\newtheorem{theorem}{Theorem}[chapter]
\theoremstyle{plain}
\newtheorem{conjecturenew}[theorem]{Conjecture}
\newtheorem{corollary}[theorem]{Corollary}
\newtheorem{definition}[theorem]{Definition}
\newtheorem{example}[theorem]{Example}
\newtheorem{exercise}[theorem]{Exercise}
\newtheorem{lemma}[theorem]{Lemma}
\newtheorem{question}[theorem]{Question}
\newtheorem{proposition}[theorem]{Proposition}
\newtheorem{remark}[theorem]{Remark}
\numberwithin{equation}{section}
\newcommand{\concat}{
  \mathord{
    \mathchoice
    {\raisebox{1ex}{\scalebox{.7}{$\frown$}}}
    {\raisebox{1ex}{\scalebox{.7}{$\frown$}}}
    {\raisebox{.7ex}{\scalebox{.5}{$\frown$}}}
    {\raisebox{.7ex}{\scalebox{.5}{$\frown$}}}
  }
}
\newtheorem{fact}[theorem]{Fact}
\newcommand{\N}{\mathbb{N}}
\newcommand{\Z}{\mathbb{Z}}
\newcommand{\Q}{\mathbb{Q}}
\newcommand{\R}{\mathbb{R}}
\newcommand{\C}{\mathbb{C}}
\renewcommand{\l}{\mathcal{L}}
\newcommand{\Pow}{\mathcal{P}}
\newcommand{\starN}{{}^{\ast}\N}
\newcommand{\starZ}{{}^{\ast}\Z}
\newcommand{\starQ}{{}^{\ast}\Q}
\newcommand{\starR}{{}^{\ast}\R}
\newcommand{\staralpha}{{}^{\ast}\alpha}
\newcommand{\st}{\operatorname{st}}
\newcommand{\BD}{\operatorname{BD}}
\renewcommand{\u}{\mathcal{U}}
\newcommand{\starA}{{}^{\ast}A}
\newcommand{\starB}{{}^{\ast}B}
\newcommand{\starC}{{}^{\ast}C}
\newcommand{\starS}{{}^{\ast}S}
\newcommand{\starX}{{}^{\ast}X}
\newcommand{\starY}{{}^{\ast}Y}
\newcommand{\starf}{{}^{\ast}f}
\newcommand{\starg}{{}^{\ast}g}
\newcommand{\F}{\mathcal{F}}
\newcommand{\G}{\mathcal{G}}
\newcommand{\U}{\mathcal{U}}
\newcommand{\s}{\mathcal{S}}
\newcommand{\UU}{\mathbb{U}}
\newcommand{\VV}{\mathbb{V}}
\newcommand{\V}{\mathcal{V}}
\newcommand{\mgoth}{\mathfrak{m}}
\newcommand{\igoth}{\mathfrak{i}}
\newcommand{\zfc}{\textsf{ZFC}}
\newcommand{\FS}{\operatorname{FS}}
\newcommand{\Fin}{\text{Fin}}
\newcommand{\Fun}{\text{Fun}}
\newcommand{\B}{\operatorname{B}}
\newcommand{\stem}{\operatorname{stem}}
\begin{document}

\author{Mauro Di Nasso, Isaac Goldbring, and Martino Lupini}
\title{Nonstandard Methods in Ramsey Theory and Combinatorial Number Theory}
\maketitle

\frontmatter

\chapter*{Acknowledgements}

The collaboration between the authors first began when they participated in an American Institute of Mathematics (AIM) Structured Quartet Research Ensemble (or SQuaRE) program together with Renling Jin, Steven Leth, and Karl Mahlburg.  We thus want to thank AIM for all of their support during our three year participation in the SQuaRE program as well as their encouragement to organize a larger workshop on the subject.  A preliminary version of this manuscript was distributed during that workshop and we want to thank the participants for their valuable comments.  In particular, Steven Leth and Terence Tao gave us a tremendous amount of feedback and for that we want to give them an extra expression of gratitude.

\chapter*{Introduction}

Generally speaking, Ramsey theory studies which combinatorial configurations of a structure can always be found in one of the pieces of a given finite partition. More generally, it considers the problem of which combinatorial configurations can be found in sets that are ``large'' in some suitable sense. Dating back to the foundational results of van der Waerden, Ramsey, Erd\H{o}s, Tur\'{a}n and others from the 1920s and 1930s, Ramsey theory has since then had an extraordinary development. On the one had, many applications of Ramsey theory have been found to numerous other areas of mathematics, ranging from functional analysis, topology, and dynamics, to set theory, model theory, and computer science. On the other hand, results and methods  from other areas of mathematics have been successfully applied to establish new results in Ramsey theory. For instance, ergodic theory and the theory of recurrence in measurable dynamics has had a huge impact on Ramsey theory, to the point of giving rise to the research area of ``ergodic Ramsey theory.'' Perhaps the best known achievement of this approach is the ergodic-theoretic proof of Szemer\'{e}di's theorem due to Furstenberg in the 1980s. In a different (but intimately related) direction, the theory of ultrafilters has been an important source of methods and ideas for Ramsey theory. In particular, the study of topological and algebraic properties of the space of ultrafilters has been used to give short and elegant proofs of deep combinatorial pigeonhole principles. Paradigmatic in this direction is the Galvin--Glazer ultrafilter proof of Hindman's theorem on sets of finite sums, previously established by Hindman in 1974 via a delicate, purely combinatorial argument. 

Recently, a new thread of research has emerged, where problems in Ramsey theory are studied from the perspective of nonstandard analysis and nonstandard methods. Developed by Abraham Robinson in the 1960s and based on first order logic and model theory, nonstandard analysis provided a formal and rigorous treatment of calculus and classical analysis via infinitesimals, an approach more similar in spirit to the approach originally taken in the development of calculus in the 17th and 18th century, and avoids the epsilon-delta arguments that are inherent in its later formalization due to Weierstrass. While this is perhaps its most well known application, nonstandard analysis is actually much more versatile. The foundations of nonstandard analysis provide an approach, which we shall call the nonstandard method, that is applicable to virtually any area of mathematics. The nonstandard method has thus far been used in numerous areas of mathematics, including functional analysis, measure theory, ergodic theory, differential equations, and stochastic analysis, just to name a few such areas.

In a nutshell, the nonstandard method allows one to extend the given mathematical universe and thus regard it as contained in a much richer nonstandard universe. Such a nonstandard universe satisfies strong saturation properties which in particular allow one to consider limiting objects which do not exist in the standard universe. This procedure is similar to passing to an ultrapower, and in fact the nonstandard method can  also be seen as a way to axiomatize the ultrapower construction in a way that distillates its essential features and benefits, but avoids being bogged down by the irrelevant details of its concrete implementation. This limiting process allows one to reformulate a given problem involving finite (but arbitrarily large) structures or configurations into a problem involving a single structure or configuration which is infinite but for all purposes behaves as though it were finite (in the precise sense that it is hyperfinite in the nonstandard universe). This reformulation can then be  tackled directly using finitary methods, ranging from combinatorial counting arguments to recurrence theorems for measurable dynamics, recast in the nonstandard universe. 

In the setting of Ramsey theory and combinatorics, the application of nonstandard methods had been pioneered by the work of Keisler, Leth, and Jin from the 1980s and 1990s. These applications had focused on density problems in combinatorial number theory. The general goal in this area is to establish the existence of combinatorial configurations in sets that are large in that sense that they have positive asymptotic density. For example, the aforementioned celebrated theorem of Szemer\'{e}di from 1970 asserts that a set of integers of positive density contains arbitrarily long finite arithmetic progressions. One of the contributions of the nonstandard approach is to translate the notion of asymptotic density on the integers, which does not satisfies all the properties of a measure, into an actual measure in the nonstandard universe. This translation then makes methods from measure theory and ergodic theory, such as the ergodic theorem or other recurrence theorems, available for the study of density problems. In a sense, this can be seen as a version of Furstenberg's correspondence (between sets of integers and measurable sets in a dynamical system), with the extra feature that the dynamical system obtained perfectly reflects \emph{all} the combinatorial properties of the set that one started with. The achievements of the nonstandard approach in this area include the work of Leth on arithmetic progressions in sparse sets, Jin's theorem on sumsets, as well as Jin's Freiman-type results on inverse problems for sumsets. More recently, these methods have also been used by  Jin, Leth, Mahlburg, and the present authors to tackle a conjecture of Erd\H{o}s concerning sums of infinite sets (the so-called $B+C$ conjecture), leading to its eventual solution by Moreira, Richter, and Robertson.

Nonstandard methods are also tightly connected with ultrafilter methods. This has been made precise and successfully applied in recent work of Di Nasso, where he observed that there is a perfect correspondence between ultrafilters and elements of the nonstandard universe up to a natural notion of equivalence. On the one hand, this allows one to manipulate ultrafilters as nonstandard points, and to use ultrafilter methods to prove the existence of certain combinatorial configurations in the nonstandard universe. One the other hand, this gives an intuitive and direct way to infer, from the existence of certain ultrafilter configurations, the existence of corresponding standard combinatorial configurations via the fundamental principle of transfer in the nonstandard method. This perspective has successfully been applied by Di Nasso, Luperi Baglini, and co-authors to the study of partition regularity problems for Diophantine equations over the integers, providing in particular a far-reaching generalization of the classical theorem of Rado on partition regularity of systems of linear equations. Unlike Rado's theorem, this recent generalization also includes equations that are \emph{not} linear. 

Finally, it is worth mentioning that many other results in combinatorics can be seen, directly or indirectly, as applications of the nonstandard method. For instance, the groundbreaking work of Hrushovski and Breuillard--Green--Tao on approximate groups, although not originally presented in this way, admit a natural nonstandard treatment. The same applies to the work of Bergelson and Tao on recurrence in quasirandom groups.

The goal of this manuscript is to introduce the uninitiated reader to the nonstandard method and to provide an overview of its most prominent applications in Ramsey theory and combinatorial number theory. In particular, no previous knowledge of nonstandard analysis will be assumed. Instead, we will provide a complete and self-contained introduction to the nonstandard method in the first part of this book. Novel to our introduction is a treatment of the topic of iterated hyperextensions, which is crucial for some applications and has thus far appeared only in specialized research articles. The intended audience for this book include researchers in combinatorics that desire to get acquainted with the nonstandard approach, as well as logicians and experts of nonstandard analysis who have been working in this or other areas of research. The list of applications of the nonstandard method to combinatorics and Ramsey theory presented here is quite extensive, including cornerstone results of Ramsey theory such as Ramsey's theorem, Hindman's theorem on sets of finite sums, the Hales--Jewett theorem on variable words, and Gowers' theorem on $\mathrm{FIN}_k$. It then proceeds with results on partition regularity of Diophantine equations and with density problems in combinatorial number theory. A nonstandard treatment of the triangle removal lemma, the Szemer\'{e}di regularity lemma, and of the already mentioned work of Hrushovski and Breuillard--Green--Tao on approximate groups conclude the book. We hope that such a complete list of examples will help the reader unfamiliar with the nonstandard method get a good grasp on how the approach works and can be applied. At the same time, we believe that collecting these results together, and providing a unified presentation and approach, will provide a useful reference for researchers in the field and will further stimulate the research in this currently very active area.

\

\noindent Pisa, Italy

\noindent Irvine, California

\noindent Wellington, New Zealand

\chapter*{Notation and Conventions}

We set $\N:=\{1,2,3,\ldots\}$ to denote the set of \emph{positive} natural numbers and $\N_0:=\{0,1,2,3,\ldots\}$ to denote the set of natural numbers.

We use the following conventions for elements of particular sets:
\begin{itemize}
\item $m$ and $n$ range over $\N$;
\item $h,k$, and $l$ range over $\Z$;
\item $H,K,M$, and $N$ range over elements of $\starN$;
\item $\delta$ and $\epsilon$ always denote (small) positive real numbers, while $\varepsilon$ denotes a positive infinitesimal element of $\starR$;
\item Given any set $S$, we let $\alpha$, $\beta$, and $\gamma$ denote arbitrary (possibly standard) elements of $\starS$.
\end{itemize}

For any set $S$, we set $\Fin(S):=\{F\subseteq X\mid X\ \text{is finite}\}$.

For any $n$, we write $[n]:=\{1,\ldots,n\}$.  Similarly, we write $[N]:=\{1,\ldots,N\}$.

Given any nonempty finite set $I$ and any set $A$, we write $\delta(A,I):=\frac{|A\cap I|}{|I|}$.  We extend this to the nonstandard situation:  if $I$ is a nonempty hyperfinite set and $A$ is an internal set, we set $\delta(A,I):=\frac{|A\cap I|}{|I|}$.  We also write $\delta(A,n):=\delta(A,[n])$ and $\delta(A,N):=\delta(A,[N])$.

Given a hyperfinite set $X$, we let $\l_X$ denote the $\sigma$-algebra of Loeb measurable subsets of $X$ and we let $\mu_X$ denote the Loeb measure on $\l_X$ that extends the normalized counting measure on $X$.  (See Chapter \ref{Loeb}.)  When $X=\{1,\ldots,N\}$, we write $\l_N$ and $\mu_N$ instead of $\l_X$ and $\mu_X$.  If $A$ is internal, we write $\mu_X(A):=\mu_X(A\cap X)$.

Suppose that $A\subseteq \Z$ and $k\in \N$.  We write $$k\cdot A:=\{x_1+\cdots+x_k \ : \ x_1,\ldots,x_k\in A\}$$ and $$ k\dot A:=\{kx \ : \ x\in A\}.$$  Of course $kA\subseteq k\cdot A$.

Throughout this book, $\log$ always denotes the logarithm base $2$.

\tableofcontents

\mainmatter

\chapter{Ultrafilters}

\section{Basics on ultrafilters}
Throughout this chapter, we let $S$ denote an infinite set.
\begin{definition}
A (proper) \emph{filter} on $S$ is a set $\mathcal{F}$ of subsets of $S$ (that is, $\mathcal{F}\subseteq \mathcal P(S)$) such that:
\begin{itemize}
\item $\emptyset\notin \mathcal{F}$, $S\in \mathcal{F}$;
\item if $A,B\in \mathcal{F}$, then $A\cap B\in \mathcal{F}$;
\item if $A\in \mathcal{F}$ and $A\subseteq B$, then $B\in \mathcal{F}$.
\end{itemize}
\end{definition}

We think of elements of $\mathcal{F}$ as ``big'' sets (because that is what filters do, they catch the big objects).  The first and third axioms are (hopefully) intuitive properties of big sets.  Perhaps the second axiom is not as intuitive, but if one thinks of the complement of a big set as a ``small'' set, then the second axiom asserts that the union of two small sets is small (which is hopefully more intuitive).

\begin{exercise}
Set $\mathcal{F}:=\{A\subseteq S\  | \ S\setminus A \text{ is finite}\}$.  Prove that $\mathcal{F}$ is a filter on $S$, called the \emph{Frech\'et} or \emph{cofinite} filter on $S$.
\end{exercise}

\begin{exercise}\label{generate}
Suppose that $\mathcal D$ is a set of subsets of $S$ with the \emph{finite intersection property}:  whenever $D_1,\ldots,D_n\in \mathcal D$, we have $D_1\cap\cdots \cap D_n\not=\emptyset$.  Set 
$$\langle \mathcal D\rangle:=\{E\subseteq S \ | \ D_1\cap\cdots\cap D_n\subseteq E \text{ for some }D_1,\ldots,D_n\in \mathcal D\}.$$  Show that $\langle \mathcal D\rangle$ is the smallest filter on $S$ containing $\mathcal D$, called the \emph{filter generated by $\mathcal D$}.
\end{exercise}

If $\mathcal{F}$ is a filter on $S$, then a subset of $S$ cannot be simultaneously big and small (that is, both it and its complement belong to $\mathcal{F}$), but there is no requirement that it be one of the two.  It will be desirable (for reasons that will become clear in a moment) to add this as an additional property:

\begin{definition}
If $\mathcal{F}$ is a filter on $S$, then $\mathcal{F}$ is an \emph{ultrafilter}\index{ultrafilter} if, for any $A\subseteq S$, either $A\in \mathcal{F}$ or $S\setminus A\in \mathcal{F}$ (but not both!).
\end{definition}

Ultrafilters are usually denoted by $\u$.  Observe that the Frech\'et filter on $S$ is not an ultrafilter since there are sets $A\subseteq S$ such that $A$ and $S\setminus A$ are both infinite.  

The following exercise illustrates one of the most important properties of ultrafilters \index{ultrafilter}.

\begin{exercise}\label{Exercise:part}
A filter $\mathcal{F}$ on $S$ is an ultrafilter if and only if whenever
$A_1,\ldots,A_n$ are pairwise disjoint subsets of $S$ such that 
$A_1\cup \cdots \cup A_n\in \mathcal{F}$, there is a unique $i\in \{1,\ldots,n\}$ such that 
$A_i\in \mathcal{F}$.
\end{exercise}

We have yet to see an example of an ultrafilter\index{ultrafilter}.  Here is a ``trivial'' source of ultrafilters:

\begin{definition}
Given $s\in S$, set $\u_s:=\{A\subseteq S\  | \ s\in A\}$.
\end{definition}

\begin{exercise}
For $s\in S$, prove that $\u_s$ is an ultrafilter\index{ultrafilter} on $S$, called the \emph{principal ultrafilter generated by $s$}.
\end{exercise}

We say that an ultrafilter\index{ultrafilter} $\u$ on $S$ is \emph{principal} if $\u=\u_s$ for some $s\in S$.  Although principal ultrafilters settle the question of the existence of ultrafilters, they will turn out to be useless for our purposes, as we will soon see.  From a philosophical viewpoint, principal ultrafilters fail to capture the idea that sets belonging to the ultrafilter are large, for $\{s\}$ belongs to the ultrafilter $\u_s$ and yet hardly anyone would dare say that the set $\{s\}$ is large!

\begin{exercise}\label{ultrafin}
Prove that an ultrafilter\index{ultrafilter} $\u$ on $S$ is nonprincipal if and only if there is no \emph{finite} set $A\subseteq S$ such that $A\in \u$ if and only if
$\u$ extends the Frech\'et filter.
\end{exercise}

We now would like to prove the existence of nonprincipal ultrafilters\index{ultrafilter}.  The following exercise will be the key to doing this.

\begin{exercise}
Suppose that $\mathcal{F}$ is a filter on $S$.  Then $\mathcal{F}$ is an ultrafilter on $S$ if and only if it is a maximal filter, that is, if and only if, whenever $\mathcal{F}'$ is a filter on $S$ such that $\mathcal{F}\subseteq \mathcal{F}'$, we have $\mathcal{F}=\mathcal{F}'$.
\end{exercise}

Since it is readily verified that the union of an increasing chain of filters on $S$ containing
a filter $\mathcal{F}$ is once again a filter on $S$ containing $\mathcal{F}$,
the previous exercises and Zorn's lemma yield the following:

\begin{theorem}
Nonprincipal ultrafilters on $S$ exist\index{ultrafilter}.
\end{theorem}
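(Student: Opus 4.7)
The plan is to apply Zorn's lemma exactly as the paragraph preceding the statement sets up, using the Fr\'echet filter as a starting point so that the resulting maximal filter is automatically nonprincipal. Concretely, I would let $\mathcal{F}_0$ denote the Fr\'echet filter on $S$ (known to be a filter by the first exercise, and a proper filter because $S$ is infinite) and consider the poset $\mathbb{P}$ of all filters on $S$ that contain $\mathcal{F}_0$, partially ordered by inclusion. The target is to produce a maximal element of $\mathbb{P}$ and then invoke the characterization of ultrafilters as maximal filters.

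The verification of Zorn's hypothesis is the main bookkeeping step, though the excerpt essentially hands it to us: given a nonempty chain $\mathcal{C}\subseteq\mathbb{P}$, I would set $\mathcal{F}^{\ast}:=\bigcup_{\mathcal{F}\in\mathcal{C}}\mathcal{F}$ and check the three filter axioms. Upward closure and the exclusion of $\emptyset$ pass through any union. For finite intersections, given $A,B\in\mathcal{F}^{\ast}$, pick $\mathcal{F}_1,\mathcal{F}_2\in\mathcal{C}$ with $A\in\mathcal{F}_1$ and $B\in\mathcal{F}_2$; since $\mathcal{C}$ is a chain, one of these filters contains the other, so $A\cap B$ lies in the larger one and hence in $\mathcal{F}^{\ast}$. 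Clearly $\mathcal{F}_0\subseteq\mathcal{F}^{\ast}$, so $\mathcal{F}^{\ast}\in\mathbb{P}$ and it dominates the chain.

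Zorn's lemma then produces a maximal element $\mathcal{U}\in\mathbb{P}$. I need to upgrade maximality in $\mathbb{P}$ to maximality among \emph{all} filters on $S$: if $\mathcal{F}'$ is any filter with $\mathcal{U}\subseteq\mathcal{F}'$, then $\mathcal{F}_0\subseteq\mathcal{U}\subseteq\mathcal{F}'$, so $\mathcal{F}'\in\mathbb{P}$, whence $\mathcal{F}'=\mathcal{U}$ by maximality. The exercise identifying ultrafilters with maximal filters then gives that $\mathcal{U}$ is an ultrafilter, and Exercise \ref{ultrafin} (an ultrafilter is nonprincipal iff it extends the Fr\'echet filter) ensures nonprincipality, since $\mathcal{U}\supseteq\mathcal{F}_0$ by construction.

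The only genuine subtlety, and where I would be most careful, is ensuring that the starting filter $\mathcal{F}_0$ is actually contained in \emph{some} filter in the first place (which is why one takes $S$ infinite, so that $\mathcal{F}_0$ is proper) and that the union-of-chain argument does not silently admit the empty set. Everything else is an invocation of the lemmas already assembled in the excerpt.
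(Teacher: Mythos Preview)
Your proposal is correct and is exactly the argument the paper has in mind: the text immediately preceding the theorem says that the union of an increasing chain of filters containing a given filter $\mathcal{F}$ is again such a filter, and then simply invokes ``the previous exercises and Zorn's lemma'' to conclude. You have just spelled out those details, taking $\mathcal{F}_0$ to be the Fr\'echet filter and using the maximal-filter characterization together with Exercise~\ref{ultrafin}.
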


\begin{exercise}\label{imageultrafilter}
Suppose that $f:S\to T$ is a function between sets.  Then given any ultrafilter $\u$ on $S$, the set
$$f(\u):=\{A\subseteq T \ : \ f^{-1}(A)\in \u\}$$ is an ultrafilter on $T$, called the \emph{image ultrafilter of $\u$ under $f$}.
\end{exercise} 

\section{The space of ultrafilters $\beta S$}

In this section, $S$ continues to denote an infinite set.  Since topological matters are the subject of this subsection, we will also treat $S$ as a topological space equipped with the discrete topology.

The set of ultrafilters\index{ultrafilter} on $S$ is denoted $\beta S$.  There is a natural topology on $\beta S$ obtained by declaring, for $A\subseteq S$, 
the following sets as basic open sets:
$$U_A:=\{\u \in\beta S \ : \ A\in \u\}.$$ 
(Note that the $U_A$'s are indeed a base for a topology as $U_A\cap U_B=U_{A\cap B}$.)  Since the complement of $U_A$ in $\beta S$ is $U_{S\setminus A}$, we see that the basic open sets are in fact clopen.  Note also that $\beta S$ is Hausdorff:  if $\u,\mathcal{V}\in \beta S$ are distinct, take $A\subseteq S$ with $A\in \u$ and $S\setminus A\in \mathcal{V}$. Then $\u\in U_A$ and $\mathcal{V}\in U_{S\setminus A}$ and clearly $U_A$ and $U_{S\setminus A}$ are disjoint.  

\begin{exercise}\label{ex-Phiclosed}
Let $\Phi(X)$ be any property of subsets of $S$.
Then the following (possibly empty) set is closed:
$$\mathcal{C}_\Phi:=\{\u\in\beta S\ :\ \forall A\in\u\ \Phi(A)\}.$$
\end{exercise}

\begin{theorem}
$\beta S$ is a compact space.
\end{theorem}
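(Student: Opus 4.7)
The plan is to reduce to showing that every cover of $\beta S$ by \emph{basic} open sets admits a finite subcover, and then derive the contrapositive as a statement about filters. A key observation is that each basic open set $U_A$ is also closed, with complement $U_{S\setminus A}$; hence passing to complements converts ``no finite subcover'' into a finite intersection property statement in a particularly clean way.

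Concretely, suppose $\{U_{A_i}\}_{i\in I}$ is a basic open cover of $\beta S$ admitting no finite subcover. Then for every finite $F\subseteq I$ the set $\beta S\setminus\bigcup_{i\in F}U_{A_i}=\bigcap_{i\in F}U_{S\setminus A_i}$ is nonempty, so there is an ultrafilter containing $S\setminus A_i$ for all $i\in F$. In particular, by the filter axioms, $\bigcap_{i\in F}(S\setminus A_i)$ belongs to that ultrafilter and is therefore nonempty. Thus the family $\mathcal{D}:=\{S\setminus A_i : i\in I\}$ has the finite intersection property.

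Now I invoke the machinery already developed in the excerpt: by Exercise~\ref{generate}, $\mathcal{D}$ generates a filter $\langle\mathcal{D}\rangle$ on $S$, and the Zorn's lemma argument that produced nonprincipal ultrafilters equally well yields an ultrafilter $\u$ extending $\langle\mathcal{D}\rangle$. For this $\u$, every $S\setminus A_i$ lies in $\u$, hence $A_i\notin\u$ and $\u\notin U_{A_i}$ for every $i\in I$, contradicting the assumption that the $U_{A_i}$ cover $\beta S$.

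The main (mild) obstacle is the initial reduction to basic covers: an arbitrary open cover need not consist of basic sets. However, since the $U_A$ form a base for the topology, each member of a general cover decomposes as a union of basic open sets, producing a refinement by basic sets; a finite subcover of the refinement then pulls back to a finite subcover of the original. Once this reduction is in place, the entire argument is driven by the finite intersection property together with the existence of ultrafilters extending filters with the FIP, both of which are already at hand.
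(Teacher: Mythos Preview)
Your proof is correct and follows essentially the same route as the paper: reduce to basic covers, show that the failure of a finite subcover gives the complements $\{S\setminus A_i\}$ the finite intersection property, and then extend to an ultrafilter that lies outside every $U_{A_i}$. The paper is slightly terser (it does not spell out the basic-cover reduction or invoke Exercise~\ref{generate} explicitly), but the argument is the same.
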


\begin{proof}
It is enough to show that every covering of $\beta S$ by basic open sets has a finite subcover.  Let $(A_i)$ be a family of subsets of $S$ such that $(U_{A_i})$ covers $\beta S$.  Suppose, towards a contradiction, that this cover of $\beta S$ has no finite subcover.  We claim then that $(S\setminus A_i)$ has the finite intersection property.  Indeed, given $J\subseteq I$ finite, there is $\u \in \beta S\setminus \bigcup_{i\in J}U_{A_i}$, whence $S\setminus A_i\in \u$ for each $i\in J$, and hence $\bigcap_{i\in J}(S\setminus A_i)\not=\emptyset$.  It follows that there is a $\u\in \beta S$ such that $S\setminus A_i\in \u$ for all $i\in I$, contradicting the fact that $\u\in U_{A_i}$ for some $i\in I$.
\end{proof}

We identify $S$ with the set of principal ultrafilters\index{ultrafilter} on $S$. Under this identification, $S$ is dense in $\beta S$:  if $A\subseteq S$ is nonemtpy and $s\in A$, then the principal ultrafilter $\u_s\in U_A$.  Thus, $\beta S$ is a compactification of $S$.  In fact,  we have:

\begin{theorem}\label{sc}
$\beta S$ is the Stone-\v{C}ech compactification of $S$.
\end{theorem}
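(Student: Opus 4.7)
The plan is to verify the universal property characterizing the Stone-\v{C}ech compactification: every function $f \colon S \to K$ into a compact Hausdorff space $K$ (all such functions are automatically continuous since $S$ is discrete) extends uniquely to a continuous map $\tilde f \colon \beta S \to K$. Combined with the fact already observed that $\beta S$ is compact Hausdorff and contains $S$ as a dense subspace, this will identify $\beta S$ as the Stone-\v{C}ech compactification.

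To construct $\tilde f$, I would use the image ultrafilter construction from Exercise~\ref{imageultrafilter}. Given $\u \in \beta S$, form the image ultrafilter $f(\u)$ on $K$. Since $K$ is compact, every ultrafilter on $K$ converges to at least one point of $K$ (this is a standard characterization of compactness via ultrafilter convergence); since $K$ is Hausdorff, this limit is unique. Define $\tilde f(\u)$ to be this unique limit point. If $\u = \u_s$ is principal, then $f(\u_s) = \u_{f(s)}$ is principal on $K$ and converges to $f(s)$, so $\tilde f$ indeed extends $f$.

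The main work is checking continuity of $\tilde f$. Fix $\u_0 \in \beta S$ and an open neighborhood $V$ of $\tilde f(\u_0)$ in $K$. Since compact Hausdorff spaces are regular, pick an open $W \subseteq K$ with $\tilde f(\u_0) \in W \subseteq \overline{W} \subseteq V$. Because $f(\u_0)$ converges to $\tilde f(\u_0)$, every neighborhood of $\tilde f(\u_0)$ lies in $f(\u_0)$, so $W \in f(\u_0)$, i.e.\ $f^{-1}(W) \in \u_0$. Thus $U_{f^{-1}(W)}$ is a basic open neighborhood of $\u_0$ in $\beta S$. For any $\u \in U_{f^{-1}(W)}$, we have $W \in f(\u)$, hence $\overline{W} \in f(\u)$ (since filters are upward closed); since $\overline{W}$ is closed and $f(\u) \to \tilde f(\u)$, every neighborhood of $\tilde f(\u)$ meets $\overline{W}$, which forces $\tilde f(\u) \in \overline{W} \subseteq V$. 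This proves $\tilde f(U_{f^{-1}(W)}) \subseteq V$, establishing continuity at $\u_0$.

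For uniqueness, any two continuous extensions of $f$ to $\beta S$ must agree on the dense subspace $S$ and hence on all of $\beta S$, since $K$ is Hausdorff. The step I expect to be the main obstacle is the continuity verification: the subtlety is that knowing $V \in f(\u)$ does not directly give $\tilde f(\u) \in V$ (the limit could sit on the boundary), which is why one must shrink $V$ to a smaller open $W$ whose closure is trapped inside $V$, using regularity of $K$. Everything else is bookkeeping with the definitions of image ultrafilter, convergence of ultrafilters, and the basic open sets $U_A$.
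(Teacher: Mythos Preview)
Your proof is correct and essentially the same as the paper's. The paper packages the construction of $\tilde f(\u)$ as a separate ``ultralimit'' lemma (Lemma~\ref{ultralimit}), defining $\tilde f(\u) = \lim_\u f(s)$, but this is exactly the limit of your image ultrafilter $f(\u)$; the continuity argument then proceeds identically, shrinking the target neighborhood via regularity of $K$ and pulling back to a basic open set $U_{f^{-1}(W)}$ in $\beta S$.
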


We remind the reader that the Stone-\v{C}ech compactification of $S$ is the unique compactifiction $X$ of $S$ with the following property:  any function $f:S\to Y$ with $Y$ compact Hausdorff has a unique continuous extension $\tilde{f}:X\to Y$.   In order to prove the previous theorem, we will first need the following lemma, which is important in its own right:

\begin{lemma}\label{ultralimit}
Suppose that $Y$ is a compact Hausdorff space and $(y_s)_{s\in S}$ is a familiy of elements of $Y$ indexed by $S$.  Then for any $\u\in \beta S$, there is a unique element $y\in Y$ with the property that, for any open neighborhood $U$ of $y$, we have $\{s\in S\ : \ y_s\in U\}\in \u$. 
\end{lemma}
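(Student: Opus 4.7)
The plan is to split the statement into uniqueness and existence; uniqueness is a short Hausdorff argument, while existence uses compactness via the finite intersection property, combined with the ultrafilter dichotomy.

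For uniqueness, suppose $y\neq y'$ both satisfied the property. Since $Y$ is Hausdorff, I would choose disjoint open neighborhoods $U\ni y$ and $U'\ni y'$. Then both $\{s\in S:y_s\in U\}$ and $\{s\in S:y_s\in U'\}$ lie in $\u$, hence so does their intersection; but this intersection is contained in $\{s:y_s\in U\cap U'\}=\emptyset$, contradicting $\emptyset\notin\u$.

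For existence, I would consider the family $\mathcal{C}$ of all closed $C\subseteq Y$ such that $\{s\in S:y_s\in C\}\in\u$. Since $\u$ is closed under finite intersections and does not contain $\emptyset$, the family $\mathcal{C}$ has the finite intersection property: any $C_1,\ldots,C_n\in\mathcal{C}$ satisfy $\{s:y_s\in C_1\cap\cdots\cap C_n\}\in\u$, hence $C_1\cap\cdots\cap C_n\neq\emptyset$. By compactness of $Y$ there exists some $y\in\bigcap\mathcal{C}$; I take this as my candidate ultralimit.

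It then remains to verify the neighborhood condition. Given an open neighborhood $U$ of $y$, suppose for contradiction that $\{s:y_s\in U\}\notin\u$. By the ultrafilter dichotomy its complement $\{s:y_s\in Y\setminus U\}$ belongs to $\u$, so the closed set $Y\setminus U$ lies in $\mathcal{C}$. But then $y\in\bigcap\mathcal{C}\subseteq Y\setminus U$, contradicting $y\in U$. The only genuine step is the design choice of applying compactness to the family of $\u$-large closed sets: this simultaneously ensures the finite intersection property via the ultrafilter axioms and makes the ultrafilter dichotomy automatically translate ``$y$ lies in every $\u$-large closed set'' into ``every open neighborhood of $y$ is $\u$-large.'' No other obstacle is expected.
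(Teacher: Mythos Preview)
Your proof is correct. The paper takes a slightly different but essentially dual route for existence: it argues by contradiction, assuming every point $y$ has a ``bad'' open neighborhood $U_y$ with $\{s:y_s\in U_y\}\notin\u$, extracts a finite subcover $Y=U_{y_1}\cup\cdots\cup U_{y_n}$ by compactness, and then invokes the ultrafilter partition property to conclude some $\{s:y_s\in U_{y_i}\}\in\u$, a contradiction. Your approach uses the dual formulation of compactness (finite intersection property of closed sets) to construct the limit point directly rather than by contradiction; both are standard and equally short, and your version has the minor advantage of producing $y$ explicitly as $\bigcap\mathcal{C}$. The uniqueness arguments are identical.
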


\begin{proof}
Suppose, towards a contradiction, that no such $y$ exists.  Then for every $y\in Y$, there is an open neighborhood $U_y$ of $y$ such that $\{s\in S \ : \ y_s\in U_y\}\notin \u$.  By compactness, there are $y_1,\ldots,y_n\in Y$ such that $Y=U_{y_1}\cup\cdots\cup U_{y_n}$.  There is then a unique $i\in\{1,\ldots,n\}$ such that $\{s\in S \ : \ y_s\in U_{y_i}\}\in \u$, yielding the desired contradiction.

The uniqueness of $y$ follows from the fact that $Y$ is Hausdorff together with the fact that $\u$ does not contain two disjoint sets.
\end{proof}

\begin{definition}
In the context of the previous lemma, we call the unique $y$ the \emph{ultralimit of $(y_s)$ with respect to $\u$}, denoted $\lim_{s,\u} y_s$ or simply just $\lim_\u y_s$.
\end{definition}

\begin{proof}[of Theorem \ref{sc}]
Suppose that $f:S\to Y$ is a function into a compact Hausdorff space.  Define $\tilde{f}:\beta S\to Y$ by $\tilde{f}(\u):=\lim_\u f(s)$, which exists by Lemma \ref{ultralimit}.  It is clear that $\tilde{f}(\u_s)=f(s)$, so $\tilde{f}$ extends $f$.  We must show that $\tilde{f}$ is continuous.  Fix $\u\in \beta S$ and let $U$ be an open neighborhood of $\tilde{f}(\u)$ in $Y$.  Let $V\subseteq U$ be an open neighborhood of $\tilde{f}(\u)$ in $Y$ such that $\overline{V}\subseteq U$.  
(This is possible because every compact Hausdorff space is \emph{regular}, that is,
every point has a base of closed neighborhoods.) Take $A\in \u$ such that $f(s)\in V$ for $s\in A$.  Suppose $\mathcal{V}\in U_A$, so $A\in \mathcal{V}$. Then $\lim_{\mathcal{V}} f(s)\in \overline{V}\subseteq U$, so $U_A\subseteq \tilde{f}^{-1}(U)$.

Recall that the set of points where two continuous functions with values in a Hausdroff space agree is closed. As $S$ is dense in $\beta S$, it follows that such a continuous extension is unique.
\end{proof}

Now that we have shown that $\beta S$ is the Stone-\v{C}ech compactification of $S$, given $f:S\to Y$ where $Y$ is a compact Hausdorff space, we will let $\beta f:\beta S\to Y$ denote the unique continuous extension of $f$.

\begin{definition}\label{def-ku}
Fix $k\in \N$.  Let $m_k:\N\to \N$ be defined by $m_k(n):=kn$.  Then for $\u\in \beta\N$, we set $k\u:=(\beta m_k)(\u)$.
\end{definition}

Note that $A\in k\u\Leftrightarrow A/k:=\{n\in\N\mid nk\in A\}\in\u$.
The ultrafilters\index{ultrafilter} $k\u$ will play an important role in Chapter \ref{partreg}.

\begin{exercise}
Given $A\subseteq S$, show that $\overline{A}=U_A$, where $\overline{A}$ denotes the closure of $A$ in $\beta S$.
\end{exercise}

Let $\ell ^{\infty }(S)$ denote the space of bounded real-valued functions on $S$.  Given $f\in \ell ^{\infty }(S)$, take $r\in \R^{>0}$ such that $f(S)\subseteq [-r,r]$, whence we may consider its unique continuous extension $\beta f:\beta S\to [-r,r]$.  Note that the function $\beta f$ does not depend on the choice of $r$.  The following exercise will be useful in Chapter \ref{sumset}.

\begin{exercise}
The function $f\mapsto \beta f$ is an isomorphism between $\ell ^{\infty }(S)$ and $C(\beta S)$ as Banach spaces.
\end{exercise}

\section{The case of a semigroup}

We now suppose that $S$ is the underlying set of a semigroup $(S,\cdot)$.  Then one can extend the semigroup operation $\cdot$ to a semigroup operation $\odot$ on $\beta S$ by declaring, for $\u,\mathcal{V}\in \beta S$ and $A\subseteq S$, that
$$A\in \u\odot \mathcal{V} \Leftrightarrow \{s\in S \ : \ s^{-1}A\in \mathcal{V}\}\in \u.$$  
Here, $s^{-1}A:=\{t\in S \ : \ st\in A\}$.  In other words, $\u\odot \mathcal{V}=\lim_{s,\u}(\lim_{t,\mathcal{V}}s\cdot t)$, where these limits are taken in the compact space $\beta S$.  In particular, note that $\u_s\odot \u_t=\u_{s\cdot t}$, so this operation on $\beta S$ does indeed extend the original operation on $S$.  It is also important to note that, in general, ultralimits do not commutate and thus, in general, $\u\odot \mathcal{V}\not=\mathcal{V}\odot \u$, even if $(S,\cdot)$ is commutative.  (See Chapter \ref{hypgenerator} for more on this lack of commutativity.)

The following theorem is the key to many applications of ultrafilter/nonstandard methods in Ramsey theory.

\begin{theorem}\label{betaSsemitop}
$(\beta S, \odot)$ is a compact, \emph{right topological semigroup}, that is, $\odot $ is a semigroup operation on  the compact space $\beta S$ such that, for each $\mathcal{V}\in \beta S$, the map $\u\mapsto \u\odot \mathcal{V}: \beta S \to \beta S$ is continuous. 
\end{theorem}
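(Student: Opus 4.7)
The plan is to verify three separate claims bundled into the theorem: (i) $\u \odot \mathcal{V}$ really is an ultrafilter, so that $\odot$ is a well-defined binary operation on $\beta S$; (ii) $\odot$ is associative; (iii) for each fixed $\mathcal{V}$, the right-multiplication map $R_{\mathcal{V}} : \u \mapsto \u \odot \mathcal{V}$ is continuous. Compactness of $\beta S$ is already established in the previous theorem, so nothing further is needed on that front.

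For (i) and (iii), I will introduce the shorthand $A_\mathcal{V} := \{s \in S : s^{-1}A \in \mathcal{V}\}$, so that by definition $A \in \u \odot \mathcal{V}$ iff $A_\mathcal{V} \in \u$. To verify (i), I would use the elementary preimage identities $s^{-1}(A \cap B) = s^{-1}A \cap s^{-1}B$, $s^{-1}(S \setminus A) = S \setminus s^{-1}A$, $s^{-1}S = S$, and $s^{-1}\emptyset = \emptyset$, which give $(A \cap B)_\mathcal{V} = A_\mathcal{V} \cap B_\mathcal{V}$, $(S \setminus A)_\mathcal{V} = S \setminus A_\mathcal{V}$ (using here that $\mathcal{V}$ itself is an ultrafilter), $S_\mathcal{V} = S$, and $\emptyset_\mathcal{V} = \emptyset$; each filter and ultrafilter axiom for $\u \odot \mathcal{V}$ then follows directly from the corresponding axiom for $\u$. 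For (iii), the same notation gives
\[
R_\mathcal{V}^{-1}(U_A) \;=\; \{\u \in \beta S : A \in \u \odot \mathcal{V}\} \;=\; \{\u \in \beta S : A_\mathcal{V} \in \u\} \;=\; U_{A_\mathcal{V}},
\]
a basic open (in fact clopen) set; since the $U_A$ form a basis for the topology, this is enough to conclude right-continuity of $R_\mathcal{V}$.

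The main obstacle, such as it is, lies in (ii). The key identity is $(st)^{-1}A = t^{-1}(s^{-1}A)$, which is an immediate restatement of associativity of the semigroup operation on $S$. Using it, I will unwind $A \in (\u \odot \mathcal{V}) \odot \mathcal{W}$ and $A \in \u \odot (\mathcal{V} \odot \mathcal{W})$ by peeling off the outermost $\odot$ in each case and applying the identity inside; both expressions should collapse to the same double-nested condition, namely $\{s \in S : \{t \in S : (st)^{-1}A \in \mathcal{W}\} \in \mathcal{V}\} \in \u$, whence $(\u \odot \mathcal{V}) \odot \mathcal{W}$ and $\u \odot (\mathcal{V} \odot \mathcal{W})$ contain exactly the same subsets of $S$ and must therefore coincide. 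The only genuine risk in the computation is losing track of which ultrafilter is binding which dummy variable; apart from careful bookkeeping there is no deeper difficulty, since the definition of $\odot$ was tailored precisely to make these three computations work.
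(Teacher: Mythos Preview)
Your proof is correct. Your continuity argument (iii) is identical to the paper's: both show that $R_{\mathcal{V}}^{-1}(U_A) = U_B$ where $B = A_{\mathcal{V}} = \{s \in S : s^{-1}A \in \mathcal{V}\}$. The paper's proof addresses only this continuity claim, leaving the verification that $\odot$ is a well-defined associative operation as understood; you have additionally supplied those checks (your (i) and (ii)), which are routine but worth writing out.
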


\begin{proof}
Fix $\mathcal{V}\in \beta S$ and let $\rho_{\mathcal{V}}:\beta S\to \beta S$ be defined by $\rho_{\mathcal{V}}(\u):=\u\odot \mathcal{V}$.  We need to show that $\rho_{\mathcal{V}}$ is continuous.  Towards this end, fix $A\subseteq S$. We must show that $\rho_{\mathcal{V}}^{-1}(U_A)$ is open.  Let $B:=\{s\in S \ : \ s^{-1}A\in \mathcal{V}\}$.  It remains to note that $\rho_{\mathcal{V}}^{-1}(U_A)=U_B$.
\end{proof}

One can introduce some notation to express more succinctly the semigroup operation in $\beta S$. Given $A\subseteq S$ and $t\in S$, one defines $At^{-1}$ to be the set $\{ s\in S: s+t\in A\}$. Similarly, for $A\subseteq S$ and $\mathcal{V}\in \beta S$, one defines $A \mathcal{V}^{-1}$ to be $\{ x\in S: Ax^{-1}\in \mathcal{V}\}$. Then for $\mathcal{U},\mathcal{V}\in \beta S$ and $A\subseteq S$, one has that $S\in \mathcal{U}\odot \mathcal{V}$ if and only if $A\mathcal{V}^{-1}\in \mathcal{U}$.


\section*{Notes and references} 
The notion of ultrafilter was introduced by H.\ Cartan \cite{cartan_theorie_1937,cartan_filtres_1937}
in 1937 to study convergence in topological spaces.
Ultrafilters and the corresponding construction of ultraproduct 
are common tools in mathematical logic, but they also
found many applications in other fields of mathematics, 
especially in topology, algebra, and functional analysis.
A classic reference on ultrafilters is the book \emph{``Ultrafilters''}
by Comfort and Negrepontis \cite{comfort_ultrafilters_1977}.
See also the more recent \cite{bergelson_ultrafilters_2010} for a review of
ultrafilters across mathematics.
The extension of the operation on a semigroup to the space of ultrafilters can be seen as a particular instance of the notion of Arens product on the bidual of a Banach algebra \cite{arens_adjoint_1951}. Indeed, one can regard the space of ultrafilters over a semigroup $S$ as a susbpace of the second dual of the Banach algebra $\ell _1{S}$ endowed with the usual convolution product. This was the initial approach taken in the study of the Stone-\v{C}ech compactification since the 1950s \cite{day_amenable_1957,civin_second_1961}. Its realization as a space of ultrafilters was first explicitly considered by Ellis \cite{ellis_lectures_1969}.


\chapter{Nonstandard analysis}\label{ch-nsa}

If one wants to present the methods of nonstandard analysis
in their full generality and with full rigor, then notions and tools
from mathematical logic such as ``first-order formula'' or 
``elementary extension'' are definitely needed.
However, we believe that a gentle introduction to the basics of 
nonstandard methods and their use in combinatorics does not directly
require any technical machinery from logic. Only at a later stage,
when advanced nonstandard techniques are applied
and their use must be put on firm foundations, detailed knowledge of notions from logic will be necessary.

We will begin with presenting the main properties of the nonstandard versions of the
natural, integer, rational, and real numbers, that will be named by
adding the prefix ``hyper''. Then we will introduce the fundamental principle
of nonstandard analysis, namely the \emph{transfer principle}
of the \emph{star map}.  While at this stage the treatment will still be informal, it will still be sufficient for the reader to gain a first idea of how nonstandard methods can be used in applications.

In the appendix, we give sound and rigorous foundations
to nonstandard analysis in full generality
by formally introducing first order logic. The reader already familiar with nonstandard methods
can proceed directly to the next chapter.

\section{Warming-up}\label{sec-warmingup}

To begin with, let us recall the following notions, which are at the 
very base of nonstandard analysis. 


\begin{definition}\label{definition:infinitesimal}
An element $\varepsilon$ of an ordered field $\mathbb{F}$ is \emph{infinitesimal}\index{hyperreal number!infinitesimal} (or \emph{infinitely small})
if $|\varepsilon|<\frac{1}{n}$ for every $n\in\N$.
A number $\Omega$ is \emph{infinite}\index{hyperreal number!infinite} if $|\Omega|>n$ for every $n\in\N$.
\end{definition}

In Definition \ref{definition:infinitesimal} we identify a natural number $n$ with the element of $\mathbb{F}$ obtained as the $n$-fold sum of $1$ by itself. 
Clearly, a nonzero number is infinite if and only if its
reciprocal is infinitesimal. We say that a
number is \emph{finite} or \emph{bounded}
if it is not infinite. 

\begin{exercise}
\

\begin{enumerate}
\item
If $\xi$ and $\zeta$ are finite, then
$\xi+\zeta$ and $\xi \cdot \zeta$ are finite.
\item
If $\xi$ and $\zeta$ are infinitesimal\index{hyperreal number!infinitesimal}, then
$\xi +\zeta $ is infinitesimal.
\item
If $\xi$ is infinitesimal and $\zeta$ is finite, then
$\xi\cdot\zeta$ is infinitesimal.
\item
If $\xi$ is infinite and $\zeta$ is not infinitesimal, then
$\xi\cdot\zeta$ is infinite.
\item
If $\xi\neq 0$ is infinitesimal and $\zeta$ is not infinitesimal, then $\xi/\zeta$ is infinitesimal.
\item
If $\xi$ is infinite\index{hyperreal number!infinite} and $\zeta$ is finite, then $\xi/\zeta$ is infinite.

\end{enumerate}
\end{exercise}

Recall that an ordered field $\mathbb{F}$ is \emph{Archimedean}
if for every positive $x\in\mathbb{F}$ there exists $n\in\N$\index{ordered field!non-Archimedean}
such that $n x>1$.

\begin{exercise}
The following properties are equivalent for an ordered field $\mathbb{F}$\index{ordered field}:
\begin{enumerate}
\item
$\mathbb{F}$ is non-Archimedean;
\item
There are nonzero infinitesimal numbers\index{hyperreal number!infinitesimal} in $\mathbb{F}$;
\item
The set of natural numbers is bounded in $\mathbb{F}$.
\end{enumerate}
\end{exercise}

We are now ready to introduce the nonstandard reals.

\begin{definition}
The \emph{hyperreal field} $\starR$ \index{hyperreal number} is a proper extension
of the ordered field $\R$ that satisfies additional properties (to be 
specified further on). The element of $\starR$ are called \emph{hyperreal numbers}.
\end{definition}

By just using the above incomplete definition, the following is proved.

\begin{proposition}\label{nonArchimedean}
The hyperreal field $\starR$\index{hyperreal number} is non-Archimedean, and hence
it contains nonzero infinitesimals and infinite numbers\index{hyperreal number!infinitesimal}.
\end{proposition}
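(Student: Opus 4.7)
The plan is to construct a nonzero infinitesimal in $\starR$ directly from the bare assumption that $\starR$ is a proper ordered field extension of $\R$, and then invoke the immediately preceding exercise, which tells us that in an ordered field the existence of a nonzero infinitesimal is equivalent to non-Archimedeanness. Once non-Archimedeanness is established, an infinite element is produced as the reciprocal of any nonzero infinitesimal, so the whole statement follows.

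Concretely, I would pick any $\xi \in \starR \setminus \R$, which exists because the extension is proper, and look at the set of reals it dominates: $L := \{ x \in \R : x < \xi \}$, with the inequality taken in $\starR$. There are three cases to dispatch. If $L = \emptyset$, then $\xi$ is a lower bound for all of $\R$, so $-\xi > n$ for every $n \in \N$, making $-\xi$ an infinite hyperreal and $-1/\xi$ a nonzero infinitesimal. If $L = \R$, then $\xi$ itself is an upper bound for $\R$ and is therefore infinite, and $1/\xi$ is a nonzero infinitesimal. In the interesting case, $L$ is nonempty and bounded above in $\R$, and I let $r := \sup L \in \R$.

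The heart of the argument is showing that $\varepsilon := \xi - r$ is a nonzero infinitesimal. Nonzeroness is immediate because $r \in \R$ while $\xi \notin \R$. For infinitesimality, I argue by contradiction: if $|\varepsilon| > 1/n$ for some $n \in \N$, then either $\xi > r + 1/n$, in which case the real number $r + 1/(2n)$ lies strictly above $\sup L$ but still strictly below $\xi$, putting it in $L$ and contradicting the definition of $r$, or $\xi < r - 1/n$, in which case $r - 1/(2n)$ is a real upper bound of $L$ strictly smaller than $r$, again contradicting the supremum property.

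The only genuine obstacle is that the notion of ``hyperreal field'' at this point of the book is deliberately underspecified, so the argument cannot lean on any transfer or saturation axiom; it must use only the order, the field operations, and the fact that $\R$ itself is Dedekind-complete. The supremum step above is exactly what exploits completeness of $\R$ to force any element of a proper ordered extension to sit infinitesimally close to a standard real, which is really the conceptual content of the proposition.
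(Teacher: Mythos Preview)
Your proof is correct and follows essentially the same approach as the paper: pick $\xi\in\starR\setminus\R$, use completeness of $\R$ to locate the real number $r$ closest to $\xi$ (the paper takes $r=\inf\{x\in\R\mid x>\xi\}$ after reducing to $\xi>0$, while you take $r=\sup\{x\in\R\mid x<\xi\}$), and observe that $\xi-r$ is a nonzero infinitesimal. Your version is more explicit in the case analysis and in the verification that $\xi-r$ is infinitesimal, but the underlying idea is identical.
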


\begin{proof}
Since $\starR$ is a proper extension of the real field, we can pick
a number $\xi\in\starR\setminus\R$. Without loss of generality,
let us assume $\xi>0$. If $\xi$ is infinite, then we are done.
Otherwise, by the completeness property of $\R$, we can consider
the number $r=\inf\{x\in\R\mid x>\xi\}$. (Notice that it
may be $r<\xi$.)
It is readily checked that $\xi-r$ is a nonzero infinitesimal number.
\end{proof}

We remark that, as a non-Archimedean field\index{ordered field!non-Archimedean}, $\starR$ is \emph{not} complete
(e.g., the set of infinitesimals is bounded but has no least upper bound).
We say that two hyperreal numbers are \emph{infinitely close} if their difference is infinitesimal. The nonstandard counterpart of completeness is given by
the following property. 

\begin{theorem}[Standard Part]
Every finite hyperreal number\index{hyperreal number} $\xi\in\starR$ is 
infinitely close to a unique real number $r\in\R$,
called the \emph{standard part}\index{standard part} of $\xi$. 
In this case, we use the notation $r=\st(\xi)$.
\end{theorem}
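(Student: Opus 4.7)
The plan is to use the Dedekind completeness of $\R$ applied to the cut determined by $\xi$, and then to show uniqueness by invoking the Archimedean property of $\R$.

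First I would handle existence. Given a finite $\xi\in\starR$, consider the set
\[
A:=\{x\in\R \mid x<\xi\}\subseteq\R.
\]
Finiteness of $\xi$ gives an $n\in\N$ with $-n<\xi<n$, so $A$ is nonempty (it contains $-n$) and bounded above in $\R$ (by $n$). By the completeness of $\R$, let $r:=\sup A\in\R$. I then claim $\xi-r$ is infinitesimal. Suppose not: then $|\xi-r|\geq 1/n$ for some $n\in\N$. If $\xi-r\geq 1/n$, the real number $r+\tfrac{1}{2n}$ lies in $A$ (since $r+\tfrac{1}{2n}<\xi$), contradicting $r=\sup A$. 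If $\xi-r\leq -1/n$, then $r-\tfrac{1}{2n}$ is an upper bound of $A$ in $\R$ strictly smaller than $r$, again contradicting the supremum property. Hence $\xi$ is infinitely close to $r$.

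For uniqueness, suppose $\xi$ is infinitely close to two reals $r,s\in\R$. Then $r-s=(r-\xi)+(\xi-s)$ is a sum of two infinitesimals, hence infinitesimal by the earlier exercise. But $r-s\in\R$, and the only infinitesimal element of $\R$ is $0$, since $\R$ is Archimedean: any nonzero $|r-s|$ satisfies $|r-s|>1/n$ for some $n\in\N$. Thus $r=s$.

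The only real content is existence, and its main obstacle is making sure the supremum cut argument is carried out inside the correct field: the set $A$ must be a subset of $\R$ (not of $\starR$) so that completeness of $\R$ applies, and then the comparison $r+\tfrac{1}{2n}<\xi$ is a comparison in the larger field $\starR$. Once this bookkeeping is clear, the rest is routine order-arithmetic with infinitesimals, already tabulated in the preceding exercise.
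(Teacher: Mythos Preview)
Your proof is correct and follows essentially the same approach as the paper: both take $r=\sup\{x\in\R\mid x<\xi\}$ via completeness of $\R$, verify that $\xi-r$ is infinitesimal, and conclude uniqueness from the fact that infinitely close real numbers must coincide. You have simply spelled out the details the paper leaves implicit.
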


\begin{proof}
By the completeness of $\R$, we can set
$\st(\xi):=\inf\{x\in\R\mid x>\xi\}=\sup\{y\in\R\mid y<r\}$.
By the supremum (or infimum) property, 
it directly follows that $\st(\xi)$ is infinitely close to $\xi$.
Moreover, $\st(\xi)$ is the unique real number with that property, 
since infinitely close real numbers are necessarily equal.
\end{proof}

It follows that every finite hyperreal number\index{hyperreal number} $\xi$ has a unique representation
in the form $\xi=r+\varepsilon$ where $r=\st(\xi)\in\R$
and $\varepsilon $ is infinitesimal\index{hyperreal number!infinitesimal}.  Given finite hyperreals $\xi$ and $\zeta$, it is sometimes convenient to write $\xi\gtrsim \zeta$ to mean $\st(\xi)\geq \st(\zeta)$.

The following are the counterparts in the nonstandard
setting of the familiar properties of limits of real sequences.

\begin{exercise}
For all finite hyperreal numbers\index{hyperreal number} $\xi,\zeta$:

\begin{enumerate}
\item
$\st(\xi)<\st(\zeta)\Rightarrow\xi<\zeta\Rightarrow\st(\xi)\le\st(\zeta)$;
\item
$\st(\xi+\zeta)=\st(\xi)+\st(\zeta)$;
\item
$\st(\xi\cdot\zeta)=\st(\xi)\cdot\st(\zeta)$;
\item
$\st(\frac{\xi}{\zeta})=\frac{\st(\xi)}{\st(\zeta)}$ whenever
$\zeta$ is not infinitesimal\index{hyperreal number!infinitesimal}.
\end{enumerate}
\end{exercise}

\begin{definition}
The \emph{ring of hyperinteger numbers} $\starZ$ is an unbounded
discretely ordered subring of $\starR$ that
satisfies special properties (to be specified further on), including the following:
\begin{itemize}
\item
For every $\xi\in\starR$ there exists $\zeta\in\starZ$ 
with $\zeta\le \xi<\zeta+1$. Such a $\zeta$ is called the 
\emph{hyperinteger part} of $\xi$, denoted $\zeta=\lfloor\xi\rfloor$.
\end{itemize}
\end{definition}

Since $\starZ$ is discretely ordered, notice that its finite part
coincides with $\Z$. This means that for every $z\in\Z$
there are \emph{no} hyperintegers $\zeta\in\starZ$ such that $z<\zeta<z+1$.

\begin{definition}
The \emph{hypernatural numbers} \index{hypernatural number} $\starN$ are the positive
part of $\starZ$. Thus $\starZ=-\starN\cup\{0\}\cup\starN$, where 
$-\starN=\{-\xi\mid\xi\in\starN\}$
are the negative hyperintegers.
\end{definition}

\begin{definition}
The field of \emph{hyperrational numbers}
$\starQ$ is the quotient field of $\starZ$. Thus hyperrational numbers $\zeta\in\starQ$ can be represented
as fractions $\zeta=\frac{\xi}{\nu}$ where $\xi\in\starZ$ and $\nu\in\starN$.
\end{definition}

\begin{exercise}
The hyperrational numbers $\starQ$ are dense in $\starR$, that is,
for every pair $\xi<\xi'$ in $\starR$ there exists $\eta\in\starQ$ such that
$\xi<\eta<\xi'$.
\end{exercise}

We remark that, although still incomplete, our definitions suffice to get
a clear picture of the order-structure of the 
two main nonstandard objects that we will consider here,
namely the hypernatural numbers \index{hypernatural number}
$\starN$ and the hyperreal line\index{hyperreal number} $\starR$.
In particular, let us focus on the nonstandard natural numbers.
One possible way (but certainly not the only possible way)
to visualize them is the following:

\begin{itemize}
\item
The \emph{hypernatural numbers} $\starN$ are the extended version
of the natural numbers that is obtained by allowing the use of a 
``mental telescope" 
to also see infinite numbers beyond the finite ones.
\end{itemize}

So, beyond the usual finite numbers $\N=\{1,2,3,\ldots\}$,
one finds infinite numbers\index{hypernatural number!infinite} $\xi>n$ for all $n\in\N$.
Every $\xi\in\starN$ has a successor $\xi+1$,
and every non-zero $\xi\in\starN$ has a predecessor $\xi-1$.
$$\starN\ =\ \big\{\underbrace{1,2,3,\ldots,n,\ldots}_{\text{\small finite numbers}}
\quad \underbrace{\ldots, N-2, N-1, N, N+1, N+2, \ldots}_{\text{\small infinite numbers}}\
\big\}$$

Thus the set of finite numbers $\N$ does not have a
greatest element and the set of infinite numbers\index{hypernatural number!infinite}
$\starN\setminus\N$ does not have a least element,
whence $\starN$ is \emph{not} well-ordered. Sometimes we will write
$\nu>\N$ to mean that $\nu\in\starN$ is infinite.

\begin{exercise}
Consider the equivalence relation $\sim_f$ on $\starN$
defined by setting
$\xi\sim_f\zeta$ if $\xi-\zeta$ is finite. The corresponding equivalence classes are called \emph{galaxies}.
The quotient set $\starN/{\sim_f}$ inherits an order structure, which turns it into a dense linearly ordered set with least
element $[1]=\N$ and with no greatest element.
\end{exercise}

\section{The star map and the transfer principle\index{transfer principle}}

As we have seen in the previous section, corresponding
to each of the sets $\N, \Z, \Q, \R$, one
has a \emph{nonstandard extension}, namely the 
sets $\starN, \starZ, \starQ, \starR$, respectively.
A defining feature of nonstandard analysis
is that one has a canonical way of extending
\emph{every} mathematical object $A$ under study
to an object $\starA$ which
inherits all ``elementary'' properties
of the initial object. 

\begin{definition}\label{starmap}
The \emph{star map} is a function that associates to each
``mathematical object" $A$ under study its \emph{hyper-extension}
(or \emph{nonstandard extension}) $\starA$
in such a way that the following holds:
\begin{itemize}
\item
\emph{Transfer principle:}\ 
Let $P(A_{1},\ldots,A_{n})$ be an ``elementary property"
of the mathematical objects $A_1,\ldots,A_n$.
Then $P(A_{1},\ldots,A_{n})$ is true if and only if
$P(\starA_{1},\ldots,\starA_{n})$ is true:
$$\ \quad P(A_{1},\ldots,A_{n})\ \Longleftrightarrow\ 
P(\starA_{1},\ldots,\starA_{n}).$$
\end{itemize}
\end{definition}

One can think of hyper-extensions as
a sort of weakly isomorphic copy of the initial objects.
Indeed, by the \emph{transfer principle}\index{transfer principle},
an object $A$ and its hyper-extension $\starA$ are 
indistinguishable as far as
their ``elementary properties" are concerned.
Of course, the crucial point here
is to precisely determine which properties are ``elementary"
and which are not.

Let us remark that the above
definition is indeed incomplete in that the 
notions of ``mathematical object'' and of ``elementary property''
are still to be made precise and rigorous.
As anticipated in the introduction, we will do this gradually.

To begin with, it will be enough to include 
in our considered ``mathematical objects" the following:

\begin{enumerate}
\item
Real numbers
and tuples of real numbers;
\item
All sets $A\subseteq\R^k$ of real tuples, and
all functions $f:A\to B$ between them;
\item
All sets made up of objects in (1) and (2), including, \emph{e.g.},
the families $\F\subseteq\bigcup_k\mathcal{P}(\R^k)$ of sets
of real $k$-tuples, and the families of functions
$\G\subseteq\Fun(\R^k,\R^h)$.
\end{enumerate}

More generally,
every structure under study could be safely taken as one of
our ``mathematical objects".\footnote
{~According to the usual set-theoretic foundational framework,
every mathematical object is identified with a set
(see Remark \ref{reductionism} in the Appendix).
However, here we will stick to the common perception that
considers numbers, ordered pairs, relations, functions, and sets as
mathematical objects of distinct nature.}

As for the notion of ``elementary property'',
we will start working with a semi-formal definition. 
Although not fully rigorous from a logical point of view, 
it may nevertheless looks perfectly fine to many,
and we believe that it can be safely 
adopted to get introduced to nonstandard analysis
and to familiarize oneself with its basic notions and tools.

\begin{definition}
A property $P$ is \emph{elementary}
if it can be expressed by an \emph{elementary formula},
that is, by a formula where:
\begin{enumerate}
\item
Besides the usual logical connectives (``not'', ``and'', ``or'',
``if $\ldots$ then'', ``if and only if'')
and the quantifiers (``there exists'', ``for every'')
only the basic notions of equality, membership, set,
ordered $k$-tuple, 
$k$-ary relation, domain, range, function, value of a function 
at a given point, are involved;
\item
The scope of every quantifier is \emph{bounded}, 
that is, quantifiers always occur
in the form ``there exists $x\in X$'' or ``for every $y\in Y$''
for specified sets $X, Y$. More generally,
also nested quantifiers 
``$Q\,x_1\in x_2\ \text{and}\ Q\,x_2\in x_3\ \ldots\ \text{and}\ 
Q\,x_n\in X$'' 
are allowed, where $Q$ is either ``there exists" or ``for every",
$x_1,\ldots,x_n$ are variables, and $X$ is a specified set. 
\end{enumerate}
\end{definition}

An immediate consequence of the \emph{transfer} principle is that
all fundamental mathematical constructions are preserved 
under the star map, with the only two
relevant exceptions being \emph{powersets} and 
\emph{function sets} (see Proposition \ref{properinclusions}). 
Below we give three comprehensive lists in three distinct propositions, 
the first one about
sets and ordered tuples, the second one about relations,
and the third one about functions. Until the notion of ``elementary property" has been made 
 precise, one can take those properties
as axioms for the star map.

\begin{proposition}\label{starpropsets}
\
\begin{enumerate}
\item\label{starequality}
$a=b\Leftrightarrow{}^*a={}^*b$.
\item\label{starmembership}
$a\in A\Leftrightarrow{}^*a\in\starA$.
\item\label{starset}
$A$ is a set if and only if $\starA$ is a set.
\item
${}^*\emptyset=\emptyset$.
\end{enumerate}

If $A,A_1,\ldots,A_k,B$ are sets:
\begin{enumerate}
\setcounter{enumi}{4}
\item
$A\subseteq B\Leftrightarrow \starA\subseteq\starB$.
\item
${}^*(A\cup B)=\starA\cup\starB$.
\item
${}^*(A\cap B)=\starA\cap\starB$.
\item\label{starsetminus}
${}^*(A\setminus B)=\starA\setminus\starB$.
\item\label{startuple}
${}^*\{a_1,\ldots,a_k\}=\{{}^*a_1,\ldots,{}^*a_k\}$.
\item\label{starorderedtuple}
${}^*(a_1,\ldots,a_k)=({}^*a_1,\ldots,{}^*a_k)$.
\item
${}^*(A_1\times\ldots\times A_k)=\starA_1\times\ldots\times\starA_k$.
\item
${}^*\{(a,a)\mid a\in A\}=\{(\xi,\xi)\mid \xi\in \starA\}$.
\end{enumerate}

If $\F$ is a family of sets:
\begin{enumerate}
\setcounter{enumi}{12}
\item\label{starfamilymembership}
${}^*\{(x,y)\mid x\in y\in \F\}=\{(\xi,\zeta)\mid \xi\in\zeta\in{}^*\F\}$.
\item\label{starfamilyunion}
${}^*(\bigcup_{F\in\F}F)=\bigcup_{G\in{}^*\F}G$.
\end{enumerate}
\end{proposition}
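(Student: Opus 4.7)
The plan is to dispose of every clause by a direct appeal to the transfer principle, once each assertion has been rewritten as an elementary formula in the listed parameters. A useful heuristic: whenever the assertion is an \emph{equality} of sets, it is cleanest to transfer two inclusion-shaped formulas separately and then combine them; whenever the assertion is already a \emph{containment}, its definitional formula is of the bounded form $\forall x\in X\,(\ldots)$, so transfer applies verbatim.

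Clauses (1)--(3) are atomic and immediate, while (4) is the transfer of $\forall x\in\emptyset\,(x\ne x)$, a vacuously true, bounded statement saying that $\emptyset$ is empty. For (5), the definition $A\subseteq B\equiv\forall x\in A\,(x\in B)$ is already elementary, and transfer yields the equivalence. For (6)--(8) I would transfer the pair of sentences $\forall x\in A\cup B\,(x\in A\vee x\in B)$ together with $\forall x\in A\,(x\in A\cup B)\wedge\forall x\in B\,(x\in A\cup B)$; after starring, these force ${}^*(A\cup B)=\starA\cup\starB$, and intersections and set differences are identical in spirit.

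Items (9) and (10) concern finite (ordered) tuples. For (9), transfer $\forall x\in\{a_1,\dots,a_k\}\,(x=a_1\vee\cdots\vee x=a_k)$ together with the $k$ atomic facts $a_i\in\{a_1,\dots,a_k\}$. For (10) I would fix a Kuratowski-style encoding $(a,b):=\{\{a\},\{a,b\}\}$ and apply (6) and (9); longer tuples then follow by induction on $k$. Item (11) drops out by transferring the defining formula of a Cartesian product, expressed via the tuple notion already handled, and (12) is similar. For (13) and (14), involving a family $\F$, I would transfer the two bounded defining properties: for (13), the sentences $\forall y\in\F\,\forall x\in y\,((x,y)\in X)$ and $\forall z\in X\,\exists y\in\F\,\exists x\in y\,(z=(x,y))$, where $X$ denotes the set on the left; for (14), setting $U:=\bigcup_{F\in\F}F$, the sentences $\forall F\in\F\,\forall x\in F\,(x\in U)$ and $\forall x\in U\,\exists F\in\F\,(x\in F)$.

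The only real subtlety, and the step I expect to be the main obstacle, is making sure each formula genuinely has its quantifiers bounded by \emph{already-existing} sets. One must resist the temptation to write ``$\forall x\,\exists F\,(x\in F\in\F)$'', which is unbounded; instead, the quantifiers have to be bounded by a named object (such as $\bigcup\F$ or the set appearing on the left-hand side of the identity being proved), possibly an auxiliary one that is itself a parameter of the formula. Once every quantifier is tamed this way, each clause becomes a one-line application of transfer.
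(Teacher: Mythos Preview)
Your plan is correct and matches the paper's approach: every clause is reduced to a bounded (elementary) formula in the listed parameters, and then transfer is invoked. The paper carries this out explicitly for items (\ref{starsetminus}), (\ref{startuple}), and (\ref{starfamilyunion}) as representative examples, with formulas essentially identical to yours; the only stylistic difference is that the paper packages each identity as a single elementary sentence ``$C=\text{(construction in }A,B,\ldots)$'' with $C$ itself a parameter, rather than splitting into two inclusions, and it treats ordered tuples as a primitive elementary notion (so (\ref{starorderedtuple}) is immediate) rather than reducing to Kuratowski pairs via (\ref{startuple}).
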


\begin{proof}
Recall that by our definition, the notions of equality, membership,
set, and ordered $k$-tuple are elementary;
thus by direct applications of \emph{transfer} one obtains 
(\ref{starequality}), (\ref{starmembership}), (\ref{starset}),
and (\ref{starorderedtuple}), respectively.
All other properties are easily proved by considering
suitable elementary formulas. 
As examples, we will consider here only three of them.

(\ref{starsetminus}). The property ``$C=A\setminus B$"
is elementary, because it is formalized by the
elementary formula: $$\text{``}\forall x\in C\ (x\in A\ \text{or}\ x\notin B)\ \text{and}\ 
\forall x\in A\ (x\notin B\Rightarrow x\in C)\text{''}.$$
So, by \emph{transfer}, we have that $C=A\setminus B$ holds if and only if
$$\text{``}\forall x\in {}^{\ast}C\ (x\in \starA\ \text{or}\ x\notin {}^{\ast}B)\ \text{and}\ 
\forall x\in \starA\ (x\notin {}^{\ast}B\Rightarrow x\in {}^{\ast}C)\text{''},$$
that is, if and only if $\starC=\starA\setminus\starB$.

(\ref{startuple})
The property ``$C=\{a_1,\ldots,a_k\}$" is formalized by the elementary
formula: ``$a_1\in C\ \text{and}\ \ldots$
$\ldots\ \text{and}\ a_k\in C\ \text{and}\ 
\forall x\in C\ (x=a_1\ \text{or}\ \ldots\ \text{or}\ x=a_k)$''. So, we can
apply \emph{transfer} and obtain that ${}^*C=\{{}^*a_1,\ldots,{}^*a_k\}$.

(\ref{starfamilyunion}). The property ``$A=\bigcup_{F\in\F}F$'' is formalized by the
elementary formula: ``$\forall x\in A\ (\exists y\in\F\ \text{with}\ x\in y)\ \text{and}\ 
\forall y\in\F\ \forall x\in y\ (x\in A)$.''
Then by \emph{transfer} one gets ``$\starA=\bigcup_{y\in{}^*\F}y$."
\end{proof}

\begin{proposition}\label{starproprelations}
\

\begin{enumerate}
\item\label{starrelation}
$R$ is a $k$-ary relation if and only if ${}^*R$ is a $k$-ary relation.
\end{enumerate}

If $R$ is a binary relation:
\begin{enumerate}
\setcounter{enumi}{1}
\item\label{stardomain}
${}^*\{a\mid \exists b\ R(a,b)\}=\{\xi\mid \exists \zeta\ {}^*R(\xi,\zeta)\}$,
that is, ${}^*\text{domain}(R)=\text{domain}({}^*R)$.
\item\label{starrange}
${}^*\{b\mid \exists a\ R(a,b)\}=\{\zeta\mid \exists \xi\ {}^*R(\xi,\zeta)\}$,
that is, ${}^*\text{range}(R)=\text{range}({}^*R)$.
\item\label{starbinaryexchange}
${}^*\{(a,b)\mid R(b,a)\}=\{(\xi,\zeta)\mid {}^*R(\zeta,\xi)\}$.
\end{enumerate}

If $S$ is a ternary relation:
\begin{enumerate}
\setcounter{enumi}{4}
\item
${}^*\{(a,b,c)\mid S(c,a,b)\}=\{(\xi,\zeta,\eta)\mid {}^*S(\xi,\eta,\zeta)\}$.
\item
${}^*\{(a,b,c)\mid S(a,c,b)\}=\{(\xi,\zeta,\eta)\mid {}^*S(\xi,\eta,\zeta)\}$.
\end{enumerate}
\end{proposition}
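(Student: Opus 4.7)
The plan is to mirror the proof of Proposition \ref{starpropsets}: for each clause, exhibit an elementary formula characterizing the set or relation in question, then invoke the transfer principle. Throughout, whenever a relation $R$ appears one may fix ambient sets $X, Y$ (or $X, Y, Z$) with $R \subseteq X \times Y$; such sets are available because any relation in our sense is a set of ordered tuples drawn from some product, and their hyperextensions behave as expected by the already proved items of Proposition \ref{starpropsets}.

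For (1), the statement ``$R$ is a $k$-ary relation'' unfolds as ``$R$ is a set and every element of $R$ is an ordered $k$-tuple,'' with the sole quantifier bounded by $R$; this is elementary, so transfer immediately gives the equivalence. For (2), writing $R \subseteq X \times Y$, the property ``$A = \mathrm{dom}(R)$'' is captured by the elementary formula
$$\bigl(\forall a \in A\ \exists b \in Y\ R(a,b)\bigr)\ \text{and}\ \bigl(\forall a \in X\ \forall b \in Y\ (R(a,b) \Rightarrow a \in A)\bigr).$$
Since ${}^*R \subseteq {}^*X \times {}^*Y$ by item (11) of Proposition \ref{starpropsets}, transferring this formula applied to $A = \mathrm{dom}(R)$ says exactly that ${}^*\mathrm{dom}(R) = \mathrm{dom}({}^*R)$. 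Part (3) is obtained by swapping the roles of $X$ and $Y$.

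For (4)--(6), the defining property of the permuted relation is again elementary once the quantifiers are properly bounded. For instance, ``$C = \{(a,b) : R(b,a)\}$'' (with $R \subseteq X \times Y$) is captured by
$$\bigl(\forall p \in C\ \exists a \in Y\ \exists b \in X\ (p = (a,b)\ \text{and}\ R(b,a))\bigr)\ \text{and}\ \bigl(\forall a \in Y\ \forall b \in X\ (R(b,a) \Rightarrow (a,b) \in C)\bigr),$$
and transfer yields the equality. The ternary cases (5) and (6) go through identically: one writes out the prescribed permutation of coordinates inside the analogous two-conjunct elementary formula, with quantifiers bounded by the ambient sets containing the coordinates of $S$, and transfers.

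The only genuine ``obstacle'' is bookkeeping, namely verifying that every quantifier is properly bounded and that each bounding set has a hyperextension with the expected behavior. Both are routine given Proposition \ref{starpropsets}, so beyond the transfer principle no new idea is needed.
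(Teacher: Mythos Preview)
Your proof is correct and follows essentially the same approach as the paper: exhibit an elementary formula for each clause and apply transfer. The only cosmetic difference is that the paper bounds the quantifiers by $\text{domain}(R)$ and $\text{range}(R)$ (which it has declared to be elementary primitives) rather than by auxiliary ambient sets $X,Y$, so items (1)--(3) go through in one line there without needing to unpack anything.
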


\begin{proof}
(\ref{starrelation}), (\ref{stardomain}), and (\ref{starrange})
are proved by direct applications of \emph{transfer},
because the notions of $k$-ary relation, domain, and range
are elementary by definition.

(\ref{starbinaryexchange}). The property ``$C=\{(a,b)\mid R(b,a)\}$'' is formalized by the
the conjunction of the elementary formula
 ``$\forall z\in C\ \exists x\in\text{domain}(R)\ 
\exists y\in\text{range}(R)\ \text{s.t.}\ R(x,y)\ \text{and}\ z=(y,x)$''
and the elementary
formula ``$\forall x\in\text{domain}(R)\ \forall y\in\text{range}(R)\ (y,x)\in C$".
Thus \emph{transfer}\index{transfer principle} applies and one obtains 
$\starC=\{(\xi,\zeta)\mid(\zeta,\xi)\in{}^{\ast} R\}$.

(5) and (6) are proved by considering similar elementary formulas as in 
(\ref{starbinaryexchange}).
\end{proof}

\begin{proposition}\label{starpropfunctions}
\

\begin{enumerate}
\item\label{starfunc}
$f$ is a function if and only if $\starf$ is a function.
\end{enumerate}

If $f,g$ are functions and $A,B$ are sets:
\begin{enumerate}
\setcounter{enumi}{1}
\item\label{starfdomain}
${}^*\text{domain}(f)=\text{domain}(\starf)$.
\item\label{starfrange}
${}^*\text{range}(f)=\text{range}(\starf)$.
\item\label{starfAB}
$f:A\to B$ if and only if $\starf:\starA\to\starB$.\footnote
{~Recall that notation $f:A\to B$ means that $f$ is a function
with $\text{domain}(f)=A$ and $\text{range}(f)\subseteq B$.}
\item\label{stargraph}
${}^*\text{graph}(f)=\text{graph}(\starf)$.
\item\label{starvalue}
${}^*(f(a)=(\starf)({}^*a)$ for every $a\in\text{domain}(f)$.
\item 
If $f:A\to A$ is the identity, then $\starf:\starA\to\starA$ 
is the identity, that is ${}^*(1_A)=1_{{}^*\!A}$.
\item
${}^*\{f(\alpha )\mid a\in A\}=\{\starf(\xi)\mid\xi\in\starA\}$,
that is ${}^*(f(A))=\starf(\starA)$.
\item
${}^*\{a\mid f(\alpha )\in B\}=\{\xi\mid\starf(\xi)\in\starB\}$,
that is ${}^*(f^{-1}(B))=(\starf)^{-1}(\starB)$.
\item\label{starcomposition}
${}^*(f\circ g)=\starf\circ\starg$.
\item
${}^*\{(a,b)\in A\times B\mid f(a)=g(b)\}=
\{(\xi,\zeta)\in\starA\times\starB\mid \starf(\xi)=\starg(\zeta)\}$.
\end{enumerate}
\end{proposition}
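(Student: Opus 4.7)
The plan is to proceed exactly as in the proofs of Propositions \ref{starpropsets} and \ref{starproprelations}: for each clause, exhibit an elementary formula (in the semi-formal sense already introduced) that expresses the property in question, and then invoke the transfer principle. No new ideas are required beyond the bookkeeping of writing each property in bounded-quantifier form using only the permitted primitive notions.

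Clauses (\ref{starfunc}), (\ref{starfdomain}), (\ref{starfrange}), and (\ref{starvalue}) are immediate: the notions ``$f$ is a function'', ``domain'', ``range'', and ``value of a function at a point'' were explicitly listed as primitive elementary notions in the definition of elementary formula, so transfer applies to ``$f$ is a function'', ``$A = \mathrm{domain}(f)$'', ``$A = \mathrm{range}(f)$'', and ``$b = f(a)$'' directly. In particular (\ref{starvalue}) yields the basic identity $({}^*f)({}^*a) = {}^*(f(a))$ for $a \in \mathrm{domain}(f)$, which I would then use freely in the remaining clauses.

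For each remaining clause, I would just write down the formalization. For (\ref{starfAB}), ``$f : A \to B$'' unfolds as ``$\mathrm{domain}(f) = A$ and $\forall x \in A\ (f(x) \in B)$''. For (\ref{stargraph}), ``$G = \mathrm{graph}(f)$'' unfolds as ``$\forall x \in \mathrm{domain}(f)\ ((x, f(x)) \in G)$ and $\forall z \in G\ \exists x \in \mathrm{domain}(f)\ (z = (x, f(x)))$''. For (7), the property ``$f$ is the identity on $A$'' unfolds as ``$\mathrm{domain}(f) = A$ and $\forall x \in A\ (f(x) = x)$'', from which transfer and clause (\ref{starfAB}) together give that ${}^*f$ is the identity on ${}^*A$. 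Clauses (8) and (9) are treated in the same spirit as the analogous clauses of Proposition \ref{starpropsets}: ``$C = \{f(a) : a \in A\}$'' formalizes as ``$\forall x \in C\ \exists a \in A\ (f(a) = x)$ and $\forall a \in A\ (f(a) \in C)$'', and similarly for the preimage. For (\ref{starcomposition}), ``$h = f \circ g$'' is elementary, since it unfolds as ``$\mathrm{domain}(h) = \{x \in \mathrm{domain}(g) : g(x) \in \mathrm{domain}(f)\}$ and $\forall x \in \mathrm{domain}(h)\ (h(x) = f(g(x)))$''. Finally, clause (11) defining the equalizer set is defined by an elementary formula with parameters $A, B, f, g$, so transfer produces the corresponding equalizer for ${}^*f$ and ${}^*g$.

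The only real ``obstacle'' is essentially hygienic rather than mathematical: one must verify, case by case, that (a) every quantifier appears in the bounded form ``$\forall x \in X$'' or ``$\exists x \in X$'' with $X$ a specific (already named) set, and (b) no notion outside the admitted primitive list is used. For example, in (\ref{starcomposition}) one must be careful to bound the quantifier defining the domain of $h$ by $\mathrm{domain}(g)$, rather than by some unspecified universe, and similarly in (\ref{stargraph}) to bound the existential quantifier by $\mathrm{domain}(f)$. Once each property is written in this disciplined form, the verification of elementarity is mechanical and the transfer principle finishes the argument.
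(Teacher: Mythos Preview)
Your proposal is correct and follows essentially the same approach as the paper: both treat (\ref{starfunc}), (\ref{starfdomain}), (\ref{starfrange}), (\ref{starvalue}) as immediate from the primitive elementary notions, and handle the remaining clauses by exhibiting explicit bounded-quantifier formulas and invoking transfer. The only differences are cosmetic (e.g., the paper's formalizations of $\text{graph}(f)$ and $f\circ g$ use slightly different but equivalent elementary formulas), and the paper only writes out (\ref{stargraph}) and (\ref{starcomposition}) in detail, leaving the rest as similar.
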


\begin{proof}
(\ref{starfunc}), (\ref{starfdomain}), (\ref{starfrange}),
and (\ref{starvalue}) are proved by direct applications of \emph{transfer},
because the notions of function, value of a function at a given point,
domain, and range, are elementary. (\ref{starfAB}) is a direct
corollary of the previous properties.
We only prove two of the remaining properties as all of the proofs are similar to one another.

(\ref{stargraph}). The property ``$C=\text{graph}(f)$" is formalized
by the elementary formula obtained as the conjunction of the formula
``$\forall z\in C\ \exists x\in\text{domain}(f)\ \exists y\in\text{range}(f)\
\text{such that }\ y=f(x)\ \text{and}\ (x,y)\in C$''
with the formula ``$\forall x\in\text{domain}(f)\ \forall y\in\text{range}(f)\ 
(y=f(x)\Rightarrow (x,y)\in C)$''. The desired equality
follows by \emph{transfer} and by the previous properties.

(\ref{starcomposition}).
If $f:A\to B$ and $g:B\to C$, 
then the property ``$h=g\circ f$'' is formalized by the formula
``$h:A\to C\ \text{and}\ \forall x\in A\ \forall y\in C\ 
(h(x)=y\Leftrightarrow \exists z\in B\ f(x)=z\ \text{and}\ g(z)=y)$''.
\end{proof}

\begin{exercise}
Prove that a function $f:A\to B$ is 1-1 if and only if $\starf:\starA\to\starB$ is 1-1.
\end{exercise}

We now discuss a general result about the star map 
that is really useful in practice (and, in fact, several 
particular cases have already been included
in the previous propositions):
If a set is defined by means of an elementary property,
then its hyper-extension is defined by the same property 
where one puts stars in front of the parameters. 

\begin{proposition}\label{applyingstar}
Let $\varphi(x,y_1,\ldots,y_n)$ be an elementary formula.
For all objects $B,A_1,\ldots,A_n$ one has
$${}^*\{x\in B\mid \varphi(x,A_1,\ldots,A_n)\}\ =\ 
\{x\in\starB\mid \varphi(x,\starA_1,\ldots,\starA_n)\}.$$
\end{proposition}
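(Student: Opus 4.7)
The plan is to name the defining set $C := \{x \in B \mid \varphi(x, A_1, \ldots, A_n)\}$ and then write down a single elementary formula $\psi(C, B, A_1, \ldots, A_n)$ that asserts precisely that $C$ coincides with this comprehension; applying transfer to $\psi$ will immediately identify ${}^*C$ with the right-hand side.

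Concretely, I would take $\psi(C, B, A_1, \ldots, A_n)$ to be the conjunction
\[
\bigl(\forall x \in C:\ x\in B \ \text{and}\ \varphi(x, A_1, \ldots, A_n)\bigr)\ \text{and}\ \bigl(\forall x\in B:\ \varphi(x, A_1, \ldots, A_n) \Rightarrow x\in C\bigr).
\]
I first check that $\psi$ is itself an elementary formula in the sense of the paper's definition: the quantifiers on $x$ are bounded by the specified sets $C$ and $B$, the only logical gluing is by connectives allowed in the definition, and $\varphi$ is elementary by hypothesis, so substituting $\varphi$ into the above produces an elementary formula whose free parameters are exactly $C, B, A_1, \ldots, A_n$. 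By construction, $\psi(C, B, A_1, \ldots, A_n)$ is true.

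Next I apply the transfer principle of Definition~\ref{starmap} to $\psi$. This gives that $\psi({}^*C, {}^*B, {}^*A_1, \ldots, {}^*A_n)$ is true, which unpacks to
\[
\bigl(\forall x \in {}^*C:\ x\in {}^*B \ \text{and}\ \varphi(x, {}^*A_1, \ldots, {}^*A_n)\bigr)\ \text{and}\ \bigl(\forall x\in {}^*B:\ \varphi(x, {}^*A_1, \ldots, {}^*A_n) \Rightarrow x\in {}^*C\bigr).
\]
The two halves of this conjunction are exactly the two set-theoretic inclusions that express ${}^*C = \{x \in {}^*B \mid \varphi(x, {}^*A_1, \ldots, {}^*A_n)\}$.

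The only real subtlety, and hence the main thing to be careful about, is verifying that $\psi$ genuinely satisfies the elementary-formula requirements after $\varphi$ is substituted in. In particular, the quantifiers we add must stay bounded (they are, by $C$ and $B$), and the free variables of the inner $\varphi$ must match the parameters we eventually star (they do, since the $A_i$ are precisely the parameters replaced by ${}^*A_i$ under transfer). Once this is in hand, the result is a direct application of transfer, with no further computation required.
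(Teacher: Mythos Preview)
Your proof is correct and essentially identical to the paper's: both name $C=\{x\in B\mid\varphi(x,A_1,\ldots,A_n)\}$, express ``$C$ equals this comprehension'' as the conjunction of the two bounded-quantifier formulas you wrote down, observe this is elementary, and apply transfer. The paper presents the property first as an unbounded biconditional and then unpacks it into exactly your conjunction, but the argument is the same.
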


\begin{proof}
Let us denote by $C=\{x\in B\mid \varphi(x,A_1,\ldots,A_n)\}$.
Then the following property holds:
$$P(A_1,\ldots,A_n,B,C):\ 
\forall x\left(x\in C \Leftrightarrow
(x\in B\ \text{and}\ \varphi(x,A_1,\ldots,A_n)\right).$$

The above formula is elementary. Indeed, it is an abbreviation for
the conjuction of the  two formulas:
``$\forall x\in C\ (x\in B\ \text{and}\ \varphi(x,A_1,\ldots,A_n)$"
and ``$\forall x\in B\ (\varphi(x,A_1,\ldots,A_n)\Rightarrow x\in C)$'',
where all quantifiers are bounded, and where $\varphi$ is elementary
by hypothesis. Then we can apply \emph{transfer} and obtain the validity of
$P(\starA_1,\ldots,\starA_n,\starB,\starC)$, that is
$\starC=\{x\in\starB\mid\varphi(x,\starA_1,\ldots,\starA_n)\}$.
\end{proof}

An immediate corollary is the following.

\begin{proposition}
If $(a,b)=\{x\in\R\mid a<x<b\}$ is an open interval
of real numbers then ${}^*(a,b)=\{\xi\in\starR\mid a<\xi<b\}$,
and similarly for intervals of the form
$[a,b)$, $(a,b]$, $(a,b)$, $(-\infty,b]$ and $[a,+\infty)$.
Analogous properties hold for
intervals of natural, integer, or rational numbers.
\end{proposition}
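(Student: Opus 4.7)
My plan is to derive this as a direct corollary of Proposition \ref{applyingstar}. The open interval $(a,b) = \{x \in \R \mid a < x < b\}$ is already in the exact form considered by that proposition, with base set $B = \R$, parameters $A_1 = a$ and $A_2 = b$, and defining formula $\varphi(x, y_1, y_2)$ given by ``$y_1 < x$ and $x < y_2$''.

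First I would verify that $\varphi$ is elementary. The only ingredients appearing in it are the binary relation $<$ on $\R$ (itself one of our mathematical objects, a binary relation on $\R$ to which Proposition \ref{starproprelations} applies), the logical connective ``and'', and the free variables. No quantifiers appear at all, so the bounded-quantifier condition is vacuous. Then I would apply Proposition \ref{applyingstar} to conclude
\[
{}^*(a,b) = \{\xi \in \starR \mid {}^*a < \xi < {}^*b\},
\]
where, as usual, the hyper-extension of the order relation on $\R$ is still written $<$ (it is a binary relation on $\starR$ whose restriction to $\R$ agrees with the original order, by Proposition \ref{starproprelations}). Identifying each real number with its hyper-extension, so that ${}^*a = a$ and ${}^*b = b$ (which is implicit in the very form of the statement being proved), delivers the stated equality.

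The remaining interval types require only cosmetic changes to $\varphi$. For $[a,b)$, $(a,b]$, and $[a,b]$, replace the appropriate occurrences of $<$ by $\leq$; for $(-\infty, b]$ and $[a,+\infty)$, use the single atomic formulas ``$x \leq b$'' and ``$a \leq x$'', respectively. Each such $\varphi$ is elementary for the same reason, and Proposition \ref{applyingstar} yields the analogous equality. For intervals of naturals, integers, or rationals, one replaces the ambient set $\R$ in the argument by $\N$, $\Z$, or $\Q$; transfer then automatically produces $\starN$, $\starZ$, or $\starQ$ on the right, again with the order relation preserved.

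I do not anticipate any genuine obstacle here: the statement is essentially just Proposition \ref{applyingstar} unpacked for the particular formula expressing membership in an interval. The only conceptual point that must be handled carefully is that the order $<$ must be treated as one of the ``mathematical objects'' to which the star map applies, but this is already built into the framework since $<$ is a binary relation on $\R$.
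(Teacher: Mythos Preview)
Your proposal is correct and matches the paper's approach exactly: the paper presents this proposition as an immediate corollary of Proposition \ref{applyingstar} without even writing out a proof, and your argument simply unpacks that corollary in the obvious way.
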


\subsection{Additional assumptions}

By property Proposition \ref{starpropsets} (1) and (2), 
the hyper-extension $\starA$ of a set $A$
contains a copy of $A$ given by the hyper-extensions
of its elements 
$${}^\sigma A\ =\ \{{}^*a\mid a\in A\}\ \subseteq\starA.$$

Notice that, by \emph{transfer},
an hyper-extension ${}^*x$ belongs to $\starA$ if and only if $x\in A$.
In consequence, $\starA\cap{}^\sigma B={}^\sigma(A\cap B)$
for all sets $A,B$.

Following the common use in nonstandard analysis, to
simplify matters we will assume that ${}^*r=r$ for all $r\in\R$. This implies by transfer that ${}^*(r_1,\ldots,r_k)=(r_1,\ldots,r_k)$ for all ordered tuples of real numbers, i.e.\ ${}^\sigma(\R^k)=\R^k$. It follows that
hyper-extensions of real sets and functions are actual 
extensions:

\begin{itemize}
\item
$A\subseteq\starA$ for every $A\subseteq\R^k$,
\item
If $f:A\to B$ where $A\subseteq\R^k$ and $B\subseteq\R^h$,
then $\starf$  is an extension of $f$,
that is, $\starf(a)=f(a)$ for every $a\in A$.
\end{itemize}

In nonstandard analysis it is always 
assumed that the star map satisfies the following

\begin{itemize}
\item
\emph{Properness condition:} $\starN\ne\N$.
\end{itemize}

\begin{proposition}
If the properness condition $\starN\ne\N$ holds then
${}^\sigma A\ne\starA$ for every infinite $A$.
\end{proposition}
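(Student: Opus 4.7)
The plan is to reduce to the case $A = \N$ by transferring an injection $f : \N \to A$: the properness condition produces a witness in $\starN \setminus \N$, and the transferred injection ${}^*f$ should carry it to a witness in $\starA \setminus {}^\sigma A$.

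More precisely, since $A$ is infinite I would fix an injection $f : \N \to A$ and set $B := f(\N)$, so that $f$ is a bijection $\N \to B \subseteq A$. The exercise just above this proposition guarantees that ${}^*f : \starN \to \starB \subseteq \starA$ is injective. The standing convention ${}^*r = r$ for real $r$ ensures $\N \subseteq \starN$, so the properness condition $\starN \ne \N$ supplies some $\nu \in \starN \setminus \N$. I would then set $\xi := {}^*f(\nu) \in \starA$ and aim to show $\xi \notin {}^\sigma A$, which would finish the proof.

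For this final step, I would argue by contradiction: suppose $\xi = {}^*a$ for some $a \in A$. Since $\xi \in \starB$, property (\ref{starmembership}) of Proposition \ref{starpropsets} forces $a \in B$, so $a = f(n)$ for some $n \in \N$. Invoking property (\ref{starvalue}) of Proposition \ref{starpropfunctions} together with the convention ${}^*n = n$,
$${}^*a \;=\; {}^*(f(n)) \;=\; ({}^*f)({}^*n) \;=\; ({}^*f)(n),$$
so ${}^*f(\nu) = {}^*f(n)$, and injectivity of ${}^*f$ gives $\nu = n \in \N$, contradicting $\nu \in \starN \setminus \N$. The delicate point is this last chain, where membership preservation, the transfer identity for function evaluation, and the convention ${}^*n = n$ must be combined to reduce the question to injectivity of ${}^*f$; everything else is direct bookkeeping with the star map axioms.
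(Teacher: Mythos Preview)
Your proof is correct, but it takes the dual route to the paper's. You fix an injection $f:\N\to A$, transfer it to an injection ${}^*f:\starN\to\starA$, and push the witness $\nu\in\starN\setminus\N$ forward to an explicit element $\xi={}^*f(\nu)\in\starA\setminus{}^\sigma A$. The paper instead picks a \emph{surjection} $f:A\to\N$, transfers it to a surjection ${}^*f:\starA\to\starN$, and argues the contrapositive: if ${}^\sigma A=\starA$, then
\[
\starN=\{{}^*f(\alpha)\mid\alpha\in\starA\}=\{{}^*f({}^*a)\mid a\in A\}=\{{}^*(f(a))\mid a\in A\}=\{{}^*n\mid n\in\N\}=\N,
\]
contradicting properness. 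Both arguments rely on the same transfer ingredients (preservation of injectivity/surjectivity, the identity ${}^*(f(a))=({}^*f)({}^*a)$, and the convention ${}^*n=n$). Your version has the minor advantage of exhibiting a concrete witness in $\starA\setminus{}^\sigma A$; the paper's version is a touch shorter since the chain of equalities does all the work at once without a separate contradiction step.
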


\begin{proof}
Given an infinite set $A$, pick a surjective map $f:A\to\N$. Then also the hyper-extension 
$\starf:\starA\to\starN$ is surjective, and 
$$\starN\ =\ \{\starf(\alpha)\mid \alpha\in\starA\}\ =\ 
\{\starf({}^*a)\mid a\in A\}\ =\ 
\{{}^*(f(a))\mid a\in A\}\ =\ \{{}^*n\mid n\in\N\}\ =\ \N.$$
\end{proof}

As a first consequence of the properness condition, one gets
a nonstandard characterization of finite sets
as those sets that are not ``extended'' by hyper-extensions.

\begin{proposition}\label{finitedoesntenlarge}
For every set $A$ one has the equivalence:
``$A$ is finite if and only if $\starA={}^\sigma A$".
(When $A\subseteq\R^k$, this is the same as ``$A$ is finite if
and only if $\starA=A$''.)
\end{proposition}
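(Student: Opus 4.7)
The plan is to handle the two directions of the biconditional separately, noting that one of them has essentially already been done in the preceding proposition.

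For the forward direction, assume $A$ is finite, say $A = \{a_1,\ldots,a_k\}$. Then I would apply item (\ref{startuple}) of Proposition \ref{starpropsets}, which gives exactly ${}^*\{a_1,\ldots,a_k\} = \{{}^*a_1,\ldots,{}^*a_k\}$. The right-hand side is, by definition, ${}^\sigma A$, so $\starA = {}^\sigma A$.

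For the reverse direction, I would argue the contrapositive: if $A$ is infinite, then $\starA \neq {}^\sigma A$. But this is precisely the content of Proposition \ref{finitedoesntenlarge}'s predecessor (the one stating that under the properness condition $\starN \neq \N$, we have ${}^\sigma A \neq \starA$ for every infinite $A$), so it suffices to cite that result. No further work is needed in this step; the key input, a surjection $A \twoheadrightarrow \N$ pushed through the star map, has already been exploited.

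Finally, for the parenthetical remark, suppose $A \subseteq \R^k$. Under the convention ${}^*r = r$ for every real $r$, item (\ref{starorderedtuple}) of Proposition \ref{starpropsets} applied to tuples of reals gives ${}^*(r_1,\ldots,r_k) = (r_1,\ldots,r_k)$, so ${}^\sigma A = \{{}^*a \mid a \in A\} = \{a \mid a \in A\} = A$. Thus for subsets of $\R^k$, the equality $\starA = {}^\sigma A$ reduces to $\starA = A$, proving the parenthetical.

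There is no real obstacle here: the nontrivial content (the infinite case) has been packaged in the preceding proposition, and the finite case is an immediate instance of a listed property of the star map. The proof is essentially a two-line bookkeeping argument plus an observation for the $\R^k$ case.
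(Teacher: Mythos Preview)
Your proposal is correct and matches the paper's approach almost exactly. The only difference is cosmetic: for the infinite case you cite the preceding proposition (${}^\sigma A \neq \starA$ for every infinite $A$), whereas the paper re-runs that same surjection-onto-$\N$ argument inline; your version is slightly more economical, and the paper does not explicitly address the parenthetical $\R^k$ remark in its proof as you do.
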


\begin{proof}
If $A=\{a_1,\ldots,a_k\}$ is finite, we already saw
in Proposition \ref{starpropsets} (\ref{startuple})
that $\starA=\{{}^*a_1,\ldots,{}^*a_k\}=\{{}^*a\mid a\in A\}$.
Conversely, if $A$ is infinite, we can pick a 
surjective function $f:A\to\N$. Then also $\starf:\starA\to\starN$
is onto. Now notice that for every $a\in A$, one has
that $(\starf)({}^*a)={}^*(f(a))\in\N$ 
(recall that ${}^*n=n$ for every $n\in\N$).
Then if $\xi\in\starN\setminus\N$ there
exists $\alpha\in\starA\setminus\{{}^*a\mid a\in A\}$ with $\starf(\alpha)=\xi$.
\end{proof}

One can safely extend the simplifying assumption ${}^*r=r$ from real numbers $r$
to elements of any given mathematical object $X$ under study
(but of course not for all mathematical objects).

\begin{itemize}
\item
\emph{Unless explicitly mentioned otherwise,
when studying a specific mathematical object $X$ by nonstandard analysis,
we will assume that ${}^*x=x$ for all $x\in X$, so that 
$X={}^\sigma X\subseteq\starX$.}
\end{itemize}

It is worth mentioning at this point that 
star maps satisfying the \emph{transfer principle}\index{transfer principle} and the \emph{properness condition} do actually exist. Indeed, they can be easily constructed 
by means of ultrafilters, or, equivalently,\index{ultrafilter}
by means of maximal ideals of rings of functions (see Section \ref{sec-ultrapower}).

We end this section with an example of using properness to give a short proof of a classical fact. (This nonstandard proof was suggested by D.A. Ross.)

\begin{theorem}[Sierpinski]
Given $a_1,\ldots,a_n,b\in \mathbb{R}^{>0}$, the set $$E:=\left\{(x_1,\ldots,x_n)\in \N^n \ : \ \frac{a_1}{x_1}+\cdots +\frac{a_n}{x_n}=b\right\}$$  is finite. 
\end{theorem}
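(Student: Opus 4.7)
The plan is to apply Proposition \ref{finitedoesntenlarge}: since $E \subseteq \N^n \subseteq \R^n$ and the convention is ${}^*r = r$ for real tuples, we have ${}^\sigma E = E$, so $E$ is finite if and only if $\starE = E$. Thus it suffices to prove that every tuple in $\starE$ actually lies in $E$, i.e., has all coordinates standard.

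To describe $\starE$, I would apply Proposition \ref{applyingstar} to the defining formula; since $a_1, \ldots, a_n, b \in \R$ are fixed parameters whose stars are themselves, one obtains
\[
\starE = \left\{(x_1,\ldots,x_n) \in \starN^n \ : \ \frac{a_1}{x_1} + \cdots + \frac{a_n}{x_n} = b\right\}.
\]
Fix such a tuple $(x_1, \ldots, x_n)$ and partition the indices as $F := \{i : x_i \in \N\}$ and $I := \{i : x_i \in \starN \setminus \N\}$. For each $i \in I$ the hypernatural $x_i$ is infinite, so $a_i/x_i$ is a positive infinitesimal; hence $\tau := \sum_{i \in I} a_i/x_i$ is either $0$ (when $I = \emptyset$) or a strictly positive infinitesimal. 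On the other hand $T := \sum_{i \in F} a_i/x_i$ is a standard nonnegative real, being a finite sum of standard positives $a_i$ divided by standard positive integers $x_i$.

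The equation then reads $T + \tau = b$, so $\tau = b - T$ is a standard real. A standard real that is infinitesimal must be $0$, hence $\tau = 0$. But the positivity hypothesis $a_i > 0$ makes every summand of $\tau$ strictly positive, so the only way for $\tau$ to vanish is $I = \emptyset$. Thus every $x_i$ is in $\N$, meaning $(x_1, \ldots, x_n) \in E$. This shows $\starE = E$, and Proposition \ref{finitedoesntenlarge} gives that $E$ is finite. No serious obstacle is anticipated here; the only delicate point is the sign argument, ensuring that the infinitesimal contributions from infinite coordinates cannot cancel among themselves, which is exactly where the positivity assumption on the $a_i$ is essential.
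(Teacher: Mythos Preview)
Your proof is correct and follows essentially the same approach as the paper's: both split the coordinates into finite and infinite ones and observe that the infinitesimal contribution from the infinite coordinates must equal the standard real $b - T$, forcing that contribution to be zero and hence the set of infinite indices to be empty. The paper phrases this as a contradiction (assuming $E$ infinite yields a point in $\starE \setminus E$), whereas you argue directly that $\starE = E$, but the mathematical content is the same.
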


\begin{proof}
Suppose, towards a contradiction, that $E$ is infinite.  Then there is $x=(x_1,\ldots,x_n)\in {}^{\ast}E\setminus E$.  Without loss of generality, we may assume that there is $k\in \{1,\ldots,n\}$ such that $x_1,\ldots,x_{k}\in \starN \setminus \N$ and $x_{k+1},\ldots,x_n\in \N$.  We then have
$$\frac{a_1}{x_1}+\cdots +\frac{a_k}{x_k}=b-\left(\frac{a_{k+1}}{x_{k+1}}+\cdots+\frac{a_n}{x_n}\right).$$  We have now arrived at a contradiction for the left hand side of the equation is a positive infinitesimal element of ${}^{\ast}\mathbb{R}$ while the right hand side of the equation is a positive standard real number.
\end{proof}

\section{The transfer principle\index{transfer principle}, in practice}

As we already pointed out, a correct application of \emph{transfer}
needs a precise understanding of the notion of elementary property.
Basically, a property is elementary if it talks about the elements of 
some given structures and \emph{not} about their subsets
or the functions between them.\footnote{
~In logic, properties that talks about elements
of a given structure are called \emph{first-order properties};
properties about subsets of the given structure 
are called \emph{second-order};
properties about subsets or relations on the given structure 
are called \emph{third-order}; and so forth.}
Indeed, in order to correctly apply the \emph{transfer principle}\index{transfer principle}, 
one must always point out the range of quantifiers, and
formulate them in the forms ``$\,\forall\,x\in X\ldots$'' and 
``$\,\exists\,y\in Y\ldots$'' for suitable specified sets $X,Y$.
With respect to this, the following remark is particularly relevant.

\begin{remark}
Before applying \emph{transfer}, all quantifications on subsets
``$\forall\,x\subseteq X\ldots$'' or
``$\exists\,x\subseteq X\ldots$''
must be reformulated as ``$\,\forall\,x\in\Pow(X)\ldots$'' and
``$\,\exists\,x\in\Pow(X)\ldots$'', respectively, where
$\Pow(X)=\{A\mid A\subseteq X\}$ is the \emph{powerset} of $X$.
Similarly, all quantifications on functions $f:A\to B$
must be bounded by $\Fun(A,B)$, the \emph{set of functions} from $A$ to $B$.
We stress that these instructions are pivotal because in general 
${}^*\Pow(X)\ne\Pow(\starX)$ and ${}^*\Fun(A,B)\ne\Fun(\starA,\starB)$,
as we will show in Proposition \ref{properinclusions}.
\end{remark}

\begin{example}
Consider the property: ``$<$ is a linear ordering on the set $A$''.
Notice first that $<$ is a binary relation on $A$, and hence its hyper-extension
${}^*\!\!\!<$ is a binary relation on $\starA$.
By definition, $<$ is a linear ordering if and only if
the following are satisfied:
\begin{enumerate}
\item[(a)]
$\forall x\in A\ (x\not< x)$,
\item[(b)]
$\forall x,y,z\in A\ (x<y \text{ and } y<z)
\Rightarrow x<z$,
\item[(c)]
$\forall x,y\in A\ (x<y \text{ or } y<x \text{ or } x=y)$.
\end{enumerate}

Notice that the three formulas above are elementary.
Then we can apply \emph{transfer} and conclude
that: ``${}^*\!\!\!<$ is a linear ordering on $\starA$."
\end{example}

Whenever confusion is unlikely, some asterisks
will be omitted. So, for instance, we will write 
$+$ to denote both the sum operation on
$\N$, $\Z$, $\Q$ and $\R$, and the corresponding 
operations on $\starN$, $\starZ$, $\starQ$ and $\starR$, respectively,
as given by the hyper-extension ${}^*+$. 

Similarly as in the example above, it is readily verified that
the properties of a discretely ordered ring, as well as the 
properties of a real-closed ordered field, are elementary because they 
just talk about the \emph{elements} of the given structures. 
Thus, by a direct application of \emph{transfer},
one obtain the following results, which generalize the properties presented in Section 
\S \ref{sec-warmingup}.

\begin{theorem}
\
\begin{enumerate}
\item
$\starR$, endowed with the hyper-extensions of the sum,
product, and order on $\R$, is a real-closed ordered field.\footnote
{~Recall that an ordered field is \emph{real closed}
if every positive element is a square, and every polynomial of 
odd degree has a root.}
\item
$\starZ$ is an unbounded discretely ordered subring of $\starR$,
whose positive part is $\starN$.
\item
The ordered subfield $\starQ\subset\starR$ 
is the quotient field of $\starZ$.
\end{enumerate}
\end{theorem}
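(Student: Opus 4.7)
The plan is to verify each clause by writing down the defining axioms as elementary formulas and invoking the transfer principle, together with Proposition~\ref{applyingstar} when needed to identify hyper-extensions of sets defined by elementary predicates.

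For clause (1), I would first note that the field and ordered-field axioms (commutativity, associativity, distributivity, existence of additive and multiplicative inverses, transitivity of $<$, compatibility of $<$ with $+$ and $\cdot$) can all be stated as bounded-quantifier formulas over $\R$, so transfer immediately gives that $(\starR,{}^*+,{}^*\cdot,{}^*<)$ satisfies the ordered-field axioms. For real-closedness, the ``every positive element is a square'' axiom is the elementary sentence $\forall x\in\R\;(x>0\Rightarrow\exists y\in\R\;y\cdot y=x)$, which transfers directly. The ``odd-degree polynomials have roots'' axiom is not a single elementary statement, but for each fixed odd $n\in\N$ it becomes the elementary formula
\[
\forall (a_0,\ldots,a_n)\in\R^{n+1}\Bigl(a_n\neq 0\Rightarrow \exists x\in\R\; \sum_{i=0}^{n}a_i x^{i}=0\Bigr),
\]
with the sum interpreted via an elementary definition using ordered tuples. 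Transfer applied to each such formula (one per odd $n\in\N$) yields the corresponding statement for $\starR$, and together these witness that $\starR$ is real-closed in the usual sense. The subtle point here, and the main thing to flag, is that real-closedness is a \emph{scheme} of elementary axioms rather than a single sentence; quantifying over all degrees simultaneously would require quantifying over hyperfinite tuples after transfer, so one must handle each standard odd $n$ separately.

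For clause (2), the statement ``$\Z$ is a subring of $\R$'' is encoded by the elementary sentence $\forall a,b\in\Z\;(a+b\in\Z\;\text{and}\;a\cdot b\in\Z\;\text{and}\;{-a}\in\Z)$, so transfer gives that $\starZ$ is closed under the hyper-extensions of $+$ and $\cdot$, hence a subring of $\starR$. Discreteness is expressed by $\forall a\in\Z\;\neg\exists b\in\Z\;(a<b<a+1)$, which transfers verbatim. Unboundedness of $\Z$ in $\R$ — $\forall x\in\R\;\exists a\in\Z\;a>x$ — likewise transfers. Finally, $\starN$ being the positive part of $\starZ$ follows from applying Proposition~\ref{applyingstar} to the defining equation $\N=\{x\in\Z\mid x>0\}$, together with the convention $\starN\cap\N=\N$.

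For clause (3), ``$\Q$ is an ordered subfield of $\R$'' is a conjunction of elementary sentences and transfers to show $\starQ$ is an ordered subfield of $\starR$. That $\Q$ is the quotient field of $\Z$ is captured by the elementary sentence $\forall x\in\Q\;\exists p\in\Z\;\exists q\in\Z\;(q\neq 0\;\text{and}\;q\cdot x=p)$, which transfers to the same statement with stars, i.e., every hyperrational is a ratio of hyperintegers. Combined with Proposition~\ref{applyingstar} applied to the defining description of $\Q$, this gives that $\starQ$ is the quotient field of $\starZ$ inside $\starR$. Throughout, I would also appeal to Proposition~\ref{starpropsets} items (2) and (5) to justify identifying $\starN\subset\starZ\subset\starQ\subset\starR$ in the expected way.
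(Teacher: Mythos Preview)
Your proposal is correct and follows exactly the approach the paper indicates: the paper simply remarks that the axioms of a real-closed ordered field and of a discretely ordered ring ``are elementary because they just talk about the \emph{elements} of the given structures,'' and then states the theorem as an immediate consequence of transfer, without further detail. Your write-up is a faithful expansion of this sketch; the one point you make explicit that the paper glosses over is that real-closedness is a \emph{scheme} of elementary sentences (one per odd degree) rather than a single sentence, which is exactly right and worth flagging.
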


Again by direct applications of \emph{transfer},
one also obtains the following properties.

\begin{proposition}
\

\begin{enumerate}
\item
Every non-zero $\nu\in\starN$ has a successor
$\nu+1$ and a predecessor $\nu-1$.\footnote{
~An element $\eta$ is the \emph{successor} of $\xi$ 
(or $\xi$ is the \emph{predecessor} of $\eta$) if $\xi<\eta$
and there are no elements $\zeta$ with $\xi<\zeta<\eta$.}
\item
For every positive $\xi\in\starR$ there exists a unique
$\nu\in\starN$ with $\nu\leq\xi<\nu+1$. As a result,
$\starN$ is unbounded in $\starR$.
\item
The hyperrational numbers $\starQ$, as well as the
hyperirrational numbers ${}^*(\R\setminus\Q)=\starR\setminus\starQ$,
are dense in $\starR$.
\end{enumerate}
\end{proposition}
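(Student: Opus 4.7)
The plan is to prove all three parts by direct application of the transfer principle, since each statement is (or can easily be rephrased as) an elementary sentence whose standard counterpart is well known. In every case the main task is just to write the assertion in a form where all quantifiers are bounded by specified sets such as $\N$, $\Z$, $\Q$, or $\R$, so that the transfer machinery developed earlier in the chapter applies directly.

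For (1), I would first recall the definition: $\eta$ is the successor of $\xi$ iff $\xi < \eta$ and there is no element strictly between them. The assertion \emph{``every non-zero $n \in \N$ has $n+1$ as successor and $n-1$ as predecessor''} then becomes the elementary formula
$$\forall n\in\N\,\Big(n\neq 0 \Rightarrow \big(n<n+1 \wedge \neg\exists k\in\N\,(n<k<n+1)\big)\wedge\big(n-1<n \wedge \neg\exists k\in\N\,(n-1<k<n)\big)\Big),$$
with the understanding that the hyper-extensions of the $+1$ and $-1$ maps supply the successor/predecessor operations on $\starN$. Transfer yields the statement about $\starN$.

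For (2), existence of $\nu$ is essentially built into the definition of $\starZ$: the integer-part axiom already gives a (unique) $\lfloor \xi\rfloor\in\starZ$ with $\lfloor\xi\rfloor\le\xi<\lfloor\xi\rfloor+1$. I would combine this with transfer of the elementary sentence $\forall x\in\R\,(x>0\Rightarrow\exists!\,n\in\N_0\,(n\le x<n+1))$ (glossing over the minor mismatch between $\N$ and $\N_0$ in the paper's convention) to obtain both existence and uniqueness in $\starR$. The unboundedness of $\starN$ in $\starR$ then follows immediately: for any $\xi\in\starR$, take $\nu:=\lfloor|\xi|\rfloor+1\in\starN$, which strictly exceeds $\xi$.

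For (3), I would transfer the familiar density formula
$$\forall x,y\in\R\,(x<y \Rightarrow \exists q\in\Q\,(x<q<y))$$
to obtain density of $\starQ$ in $\starR$. For the hyperirrationals I would first invoke Proposition \ref{starpropsets}(\ref{starsetminus}) to identify ${}^{*}(\R\setminus\Q)=\starR\setminus\starQ$, and then transfer the analogous density statement for the irrationals in $\R$. There is no genuine obstacle here; the only thing to be careful about is that all quantifiers are written in bounded form over sets that have already been starred, so that the transfer principle in the sense of Definition \ref{starmap} applies cleanly.
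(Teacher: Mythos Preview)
Your proposal is correct and matches the paper's approach exactly: the paper does not give a detailed proof but simply states that the proposition follows ``by direct applications of \emph{transfer}'', and you have carried out precisely those applications by writing each assertion as a bounded-quantifier formula about $\N$, $\Q$, $\R$, or $\R\setminus\Q$ and invoking transfer. Your only embellishments (using the integer-part axiom already built into the definition of $\starZ$, and invoking Proposition~\ref{starpropsets}(\ref{starsetminus}) for the identification ${}^*(\R\setminus\Q)=\starR\setminus\starQ$) are entirely in keeping with the paper's intended level of detail.
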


\begin{proposition}\label{initialsegment}
$(\N,\le)$ is an initial segment of $(\starN,\le)$,
that is, if $\nu\in\starN\setminus\N$, then $\nu>n$ for all $n\in\N$,
\end{proposition}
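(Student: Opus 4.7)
The plan is to use the transfer principle applied to a well-chosen elementary property about the initial segment $\{1,\ldots,n\}$ of $\N$, combined with the simplification ${}^{\ast}n=n$ and the fact from Proposition \ref{finitedoesntenlarge} that finite sets are not enlarged by the star map.

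First I would fix an arbitrary $n\in\N$ and consider the elementary formula
\[
\varphi_n:\quad \forall x\in\N\ \bigl(x\le n\ \Longrightarrow\ x=1\ \text{or}\ x=2\ \text{or}\ \cdots\ \text{or}\ x=n\bigr).
\]
This is genuinely an elementary property in the sense of the preceding definition: the quantifier is bounded by $\N$, the only relations involved are $\le$ and $=$, and the parameter $n$ is a specific element. The statement $\varphi_n$ is obviously true in $\N$, so by the transfer principle (together with the convention that ${}^{\ast}k=k$ for $k=1,\ldots,n$) it holds in $\starN$:
\[
\forall x\in\starN\ \bigl(x\le n\ \Longrightarrow\ x=1\ \text{or}\ \cdots\ \text{or}\ x=n\bigr).
\]
Equivalently, invoking Proposition \ref{finitedoesntenlarge}, one can note that ${}^{\ast}\{1,\ldots,n\}=\{1,\ldots,n\}$ and that the elementary formula ``$\{x\in\N:x\le n\}=\{1,\ldots,n\}$'' transfers to ``$\{x\in\starN:x\le n\}=\{1,\ldots,n\}$''.

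From this the conclusion is immediate by contraposition: if $\nu\in\starN$ satisfies $\nu\le n$, then $\nu\in\{1,\ldots,n\}\subseteq\N$, so any $\nu\in\starN\setminus\N$ must satisfy $\nu>n$. Since $n\in\N$ was arbitrary, $\nu>n$ for every $n\in\N$, as required.

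There is really no substantial obstacle here; the only thing to be careful about is verifying that $\varphi_n$ (or equivalently the defining property of $\{1,\ldots,n\}$) is truly an elementary formula in the sense of the definition — the quantifier is bounded, the disjunction is finite (since $n$ is a fixed standard natural number, not a variable), and all parameters are specific mathematical objects whose hyper-extensions coincide with themselves by the running convention. Once this is checked, transfer does all the work in one step.
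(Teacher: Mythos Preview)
Your proof is correct and takes essentially the same approach as the paper: the paper transfers the elementary formula ``$\forall x\in\N\ (x\ne 1\ \text{and}\ \ldots\ \text{and}\ x\ne n)\Rightarrow x>n$'', which is literally the contrapositive of your $\varphi_n$. Your additional remarks about Proposition~\ref{finitedoesntenlarge} and the care in checking elementarity are fine but not strictly needed, since the paper simply applies transfer directly to the (contrapositive) formula.
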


\begin{proof}
For every $n\in\N$, by \emph{transfer} one obtains the validity
of the following elementary formula: ``$\forall x\in\starN\ (x\ne 1\ \text{and}\ \ldots$
$\text{and}\ x\ne n)\Rightarrow x>n$'', and hence the proposition holds.
\end{proof}

To get a clearer picture of the situation, examples
of \emph{non}-elementary properties that are not
preserved under hyper-extensions, are now in order.

\begin{example}\label{ex-wellordering}
The property of well-ordering (that is, every nonempty subset 
has a least element) and of completeness of an ordered set are \emph{not} elementary. Indeed, they concern the \emph{subsets} of the given ordered set. 
Notice that these properties are not preserved by hyper-extensions. In fact, $\N$ is well-ordered but $\starN$ is not 
(\emph{e.g.}, the set of infinite
hyper-natural numbers has no least element). The real line $\R$ is complete but
$\starR$ is not (\emph{e.g.}, the set of infinitesimal
numbers is bounded with no least upper bound).
\end{example}

\begin{remark}\label{rmk-wellordering}
\emph{Transfer} applies also to the 
the well-ordering property of $\N$, provided one formalizes it as:
``\emph{Every nonempty element of $\Pow(\N)$
has a least element}''.
(The property ``$X$ has a least element''
is elementary: ``there exists $x\in X$ such that
for every $y\in X$, $x\le y$.'')
In this way, one gets:
``\emph{Every nonempty element of ${}^*\Pow(\N)$ has a least
element}''. The crucial point here is that ${}^*\Pow(\N)$ is a 
only a proper subfamily of $\Pow(\starN)$
(see Proposition \ref{properinclusions} below).
So, the well-ordering property is \emph{not} an elementary
property of $\N$, but it is an elementary property of 
$\Pow(\N)$. Much the same observations can be made
about the completeness property.
Indeed, virtually all properties of mathematical objects
can be formalized by elementary formulas,
provided one uses the appropriate parameters.
\end{remark}

A much more slippery example of a \emph{non}-elementary property
is the following.

\begin{example}
The Archimedean property of an ordered field $\mathbb{F}$
is \emph{not} elementary. Notice that
to formulate it, one needs to also consider the 
substructure $\N\subset\mathbb{F}$:
\begin{center}
\emph{``For all positive $x\in\mathbb{F}$ there exists $n\in\N$ 
such that $nx>1$.''}
\end{center}
While the above is an elementary property
of the pair $(\mathbb{F},\N)$ since it talks about the
elements of $\mathbb{F}$ and $\N$ combined, 
it is \emph{not} an elementary property of the 
ordered field $\mathbb{F}$ alone. 
In regard to this, we remark that the following expression:
\begin{center}
\emph{``For all positive $x\in\mathbb{F}$ it is $x>1$ or 
$2x>1$ or $3x>1$ or \ $\ldots$\  or $nx>1$ or \ $\ldots$.''}
\end{center}
is \emph{not} a formula, because it would consist
in an infinitely long string of symbols if written in full.
Notice that the Archimedean property is not preserved by hyper-extensions.
For instance, $\R$ is Archimedean, but
the hyperreal line \index{hyperreal number}$\starR$ is not, being an ordered
field that properly extends $\R$ (see  Proposition \ref{nonArchimedean}).

Similarly, the properties of being infinitesimal, finite,
or infinite are \emph{not} elementary properties of elements
in a given ordered field
$\mathbb{F}$, because to formulate them one needs
to also consider the substructure $\N\subset\mathbb{F}$ as a parameter.
\end{example}

\section{The ultrapower model}\label{sec-ultrapower}

It is now time to justify what we have seen in the previous sections
and show that star maps that satisfy the \emph{transfer} principle
do actually exist.
Many researchers using nonstandard methods, 
especially those who do not have
a strong background in logic,  
feel more comfortable in directly working with a model.
However we remark that this is not necessary.
Rather, it is worth stressing that all one needs in practice is a good understanding of the \emph{transfer}
principle and its use, whereas the underlying construction
of a specific star map does not play a crucial role.\footnote
{~There are a few exceptions to this statement, but we will never
see them in the combinatorial applications presented in this book.}
The situation is similar to what happens when 
working in real analysis: what really matters is that $\mathbb{R}$ is a complete Arichmedean ordered field, along with the fact that such a field actually exist; whereas
the specific construction of the real line (\emph{e.g.}, by means
of Dedekind cuts or by a suitable quotient of the set of Cauchy sequences)
is irrelevant when developing the theory.

\subsection{The ultrapower construction}\label{ultrapower}
The ultrapower\index{ultrapower} construction relies on ultrafilters\index{ultrafilter}
and so, to begin with, let us fix an ultrafilter $\U$
on a set of indices $I$. For simplicity, in the following we will focus on ultrapowers of $\mathbb{R}$. However, the same
construction can be carried out by starting with any 
mathematical structure.

\begin{definition}
The \emph{ultrapower} of $\R$ modulo the ultrafilter $\U$,\index{ultrafilter}
denoted $\R^I/\U$, is the quotient of the family of real $I$-sequences 
$\R^I=\Fun(I,\R)=\{\sigma\mid \sigma:I\to\R\}$ 
modulo the equivalence relation $\equiv_\U$ defined by setting:
$$\sigma\equiv_\U\tau\ \Leftrightarrow\
\{i\in I\mid\sigma(i)=\tau(i)\}\in\U.$$
\end{definition}

Notice that the properties of being a filter on $\U$ guarantee that
$\equiv_\U$ is actually an equivalence relation.  Equivalence classes are denoted by using square brackets:
$[\sigma]=\{\tau\in\Fun(I,\R)\mid\tau\equiv_\U\sigma\}$.
The pointwise sum and product operations on the ring $\Fun(I,\R)$
are inherited by the ultrapower\index{ultrapower}. Indeed,
it is easily verified that the following definitions are well-posed:
$$[\sigma]\boldsymbol{+}[\tau]=[\sigma+\tau]\quad\text{and}\quad
[\sigma]\boldsymbol{\cdot}[\tau]=[\sigma\cdot\tau].$$

The order relation $\boldsymbol{<}$ on the ultrapower\index{ultrapower} is defined by putting:
$$[\sigma]\boldsymbol{<}[\tau]\ \Leftrightarrow\ 
\{i\in I\mid \sigma(i)<\tau(i)\}\in\U.$$

\begin{proposition}
The ultrapower\index{ultrapower} $(\R^I/\U, \boldsymbol{+},\boldsymbol{\cdot},
\boldsymbol{<},\mathbf{0},\mathbf{1})$
is an ordered field.
\end{proposition}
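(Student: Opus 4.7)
The plan is to verify each axiom of an ordered field in turn, leveraging the three properties of the ultrafilter $\U$ (closure under finite intersections, upward closure, and the partition property from Exercise \ref{Exercise:part}) to push the corresponding properties from the pointwise structure on $\R^I$ to the quotient.

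First I would check that $\boldsymbol{+}$, $\boldsymbol{\cdot}$, and $\boldsymbol{<}$ are well-defined on equivalence classes. For the operations, if $\sigma\equiv_\U\sigma'$ and $\tau\equiv_\U\tau'$ then the sets $E_1=\{i:\sigma(i)=\sigma'(i)\}$ and $E_2=\{i:\tau(i)=\tau'(i)\}$ lie in $\U$, and on $E_1\cap E_2\in\U$ the sum (resp.\ product) of the representatives agree; closure under finite intersection plus upward closure gives $\sigma+\tau\equiv_\U\sigma'+\tau'$ and $\sigma\cdot\tau\equiv_\U\sigma'\cdot\tau'$. A parallel argument, using that $\{i:\sigma(i)<\tau(i)\}$ and $\{i:\sigma'(i)<\tau'(i)\}$ differ only on a set disjoint from $E_1\cap E_2$, shows $\boldsymbol{<}$ is well-defined.

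Next, the commutative ring axioms (associativity, commutativity, distributivity, the additive identity $\mathbf{0}=[i\mapsto 0]$, the multiplicative identity $\mathbf{1}=[i\mapsto 1]$, and additive inverses $-[\sigma]=[-\sigma]$) transfer from $\R$ to $\R^I$ pointwise, and then descend to $\R^I/\U$ because the defining identities hold on all of $I$, hence certainly on a set in $\U$. The only nontrivial field axiom is the existence of multiplicative inverses for nonzero classes: if $[\sigma]\neq\mathbf{0}$, then $Z:=\{i:\sigma(i)=0\}\notin\U$, so by the ultrafilter property $I\setminus Z\in\U$. Define
\[
\tau(i):=\begin{cases}1/\sigma(i)&\text{if }i\notin Z,\\ 0&\text{if }i\in Z.\end{cases}
\]
Then $\sigma(i)\tau(i)=1$ on $I\setminus Z\in\U$, so $[\sigma]\boldsymbol{\cdot}[\tau]=\mathbf{1}$.

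For the order, I would verify trichotomy and transitivity using the partition property of ultrafilters (Exercise \ref{Exercise:part}): the sets $A_<=\{i:\sigma(i)<\tau(i)\}$, $A_==\{i:\sigma(i)=\tau(i)\}$, $A_>=\{i:\sigma(i)>\tau(i)\}$ form a finite partition of $I$, so exactly one lies in $\U$, giving trichotomy. Transitivity follows because if $A=\{i:\sigma(i)<\tau(i)\}\in\U$ and $B=\{i:\tau(i)<\rho(i)\}\in\U$, then $A\cap B\in\U$ and $\sigma(i)<\rho(i)$ there. Compatibility with the field structure is equally direct: $\{i:\sigma(i)<\tau(i)\}\subseteq\{i:\sigma(i)+\rho(i)<\tau(i)+\rho(i)\}$, and if additionally $[\rho]\boldsymbol{>}\mathbf{0}$ then on the intersection (which lies in $\U$) we also have $\sigma(i)\rho(i)<\tau(i)\rho(i)$, so upward closure finishes the argument.

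The main obstacle—and the one place where being a mere filter would not suffice—is the existence of multiplicative inverses together with trichotomy of the order; both rely essentially on the ultrafilter dichotomy ``$A\in\U$ or $I\setminus A\in\U$.'' Everything else is a bookkeeping exercise propagating pointwise identities through the quotient via the filter axioms.
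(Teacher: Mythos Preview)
Your proof is correct and follows essentially the same approach as the paper: the paper's proof simply says the axioms are verified directly from the ultrafilter properties, then works out the same two illustrative cases you single out as the crucial ones---trichotomy via the three-way partition $\{i:\sigma(i)<\tau(i)\}$, $\{i:\sigma(i)=\tau(i)\}$, $\{i:\sigma(i)>\tau(i)\}$, and multiplicative inverses via the complement $\{i:\sigma(i)\neq 0\}\in\U$. Your write-up is more thorough (you also spell out well-definedness, the ring axioms, transitivity, and order-compatibility), but the substance is identical.
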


\begin{proof}
All properties of an ordered field are directly proved by using the properties of an ultrafilter.
For example, to prove that $\boldsymbol{<}$ is a total
ordering, one considers the partition
$I=I_1\cup I_2\cup I_3$ where $I_1=\{i\in I\mid \sigma(i)<\tau(i)\}$,
$I_2=\{i\in I\mid\sigma(i)=\tau(i)\}$ and $I_3=\{i\in I\mid \sigma(i)>\tau(i)\}$:
exactly one out of the three sets belongs to $\U$,
and hence exactly one out of 
$[\sigma]<[\tau]$, $[\sigma]=[\tau]$, or $[\sigma]>[\tau]$ holds.
As another example, let us show that every
$[\sigma]\neq\boldsymbol{0}$ has a multiplicative inverse.
By assumption, $A=\{i\in I\mid\sigma(i)=0\}\notin\mathcal{U}$, 
and so the complement $A^c=\{i\in I\mid\sigma(i)\neq 0\}\in \mathcal{U}$. 
Now pick any $I$-sequence $\tau$ such that $\tau(i)=1/{\sigma(i)}$
whenever $i\in A^c$. Then
$A^c\subseteq \{i\in I\mid \sigma(i)\cdot\tau(i)=1\}\in\U$,
and hence $[\sigma]\cdot[\tau]=\boldsymbol{1}$.
\end{proof}

There is a canonical way of embedding $\R$ into its ultrapower\index{ultrapower}.

\begin{definition}
The \emph{diagonal embedding} $d:\R\to\R^I/\U$
is the function that associates to every real number $r$ the equivalence
class of the corresponding constant $I$-sequence $[c_r]$.
\end{definition}

It is readily seen that $d$ is a 1-1 map that preserves
sums, products and the order relation.
As a result, without loss of generality,
we can identify every $r\in\R$ with its diagonal
image $d(r)=[c_r]$, and assume that $\R\subseteq\R^I/\U$ is an
ordered subfield.

Notice that if $\U=\U_j$ is principal then the corresponding
ultrapower\index{ultrapower} $\R^I/\U_j=\R$ is trivial. Indeed, in this
case one has $\sigma\equiv_{\U_j}\tau\Leftrightarrow\sigma(j)=\tau(j)$. Thus, every sequence is equivalent to the constant
$I$-sequence with value $\sigma(j)$, and the diagonal
embedding $d:\R\to\R^I/\U_j$ is onto. 

\begin{remark}\label{rem-isohyperreals}
Under the \emph{Continuum Hypothesis},
one can show that for every pair $\U,\V$ of non-principal ultrafilters on $\N$,
the hyperreal numbers given by the corresponding
ultrapower models $\R^\N/\U\cong\R^\N/\V$ are
isomorphic as ordered fields.\footnote{
~This is because, under the Continuum Hypothesis, they are
$\aleph_1$-saturated models of cardinality $\aleph_1$ in a finite language.}
\end{remark}

\subsection{Hyper-extensions in the ultrapower model}

In this section we will see how the ultrapower\index{ultrapower} $\R^I/\U$ can be
made a model of the hyperreal numbers\index{hyperreal number} of nonstandard analysis.
Let us start by denoting 
$$\starR\ =\ \R^I/\U.$$

We now have to show that the ordered field $\starR$ 
has all the special features that make it a set
of hyperreal numbers\index{hyperreal number}. To this end, we will define a 
\emph{star map} on the family of all sets of ordered tuples 
of real numbers and of all real functions, in such a way 
that the \emph{transfer} principle holds.  

\begin{definition}
Let $A\subseteq\R$. 
Then its \emph{hyper-extension} $\starA\subseteq\starR$ is defined as
the family of all equivalence classes of $I$-sequences that take
values in $A$, that is:
$$\starA\ =\ A^I/\U\ =\ \left\{[\sigma]\mid\sigma:I\to A\right\}\ \subseteq\ \starR.$$

Similarly, if $A\subseteq\R^k$ is a set of real $k$-tuples, then
its \emph{hyper-extension} is defined as
$$\starA\ =\ \left\{([\sigma_1],\ldots,[\sigma_k])\mid
(\sigma_1,\ldots,\sigma_k):I\to A\right\}\ \subseteq\ \starR^k$$
where we denoted 
$(\sigma_1,\ldots,\sigma_k):i\mapsto(\sigma_1(i),\ldots,\sigma_k(i))$.
\end{definition}

Notice that, by the properties of ultrafilter, for every
$\sigma_1,\ldots,\sigma_k,\tau_1,\ldots,\tau_k:I\to\R$, one has
$$\{i\in I\mid (\sigma_1(i),\ldots,\sigma_k(i))=
(\tau_1(i),\ldots,\tau_k(i))\}\in\U\ \Longleftrightarrow\
\sigma_s\equiv_\U\tau_s\ \text{for every }s=1,\ldots,k.$$
In consequence, the above definition is well-posed,
and one has that $([\sigma_1],\ldots,[\sigma_n])\in\starA\Leftrightarrow
\{i\mid(\sigma_1,\ldots,\sigma_n)\in A\}\in\U$.

We also define the star map on real ordered tuples by setting
$${}^*(r_1,\ldots,r_k)=(r_1,\ldots,r_k).$$
Recall that we identified every $r\in\R$ with the 
equivalence class $[c_r]$ of the corresponding constant sequence and
so, by letting ${}^*r=r=[c_r]$,
we have that $A\subseteq\starA$ for every $A\subseteq\R^k$.

We have already seen that $\starR$ is
an ordered field that extends the real line.
As a result, every rational function $f:\R\to\R$ is 
naturally extended to a function $\starf:\starR\to\starR$.
However, here we are interested in extending \emph{all}
real functions $f:A\to B$ where $A$ and $B$ are set of real tuples,
to functions $\starf:\starA\to\starB$. With ultrapowers,
this can be done in a natural way.

\begin{definition}
Let $f:A\to B$ where $A,B\subseteq\R$. Then the
\emph{hyper-extension} of $f$ is the function $\starf:\starA\to\starB$
defined by setting $\starf([\sigma])=[f\circ\sigma]$ for every
$\sigma:I\to A$.

\medskip
$$\begin{picture}(80,40)(0,0)
      \put(0,40){\makebox(0,0){$A$}}
      \put(80,40){\makebox(0,0){$B$}}
      \put(40,0){\makebox(0,0){$I$}}
      \put(40,48){\makebox(0,0){$f$}}
      \put(18,16){\makebox(0,0){$\sigma$}}
      \put(72,16){\makebox(0,0){$f\circ\sigma$}}
      \put(14,40){\vector(1,0){54}}
      \put(35,9){\vector(-1,1){26}}
      \put(48,9){\vector(1,1){24}}
\end{picture}$$

\smallskip
If $f:A\to B$ is a function of several variables where
$A\subseteq\R^k$ and $B\subseteq\R$, 
then $\starf:\starA\to\starB$ is defined by setting
for every $\sigma_1,\ldots,\sigma_k:I\to\R$:
$$\starf([\sigma_1],\ldots,[\sigma_k])\ =\ 
[\langle f(\sigma_1(i),\ldots,\sigma_k(i))\mid i\in I\rangle].$$



\end{definition}

Similarly as for hyper-extensions of sets of tuples,
it is routine to check that the properties of a filter
guarantee that the above definition is well-posed.

Let us now see that the ultrapower\index{ultrapower} model has all the desired properties.

\begin{theorem}
The hyper-extensions of real ordered tuples, sets of ordered real tuples and
real functions, as defined above, 
satisfy all the properties itemized in Propositions 
\ref{starpropsets}, \ref{starproprelations},
and \ref{starpropfunctions}:\footnote
{~Since hyper-extensions of families of sets have 
not been defined, properties 
(\ref{starfamilymembership}) and (\ref{starfamilyunion})
of Proposition \ref{starpropsets} are not included in the list. 
Clearly, (4), (5), (6), (7), (10) only applies when
$A,B,A_i$ are sets.}

\begin{enumerate}
\item
$a=b\Leftrightarrow{}^*a={}^*b$.
\item
$A$ is a set if and only if $\starA$ is a set.
\item
${}^*\emptyset=\emptyset$.
\item
$A\subseteq B\Leftrightarrow \starA\subseteq\starB$.
\item
${}^*(A\cup B)=\starA\cup\starB$.
\item
${}^*(A\cap B)=\starA\cap\starB$.
\item
${}^*(A\setminus B)=\starA\setminus\starB$.
\item
${}^*\{a_1,\ldots,a_k\}=\{a_1,\ldots,a_k\}$.
\item
${}^*(a_1,\ldots,a_k)=(a_1,\ldots,a_k)$.
\item
${}^*(A_1\times\ldots\times A_k)=\starA_1\times\ldots\times\starA_k$.
\item
${}^*\{(a,a)\mid a\in A\}=\{(\xi,\xi)\mid \xi\in A\}$.
\item
$R$ is a $k$-ary relation if and only if ${}^*R$ is a $k$-ary relation.
\item
${}^*\{a\mid \exists b\ R(a,b)\}=\{\xi\mid \exists \zeta\ {}^*R(\xi,\zeta)\}$,
that is, ${}^*\text{domain}(R)=\text{domain}({}^*R)$.
\item
${}^*\{b\mid \exists a\ R(a,b)\}=\{\zeta\mid \exists \xi\ {}^*R(\xi,\zeta)\}$,
that is, ${}^*\text{range}(R)=\text{range}({}^*R)$.
\item
${}^*\{(a,b)\mid R(b,a)\}=\{(\xi,\zeta)\mid {}^*R(\zeta,\xi)\}$.
\item
${}^*\{(a,b,c)\mid S(c,a,b)\}=\{(\xi,\zeta,\eta)\mid {}^*S(\xi,\eta,\zeta)\}$.
\item
${}^*\{(a,b,c)\mid S(a,c,b)\}=\{(\xi,\zeta,\eta)\mid {}^*S(\xi,\eta,\zeta)\}$.
\item\label{starfunction}
$f$ is a function if and only if $\starf$ is a function.
\item
${}^*\text{domain}(f)=\text{domain}(\starf)$.
\item
${}^*\text{range}(f)=\text{range}(\starf)$.
\item
$f:A\to B$ if and only if $\starf:\starA\to\starB$.
\item
${}^*\text{graph}(f)=\text{graph}(\starf)$.
\item
$(\starf)(a)=f(a)$ for every $a\in\text{domain}(f)$.
\item 
If $f:A\to A$ is the identity, then $\starf:\starA\to\starA$ 
is the identity, that is ${}^*(1_A)=1_{{}^*\!A}$.
\item
${}^*\{f(a)\mid a\in A\}=\{\starf(\xi)\mid\xi\in\starA\}$,
that is ${}^*(f(A))=\starf(\starA)$.
\item
${}^*\{a\mid f(a)\in B\}=\{\xi\mid\starf(\xi)\in\starB\}$,
that is ${}^*(f^{-1}(B))=(\starf)^{-1}(\starB)$.
\item
${}^*(f\circ g)=\starf\circ\starg$.
\item
${}^*\{(a,b)\in A\times B\mid f(a)=g(b)\}=
\{(\xi,\zeta)\in\starA\times\starB\mid \starf(\xi)=\starg(\zeta)\}$.
\end{enumerate}
\end{theorem}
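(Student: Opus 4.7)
The proof strategy is to verify each of the 28 items by direct computation from the ultrapower definitions, using three recurring techniques: (a) the well-definedness observation that $[\sigma] = [\tau]$ iff $\{i \in I : \sigma(i) = \tau(i)\} \in \U$, (b) the filter properties (closure under intersection and supersets, so that finitely many ``$\U$-large'' conditions can be conjoined), and (c) the ultrafilter dichotomy $A \in \U \Leftrightarrow I \setminus A \notin \U$. I would first prove a master lemma: for any real relation $R \subseteq \R^k$, one has $([\sigma_1],\ldots,[\sigma_k]) \in \starR$ iff $\{i : (\sigma_1(i),\ldots,\sigma_k(i)) \in R\} \in \U$. This is immediate from the definition, but having it stated once makes most later verifications a one-line unpacking.

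With the master lemma in hand, the set-theoretic items (1)--(11) are routine. For instance, (5) $A \subseteq B \Rightarrow \starA \subseteq \starB$ follows from filter-supersets, and the converse from the diagonal embedding. Item (6) (unions) uses $\{i : \sigma(i) \in A \cup B\} = \{i : \sigma(i) \in A\} \cup \{i : \sigma(i) \in B\}$; the forward direction here is where the \emph{ultrafilter} (not just filter) property is essential, since one needs that a union in $\U$ forces one summand to be in $\U$. Item (7) (intersections) uses closure under finite intersections, and (8) (set difference) uses the dichotomy property. Items (9)--(11) on finite sets, ordered tuples, and Cartesian products follow from the convention ${}^*(r_1,\ldots,r_k) = (r_1,\ldots,r_k)$ combined with the master lemma.

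The relational items (12)--(17) are direct consequences of the master lemma applied to the relations defining the domain, range, and coordinate permutations. For the function items (18)--(28), the key is to verify (18): $\starf$ as defined by $\starf([\sigma]) = [f \circ \sigma]$ is well-defined (if $\sigma \equiv_\U \tau$, then $f \circ \sigma \equiv_\U f \circ \tau$ on the same $\U$-set) and indeed a function on $\starA$. Once this is established, the remaining function properties (19)--(28) follow by applying the master lemma to the relevant defining relations: for example, (27) on composition uses $(f \circ g) \circ \sigma = f \circ (g \circ \sigma)$ pointwise in $i$, and (23) ($\starf$ extends $f$) uses $\starf([c_a]) = [f \circ c_a] = [c_{f(a)}] = f(a)$.

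The main obstacle, such as it is, will not be any single item but rather maintaining notational discipline when passing between an $I$-sequence $\sigma$, its class $[\sigma]$, and the pointwise conditions defining $\U$-large sets. A secondary subtlety is the systematic use of the ultrafilter dichotomy versus mere filter closure: items involving negation, set difference, or union all require the former, so I would flag explicitly at each such step where the ultrafilter property (as opposed to the weaker filter property) is invoked. Apart from this bookkeeping, no single item requires more than a few lines once the master lemma and the well-definedness of $\starf$ are in place.
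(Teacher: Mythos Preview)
Your approach is correct and matches the paper's: the paper states your ``master lemma'' immediately after the ultrapower definitions (noting $([\sigma_1],\ldots,[\sigma_n])\in\starA\Leftrightarrow\{i\mid(\sigma_1(i),\ldots,\sigma_n(i))\in A\}\in\U$) and then declares all items to be ``straightforward applications of the definitions and of the properties of ultrafilters,'' working out only item (13) in detail as an example. Two minor remarks: your item numbering is off by one throughout (your (5) is the paper's (4), your (6) is the paper's (5), etc.), and the existential items such as (13)--(14) need a small choice-function step beyond the master lemma---one must \emph{build} the witnessing sequence $\tau$ pointwise, as the paper's sample proof of (13) makes explicit---but neither point affects the substance of your plan.
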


\begin{proof}
All proofs of the above properties are
straightforward applications of the definitions
and of the properties of ultrafilters.
As an example, let us see here property (13) in detail.
We leave the others to the reader as exercises. 

Let $\Lambda=\{a\mid \exists b\ R(a,b)\}$
and let $\Gamma=\{\xi\mid \exists \zeta\ {}^*R(\xi,\zeta)\}$.
We have to show that ${}^*\Lambda=\Gamma$.
If $\sigma:I\to\Lambda$ then for every $i$ 
there exists an element $\tau(i)$ such that $R(\sigma(i),\tau(i))$.
Then ${}^*R([\sigma],[\tau])$ and so $[\sigma]\in\Gamma$.
This shows the inclusion ${}^*\Lambda\subseteq\Gamma$.
Conversely, $[\sigma]\in\Gamma$ if and only if
${}^*R([\sigma],[\tau])$ for some $I$-sequence $\tau$.
Since $([\sigma],[\tau])\in{}^*R$, the set
$\Theta=\{i\mid(\sigma(i),\tau(i))\in R\}\in\U$,
so also the superset $\{i\mid \sigma(i)\in\Lambda\}\supseteq\Theta$
belongs to $\U$. We conclude that
$[\sigma]\in{}^*\Lambda$, as desired.
\end{proof}

We disclose that the previous theorem essentially states that our
defined star map satisfies the \emph{transfer} principle.
Indeed, once the notion of elementary property
will be made fully rigorous, one can show that
\emph{transfer} is actually equivalent to the validity
of the properties listed above.

\begin{remark}
A ``strong isomorphism" between two sets of hyperreals
$\starR$ and ${}^\star\R$ is defined as a bijection $\psi:\starR\to{}^\star\R$
that it coherent with hyper-extensions, that is,
$(\xi_1,\ldots,\xi_k)\in\starA\Leftrightarrow
(\Psi(\xi_1),\ldots,\Psi(\xi_k))\in {}^\star A$
for every $A\subseteq\R^k$ and for every $\xi_1,\ldots,\xi_k\in\starR$,
and $\starf(\xi_1,\ldots,\xi_k)=\eta\Leftrightarrow
{}^\star f(\Psi(\xi_1),\ldots,\Psi(\xi_k))=\Psi(\eta)$
for every $f:\R^k\to\R$ and for every
$\xi_1,\ldots,\xi_k,\eta\in\starR$.
Then one can show that two ultrapower models
$\R^\N/\U$ and $\R^\N/\V$ are ``strongly isomorphic"
if and only if the ultrafilters $\U\cong\V$ are isomorphic,
that is, there exists a permutation $\sigma:\N\to\N$ such that
$A\in\U\Leftrightarrow\sigma(A)\in\V$ for every $A\subseteq\N$.
We remark that there exist plently of non-isomorphic ultrafilters
(indeed, one can show that there are $2^\mathfrak{c}$-many
distinct classes of isomorphic ultrafilters on $\N$).
This is to be contrasted with the previous Remark \ref{rem-isohyperreals},
where the notion of isomorphism between sets of hyperreals
was limited to
the structure of ordered field.
\end{remark}

\subsection{The properness condition in the ultrapower model}

In the previous section, we observed that principal ultrafilters
generate trivial ultrapowers. Below, we precisely isolate
the class of those ultrafilters that produce models
where the properness condition $\N\ne\starN$ 
(as well as $\R\ne\starR$) holds.

Recall that an ultrafilter $\U$ is called
\emph{countably incomplete}\index{ultrafilter!countably incomplete}
if it is not closed under countable intersections, that is,
if there exists a countable family $\{I_n\}_{n\in\N}\subseteq\U$
such that $\bigcap_{n\in\N} I_n\notin\U$.
We remark that all non-principal ultrafilters on $\N$ or on $\R$
are countably incomplete.\footnote
{~The existence of non-principal ultrafilters that are countably 
complete is equivalent to the existence of the so-called 
\emph{measurable cardinals}, a kind of inaccessible cardinals 
studied in the hierarchy of large cardinals, and whose
existence cannot be proved by \textsf{ZFC}. 
In consequence, if one sticks to the usual principles of mathematics, 
it is safe to assume that every non-principal ultrafilter is countably incomplete.}

\begin{exercise}
An ultrafilter $\U$ on $I$ is countably incomplete
if and only if there exists a countable partition $I=\bigcup_{n\in\N} J_n$
where $J_n\notin\U$ for every $n$.
\end{exercise}

\begin{proposition}
In the ultrapower model modulo the ultrafilter $\U$ on $I$,
the following properties are equivalent:
\begin{enumerate}
\item
Properness condition: $\starN\ne\N$;
\item
$\U$ is countably incomplete.
\end{enumerate}
\end{proposition}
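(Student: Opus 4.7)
The plan is to prove both implications by leveraging the characterization from the preceding exercise: $\mathcal{U}$ is countably incomplete if and only if $I$ admits a countable partition $I=\bigsqcup_{n\in\N} J_n$ with every $J_n\notin\U$. The proof then reduces to cleanly transporting this combinatorial information to/from the ultrapower.

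For the implication $(2)\Rightarrow(1)$, I would begin by fixing such a partition $I=\bigsqcup_n J_n$ witnessing countable incompleteness, and then define the sequence $\sigma:I\to\N$ by $\sigma(i):=n$ whenever $i\in J_n$. Then $[\sigma]\in\starN$, and I want to argue that $[\sigma]\neq n$ for every standard $n\in\N$. Under the diagonal identification $n=[c_n]$, equality $[\sigma]=n$ would require $\{i\in I\mid\sigma(i)=n\}=J_n\in\U$, contradicting the choice of the partition. Hence $[\sigma]\in\starN\setminus\N$, giving $\starN\neq\N$.

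For the implication $(1)\Rightarrow(2)$, I would argue the contrapositive. Assume $\U$ is countably complete, i.e., closed under countable intersections, and let $[\sigma]\in\starN$ be arbitrary, represented by some $\sigma:I\to\N$. The sets $A_n:=\{i\in I\mid\sigma(i)=n\}$ form a countable partition of $I$. If no $A_n$ belongs to $\U$, then every complement $A_n^c$ belongs to $\U$, so by countable completeness
\[
\bigcap_{n\in\N} A_n^c \;=\; \Bigl(\bigcup_{n\in\N} A_n\Bigr)^c \;=\; \emptyset \;\in\; \U,
\]
contradicting the definition of a filter. Thus some $A_n\in\U$, which means $[\sigma]=[c_n]=n\in\N$. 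Since $[\sigma]$ was arbitrary, $\starN=\N$.

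I do not anticipate a genuine obstacle here; the only delicate point is making sure the characterization of countable incompleteness via a disjoint partition (rather than via a decreasing chain with empty intersection) is the version being invoked, which the cited exercise supplies. Everything else is a straightforward translation between set-theoretic statements about $\U$ and equality/inequality of equivalence classes in the ultrapower.
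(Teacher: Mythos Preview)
Your proof is correct and essentially matches the paper's. The $(2)\Rightarrow(1)$ direction is identical to the paper's argument; for $(1)\Rightarrow(2)$ the paper argues directly (from $[\sigma]\notin\N$ it exhibits the family $I_n=\{i:\sigma(i)\ne n\}\in\U$ with $\bigcap_n I_n=\emptyset$), while you phrase the same idea contrapositively, but this is only a cosmetic difference.
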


\begin{proof}
Assume first that $\starN\ne\N$. Pick a sequence $\sigma:I\to\N$
such that $[\sigma]\notin\N$. Then 
$I_n=\{i\in I\mid \sigma(i)\ne n\}\in\U$ for every $n\in\N$,
but $\bigcap_n I_n=\emptyset\notin\U$.
Conversely, if $\U$ is countably incomplete, pick
a countable partition $I=\bigcup_n J_n$ where $J_n\notin\U$ for every $n$,
and pick the sequence $\sigma:I\to\N$ where
$\sigma(i)=n$ for $i\in J_n$.
Then $[\sigma]\in\starN$ but $[\sigma]\ne d(n)$ for every $n$.
\end{proof}

%

In the sequel we will always assume that ultrapower models
are constructed by using ultrafilters $\U$ that are
countably incomplete.

\subsection{An algebraic presentation}\label{algebraic}

The ultrapower\index{ultrapower} model can be presented in an alternative, 
but equivalent, purely algebraic fashion where
only the notion of quotient field of a ring
modulo a maximal ideal is assumed.(~See \cite{benci_purely_2005} for details.)
Here are the steps of the construction.

\begin{itemize}
\item
Consider $\text{Fun}(I,\R)$, the
ring of real valued sequences where the sum and 
product operations are defined pointwise.
\item
Let $\igoth$ be the ideal of those sequences that have \emph{finite support}:
$$\igoth\ =\ \{\sigma\in\text{Fun}(I,\R)\mid
\sigma(i)=0\ \text{for all but at most finitely many}\ i\}.$$
\item
Extend $\igoth$ to a maximal ideal $\mgoth$, 
and define the hyperreal numbers as the quotient field:
$$\starR\ =\ {\text{Fun}(I,\R)}/\mgoth.$$
\item
For every subset $A\subseteq\R$, its hyper-extension is defined by:
$$\starA\ =\ \{\sigma+\mgoth\mid \sigma:I\to A\}\ \subseteq\ \starR.$$
So, \emph{e.g.}, the \emph{hyper-natural numbers} $\starN$
are the cosets $\sigma+\mgoth$ of $I$-sequences 
$\sigma:I\to\N$ of natural numbers.
\item
For every function $f:A\to B$ where $A,B\subseteq\R$, its
hyper-extension $\starf:\starA\to\starB$ is defined by setting
for every $\sigma:I\to A$:
$$\starf(\sigma+\mgoth)\ =\ (f\circ\sigma)+\mgoth.$$
\end{itemize}

\medskip
It can be directly verified that $\starR$ is an ordered field whose
positive elements are $\starR^+=\text{Fun}(\N,\R^+)/\mgoth$,
where $\R^+$ is the set of positive reals.
By identifying each $r\in\R$ with the coset $c_r+\mgoth$
of the corresponding constant sequence, one obtains that $\R$ is a
proper subfield of $\starR$.

Notice that, as in the case of the ultrapower model, the above definitions are naturally
extended to hyper-extensions of sets of real tuples and of functions
between sets of real tuples.
%

\begin{remark}
The algebraic approach presented here
is basically equivalent to the ultrapower\index{ultrapower} model. 
Indeed, for every function $f:I\to\R$, let us
denote by $Z(f)=\{i\in I\mid f(i)=0\}$ its zero-set.
If $\mathfrak{m}$ is a maximal ideal of the ring $\text{Fun}(I,\R)$, 
then it is easily shown that the family
$\U_{\mathfrak{m}}=\{Z(f)\mid f\in\mathfrak{m}\}$
is an ultrafilter on $\N$.
Conversely, if $\U$ is an ultrafilter on $\N$,
then $\mathfrak{m}_\U=\{f\mid Z(f)\in\U\}$
is a maximal ideal of the ring $\Fun(I,\R)$.
The correspondance between $\U$-equivalence classes $[\sigma]$
and cosets $\sigma+\mathfrak{m}_\U$ yields an
isomorphism between the ultrapower\index{ultrapower} $\R^I/\U$ 
and the quotient $\Fun(I,\R)/{\mgoth_\U}$.
\end{remark}

\section{Internal and external objects}

We are now ready to introduce a fundamental class of objects
in nonstandard analysis, namely the internal objects.
In a way, they are similar to the open sets in topology,
or to the measurable sets in measure theory, because they
are those objects that behave ``nicely" in our theory.
Indeed, elementary properties of subsets or of functions transfer
to the corresponding internal objects (see below).

Recall that the \emph{star map} does not
preserve the properties of \emph{powersets} and \emph{function sets}.
For instance, we have noticed in the previous sections that
that there are (nonempty) sets
in $\mathcal{P}(\starN)$ with no least element, and there
are (nonempty) sets in $\mathcal{P}(\starR)$ that are bounded
but have no least upper bound (see Example \ref{ex-wellordering} and
Remark \ref{rmk-wellordering}).
However, by the \emph{transfer principle}\index{transfer principle}, the family 
$\mathcal{P}(A)$ of all
subsets of a set $A$ and ${}^*\mathcal{P}(A)$
satisfy the same properties. Similarly, the family $\Fun(A,B)$ of all
functions $f:A\to B$ and ${}^*\Fun(A,B)$ satisfy the same properties. 
Let us now elaborate on this, and start with two
easy observations.

\begin{proposition}
\

\begin{enumerate}
\item
Every element of the hyper-extension 
${}^*\mathcal{P}(A)$ is a subset of $\starA$, that is, 
${}^*\mathcal{P}(A)\subseteq\mathcal{P}(\starA)$;
\item
Every element of the hyper-extension 
${}^*\Fun(A,B)$ is a function $f:\starA\to\starB$,
that is,  ${}^*\Fun(A,B)\subseteq\Fun(\starA,\starB)$.
\end{enumerate}
\end{proposition}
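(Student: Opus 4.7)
The plan is to derive both inclusions as direct applications of the transfer principle, after reformulating the defining properties of $\mathcal{P}(A)$ and $\Fun(A,B)$ as elementary formulas with bounded quantifiers.

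For (1), I would start from the tautological elementary statement characterizing the powerset, namely
\[
\forall x \in \mathcal{P}(A)\ \forall y \in x\ (y \in A).
\]
This is elementary because the quantifiers are bounded by the specified sets $\mathcal{P}(A)$ and $x$, and the only other symbols are $\in$. Applying the transfer principle yields
\[
\forall x \in {}^*\mathcal{P}(A)\ \forall y \in x\ (y \in \starA),
\]
which by definition says exactly that every element $x \in {}^*\mathcal{P}(A)$ is a subset of $\starA$, i.e.\ $x \in \mathcal{P}(\starA)$.

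For (2), I would similarly transfer the elementary statement that every member of $\Fun(A,B)$ is a function with domain $A$ and range contained in $B$. Concretely, I would formalize
\[
\forall f \in \Fun(A,B)\ \bigl(f \text{ is a function} \ \text{and}\ \text{domain}(f) = A\ \text{and}\ \text{range}(f) \subseteq B\bigr),
\]
expanding $\text{range}(f) \subseteq B$ as the bounded formula $\forall a \in A\ (f(a) \in B)$. Since being a function, having a given domain, and taking values in a given set are all elementary by the hypotheses of Definition~\ref{starmap} and Proposition~\ref{starpropfunctions}, transfer gives the same statement with stars on $A$, $B$, and $\Fun(A,B)$. By Proposition~\ref{starpropfunctions}(\ref{starfAB}), this means each $f \in {}^*\Fun(A,B)$ is a function $\starA \to \starB$, i.e.\ $f \in \Fun(\starA,\starB)$.

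The only mildly delicate step is making sure the reformulations are genuinely elementary in the sense of the paper, that is, that all quantifiers are bounded by specific sets (rather than ranging over arbitrary subsets or functions). That is the point where one could inadvertently slip in a second-order quantifier. Once those formulas are written out as above with quantifiers bounded by $\mathcal{P}(A)$, $\Fun(A,B)$, $A$, and $B$, the transfer principle from Definition~\ref{starmap} applies verbatim and both inclusions follow immediately, with no further case analysis or ultrapower computation needed.
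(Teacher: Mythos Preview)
Your proof is correct and essentially identical to the paper's own argument: both parts are obtained by applying transfer to the elementary formulas $\forall x\in\mathcal{P}(A)\ \forall y\in x\ (y\in A)$ and $\forall f\in\Fun(A,B)\ (f\text{ is a function}\ \text{and}\ \text{dom}(f)=A\ \text{and}\ \text{range}(f)\subseteq B)$, respectively. Your added remarks about ensuring the quantifiers are genuinely bounded are appropriate caution but not strictly needed here, since these formulas are elementary by the paper's conventions.
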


\begin{proof}
(1). Apply \emph{transfer} to the elementary property:
$\forall x\in\mathcal{P}(A)\ \forall y\in x\ \,y\in A$.
(2) Apply \emph{transfer} to the elementary property:
$\forall x\in\Fun(A,B)\ ``x\ \text{is a function}"\ \text{and}\ 
\text{dom}(x)=A\ \text{and}\ \text{range}(x)\subseteq B$.
\end{proof}

Consequently, it is natural to consider the
elements in ${}^*\mathcal{P}(A)$ as the ``nice'' subsets of
$\starA$, and the elements in ${}^*\Fun(A,B)$ as the ``nice'' functions 
from $\starA$ to $\starB$.

\begin{definition}\label{def-internal}
Let $A,B$ be sets. 
The elements of ${}^*\mathcal{P}(A)$ are called
the \emph{internal subsets}\index{internal!set} of $\starA$ and the
elements of ${}^*\Fun(A,B)$ are called the
\emph{internal functions}\index{internal!function} from 
$\starA$ to $\starB$.
More generally, an \emph{internal} object\index{internal!object} 
is any element $B\in\starY$
that belongs to some hyper-extension.
\end{definition}

The following facts about functions are easily verified, and
the proofs are left as exercises.

\begin{proposition}\label{internalfunctions}
\
\begin{enumerate}
\item
A function $F$ is internal 
if and only if it belongs to the hyper-extension ${}^*\F$ 
of some set of functions $\F$;
\item
A function $F:A\to B$ is internal if and only if
there exist sets $X,Y$ such that
$A\in{}^*\Pow(X)$, $B\in{}^*\Pow(Y)$, and 
$F\in{}^*\{f\ \text{function}\mid \text{domain}(f)\subseteq X\ \text{and}\ 
\text{range}(f)\subseteq Y\}$.
\end{enumerate}
\end{proposition}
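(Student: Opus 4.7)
The plan is to handle the two parts separately, using (1) as a stepping stone for (2). For part (1), the backward direction is essentially free: if $F\in{}^{\ast}\F$ for some set $\F$ of functions, then $F$ is internal by Definition \ref{def-internal}, and transfer applied to the elementary statement ``every element of $\F$ is a function'' (together with Proposition \ref{starpropfunctions}) shows that $F$ is in fact a function. For the forward direction, I would start from the definition of internal to produce a set $Y$ with $F\in\starY$, and then set $\F := \{y\in Y : y\text{ is a function}\}$; this is a set of functions, and Proposition \ref{applyingstar} yields ${}^{\ast}\F = \{z\in\starY : z\text{ is a function}\}$, so $F\in{}^{\ast}\F$.

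For part (2), the backward direction is again immediate, since membership in any hyper-extension witnesses that $F$ is internal. For the forward direction, invoke part (1) to fix a set $\F$ of functions with $F\in{}^{\ast}\F$, and define $X := \bigcup_{f\in\F}\text{domain}(f)$ and $Y := \bigcup_{f\in\F}\text{range}(f)$. Then $\F$ is contained in $\F' := \{f\text{ function} : \text{domain}(f)\subseteq X,\ \text{range}(f)\subseteq Y\}$, so monotonicity of the star map (Proposition \ref{starpropsets}, part (5)) gives $F\in{}^{\ast}\F'$. Transferring the elementary statement ``$\forall f\in\F'\ \text{domain}(f)\in\Pow(X)$'' and applying the result to $F$ yields $A=\text{domain}(F)\in{}^{\ast}\Pow(X)$, and the same argument with range in place of domain gives $\text{range}(F)\in{}^{\ast}\Pow(Y)$.

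The remaining subtlety, which I expect to be the main obstacle, is producing $Y$ so that the specified codomain $B$ lies in ${}^{\ast}\Pow(Y)$. By the footnote convention, $F:A\to B$ only requires $\text{range}(F)\subseteq B$, so $B$ can strictly exceed the range; for the desired characterisation to hold, $B$ must itself be internal, say $B\in{}^{\ast}\Pow(W)$ for some $W$. The fix is to replace $Y$ by $Y\cup W$: the containment $\F\subseteq\F'$ is preserved under this enlargement, so $F$ remains in the relevant hyper-extension, while now $B\in{}^{\ast}\Pow(Y\cup W)$ follows by transfer from $\Pow(W)\subseteq\Pow(Y\cup W)$. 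This bookkeeping is the only non-mechanical part of the argument; everything else is a direct appeal to transfer via Propositions \ref{starpropsets}--\ref{starpropfunctions} and \ref{applyingstar}.
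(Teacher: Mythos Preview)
Your argument is correct. The paper itself leaves this proposition as an exercise (``easily verified, and the proofs are left as exercises''), so there is no proof to compare against; your use of Proposition~\ref{applyingstar} to cut down $Y$ to a set of functions in part~(1), and your passage from $\F$ to the larger $\F'$ via the unions of domains and ranges in part~(2), are exactly the natural moves.

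You are also right to flag the issue with $B$. As written, the forward direction of (2) cannot hold for an arbitrary codomain: if $F$ is internal but $B$ is merely some external superset of $\text{range}(F)$, then no $Y$ with $B\in{}^*\Pow(Y)$ can exist. The statement should be read (as is standard in this context, and consistent with how the proposition is used immediately afterward and in Proposition~\ref{internal}(3)) as concerning internal $A,B$; your fix of enlarging $Y$ to $Y\cup W$ once $B\in{}^*\Pow(W)$ is exactly how to complete the argument under that reading.
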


In consequence, domain and range of an internal function are internal sets.

First examples of internal objects are given by the
hyperreal numbers\index{hyperreal number} $\xi\in\starR$, and also by all ordered tuples of
hyperreal numbers $(\xi_1,\ldots,\xi_k)\in\starR^k$.
Notice that hyper-extensions $\starX$ themselves 
are internal objects, since trivially $\starX\in{}^*\{X\}=\{\starX\}$.

\begin{itemize}
\item
\textbf{Rule of thumb.}\
Properties about \emph{subsets} of a set $A$
transfer to the \emph{internal subsets}\index{internal!set} of $\starA$,
and properties about functions $f:A\to B$ trasfer to
the \emph{internal functions} from $\starA$ to $\starB$.
\end{itemize}

For instance, the \emph{well-ordering} property of $\N$
is transferred to:\
``Every nonempty \emph{internal} subset of $\starN$
has a least element'', and the \emph{completeness} property of $\R$
transfers to: ``Every nonempty \emph{internal} subset of $\starR$
that is bounded above has a least upper bound''.

The following is a useful closure property
of the class of internal objects\index{internal!object}\index{internal!definition}.

\begin{theorem}[Internal Definition Principle]
Let $\varphi(x,y_1,\ldots,y_k)$ be an elementary formula.
If $A$ is an internal set and $B_1,\ldots,B_n$ are internal objects,
then the set
$\{x\in A\mid \varphi(x,B_1,\ldots,B_n)\}$ is also internal.
\end{theorem}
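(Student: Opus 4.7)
The plan is to reduce this to the transfer principle by constructing an auxiliary standard function whose values are the sets we want to produce, then observing that values of an internal function lie in an internal codomain.

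First, I would use the definition of internality to fix standard sets $\mathcal{F}, \mathcal{G}_1,\ldots,\mathcal{G}_n$ such that $A\in{}^*\mathcal{F}$ and $B_j\in{}^*\mathcal{G}_j$ for each $j$. Without loss of generality, by intersecting $\mathcal{F}$ with the elementary class of sets if needed, I may assume every element of $\mathcal{F}$ is a set, and I set $X:=\bigcup_{E\in\mathcal{F}}E$, so that every $E\in\mathcal{F}$ satisfies $E\subseteq X$.

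Next, I define the standard function
$$f:\mathcal{F}\times\mathcal{G}_1\times\cdots\times\mathcal{G}_n\longrightarrow \mathcal{P}(X),\qquad f(E,b_1,\ldots,b_n):=\{x\in E:\varphi(x,b_1,\ldots,b_n)\}.$$
The graph of $f$ is cut out by an elementary formula, namely the conjunction of ``$f(E,b_1,\ldots,b_n)\subseteq E$'' with
$$\forall E\in\mathcal{F}\;\forall b_1\in\mathcal{G}_1\cdots\forall b_n\in\mathcal{G}_n\;\forall x\in E\ \bigl(x\in f(E,b_1,\ldots,b_n)\Leftrightarrow\varphi(x,b_1,\ldots,b_n)\bigr),$$
which is elementary because all quantifiers are bounded by specified sets and $\varphi$ is assumed elementary.

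Now I apply the transfer principle. Transfer turns $f$ into an internal function ${}^*f:{}^*\mathcal{F}\times{}^*\mathcal{G}_1\times\cdots\times{}^*\mathcal{G}_n\to{}^*\mathcal{P}(X)$ satisfying the analogous characterization with stars inserted in front of $\mathcal{F}$ and the $\mathcal{G}_j$. Specializing the transferred formula to the internal tuple $(A,B_1,\ldots,B_n)$ yields
$${}^*f(A,B_1,\ldots,B_n)=\{x\in A:\varphi(x,B_1,\ldots,B_n)\}.$$
Since the value ${}^*f(A,B_1,\ldots,B_n)$ is an element of ${}^*\mathcal{P}(X)$, it is by Definition \ref{def-internal} an internal subset of ${}^*X$, which is exactly the conclusion sought.

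The only delicate step is the verification that the defining property of $f$ can be formalized as an elementary formula in the technical sense: this uses both the hypothesis that $\varphi$ is elementary and the availability of the ambient sets $\mathcal{F},\mathcal{G}_1,\ldots,\mathcal{G}_n$ to bound every quantifier. Once that verification is in place, the rest is a direct invocation of transfer together with the observation that the range of an internal function consists of internal objects.
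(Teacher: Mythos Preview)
Your proof is correct and follows essentially the same strategy as the paper's: both choose a standard ``container'' family in which the defined set must live, then apply transfer to an elementary statement to extract the set as an element of the hyper-extension of that container. The paper packages this slightly differently---it enlarges $\mathcal{F}$ to a family $\mathcal{G}$ closed under subsets (for instance $\mathcal{G}=\bigcup\{\mathcal{P}(C):C\in\mathcal{F}\}$) and transfers the existential statement ``$\forall x\in\mathcal{G}\,\forall y_1\in Y_1\cdots\forall y_n\in Y_n\;\exists z\in\mathcal{G}$ with $z=\{t\in x:\varphi(t,y_1,\ldots,y_n)\}$''---whereas you define a function into $\mathcal{P}(X)$ and transfer its characterizing property; but the container $\mathcal{P}(X)$ you use plays exactly the role of the paper's $\mathcal{G}$, and the function is just a Skolemization of their existential.
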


\begin{proof}
By assumption, there exists a family of sets $\F$ and
sets $Y_i$ such that $A\in{}^*\F$ and $B_i\in\starY_i$ for $i=1,\ldots,n$. 
Pick any family $\G\supseteq\F$ that is closed under subsets, 
that is, $C'\subseteq C\in\G\Rightarrow C'\in\G$.
(For example, one can take $\G=\bigcup\{\mathcal{P}(C)\mid C\in\F\}$.)
Then the following is a true elementary property of the objects $\G,Y_1,\ldots,Y_n$:
$$P(\G,Y_1,\ldots,Y_n):\quad
\forall x\in\G\,\forall y_1\in Y_1\,\ldots\,\forall y_n\in Y_n\ 
\exists z\in\G\ \text{such that}\ 
``z=\{t\in x\mid \varphi(t,y_1,\ldots,y_n)\}."\footnote
{~The subformula ``$z=\{t\in x\mid \varphi(t,y_1,\ldots,y_n)\}$"
is elementary because it denotes the conjuction of the two formulas:
\begin{center}
``$\forall t\in z\ (t\in x\ \text{and}\ \varphi(t,y_1,\ldots,y_n))$''\ \
and\  \ ``$\forall t\in x\ (\varphi(t,y_1,\ldots,y_n)\Rightarrow t\in z)$''.
\end{center}}$$
By \emph{transfer}, the property $P({}^*\G,\starY_1,\ldots,\starY_n)$
is also true, and since $A\in{}^*\G, B_i\in\starY_i$, we obtain the
existence of an internal set $C\in{}^*\G$ such that
$C=\{t\in A\mid \varphi(x,B_1,\ldots,B_n)\}$, as desired.
\end{proof}

As direct applications of the above principle, one obtains
the following properties for the class of internal objects\index{internal!object}.

\begin{proposition}\label{internal}
\
\begin{enumerate}
\item
The class $\mathcal{I}$ of internal sets\index{internal!set} is closed under
unions, intersections, set-differences, finite sets and tuples,
Cartesian products, and under images and preimages of
internal functions.
\item
If $A\in\mathcal{I}$ is an internal set, then 
the set of its internal subsets 
$\mathcal{P}(A)\cap\mathcal{I}\in\mathcal{I}$ is itself
internal.
\item
If $A,B$ are internal sets, then 
the set $\Fun(A,B)\cap\mathcal{I}\in\mathcal{I}$ of internal functions\index{internal!function} between them is itself internal.
\end{enumerate}
\end{proposition}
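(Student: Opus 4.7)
The plan is to derive all three clauses as applications of the Internal Definition Principle (IDP), after identifying, in each case, an ambient internal set and an elementary formula that cuts out the target set.

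A useful preliminary observation is that every internal set sits inside a hyperextension of a powerset. Indeed, if $A$ is internal with $A\in{}^{*}\mathcal{F}$, set $X:=\bigcup_{F\in\mathcal{F}}F$; then $\mathcal{F}\subseteq\Pow(X)$, so by transfer $A\in{}^{*}\mathcal{F}\subseteq{}^{*}\Pow(X)\subseteq\Pow(\starX)$. Hence $A$ is an element of ${}^{*}\Pow(X)$ and, in particular, $A\subseteq\starX$.

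With this in hand, each closure in (1) is a direct IDP application. For internal $A\subseteq\starX$ and $B\subseteq\starY$, write $A\cup B=\{t\in{}^{*}(X\cup Y):t\in A\vee t\in B\}$, $A\cap B=\{t\in A:t\in B\}$, and $A\setminus B=\{t\in A:t\notin B\}$; each defining formula is elementary with internal parameters $A,B$. A finite collection $\{a_{1},\ldots,a_{k}\}$ of internal objects with $a_{i}\in{}^{*}Y_{i}$ is cut out of ${}^{*}(Y_{1}\cup\cdots\cup Y_{k})$ by the elementary formula $x=a_{1}\vee\cdots\vee x=a_{k}$. An ordered tuple $(a_{1},\ldots,a_{k})$ lies in $\starY_{1}\times\cdots\times\starY_{k}={}^{*}(Y_{1}\times\cdots\times Y_{k})$, so is internal by definition. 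The Cartesian product is $A\times B=\{z\in{}^{*}(X\times Y):\exists x\in A\,\exists y\in B\,\,z=(x,y)\}$. Finally, for an internal function $f:A\to B$ and internal sets $C\subseteq A$, $D\subseteq B$, the image $f(C)=\{y\in B:\exists x\in C,\,f(x)=y\}$ and preimage $f^{-1}(D)=\{x\in A:f(x)\in D\}$ are both internal by IDP.

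For parts (2) and (3) the crucial lemma is a converse to the preliminary: if $C$ is internal and $C\subseteq\starX$, then $C\in{}^{*}\Pow(X)$. To prove this, take $C\in{}^{*}\mathcal{F}$ and transfer the elementary sentence
\[
\forall S\in\mathcal{F}\,\bigl[(\forall t\in S,\,t\in X)\Rightarrow S\in\Pow(X)\bigr],
\]
which holds trivially in the standard universe since $S\subseteq X\Leftrightarrow S\in\Pow(X)$. Applying the transferred sentence to $C$ yields $C\in{}^{*}\Pow(X)$. Then, for (2), fixing $A\in{}^{*}\Pow(X)$, the set of internal subsets of $A$ equals $\{C\in{}^{*}\Pow(X):C\subseteq A\}$, which is internal by IDP (the formula $C\subseteq A$, i.e.\ $\forall t\in C,\,t\in A$, is elementary with parameter $A$). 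For (3), the completely analogous converse lemma shows that an internal function $f$ with $\mathrm{domain}(f)\subseteq\starX$ and $\mathrm{range}(f)\subseteq\starY$ lies in ${}^{*}\mathcal{H}$, where $\mathcal{H}:=\{f:\mathrm{domain}(f)\subseteq X,\,\mathrm{range}(f)\subseteq Y\}$; then the internal functions from $A$ to $B$ are $\{f\in{}^{*}\mathcal{H}:\mathrm{domain}(f)=A\wedge\mathrm{range}(f)\subseteq B\}$, internal by IDP.

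The main obstacle is the converse lemma underlying (2) and (3): the characterisation that being internal together with $C\subseteq\starX$ forces $C\in{}^{*}\Pow(X)$. Once this is secured by the transfer trick above, everything else is a mechanical exercise in producing the right elementary defining formula and invoking IDP.
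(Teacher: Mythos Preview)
Your proof is correct and follows essentially the same route as the paper: both reduce each clause to the Internal Definition Principle after locating an ambient set of the form ${}^{*}\Pow(X)$ (or ${}^{*}\mathcal{H}$ for functions). You go a bit further than the paper by explicitly proving the ``converse lemma'' (internal $C\subseteq\starX$ implies $C\in{}^{*}\Pow(X)$) via transfer, whereas the paper simply asserts that $\mathcal{P}(A)\cap\mathcal{I}=\{B\in{}^{*}\Pow(X)\mid B\subseteq A\}$ is ``easily verified'' and, for part~(3), cites the earlier Proposition~\ref{internalfunctions} where the analogous fact for functions was stated.
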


\begin{proof}
(1). If $A$ and $B$ are internal sets, say $A\in{}^*\Pow(X)$
and $B\in{}^*\Pow(Y)$, then
$A\cup B=\{t\in \starX\cup\starY\mid t\in A\ \text{or}\ x\in B\}$
is internal by the \emph{Internal Definition Principle}.
The other properties are easily proved in the same fashion.

(2). Let $X$ be such that $A\in{}^*\Pow(X)$. 
It is easily verified that 
$\mathcal{P}(A)\cap\mathcal{I}=\{B\in{}^*\Pow(X)\mid B\subseteq A\}$,
and so the \emph{Internal Definition Principle} applies.

(3). Pick $X,Y$ such that $A\in{}^*\Pow(X)$ and 
$B\in{}^*\Pow(Y)$. By Proposition \ref{internalfunctions},
we know that 
$$\Fun(A,B)\cap\mathcal{I}=
\{F\in{}^*\F\mid \text{domain}(F)=A\ \text{and}\ \text{range}(F)\subseteq B\}$$
where $\F=\{f\ \text{function}\mid \text{domain}(f)\subseteq X\ \text{and}\ 
\text{range}(f)\subseteq Y\}$, and so $\Fun(A,B)\cap\mathcal{I}$ is internal
by the \emph{Internal Definition Principle}.
\end{proof}

\begin{definition}
An object that is not internal is called \emph{external}.
\end{definition}

Although ``bad'' with respect to \emph{transfer}\index{transfer principle},
there are relevant examples of external sets that are useful 
in the application of nonstandard methods.

\begin{example}
\

\begin{enumerate}
\item
The set of infinitesimal\index{hyperreal number!infinitesimal} hyperreal\index{hyperreal number} numbers is \emph{external}.
Indeed, it is a bounded subset of $\starR$ without least upper bound.
\item
The set of infinite hypernatural \index{hypernatural number!infinite} numbers is \emph{external}.
Indeed, it is a nonempty subset of $\starN$ without a least element.
\item
The set $\N$ of finite hypernatural numbers \index{hypernatural number} is \emph{external},
otherwise the set-difference $\starN\setminus\N$ 
of infinite numbers would be internal.
\end{enumerate}
\end{example}

The above examples shows that ${}^*\Pow(\N)\ne\Pow(\starN)$
and ${}^*\Pow(\R)\ne\Pow(\starR)$.
More generally, we have

\begin{proposition}\label{properinclusions}
\
\begin{enumerate}
\item
For every infinite set $A$, the set ${}^\sigma A=\{{}^*a\mid a\in A\}$
is external.
\item
Every infinite hyperextension $\starA$ has external subsets, that is,
the inclusion ${}^*\Pow(A)\subset\Pow(\starA)$ is proper.
\item
If the set $A$ is infinite and
$B$ contains at least two elements, then the inclusion
${}^*\Fun(A,B)\subset \Fun(\starA,\starB)$ is proper.
\end{enumerate}
\end{proposition}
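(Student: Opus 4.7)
My plan is to observe that all three statements essentially reduce to part (1), so the main work is showing that ${}^\sigma A$ is external whenever $A$ is infinite. The key leverage we have is the preceding example stating that $\N \subseteq \starN$ is external, so the strategy is to transport an assumed internality of ${}^\sigma A$ to an internality of $\N$ via a suitable internal map, obtaining a contradiction.

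For part (1), I would argue by contradiction. Assuming $A$ is infinite, pick a surjection $f : A \to \N$; then the hyper-extension $\starf : \starA \to \starN$ is an internal function (it is an element of the hyper-extension ${}^*\Fun(A,\N)$), and it is surjective (this is exactly the argument used in the proof of Proposition \ref{finitedoesntenlarge}). If ${}^\sigma A$ were internal, then by Proposition \ref{internal}(1), the image $\starf({}^\sigma A)$ would be internal as well. However, using property (6) of Proposition \ref{starpropfunctions} together with the standing convention ${}^*n = n$ for $n \in \N$, one computes
\[
\starf({}^\sigma A) \;=\; \{\starf({}^*a) : a \in A\} \;=\; \{{}^*(f(a)) : a \in A\} \;=\; \{f(a) : a \in A\} \;=\; \N,
\]
and this would contradict the externality of $\N$ established in the preceding example.

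Parts (2) and (3) then fall out easily. For (2), if $\starA$ is infinite then Proposition \ref{finitedoesntenlarge} forces $A$ to be infinite, so part (1) exhibits ${}^\sigma A$ as an element of $\Pow(\starA) \setminus {}^*\Pow(A)$. For (3), given distinct $b_0, b_1 \in B$, I would define $F : \starA \to \starB$ by setting $F(\xi) = {}^*b_0$ if $\xi \in {}^\sigma A$ and $F(\xi) = {}^*b_1$ otherwise; note that ${}^*b_0 \ne {}^*b_1$ by property (1) of the star map. If $F$ were internal, then the Internal Definition Principle applied to the elementary formula ``$F(x) = {}^*b_0$'' would produce an internal set $\{\xi \in \starA : F(\xi) = {}^*b_0\} = {}^\sigma A$, again contradicting part (1).

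The only potentially delicate point is verifying that the image of ${}^\sigma A$ under $\starf$ really is $\N$ rather than a larger subset of $\starN$; this is where the identification ${}^*n = n$ and the surjectivity of the original $f$ are both essential. Everything else is bookkeeping with the closure properties of internal objects and the Internal Definition Principle, both of which are already in hand.
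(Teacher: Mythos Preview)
Your proof is correct and follows essentially the same route as the paper: for (1) you pull back externality of $\N$ along the internal map $\starf$ via the image of ${}^\sigma A$, for (2) you invoke (1), and for (3) you build a two-valued function on $\starA$ using an external subset (you use ${}^\sigma A$ specifically; the paper uses an arbitrary external $X\subset\starA$) and recover that subset as a level set. The only cosmetic difference is that in (3) you appeal to the Internal Definition Principle while the paper cites closure of internal objects under preimages of internal functions; these are equivalent here.
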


\begin{proof}
(1). Pick a surjective map $\psi:A\to\N$. Then also the hyper-extension
${}^*\psi:\starA\to\starN$ is surjective. If by contradiction
${}^\sigma A$ was internal, also its image under ${}^*\psi$ would be,
and this is not possible, since
$${}^*\psi\left({}^\sigma A\right)\ =\ 
\left\{{}^*\psi({}^*a)\mid a\in A\right\}\ =\ 
\left\{{}^*(\psi(a))\mid a\in A\right\}\ =\ 
\left\{\psi(a)\mid a\in A\right\}\ =\ \N.$$

(2). Notice first that $A$ is infinite, because if
$A=\{a_1,\ldots,a_n\}$ was finite 
then also $\starA=\{{}^*a_1,\ldots,{}^*a_n\}$  would be finite.
Recall that ${}^*\Pow(A)$ is the set of all internal subsets of $\starA$.
Since ${}^\sigma A\subset\starA$ is external by (1),
${}^\sigma A\in\Pow(\starA)\setminus{}^*\Pow(A)$.

(3). Recall that ${}^*\Fun(A,B)$ is the set of all internal 
functions $f:\starA\to\starB$. 
Pick an external subset $X\subset A$, pick $b_1\ne b_2$ in $B$,
and let $f:\starA\to\starB$ be the function where
$f(a)={}^*b_1$ if $a\in X$ and $f(a)={}^*b_2$ if $a\notin X$.
Then $f$ is external, otherwise the preimage $f^{-1}({}^*\{b_1\})=X$
would be internal.
\end{proof}

We warn the reader that becoming familiar with the distinction
between internal and external objects is probably the hardest part
of learning nonstandard analysis.

\subsection{Internal objects in the ultrapower model}

The ultrapower\index{ultrapower} model $\starR=\R^I/\U$ that we introduced in 
Section \ref{sec-ultrapower} can be naturally extended so as
to include also hyper-extensions of families 
of sets of real tuples, and of families of functions.

Let us start by observing that every $I$-sequence 
$T=\langle T_i\mid i\in I\rangle$ of
sets of real numbers $T_i\subseteq\R$ 
determines a set $\widehat{T}\subseteq\starR$ 
of hyperreal numbers in a natural way, by letting
$$\widehat{T}\ =\ 
\left\{[\sigma]\in\starR\,\bigm|\{i\in I\mid \sigma(i)\in T_i\}\in\U\right\}.$$

\begin{definition}\label{ultraextensionfamilies}
If $\F\subseteq\mathcal{P}(\R)$,
then its \emph{hyper-extension} ${}^*\F\subseteq{}^*\mathcal{P}(\R)$
is defined as
$${}^*\F\ =\ \left\{\widehat{T}\bigm| T:I\to\F\right\}.$$
\end{definition}

We remark that the same definition above also applies to 
families $\F\subseteq\mathcal{P}(\R^k)$ of sets of $k$-tuples,
where for $I$-sequences $T:I\to\mathcal{P}(\R^k)$ one lets
$\widehat{T}=\left\{([\sigma_1],\ldots,[\sigma_k])\in\starR^k\bigm|
\{i\in I\mid (\sigma_1(i),\ldots,\sigma_k(i))\in T_i\}\in\U\right\}$.

According to Definition \ref{def-internal},
$A\subseteq\starR$ is internal if and only if $A\in{}^*\Pow(\R)$.
So, in the ultrapower model, $A\subseteq\starR$ is internal
if and only if $A=\widehat{T}$ for
some $I$-sequence $T:I\to\mathcal{P}(\R)$.

Analogously as above, every $I$-sequence 
$F=\langle F_i\mid i\in I\rangle$ of real functions 
$F_i:\R\to\R$ determines a 
function $\widehat{F}:\starR\to\starR$ on the hyperreal numbers 
by letting for every $\sigma:I\to\R$:
$$\widehat{F}([\sigma])\ =\ [\langle F_i(\sigma(i))\mid i\in I\rangle].$$

The internal functions \index{internal!function}from $\starR$ to $\starR$
in the ultrapower\index{ultrapower} model are precisely those that are determined by 
some $I$-sequence $F:I\to\Fun(\R,\R)$.

\begin{definition}
If $\G\subseteq\Fun(\R,\R)$, then its \emph{hyper-extension} 
${}^*\G\subseteq{}^*\Fun(\R,\R)$ is defined as
$${}^*\G\ =\ \left\{\widehat{F}\bigm| F:I\to\G\right\}.$$
\end{definition}

If $F=\langle F_i\mid i\in I\rangle$ is an $I$-sequence
of functions $F_i:\R^k\to\R$ of several variables,
one extends the above definition by letting
$\widehat{F}:\starR^k\to\starR$ be the function
where for every $\sigma_1,\ldots,\sigma_k:I\to\R$:
$$\widehat{F}([\sigma_1],\ldots,[\sigma_k])\ =\ 
[\langle F_i(\sigma(i))\mid i\in I\rangle].$$
Indeed, also in this case, if $\G\subseteq\Fun(\R^k,\R)$ 
then one puts ${}^*\G=\left\{\widehat{F}\bigm| F:I\to\G\right\}$.

\section{Hyperfinite sets}

In this section we introduce a fundamental tool
in nonstandard analysis, namely the class of \emph{hyperfinite sets}\index{hyperfinite!set}. 
Although they may contain infinitely many elements,
hyperfinite sets satisfy the same ``elementary properties" as finite sets.
For this reason they are instrumental in applications as
a convenient bridge between finitary statements and infinitary notions.

\begin{definition}\label{hyperfiniteset}\index{hyperfinite!set}
A \emph{hyperfinite set} $A$ is an element of the hyper-extension ${}^*\F$
of a family $\F$ of finite sets.
\end{definition}

In particular, hyperfinite\index{hyperfinite!set} sets are internal objects.

\begin{remark}
In the ultrapower\index{ultrapower} model, the hyperfinite subsets of $\starR$
are defined according to Definition \ref{ultraextensionfamilies}. 
Precisely, $A\subseteq\starR$ is hyperfinite\index{hyperfinite!set} if and only if
there exists a sequence $\langle T_i\mid i\in I\rangle$
of finite sets $T_i\subset\R$ such that $A=\widehat{T}$, that is, 
for every $\sigma:I\to\R$,
$[\sigma]\in A\Leftrightarrow \{i\in I\mid\sigma(i)\in T_i\}\in\U$.
\end{remark}

Let us start with the simplest properties of hyperfinite\index{hyperfinite!set} sets.

\begin{proposition}\label{hyperfinitebasics}
\

\begin{enumerate}
\item
A subset $A\subseteq\starX$ is hyperfinite\index{hyperfinite!set} if and only if $A\in{}^*\Fin(X)$,
where $\Fin(X)=\{A\subseteq X\mid A\ \text{is finite}\}$.
\item
Every finite set of internal objects\index{internal!object} is hyperfinite.
\item
A set of the form $\starX$ for some standard set $X$ is hyperfinite\index{hyperfinite!set} if and only if $X$ is finite.
\item
If $f:A\to B$ is an internal function\index{internal!function}, and $\Omega\subseteq A$ is
a hyperfinite\index{hyperfinite!set} set, then its image $f(\Omega)=\{\starf(\xi)\mid\xi\in\Omega\}$
is hyperfinite as well.  In particular, internal subsets of hyperfinite sets are hyperfinite.
\end{enumerate}
\end{proposition}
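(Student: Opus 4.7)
The plan is to handle all four parts by transfer of carefully chosen elementary statements about standard families of finite sets. For part (1), the backward direction will be immediate from Definition \ref{hyperfiniteset}, since $\Fin(X)$ is itself a family of finite sets. For the forward direction, starting from a hyperfinite $A \in {}^*\F$ with $A \subseteq \starX$, I would set $Y := \bigcup \F$ so that $\F \subseteq \Fin(Y)$, hence $A \in {}^*\Fin(Y)$, and then transfer
\[
\forall F \in \Fin(Y)\,(F \subseteq X \Rightarrow F \in \Fin(X))
\]
to conclude $A \in {}^*\Fin(X)$. For (2), given $A = \{a_1, \ldots, a_n\}$ with each $a_i$ internal, I would pick a common standard $Y$ with $a_i \in \starY$ for all $i$ (the union of the sets witnessing internality), and apply transfer to the standard map $g : Y^n \to \Fin(Y)$, $g(y_1, \ldots, y_n) := \{y_1, \ldots, y_n\}$; the transferred defining property of $g$ yields ${}^*g(a_1, \ldots, a_n) = A \in {}^*\Fin(Y)$.

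For (3), the backward direction follows since $X \in \Fin(X)$ transfers to $\starX \in {}^*\Fin(X)$ via Proposition \ref{starpropsets}. For the forward direction, which is the most substantial step, I would argue by contradiction: if $\starX$ is hyperfinite and $X$ is infinite, pick a surjection $f : X \to \N$; by part (4), the internal image ${}^*f(\starX) = \starN$ is hyperfinite, so $\starN \in {}^*\Fin(\N)$. Transferring the elementary statement ``every nonempty element of $\Fin(\N)$ has a maximum'' would then force $\starN$ to have a maximum element, contradicting the fact that every $\nu \in \starN$ has a successor $\nu + 1 \in \starN$.

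For (4), the image claim will follow by invoking Proposition \ref{internalfunctions} to obtain standard $A_0, B_0$ with $f \in {}^*\Fun(A_0, B_0)$ and $\Omega \in {}^*\Fin(A_0)$, and then transferring the standard function
\[
G : \Fun(A_0, B_0) \times \Fin(A_0) \to \Fin(B_0), \quad G(g, F) := g(F),
\]
to obtain $f(\Omega) = {}^*G(f, \Omega) \in {}^*\Fin(B_0)$. For the ``in particular'' clause, given a hyperfinite $B \in {}^*\Fin(X)$ and an internal $A \subseteq B$, I first note that $A$ must lie in ${}^*\Pow(X)$ (a routine argument: if $A \in \starZ$, enlarge to $W := Z \cup \Pow(X)$ and transfer the statement that every element of $W$ which is a subset of $X$ lies in $\Pow(X)$), and then transfer
\[
\forall F \in \Fin(X)\,\forall F' \in \Pow(X)\,(F' \subseteq F \Rightarrow F' \in \Fin(X))
\]
to deduce $A \in {}^*\Fin(X)$.

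The main obstacle will be the forward direction of (3), since it is the only place that genuinely exploits internal arithmetic structure (maxima versus successors) rather than a bare transfer of finiteness, and it depends on (4) as a lemma. A recurring technical delicacy throughout is producing an ambient standard set $X$ when a hyperfinite or internal object is given only abstractly; I handle this uniformly by taking unions of witnessing sets and invoking Proposition \ref{internalfunctions}.
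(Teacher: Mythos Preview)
Your overall strategy is correct and matches the paper's use of transfer, but two points deserve comment.

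First, your forward direction of (3) is considerably more elaborate than needed. The paper dispatches (3) in one line: the membership ``$X \in \Fin(X)$'' is an elementary property of the two parameters $X$ and $\Fin(X)$, and it holds precisely when $X$ is finite; by transfer this is equivalent to ``$\starX \in {}^*\Fin(X)$'', which by part (1) is exactly the statement that $\starX$ is hyperfinite. Your detour through a surjection onto $\N$, part (4), and the maximum-versus-successor contradiction in $\starN$ is valid, but it hides the fact that (3) is really just a single instance of transfer on a membership statement.

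Second, there is a small slip in your treatment of (4): Proposition~\ref{internalfunctions} does not give $f \in {}^*\Fun(A_0, B_0)$, only that $f$ lies in the star of the family of partial functions with domain contained in $A_0$ and range contained in $B_0$. So your map $G : \Fun(A_0, B_0) \times \Fin(A_0) \to \Fin(B_0)$ need not apply to $f$ directly. The paper sidesteps this by transferring the elementary statement ``for every $C \in \Pow(A_0)$, every $D \in \Pow(B_0)$, every $g \in \Fun(C,D)$, and every $F \in \Fin(A_0)$ with $F \subseteq C$, one has $g(F) \in \Fin(B_0)$'', quantifying over the domain and codomain rather than fixing them. Your argument repairs immediately along the same lines. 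Your explicit handling of the ``in particular'' clause via a separate transfer is fine (the paper leaves it implicit).
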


\begin{proof}
(1). If $A$ is a hyperfinite\index{hyperfinite!set} subset of $\starX$, then $A$ is internal,
and hence $A\in{}^*\mathcal{P}(X)$. So, if $\F$ is a family of finite sets with 
$A\in{}^*\F$, then $A\in{}^*\mathcal{P}(X)\cap{}^*\F=
{}^*(\mathcal{P}(X)\cap\F)\subseteq{}^*\Fin(X)$.
The converse implication is trivial.

(2). Let $A=\{a_1,\ldots,a_k\}$, and pick $X_i$ such that $a_i\in\starX_i$.
If $X=\bigcup_{i=1}^n X_i$, then $A\in{}^*\Fin(X)$, as it is easily
shown by applying \emph{transfer} to the elementary property:
``$\forall x_1,\ldots,x_k\in X\ \{x_1,\ldots,x_k\}\in\Fin(X)$''.

(3). This is a direct consequence of \emph{transfer} and the definition of hyperfinite set.


(4). Pick $X$ and $Y$ with $A\in{}^*\mathcal{P}(X)$ and 
$B\in{}^*\mathcal{P}(Y)$.
Then apply \emph{transfer} to the property: 
``For every $C\in\mathcal{P}(X)$, for every $D\in\mathcal{P}(Y)$,
for every $f\in\Fun(C,D)$ and for every $F\in\Fin(X)$
with $F\subseteq C$,
the image $f(F)\in\Fin(Y)$''.
\end{proof}

\begin{example}
For every pair $N<M$ of (possibly infinite) 
hypernatural numbers\index{hypernatural number}, the interval
$$[N,M]_{\starN}\ =\ \{\alpha\in\starN\mid N\le\alpha\le M\}$$
is hyperfinite. Indeed, applying \emph{transfer} to the property:
``For every $x,y\in\N$ with $x<y$, the set $[x,y]_\N=\{a\in\N\mid x\le a\le y\}\in\Fin(\N)$",
one obtains that $[N,M]_{\starN}\in{}^*\Fin(\N)$.\footnote
{~More formally, one transfers the formula:
``$\forall x,y\in\N\ [(x<y\Rightarrow(\exists A\in\Fin(\N)\ 
\forall z\,(z\in A\leftrightarrow x\le z\le y))]$''.}
More generally, it follows from transfer that every bounded internal set of hyperintegers
is hyperfinite.
\end{example}

Whenever confusion is unlikely, we will omit the subscript, and 
write directly $[N,M]$ to denote the interval of hypernatural numbers
determined by $N,N\in\starN$.

\begin{definition}\label{hyperfinitesequence}\index{hyperfinite!sequence}
A \emph{hyperfinite sequence} is an internal function whose domain
is a hyperfinite set $A$.  
\end{definition}

Typical examples of hyperfinite sequences are defined on initial
segments $[1,N]\subset\starN$ of the hypernatural numbers. In this case
we use notation $\langle \xi_\nu\mid\nu=1,\ldots,N\rangle$

By \emph{transfer} from the property:
``For every nonempty finite set $A$ there exists a unique $n\in\N$ such that
$A$ is in bijection with the segment $\{1,\ldots,n\}$,''
one obtains that there is a well-posed definition of cardinality 
for hyperfinite sets.

\begin{definition}
The \emph{internal cardinality}\index{internal!cardinality}
$|A|_h$ of a nonempty hyperfinite\index{hyperfinite!set} set $A$
is the unique hypernatural number \index{hypernatural number}$\alpha$ such that
there exists an internal bijection $f:[1,\alpha]\to A$.
\end{definition}

\begin{proposition} The internal cardinality satisfies the following properties:

\begin{enumerate}
\item
If the hyperfinite set $A$ is finite, then $|A|_h=|A|$.

\smallskip
\item
For any $\nu\in \starN$, we have
$|[1,\nu]|_h=\nu$. More generally, we have $|[\alpha,\beta]|_h=\beta-\alpha+1$.
\end{enumerate}
\end{proposition}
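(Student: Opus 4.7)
The plan is to exhibit, in each part, an internal bijection from an initial segment of $\starN$ onto the given hyperfinite set; the uniqueness built into the definition of $|A|_h$ then forces the claimed value.

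For part (1), suppose $A$ is a hyperfinite set that happens to be finite in the standard sense with $|A|=n$, and enumerate $A=\{a_1,\ldots,a_n\}$. Define $f:[1,n]\to A$ by $f(i)=a_i$. The remaining task is to verify that $f$ is internal. Since $A$ is hyperfinite it is internal, so each $a_i$ is an internal object; each index $i\in\N\subseteq\starN$ is internal as well. By Proposition \ref{internal}(1) the class of internal sets is closed under ordered tuples, so each pair $(i,a_i)$ is internal, and then by Proposition \ref{hyperfinitebasics}(2) the finite graph $\{(1,a_1),\ldots,(n,a_n)\}$ is itself hyperfinite, hence internal. Because a function is determined by its graph, $f$ is an internal bijection from $[1,n]$ onto $A$, which forces $|A|_h=n=|A|$.

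For part (2), I would first handle the case $|[1,\nu]|_h=\nu$. Transfer the elementary statement ``for every $n\in\N$, the identity function on $[1,n]$ is a bijection from $[1,n]$ onto $[1,n]$'' to obtain its analogue for every $\nu\in\starN$. Concretely, the identity on $\starN$ is internal (it is ${}^*(1_\N)$) and its restriction to the internal set $[1,\nu]$ is internal by the Internal Definition Principle; this restriction is obviously a bijection, so $|[1,\nu]|_h=\nu$. For the general case, transfer the elementary statement ``for all $m,n\in\N$ with $m\le n$, the map $k\mapsto k+m-1$ is a bijection from $[1,n-m+1]$ onto $[m,n]$'' to $\starN$. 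This yields an internal bijection $[1,\beta-\alpha+1]\to[\alpha,\beta]$, so $|[\alpha,\beta]|_h=\beta-\alpha+1$.

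The only step that requires real care is the internality verification in part (1): one must recognize that the naive enumeration of a standardly-finite set of internal points defines an internal function. That reduces, as above, to closure of the class of internal objects under tuples together with Proposition \ref{hyperfinitebasics}(2) that finite sets of internal objects are hyperfinite; the rest is pure transfer.
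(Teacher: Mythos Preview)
Your proof is correct and follows essentially the same approach as the paper. In part (1) the paper simply observes that any bijection $f:[1,n]\to A$ is internal because it is a finite set of internal ordered pairs, exactly as you argue; in part (2) the paper directly exhibits the map $f(i)=\alpha+i-1$ as an internal bijection, without separating out the case $[1,\nu]$ first, but the substance is the same as your transfer argument.
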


\begin{proof}
(1). If $A$ is a finite internal
set of cardinality $n$, then every bijection
$f:[1,n]\to A$ is internal, in that a finite set
of internal ordered pairs.

(2). The map $f:[1,\beta-\alpha+1]\to[\alpha,\beta]$
where $f(i)=\alpha+i-1$ is an internal bijection. 
\end{proof}

When confusion is unlikely, we will drop the subscript and
directly write $|A|$ to also denote the internal cardinality of 
a hyperfinite set $A$.

The following is a typical example of a property
that hyperfinite\index{hyperfinite!set} sets inherit from finite sets. It is
obtained by a straightforward application of \emph{transfer},
and its proof is left as an exercise.

\begin{proposition}
Every nonempty hyperfinite subset of $\starR$
has a least and a greatest element.
\end{proposition}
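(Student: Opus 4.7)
The plan is to obtain the result as a direct application of the \emph{transfer principle} starting from the corresponding fact about finite subsets of $\R$: every nonempty finite subset of $\R$ has a least and a greatest element. The key observation that makes transfer applicable is Proposition \ref{hyperfinitebasics}(1), which identifies the hyperfinite subsets of $\starR$ with the elements of ${}^{\ast}\Fin(\R)$. Thus the target statement is really a statement about elements of ${}^{\ast}\Fin(\R)$, which is the hyper-extension of the standard family $\Fin(\R)$, and so is the kind of object transfer can talk about.

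First, I would write out the standard property as an elementary formula. For the least element, one can take
$$P(\R,\Fin(\R)):\ \forall A\in\Fin(\R)\ \bigl(A\neq\emptyset\Rightarrow \exists x\in A\ \forall y\in A\ x\le y\bigr),$$
and analogously for the greatest element with $x\ge y$. All quantifiers are bounded by specified sets ($A$ by $\Fin(\R)$, and $x,y$ by $A$), and the only primitives used are equality, membership, and the order relation, so the formula is elementary in the sense of the chapter.

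Next, I would apply \emph{transfer} to obtain
$$P(\starR,{}^{\ast}\Fin(\R)):\ \forall A\in{}^{\ast}\Fin(\R)\ \bigl(A\neq\emptyset\Rightarrow \exists x\in A\ \forall y\in A\ x\le y\bigr),$$
using that ${}^{\ast}\emptyset=\emptyset$ (Proposition \ref{starpropsets}(4)) to interpret the condition $A\neq\emptyset$ correctly after transfer, and that ${}^{\ast}\!\!\le$ behaves as a linear order on $\starR$. By Proposition \ref{hyperfinitebasics}(1), an internal subset of $\starR$ is hyperfinite precisely when it lies in ${}^{\ast}\Fin(\R)$, so this is exactly the claim that every nonempty hyperfinite subset of $\starR$ has a least element. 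The same argument with $\ge$ in place of $\le$ yields the greatest element.

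There is essentially no obstacle here: the only thing to be careful about is to phrase the standard property with bounded quantifiers over the specified sets $\Fin(\R)$ and $A$ (rather than as a second-order statement about ``all subsets of $\R$,'' which would not transfer) and to invoke Proposition \ref{hyperfinitebasics}(1) to translate ``element of ${}^{\ast}\Fin(\R)$'' into ``hyperfinite subset of $\starR$.''
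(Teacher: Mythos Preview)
Your proof is correct and is exactly the intended approach: the paper itself does not spell out the argument but states that the proposition ``is obtained by a straightforward application of \emph{transfer}, and its proof is left as an exercise.'' Your formulation of the elementary property with quantifiers bounded by $\Fin(\R)$ and the use of Proposition~\ref{hyperfinitebasics}(1) to identify hyperfinite subsets of $\starR$ with elements of ${}^{\ast}\Fin(\R)$ is precisely what the exercise asks for.
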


A relevant example of a hyperfinite\index{hyperfinite!set} set which is useful in applications is the following.

\begin{definition}\label{def-hyperfinitegrid}\index{hyperfinite!grid}
Fix an infinite $N\in\starN$. The corresponding
\emph{hyperfinite grid} $\mathbb{H}_N\subset\starQ$ is the hyperfinite set
that determines a partition of the interval $[1,N]\subset\starR$ 
of hyperreals\index{hyperreal number} into $N$-many intervals of equal infinitesimal 
length $1/N$. Precisely:
$$\mathbb{H}_N\ =\
\left\{\pm\, \frac{\alpha}{N}\,\Bigm|\,\alpha=0, 1, \ldots, N\right\}.$$
\end{definition}

We close this section with a couple of result about the
(infinite) cardinalities of hyperfinite\index{hyperfinite!set} sets. 

\begin{proposition}\label{hyperfinitecardinality}
If $\alpha\in\starN$ is infinite, then the corresponding
interval $[1,\alpha]\subset\starN$ has cardinality at least the cardinality of the 
\emph{continuum}.
\end{proposition}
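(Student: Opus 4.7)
The plan is to exhibit an explicit injection from the interval $[0,1)\subseteq \R$ into $[1,\alpha]\subseteq \starN$, which suffices because $|[0,1)|=2^{\aleph_0}$. Define
\[
 f:[0,1)\longrightarrow [1,\alpha],\qquad f(r):=\lfloor r\cdot\alpha\rfloor + 1,
\]
where $\lfloor \cdot \rfloor$ is the hyperinteger part function (the hyper-extension of the usual floor function on $\R$). Since $r\in[0,1)$ is a standard real and we are using the convention $\,{}^*r=r$, the product $r\cdot\alpha$ lives in $\starR$, and $\lfloor r\cdot\alpha\rfloor\in\starZ$ is well defined.

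First I would verify that $f$ takes values in $[1,\alpha]$. Clearly $f(r)\geq 1$. For the upper bound, from $r<1$ we get $r\cdot\alpha<\alpha$, and since both $\lfloor r\cdot\alpha\rfloor$ and $\alpha$ are hyperintegers the strict inequality gives $\lfloor r\cdot\alpha\rfloor\leq\alpha-1$, whence $f(r)\leq\alpha$.

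Next I would check injectivity, which is the key computation. Suppose $r<r'$ in $[0,1)$. Then
\[
 r'\cdot\alpha - r\cdot\alpha = (r'-r)\cdot\alpha.
\]
Here $r'-r$ is a \emph{positive standard} real and $\alpha\in\starN\setminus\N$ is infinite; by one of the basic properties of infinitesimal/infinite arithmetic (a standard positive finite times an infinite element is infinite), $(r'-r)\cdot\alpha$ is infinite, in particular exceeds $1$. Therefore $\lfloor r'\cdot\alpha\rfloor>\lfloor r\cdot\alpha\rfloor$, so $f(r')>f(r)$, and in particular $f(r')\neq f(r)$.

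Consequently $f$ is an injection and $|[1,\alpha]|\geq |[0,1)|=2^{\aleph_0}$. I do not foresee any real obstacle: the only subtlety is to make sure that $r\cdot\alpha$ and its floor are interpreted inside $\starR$ (relying on the conventions established in the section on the star map, in particular that ${}^*r=r$ for $r\in\R$ and that the floor function transfers to the hyperinteger part), and to notice that $(r'-r)\alpha$ being infinite forces the hyperinteger parts to differ by at least $1$.
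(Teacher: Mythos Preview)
Your argument is correct and is essentially the same as the paper's. The paper defines $\psi(r)=\min\{\beta\in[1,\alpha]\mid r<\beta/\alpha\}$ on $(0,1)$, which is literally your map $\lfloor r\alpha\rfloor+1$; for injectivity the paper argues contrapositively that $\psi(r)=\psi(s)$ forces $|r-s|<1/\alpha$, hence $r=s$ since distinct standard reals cannot be infinitely close, while you argue directly that $r<r'$ makes $(r'-r)\alpha$ infinite and hence the floors differ---the same idea read in the opposite direction.
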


\begin{proof}
For every real number $r\in(0,1)$, let
$$\psi(r)\ = \min\{\beta\in[1,\alpha]\mid r<\beta/\alpha\}.$$
Notice that the above definition is well-posed, because
$\{\beta\in\starN\mid r<\beta/\alpha\}$ is an internal bounded set
of hypernatural numbers\index{hypernatural number}, and hence a hyperfinite\index{hyperfinite!set} set.
The map $\psi:(0,1)_\R\to[1,\alpha]_{\starN}$ is 1-1. Indeed, 
$\psi(r)=\psi(s)\Rightarrow |r-s|<1/\alpha\Rightarrow r\sim s\Rightarrow r=s$
(recall that two real numbers that are infinitely close are necessarily equal).
Thus, we obtain the desired inequality
$\mathfrak{c}=|(0,1)_\R|\le|[1,\alpha]_{\starN}|$.
\end{proof}

\begin{corollary}
If $A$ is internal, then either $A$ is finite or
$A$ has at least the cardinality of the \emph{continuum}.
In consequence, every countably infinite set is external.
\end{corollary}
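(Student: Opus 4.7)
The plan is to produce, from an internal infinite set $A$, an internal injection from some interval $[1,N]_{\starN}$ with $N$ infinite into $A$, and then invoke Proposition \ref{hyperfinitecardinality} to conclude $|A|\geq|[1,N]|\geq\mathfrak{c}$. The consequence that every countably infinite set is external will then follow immediately: such a set is infinite and has cardinality $\aleph_0<\mathfrak{c}$, so by the main claim it cannot be internal.

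To produce the injection, I would run a short overspill-style argument. Since $A$ is internal, $A\in{}^*\Pow(X)$ for some set $X$, so for every $N\in\starN$ both $[1,N]$ and $A$ are internal sets, and by Proposition \ref{internal}(3) the collection of internal functions $[1,N]\to A$ is itself internal. The Internal Definition Principle then guarantees that
$$S:=\{N\in\starN : \text{there exists an internal injection } f:[1,N]\to A\}$$
is an internal subset of $\starN$. Next I would check that $\N\subseteq S$: for each $n\in\N$, pick (externally) $n$ distinct elements $a_1,\ldots,a_n$ of the infinite set $A$; the function $\{1,\ldots,n\}\to A$ sending $i\mapsto a_i$ has finite graph $\{(1,a_1),\ldots,(n,a_n)\}$, a finite set of internal pairs, which is hyperfinite (hence internal) by Proposition \ref{hyperfinitebasics}(2). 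Since $\N$ is external while $S$ is internal, we cannot have $S=\N$, so $S$ contains some infinite $N\in\starN\setminus\N$, yielding the desired internal injection.

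The delicate point will be the application of the Internal Definition Principle when forming $S$: one must express ``there exists an internal injection $f:[1,N]\to A$'' as an elementary formula with internal parameters, which works precisely because the set of internal functions between two internal sets is itself internal (Proposition \ref{internal}(3)). Once $S$ is known to be internal, everything else is essentially automatic: the externality of $\N$ forces $S$ to contain an infinite $N$, and Proposition \ref{hyperfinitecardinality} converts the resulting internal injection $[1,N]\hookrightarrow A$ into the external inequality $|A|\geq\mathfrak{c}$.
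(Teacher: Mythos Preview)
Your argument is correct but follows a different route from the paper's. The paper proceeds by a single transfer: the standard dichotomy ``every subset of $X$ is either finite or admits an injection from $\N$'' transfers to ``every element of ${}^*\Pow(X)$ is either hyperfinite or admits an internal injection from $\starN$'', and then one case-splits (in the hyperfinite case $A$ is in internal bijection with some $[1,\alpha]$, and Proposition~\ref{hyperfinitecardinality} applies when $\alpha$ is infinite; in the other case $|A|\ge|\starN|\ge|[1,\alpha]|\ge\mathfrak c$ for any infinite $\alpha$). Your overflow argument avoids the case split by directly manufacturing an infinite $N$ with an internal injection $[1,N]\hookrightarrow A$, which is a pleasant uniformization. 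One small technical caveat: Proposition~\ref{internal}(3) gives internality of $\Fun([1,N],A)\cap\mathcal I$ only for each \emph{fixed} $N$, whereas the Internal Definition Principle requires a single internal parameter to bound the existential quantifier; the fix is to quantify instead over ${}^*\Pow(\N\times X)$ (with $A\in{}^*\Pow(X)$), which contains all the relevant graphs at once, and your finite graphs $\{(1,a_1),\dots,(n,a_n)\}$ land there since they are internal subsets of $\starN\times\starX$.
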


\begin{proof}
It is easily seen by \emph{transfer} that an internal\index{internal!set} set $A$ is 
either hyperfinite\index{hyperfinite!set}, and hence it is in bijection with an interval 
$[1,\alpha]\subset\starN$, or there exists an internal
1-1 function\index{internal!function} $f:\starN\to A$.
In the first case, if $\alpha\in\N$ is finite, then trivially $A$ is finite.
Otherwise $|A|=[1,\alpha]\ge\mathfrak{c}$ by the previous proposition.
In the second case, if $\alpha$ is any infinite hypernatural 
number\index{hypernatural number}, then
$|A|\ge|\starN|\ge|[1,\alpha]|\ge\mathfrak{c}$.
\end{proof}

\subsection{Hyperfinite sums}\label{sec-hyperfinitesums}

Similarly to finite sums of real numbers,
one can consider \emph{hyperfinite} sums\index{hyperfinite!sum} of
hyperfinite sets of hyperreal numbers\index{hyperreal number}. 

\begin{definition}
If $f:A\to\R$ then for every nonempty hyperfinite subset 
$\Omega\subset\starA$, one defines the
corresponding \emph{hyperfinite sum}\index{hyperfinite!sum} by setting:
$$\sum_{\xi\in \Omega}\,\starf(\xi)\ :=\ {}^*S_f(\Omega),$$
where $S_f:\Fin(A)\setminus\{\emptyset\}\to\R$ is the function
$\{r_1<\ldots<r_k\}\mapsto f(r_1)+\ldots+f(r_k)$.

As a particular case, if $a=\langle a_n\mid n\in\N\rangle$ 
is a sequence of real numbers and $\alpha\in\starN$ is a 
hypernatural number\index{hypernatural number}, 
then the corresponding \emph{hyperfinitely long sum}\index{hyperfinite!sum} is defined as
$$\sum_{i=1}^\alpha a_i\ =\ {}^*S_a(\alpha)$$
where $S_a:\N\to\R$ is the function $n\mapsto a_1+\ldots+a_n$.
\end{definition}

\begin{remark}
More generally, the above definition can be extended to hyperfinite sums
$\sum_{\xi\in\Omega}F(\xi)$\index{hyperfinite!sum} where 
$F:\starA\to\starR$ is an internal function, 
and $\Omega\subseteq\starA$ is a nonempty hyperfinite subset.
Precisely, in this case one sets
$\sum_{\xi\in\Omega}F(\xi)={}^*\mathcal{S}(F,\Omega)$, where
$\mathcal{S}:\Fun(A,\R)\times(\Fin(A)\setminus\{\emptyset\})\to\R$ 
is the function $(f,G)\mapsto\sum_{x\in G}f(x)$.
\end{remark}


Let us mention in passing that hyperfinite sums can be used
to directly define integrals. Indeed, if $N\in\starN$ is any infinite 
hypernatural number\index{hypernatural number} and $\mathbb{H}$ is the corresponding 
hyperfinite grid (see Definition \ref{def-hyperfinitegrid}), 
then for every $f:\R\to\R$ and for every $A\subseteq\R$,
one defines the \emph{grid integral} by putting:
$$\int_A f(x)d_{\mathbb{H}}(x)\ =\ 
\st\left(\sum_{\xi\in\mathbb{H}\cap\starA}{}^*f(\xi)\right).$$ 
Notice that the above definition applies to \emph{every}
real function $f$ and to \emph{every} subset $A$.
Moreover, it can be shown that if $f:[a,b]\to\R$ is a Riemann integrable function
defined on an interval, then the grid integral 
coincides with the usual Riemann integral. 


\section{Overflow and underflow principles}
\index{overflow principle}\index{underflow principle}

\begin{proposition}[Overflow principles]\label{overflow}
\

\begin{enumerate}
\item
$A\subseteq\N$ is infinite if and only if its hyper-extension $\starA$
contains an infinite number. 
\item
If $B\subseteq\starN$ is internal\index{internal!set} and $B\cap\N$ is infinite 
then $B$ contains an infinite number.
\item
If $B\subseteq\starN$ is internal and $\N\subseteq B$ then 
$[1,\alpha]\subseteq B$ for some infinite $\alpha\in\starN$.
\end{enumerate}
\end{proposition}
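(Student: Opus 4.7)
The plan is to prove the three items in order, using transfer for (1), the cardinality dichotomy for internal sets for (2), and an auxiliary internal set to reduce (3) to (2).

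For (1), the ``only if'' direction is Proposition \ref{finitedoesntenlarge}: if $A$ is finite then $\starA = A \subseteq \N$, which contains no infinite element. For the ``if'' direction, $A \subseteq \N$ being infinite means that for every $n \in \N$ there exists $a \in A$ with $a > n$; transferring this elementary statement (with $A$ as a parameter) yields: for every $\nu \in \starN$ there exists $\alpha \in \starA$ with $\alpha > \nu$. Applying this to any infinite $\nu \in \starN \setminus \N$ (which exists by the properness condition) produces the desired infinite element of $\starA$.

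For (2), I would argue by contradiction. Assume $B$ has no infinite element; then $B \subseteq \N$, so $|B| \le \aleph_0 < \mathfrak{c}$. But $B$ is internal, so by the corollary immediately following Proposition \ref{hyperfinitecardinality}, any internal set is either finite or has at least continuum cardinality. Hence $B$ must be finite, whence $B \cap \N = B$ is finite, contradicting the hypothesis.

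For (3), the key step is to consider the auxiliary set
$$C \;:=\; \{\nu \in \starN \mid [1,\nu] \subseteq B\}.$$
The Internal Definition Principle guarantees that $C$ is internal, since $B$ is (the defining condition ``$\forall m \in \starN\,(m \le \nu \Rightarrow m \in B)$'' is elementary with internal parameter $B$). Since $\N \subseteq B$, every standard $n$ lies in $C$, so $C \cap \N$ is infinite. Applying part (2) to $C$ produces an infinite $\alpha \in C$, and by definition of $C$ this means $[1,\alpha] \subseteq B$, as required.

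The main obstacle is (2), and the critical ingredient is the structural fact that no internal subset of $\starN$ can be countably infinite — a feature that sharply distinguishes internal objects from arbitrary subsets and is precisely what makes overflow work. Once this dichotomy is available, (1) and (3) reduce to routine applications of \emph{transfer} and the \emph{Internal Definition Principle} respectively.
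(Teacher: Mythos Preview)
Your proof is correct. Items (1) and (3) match the paper's approach essentially verbatim (the paper cites Propositions~\ref{finitedoesntenlarge} and~\ref{initialsegment} for (1), and constructs the same auxiliary set $C$ for (3)).

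For item (2), however, you take a genuinely different route. The paper argues directly by transfer: if $B$ contains no infinite number then $B$ is bounded above in $\starN$; transferring ``every nonempty bounded subset of $\N$ has a maximum'' to internal subsets of $\starN$, one concludes $B$ has a maximum $m\in\N$, so $B\subseteq[1,m]$ is finite, contradicting the infinitude of $B\cap\N$. Your argument instead invokes the cardinality dichotomy (the corollary to Proposition~\ref{hyperfinitecardinality}): an internal set contained in $\N$ is countable, hence finite. Both are valid; the paper's argument is more elementary and self-contained (it uses only transfer of a first-order property of $\N$), whereas yours imports a structural fact whose proof already required some work with hyperfinite intervals and the standard-part map. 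On the other hand, your phrasing makes the external nature of $\N$ (and its subsets) the visible engine of overflow, which is a conceptually clean way to remember why the principle holds.
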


\begin{proof}
Item 1 follows from Propositions \ref{finitedoesntenlarge} and \ref{initialsegment}.  For item 2, suppose that $B$ does not contain an infinite number.  Then $B$ is bounded above in $\starN$.  By \emph{transfer}, $B$ has a maximum, which is necessarily an element of $\N$, contradicting that $B\cap \N$ is infinite.  For item 3, let $C:=\{\alpha\in \starN \ : \ [1,\alpha]\subseteq B\}$.  Then $C$ is internal and $\N\subseteq C$ by assumption.  By item 2 applied to $C$, there is $\alpha\in C$ that is infinite. This $\alpha$ is as desired.
\end{proof}

\begin{proposition}[Underflow principles]
\

\begin{enumerate}
\item
If $B\subseteq\starN$ is internal and $B$ contains arbitrarily
small infinite numbers, then $B$ contains a finite number.
\item
If $B\subseteq\starN$ is internal and $[\alpha,+\infty)\subseteq B$ 
for every infinite $\alpha\in\starN$ then
then $[n,+\infty)\subseteq B$ for some finite $n\in\N$.
\end{enumerate}
\end{proposition}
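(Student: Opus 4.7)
My plan is to prove both items by reducing to the overflow principles just established (and the transferred well-ordering of internal subsets of $\starN$), essentially by taking complements and passing to contrapositives.

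For item (1), the plan is to argue by contradiction: assume $B$ contains no finite number, so $B \subseteq \starN \setminus \N$. Since $B$ is a nonempty internal subset of $\starN$, by transfer of the well-ordering property (applied to internal subsets of $\starN$, as in Remark \ref{rmk-wellordering}), $B$ has a least element $\beta_0$. This $\beta_0$ must itself be infinite, since every element of $B$ is. But the hypothesis that $B$ contains arbitrarily small infinite numbers supplies some infinite $\beta \in B$ with $\beta < \beta_0$, contradicting the minimality of $\beta_0$.

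For item (2), I would pass to the complement: set $C := \starN \setminus B$. Since $\starN$ and $B$ are internal, $C$ is internal by Proposition \ref{internal}(1) (or directly by the Internal Definition Principle). The key observation is that $C \subseteq \N$: if there were an infinite $\gamma \in C$, then by hypothesis $\gamma \in [\gamma,+\infty) \subseteq B$, contradicting $\gamma \notin B$. Thus $C$ is an internal subset of $\starN$ containing no infinite number, so by the contrapositive of item (2) of Proposition \ref{overflow} (the overflow principle), $C \cap \N = C$ must be finite. Setting $n := \max(C) + 1$ (or $n := 1$ if $C = \emptyset$) then yields $[n,+\infty) \subseteq B$ with $n \in \N$.

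Neither step presents a serious obstacle; the main conceptual point to keep straight is the careful use of internality to invoke either the transferred well-ordering or the previously proved overflow principle. In particular, one must verify that the sets constructed ($B$ itself in (1), the complement $C$ in (2)) really are internal, which is immediate from closure of the class of internal sets under set-theoretic operations. The rest is a clean contradiction argument in each case.
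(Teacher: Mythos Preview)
Your proof of item (1) is essentially identical to the paper's: assume no finite element, take the minimum of the internal set $B$ (which exists by transfer of well-ordering), observe it must be infinite, and contradict the hypothesis.

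For item (2) your argument is correct but differs from the paper's. The paper applies item (1) directly to the internal set $C:=\{\alpha\in\starN : [\alpha,+\infty)\subseteq B\}$: since every infinite $\alpha$ lies in $C$, the set $C$ contains arbitrarily small infinite numbers, so by item (1) it contains some finite $n$, giving $[n,+\infty)\subseteq B$. You instead pass to the complement $\starN\setminus B$, show it is internal and lies entirely in $\N$, and invoke the contrapositive of overflow to conclude it is finite. Both routes are short and clean; the paper's choice mirrors exactly how overflow item (3) was derived from overflow item (2), keeping the two propositions structurally parallel, while your route has the minor virtue of not needing the Internal Definition Principle (just closure of internal sets under set-difference).
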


\begin{proof}
For item 1, suppose that $B$ does not contain a finite number.  Then the minimum of $B$ is necessarily infinite, contradicting the assumption that $B$ contains arbitrarily small infinite numbers.  Item 2 follows by applying item 1 to the internal set $C:=\{\alpha\in \starN \ : \ [\alpha,+\infty)\subseteq B\}$.
\end{proof}

In practice, one often says they are using \emph{overflow} when they are using any of the items in Proposition \ref{overflow} and likewise for \emph{underflow}.  
Below we will present a use of \emph{overflow} in graph theory.

\subsection{An application to graph theory}

Recall that a \emph{graph} is a set $V$ (the set of 
\emph{vertices}) endowed with an anti-reflexive and symmetric binary
relation $E$ (the set of \emph{edges}). 
Notice that if $G=(V,E)$ is a graph\index{graph} then also 
its hyper-extension ${}^*G=({}^*V,{}^*E)$ is a graph. 
By assuming as usual that ${}^*v=v$ for all $v\in V$,
one has that $G$ is a sub-graph of ${}^*G$.
A graph $G=(V,E)$ is \emph{locally finite}\index{graph!locally finite} if
for every vertex $v\in V$, its \emph{set of neighbors} 
$N_G(v)=\{u\in V\mid \{u,v\}\in E\}$ is finite. 
One has the following simple nonstandard characterization.

\begin{proposition}
A graph $G=(V,E)$ is locally finite if and only if
${}^*(N_G(v))\subseteq V$ for every $v\in V$.
\end{proposition}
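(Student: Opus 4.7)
The plan is to apply Proposition \ref{finitedoesntenlarge}, which characterizes finite sets as those whose hyper-extension equals their pointwise star-image ${}^{\sigma}A=\{{}^{*}a : a\in A\}$. Since we have adopted the convention ${}^{*}v=v$ for every $v\in V$ (treating $V$ as the standard object under study), we have ${}^{\sigma}N_G(v)=N_G(v)\subseteq V$ for each $v\in V$.

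For the forward direction, suppose $G$ is locally finite, so $N_G(v)$ is finite. Then by Proposition \ref{finitedoesntenlarge} one has ${}^{*}(N_G(v))={}^{\sigma}N_G(v)=N_G(v)\subseteq V$, which gives the inclusion immediately.

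For the converse, I will argue the contrapositive: assume $G$ is not locally finite and produce a $v\in V$ for which ${}^{*}(N_G(v))\not\subseteq V$. Pick $v\in V$ with $N_G(v)$ infinite. By Proposition \ref{finitedoesntenlarge} again, ${}^{\sigma}N_G(v)\subsetneq {}^{*}(N_G(v))$, so we may choose $\xi\in {}^{*}(N_G(v))\setminus N_G(v)$. The key step, and the one small subtlety, is to verify $\xi\notin V$: if on the contrary $\xi\in V$, then the convention ${}^{*}\xi=\xi$ combined with Proposition \ref{starpropsets}(\ref{starmembership}) (i.e.\ $a\in A\Leftrightarrow {}^{*}a\in \starA$) would give $\xi\in N_G(v)$, contradicting our choice of $\xi$. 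Hence $\xi\in {}^{*}(N_G(v))\setminus V$, as required.

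There is no real obstacle here — the statement is essentially a direct unpacking of Proposition \ref{finitedoesntenlarge} specialized to the set $N_G(v)$, once one remembers that elements of $V$ are fixed by the star map so that ${}^{\sigma}N_G(v)$ coincides with $N_G(v)$ itself.
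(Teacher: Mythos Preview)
Your proof is correct and follows essentially the same route as the paper's: both directions invoke Proposition~\ref{finitedoesntenlarge}, and for the converse both pick $\xi\in{}^{*}(N_G(v))\setminus N_G(v)$ and argue $\xi\notin V$. The only cosmetic difference is that the paper phrases the last step via the identity ${}^{*}(N_G(v))\cap V=N_G(v)$, whereas you unwind it through Proposition~\ref{starpropsets}(\ref{starmembership}) and the convention ${}^{*}\xi=\xi$ --- these are the same observation.
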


\begin{proof}
If $G$ is locally finite then for every $v\in V$ the set
of its neighbors $N_G(v)=\{u_1,\ldots,u_n\}$ is finite,
and so ${}^*N_G(v)=\{{}^*u_1,\ldots,{}^*u_n\}=\{u_1,\ldots,u_n\}\subseteq V$.
Conversely, if $G$ is not locally finite, then there exists a vertex $v\in V$
such that $N_G(v)$ is infinite, and we can pick an element
$\tau\in{}^*(N_G(v))\setminus N_G(v)$. 
Now, $\tau\notin V$, as otherwise
$\tau\in{}^*(N_G(v))\cap V=N_G(v)$, a contradiction.
\end{proof}

Recall that a \emph{finite path} in a graph $G=(V,E)$
is a finite sequence $\langle v_i\mid i=1,\ldots,n\rangle$
of pairwise distinct vertexes such that $\{v_i,v_{i+1}\}\in E$ for every $i<n$.
A graph is \emph{connected} if for every pair of
distinct vertices $u,u'$ there exists a finite path $\langle v_i\mid i=1,\ldots,n\rangle$
where $v_1=u$ and $v_n=u'$.
An \emph{infinite path} is a sequence $\langle v_i\mid i\in\N\rangle$
of pairwise distinct vertexes such that $\{v_i,v_{i+1}\}\in E$ for every $i\in\N$.


\begin{theorem}[K\"onig's Lemma - I]
Every infinite connected graph that is locally finite
contains an infinite path.
\end{theorem}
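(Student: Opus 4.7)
The plan is to produce the infinite path as the ``standard part'' of a hyperfinitely long internal path reaching an infinite distance from a base vertex, then use the nonstandard characterization of local finiteness to inductively check that every initial segment consists of standard vertices.

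First, I would fix an arbitrary vertex $v_0 \in V$ and let $d=d_G$ denote graph distance. Since $G$ is locally finite the ball $B(v_0,n)=\{v\in V\mid d(v_0,v)\le n\}$ is finite for every $n\in\N$ (an easy induction on $n$, using that each ball is a finite union of finite neighbor sets). Because $G$ is connected and infinite, $V=\bigcup_n B(v_0,n)$, so $\{d(v_0,v):v\in V\}\subseteq\N$ is unbounded. By transfer, there exists $\tau\in\starV$ with ${}^*d(v_0,\tau)=N$ for some infinite $N\in\starN$; transferring ``for any two vertices at distance $n$ there is a path of length exactly $n$ joining them'', we obtain an internal hyperfinite path $\langle w_0,w_1,\ldots,w_N\rangle$ in ${}^*G$ with $w_0=v_0$ and $w_N=\tau$, whose vertices are pairwise distinct and satisfy $\{w_{i},w_{i+1}\}\in{}^*E$ for every $i<N$.

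Next, I would show by induction on $n\in\N$ that $w_n\in V$. The base case $w_0=v_0$ is trivial. For the inductive step, assume $w_{n-1}\in V$. Then $N_G(w_{n-1})$ is finite, so by Proposition \ref{finitedoesntenlarge} we have ${}^*(N_G(w_{n-1}))=N_G(w_{n-1})$. Since $\{w_{n-1},w_n\}\in{}^*E$, we have $w_n\in{}^*(N_G(w_{n-1}))=N_G(w_{n-1})\subseteq V$, completing the induction. Note that this step is exactly the nonstandard characterization of local finiteness stated just before the theorem, and is really where local finiteness is used.

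Finally, I set $v_n:=w_n$ for $n\in\N$. For each $n$, both $w_n$ and $w_{n+1}$ are standard vertices with $\{w_n,w_{n+1}\}\in{}^*E$; since for standard $a,b\in V$ the membership $\{a,b\}\in{}^*E$ is equivalent to $\{a,b\}\in E$ by transfer, we get $\{v_n,v_{n+1}\}\in E$. Pairwise distinctness of the $v_n$'s follows from the pairwise distinctness of the $w_i$'s along the internal path (any two distinct $n,m\in\N$ are legitimate indices since $N$ is infinite). Hence $\langle v_n\mid n\in\N\rangle$ is the desired infinite path. The only subtle point, and the one I would be most careful about, is justifying the existence of the internal path of infinite length $N$ via a clean transfer statement; everything else is a routine use of the principle that finite sets do not enlarge under the star map.
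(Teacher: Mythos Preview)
Your proof is correct and follows essentially the same route as the paper: obtain an internal hyperfinite path of infinite length starting at a standard vertex, then use the nonstandard characterization of local finiteness (Proposition~\ref{finitedoesntenlarge} / the fact that ${}^*(N_G(v))=N_G(v)$ for $v\in V$) to show inductively that every finite-index vertex along the path is standard. The only cosmetic difference is that the paper simply picks any $\tau\in{}^*V\setminus V$ and transfers connectivity to get a hyperfinite path from $u$ to $\tau$, whereas you first argue that graph distances are unbounded and then transfer the existence of geodesics; your variant has the minor bonus that pairwise distinctness of the $w_i$ is automatic from the geodesic property, while the paper relies on its definition of ``path'' already including distinctness.
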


\begin{proof}
Given a locally finite connected graph $G=(V,E)$ where $V$ is infinite,
pick $u\in V$ and $\tau\in {}^*V\setminus V$.
Since $G$ is connected, by \emph{transfer} there exists a hyperfinite
sequence
$\langle v_i\mid i=1,\ldots,\mu\rangle$ for some $\mu\in\starN$
where $v_1=u$ and $\{v_i,v_{i+1}\}\in{}^*E$ for every $i<\mu$.
By local finiteness, ${}^*(N_G(v_1))\subseteq V$
and so $v_2\in V$ and $\{v_1,v_2\}\in E$. Then, by induction, it is easily
verified that the restriction $\langle v_i\mid i\in\N\rangle$
of the above sequence to the finite indexes is an infinite path in $G$.
\end{proof}

A simple but relevant application of \emph{overflow}
proves the following equivalent formulation in terms of trees.

\begin{theorem}[K\"onig's Lemma - II]
Every infinite, finitely branching tree has an infinite path.
\end{theorem}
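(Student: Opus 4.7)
The plan is to mimic the nonstandard proof of König's Lemma - I, using \emph{overflow} to produce a node at an infinite level of ${}^{\ast}T$ and then reading off its standard-level ancestors. Write $T = (V,E)$ for the given infinite finitely branching tree, fix its root $r$, and let $\ell : V \to \N_0$ denote the level (distance-from-$r$) function. A straightforward induction using finite branching shows that each level $V_n := \ell^{-1}(n)$ is finite; hence from $V = \bigcup_{n \in \N_0} V_n$ being infinite we conclude that $\ell(V)$ is an infinite subset of $\N_0$.

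Next I invoke \emph{overflow}: since $\ell(V)$ is infinite, its hyper-extension ${}^{\ast}(\ell(V)) = {}^{\ast}\ell({}^{\ast}V)$ contains some infinite $N \in {}^{\ast}\N_0 \setminus \N_0$, so we may pick $\tau \in {}^{\ast}V$ with ${}^{\ast}\ell(\tau) = N$. Each $v \in V$ carries a unique finite list of ancestors $A(v) = (a_0, a_1, \ldots, a_{\ell(v)})$ with $a_0 = r$, $a_{\ell(v)} = v$, $\ell(a_i) = i$, and $a_{i-1}$ the parent of $a_i$. Applying transfer to the function $v \mapsto A(v)$ yields a hyperfinite sequence ${}^{\ast}A(\tau) = \langle v_0, v_1, \ldots, v_N \rangle$ inside ${}^{\ast}V$ satisfying $v_0 = r$, $v_N = \tau$, ${}^{\ast}\ell(v_i) = i$, and $v_{i-1}$ is the parent (in ${}^{\ast}T$) of $v_i$ for every $1 \le i \le N$.

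It then remains to show by induction on $n \in \N_0$ that $v_n \in V$, so that $\langle v_n : n \in \N_0 \rangle$ is the desired infinite path in $T$. The base case $v_0 = r \in V$ is immediate; for the step, if $v_n \in V$ then $v_{n+1}$ is a child of $v_n$ in ${}^{\ast}T$, which by transfer means $v_{n+1} \in {}^{\ast}(\mathrm{children}_T(v_n))$. Finite branching makes $\mathrm{children}_T(v_n)$ a finite subset of $V$, so Proposition \ref{finitedoesntenlarge} gives ${}^{\ast}(\mathrm{children}_T(v_n)) = \mathrm{children}_T(v_n) \subseteq V$, and thus $v_{n+1} \in V$.

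The only real obstacle is bookkeeping: one must formulate ``level'', ``parent'', and ``ancestor sequence'' as honest functions between specified sets so that transfer and overflow apply cleanly and the hyperfinite ancestor chain of $\tau$ is produced legitimately. Once that setup is in place, the argument parallels the proof of the first form of König's Lemma, with overflow supplying the nonstandard witness and Proposition \ref{finitedoesntenlarge} combined with finite branching forcing each finite-index entry of the hyperfinite ancestor chain to be standard.
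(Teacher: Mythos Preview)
Your proof is correct and follows essentially the same approach as the paper's: use overflow to obtain a node $\tau$ at an infinite level of ${}^{\ast}T$, then read off its standard-level ancestors to produce the infinite branch. The paper's version is terser (it simply takes $\{y\in T : y\text{ lies below }\tau\}$ and implicitly uses that each finite level $T_n$ satisfies ${}^{\ast}T_n = T_n$), whereas you carry out the same finiteness argument more explicitly via the parent--child induction and Proposition~\ref{finitedoesntenlarge}.
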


\begin{proof}
Let $T_n$ denote the nodes of the tree of height $n$.  Since $T$ is finitely branching, 
each $T_n$ is finite.  Since $T$ is infinite, each $T_n\not=\emptyset$.  
By \emph{overflow}\index{overflow principle}, there is $N>\N$ such that $T_N\not=\emptyset$.  
Fix $x\in T_N$.  Then $\{y\in T \mid y\text{ is connected }x\}$ is an infinite branch in $T$.
\end{proof}

\section{The saturation principle}

The \emph{transfer principle}\index{transfer principle} is all that one needs to
to develop the machinery of nonstandard analysis, 
but for advanced applications another property is also necessary, namely:

\begin{definition}
\emph{Countable Saturation Principle:}\index{saturation!countable}\
Suppose $\{B_n\}_{n\in\N}\subseteq\starA$ is a
countable family of internal sets
with the finite intersection property.
Then
$\bigcap_{n\in\N}B_n\neq\emptyset$.
\end{definition}

\begin{exercise}
Assume countable saturation. Then for every sequence $\langle B_n\mid n\in\N\rangle$
of internal elements can be extended to an internal sequence $\langle B_n\mid n\in\starN\rangle$,
that is, there exists an internal function $\sigma$ with domain $\starN$
and such that $\sigma(n)=B_n$ for every $n\in\N$.
\end{exercise}

Countable saturation\index{saturation!countable} will be instrumental in the
definition of \emph{Loeb measures}.
In several contexts, stronger saturation principles are assumed 
where also families of larger size are allowed.
Precisely, if $\kappa$ is a given uncountable cardinal,
then one considers the following.

\begin{definition}
$\kappa$-\emph{saturation property}\index{saturation!$\kappa $-}:\
If $\mathcal{B}\subseteq\starA$ is a family of
internal subsets of cardinality where $|\mathcal{B}|<\kappa$, and if
$\mathcal{B}$ has the finite intersection property,
then $\,\bigcap_{B\in\mathcal{B}}{}B\neq\emptyset$.
\end{definition}

Notice that, in this terminology, countable saturation\index{saturation!countable} is
$\aleph_1$-saturation.

In addition to countable saturation, in the applications
presented in this book, we will only use
the following weakened version of $\kappa$-saturation\index{enlarging property!$\kappa $-},
where only families of hyper-extensions are considered.

\begin{definition}
$\kappa$-\textit{enlarging property}:\
Suppose $\F\subseteq\mathcal{P}(A)$ has cardinality
$|\F|<\kappa$. If $\F$ has the finite intersection property,
then $\bigcap_{F\in\F}{}^*F\neq\emptyset$.\footnote{
~We remark that the enlarging property is strictly
weaker than saturation, in the sense that for every infinite $\kappa$
there are models of nonstandard analysis where the 
$\kappa$-enlarging property holds but $\kappa$-saturation fails.}
\end{definition}

As a first important application of the enlarging property,
one obtains that sets are included in a hyperfinite subset
of their hyper-extension. 

\begin{proposition}\label{hypapprox}
If the $\kappa$-enlarging property holds, then for
every set $X$ of cardinality $|X|<\kappa$
there exists a hyperfinite subset $H\subseteq\starX$
such that $X\subseteq H$.
\end{proposition}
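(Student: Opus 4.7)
The plan is to apply the $\kappa$-enlarging property to a family of subsets of $\Fin(X)$ indexed by the elements of $X$ itself. The natural family to consider is $\mathcal{F} := \{F_x : x \in X\} \subseteq \mathcal{P}(\Fin(X))$, where, for each $x \in X$, I set $F_x := \{A \in \Fin(X) : x \in A\}$. Since each $F_x$ is indexed by an element of $X$, the cardinality of $\mathcal{F}$ is at most $|X| < \kappa$, which matches the hypothesis of the $\kappa$-enlarging property.

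First, I would verify that $\mathcal{F}$ has the finite intersection property: given $x_1, \ldots, x_n \in X$, the finite set $\{x_1, \ldots, x_n\}$ belongs to $\Fin(X)$ and lies in $F_{x_1} \cap \cdots \cap F_{x_n}$, so this intersection is nonempty. By the $\kappa$-enlarging property applied to $\mathcal{F}$, there exists some $H \in \bigcap_{x \in X} {}^*F_x$.

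Now I would check that this $H$ has the required properties. Since $F_x \subseteq \Fin(X)$ for every $x$, transfer gives ${}^*F_x \subseteq {}^*\Fin(X)$, so $H \in {}^*\Fin(X)$; by Proposition \ref{hyperfinitebasics}(1), this means exactly that $H$ is a hyperfinite subset of $\starX$. Next, applying transfer to the elementary property ``$\forall A \in \Fin(X)\ (A \in F_x \Leftrightarrow x \in A)$'' yields ``$\forall A \in {}^*\Fin(X)\ (A \in {}^*F_x \Leftrightarrow {}^*x \in A)$.'' Using the standing convention that ${}^*x = x$ for $x \in X$, membership of $H$ in ${}^*F_x$ translates to $x \in H$, so $X \subseteq H$ as required.

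The only step that requires any care is the verification that $\mathcal{F}$ has the finite intersection property and that the transferred membership condition correctly identifies $H$ as containing each standard $x$; both amount to routine applications of transfer, but one must remember to use the convention ${}^*x = x$ on elements of the base set $X$ to conclude $X \subseteq H$ rather than only ${}^\sigma X \subseteq H$.
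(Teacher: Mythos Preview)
Your proof is correct and follows essentially the same approach as the paper: both define, for each $x\in X$, the set of finite subsets of $X$ containing $x$, verify the finite intersection property, and apply the $\kappa$-enlarging property to extract a hyperfinite $H$ from the intersection of the hyper-extensions. Your write-up is in fact more careful than the paper's (which is rather terse and contains a small typo), particularly in explicitly invoking transfer and the convention ${}^*x=x$ to pass from ${}^\sigma X\subseteq H$ to $X\subseteq H$.
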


\begin{proof}
For each $a\in X$, let $X_a:=\{Y\subseteq X \ : \ Y \text{ is finite and }a\in X\}$.  One then applies the $\kappa$-enlarging property to the family $\mathcal{F}:=\{X_a \ : \ a\in X\}$ to obtain $H\in \bigcap_{a\in X}{}^{\ast}X_a$. Such $H$ is as desired.
\end{proof}

%
%
%

Although it will not play a role in this book, we would be remiss if we did not mention the following example:

\begin{example}\label{topology}
Let $(X,\tau)$ be a topological space
with \textit{character} $<\kappa$, that is, such that
each point $x\in X$ has a base of neighborhoods $\mathcal{N}_x$ 
of cardinality less than $\kappa$. If we assume
the $\kappa$-enlarging property, the intersection
$\mu(x)=\bigcap_{U\in\mathcal{N}_x}{}^*U$ is nonempty.
In the literature, $\mu(x)$ is called the \textit{monad}
of $x$. Monads are the basic ingredient in applying nonstandard
analysis to topology, starting with the following characterizations
(see, \emph{e.g.}, \cite{lindstrom_invitation_1988} Ch.III):

\begin{itemize}
\item
$X$ is \textit{Hausdorff} if and only if $\mu(x)\cap\mu(y)=\emptyset$ 
whenever $x\ne y$;
\item
$A\subseteq X$ is \textit{open} if and only if
for every $x\in A$, $\mu(a)\subseteq{}^*A$;
\item
$C\subseteq X$ is \textit{closed} if and only if
for every $x\notin C$, $\mu(x)\cap{}^*C=\emptyset$;
\item
$K\subseteq X$ is \textit{compact} if and only if
${}^*K\subseteq\bigcup_{x\in K}\mu(x)$.
\end{itemize}
\end{example}

\begin{exercise}
Use the above characterization of compactness to show that $\beta S$ is the Stone-\v{C}ech compactification of $S$.  (Hint:  To prove that $\beta S$ is compact, given $\u\in {}^{\ast}\beta S$, define $\mathcal{V}:=\{A\subseteq S \ : \ {}^{\ast}A\in \u\}$.  Show that $\u\in \beta S$ and $\u\in \mu(\mathcal{V})$.  To establish the universal property of $\beta S$, given a function $f:S\to K$ where $K$ is a compact Hausdorff space, define $\overline{f}:\beta S\to K$ by defining $\overline{f}(\u_\alpha):=\st(f(\alpha))$, where $\st(f(\alpha))$ is the unique $x\in K$ such that $f(\alpha)\in \mu(x)$.  Notice that this map is well-defined.)
\end{exercise}
\subsection{Saturation in the ultrapower model}

We now show that the ultrapower\index{ultrapower} model $\starR=\R^I/\U$ 
introduced in Section \ref{sec-ultrapower} provides an example of nonstandard map that satisfies
saturation. Let us start with a direct combinatorial proof
in the case of ultrapowers modulo ultrafilters on $\N$.

\begin{theorem}
For every non-principal ultrafilter $\U$ on $\N$,
the corresponding ultrapower model satisfies countable saturation.
\end{theorem}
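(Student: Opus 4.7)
The strategy is to combine the finite intersection property of the $B_n$ with the fact that every non-principal ultrafilter on $\N$ is countably incomplete (established in the exercise preceding this theorem). The countable incompleteness produces a decreasing sequence $I_1 \supseteq I_2 \supseteq \cdots$ in $\U$ with $\bigcap_n I_n = \emptyset$, and we will use these shrinking sets as ``cutoffs'' that force the construction to approximate membership in more and more of the $B_n$ as the index $i \in \N$ varies.

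First, I would replace each $B_n$ by $B_1 \cap B_2 \cap \cdots \cap B_n$; by the closure properties of the internal class, these are still internal, they form a decreasing chain, and the finite intersection property becomes the condition that each $B_n$ is nonempty. Next, using the description of internal subsets of $\starR$ (or more generally of $\starA^k$) in the ultrapower model, pick for each $n$ a sequence $T^{(n)} : \N \to \Pow(\R)$ representing $B_n$, in the sense that $B_n = \widehat{T^{(n)}}$. The hypotheses $B_n \neq \emptyset$ and $B_{n+1} \subseteq B_n$ translate into the statements that
\[
A_n := \{i \in \N \mid T^{(n)}_i \neq \emptyset\} \in \U \quad \text{and} \quad C_n := \{i \in \N \mid T^{(n+1)}_i \subseteq T^{(n)}_i\} \in \U.
\]

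Now fix a decreasing sequence $I_1 \supseteq I_2 \supseteq \cdots$ in $\U$ with empty intersection, and set
\[
D_n := I_n \cap A_1 \cap \cdots \cap A_n \cap C_1 \cap \cdots \cap C_{n-1}.
\]
Each $D_n$ lies in $\U$, the sequence $(D_n)$ is decreasing, and $\bigcap_n D_n = \emptyset$. Therefore, for every $i \in \N$, the quantity $n(i) := \max\{n \in \N \mid i \in D_n\}$ is a well-defined finite number (taking $n(i) := 0$ if $i \notin D_1$). Define a representative sequence $\sigma : \N \to \R$ by choosing $\sigma(i)$ to be any element of $T^{(n(i))}_i$ when $n(i) \geq 1$ (this is possible since $D_{n(i)} \subseteq A_{n(i)}$) and arbitrarily otherwise.

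The claim is that $[\sigma] \in \bigcap_n B_n$. To verify this, fix $n$ and observe that whenever $i \in D_n$, we have $n(i) \geq n$, and by iterated application of the inclusions $T^{(k+1)}_i \subseteq T^{(k)}_i$ valid on $C_k \supseteq D_n$ for $k = n, n+1, \ldots, n(i)-1$, we obtain $\sigma(i) \in T^{(n(i))}_i \subseteq T^{(n)}_i$. Hence $\{i \mid \sigma(i) \in T^{(n)}_i\} \supseteq D_n \in \U$, so $[\sigma] \in B_n$. The main obstacle is the careful bookkeeping in stitching together the pointwise information about the representatives so that $[\sigma]$ lies in \emph{every} $B_n$ simultaneously; once countable incompleteness supplies the ``telescoping'' sets $I_n$, the rest is a routine diagonalization.
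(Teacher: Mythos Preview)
Your proof is correct and follows the same diagonalization strategy as the paper. The paper's argument differs in two minor ways: it does not first reduce to the nested case, instead defining $\tau(n)$ to lie in the largest nonempty intersection $T_1(n)\cap\cdots\cap T_k(n)$ with $k\le n$; and rather than invoking countable incompleteness abstractly to produce the cutoff sets $I_n$, it simply uses the tails $\{i\ge k\}$ of $\N$ (which lie in $\U$ since $\U$ is non-principal). Your version has the advantage of generalizing verbatim to any countably incomplete ultrafilter on any index set, while the paper's is slightly more direct for $\N$.
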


\begin{proof}
Let $\{B_n\}$ be a countable family of internal subsets of $\starR$
with the finite intersection property. For every $n$, pick
a function $T_n:\N\to\Pow(\R)$ such that 
$$B_n\ =\ \widehat{T}_n\ =\ 
\left\{[\sigma]\in\starR\mid \{i\in \N\mid \sigma(i)\in T_n\}\in\U\right\}.$$
For any fixed $n$, pick an element
$\tau(n)\in T_1(n)\cap\cdots\cap T_n(n)$
if that intersection is nonempty. Otherwise, pick an element
$\tau(n)\in T_1(n)\cap\cdots\cap T_{n-1}(n)$
if that intersection is nonempty, and so forth 
until $\tau(n)$ is defined.
We agree that $\tau(n)=0$ in case $T_1(n)=\emptyset$.
By the definition of $\tau$, one has the following property:

\begin{itemize}
\item
If $T_1(n)\cap\cdots\cap T_k(n)\neq\emptyset$
and $n\ge k$ then $\tau(n)\in T_1(n)\cap\ldots\cap T_k(n)$.
\end{itemize}

\noindent
Now let $k$ be fixed. By the finite intersection property,
$\widehat{T}_1\cap\ldots\cap\widehat{T}_k\ne\emptyset$, so
there exists $\sigma:\N\to\R$ such that
$\Lambda_j=\{i\in\N\mid \sigma(i)\in T_j(i)\}\in\U$ for every $j=1,\ldots,k$.
In particular, the set of indexes
$\Gamma(k)=\{i\in\N\mid T_1(i)\cap\ldots\cap T_k(i)\ne\emptyset\}\in\U$
because it is a superset of $\Lambda_1\cap\ldots\cap\Lambda_k\in\U$.
But then the set $\{i\in\N\mid\tau(i)\in T_1(i)\cap\ldots\cap T_k(i)\}\in\U$
because it is a superset of $\{i\in\Gamma(k)\mid i\ge k\}\in\U$.
We conclude that $[\tau]\in\widehat{T}_1\cap\ldots\cap\widehat{T}_k$.
As this holds for every $k$, the proof is completed.
\end{proof}

The above result can be extended to all ultrapower models
where the ultrafilter $\U$ on $I$ is 
\emph{countably incomplete}\index{ultrafilter!countably incomplete}
(recall that every non-principal ultrafilter on $\N$
is countably incomplete).

\begin{theorem}
For every infinite cardinal $\kappa$ there exist ultrafilters
$\U$ on the set $I=\text{Fin}(\kappa)$ of finite parts of $\kappa$
such that the corresponding ultrapower model 
satisfies the $\kappa^+$-enlarging property.
\end{theorem}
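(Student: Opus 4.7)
The plan is to use the standard construction of an ultrafilter on $I=\Fin(\kappa)$ that contains the ``tail sets'' indexed by elements of $\kappa$, and then exhibit a concrete representative for the nonempty intersection via a choice function driven by the finite intersection property.

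First, for each $a\in\kappa$ set $\widehat{a}:=\{i\in I\mid a\in i\}\subseteq I$. Observe that $\{\widehat{a}\mid a\in\kappa\}$ has the finite intersection property: for any $a_1,\ldots,a_n\in\kappa$ we have $\{a_1,\ldots,a_n\}\in\widehat{a_1}\cap\cdots\cap\widehat{a_n}$. Applying Zorn's lemma, extend this family to an ultrafilter $\U$ on $I$. Note in passing that $\U$ is countably incomplete (since $\bigcap_{n\in\N}\widehat{a_n}=\emptyset$ for any injective sequence $a_n$ in $\kappa$), so the associated ultrapower is a genuine nonstandard extension.

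Next, consider a family $\F\subseteq\Pow(A)$ with $|\F|\leq\kappa$ and the finite intersection property. Enumerate (with possible repetitions) $\F=\{F_\alpha\mid\alpha<\kappa\}$. For each $i\in I$, the set $i\subseteq\kappa$ is finite, so by the FIP the finite intersection $\bigcap_{\alpha\in i}F_\alpha$ is nonempty; pick $\sigma(i)\in\bigcap_{\alpha\in i}F_\alpha$ (with $\sigma(\emptyset)$ chosen arbitrarily in $A$). This defines a function $\sigma:I\to A$, hence an element $[\sigma]\in\starA$.

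Finally, I would verify $[\sigma]\in\bigcap_{\alpha<\kappa}{}^{\ast}F_\alpha$. Fix $\alpha<\kappa$. By construction, whenever $i\in\widehat{\alpha}$ (i.e.\ $\alpha\in i$) we have $\sigma(i)\in F_\alpha$. Hence
\[
\{i\in I\mid\sigma(i)\in F_\alpha\}\ \supseteq\ \widehat{\alpha}\ \in\ \U,
\]
so the set on the left also belongs to $\U$, giving $[\sigma]\in{}^{\ast}F_\alpha$. Since $\alpha$ was arbitrary, $[\sigma]\in\bigcap_{\alpha<\kappa}{}^{\ast}F_\alpha\neq\emptyset$, which establishes the $\kappa^+$-enlarging property. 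There is no real obstacle here; the only point requiring a moment of care is the indexing trick that turns the FIP hypothesis into a witnessing sequence, namely choosing $\sigma(i)$ in the intersection over precisely those $F_\alpha$ with $\alpha\in i$, so that the ``large'' set inside $\U$ witnessing membership in ${}^{\ast}F_\alpha$ is exactly the tail set $\widehat{\alpha}$.
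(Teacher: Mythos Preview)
Your proof is correct and follows essentially the same approach as the paper: build the ultrafilter from the tail sets $\widehat{a}=\{i\in I\mid a\in i\}$, then for a family $\{F_\alpha\mid\alpha<\kappa\}$ with the FIP define $\sigma(i)\in\bigcap_{\alpha\in i}F_\alpha$ and observe that $\widehat{\alpha}$ witnesses $[\sigma]\in{}^*F_\alpha$. Your extra remarks (countable incompleteness, enumeration with repetitions) are correct refinements but not essential to the argument.
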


\begin{proof}
For every $x\in\kappa$, let $\widehat{x}=\{a\in I\mid x\in a\}$.
Then trivially the family $\mathcal{X}=\{\widehat{x}\mid x\in\kappa\}$ has
the finite intersection property. We claim that every ultrafilter
$\U$ that extends $\mathcal{X}$ has the desired property.

Suppose that the family $\F=\{B_x\mid x\in\kappa\}\subseteq\Pow(A)$ 
satisfies the finite intersection property. Then we can pick a sequence
$\sigma:I\to A$ such that $\sigma(a)\in \bigcap_{x\in a}A_x$ 
for every $a\in I$. The proof is completed by noticing that
$[\sigma]\in \starA_x$ for every $x\in\kappa$, since
$\{a\in I\mid \sigma(a)\in A_x\}\supseteq \widehat{x}\in\U$.
\end{proof}

A stronger result holds, but we will not prove it here because
it takes a rather technical proof, and we do not need that
result in the applications presented in this book.

\begin{theorem}
For every infinite cardinal $\kappa$ there exist ultrafilters
$\U$ on $\kappa$ (named $\kappa^+$-\emph{good ultrafilters})
such that the corresponding ultrapower models
satisfy the $\kappa^+$-saturation property.
\end{theorem}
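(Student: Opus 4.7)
The plan is to follow Keisler's original approach (in ZFC, due to Kunen) via the notion of \emph{$\kappa^+$-good ultrafilters}. Recall that a filter $\mathcal{F}$ on $\kappa$ is called $\kappa^+$-good if for every monotone function $f\colon \mathrm{Fin}(\kappa)\to \mathcal{F}$ (meaning $s\subseteq t \Rightarrow f(t)\subseteq f(s)$) there exists a multiplicative refinement, that is, a function $g\colon \mathrm{Fin}(\kappa)\to \mathcal{F}$ with $g(s)\subseteq f(s)$ for all $s$ and $g(s\cup t)=g(s)\cap g(t)$. I would prove the theorem by first establishing that any countably incomplete $\kappa^+$-good ultrafilter on $\kappa$ gives rise to a $\kappa^+$-saturated ultrapower, and then constructing such an ultrafilter by a transfinite induction of length $2^\kappa$.

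For the first step, I would start from a family $\mathcal{B}=\{B_\alpha\mid \alpha<\kappa\}$ of internal subsets of $\starA=A^\kappa/\U$ with the finite intersection property, each represented as $B_\alpha=\widehat{T}_\alpha$ for some $T_\alpha\colon \kappa\to \mathcal{P}(A)$. Using the finite intersection property, I would define the monotone map $f\colon \mathrm{Fin}(\kappa)\to \U$ by $f(s)=\{i\in \kappa\mid \bigcap_{\alpha\in s} T_\alpha(i)\neq\emptyset\}$. Countable incompleteness lets me fix a descending sequence $\kappa=I_0\supseteq I_1\supseteq\dots$ with $\bigcap_n I_n=\emptyset$, so I can refine $f$ to a monotone map taking only ``small'' values. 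Goodness then yields a multiplicative refinement $g$, which allows me to define a sequence $\sigma\colon \kappa\to A$ with $\sigma(i)\in \bigcap_{\alpha\in s}T_\alpha(i)$ for suitable $s$ at each coordinate; a careful bookkeeping argument shows $[\sigma]\in \bigcap_{\alpha<\kappa} B_\alpha$, yielding saturation.

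For the construction of the ultrafilter itself, I would take $I=\kappa$ and start with the cofinite filter on $\kappa$, which is countably incomplete. I would then enumerate $\langle (X_\xi, f_\xi): \xi < 2^\kappa\rangle$ where each $X_\xi \subseteq \kappa$ and each $f_\xi$ is a potential monotone map into an intermediate filter. At stage $\xi$ I would extend the current filter $\mathcal{F}_\xi$ to decide membership of $X_\xi$ (preserving ultrafilter construction in the limit) and, if $f_\xi$ happens to take values in $\mathcal{F}_\xi$ and is monotone, add a multiplicative refinement $g_\xi$ to ensure $\mathcal{F}_{\xi+1}$ contains witnesses to goodness for $f_\xi$. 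Taking unions at limits, the final filter $\mathcal{F}=\bigcup_{\xi<2^\kappa}\mathcal{F}_\xi$ is the desired $\kappa^+$-good ultrafilter, provided each step is legitimate (i.e.\ the sets I add preserve the finite intersection property).

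The main obstacle, and the reason this proof is technical, is precisely the legitimacy of the inductive step: given an arbitrary monotone $f\colon \mathrm{Fin}(\kappa)\to \mathcal{F}_\xi$, one must produce a multiplicative refinement $g$ whose values can be consistently added to $\mathcal{F}_\xi$ without destroying the finite intersection property. This is achieved via Kunen's combinatorial lemma on ``independent families'' of functions $\mathrm{Fin}(\kappa)\to \kappa$, which provides the reservoir of fresh sets needed to define $g(s)$ for each finite $s$ while controlling all finite Boolean combinations. Managing the bookkeeping so that every potential monotone map eventually gets refined, while simultaneously ensuring countable incompleteness is preserved and the filter becomes maximal, is the heart of the argument; once this combinatorial machinery is in place, the rest of the construction proceeds by routine transfinite recursion.
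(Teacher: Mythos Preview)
Your proposal is correct and outlines precisely the standard Keisler--Kunen construction of $\kappa^+$-good ultrafilters together with the proof that countably incomplete $\kappa^+$-good ultrafilters yield $\kappa^+$-saturated ultrapowers. The paper does not give its own proof of this result at all; it simply refers the reader to \cite[\S 6.1]{chang_model_1977}, and what you have sketched is essentially the argument found there.
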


\begin{proof}
See \cite[\S 6.1]{chang_model_1977}.
\end{proof}

\section{Hyperfinite approximation}

As established in Proposition \ref{hypapprox}, in sufficiently saturated structures, hyperfinite sets can be conveniently used as ``approximations''\index{hyperfinite!approximation}
of infinite structure. The fact that they behave as finite sets makes them 
particularly useful objects in applications of nonstandard analysis.
In this section we will see a few examples to illustrate this.  We assume that the nonstandard extension satisfies the $\kappa$-enlarging property, where $\kappa$ is larger than the cardinality of the objects under consideration.

\begin{theorem}
Every infinite set can be linearly ordered.
\end{theorem}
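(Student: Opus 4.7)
The plan is to use hyperfinite approximation together with transfer from the trivial fact that every finite set can be linearly ordered.

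First, let $X$ be an infinite set. Choose the nonstandard extension so that the $\kappa$-enlarging property holds for some $\kappa > |X|$. By Proposition \ref{hypapprox} (applied to the identity embedding $X \hookrightarrow X$), there is a hyperfinite set $H \subseteq {}^*X$ with $X \subseteq H$ (recall that under our conventions ${}^\sigma X = X$).

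Next, I would transfer the elementary statement ``for every nonempty finite $A \in \Fin(X)$ there exists a binary relation $R \in \Pow(A \times A)$ which is a linear order on $A$''. This is elementary once the properties of being irreflexive, transitive, and trichotomous are written out as bounded-quantifier formulas over $A$, as in the example following Proposition \ref{initialsegment}. Transfer then yields: for every nonempty hyperfinite $A \in {}^*\Fin(X)$ there exists an internal binary relation $<_A$ on $A$ satisfying the internal versions of irreflexivity, transitivity, and trichotomy. Applying this to our $H$, we obtain an internal linear order $<_H$ on $H$.

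Finally, I would define $<$ on $X$ simply as the restriction $<_H \restriction (X \times X)$. Irreflexivity, transitivity, and trichotomy for $<$ on $X$ follow immediately from the corresponding properties of $<_H$ on $H$: for any $x, y, z \in X \subseteq H$, we can substitute them into the transferred statements about $<_H$. Note that the resulting order on $X$ need not be internal (indeed $X$ itself is external when infinite), but that is irrelevant — we only need a linear order in the standard sense on the standard set $X$.

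The only delicate step is verifying that ``$R$ is a linear order on $A$'' really is elementary as a property of the pair $(A, R)$ quantified over $A \in \Fin(X)$ and $R \in \Pow(A \times A)$; this amounts to checking that all quantifiers range over specified sets, which they do once one writes out trichotomy as $\forall x \in A\, \forall y \in A\,( (x,y) \in R \vee (y,x) \in R \vee x = y)$ and similarly for the other axioms. After that, the argument is essentially a one-line application of transfer plus the enlarging property, which is precisely the kind of ``finite set $\Rightarrow$ hyperfinite approximation $\Rightarrow$ standard infinite set'' pattern that the section is designed to illustrate.
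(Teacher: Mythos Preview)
Your proof is correct and follows exactly the same approach as the paper: embed $X$ into a hyperfinite $H\subseteq{}^{\ast}X$ via the enlarging property, linearly order $H$ by transfer of the corresponding fact for finite sets, and restrict the order to $X$. Your write-up is more detailed than the paper's (which compresses everything into two sentences), but the ideas are identical.
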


\begin{proof}
Let $X$ be an infinite set and take hyperfinite $H\subseteq {}^{\ast}X$ such that 
$\{{}^*x\mid x\in X\}\subseteq H$.  By \emph{transfer} applied to the corresponding property of finite sets, $H$ can be linearly ordered, 
whence so can $\{{}^*x\mid x\in X\}$, and hence $X$.
\end{proof}

The next theorem is a generalization of the previous one:

\begin{theorem}
Every partial order on a set can be extended to a linear order.
\end{theorem}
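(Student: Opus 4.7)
The plan is to mimic the proof of the previous theorem, using hyperfinite approximation combined with the finite case, which is a classical fact (every partial order on a finite set extends to a linear order, e.g.\ via a topological sort argument).

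Let $(X,\leq)$ be a partially ordered set. By the enlarging property (as in Proposition \ref{hypapprox}), I would first fix a hyperfinite set $H\subseteq {}^*X$ with $\{{}^*x \mid x\in X\}\subseteq H$. Since ``$\leq$ is a partial order on $X$'' is elementary, transfer gives that ${}^*\!\leq$ is a partial order on ${}^*X$, and hence its restriction to $H$ is an internal partial order on $H$.

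Next, the key input is the finite case: for every finite set $F$ and every partial order $P$ on $F$, there exists a linear order $L$ on $F$ with $P\subseteq L$. Formally, quantifying appropriately over $\Fin(X)$ and subsets of $X\times X$, this is an elementary property. Transferring it to ${}^*X$ yields: for every hyperfinite subset of ${}^*X$ and every internal partial order on it, there is an internal linear order on it extending the partial order. Applying this to $H$ and ${}^*\!\leq|_H$ produces an internal linear order $\preceq$ on $H$ with ${}^*\!\leq|_H\,\subseteq\,\preceq$.

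Finally, I would define the desired linear order $\leq'$ on $X$ by $x\leq' y \iff {}^*x \preceq {}^*y$. This is a linear order since $\preceq$ is, and it extends $\leq$ because $x\leq y$ implies ${}^*x \,({}^*\!\leq)\, {}^*y$, hence ${}^*x\preceq {}^*y$. The main technical point to be careful about is the formulation of the finite case as a genuinely elementary property (so that transfer applies cleanly); once that is arranged, the rest is routine. There is no real obstacle beyond this bookkeeping, as the combinatorial content lives entirely in the well-known finite statement.
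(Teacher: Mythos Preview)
Your proposal is correct and follows essentially the same approach as the paper: take a hyperfinite $H\supseteq\{{}^*x:x\in X\}$, transfer the finite extension-of-partial-orders fact to obtain an internal linear order on $H$ extending ${}^*\!\leq|_H$, and pull it back to $X$. The paper's proof is simply a terser version of exactly this argument, leaving the finite case and the bookkeeping as exercises.
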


\begin{proof}
We leave it as an easy exercise by induction to show that every partial order on a finite set can be extended to a linear order.  Thus, we may precede as in the previous theorem.  This time, $H$ is endowed with the partial order it inherits from ${}^{\ast}X$, whence, by \emph{transfer}, this partial order can be extended to a linear order.  This linear order restricted to $X$ extends the original partial order on $X$.
\end{proof}

\begin{theorem}
A graph is $k$-colorable if and only if every finite subgraph is $k$-colorable.
\end{theorem}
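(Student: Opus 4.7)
The plan is to mimic the strategy used for the previous two theorems in this section, which is the standard hyperfinite-approximation technique. One direction is immediate: if $G$ admits a proper $k$-coloring $c\colon V\to\{1,\ldots,k\}$, then the restriction of $c$ to any finite vertex set yields a proper $k$-coloring of the induced finite subgraph. So the content lies in the converse.

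For the converse, let $G=(V,E)$ and assume every finite subgraph of $G$ is $k$-colorable. Assuming the $\kappa$-enlarging property with $\kappa>|V|$, apply Proposition \ref{hypapprox} to obtain a hyperfinite set $H\subseteq{}^{\ast}V$ with $V\subseteq H$ (after the usual identification ${}^{\ast}v=v$ for $v\in V$). Then $({}^{\ast}V,{}^{\ast}E)$ restricted to $H$ is an internal, hyperfinite subgraph of ${}^{\ast}G$.

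Next I would invoke \emph{transfer}. The hypothesis on $G$ is that for every $F\in\Fin(V)$ there exists a function $c\colon F\to\{1,\ldots,k\}$ such that $c(u)\ne c(v)$ whenever $\{u,v\}\in E$ with $u,v\in F$. This is an elementary property of $G$, so by transfer, for every internal hyperfinite $F\subseteq{}^{\ast}V$ there is an internal function $c\colon F\to\{1,\ldots,k\}$ satisfying $c(u)\ne c(v)$ for every ${}^{\ast}$-edge $\{u,v\}\in{}^{\ast}E$ contained in $F$. Apply this to $F:=H$ to obtain an internal proper $k$-coloring $c\colon H\to\{1,\ldots,k\}$.

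Finally, define $\tilde c\colon V\to\{1,\ldots,k\}$ to be the restriction $c\!\upharpoonright\!V$. If $\{u,v\}\in E$ then $\{u,v\}\in{}^{\ast}E$ and $u,v\in V\subseteq H$, so $\tilde c(u)=c(u)\ne c(v)=\tilde c(v)$, showing $\tilde c$ is a proper $k$-coloring of $G$. The only subtle point — and the place to be careful — is the transfer step: the statement being transferred must be phrased with the quantifier over finite subgraphs bounded by $\Fin(V)$ (so that it becomes a quantifier over hyperfinite subsets after transfer), and the coloring quantifier must be bounded by $\Fun(F,\{1,\ldots,k\})$ so that the resulting function on $H$ is guaranteed internal.
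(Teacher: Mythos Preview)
Your proof is correct and follows essentially the same approach as the paper: embed $V$ into a hyperfinite $H\subseteq{}^{\ast}V$, transfer the finite-$k$-colorability hypothesis to obtain an internal proper $k$-coloring of $H$, and restrict to $V$. Your version is more detailed—in particular, your care about bounding the quantifiers by $\Fin(V)$ and $\Fun(F,\{1,\ldots,k\})$ before applying transfer is exactly right and makes explicit what the paper leaves implicit.
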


\begin{proof}
Suppose that $G$ is a graph such that every finite subgraph is $k$-colorable.  Embed $G$ into a hyperfinite subgraph $H$ of ${}^{\ast}G$.  By \emph{transfer}, $H$ can be $k$-colored.  The restriction of this $k$-coloring to $G$ is a $k$-coloring of $G$.
\end{proof}


The next result plays an important role in the application of ultrafilter and nonstandard methods.  
Say that $f:\N\to \N$ is \emph{fixed-point free} if $f(n)\not=n$ for all $n\in \N$.
\begin{theorem}\label{fixedpointfree}
Suppose that $f:\N\to \N$ is fixed-point free.  Then there is a function $c:\N\to \{1,2,3\}$ (that is, a \emph{$3$-coloring of $\N$}) such that $c(f(n))\not=c(n)$ for all $n\in \N$.
\end{theorem}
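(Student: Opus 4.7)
The plan is to invoke the previously established theorem that a graph is $k$-colorable if and only if every finite subgraph is $k$-colorable (proved via hyperfinite approximation in the excerpt). I would consider the graph $G = (\N, E)$ with edge set $E = \{\{n, f(n)\} : n \in \N\}$. Since $f$ is fixed-point free, there are no self-loops, so this is a legitimate simple graph. A proper $3$-coloring of $G$ is exactly a function $c: \N \to \{1,2,3\}$ with $c(n) \neq c(f(n))$ for every $n \in \N$. So the task reduces to showing that every finite subgraph of $G$ is $3$-colorable.

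Fix a finite $F \subseteq \N$ and consider the induced subgraph $G[F]$, whose edges are those $\{n, f(n)\}$ with both endpoints in $F$. Each vertex $n \in F$ contributes at most one such edge (the ``outgoing'' edge $\{n, f(n)\}$, provided $f(n) \in F$); and if $f(n)=m$ and $f(m)=n$ both hold, the two contributions coincide as undirected edges. Hence $G[F]$ has at most $|F|$ edges on $|F|$ vertices, so each connected component contains at most one cycle; i.e., $G[F]$ is a \emph{pseudoforest}. Since $G[F]$ is a simple graph, any cycle in it must have length at least $3$.

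It remains to show that any finite pseudoforest is $3$-colorable, which I would do component by component. A tree component is $2$-colorable by alternating colors from a root. A unicyclic component is colored by first $2$-coloring its cycle if the cycle has even length, or $3$-coloring it if odd (length $\geq 3$), and then extending outward along the attached tree-branches, giving each new vertex any color distinct from its parent (there are always at least two choices from the palette of $3$). This yields the desired $3$-coloring of $G[F]$, which completes the reduction. The main ``obstacle'' is really just this combinatorial observation about pseudoforests; all the genuinely nonstandard content is absorbed into the compactness-style lifting from finite subgraphs to the whole graph, which is already available.
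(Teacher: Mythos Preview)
Your proof is correct and takes a somewhat different route from the paper's. Both reduce to a finite problem via hyperfinite approximation, but they differ in how. You invoke the already-proved graph-coloring compactness theorem; the paper instead applies hyperfinite approximation directly: it proves the claim for each finite $F\subseteq\N$, takes a hyperfinite $H\supseteq\N$, transfers to get an internal $3$-coloring of $H$, and restricts to $\N$. For the finite case, you identify the pseudoforest structure and color cycle-first; the paper runs a bare-hands induction on $|F|$, choosing $m\in F$ with $|f^{-1}(m)\cap F|\leq 1$ (such $m$ exists by pigeonhole), coloring $F\setminus\{m\}$ inductively, and then assigning $m$ a color avoiding at most two forbidden values (those of $f(m)$ and of the unique preimage of $m$ in $F$, when present). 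This is essentially greedy coloring of a $2$-degenerate graph, so the two finite arguments are close cousins. Your approach is more structural and reuses a theorem already on the shelf; the paper's is more self-contained.

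One small point of precision: the inference ``$G[F]$ has at most $|F|$ edges on $|F|$ vertices, so each connected component contains at most one cycle'' is not valid from the global edge count alone (a component with extra edges could be balanced by isolated vertices elsewhere). What you actually need, and have, is the stronger local fact that \emph{every} subgraph $H$ of $G[F]$ satisfies $|E(H)|\leq|V(H)|$; this follows immediately from your observation that each vertex contributes at most one outgoing edge, and applying it to each component gives the pseudoforest conclusion.
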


\begin{proof}
In order to use hyperfinite approximation\index{hyperfinite!approximation}, we first need a finitary version of the theorem:

\

\noindent \textbf{Claim:}  For every finite subset $F\subseteq \N$, there is a 3-coloring $c_F$ of $F$ such that $c(f(n))\not=c(n)$ whenever $n,f(n)\in F$.

\noindent \textbf{Proof of Claim:}  We prove the claim by induction on the cardinality of $F$, the case $|F|=1$ being trivial since $F$ never contains both $n$ and $f(n)$.  Now suppose that $|F|>1$.  Fix $m\in F$ such that $|f^{-1}(m)\cap F|\leq 1$. Such an $m$ clearly exists by the Pigeonhole principle.  Let $G:=F\setminus \{m\}$.  By the induction assumption, there is a 3-coloring $c_G$ of $G$ such that $c(f(n))\not=c(n)$ whenever $n,f(n)\in G$.  One extends $c_G$ to a 3-coloring $c_F$ of $F$ by choosing $c_F(m)$ different from $c_G(f(m))$ (if $f(m)\in G$) and different from $c_G(k)$ if $k\in G$ is such that $f(k)=m$ (if there is such $k$). Since we have three colors to choose from, this is clearly possible.  The coloring $c_F$ is as desired.  

\

Now that the claim has been proven, let $H\subseteq \starN$ be hyperfinite such that $\N\subseteq H$.  By \emph{transfer}, there is an internal 3-coloring $c_H$ of $H$ such that $c(f(n))\not=c(n)$ whenever $n,f(n)\in H$.  Since $n\in \N$ implies $n,f(n)\in H$, we see that $c_H|\N$ is a 3-coloring of $H$ as desired. 
\end{proof}

\section*{Notes and references}
Nonstandard analysis was introduced by A. Robinson in the 1960s \cite{robinson_non-standard_1966}. Robinson's original approach was based on model theory. Shortly after, Luxemburg proposed an alternative appoach based on the ultrapower construction \cite{luxemburg_non-standard_1973}, which helped to further popularize nonstandard methods. Indeed, the ultrapower construction is still one of the most common ways to present nonstandard methods. This is the approach followed in \cite{goldblatt_lectures_1998}, which is an accessible introduction to nonstandard analysis, including a rigorous formulation and a detailed proof of the
\emph{transfer principle}\index{transfer principle}.
The foundations of nonstandard analysis are also presented in detail in \S 4.4 of \cite{chang_model_1977}. A survey of several different possible introductions to nonstandard
methods is given in \cite{benci_eightfold_2006}.
A nice introduction to nonstandard methods
for number theorists, including many examples,
is presented in \cite{jin_introduction_2008} (see also \cite{jin_applications_2000}).
Finally, a full development of nonstandard analysis can be
found in several monographs in the existing literature;
see \emph{e.g.} H.J.\ Keisler's classical book \cite{keisler_infinitesimal_1984},
or the comprehensive collection of surveys in \cite{arkeryd_nonstandard_1997}.

\chapter{Hyperfinite generators of ultrafilters}\label{hypgenerator}

Throughout this chapter, we fix an infinite set $S$ and 
we assume that ${}^*s=s$ for every $s\in S$,
so that $S\subseteq{}^*S$.  

\section{Hyperfinite generators}

An important observation is that elements of $\starS$ generate ultrafilters\index{ultrafilter} on $S$\index{hyperfinite!generator}: 

\begin{exercise}
Suppose that $\alpha\in \starS$.  
Set $\mathcal{U}_\alpha:=\{A\subseteq S \ : \ \alpha \in {}^{\ast}A\}$.  
\begin{enumerate}
\item $\mathcal{U}_\alpha$ is an ultrafilter\index{ultrafilter} on $S$.
\item $\mathcal{U}_\alpha$ is principal if and only if $\alpha\in S$.
\end{enumerate}
\end{exercise}

We call $\u_\alpha$ the \emph{ultrafilter on $S$ generated by $\alpha$}\index{ultrafilter}.  
Note that in the case that $\alpha\in S$, there is no conflict between the notation 
$\u_\alpha$ in this chapter and the notation $\u_\alpha$ from Chapter 1. 

\begin{exercise}
For $k\in \N$ and $\alpha\in \starN$, show that $k\u_\alpha=\u_{k\alpha}$.\footnote{
~Recall from Definition \ref{def-ku} 
that $A\in k\u\Leftrightarrow A/k=\{n\in\N\mid nk\in A\}\in\u$.}
\end{exercise}

Recall from Exercise \ref{imageultrafilter} that, for for every function $f:S\to T$ and for
every ultrafilter $\U$\index{ultrafilter} on $S$, the \emph{image ultrafilter} $f(\U)$\index{ultrafilter}
is the ultrafilter on $T$ defined by setting
$$f(\U)\ =\ \{B\subseteq T\mid f^{-1}(B)\in\U\}.$$

\begin{exercise}
Show that \index{ultrafilter}
$f(\U_\alpha)=\U_{f(\alpha)}$.
\end{exercise}

Since there are at most $2^{2^{|S|}}$ ultrafilters\index{ultrafilter} on $S$, 
if the nonstandard extension is $\kappa$-saturated for $\kappa>2^{2^{|S|}}$, 
then $|{}^*S|>2^{2^{|S|}}$ and we see that there must exist distinct 
$\alpha,\beta\in \starS\setminus S$ such that $\u_\alpha=\u_\beta$
(see Proposition \ref{prop-enlarging} and Exercise \ref{ex-enlarging} below).
This leads to the following notion, which is of central importance in Part II of this book.

\begin{definition}
Given $\alpha,\beta\in \starS$, we say that $\alpha$ and $\beta$ are \emph{$u$-equivalent}, written $\alpha\sim\beta$, if $\u_\alpha=\u_\beta$.
\end{definition}  

Here are some useful properties of this relation on $\starS$:

\begin{proposition}\label{simproperties}

\

\begin{enumerate}
\item If $\alpha,\beta\in S$, then $\alpha\sim\beta$ if and only if $\alpha=\beta$.
\item Suppose that $f:\N\to \N$ and $\alpha\sim \beta$.  Then $f(\alpha)\sim f(\beta)$.
\item Suppose that $f:\N\to \N$ and $\alpha$ is such that $f(\alpha)\sim \alpha$.  Then $f(\alpha)=\alpha$.
\end{enumerate}
\end{proposition}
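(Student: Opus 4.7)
The plan is to dispatch (1) and (2) as quick consequences of definitions and earlier exercises, and then to put all the work into (3), which is the substantive part.

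For (1), I would observe that if $\alpha \in S$, then $\{\alpha\} \in \u_\alpha$ because $\alpha = {}^{*}\alpha \in {}^{*}\{\alpha\} = \{\alpha\}$; so if $\u_\alpha = \u_\beta$ with $\alpha,\beta \in S$, then $\{\alpha\} \in \u_\beta$, forcing $\beta \in \{\alpha\}$, i.e.\ $\alpha = \beta$. For (2), I would simply cite the exercise immediately preceding this proposition, which identifies $\u_{f(\alpha)}$ with the image ultrafilter $f(\u_\alpha)$ (the verification being that $A \in \u_{f(\alpha)}$ iff ${}^{*}f(\alpha) \in {}^{*}A$ iff $\alpha \in {}^{*}(f^{-1}(A))$ iff $f^{-1}(A) \in \u_\alpha$). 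Consequently $\u_\alpha = \u_\beta$ immediately yields $\u_{f(\alpha)} = f(\u_\alpha) = f(\u_\beta) = \u_{f(\beta)}$.

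For (3), the heart of the proposition, the plan is to argue by contradiction and invoke Theorem \ref{fixedpointfree}. So assume $f(\alpha) \sim \alpha$ yet $f(\alpha) \neq \alpha$ in ${}^{*}\N$. Let $B := \{n \in \N : f(n) \neq n\}$; by transfer ${}^{*}B = \{\beta \in {}^{*}\N : {}^{*}f(\beta) \neq \beta\}$, so $\alpha \in {}^{*}B$ and hence $B \in \u_\alpha$. Since $f$ is not assumed to be fixed-point free, I would extend $f|_B$ to a fixed-point-free function $g : \N \to \N$, e.g.\ by setting $g(n) := f(n)$ for $n \in B$ and $g(n) := n+1$ for $n \notin B$. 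Applying Theorem \ref{fixedpointfree} to $g$ supplies a $3$-coloring $c : \N \to \{1,2,3\}$ with $c(g(n)) \neq c(n)$ for every $n$; crucially, $c(f(n)) \neq c(n)$ for every $n \in B$.

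I would then close using the partition property of ultrafilters (Exercise \ref{Exercise:part}). Writing $A_i := c^{-1}(i)$, the decomposition $\N = A_1 \sqcup A_2 \sqcup A_3$ forces exactly one $A_i$ to lie in $\u_\alpha$; say $A_i \in \u_\alpha$, so that $\alpha \in {}^{*}A_i$ and hence ${}^{*}c(\alpha) = i$. Since $B \in \u_\alpha$ as well, $\alpha \in {}^{*}B$, and transferring the statement ``$\forall n \in B,\ c(f(n)) \neq c(n)$'' produces ${}^{*}c({}^{*}f(\alpha)) \neq {}^{*}c(\alpha) = i$. Hence $f(\alpha) \notin {}^{*}A_i$, i.e.\ $A_i \notin \u_{f(\alpha)}$, contradicting $\u_\alpha = \u_{f(\alpha)}$. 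The main obstacle is precisely the idea needed to deploy Theorem \ref{fixedpointfree}: $f$ itself need not be fixed-point free, so one must modify it on the $\u_\alpha$-small set where $f(n) = n$ in such a way that the transferred coloring inequality still fires at the point $\alpha$, which we arranged by only changing $f$ off of $B$.
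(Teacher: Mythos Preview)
Your proof is correct and follows essentially the same route as the paper: for (3), both arguments pass to the set of non-fixed points of $f$, extend $f$ to a fixed-point-free $g$, apply Theorem~\ref{fixedpointfree} to obtain a $3$-coloring, and then transfer to produce a color class $X$ with $\alpha\in{}^{*}X$ but $f(\alpha)\notin{}^{*}X$. The only cosmetic difference is that the paper transfers the global statement $c(g(n))\neq c(n)$ and then uses $g(\alpha)=f(\alpha)$, whereas you transfer the statement restricted to $B$; both yield the same conclusion.
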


\begin{proof}
Items (1) and (2) are easy and left to the reader.  We now prove (3).  Suppose that $f(\alpha)\not=\alpha$.  Let $A:=\{n\in \N \ : \ f(n)\not=n\}$.  Let $g:\N\to \N$ be fixed-point free such that $f|A=g|A$.  By Theorem \ref{fixedpointfree}, there is a 3-coloring $c$ of $\N$ such that $c(g(n))\not=c(n)$ for all $n\in \N$.  By transfer, $c(g(\alpha))\not=c(\alpha)$.  Since $\alpha\in \starA$, we have $f(\alpha)=g(\alpha)$, so $c(f(\alpha))\not=c(\alpha)$.  Setting, $i:=c(\alpha)$ and $X:=\{n\in \N \ : \ c(n)=i\}$, we have that $\alpha\in {}^{\ast}X$ but $f(\alpha)\notin {}^{\ast}X$, whence $f(\alpha)\not\sim \alpha$.
\end{proof}

We have seen that elements of $\starS$ generate ultrafilters on $S$\index{ultrafilter}.  
Under sufficient saturation, the converse holds:

\begin{proposition}\label{prop-enlarging}
Assume that the nonstandard universe has the
$(2^{|S|})^+$-enlarging property.  
Then for every $\mathcal{U}\in \beta S$, there is $\alpha\in \starS$ such that $\mathcal{U}=\mathcal{U}_\alpha$. 
\end{proposition}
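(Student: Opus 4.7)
The plan is to apply the $(2^{|S|})^+$-enlarging property to the ultrafilter $\mathcal{U}$ itself, viewed as a family of subsets of $S$. Since $\mathcal{U}\subseteq \mathcal{P}(S)$, we have $|\mathcal{U}|\leq 2^{|S|}<(2^{|S|})^+$, so the cardinality hypothesis of the enlarging property is satisfied. Moreover, $\mathcal{U}$ has the finite intersection property (in fact, it is closed under finite intersections, and $\emptyset\notin\mathcal{U}$).

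First, I would invoke the $(2^{|S|})^+$-enlarging property applied to the family $\mathcal{F}:=\mathcal{U}$ to obtain an element
\[
\alpha\in \bigcap_{A\in\mathcal{U}}{}^{\ast}A.
\]
Next, I would verify that $\mathcal{U}=\mathcal{U}_\alpha$. By construction, for every $A\in\mathcal{U}$ we have $\alpha\in{}^{\ast}A$, i.e.\ $A\in\mathcal{U}_\alpha$; thus $\mathcal{U}\subseteq\mathcal{U}_\alpha$. Since $\mathcal{U}$ is a maximal filter on $S$ (by the characterization of ultrafilters as maximal filters established earlier) and $\mathcal{U}_\alpha$ is a filter containing $\mathcal{U}$, maximality forces $\mathcal{U}=\mathcal{U}_\alpha$. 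Alternatively, and perhaps more cleanly, one can argue directly: given any $B\subseteq S$, either $B\in\mathcal{U}$ or $S\setminus B\in\mathcal{U}$; in the former case $\alpha\in{}^{\ast}B$ so $B\in\mathcal{U}_\alpha$, while in the latter case $\alpha\in{}^{\ast}(S\setminus B)={}^{\ast}S\setminus{}^{\ast}B$ so $B\notin\mathcal{U}_\alpha$. This shows the two ultrafilters agree on every subset of $S$.

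There is essentially no main obstacle here: the proof is a one-shot application of the enlarging property, and the only thing to check is that the cardinality bound on $\mathcal{U}$ matches the hypothesis and that $\mathcal{U}$ has the finite intersection property, both of which are immediate. The small subtlety worth stating explicitly is that we need $A\in\mathcal{U}\Rightarrow A\neq\emptyset$ (which is built into the definition of a filter), so that the enlarging property does not yield a triviality, and that the equality ${}^{\ast}(S\setminus B)={}^{\ast}S\setminus{}^{\ast}B$ used in the direct argument is an instance of Proposition \ref{starpropsets}(\ref{starsetminus}).
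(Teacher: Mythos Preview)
Your proposal is correct and follows essentially the same approach as the paper: apply the $(2^{|S|})^+$-enlarging property to the family $\mathcal{U}$ itself (using that $|\mathcal{U}|\leq 2^{|S|}$ and that ultrafilters have the finite intersection property) to obtain $\alpha\in\bigcap_{A\in\mathcal{U}}{}^{\ast}A$, and then note that $\mathcal{U}=\mathcal{U}_\alpha$. The paper leaves the final verification as an observation, while you spell it out via maximality; both are fine.
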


\begin{proof}
Fix $\mathcal{U}\in \beta S$.  
It is clear that $\u$ is a family of subsets of $S$ of cardinality $|\u|\leq 2^{|S|}$ with the finite intersection property, whence, by the $(2^{|S|})^+$-enlarging property,
there is $\alpha\in \bigcap_{A\in \u}\starA$.  Observe now that $\u=\u_\alpha$.  
\end{proof}

\begin{exercise}\label{ex-enlarging}
Assume the $(2^{|S|})^+$-enlarging property. Show that for every
non-principal $\U\in\beta S\setminus S$ there exist
$|\starN|$-many $\alpha\in\starN$ such that $\U=\U_\alpha$.
\end{exercise}

By the previous proposition, the map $\alpha\mapsto \u_\alpha:\starS\to \beta S$ is surjective. This suggests that we define a topology on $\starS$, called the \emph{$u$-topology on $\starS$}, by declaring the sets $\starA$, for $A\subseteq S$, to be the basic open sets.\footnote
{~This topology is usually named ``$S$-topology" in the literature
of nonstandard analysis, where the ``S'' stands for ``standard".}
This topology, while (quasi)compact by the enlarging property, is not Hausdorff.  
In fact, $\alpha,\beta\in \starS$ are \emph{not} separated in the $u$-topology precisely 
when $\alpha\sim \beta$.  Passing to the separation, we get a compact 
Hausdorff space $\starS/\!\sim$ and the surjection $\starS\to \beta S$ 
defined above descends to a homeomorphism between the
quotient space $\starS/\!\sim$ and $\beta S$. So, while $\beta S$
is the ``largest" Hausdorff compactification of the discrete space $S$, a (sufficiently
saturated) hyper-extension of $S$ is an even larger space, which is still
compact (but non-Hausdorff) and has $\beta S$ as a quotient.

\section{The case of a semigroup again}\label{hg}
Let us now suppose, once again, that $S$ is the underlying set of a semigroup $(S,\cdot)$.  One might guess that, for $\alpha,\beta\in \starS$, we have that the equation $\u_{\alpha\cdot \beta}=\u_\alpha\odot\u_\beta$ holds.  Unfortunately, this is not the case:

\begin{example}
Fix any $\alpha \in \starN\setminus \N$.  We show that there is $\beta \in \starN$ such that 
$\u_\alpha\oplus \u_\beta\not=\u_\beta\oplus \u_\alpha$.  For this $\beta$, we must have that either $\u_\alpha\oplus \u_\beta\not=\u_{\alpha+\beta}$ or $\u_\beta\oplus \u_\alpha\not=\u_{\beta+\alpha}$.

Let $A=\bigcup_{n \text{ even}}[n^2,(n+1)^2)$. 
Take $\nu\in \starN$ such that $\nu^2\leq \alpha<(\nu+1)^2$.
Without loss of generality, we may assume that $\nu$ is even.  
(The argument when $\nu$ is odd is exactly the same.)      
First suppose that $(\nu+1)^2-\alpha$ is finite.  In this case, we let $\beta:=\nu^2$.  Note that $\{n\in \N \ : \ (A-n)\in \u_\alpha\}=\{n\in \N \ : \ n+\alpha\in \starA\}$ is finite by assumption, whence not in $\u_\beta$.  Consequently, $A\notin \u_\beta\oplus \u_\alpha$.  However, since $\alpha-\beta$ is necessarily infinite, we have $\{n\in \N \ : \ (A-n)\in \u_\beta\}=\{n\in \N \ : \ n+\beta \in\starA\}=\N$, whence a member of $\u_\alpha$ and thus $A\in \u_\alpha\oplus \u_\beta$.
If $(\nu+1)^2-\alpha$ is infinite, then set $\beta:=(\nu+1)^2$.  An argument analogous to the argument in the previous paragraph shows that $A\notin \u_\alpha\oplus \u_\beta$ but $A\in \u_\beta\oplus \u_\alpha$.  
\end{example}

\begin{remark}
The previous argument also gives a nonstandard proof of the fact that the center of $(\beta \N,\oplus)$ is precisely the set of principal ultrafilters.
\end{remark}

The previous example notwithstanding, there is a connection between $(\beta S,\cdot)$ and the nonstandard extension\index{nonstandard extension} of the semigroup $(S,\cdot)$.  To see this, for notational cleanliness, let us switch over to writing the semigroup operation of $S$ by $+$ (even though the semigroup need not be commutative).  Fix $\alpha,\beta \in \starS$. Define $A\cdot \mathcal{U}^{-1}_{\beta }$ to be the set $\{ a\in S: \{b\in S:a\cdot b\in A \}\in \mathcal{U}_\beta \}$. For $a\in S$, we have that $a \in A \cdot \mathcal{U}^{-1}_{\beta }$ if and only if $a\cdot {}^{\ast}\beta \in {}^{\ast }A$. By transfer, we have that ${}^{\ast } (A \cdot \mathcal{U}^{-1}_{\beta })=\{\gamma \in {}^{\ast}S : \gamma \cdot {}^{\ast} \beta \in {}^{\ast \ast} A \}$ Hence, we have that 

$$A\in \mathcal{U}_\alpha\odot \mathcal{U}_\beta \Leftrightarrow \alpha \in {}^{\ast } (A\cdot \mathcal{U}^{-1}_{\beta }) \Leftrightarrow \alpha \cdot  {}^{\ast} \beta \in {}^{\ast \ast} A$$.


Wait!  What is ${}^{\ast\ast}A$?  And what is ${}^{\ast}\beta$?  Well, our intentional carelessness was intended to motivate the need to be able to take nonstandard extensions of nonstandard extensions, that is, to be able to consider \emph{iterated nonstandard extensions}.  Once we give this precise meaning in the next chapter, the above informal calculation will become completely rigorous and we have a precise connection between the operation $\oplus$ on $\beta S$ and the operation $+$ on ${}^{\ast\ast}S$.  

We should also mention that it is possible for the equality $\u_{\alpha}\odot \u_{\beta}=\u_{\alpha \cdot \beta}$ to be valid.  Indeed, this happens when $\alpha$ and $\beta$ are \emph{independent} in a certain sense; see \cite{di_nasso_hypernatural_2015}.

\section*{Notes and references}
 The notion of nonstandard generator of an ultrafilter was initially isolated by Luxemburg in 
 \cite{luxemburg_general_1969}. It was later used by Puritz \cite{puritz_skies_1972,puritz_ultrafilters_1971} and by Cherlin and Hirshfeld \cite{cherlin_ultrafilters_1972} to study the Rudin-Keisler order among ultrafilters.  Model theorists will recognize hyperfinite generators of ultrafilters simply as realizations of the types corresponding to the ultrafilters.  

\chapter{Many stars:  iterated nonstandard extensions}

\section{The foundational perspective}

As we saw in the previous chapter, it is useful in applications
to consider iterated hyper-extensions
of the natural numbers, namely $\starN$, ${}^*\starN$, 
${}^{**}\starN$, and so forth. A convenient
foundational framework where such iterations make sense
can be obtained by considering models of nonstandard analysis
where the \emph{standard universe} and the \emph{nonstandard universe} 
coincide.\footnote
{~A construction of such star maps is given in Section \ref{allowiterated} of the foundational appendix.}
In other words, one works with a \emph{star map}
$$*:\VV\to\VV$$
from a universe into itself. Clearly,
in this case every hyper-extension $\starX$ belongs
to the universe $\VV$, so one can apply
the \emph{star map} to it, and obtain the ``second level" hyper-extension
${}^{**}X$, and so forth.

Let us stress that the \emph{transfer principle}\index{transfer principle} in this context
must be handled with much care. The crucial point to
keep in mind is that in the equivalence
$$P(A_{1},\ldots,A_{n})\ \Longleftrightarrow\ 
P(\starA_{1},\ldots,\starA_{n}),$$
the considered objects $A_1,\ldots,A_n$ could be themselves
iterated hyper-extensions. In this case, one simply has to add
one more ``star". Let us elaborate on this with a few examples.

\begin{example}
Recall that $\N$ is an initial segment of $\starN$, that is,
$$\N\subset\starN\ \text{and}\ 
\forall x\in\N\ \forall y\in\starN\setminus\N\ \ x<y.$$
Thus, by \emph{transfer}, we obtain that:
$$\starN\subset{}^{**}\N\ \text{and}\ 
\forall x\in\starN\ \forall y\in{}^{**}\N\setminus\starN\ \ x<y.$$
This means that $\starN$ is a proper initial segment of
the double hyper-image ${}^{**}\N$, that is,
every element of ${}^{**}\N\setminus\starN$ is larger
than all element in $\starN$.
\end{example}

\begin{example}
If $\eta\in\starN\setminus\N$, then by \emph{transfer}
${}^*\eta\in{}^{**}\N\setminus\starN$, and hence $\eta<{}^*\eta$.
Then, again by \emph{transfer}, one obtains that
the elements ${}^*\eta,{}^{**}\eta\in{}^{***}\N$
are such that ${}^{*}\eta<{}^{**}\eta$, and so forth.
\end{example}

The above example clarifies that the simplifying assumption
${}^*r=r$ that was adopted for every $r\in\R$ cannot be
extended to hold for all hypernatural numbers \index{hypernatural number}. Indeed, 
we just proved that $\eta\ne{}^*\eta$ for every $\eta\in\starN\setminus\N$.

\begin{example}
Since $\R\subset\starR$, by \emph{transfer} it follows that
$\starR\subset{}^{**}\R$. If $\varepsilon\in\starR$ is a positive
infinitesimal, that is, if $0<\varepsilon<r$ for every positive $r\in\R$,
then by \emph{transfer} we obtain that
$0<{}^*\varepsilon<\xi$ for every positive $\xi\in\starR$.
In particular, ${}^*\varepsilon<\varepsilon$.
\end{example}

Recall that, by Proposition \ref{applyingstar},
for every elementary formula $\varphi(x,y_1,\ldots,y_n)$
and for all objects $B,A_1,\ldots,A_n$, one has that
$${}^*\{y\in B\mid P(y,A_1,\ldots,A_n)\}\ =\ \{y\in\starB\mid P(y_1,\starA_1,\ldots,\starA_n)\}. \quad (\dagger)$$

Of course one can apply the above property also when
(some of) the parameters are hyper-extensions.


\begin{remark}
In nonstandard analysis, a hyper-extension $\starA$
is often called a ``standard" set. This terminology comes from the fact
that -- in the usual approaches -- one considers
a star map $*:\mathbb{S}\to\mathbb{V}$
between the ``standard universe" $\mathbb{S}$
and a ``nonstandard universe" $\mathbb{V}$. 
Objects $A\in\mathbb{S}$ are named ``standard" and, with some
ambiguity, also their hyper-extensions $\starA$ are named ``standard".\footnote
{~To avoid ambiguity, some authors call the
hyper-extensions $\starA\in\mathbb{V}$ 
``internal-standard".} 
Let us stress that the name ``standard" would be misleading
in our framework, where there is
just one single universe, namely the universe
of \emph{all} mathematical objects. 
Those objects of our universe that happen to be in the
range of the star map, are called hyper-extensions.
\end{remark}

\section{Revisiting hyperfinite generators}

In this subsection, we let $(S,+)$ denote an infinite semigroup.  Now that we have the ability to take iterated nonstandard extensions, we can make our discussion from the end of Section \ref{hg} precise:  

\begin{proposition}
For $\alpha,\beta\in \starS$, we have $\u_\alpha\odot \u_\beta=\u_{\alpha \cdot {}^{\ast}\beta}$.
\end{proposition}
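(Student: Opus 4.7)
The plan is to unpack both sides of the claimed equality using the definition of $\odot$ together with the characterization $A \in \mathcal{U}_\gamma \Leftrightarrow \gamma \in {}^{*}A$, and then apply the star map once more to transport a set defined with $\beta$ as a parameter into the double hyper-extension. The key is the rule $(\dagger)$ about applying $*$ to sets defined by elementary formulas, used with a parameter that is itself already a hyper-extension.

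First I would fix $A \subseteq S$ and write
\[
A \in \mathcal{U}_\alpha \odot \mathcal{U}_\beta
\;\Longleftrightarrow\; B := \{s \in S : s^{-1}A \in \mathcal{U}_\beta\} \in \mathcal{U}_\alpha
\;\Longleftrightarrow\; \alpha \in {}^{*}B.
\]
For $s \in S$, unraveling gives $s^{-1}A \in \mathcal{U}_\beta \Leftrightarrow \beta \in {}^{*}(s^{-1}A)$, and by transfer applied to the elementary property ``$s^{-1}A = \{t \in S : s\cdot t \in A\}$'' one has ${}^{*}(s^{-1}A) = \{t \in {}^{*}S : s \cdot t \in {}^{*}A\}$, where $\cdot$ now denotes the hyper-extended operation. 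Thus
\[
B \;=\; \{s \in S : s \cdot \beta \in {}^{*}A\}.
\]

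Next I would apply $*$ to $B$, viewing the right-hand side as a set carved out of $S$ by an elementary formula with parameters $\beta$ and ${}^{*}A$ (both legitimate objects of the universe). By Proposition~\ref{applyingstar} (with hyper-extensions allowed as parameters, as noted in the chapter), taking $*$ replaces each parameter by its hyper-extension, so
\[
{}^{*}B \;=\; \{\xi \in {}^{*}S : \xi \cdot {}^{*}\beta \in {}^{**}A\}.
\]
Therefore $\alpha \in {}^{*}B$ iff $\alpha \cdot {}^{*}\beta \in {}^{**}A$, which by the very definition of the ultrafilter generated by the element $\alpha \cdot {}^{*}\beta \in {}^{**}S$ is equivalent to $A \in \mathcal{U}_{\alpha \cdot {}^{*}\beta}$. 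Combining the chain of equivalences yields $\mathcal{U}_\alpha \odot \mathcal{U}_\beta = \mathcal{U}_{\alpha \cdot {}^{*}\beta}$.

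The only real subtlety, and what I expect to be the main point requiring care, is the transfer step that produces ${}^{*}B = \{\xi \in {}^{*}S : \xi \cdot {}^{*}\beta \in {}^{**}A\}$: here $\beta$ is not a standard object but an element of ${}^{*}S$, so one must invoke the version of Proposition~\ref{applyingstar} that permits already-hyper-extended parameters (each such parameter being replaced by one more star upon applying $*$). This is precisely the phenomenon that motivated introducing iterated hyper-extensions in the previous section, and it is what turns the informal calculation at the end of Section~\ref{hg} into a rigorous proof.
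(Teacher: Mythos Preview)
Your proof is correct and follows essentially the same approach as the paper. The paper's proof is more terse because it had already introduced the notation $A\cdot\mathcal{U}_\beta^{-1}$ for your set $B$ and computed $A\cdot\mathcal{U}_\beta^{-1}=\{s\in S: s\cdot\beta\in{}^*A\}$ informally in the preceding chapter; it then simply invokes the rule $(\dagger)$ (your Proposition~\ref{applyingstar} with hyper-extended parameters) to obtain ${}^*(A\cdot\mathcal{U}_\beta^{-1})=\{\gamma\in{}^*S:\gamma\cdot{}^*\beta\in{}^{**}A\}$ and concludes exactly as you do.
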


\begin{proof}

By equation $(\dagger)$ from the previous section, we have that ${}^{\ast } (A \cdot \mathcal{U}^{-1}_{\beta })=\{\gamma \in {}^{\ast}S : \gamma \cdot {}^{\ast} \beta \in {}^{\ast \ast} A \}$. Hence, for $A\subseteq S$, we have that

$$A\in \mathcal{U}_\alpha\odot \mathcal{U}_\beta \Leftrightarrow \alpha \in {}^{\ast } (A\cdot \mathcal{U}^{-1}_{\beta }) \Leftrightarrow \alpha \cdot  {}^{\ast} \beta \in {}^{\ast \ast} A.$$

\end{proof}

\begin{exercise}
The \emph{tensor product} $\U\otimes\V$
of two ultrafilters\index{ultrafilter} on $S$ is the ultrafilter on $S\times S$ defined by:
$$\U\otimes\V\ =\ \{C\subseteq S\times S\mid\{s\in S\mid C_s\in\V\}\in\U\},$$
where $C_s=\{t\in S\mid (s,t)\in C\}$ is vertical $s$-fiber of $C$.  If $\alpha,\beta \in \starS$, prove that $\u_\alpha\otimes \u_\beta=\u_{(\alpha,{}^{\ast} \beta)}$.
\end{exercise}


We can extend this discussion to elements of higher nonstandard iterates of the universe.  Indeed, given $\alpha \in {}^{k\ast}S$, we can define $\mathcal{U}_\alpha:=\{A\subseteq S \ : \ \alpha\in {}^{k\ast}A\}$.  \index{hyperfinite!generator}

\begin{exercise}
For $\alpha \in {}^{k\ast}S$, prove that $\u_\alpha=\u_{{}^{\ast}\alpha}$.
\end{exercise}

For $\alpha,\beta \in\bigcup_k {}^{k\ast}S$, we define $\alpha\sim \beta$ 
if and only if $\mathcal{U}_\alpha=\mathcal{U}_\beta$.  
Note that $\alpha$ and $\beta$ may live in different levels of the 
iterated nonstandard extensions\index{nonstandard extension!iterated}.


%
%



\begin{exercise}
Prove that, for $\alpha_0,\ldots,\alpha_k\in \starN$ and $a_0,\ldots,a_k\in \N$, one has
$$a_0\mathcal{U}_{\alpha_0}\oplus \cdots \oplus a_k\mathcal{U}_{\alpha_k}=\mathcal{U}_{a_0\alpha_0+a_1{}^{\ast}\alpha_1+\cdots+a_k{}^{k\ast}\alpha_k}.$$
\end{exercise}

\begin{exercise}

\

\begin{enumerate}
\item Suppose that $\alpha,\alpha',\beta,\beta'\in \starN$ are such that $\alpha\sim \alpha'$ and $\beta\sim \beta'$.  Prove that $\alpha+{}^{\ast}\beta\sim \alpha'+{}^{\ast}\beta'$.
\item Find $\alpha,\alpha',\beta,\beta'$ as above with $\alpha+\beta\not\sim \alpha'+\beta'$.
\end{enumerate}
\end{exercise}



\section{The iterated ultrapower perspective}

The ultrapower\index{ultrapower} model does not naturally accommodate 
iterations of hyper-extensions, and in fact, one can be
easily puzzled when thinking of iterated hyper-extensions 
in terms of ``iterated ultrapowers". Let us try to clarify this point.

Let us fix an ultrafilter $\U$ on $\N$.\index{ultrafilter}
Since one can take the ultrapower\index{ultrapower} $\N^\N/\u$ of $\N$ to get a nonstandard 
extension of $\N$, it is natural to take an ultrapower\index{ultrapower} $(\N^\N/\u)^\N/\u$ of 
$\N^\N/\u$ to get a further nonstandard extension.  
The diagonal embedding $d:\N^\N/\u\to (\N^\N/\u)^\N/\u$ is the
map where $d(\alpha)$ is the equivalence class in $(\N^\N/\u)^\N/\u$ 
of the sequence that is constantly $\alpha$.  
We define ${}^*\alpha$ as $d(\alpha)$, but, unlike the first time when 
we took an ultrapower\index{ultrapower} and identified $n\in\N$ with $d(n)$, 
let us refrain from identifying $\alpha$ with ${}^*\alpha$.
Indeed, recall that, according to the 
theory developed in the first section of this chapter, ${}^*\alpha$ 
is supposed to be infinitely larger than $\alpha$.  
How do we reconcile this fact with the current construction?  
Well, unlike the first time we took an ultrapower\index{ultrapower}, a new phenomenon has occurred.  
Indeed, we now have a second embedding $d_0^\u:\N^\N/\u\to(\N^\N/\u)^\N/\u$ 
given by taking the ultrapower\index{ultrapower} of the diagonal embedding
$d_0:\N\to\N^\N/\u$.\footnote
{~Every map $f:A\to B$ yields
a natural map $f^\u:A^\N/\U\to B^\N/\U$ between their ultrapowers,
by setting $f^\u([\sigma])=[f\circ\sigma]$ for every $\sigma:\N\to A$.}
Precisely, if $\alpha=[\sigma]\in\N^\N/\u$ where $\sigma:\N\to\N$,
then $d_0^\U(\alpha)=[([c_{\sigma(1)}], [c_{\sigma(2)}], [c_{\sigma(3)}],\ldots)]$.
It is thus through this embedding that we identify $\alpha\in\N^\N/\u$ 
with its image $d_0^\u(\alpha)\in(\N^\N/\u)^\N/\u$.

It is now straightforward to see that $\alpha<d(\alpha)$ for all 
$\alpha\in\N^\N/\u\setminus \N$.  For example, if
$\alpha=[(1,2,3,\ldots)]\in\N^\N/\u$, then we identify 
$\alpha$ with $[([c_1],[c_2],[c_3],\ldots)]\in(\N^\N/\u)^\N/\U$.
Since $[c_n]<\alpha$ for all $n$, we have that 
$\alpha<[(\alpha,\alpha,\alpha,\ldots)]=d(\alpha)={}^*\alpha$.

Also, it is also straightforward to see that defining 
${}^{**}f$ as $(f^\u)^\u$ extends ${}^*f=f^\u$ for any function $f:\N\to\N$.  
Indeed, if $\alpha=[\sigma]\in\N^\N/\u$,
then we have that
$$(f^\u)^\u(\alpha)=(f^\u)^\u(d_0^\u(\alpha))=
[(f^\U([c_{\sigma(1)}]),f^\U([c_{\sigma(2)}]),\ldots)]=
[([c_{f(\sigma(1))}],[c_{f(\sigma(2))}],\ldots)]=d_0^\u([f\circ \sigma])
=[f\circ\sigma]=f^\u(\alpha).$$

\section*{Notes and references}

Iterated hyperextensions were introduced in \cite{di_nasso_iterated_2015}, where they are used to give a new approach to the proof of Rado's theorem in the theory of partition regularity of equations (see Chapter \ref{partreg} below).  Further applications to the study of partition regularity of equations are obtained in \cite{luperi_baglini_nonstandard_2015,luperi_baglini_partition_2014,luperi_baglini_finite_2016,di_nasso_fermat-like_2016,di_nasso_ramsey_2016}.  A survey on the main properties of iterated hyperextensions, also in relation with hyperfinite generators of ultrafilters, is presented in \cite{di_nasso_hypernatural_2015}.

 


\chapter{Idempotents}

\section{The existence of idempotents in semitopological semigroups}

\begin{definition}
Suppose that $(S,\cdot)$ is a semigroup.  We say that $e\in S$ is \emph{idempotent} if $e\cdot e=e$.
\end{definition}

The following classical theorem of Ellis\index{Ellis' Theorem} is the key to much of what we do.

\begin{theorem}
Suppose that $(S,\cdot)$ is a compact semitopological semigroup\index{semitopological semigroup}.  Then $S$ has an idempotent element\index{idempotent element}.
\end{theorem}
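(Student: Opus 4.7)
The plan is to prove Ellis's theorem by a classical two-step Zorn argument: first extract a \emph{minimal} nonempty closed subsemigroup $T$ of $S$, and then show that any element of $T$ is automatically idempotent by exploiting minimality twice.

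First I would consider the collection
\[
\mathcal{F} := \{T \subseteq S \mid T \text{ is nonempty, closed, and a subsemigroup of } S\},
\]
partially ordered by reverse inclusion. Since $S \in \mathcal{F}$, it is nonempty. To apply Zorn's lemma I need every chain in $\mathcal{F}$ to have an upper bound, i.e., every descending chain $(T_i)_{i \in I}$ of nonempty closed subsemigroups to have nonempty intersection that is itself a closed subsemigroup. Closedness and the subsemigroup property are clearly preserved under arbitrary intersections, and nonemptiness of $\bigcap_i T_i$ follows from the finite intersection property together with compactness of $S$ (the $T_i$ form a decreasing chain, so finite subintersections equal the smallest finite subchain member, which is nonempty). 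Hence Zorn's lemma yields a minimal element $T \in \mathcal{F}$.

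Next I would pick an arbitrary $s \in T$ and consider $Ts := \{ts : t \in T\}$. This is the image of $T$ under the right-multiplication map $\rho_s : x \mapsto xs$. By the standing hypothesis (right multiplication is continuous), $\rho_s$ is continuous, so $Ts$ is compact, hence closed. It is also a subsemigroup: for $t_1s, t_2s \in Ts$, one has $(t_1s)(t_2s) = (t_1st_2)s \in Ts$. Finally $Ts \subseteq T$ because $T$ is a subsemigroup. So $Ts \in \mathcal{F}$, and by minimality $Ts = T$. In particular there exists some $t \in T$ with $ts = s$, so the set
\[
E := \{t \in T \mid ts = s\} = T \cap \rho_s^{-1}(\{s\})
\]
is nonempty. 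It is closed (as the preimage of the closed singleton $\{s\}$ under the continuous map $\rho_s$, intersected with the closed set $T$) and is a subsemigroup: if $t_1, t_2 \in E$ then $(t_1 t_2) s = t_1(t_2 s) = t_1 s = s$. Thus $E \in \mathcal{F}$, and minimality of $T$ forces $E = T$. But $s \in T = E$ means $s \cdot s = s$, so $s$ is an idempotent of $S$.

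The main subtlety to watch is ensuring at each step that the constructed subset lies in $\mathcal{F}$, and in particular is \emph{closed}; this is precisely where the continuity of right multiplication enters (twice: to know $Ts$ is closed and to know $E$ is closed). No continuity of left multiplication is needed, so the proof in fact goes through under the weaker right-topological hypothesis enjoyed by $(\beta S, \odot)$ in Theorem~\ref{betaSsemitop}, which is the intended application.
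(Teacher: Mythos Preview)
Your proof is correct and follows essentially the same approach as the paper: apply Zorn's lemma to find a minimal nonempty closed subsemigroup $T$, then use minimality first on $Ts$ and second on $\{t\in T: ts=s\}$ to conclude that any $s\in T$ is idempotent. Your additional remark that only right-continuity is used is also consistent with the paper's subsequent application to $(\beta S,\odot)$.
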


\begin{proof}
Let $\s$ denote the set of nonempty closed subsemigroups of $S$.  It is clear that the intersection of any descending chain of elements of $\s$ is also an element of $\s$, whence by Zorn's lemma, we may find $T\in \s$ that is minimal.  

Fix $s\in T$. We show that $s$ is idempotent.  Set $T_1:=Ts$.  Note that $T_1\not=\emptyset$ as $T\not=\emptyset$.  Since $S$ is a semitopological semigroup\index{semitopological semigroup} and $T$ is compact, we have that $T_1$ is also compact.  Finally, note that $T_1$ is also a subsemigroup of $S$:  
$$T_1\cdot T_1=(Ts)(Ts)\subseteq T\cdot T\cdot T\cdot s\subseteq T\cdot s=T_1.$$
We thus have that $T_1\in \s$.  Since $s\in T$, we have that $T_1\subseteq T$, whence by minimality of $T$, we have that $T_1=T$.  In particular, the set $T_2:=\{t\in T \ : \ t\cdot s=s\}$ is not empty.  Note that $T_2$ is also a closed subset of $T$, whence compact.  Once again, we note that $T_2$ is a subsemigroup of $S$.  Indeed, if $t,t'\in T_2$, then $tt'\in T$ and $(tt')\cdot s=t\cdot (t'\cdot s)=t\cdot s=s$.  We thus have that $T_2\in \s$.  By minimality of $T$, we have that $T_2=T$.  It follows that $s\in T_2$, that is, $s\cdot s=s$.
\end{proof}

The previous theorem and Theorem \ref{betaSsemitop} immediately give the following:

\begin{corollary}
Let $(S,\cdot)$ be a semigroup and let $T$ be any nonempty closed subsemigroup of $(\beta S,\odot)$.  Then $T$ contains an idempotent element\index{idempotent element}.
\end{corollary}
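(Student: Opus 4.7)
The plan is simply to verify that $T$ itself satisfies the hypotheses of Ellis' theorem and then invoke it. By Theorem \ref{betaSsemitop}, $(\beta S, \odot)$ is a compact semitopological (right topological) semigroup, so I will argue that every nonempty closed subsemigroup of $\beta S$ inherits these properties.

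First I would observe that $T$ is compact: it is a closed subset of the compact Hausdorff space $\beta S$. Next, since $T$ is given to be a subsemigroup, the operation $\odot$ restricts to a well-defined binary operation $T \times T \to T$, and associativity is automatic. The continuity condition also passes to the subsemigroup: for each fixed $\mathcal{V} \in T$, the map $\rho_{\mathcal{V}}|_T : T \to T$, $\mathcal{U} \mapsto \mathcal{U} \odot \mathcal{V}$, is the restriction to $T$ of the continuous map $\rho_{\mathcal{V}} : \beta S \to \beta S$ from the proof of Theorem \ref{betaSsemitop} (its image lies in $T$ precisely because $T$ is a subsemigroup), hence is continuous. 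Thus $(T, \odot)$ is a compact semitopological semigroup in its own right.

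Applying Ellis' theorem to $(T, \odot)$ then yields an element $e \in T$ with $e \odot e = e$, which is the required idempotent. The proof has essentially no real content beyond checking that "compact semitopological semigroup" is preserved under taking nonempty closed subsemigroups; there is no genuine obstacle, and the corollary is immediate from the two preceding results.
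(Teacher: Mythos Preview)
Your proposal is correct and matches the paper's approach exactly: the paper simply states that the corollary follows immediately from Ellis' theorem together with Theorem~\ref{betaSsemitop}, and you have correctly spelled out the routine verification that a nonempty closed subsemigroup of a compact right topological semigroup is again a compact right topological semigroup.
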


We refer to idempotent elements of $\beta S$ as \emph{idempotent ultrafilters}.\index{ultrafilter!idempotent}  Thus, the previous corollary says that any nonempty closed subsemigroup of $\beta S$ contains an idempotent ultrafilter.

%
%


Given the correspondence between ultrafilters\index{ultrafilter} on $S$ and elements of $\starS$, it is natural to translate the notion of idempotent ultrafilter\index{ultrafilter!idempotent} to the setting of $\starS$.  Suppose that $\alpha\in \starS$ is such that $\u_\alpha$ is an idempotent ultrafilter on $S$.  We thus have that $\u_\alpha=\u_\alpha\odot \u_\alpha=\u_{\alpha\cdot {}^\ast \alpha}$.  This motivates the following:

\begin{definition}
$\alpha\in \starS$ is \emph{$u$-idempotent} if $\alpha\cdot{}^{\ast}\alpha\sim\alpha$.
\end{definition}

We thus see that $\alpha\in \starS$ is $u$-idempotent\index{$u$-idempotent} if and only if $\u_\alpha$ is an idempotent ultrafilter\index{ultrafilter} on $S$.  The following exercise gives a nonstandard proof of \cite[Theorem 2.10]{bergelson_nonmetrizable_1990}.  

\begin{exercise}

\

\begin{enumerate}
\item Suppose that $\alpha\in \starN$ is idempotent\index{ultrafilter!idempotent}.  Prove that $2\alpha+{}^{\ast\ast}\alpha$, $2\alpha+{}^{\ast}\alpha+{}^{\ast\ast}\alpha$, and $2\alpha+2{}^{\ast}\alpha+{}^{\ast\ast}\alpha$ all generate the same ultrafilter, namely $2\u_\alpha\oplus\u_\alpha$.
\item Suppose that $\u\in \beta\N$ is idempotent and $A\in 2\u\oplus \u$.  Prove that $A$ contains a $3$-termed arithmetic progression\index{arithmetic progression}.
\end{enumerate}
\end{exercise}

We now seek an analog of the above fact that nonempty closed subsemigroups of $\beta S$ contain idempotents.  Suppose that $T\subseteq \beta S$ is a subsemigroup and that $\alpha,\beta\in \starS$ are such that $\u_\alpha,\u_\beta\in T$.  Since $\u_{\alpha\cdot{}^{\ast}\beta}=\u_\alpha\odot\u_\beta\in T$, we are led to the following definition:

\begin{definition}
$T\subseteq \starS$ is a \emph{$u$-subsemigroup} if, for any $\alpha,\beta\in T$, there is $\gamma\in T$ such that $\alpha\cdot{}^{\ast}\beta\sim \gamma$. 
\end{definition}

We thus have the following:

\begin{corollary}\label{existenceofuidempotents}
Suppose that $T\subseteq \starS$ is a nonempty closed $u$-subsemigroup.  Then $T$ contains a $u$-idempotent\index{$u$-idempotent} element.
\end{corollary}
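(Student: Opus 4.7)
The plan is to reduce this corollary to the already-established fact that every nonempty closed subsemigroup of the compact right topological semigroup $(\beta S,\odot)$ contains an idempotent ultrafilter. Concretely, I would consider the ``image'' of $T$ in $\beta S$ under the quotient map $\pi\colon \starS \to \beta S$, $\alpha \mapsto \u_\alpha$, and check that this image inherits the three properties (nonempty, closed, subsemigroup) needed to apply the Ellis-type corollary.

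First I would define $\widetilde{T} := \pi(T) = \{\u_\alpha : \alpha \in T\} \subseteq \beta S$. Nonemptyness of $\widetilde{T}$ is immediate from nonemptyness of $T$. To see that $\widetilde{T}$ is a subsemigroup of $(\beta S, \odot)$, take $\u_\alpha, \u_\beta \in \widetilde{T}$ with $\alpha,\beta \in T$. By the identity $\u_\alpha \odot \u_\beta = \u_{\alpha \cdot {}^{\ast}\beta}$ established in the previous chapter, and by the hypothesis that $T$ is a $u$-subsemigroup, there exists $\gamma \in T$ with $\alpha \cdot {}^{\ast}\beta \sim \gamma$; hence $\u_\alpha \odot \u_\beta = \u_\gamma \in \widetilde{T}$.

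The step I expect to require the most care is checking that $\widetilde{T}$ is closed in $\beta S$. Here I would use that the $u$-topology on $\starS$ has the sets $\starA$ ($A \subseteq S$) as a basis and is quasicompact (via the enlarging property), while $\pi$ is continuous because $\pi^{-1}(U_A) = \starA$ for every $A \subseteq S$. Since $T$ is closed in the quasicompact space $\starS$, it is itself quasicompact, so $\widetilde{T} = \pi(T)$ is a quasicompact subset of the Hausdorff space $\beta S$, and therefore closed.

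With these three properties in hand, the corollary to Ellis's theorem yields an idempotent ultrafilter $\u \in \widetilde{T}$. By the very definition of $\widetilde{T}$, we can write $\u = \u_\alpha$ for some $\alpha \in T$. Idempotency of $\u_\alpha$ then rewrites, via $\u_\alpha \odot \u_\alpha = \u_{\alpha \cdot {}^{\ast}\alpha}$, as $\u_{\alpha \cdot {}^{\ast}\alpha} = \u_\alpha$, i.e.\ $\alpha \cdot {}^{\ast}\alpha \sim \alpha$, which is precisely the definition of $\alpha$ being $u$-idempotent. This $\alpha$ is the required element of $T$.
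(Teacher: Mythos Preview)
Your proposal is correct and is precisely the argument the paper has in mind: the corollary is stated as an immediate consequence of the preceding discussion, which sets up exactly the dictionary you use (the quotient map $\alpha\mapsto\u_\alpha$, the identity $\u_\alpha\odot\u_\beta=\u_{\alpha\cdot{}^{\ast}\beta}$, and the equivalence of $u$-idempotency with idempotency of $\u_\alpha$). Your care in verifying that $\pi(T)$ is closed---via quasicompactness of $T$ in the $u$-topology and Hausdorffness of $\beta S$---is the one detail the paper leaves implicit, and you have handled it correctly.
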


\section{Partial semigroups}
We will encounter the need to apply the above ideas to the broader context of partial semigroups.

\begin{definition}
A \emph{partial semigroup} is
a set $S$ endowed with a partially defined binary operation $\left(
s,t\right) \mapsto s\cdot t$ that satisfies the following form of the associative law:  given $s_1,s_2,s_3\in S$, if either of the products $(s_1\cdot s_2)\cdot s_3$ or $s_1\cdot (s_2\cdot s_3)$ are defined, then so is the other and the products are equal.  The partial semigroup $(S,\cdot)$ is \emph{%
directed }if, for any finite subset $F$ of $S$, there exists $t\in S$ such
that the product $s\cdot t$ is defined for every $s\in F$. 
\end{definition}

For the rest of this chapter, we assume that $(S,\cdot)$ is a directed partial semigroup.

\begin{definition}
We call $\u\in \beta S$ \emph{cofinite} if, for all $s\in S$, we have $\{t\in S \ : \ s\cdot t \text{ is defined }\}\in \u$.  We let $\gamma S$ denote the set of all cofinite elements of $\beta S$.
\end{definition}

\begin{exercise}
$\gamma S$ is a \emph{nonempty} closed subset of $\beta S$.
\end{exercise}

We can define an operation $\odot$ on $\gamma S$ by declaring, for $\u,\mathcal{V}\in \gamma S$ and $A\subseteq S$, that $A\subseteq \u\odot \mathcal{V}$ if and only if $$\{s\in S \ : \ \{t\in S \ : \ s\cdot t \text{ is defined and }s\cdot t\in A\}\in \mathcal{V}\}\in \u.$$  Note that the operation $\odot$ is a totally defined operation on $\gamma S$ even though the original operation $\cdot$ was only a partially defined operation.

The next fact is very important but is somewhat routine given everything that has been proven thus far.  We thus leave the proof as a (lengthy) exercise; see also \cite[pages 31 and 32]{todorcevic_introduction_2010}.

\begin{theorem}\label{partial}
$(\gamma S,\odot)$ is a compact semitopological semigroup\index{semitopological semigroup}.  Consequently, every nonempty closed subsemigroup of $\gamma S$ contains an idempotent element\index{idempotent element}.
\end{theorem}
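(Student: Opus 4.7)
The plan is to verify that $(\gamma S, \odot)$ is a compact semitopological semigroup and then invoke Ellis's theorem for the corollary. Compactness is immediate: $\gamma S$ is a closed subset of the compact space $\beta S$ by the preceding exercise. The remaining work splits into three parts: showing that $\odot$ really maps $\gamma S \times \gamma S$ into $\gamma S$, showing that $\odot$ is associative, and showing that for each fixed $\mathcal{V}$ the right translation $\rho_{\mathcal{V}}(\u) := \u \odot \mathcal{V}$ is continuous.

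For the first part, write $s^{-1}A := \{t \in S : s \cdot t \text{ is defined and } s \cdot t \in A\}$ and $D_s := s^{-1}S$; by cofiniteness of $\mathcal{V}$, every $D_s \in \mathcal{V}$. The sets $s^{-1}A$ and $s^{-1}(S \setminus A)$ partition $D_s$, so exactly one of them lies in $\mathcal{V}$; combined with the identity $s^{-1}(A \cap B) = s^{-1}A \cap s^{-1}B$, this shows that $\u \odot \mathcal{V}$ is an ultrafilter on $S$. For cofiniteness, fix $a \in S$ and observe that if $s \in D_a$ and $t \in D_s \cap D_{as}$, then $(a \cdot s) \cdot t$ is defined, so by partial associativity $a \cdot (s \cdot t)$ is defined as well, i.e., $s \cdot t \in D_a$. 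Hence $s^{-1}D_a \supseteq D_s \cap D_{as} \in \mathcal{V}$ for every $s \in D_a$, so $\{s \in S : s^{-1}D_a \in \mathcal{V}\} \supseteq D_a \in \u$ and therefore $D_a \in \u \odot \mathcal{V}$.

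Associativity mirrors the standard argument in $\beta S$ after recording the identity $(A/s)/t = A/(s \cdot t)$ whenever $s \cdot t$ is defined, which is partial associativity applied in both directions (together with the fact that ``$(st)u$ defined'' forces $tu$ to be defined). Right-continuity of $\rho_{\mathcal{V}}$ is then formally identical to the proof of Theorem \ref{betaSsemitop}: for $A \subseteq S$, one has $\rho_{\mathcal{V}}^{-1}(U_A \cap \gamma S) = U_B \cap \gamma S$ with $B := \{s \in S : s^{-1}A \in \mathcal{V}\}$. The main obstacle throughout is pure bookkeeping: one must repeatedly absorb ``definedness'' conditions into $\u$- or $\mathcal{V}$-large sets using cofiniteness, so that the total-operation arguments for $\beta S$ carry over with only cosmetic changes. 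Once $(\gamma S, \odot)$ is established as a compact semitopological semigroup, any nonempty closed subsemigroup $T \subseteq \gamma S$ is itself one, and Ellis's theorem supplies an idempotent in $T$.
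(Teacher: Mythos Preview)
Your proposal is correct and follows exactly the expected route; the paper itself does not give a proof but leaves the result as a ``(lengthy) exercise,'' citing Todorcevic for details, so there is nothing to compare against beyond noting that your outline is precisely the standard argument the authors have in mind. One cosmetic remark: your identity $(A/s)/t = A/(s\cdot t)$ is written in notation the paper does not introduce; in the paper's conventions this is $t^{-1}(s^{-1}A) = (s\cdot t)^{-1}A$, and your parenthetical observation that ``$(st)u$ defined forces $tu$ defined'' is exactly the use of partial associativity needed to make that identity go through.
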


We once again give the nonstandard perspective on the preceding discussion.  Note that $\starS$ is naturally a partial semigroup with the nonstandard extension\index{nonstandard extension} of the partial seimgroup operation.  We say that $\alpha \in \starS$ is \emph{cofinite} if $s\cdot \alpha $ is defined for every $%
s\in S$.  We leave it to the reader to check that $\alpha$ is cofinite if and only if $\u_\alpha$ is a cofinite element of $\beta S$.  Consequently, Theorem \ref{partial} implies that any nonempty closed $u$-subsemigroup of the set of cofinite elements of $\starS$ contains an idempotent element\index{idempotent element}.

\begin{exercise}
Without using Theorem \ref{partial}, prove that, for any cofinite $\alpha,\beta \in \starS$, there is cofinite $\gamma\in \starS$ such that $\alpha\cdot {}^{\ast}\beta\sim \gamma$.  Compare your proof to the proof that $\u\odot\mathcal{V}\in \gamma S$ whenever $\u,\mathcal{V}\in \gamma S$.
\end{exercise}

%

\section*{Notes and references}

The existence of idempotent elements in any compact right topological semigroup is a classical result of Ellis \cite{ellis_lectures_1969}. The observation that this implies the existence of idempotent ultrafilters is due to Galvin. Idempotent ultrafilters play a fundamental role in the application of ultrafilter methods to combinatorics, starting from the Galvin--Glazer proof of Hindman's Theorem on sumsets (see Chapter \ref{chapter_hindman_gowers} below).

\chapter{Loeb measure}\label{Loeb}

\section{Premeasures and measures}

Fix a set $X$. A nonempty set $\mathcal{A}\subseteq \mathcal{P}(X)$ is an 
\emph{algebra} if it is closed under unions, intersections, and complements,
that is, if $A,B\in \mathcal{A}$, then $A\cup B$, $A\cap B$, and $X\setminus
A$ all belong to $\mathcal{A}$. If $\mathcal{A}$ is an algebra of subsets of 
$X$, then $\emptyset,X\in \mathcal{A}$. An algebra $\mathcal{A}$ on $X$ is
said to be a \emph{$\sigma$-algebra} if it is also closed under countable
unions, that is, if $A_1,A_2,\ldots$ all belong to $\mathcal{A}$, then so
does $\bigcup_{n=1}^\infty A_n$. A $\sigma$-algebra is then automatically
closed under countable intersections.

\begin{exercise}
Suppose that $X$ is a set and $\mathcal{O}\subseteq \mathcal{P}(X)$ is an
arbitrary collection of subsets of $X$. Prove that there is a smallest $%
\sigma $-algebra $\Omega $ containing $\mathcal{O}$. We call this $\sigma $%
-algebra the \emph{$\sigma $-algebra generated by $\mathcal{O}$} and denote
it by $\sigma (\mathcal{O})$.
\end{exercise}

\begin{remark}
When trying to prove that every element of $\sigma(\mathcal{O})$ has a
certain property, one just needs to show that the set of elements having
that property contains $\mathcal{O}$ and is a $\sigma$-algebra.
\end{remark}

Suppose that $\mathcal{A}$ is an algebra on $X$. A \emph{pre-measure} on $%
\mathcal{A}$ is a function $\mu:\mathcal{A}\to [0,+\infty]$ satisfying the
following two axioms:

\begin{itemize}
\item $\mu(\emptyset)=0$;

\item (Countable Additivity) If $A_1,A_2,\ldots,$ all belong to $\mathcal{A}$%
, are pairwise disjoint, \emph{and} $\bigcup_{n=1}^\infty A_n$ belongs to $%
\Omega$, then $\mu(\bigcup_{n=1}^\infty A_n)=\sum_{n=1}^\infty \mu(A_n)$.
\end{itemize}

If $\mathcal{A}$ is a $\sigma$-algebra, then a pre-measure is called a \emph{%
measure}. If $\mu$ is a measure on $X$ and $\mu(X)=1$, then we call $\mu$ a 
\emph{probability measure on $X$}.

\begin{exercise}
Fix $n\in \mathbb{N}$ and suppose that $X=\{1,2,\ldots,n\}$. Let $\mathcal{A}%
:=\mathcal{P}(X)$. Then $\mathcal{A}$ is an algebra of subsets of $X$ that
is actually a $\sigma$-algebra for trivial reasons. Define the function $\mu:%
\mathcal{A}\to [0,1]$ by $\mu(A)=\frac{|A|}{n}$. Then $\mu$ is a probability
measure on $\mathcal{A}$, called the \emph{normalized counting measure}.
\end{exercise}

\begin{exercise}
Suppose that $\mu:\mathcal{A}\to [0,+\infty]$ is a pre-measure. Prove that $%
\mu(A)\leq \mu(B)$ for all $A,B\in \mathcal{A}$ with $A\subseteq B$.
\end{exercise}

For subsets $A,B$ of $X$, we define the \emph{symmetric difference of $A$
and $B$} to be $A\triangle B:=(A\setminus B)\cup (B\setminus A)$.

\begin{exercise}
\label{productapprox} Suppose that $\mathcal{A}$ is an algebra and $\mu
:\sigma (\mathcal{A})\rightarrow \lbrack 0,\infty ]$ is a measure. Prove
that, for every $A\in \sigma (\mathcal{A})$ with $\mu (A)<\infty $ and every 
$\epsilon \in \mathbb{R}^{>0}$, there is $B\in \mathcal{A}$ such that $\mu
(A\triangle B)<\epsilon $.
\end{exercise}

For our purposes, it will be of vital importance to know that a pre-measure $%
\mu $ on an algebra $\mathcal{A}$ can be extended to a measure on a $\sigma $%
-algebra $\sigma (\mathcal{A})^{\prime }$ extending $\mathcal{A}$, a process
which is known as \emph{Carath\'{e}odory extension}. We briefly outline how
this is done. The interested reader can consult any good book on measure
theory for all the glorious details; see for instance \cite[Section 1.7]%
{tao_introduction_2011}.

Fix an algebra $\mathcal{A}$ of subsets of $X$ and a pre-measure $\mu $ on $%
\mathcal{A}$. For arbitrary $A\subseteq X$, we define the \emph{outer
measure of }$A$ to be 
\begin{equation*}
\mu ^{+}(A):=\inf \{\sum_{n\in \mathbb{N}}\mu (B_{n})\ |\ A\subseteq
\bigcup_{n\in \mathbb{N}}B_{n},\text{ each }B_{n}\in \mathcal{A}\}.
\end{equation*}%
Note that $\mu ^{+}(A)=\mu (A)$ for all $A\in \mathcal{A}$. Now although $%
\mu ^{+}$ is defined on all of $\mathcal{P}(X)$ (which is certainly a $%
\sigma $-algebra), it need not be a measure. However, there is a canonical $%
\sigma $-sub-algebra $\mathcal{A}_{m}$ of $\mathcal{P}(X)$, the so-called 
\emph{Carath\'{e}odory measurable }or \emph{$\mu ^{+}$-measurable subsets of 
$X$}, on which $\mu ^{+}$ is a measure. These are the sets $A\subseteq X$
such that%
\begin{equation*}
\mu ^{+}\left( E\right) =\mu ^{+}\left( A\cap E\right) +\mu ^{+}\left(
E\setminus A\right)
\end{equation*}%
for every other set $E\subset X$. Let us collect the relevant facts here:

\begin{fact}
\label{measurefacts} Let $X$ be a set, $\mathcal{A}$ an algebra of subsets
of $X$, and $\mu:\mathcal{A}\to [0,\infty]$ a pre-measure on $\mathcal{A}$
with associated outer measure $\mu^+$ and $\sigma$-algebra of $\mu^+$%
-measurable sets $\mathcal{A}_m$. Further suppose that $\mu$ is \emph{$%
\sigma $-finite}, meaning that we can write $X=\bigcup_{n\in \mathbb{N}}X_n$
with each $X_n\in \mathcal{A}$ and $\mu(X_n)<\infty$.

\begin{enumerate}
\item $\sigma(\mathcal{A})\subseteq \mathcal{A}_m$ and $\mu^+|\mathcal{A}%
=\mu $.

\item (Uniqueness) If $\mathcal{A}^{\prime }$ is another $\sigma$-algebra on 
$X$ extending $\mathcal{A}$ and $\mu^{\prime }:\mathcal{A}^{\prime }\to
[0,\infty]$ is a measure on $\mathcal{A}^{\prime }$ extending $\mu$, then $%
\mu^+$ and $\mu^{\prime }$ agree on $\mathcal{A}_m\cap \mathcal{A}^{\prime }$
(and, in particular, on $\sigma(\mathcal{A})$).

\item (Completeness) If $A\subseteq B\subseteq X$ are such that $B\in 
\mathcal{A}_m$ and $\mu^+(B)=0$, then $A\in \mathcal{A}_m$ and $\mu^+(A)=0$.

\item (Approximation Results)

\begin{enumerate}
\item If $A\in \mathcal{A}_m$, then there is $B\in \sigma(\mathcal{A})$
containing $A$ such that $\mu^+(B\setminus A)=0$. (So $\mathcal{A}_m$ is the 
\emph{completion} of $\sigma(\mathcal{A})$.)

\item If $A\in \mathcal{A}_m$ is such that $\mu^+(A)<\infty$, then for every 
$\epsilon \in \mathbb{R}^{>0}$, there is $B\in \mathcal{A}$ such that $%
\mu(A\triangle B)<\epsilon$.

\item Suppose that $A\subseteq X$ is such that, for every $\epsilon \in 
\mathbb{R}^{>0}$, there is $B\in \mathcal{A}$ such that $\mu (A\triangle
B)<\epsilon $. Then $A\in \mathcal{A}_{m}$.
\end{enumerate}
\end{enumerate}
\end{fact}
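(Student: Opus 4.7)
The plan is to follow the classical Carathéodory extension argument. First I would verify that $\mu^+$ is monotone and countably subadditive on $\mathcal{P}(X)$ with $\mu^+(\emptyset)=0$, and that $\mu^+|\mathcal{A}=\mu$. The inequality $\mu^+(A)\le \mu(A)$ for $A\in\mathcal{A}$ is immediate since $A$ covers itself; for the reverse, given a cover $A\subseteq \bigcup_n B_n$ with $B_n\in\mathcal{A}$, one disjointifies inside $\mathcal{A}$ via $B_n':=A\cap B_n\setminus\bigcup_{k<n}B_k$ and applies countable additivity of the pre-measure to the disjoint decomposition of $A$.

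Next I would show that $\mathcal{A}_m$ is a $\sigma$-algebra on which $\mu^+$ is a measure. Closure under complements is built into the symmetric definition. Closure under finite unions follows by direct manipulation of the Carathéodory criterion applied to $E\cap(A\cup B)$ and $E\setminus(A\cup B)$. Upgrading to countable disjoint unions, and deducing countable additivity on $\mathcal{A}_m$, proceeds by iterating on truncated partial unions and passing to the monotone limit using subadditivity. To show $\mathcal{A}\subseteq\mathcal{A}_m$, for $A\in\mathcal{A}$ and a cover $E\subseteq\bigcup_n B_n$ in $\mathcal{A}$, I split $B_n=(B_n\cap A)\sqcup(B_n\setminus A)$ inside $\mathcal{A}$, apply $\mu(B_n)=\mu(B_n\cap A)+\mu(B_n\setminus A)$, and infimize over covers.

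For uniqueness under $\sigma$-finiteness, I would invoke a $\pi$-$\lambda$ argument: any two measures on $\sigma(\mathcal{A})$ (or on $\mathcal{A}_m\cap \mathcal{A}'$) agreeing on the $\pi$-system $\{A\in\mathcal{A}:\mu(A)<\infty\}$ must agree on the generated $\sigma$-algebra intersected with any finite-measure set, after which the exhaustion $X=\bigcup_n X_n$ extends the agreement everywhere. Completeness is immediate: if $A\subseteq B$ with $\mu^+(B)=0$, then $\mu^+(A)=0$ by monotonicity, and then $\mu^+(E\cap A)+\mu^+(E\setminus A)\le 0+\mu^+(E)$. For approximation (b), pick a cover $A\subseteq\bigcup_n B_n$ with $\sum\mu(B_n)<\mu^+(A)+\epsilon/2$ and truncate to $B=B_1\cup\cdots\cup B_N\in\mathcal{A}$ with tail mass below $\epsilon/2$. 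For (c), the hypothesis yields $B_n\in\mathcal{A}$ with $\mu^+(A\triangle B_n)<1/n$; then a $\limsup/\liminf$-style construction produces an element of $\sigma(\mathcal{A})$ differing from $A$ by a $\mu^+$-null set, so $A$ lies in the completion of $\sigma(\mathcal{A})$, which by (a) coincides with $\mathcal{A}_m$. For (a) itself, I would exhaust by the $\sigma$-finite pieces $X_n$; on each finite-measure piece $A\cap X_n$, intersect countably many outer-approximating $\mathcal{A}$-covers with errors tending to $0$ to produce a $\sigma(\mathcal{A})$-superset of $A\cap X_n$ differing from it by a null set, then union over $n$.

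The main obstacle, I expect, is the countable-additivity step for $\mu^+$ on $\mathcal{A}_m$: passing from finite disjoint unions to countable disjoint unions requires iterating the finite Carathéodory criterion on increasing partial unions and then carefully passing to the limit, using subadditivity in one direction and the Carathéodory inequality in the other. The $\sigma$-finiteness hypothesis also does real work in the uniqueness proof and in approximation (a), and one must be careful not to invoke it where unnecessary --- completeness and the construction of the Carathéodory $\sigma$-algebra itself require no finiteness at all.
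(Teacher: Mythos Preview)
The paper does not prove this statement at all: it is stated as a \texttt{Fact} and the surrounding text explicitly directs the reader to a measure theory textbook (Tao's \emph{An Introduction to Measure Theory}, Section 1.7) for ``all the glorious details.'' Your outline is the standard Carath\'eodory argument and is correct in its broad strokes; since the paper offers nothing to compare against, there is nothing further to say about alignment of approaches.
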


\begin{example}[Lebesgue measure]
Suppose that $X=\mathbb{R}$ and $\mathcal{A}$ is the collection of \emph{%
elementary sets}, namely the finite unions of intervals. Define $\mu :%
\mathcal{A}\rightarrow \lbrack 0,\infty ]$ by declaring $\mu (I)=\operatorname{length%
}(I)$ and $\mu (I_{1}\cup \cdots \cup I_{n})=\sum_{i=1}^{n}\mu (I_{j})$
whenever $I_{1},\ldots ,I_{n}$ are pairwise disjoint. The above
outer-measure procedure yields the $\sigma $-algebra $\mathcal{A}_{m}$,
which is known as the $\sigma $-algebra of \emph{Lebesgue measurable subsets
of $\mathbb{R}$} and usually denoted by $\mathfrak{M}$. The measure $\mu
^{+} $ is often denoted by $\lambda $ and is referred to as \emph{Lebesgue
measure}. The $\sigma $-algebra $\sigma (\mathcal{A})$ in this case is known
as the $\sigma $-algebra of \emph{Borel subsets of $\mathbb{R}$}, usually
denoted by $\mathcal{B}$. It can also be seen to be the $\sigma $-algebra
generated by the open intervals.
\end{example}

\section{The definition of Loeb measure}

How do we obtain pre-measures in the nonstandard context? Well, we obtain
them by looking at normalized counting measures on hyperfinite sets. Suppose
that $X$ is a hyperfinite set%
\index{hyperfinite!set}. We set $\mathcal{A}$ to be the set of \emph{internal%
} subsets of $X$. Then $\mathcal{A}$ is an algebra of subsets of $X$ that is
not (in general) a $\sigma $-algebra. For example, if $X=\left[ 1,N\right]
\subseteq {}^{\ast }\mathbb{N}$ for some $N\in {}^{\ast }\mathbb{N}\setminus 
\mathbb{N}$, then for each $n\in \mathbb{N}$, $A_{n}:=\{n\}$ belongs to $%
\mathcal{A}$, but $\bigcup_{n}A_{n}=\mathbb{N}$ does not belong to $\mathcal{%
A}$ as $\mathbb{N}$ is not internal.

If $A\in \mathcal{A}$, then $A$ is also hyperfinite. We thus define a
function $\mu :\mathcal{A}\rightarrow \lbrack 0,1]$ by $\mu (A):=%
\operatorname{st}\left( \frac{|A|}{|X|}\right) $. We claim that $\mu$ is a
pre-measure. It is easily seen to be \emph{finitely additive}, that is, $\mu
(A_{1}\cup \cdots \cup A_{n})=\sum_{i=1}^{n}\mu (A_{i})$ whenever $%
A_{1},\ldots ,A_{n}\in \mathcal{A}$ are disjoint. But how do we verify
countable additivity?%
\index{Loeb measure}

\begin{exercise}
If $A_1,A_2,\ldots$ all belong to $\mathcal{A}$ and $\bigcup_{n=1}^\infty
A_n $ also belongs to $\mathcal{A}$, then there is $k\in\mathbb{N}$ such
that $\bigcup_{n=1}^\infty A_n=\bigcup_{n=1}^k A_n$.
\end{exercise}

Thus, by the exercise, countable additivity is a trivial consequence of
finite additivity in this context. We may thus apply the Carath\'{e}odory
extension theorem from the previous section to obtain a probability measure $%
\mu ^{+}:\mathcal{A}\rightarrow \lbrack 0,1]$ extending $\mu $. The measure $%
\mu ^{+}$ is called the \emph{Loeb measure on $X$} and will be denoted $\mu
_{X}$. The elements of $\mathcal{A}$ are referred to as the \emph{Loeb
measurable subsets of $X$} and will be denoted by $\mathcal{L}_{X}$.%
\index{Loeb measure}

\begin{lemma}
\label{Lemma:approx-above}If $B\in \mathcal{L}_{X}$, then 
\begin{equation*}
\mu _{X}(B)=\inf \{\mu _{X}(A)\ |\ A%
\text{ is internal and }B\subseteq A\}\text{.}
\end{equation*}
\end{lemma}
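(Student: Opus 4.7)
The plan is to show the non-trivial inequality $\mu_X(B)\geq \inf\{\mu_X(A)\ |\ A\text{ internal},\ B\subseteq A\}$, since the reverse follows immediately from monotonicity (any internal $A\supseteq B$ has $\mu_X(A)=\mu^+(A)\geq \mu^+(B)=\mu_X(B)$). So I fix $\epsilon>0$ and aim to produce an internal set $A\supseteq B$ with $\mu_X(A)<\mu_X(B)+\epsilon$.

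First I would unpack the outer measure definition applied to $B\in\mathcal{L}_X$. There exist internal sets $B_1,B_2,\ldots\in\mathcal{A}$ with $B\subseteq\bigcup_{n\in\N}B_n$ and $\sum_{n\in\N}\mu(B_n)<\mu_X(B)+\tfrac{\epsilon}{2}$. Replacing $B_n$ by $D_n:=B_1\cup\cdots\cup B_n$, I obtain an increasing sequence of internal subsets of $X$ with $B\subseteq\bigcup_{n\in\N}D_n$ and, by finite additivity, $\mu(D_n)\le\sum_{k\le n}\mu(B_k)<\mu_X(B)+\tfrac{\epsilon}{2}$ for every $n\in\N$. Fix any real $r$ with $\mu_X(B)+\tfrac{\epsilon}{2}<r<\mu_X(B)+\epsilon$; then $|D_n|/|X|<r$ holds in $\starR$ for each $n\in\N$.

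Next I would invoke countable saturation to extend $(D_n)_{n\in\N}$ to an internal sequence $(D_n)_{n\in\starN}$ (as in the exercise following the countable saturation principle). The two internal conditions
\[
\forall m\le n\ (D_m\subseteq D_n)\qquad\text{and}\qquad |D_n|/|X|<r
\]
both hold for all finite $n$, so by overflow there is an infinite $N\in\starN$ at which both remain valid (apply overflow to the internal set of $n\in\starN$ for which these two properties hold up to index $n$; this set is internal by the Internal Definition Principle and contains $\N$). Set $A:=D_N$. Then $A$ is internal with $\mu_X(A)=\st(|D_N|/|X|)\le r<\mu_X(B)+\epsilon$. Finally, every $x\in B$ lies in $D_n$ for some finite $n$, and since $D_n\subseteq D_N$ by the first overflow property, $B\subseteq A$, which is what was required.

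The main obstacle is the last step: getting from the countable cover $B\subseteq\bigcup_{n\in\N}D_n$ to a single internal set containing $B$. The passage to partial unions (to make the sequence increasing) combined with overflow applied to the conjunction of the monotonicity and measure bound is exactly what closes this gap; without the monotonicity clause, an infinite index $N$ giving small measure would not be guaranteed to contain $B$.
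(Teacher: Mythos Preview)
Your proof is correct and follows essentially the same route as the paper's: obtain an increasing sequence of internal sets covering $B$ with measure below $\mu_X(B)+\epsilon$, extend it to an internal sequence by countable saturation, and overflow on the conjunction of monotonicity and the measure bound to land on a single internal $A=D_N\supseteq B$. You are in fact a bit more careful than the paper, explicitly deriving the increasing sequence from the outer-measure cover and replacing the external bound $\mu_X(D_n)<\mu_X(B)+\epsilon$ by the genuinely internal inequality $|D_n|/|X|<r$ before invoking overflow.
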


\begin{proof}
The inequality $\leq $ is clear. Towards the other inequality, fix $\epsilon
\in \mathbb{R}^{>0}$. We need to find internal $A$ such that $B\subseteq A$
and $\mu _{X}(A)\leq \mu _{X}(B)+\epsilon $. Fix an increasing sequence of
internal sets $(A_{n}\ |\ n\in \mathbb{N})$ such that $B\subseteq
\bigcup_{n\in \mathbb{N}}A_{n}$ and $\mu _{X}\left( A_{n}\right) <\mu
_{X}(B)+\epsilon $ for every $n\in \mathbb{N}$. By countable saturation%
\index{saturation!countable}, we extend this sequence to an internal
sequence $(A_{n}\ |\ n\in {}^{\ast }\mathbb{N})$. By transfer, for each $%
k\in \mathbb{N}$, we have 
\begin{equation*}
(\forall n\in {}^{\ast }\mathbb{N})(n\leq k\rightarrow (A_{n}\subseteq A_{k}%
\text{ and }\mu _{X}(A_{n})<\mu _{X}(B)+\epsilon )).
\end{equation*}%
By 
\index{overflow principle}, there is $K>\mathbb{N}$ such that $\mu
_{X}(A_{K})\leq \mu _{X}(B)+\epsilon $. This concludes the proof.
\end{proof}

\begin{lemma}
\label{approx} If $B\in \mathcal{L}_{X}$ , then, for every $\epsilon \in 
\mathbb{R}^{>0}$, there are internal subsets $C,A$ of $X$ such that $%
C\subseteq B\subseteq A$ and $\mu _{X}(A\setminus C)<\epsilon $.
\end{lemma}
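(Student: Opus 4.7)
My plan is to deduce this from Lemma \ref{Lemma:approx-above} by using it twice: once to approximate $B$ from above by an internal set, and once to approximate the complement $X \setminus B$ from above, which gives an internal inner approximation of $B$ by taking complements (since $\mathcal{L}_X$ and the algebra of internal subsets of $X$ are both closed under complementation).

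More concretely, given $\epsilon > 0$, I would first apply Lemma \ref{Lemma:approx-above} to $B$ to obtain an internal $A \supseteq B$ with $\mu_X(A) \leq \mu_X(B) + \epsilon/2$. Next, since $X \setminus B \in \mathcal{L}_X$, I would apply the same lemma to $X \setminus B$ to obtain an internal $A' \supseteq X \setminus B$ with $\mu_X(A') \leq \mu_X(X \setminus B) + \epsilon/2$. Then I would set $C := X \setminus A'$, which is internal (internal sets form an algebra), satisfies $C \subseteq B$, and by finite additivity of $\mu_X$ on $\mathcal{L}_X$ gives $\mu_X(C) = 1 - \mu_X(A') \geq \mu_X(B) - \epsilon/2$, so $\mu_X(B \setminus C) \leq \epsilon/2$.

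Finally, since $C \subseteq B \subseteq A$, we have the disjoint decomposition $A \setminus C = (A \setminus B) \sqcup (B \setminus C)$, so
\[
\mu_X(A \setminus C) = \mu_X(A \setminus B) + \mu_X(B \setminus C) \leq \tfrac{\epsilon}{2} + \tfrac{\epsilon}{2} = \epsilon,
\]
as required. There is no real obstacle here; the only thing to be a bit careful about is that the measure of the difference sets is computed via the Loeb measure (not just the premeasure) since $B$ is only assumed Loeb measurable, but this is fine because $\mu_X$ is a genuine probability measure on the $\sigma$-algebra $\mathcal{L}_X$ extending the finitely additive function on internal sets, and $C, A$ are internal so their measures match the premeasure values.
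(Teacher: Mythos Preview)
Your proof is correct and follows essentially the same approach as the paper: apply Lemma~\ref{Lemma:approx-above} twice, once to $B$ for the outer set $A$, and once more to manufacture an internal inner approximation $C$. The only cosmetic differences are that the paper applies the second instance to $A\setminus B$ (setting $C=A\setminus R$) rather than to $X\setminus B$, and that you should invoke the strict inequalities the infimum characterization in Lemma~\ref{Lemma:approx-above} provides, so as to conclude $\mu_X(A\setminus C)<\epsilon$ rather than merely $\le\epsilon$.
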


\begin{proof}
Fix $\epsilon >0$. By Lemma \ref{Lemma:approx-above} applied to $B$, there
is an internal set $A$ containing $B$ such that $\mu _{X}(A)< \mu _{X}(B)+%
\frac{\epsilon}{2} $. By Lemma \ref{Lemma:approx-above} applied to\textrm{\ }%
$A\setminus B$, there is an internal set $R$ containing $A\setminus B$ such
that $\mu _{X}\left( R\right) < \mu _{X}\left( A\setminus B\right) +\frac{%
\epsilon}{2} < \epsilon $. Set now $C:=A\setminus R$ and observe that $C$ is
an internal set contained in $B$. Furthermore we have that $\mu _{X}\left(
A\setminus C\right) \leq \mu _{X}\left( R\right) < \epsilon $. This
concludes the proof.
\end{proof}

There are many interesting things to say about Loeb measure. It is crucial
for applications of nonstandard analysis to many different areas of
mathematics. More information on the Loeb measure can be found in \cite%
{albeverio_nonstandard_1986,arkeryd_nonstandard_1997}. We will see later in
this book that Loeb measure allows us to treat densities on the natural
numbers as measures, allowing us to bring in tools from measure theory and
ergodic theory into combinatorial number theory.%
\index{Loeb measure}


\section{Lebesgue measure via Loeb measure}

\label{LebesgueLoeb}

The purpose of this section is to see that Lebesgue measure can be
constructed using a suitable Loeb measure. The connection between these
measures serves as a useful motivation for the results of Chapter \ref%
{sumset} on sumsets of sets of positive density.%
\index{Loeb measure}

\begin{theorem}
Suppose that $N>\mathbb{N}$ and consider the hyperfinite set%
\index{hyperfinite!set} $X:=\{0,%
\frac{1}{N},\frac{2}{N},\ldots ,\frac{N}{N}=1\}$ and the function $\operatorname{st}%
:X\rightarrow \lbrack 0,1]$. Define a $\sigma $-algebra $\mathcal{A}$ on $%
[0,1]$ by $A\in \mathcal{A}$ if and only if $\operatorname{st}^{-1}(A)\in \mathcal{L}%
_{X}$. For $A\in \mathcal{A}$, define $\nu (A):=\mu _{X}(\operatorname{st}^{-1}(A))$%
. Then $\mathcal{A}$ is the algebra of Lebesgue measurable subsets of $\left[
0,1\right] $ and $\nu $ is Lebesgue measure.
\end{theorem}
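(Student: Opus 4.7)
My plan is to push the Loeb measure $\mu_X$ forward via the standard part map, compute the pushforward on intervals, and then invoke the uniqueness and completeness clauses of Fact \ref{measurefacts} to identify it with Lebesgue measure.

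First I would dispose of the routine structural statements: $\mathcal{A}$ is a $\sigma$-algebra and $\nu$ is a measure on it, both of which follow mechanically because $\operatorname{st}^{-1}$ commutes with complements and countable unions and $\mu_X$ is already a measure on $\mathcal{L}_X$. The first concrete computation is to evaluate $\nu$ on a closed interval $[a,b] \subseteq [0,1]$. For each standard $\epsilon > 0$ the two internal sets $\{k/N \in X : a+\epsilon \leq k/N \leq b-\epsilon\}$ and $\{k/N \in X : a-\epsilon \leq k/N \leq b+\epsilon\}$ sandwich $\operatorname{st}^{-1}([a,b])$, and their internal cardinalities agree with $(b-a)N$ up to $O(\epsilon N)$. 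Lemma \ref{approx} (in its squeezing form) combined with $\epsilon \to 0$ then gives $[a,b] \in \mathcal{A}$ with $\nu([a,b]) = b-a$. Finite additivity extends this to the algebra $\mathcal{E}$ of elementary subsets of $[0,1]$, with $\nu|_\mathcal{E}$ equal to the length pre-measure.

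Next I would apply Fact \ref{measurefacts}. The uniqueness clause forces $\nu$ and $\lambda$ to agree on the Borel $\sigma$-algebra $\mathcal{B} = \sigma(\mathcal{E})$. Since $\nu$ is a pullback of the complete measure $\mu_X$, it is itself complete; hence $\mathcal{A}$ contains the completion of $(\mathcal{B},\lambda)$, which is $\mathfrak{M}$, and $\nu|_\mathfrak{M} = \lambda$.

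The main obstacle is the reverse inclusion $\mathcal{A} \subseteq \mathfrak{M}$, since Loeb $\sigma$-algebras are in general far richer than Borel ones and there is no a priori reason a Loeb measurable set should be Lebesgue measurable. The plan here is to sandwich an arbitrary $A \in \mathcal{A}$ between Borel sets $F \subseteq A \subseteq G$ with $\nu(G \setminus F) = 0$, after which completeness of Lebesgue measure places $A$ in $\mathfrak{M}$. Given $A \in \mathcal{A}$, Lemma \ref{approx} furnishes internal $C_n \subseteq \operatorname{st}^{-1}(A) \subseteq B_n$ with $\mu_X(B_n \setminus C_n) < 1/n$. I would set $F_n := \operatorname{st}(C_n)$ (closed, by a short countable-saturation argument using that $C_n$ is hyperfinite and bounded) and $G_n := [0,1] \setminus \operatorname{st}(X \setminus B_n)$ (open, being the complement of such a closed set). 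The inclusions $F_n \subseteq A \subseteq G_n$ follow from $C_n \subseteq \operatorname{st}^{-1}(A) \subseteq B_n$ together with surjectivity of $\operatorname{st}:X \to [0,1]$: any $x \in A$ has a preimage $\xi \in X$, necessarily in $B_n$, so $x \notin \operatorname{st}(X\setminus B_n)$. The crucial estimate $\operatorname{st}^{-1}(G_n \setminus F_n) \subseteq B_n \setminus C_n$ then follows from $\operatorname{st}^{-1}(G_n) \subseteq B_n$ and $\operatorname{st}^{-1}(F_n) \supseteq C_n$, giving $\nu(G_n \setminus F_n) < 1/n$. Taking $F := \bigcup_n F_n$ and $G := \bigcap_n G_n$ delivers the Borel sandwich, and because $\nu$ and $\lambda$ already coincide on Borel sets, $\lambda(G \setminus F) = 0$, which completes the identification $\mathcal{A} = \mathfrak{M}$ with $\nu = \lambda$.
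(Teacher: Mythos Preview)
Your proposal is correct and follows essentially the same route as the paper's outline: verify $\nu$ on intervals, use Fact~\ref{measurefacts} to get agreement on Borel sets and then on $\mathfrak{M}$, and for the reverse inclusion $\mathcal{A}\subseteq\mathfrak{M}$ use Lemma~\ref{approx} together with the exact same sandwich $F_n=\operatorname{st}(C_n)$, $G_n=[0,1]\setminus\operatorname{st}(X\setminus B_n)$ and the key inclusions $C_n\subseteq\operatorname{st}^{-1}(F_n)$, $\operatorname{st}^{-1}(G_n)\subseteq B_n$. The only cosmetic difference is that you invoke the uniqueness clause of Fact~\ref{measurefacts} directly on the elementary-set algebra, whereas the paper phrases the identification on $\mathcal{B}$ via translation invariance; your version is if anything slightly cleaner.
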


We outline the proof of this theorem in a series of steps. We denote by $%
\mathcal{B}$ the $\sigma $-algebra of Borel subsets of $\left[ 0,1\right] $,
by $\mathcal{M}$ the $\sigma $-algebra of measurable subsets of $\left[ 0,1%
\right] $, and by $\lambda $ the Lebesgue measure on $\mathcal{M}$.

\begin{exercise}
Prove that $\mathcal{A}$ is a $\sigma$-algebra and $\nu$ is a measure on $%
\mathcal{A}$.
\end{exercise}

\begin{exercise}
Fix $a,b\in [0,1]$ with $a<b$.

\begin{enumerate}
\item Prove that $X\cap (a,b)^*\in \mathcal{L} _X$ and $\mu_X(X\cap
(a,b)^*)=b-a$.

\item Prove that $\operatorname{st}^{-1}((a,b))=\bigcup_{n\in \mathbb{N}} (X\cap (a+%
\frac{1}{n},b-\frac{1}{n})^*)$.

\item Prove that $(a,b)\in \mathcal{A}$ and $\nu((a,b))=b-a$.
\end{enumerate}
\end{exercise}

We now use the fact that $\lambda $ is the only probability measure on $%
\mathcal{B}$ satisfying $\lambda (a,b)=b-a$ and that is invariant under
translations modulo $1$ to conclude that $\mathcal{B}\subseteq \mathcal{A}$
and $\nu |_{\mathcal{B}}=\lambda |_{\mathcal{B}}$.

\begin{exercise}
Conclude that $\mathcal{M}\subseteq \mathcal{A}$ and $\nu |_{\mathfrak{M}%
}=\lambda |_{\mathfrak{M}}$. (Hint: Use Fact \ref{measurefacts}.)
\end{exercise}

\begin{exercise}
Show that $\mathcal{A}\subseteq \mathcal{M}$. (Hint: if $B\in \mathcal{A}$,
then by Lemma \ref{approx}, there are internal $C,D\subseteq X$ such that $%
C\subseteq \operatorname{st}^{-1}(B)\subseteq D$ and $\mu _{X}(D\setminus
C)<\epsilon $. Set $C^{\prime }:=\operatorname{st}(C)$ and $D^{\prime
}:=[0,1]\setminus \operatorname{st}(X\setminus D)$. Notice that $C^{\prime }$ is
closed and $D^{\prime }$ is open, whence $C^{\prime },D^{\prime }\in 
\mathcal{B}\subseteq \mathcal{A}$. Prove that $C\subseteq \operatorname{st}%
^{-1}(C^{\prime })$ and $\operatorname{st}^{-1}(D^{\prime })\subseteq D$. Conclude
that $B\in \mathcal{M}$.)
\end{exercise}

%

%

\section{Integration}

There is a lot to say about the nonstandard theory of integration. We will
focus on the Loeb%
\index{Loeb measure} measure $\mu _{X}$ obtained from a hyperfinite set $X$%
\index{hyperfinite!set}. In this section, $X$ always denotes a hyperfinite
set.

First, if $F:X\to {}^{\ast}\mathbb{R}$ is an internal function such that $%
F(x)$ is finite for $\mu_X$-almost every $x\in X$, we define $\st(F):X\to 
\mathbb{R}$ by $\st(F)(x):=\st(F(x))$ whenever $F(x)$ is finite.
(Technically speaking, $\st(F)$ is only defined on a set of measure $1$, but
we will ignore this minor point.) If $f:X\to \mathbb{R}$ is a function and $%
F:X\to {}^{\ast}\mathbb{R}$ is an internal function such that $f(x)=\st(F)(x)
$ for $\mu_X$-almost every $x\in X$, we call $F$ a \emph{lift} of $f$. We
first characterize which functions have lifts.

\begin{proposition}
$f:X\to \mathbb{R}$ has a lift if and only if $f$ is $\mu_X$-measurable.
\end{proposition}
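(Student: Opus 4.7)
For the forward direction, which is essentially immediate, suppose $F:X\to\starR$ is an internal lift of $f$, so $\st(F(x))=f(x)$ off a Loeb-null set $N$. For each $r\in\mathbb{R}$, the set $\{x\in X:f(x)<r\}$ agrees modulo $N$ with $\bigcup_{n\in\mathbb{N}}\{x\in X:F(x)<r-1/n\}$. Each set in this countable union is internal by the Internal Definition Principle, hence Loeb measurable, so $f$ is $\mu_X$-measurable.

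For the backward direction I proceed in three steps. First, for indicator functions: given $B\in\l_X$, Lemma \ref{approx} supplies, for every $n\in\mathbb{N}$, internal sets $C_n\subseteq B\subseteq A_n$ with $\mu_X(A_n\setminus C_n)<1/n$; replacing $C_n$ by $C_1\cup\cdots\cup C_n$ and $A_n$ by $A_1\cap\cdots\cap A_n$, I may assume the $C_n$ are increasing and the $A_n$ decreasing. The nonempty internal sets $\{D\subseteq X\text{ internal}:C_n\subseteq D\subseteq A_n\}$ form a decreasing family, so by countable saturation there is an internal $D$ with $C_n\subseteq D\subseteq A_n$ for every $n$, giving $\mu_X(D\triangle B)=0$. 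Thus $\chi_D$ is an internal lift of $\chi_B$, and linearity immediately yields an internal lift of any simple $\l_X$-measurable function.

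For a general $\mu_X$-measurable $f$, I first reduce to the bounded case (for instance by applying the argument to $\arctan\circ f$ and pulling back, or by truncating and gluing), so assume $|f|\le M$ for some standard $M$. For each $n\in\mathbb{N}$ let $f_n$ be the standard simple function obtained by partitioning $[-M,M]$ into intervals of length $1/n$ and replacing $f$ by the left endpoint on each preimage, so that $|f-f_n|\le 1/n$ pointwise. By the previous step each $f_n$ has an internal lift $F_n$, and countable saturation extends $(F_n)_{n\in\mathbb{N}}$ to an internal sequence $(F_\nu)_{\nu\in\starN}$. For each $n\in\mathbb{N}$, the internal set
$$I_n=\Bigl\{\nu\in\starN:\nu\ge n\ \text{and}\ |\{x\in X:|F_\nu(x)-F_n(x)|>2/n\}|\le (2/n)|X|\Bigr\}$$
contains every standard $m\ge n$, since $\st(F_m)=f_m$ and $\st(F_n)=f_n$ almost everywhere and $|f_m-f_n|\le 2/n$ pointwise. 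By countable saturation $\bigcap_n I_n$ is nonempty; any $\nu$ in it is necessarily infinite, and for every $\epsilon>0$ and every $n$ with $3/n<\epsilon$, the set $\{x:|\st(F_\nu(x))-f(x)|>\epsilon\}$ has Loeb measure at most $2/n$. Letting $n\to\infty$ yields $\st(F_\nu)=f$ almost everywhere, so $F_\nu$ is the desired lift.

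The main obstacle is this final step: arranging that countable saturation produces an $F_\nu$ whose standard part agrees with $f$ pointwise almost everywhere, not merely in measure. Exploiting uniform approximation in the bounded case sidesteps Borel--Cantelli-style subtleties that would otherwise arise, since it converts a ``Cauchyness in $L^\infty$'' statement for the $F_n$ into an internal density bound on $\{|F_\nu-F_n|>2/n\}$ strong enough to force the exceptional set to be Loeb-null as $n\to\infty$.
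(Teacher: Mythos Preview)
Your proof is correct. The forward direction matches the paper's. For the converse, the two arguments diverge.

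The paper's route is shorter and more direct: fix a countable open basis $\{V_n\}$ of $\mathbb{R}$, approximate each $U_n:=f^{-1}(V_n)$ from inside by an increasing sequence of internal sets $A_{n,m}$ (via the same approximation lemma you use for indicators), and then apply countable saturation \emph{once} to the family of conditions ``$F(A_{n,m})\subseteq{}^*V_n$'' --- each finite subfamily is satisfiable because the corresponding finite intersection of $V_\ell$'s is nonempty whenever the $A_{\ell,k}$'s overlap. On the co-null set $X_0 = X\setminus\bigcup_n(U_n\setminus\bigcup_m A_{n,m})$ one then reads off $\st(F(x))=f(x)$.

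Your approach is the classical simple-function buildup transported to the nonstandard setting: lift indicators by sandwiching, extend linearly, pass to bounded $f$ by uniform approximation plus a saturation/overflow argument on the internally extended sequence $(F_\nu)$, and reduce the general case via $\arctan$. Both arguments rest on exactly the same two ingredients (the internal approximation lemma and countable saturation), just organized differently. The paper's version handles unbounded $f$ in one stroke and avoids the $\arctan$ reduction; yours is more modular and makes the role of uniform convergence explicit, at the cost of an extra step.
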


\begin{proof}
If $F$ is a lift of $f$, then for any $r\in \mathbb{R}$, we have 
\begin{equation*}
\mu_X\left(\{x\in X \ : \ f(x)<r\}\triangle \bigcup_{n\in \mathbb{N}%
}\left\{x\in X \ : \ F(x)<r-%
\frac{1}{n}\right\}\right)=0.
\end{equation*}
Since the latter set is clearly measurable and $\mu_X$ is a complete
measure, it follows that $\{x\in X \ : \ f(x)<r\}$ is measurable, whence $f$
is $\mu_X$-measurable.

For the converse, suppose that $f$ is $\mu_X$-measurable and fix a countable
open basis $\left\{ V_{n}\right\} $ for $\mathbb{R}$. For $n\in \mathbb{N}$,
set $U_{n}:=f^{-1}\left( V_{n}\right) \in \mathcal{L}_{X}$. By Lemma \ref%
{approx}, one can find, for every $n\in \mathbb{N}$, an increasing sequence $%
\left( A_{n,m}\right) $ of internal subsets of $U_{n}$ such that $\mu
_{X}\left( A_{n,m}\right) \geq \mu _{X}\left( U_{n}\right) -2^{-m}$ for
every $m\in \mathbb{N}$. It follows that the subset 
\begin{equation*}
X_{0}:=X\setminus \bigcup_{n\in \mathbb{N}}\left( U_{n}\setminus
\bigcup_{m\in \mathbb{N}}A_{n,m}\right)
\end{equation*}%
of $X$ has $\mu _{X}$-measure $1$. Observe now that, for every $n,m\in 
\mathbb{N}$, there exists an internal function $F:X\rightarrow {}^{\ast }%
\mathbb{R}$ such that $F\left( A_{\ell ,k}\right) \subset {}^{\ast }V_{\ell
} $ for $k\leq m$ and $\ell \leq n$. Therefore, by saturation, there exists
an internal function $F:X\rightarrow {}^{\ast }\mathbb{R}$ such that $%
F\left( A_{n,m}\right) \subset {}^{\ast }V_{n}$ for every $n,m\in \mathbb{N}$%
. It is clear that $f(x)=\operatorname{st}\left( F\left( x\right) \right) $ for
every $x\in X_{0}$, whence $F$ is a lift of $f$.
\end{proof}

The rest of this section is devoted towards understanding $\int fd\mu_X$ (in
the case that $f$ is $\mu_X$-integrable) and the ``internal integral'' $%
\frac{1}{|X|}\sum_{x\in X}F(x)$ of a lift $F$ of $f$. We first treat a
special, but important, case.

\begin{lemma}
\label{boundedintegration} Suppose that $F:X\to {}^{\ast}\mathbb{R}$ is an
internal function such that $F(x)$ is finite for all $x\in X$. Then $\st(F)$
is $\mu_X$-integrable and 
\begin{equation*}
\int \st(F)d\mu_X=\st\left(\frac{1}{|X|}\sum_{x\in X}F(x)\right).
\end{equation*}
\end{lemma}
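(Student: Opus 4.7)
The plan is as follows. First I would upgrade the hypothesis ``$F(x)$ finite for all $x\in X$'' to a uniform bound: since $X$ is hyperfinite and $F$ is internal, the element $K:=\max_{x\in X}|F(x)|\in\starR$ is attained at some $x_0\in X$; if $K$ were infinite, then $F(x_0)$ would be infinite, contradicting the hypothesis. Hence there is a standard $M\in\R^{>0}$ with $|F(x)|\leq M$ for every $x\in X$, and consequently $\st(F):X\to [-M,M]$ is bounded, hence automatically $\mu_X$-integrable once one knows it is measurable (which follows from the previous proposition, since $F$ is a lift of $\st(F)$).

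For each $n\in\N$ and each integer $k$ with $-nM\leq k\leq nM$, set
\[
B_{n,k}\ :=\ \{x\in X\ :\ k/n\leq F(x)<(k+1)/n\}.
\]
By the internal definition principle each $B_{n,k}$ is internal, and the family $\{B_{n,k}\}_k$ forms a finite partition of $X$. Define the internal simple function $F_n:X\to\starR$ and the (Loeb measurable) simple function $f_n:X\to\R$ by
\[
F_n(x)\ :=\ \sum_k \tfrac{k}{n}\,\mathbbm{1}_{B_{n,k}}(x),\qquad f_n(x)\ :=\ \sum_k \tfrac{k}{n}\,\mathbbm{1}_{B_{n,k}}(x).
\]
By construction $|F(x)-F_n(x)|\leq 1/n$ for every $x\in X$, and since $x\in B_{n,k}$ forces $\st(F(x))\in[k/n,(k+1)/n]$, we also have $|\st(F)(x)-f_n(x)|\leq 1/n$ for every $x\in X$.

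Now the key computation is the identity
\[
\int f_n\,d\mu_X\ =\ \sum_k \tfrac{k}{n}\,\mu_X(B_{n,k})\ =\ \sum_k \tfrac{k}{n}\,\st\!\left(\tfrac{|B_{n,k}|}{|X|}\right)\ =\ \st\!\left(\tfrac{1}{|X|}\sum_{x\in X}F_n(x)\right),
\]
where the last equality uses that the sum is over a standard finite index set together with the definition of Loeb measure on internal sets. Combining the two uniform approximations, one obtains
\[
\left|\int\st(F)\,d\mu_X-\st\!\left(\tfrac{1}{|X|}\sum_{x\in X}F(x)\right)\right|\ \leq\ \int|\st(F)-f_n|\,d\mu_X+\st\!\left(\tfrac{1}{|X|}\sum_{x\in X}|F(x)-F_n(x)|\right)\ \leq\ \tfrac{2}{n}.
\]
Letting $n\to\infty$ yields the desired equality. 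The only mildly delicate step is the first one, where one must be careful to extract a \emph{standard} bound $M$ from the pointwise finiteness hypothesis; everything else is a standard simple-function approximation argument, with transfer used only to convert the pointwise estimate $|F-F_n|\leq 1/n$ into the hyperfinite-average estimate.
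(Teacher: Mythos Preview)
Your proof is correct and follows essentially the same strategy as the paper: extract a uniform standard bound from the pointwise finiteness hypothesis (via the internal maximum on a hyperfinite set), partition $X$ by the level sets $\{k/n\le F(x)<(k+1)/n\}$, and compare the Loeb integral of the resulting simple function to the corresponding hyperfinite average. The only difference is packaging: the paper reduces to the nonnegative case and proves the two inequalities separately (leaving one to the reader), whereas you handle both signs at once and obtain the equality directly via the two-sided estimate $\le 2/n$.
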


\begin{proof}
Note first that the assumptions imply that there is $m\in \N$ such that $%
|F(x)|\leq m$ for all $x\in X$. It follows that $\st(F)$ is $\mu _{X}$%
-integrable. Towards establishing the displayed equality, note that, by
considering positive and negative parts, that we may assume that $F$ is
nonnegative. Fix $n\in \mathbb{N}$. For $k\in \{0,1,\ldots ,mn-1\}$, set $%
A_{k}:=\{x\in X\ :\ \frac{k}{n}\leq F(x)<\frac{k+1}{n}\}$, an internal set.
Since $\sum_{k}\frac{k}{n}\chi _{A_{k}}$ is a simple function below $\st(F)$%
, we have that $\sum_{k}\frac{k}{n}\mu _{X}(A_{k})\leq \int \st(F)d\mu _{X}$%
. However, we also have 
\begin{equation*}
\sum_{k}\frac{k}{n}\mu _{X}(A_{k})=\operatorname{st}\left( \frac{1}{|X|}%
\sum_{k}\sum_{x\in A_{k}}\frac{k}{n}\right) \geq \operatorname{st}\left( \frac{1}{|X|%
}\sum_{k}\sum_{x\in A_{k}}(F(x)-\frac{1}{n})\right) =\operatorname{st}\left( \frac{1%
}{|X|}\sum_{x\in X}F(x)\right) -\frac{1}{n}.
\end{equation*}%
It follows that $\operatorname{st}(\frac{1}{|X|}\sum_{x\in X}F(x))\leq \int \st%
(F)d\mu _{X}+\frac{1}{n}$. Since $n$ was arbitrary, we have that $\operatorname{st}(%
\frac{1}{|X|}\sum_{x\in X}F(x))\leq \int \st(F)d\mu _{X}$.

We leave the proof of the inequality $\int \st(F)d\mu_X\leq \operatorname{st}(\frac{1%
}{|X|}\sum_{x\in X}F(x))$ to the reader.
\end{proof}

We now seek to extend the previous lemma to cover situations when $F$ is not
necessarily bounded by a standard number. Towards this end, we need to
introduce the appropriate nonstandard integrability assumption. A $\mu_X$%
-measurable internal function $F:X\rightarrow {}^{\ast }\mathbb{R}$ is
called \emph{$S$-integrable} if:

\begin{enumerate}
\item The quantity 
\begin{equation*}
\frac{1}{|X|}\sum_{x\in X}|F(x)|
\end{equation*}%
is finite, and

\item for every internal subset $A$ of $X$ with $\mu_X(A)=0$, we have%
\begin{equation*}
\frac{1}{\left\vert X\right\vert }\sum_{x\in A}\left\vert F\left( x\right)
\right\vert\approx 0.
\end{equation*}
\end{enumerate}

Here is the main result of this section:

\begin{theorem}
\label{integratebycounting} Suppose that $f:X\to \mathbb{R}$ is a $\mu_X$%
-measurable function. Then $f$ is $\mu_X$-integrable if and only if $f$ has
an $S$-integrable lifting. In this case, for any $S$-integrable lift $F$ of $%
f$ and any internal subset $B$ of $X$, we have 
\begin{equation*}
\int_B fd\mu_X=\st\left(\frac{1}{|X|}\sum_{x\in B}F(x)\right).
\end{equation*}
\end{theorem}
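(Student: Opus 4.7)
The proof splits naturally into the two implications, connected by the bounded case (Lemma \ref{boundedintegration}) and a useful reformulation of $S$-integrability. Before starting, I would establish the following equivalent characterization: an internal $F\colon X\to{}^*\mathbb{R}$ with $\frac{1}{|X|}\sum_{x\in X}|F(x)|$ finite is $S$-integrable if and only if for every standard $\epsilon>0$ there exists a standard $M\in\mathbb{N}$ with $\frac{1}{|X|}\sum_{|F(x)|>M}|F(x)|<\epsilon$. The forward implication uses Markov's inequality to show $\mu_X(\{|F|>N\})=0$ for any infinite $N$, combined with \emph{underflow} applied to the decreasing internal function $M\mapsto \frac{1}{|X|}\sum_{|F|>M}|F|$; the reverse splits $\frac{1}{|X|}\sum_{x\in A}|F(x)|\le M\mu_X(A)+\epsilon$ for $A$ with $\mu_X(A)=0$.

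For the $(\Leftarrow)$ direction, suppose $F$ is an $S$-integrable lift of $f$. For each standard $M$, $F_M:=F\cdot\chi_{|F|\le M}$ is internally bounded, so Lemma \ref{boundedintegration} yields $\int_B\operatorname{st}(F_M)\,d\mu_X=\operatorname{st}(\frac{1}{|X|}\sum_{x\in B}F_M(x))$. Since $\operatorname{st}(F_M)=f\cdot\chi_{|F|\le M}$ a.e., monotone convergence applied to $|F_M|$ (whose integrals are uniformly bounded by $\operatorname{st}(\frac{1}{|X|}\sum|F|)$) shows $f\in L^1(\mu_X)$. Using the reformulation above, given $\epsilon>0$, pick standard $M$ with $\frac{1}{|X|}\sum_{|F|>M}|F|<\epsilon$; then $|\frac{1}{|X|}\sum_B F-\frac{1}{|X|}\sum_B F_M|<\epsilon$, while $\int_B f\,d\mu_X-\int_B\operatorname{st}(F_M)\,d\mu_X\to 0$ as $M\to\infty$ by dominated convergence. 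Combining and letting $\epsilon\to 0$ gives the displayed equality.

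For the $(\Rightarrow)$ direction, assume $f$ is $\mu_X$-integrable and fix a lift $F_0$ (which exists by the preceding proposition). By integrability, for each $n\in\mathbb{N}$ choose a standard $k_n\in\mathbb{N}$ with $\int_{|f|>k_n}|f|\,d\mu_X<\frac{1}{2n}$. For any standard $M\ge k_n$, the function $|F_0|\chi_{k_n<|F_0|\le M}$ is bounded, so Lemma \ref{boundedintegration} gives $\frac{1}{|X|}\sum_{k_n<|F_0(x)|\le M}|F_0(x)|\approx\int_{k_n<|f|\le M}|f|\,d\mu_X<\frac{1}{2n}$, hence $<\frac{1}{n}$. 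Define the internal sets
\[
A_n:=\Bigl\{M\in{}^*\mathbb{N}\ :\ M\ge k_n\ \text{and}\ \tfrac{1}{|X|}\!\!\sum_{k_n<|F_0(x)|\le M}\!\!|F_0(x)|<\tfrac{1}{n}\Bigr\}\cap\{M\ge n\}.
\]
Each $A_n$ contains all sufficiently large standard $M$, and $A_{n+1}\subseteq A_n$, so by countable saturation there exists $M^*\in\bigcap_n A_n$, which is necessarily infinite. Set $F:=F_0\cdot\chi_{|F_0|\le M^*}$.

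It remains to verify that $F$ works. Since $F_0$ is finite a.e.\ (as $f=\operatorname{st}(F_0)$ is defined a.e.), we have $F=F_0$ a.e., so $F$ is still a lift of $f$. For $S$-integrability, for every $n$ we get $\frac{1}{|X|}\sum_{|F(x)|>k_n}|F(x)|=\frac{1}{|X|}\sum_{k_n<|F_0(x)|\le M^*}|F_0(x)|<\frac{1}{n}$, which is precisely the reformulated $S$-integrability condition (the finiteness of $\frac{1}{|X|}\sum|F|$ then follows by splitting at $k_1$). The equality $\int_B f\,d\mu_X=\operatorname{st}(\frac{1}{|X|}\sum_{x\in B}F(x))$ for arbitrary internal $B$ now follows from the $(\Leftarrow)$ direction applied to this $F$. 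The main obstacle is the $(\Rightarrow)$ direction: the truncation level $M^*$ must be infinite so that $F$ remains a lift of $f$, yet controlled enough to secure $S$-integrability, and threading this needle is what the saturation argument with the carefully chosen thresholds $k_n$ accomplishes.
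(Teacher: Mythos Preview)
Your proof is essentially correct and follows the same overall architecture as the paper's: reduce to the bounded case (Lemma \ref{boundedintegration}) via truncation, and for the $(\Rightarrow)$ direction find an \emph{infinite} truncation level that keeps the tail under control. Two minor slips are worth flagging, though neither is fatal.

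First, the line $\frac{1}{|X|}\sum_{k_n<|F_0|\le M}|F_0|\approx\int_{k_n<|f|\le M}|f|\,d\mu_X$ is not literally correct: Lemma \ref{boundedintegration} gives $\approx\int_{\{k_n<|F_0|\le M\}}|f|\,d\mu_X$, and the internal set $\{k_n<|F_0|\le M\}$ differs from $\{k_n<|f|\le M\}$ on the level set $\{|f|=k_n\}$. What you actually get is $\{k_n<|F_0|\le M\}\subseteq\{|f|\ge k_n\}$ (up to a null set), so the sum is $\lesssim\int_{|f|\ge k_n}|f|$. Choosing $k_n$ with $\int_{|f|\ge k_n}|f|<\tfrac{1}{2n}$ (rather than strict inequality) fixes this immediately. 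Second, the nesting $A_{n+1}\subseteq A_n$ does not follow as stated (the lower cutoff $k_n$ changes with $n$), but you do not need it: each $A_n$ contains all sufficiently large standard $M$, so the family has the finite intersection property, and that is all countable saturation requires.

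Where your argument genuinely diverges from the paper's is in organization and in the mechanism for producing $M^*$. You isolate the tail-sum reformulation of $S$-integrability at the outset and use it uniformly in both directions; the paper instead proves the analogous tail estimate (its Claim 1) ad hoc inside the $(\Leftarrow)$ direction and verifies $S$-integrability of the truncated lift in $(\Rightarrow)$ by a direct integral inequality. For finding the infinite cutoff, the paper uses monotone convergence together with transfer/overflow to locate an infinite $M$ with $\int f\,d\mu_X=\st\bigl(\tfrac{1}{|X|}\sum F_M\bigr)$, whereas you use countable saturation on the sets $A_n$. The paper's truncation is $F_m=\min\{F,m\}$ rather than your indicator cutoff $F\cdot\chi_{|F|\le M}$; both work, but the indicator version pairs more cleanly with your tail-sum criterion. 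Overall your route is a legitimate and arguably tidier alternative, trading the paper's overflow step for a saturation argument.
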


\begin{proof}
We first note that, by taking positive and negative parts, we may assume
that $f$ is nonnegative. Moreover, by replacing $f$ with $f\cdot \chi _{B}$,
we may assume that $B=X$.

We first suppose that $F:X\rightarrow {}^{\ast }\mathbb{R}$ is a nonegative $%
S$-integrable function such that $F(x)$ is finite for $\mu _{X}$-almost
every $x$. For $n\in \starN$, set $B_{n}:=\{x\in X\ :\ F(x)\geq n\}$.

\ 

\noindent \textbf{Claim 1:} For every infinite $N\in {}^{\ast }\mathbb{N}$,
we have%
\begin{equation*}
\frac{1}{\left\vert X\right\vert }\sum_{x\in B_N} F(x) \approx 0\text{.}
\end{equation*}

\ 

\noindent \textbf{Proof of Claim 1:} Observe that%
\begin{equation*}
\frac{N\left\vert B_{N}\right\vert }{\left\vert X\right\vert }\leq \frac{1}{%
\left\vert X\right\vert }\sum_{x\in B_{N}}F\left( x\right) \leq \frac{1}{%
\left\vert X\right\vert }\sum_{x\in X}F\left( x\right) 
\end{equation*}%
Therefore%
\begin{equation*}
\frac{\left\vert B_{N}\right\vert }{\left\vert X\right\vert }\leq \frac{1}{N}%
\frac{1}{\left\vert X\right\vert }\sum_{x\in X}F\left( x\right) \thickapprox
0
\end{equation*}%
since, by assumption, $\frac{1}{\left\vert X\right\vert }\sum_{x\in
X}F\left( x\right) $ is finite. It follows from the assumption that $F$ is $S
$-integrable that%
\begin{equation*}
\frac{1}{\left\vert X\right\vert }\sum_{x\in B_{N}}F\left( x\right)
\thickapprox 0\text{.}
\end{equation*}

In the rest of the proof, we will use the following notation: given a
nonegative internal function $F:X\rightarrow {}^{\ast }\mathbb{R}$ and $m\in %
\starN$, we define the internal function $F_{m}:X\rightarrow {}^{\ast }%
\mathbb{R}$ by $F_{m}\left( x\right) =\min \left\{ F\left( x\right)
,m\right\} $. Observe that $F_{m}(x)\leq F_{m+1}(x)\leq F(x)$ for every $%
m\in \starN$ and every $x\in X$. It follows from the Monotone Convergence
Theorem and the fact that, for $\mu _{X}$-almost every $x\in X$, the
sequence $(\st(F_{m}(x))\ :\ m\in \mathbb{N})$ converges to $\st(F(x))$,
that $\int \st(F_{m})d\mu _{X}\rightarrow \int \st(F)d\mu _{X}$.

\ 

\noindent \textbf{Claim 2:} We have 
\begin{equation*}
\operatorname{st}\left( \frac{1}{\left\vert X\right\vert }\sum_{x\in X} F\left(
x\right) \right) =\lim_{m\rightarrow +\infty }\operatorname{st}\left( \frac{1}{%
\left\vert X\right\vert }\sum_{x\in X}F_{m}\left( x\right) \right) \text{.}
\end{equation*}

\ 

\noindent \textbf{Proof of Claim 2:} It is clear that%
\begin{equation*}
\lim_{m\rightarrow \infty }\operatorname{st}\left( \frac{1}{\left\vert X\right\vert }%
\sum_{x\in X}F_{m}\left( x\right) \right) \leq \operatorname{st}\left( \frac{1}{%
\left\vert X\right\vert }\sum_{x\in X}F\left( x\right) \ \right) \text{.}
\end{equation*}%
For the other inequality, fix $M\in \starN$ infinite and observe that 
\begin{eqnarray*}
\frac{1}{\left\vert X\right\vert }\sum_{x\in X}F\left( x\right)  &=&\frac{1}{%
\left\vert X\right\vert }\sum_{x\in B_{M}}F\left( x\right) +\frac{1}{%
\left\vert X\right\vert }\sum_{x\in X\setminus B_{M}}F\left( x\right)  \\
&\approx &\frac{1}{\left\vert X\right\vert }\sum_{x\in X\setminus
B_{M}}F\left( x\right)  \\
&=&\frac{1}{\left\vert X\right\vert }\sum_{x\in X\setminus B_{M}}F_{M}\left(
x\right)  \\
&\leq &\frac{1}{\left\vert X\right\vert }\sum_{x\in X}F_{M}\left( x\right) 
\text{.}
\end{eqnarray*}%
Thus, given any $\epsilon >0$, we have that $\frac{1}{\left\vert
X\right\vert }\sum_{x\in X}F\left( x\right) \leq \frac{1}{\left\vert
X\right\vert }\sum_{x\in X}F_{M}\left( x\right) +\epsilon $ for all infinite 
$M$, whence, by underflow, we have that $\frac{1}{\left\vert X\right\vert }%
\sum_{x\in X}F\left( x\right) \leq \frac{1}{\left\vert X\right\vert }%
\sum_{x\in X}F_{m}\left( x\right) +\epsilon $ for all but finitely many $%
m\in \mathbb{N}$. It follows that $\operatorname{st}\left( \frac{1}{\left\vert
X\right\vert }\sum_{x\in X}F\left( x\right) \right) \leq \lim_{m\rightarrow
+\infty }\operatorname{st}\left( \frac{1}{\left\vert X\right\vert }\sum_{x\in
X}F_{m}\left( x\right) \right) $, as desired.

By Lemma \ref{boundedintegration}, Claim 2, and the discussion preceding
Claim 2, we have that $\st(F)$ is $\mu_X$-integrable and $\int \st(F)d\mu=\st%
\left(\frac{1}{|X|}\sum_{x\in X}F(x)\right)$, as desired.

We now suppose that $f$ is a nonnegative $\mu _{X}$-integrable function. We must show that $f$ has
an $S$-integrable lifting. Let $F$ be any nonnegative lifting of $f$. Note that, for
every infinite $M\in \starN$, that $F_{M}$ is also a lifting of $f$. We will
find an infinite $M\in {}^{\ast }\mathbb{N}$ such that $F_{M}$ is also $S$%
-integrable.

By the Monotone Convergence Theorem, for every $\epsilon >0$, we have that 
\begin{equation*}
\left\vert \int \st(F)d\mu _{X}-\int \st(F_{m})d\mu _{X}\right\vert
<\epsilon 
\end{equation*}%
holds for all but finitely many $m\in \mathbb{N}$. %
Therefore, by Lemma \ref{boundedintegration}, we have that%
\begin{equation*}
\left\vert \int \st(F)d\mu _{X}-\frac{1}{\left\vert X\right\vert }\sum_{x\in
X}F_{m}\left( x\right) \right\vert <\epsilon 
\end{equation*}%
holds for all but finitely many $m\in \mathbb{N}$. %
By transfer, there exists infinite $M\in {}^{\ast }\mathbb{N}$ such that%
\begin{equation*}
\int \st(F)d\mu _{X}=\operatorname{st}\left( \frac{1}{\left\vert X\right\vert }%
\sum_{x\in X}F_{M}\left( x\right) \right) 
\end{equation*}%
and%
\begin{equation*}
\int fd\mu _{X}=\operatorname{st}\left( \frac{1}{\left\vert X\right\vert }\sum_{x\in
X}F_{M}\left( x\right) \right) \text{.}
\end{equation*}%
We show that the function $F_{M}$ is $S$-integrable. Suppose that $B$ is an
internal subset of $X$ such that $\mu _{X}(B)=0$. Set 
\begin{equation*}
r:=\operatorname{st}\left( \frac{1}{\left\vert X\right\vert }\sum_{x\in B}\left\vert
F_{M}\left( x\right) \right\vert \right) \text{.}
\end{equation*}%
We wish to show that $r=0$. Towards this end, fix $m\in \mathbb{N}$. Then we
have that%
\begin{eqnarray*}
r+\int \st(F_{m})d\mu _{X} &=&r+\int_{X\setminus B}\st(F_{m})d\mu
_{X}\approx r+\frac{1}{\left\vert X\right\vert }\sum_{x\in X\setminus
B}F_{m}\left( x\right)  \\
&\leq &r+\frac{1}{\left\vert X\right\vert }\sum_{x\in X\setminus
B}F_{M}\left( x\right) \approx \frac{1}{\left\vert X\right\vert }\sum_{x\in
X}F_{M}\left( x\right) \approx \int \st(F)d\mu _{X}\text{.}
\end{eqnarray*}%
Letting $m\rightarrow +\infty $, we obtain that $r=0$, as desired.
\end{proof}

\begin{corollary}
\label{approxL1} Suppose $f\in L^{1}(X,\mathcal{L}_{X},\mu _{X})$ and $%
\epsilon >0$. Then there exists internal functions $F,G:X\rightarrow
{}^{\ast }\mathbb{R}$ such that $F\leq f\leq G$ $\mu _{X}$-almost everywhere
and 
\begin{equation*}
\max \left\{ \left\vert \int_{B}fd\mu _{X}-\frac{1}{|X|}\sum_{x\in
B}F(x)\right\vert ,\left\vert \int_{B}fd\mu _{X}-\frac{1}{|X|}\sum_{x\in
B}G(x)\right\vert \right\} \leq \epsilon
\end{equation*}%
for every internal subset $B$ of $X$.
\end{corollary}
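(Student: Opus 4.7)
The plan is to reduce everything to Theorem~\ref{integratebycounting} by starting from an $S$-integrable lift of $f$ and perturbing it by a small standard constant.

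First, I would invoke Theorem~\ref{integratebycounting} to obtain an $S$-integrable lift $H:X\to{}^\ast\mathbb{R}$ of $f$. By definition of ``lift,'' there is a $\mu_X$-conull set $X_0\subseteq X$ on which $H(x)$ is finite and $\operatorname{st}(H(x))=f(x)$. In particular, for every $x\in X_0$ and every standard $\delta>0$, we have $|H(x)-f(x)|<\delta$.

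Next, I would simply set
\[
F := H - \tfrac{\epsilon}{2}, \qquad G := H + \tfrac{\epsilon}{2}.
\]
These are internal (as $H$ is internal and $\epsilon/2$ is a real constant), and by the previous paragraph $F(x)\le f(x)\le G(x)$ for every $x\in X_0$, hence $\mu_X$-almost everywhere.

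For the integral estimate, fix an internal set $B\subseteq X$. Since $H$ is an $S$-integrable lift of $f$, Theorem~\ref{integratebycounting} (applied to $f\cdot\chi_B$, whose $S$-integrable lift is $H\cdot\chi_B$, or equivalently the ``for every internal $B$'' clause of the theorem) gives
\[
\int_B f\,d\mu_X \;=\; \operatorname{st}\!\left(\frac{1}{|X|}\sum_{x\in B}H(x)\right),
\]
so $\bigl|\int_B f\,d\mu_X-\tfrac{1}{|X|}\sum_{x\in B}H(x)\bigr|$ is infinitesimal. Since
\[
\frac{1}{|X|}\sum_{x\in B}G(x) \;=\; \frac{1}{|X|}\sum_{x\in B}H(x) + \frac{|B|}{|X|}\cdot\frac{\epsilon}{2},
\]
and $|B|/|X|\le 1$, the triangle inequality gives
\[
\left|\int_B f\,d\mu_X - \frac{1}{|X|}\sum_{x\in B}G(x)\right| \;\le\; \text{(infinitesimal)} + \frac{\epsilon}{2} \;\le\; \epsilon,
\]
and the same bound works for $F$.

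There is essentially no obstacle here—the only point worth noting is that Theorem~\ref{integratebycounting} already delivers the integral identity \emph{uniformly} in the sense that it holds for every internal $B$, so the $\epsilon/2$ slack absorbed by the perturbation is all we need. The perturbation trick is precisely what converts an almost-everywhere equality $\operatorname{st}(H)=f$ into the two-sided almost-everywhere inequality $F\le f\le G$ required by the statement.
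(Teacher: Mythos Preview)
Your proof is correct and essentially identical to the paper's: the paper also takes an $S$-integrable lift $H$ of $f$ (via Theorem~\ref{integratebycounting}), sets $F:=H-\epsilon/2$ and $G:=H+\epsilon/2$, and then bounds the error for each internal $B$ using the integral identity for $H$ plus the $\epsilon/2$ shift. The only cosmetic difference is that you spell out the $S$-integrability of $H$ and the $|B|/|X|\le 1$ factor explicitly.
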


\begin{proof}
Let $H:X\rightarrow {}^{\ast }\mathbb{R}$ be a lifting of $f$. Set $%
F:=H-\epsilon /2$ and $G:=H+\epsilon /2$. Since $\operatorname{st}\left( H\left(
x\right) \right) =f\left( x\right) $ for $\mu _{X}$-almost every $x\in X$,
we conclude that $F\left( x\right) \leq f\left( x\right) \leq G\left(
x\right) $ for $\mu _{X}$-almost every $x\in X$. Furthermore, if $B$ is an
internal subset of $X$, then by Lemma \ref{integratebycounting}, we have that%
\begin{equation*}
\left\vert \int_{B}fd\mu _{X}-\frac{1}{\left\vert X\right\vert }\sum_{x\in
B}F\left( x\right) \right\vert \leq \epsilon /2+\left\vert \int_{B}fd\mu
_{X}-\frac{1}{\left\vert X\right\vert }\sum_{x\in B}H\left( x\right)
\right\vert \leq \epsilon 
\end{equation*}%
and 
\begin{equation*}
\left\vert \int_{B}fd\mu _{X}-\frac{1}{|X|}\sum_{x\in B}G(x)\right\vert \leq
\epsilon /2+\left\vert \int_{B}fd\mu _{X}-\frac{1}{\left\vert X\right\vert }%
\sum_{x\in B}H\left( x\right) \right\vert \leq \epsilon \text{.}
\end{equation*}%
This concludes the proof.
\end{proof}

\section{Product measure\label{product}}

Suppose that $(X,\mathcal{A}_{X},\nu _{X})$ and $(Y,\mathcal{A}_{Y},\nu
_{Y}) $ are two probability measure spaces. We can then form their \emph{%
product} as follows: first, set $\mathcal{A}$ to be the set of finite unions
of rectangles of the form $A\times B$, where $A\in \mathcal{A}_{X}$ and $%
B\in \mathcal{A}_{Y}$. The elements of $\mathcal{A}$ are called \emph{elementary
sets}. It is an exercise to show that $\mathcal{A}$ is an algebra of subsets
of $X\times Y$ and that every element of $\mathcal{A}$ can be written as a
finite union of \emph{disjoint} such rectangles. We can then define a
pre-measure $\nu $ on $\mathcal{A}$ by $\mu (\bigcup_{i=1}^{n}(A_{i}\times
B_{i})):=\sum_{i=1}^{n}(\nu _{X}(A_{i})\cdot \nu _{Y}(B_{i}))$. Applying the
outer measure procedure, we get a measure $\nu _{X}\otimes \nu _{Y}:\mathcal{%
A}_{m}\rightarrow \lbrack 0,1]$ extending $\nu $. We denote $\mathcal{A}_{m}$
by $\mathcal{A}_{X}\otimes \mathcal{A}_{Y}$.

The following situation will come up in Chapter \ref{trianglechapter}:
suppose that $X$ and $Y$ are hyperfinite sets and we construct the Loeb%
\index{Loeb measure} measure spaces $(X,\mathcal{L} _X,\mu_X)$ and $(Y,%
\mathcal{L} _Y,\mu_Y)$. We are thus entitled to consider the product measure
space $(X\times Y,\mathcal{L} _X\otimes \mathcal{L} _Y,\mu_X\otimes \mu_Y)$.
However, $X\times Y$ is itself a hyperfinite set, whence we can consider its
Loeb%
\index{Loeb measure} measure space $(X\times Y, \mathcal{L} _{X\times
Y,L},\mu_{X\times Y})$. There is a connection:

\begin{exercise}
Show that $\mathcal{L}_{X}\otimes \mathcal{L}_{Y}$ is a sub-$\sigma $%
-algebra of $\mathcal{L}_{X\times Y}$ and that $\mu _{X\times Y}|_{(\mathcal{%
L}_{X}\otimes \mathcal{L}_{Y})}=\mu _{X}\otimes \mu _{Y}$.
\end{exercise}

In the proof of the triangle removal lemma in Chapter \ref{trianglechapter},
we will need to use the following Fubini-type theorem for Loeb%
\index{Loeb measure} measure on a hyperfinite set.

\begin{theorem}
\label{fubiniloeb} Suppose that $X$ and $Y$ are hyperfinite sets and $%
f:X\times Y\to \mathbb{R}$ is a bounded $\mathcal{L} _{X\times Y}$%
-measurable function. For $x\in X$, let $f_x:Y\to \mathbb{R}$ be defined by $%
f_x(y):=f(x,y)$. Similarly, for $y\in Y$, let $f^y:X\to \mathbb{R}$ be
defined by $f^y(x):=f(x,y)$. Then:

\begin{enumerate}
\item $f_{x}$ is $\mathcal{L}_{Y}$-measurable for $\mu _{X}$-almost every $%
x\in X$;

\item $f^{y}$ is $\mathcal{L}_{X}$-measurable for $\mu _{Y}$-almost every $%
y\in Y$;

\item The double integral can be computed as an iterated integral: 
\begin{equation*}
\int_{X\times Y}f(x,y)d\mu_{X\times Y}(x,y)=\int_X\left(\int_Y
f_x(y)d\mu_Y(y)\right)d\mu_X(x)=\int_Y\left(\int_X
f^y(x)d\mu_X(x)\right)d\mu_Y(y).
\end{equation*}
\end{enumerate}
\end{theorem}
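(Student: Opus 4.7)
The plan is to reduce the theorem, in stages, to a hyperfinite Fubini-type identity for counting measure, which follows from transfer. By considering positive and negative parts and approximating bounded measurable functions monotonically from below by simple functions $s_n = \sum_{i} c_i \chi_{E_i}$ with $E_i \in \mathcal{L}_{X \times Y}$, and invoking dominated convergence (legitimate because $f$ is bounded), everything reduces to proving the statement for $f = \chi_E$ with $E \in \mathcal{L}_{X \times Y}$. The claim for $f^y$ is entirely symmetric, so I only treat $f_x$.

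First I would handle the case where $E \subseteq X \times Y$ is internal. By transfer (more specifically, the Internal Definition Principle), each section $E_x = \{y \in Y : (x,y) \in E\}$ is internal, so $\chi_{E_x}$ is $\mathcal{L}_Y$-measurable and $\mu_Y(E_x) = \st(|E_x|/|Y|)$. The internal function $F : X \to {}^\ast\R$ defined by $F(x) := |E_x|/|Y|$ takes values in ${}^\ast[0,1]$ and satisfies $\st(F(x)) = \mu_Y(E_x)$, so $F$ is an internal bounded lift of $x \mapsto \mu_Y(E_x)$. Transfer of the finite Fubini identity for counting measure gives $|E| = \sum_{x \in X} |E_x|$, and dividing through by $|X|\cdot|Y|$ and applying Lemma \ref{boundedintegration} yields
\begin{equation*}
\mu_{X \times Y}(E) \;=\; \st\!\left(\frac{1}{|X|}\sum_{x \in X} F(x)\right) \;=\; \int_X \mu_Y(E_x)\,d\mu_X(x).
\end{equation*}

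Next I would bootstrap to a general $E \in \mathcal{L}_{X \times Y}$ via approximation. Using Lemma \ref{approx}, for each $n$ pick internal $A_n \subseteq E \subseteq B_n$ with $\mu_{X \times Y}(B_n \setminus A_n) < 2^{-n}$. Replacing $A_n$ by $\bigcup_{k \le n} A_k$ and $B_n$ by $\bigcap_{k \le n} B_k$, we may assume the $A_n$ are increasing and the $B_n$ are decreasing. Set $A := \bigcup_n A_n$ and $B := \bigcap_n B_n$, so $A \subseteq E \subseteq B$ and $\mu_{X \times Y}(B \setminus A) = 0$. By the internal case applied to $B_n \setminus A_n$,
\begin{equation*}
\sum_n \int_X \mu_Y((B_n \setminus A_n)_x)\,d\mu_X(x) \;=\; \sum_n \mu_{X \times Y}(B_n \setminus A_n) \;<\; \infty,
\end{equation*}
so by Borel--Cantelli (or monotone convergence applied to $\sum_n \mu_Y((B_n \setminus A_n)_x)$), the integrand is finite for $\mu_X$-almost every $x$, and since $(B \setminus A)_x \subseteq (B_n \setminus A_n)_x$ for every $n$, we conclude $\mu_Y((B \setminus A)_x) = 0$ for $\mu_X$-almost every $x$. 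For such $x$, $A_x \subseteq E_x \subseteq B_x$ with $\mu_Y(B_x \setminus A_x) = 0$; since $A_x = \bigcup_n (A_n)_x$ is $\mathcal{L}_Y$-measurable and $\mu_Y$ is complete, $E_x \in \mathcal{L}_Y$ with $\mu_Y(E_x) = \mu_Y(A_x) = \lim_n \mu_Y((A_n)_x)$. The iterated integral identity for $E$ then follows from the corresponding identity for each $A_n$ by monotone convergence on both sides of the equation.

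The main obstacle is the second stage: one must show not merely that $\int\int$ computes the right number, but that $E_x$ is genuinely $\mathcal{L}_Y$-measurable for almost every $x$. A naive approximation only gives that $E_x$ coincides with a measurable set up to a $\mu_Y$-outer-null set for almost every $x$; promoting this to actual measurability requires both the completeness of Loeb measure (Fact \ref{measurefacts}(3)) and the two-sided sandwiching by internal $A_n, B_n$. Once the measurability of $E_x$ is secured on a full-measure set, the integral identity is a routine monotone convergence argument.
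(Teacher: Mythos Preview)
Your proof is correct and follows essentially the same route as the paper: reduce to characteristic functions of Loeb measurable sets, then to internal sets via Lemma~\ref{approx}, and compute the internal case by hyperfinite counting together with Lemma~\ref{boundedintegration}. The paper compresses the passage from internal $E$ to general Loeb measurable $E$ into the single phrase ``Lemma~\ref{approx} and a further application of the Monotone Convergence Theorem,'' whereas you spell out the sandwich argument and the Borel--Cantelli step needed to secure $\mathcal{L}_Y$-measurability of $E_x$ for $\mu_X$-almost every $x$; your treatment of this point is more complete than the paper's, but the underlying strategy is the same.
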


\begin{proof}
After taking positive and negative parts, it suffices to consider the case
that $f$ is positive. Furthermore, by the Monotone Convergence Theorem, it
suffices to consider the case that $f$ is a step function. Then, by
linearity, one can restrict to the case that $f=\chi _{E}$ is the
characteristic function of a Loeb%
\index{Loeb measure} measurable set $E\subseteq X\times Y$. Now Lemma \ref%
{approx} and a further application of the Monotone Convergence Theorem
allows one to restrict to the case that $E$ is internal. In this case, for $%
x\in X$ we have that $\int_{Y}\chi _{E}(x,y)d\mu _{Y}(y)=%
\operatorname{st}\left( \frac{|E_{x}|}{|Y|}\right) $, where $E_{x}:=\{y\in Y\ :\
(x,y)\in E\}$. By Theorem \ref{integratebycounting}, we thus have 
\begin{equation*}
\int_{X}\left( \int_{Y}\chi _{E}(x,y)d\mu _{Y}(y)\right) d\mu _{X}(x)\approx 
\frac{1}{|X|}\sum_{x\in X}\frac{|E_{x}|}{|Y|}=\frac{|E|}{|X||Y|}\approx
\int_{X\times Y}\chi _{E}(x,y)d\mu _{X\times Y}(x,y).
\end{equation*}%
The other equality is proved in the exact same way.
\end{proof}

\section{Ergodic theory of hypercycle systems\label{hypercycle}}

\begin{definition}
If $(X,\mathcal{B},\mu )$ is a probability space, we say that a bijection $%
T:X\rightarrow X$ is a \emph{measure-preserving transformation} if, for all $%
A\in \mathcal{B}$, $T^{-1}(A)\in \mathcal{B}$ and $\mu (T^{-1}(A))=\mu (A)$.
The tuple $(X,\mathcal{B},\mu ,T)$ is called a \emph{measure-preserving
dynamical system}. A measure-preserving dynamical system $(Y,\mathcal{C},\nu
,S)$ is a \emph{factor }of $\left( X,\mathcal{B},\mu ,T\right) $ if there is
a function $\pi :X\rightarrow Y$ such that, for $A\subseteq Y$, $A\in 
\mathcal{C}$ if and only if $\pi ^{-1}\left( A\right) \in \mathcal{B}$, $\nu
=\pi _{\ast }\mu $---which means $\nu (A)=\mu \left( \pi ^{-1}(A)\right) $
for every $A\in \mathcal{C}$---and $\left( S\circ \pi \right) \left(
x\right) =\left( \pi \circ T\right) \left( x\right) $ for $\mu $-almost
every $x\in X$.
\end{definition}

\begin{example}
Suppose that $X=[0,N-1]$ is an infinite hyperfinite interval. Define $%
S:X\rightarrow X$ by $S(x)=x+1$ if $x<N$ and $S(N-1)=0$. Then $S$ is a
measure-preserving transformation and the dynamical system $(X,\mathcal{L}%
_{X},\mu _{X},S)$ will be referred to as a \emph{hypercycle system}.
\end{example}

The hypercycle system%
\index{hypercycle system} will play an important role later in the book. In
particular, we will need to use the \emph{pointwise ergodic theorem} for the
hypercycle system. While the proof of the general ergodic theorem is fairly
nontrivial, the proof for the hypercycle system, due to Kamae \cite%
{kamae_simple_1982}, is much simpler. In the rest of this section, we fix a
hypercycle system $(X,\Omega _{X},\mu _{X},S)$.%
\index{pointwise ergodic theorem}

\begin{theorem}[The ergodic theorem for the hypercycle system]
\label{ergodichypercycle} Suppose that $f\in L^1(X,\Omega,\mu)$. Define 
\begin{equation*}
\hat{f}(x):=\lim_{n\to \infty}\frac{1}{n}\sum_{i=0}^{n-1} f(S^ix)
\end{equation*}
whenever this limit exists. Then:

\begin{enumerate}
\item $\hat{f}(x)$ exists for almost all $x\in X$;

\item $\hat{f}\in L^1(X,\Omega,\mu)$;

\item $\int_X fd\mu=\int_X \hat{f}d\mu$.
\end{enumerate}
\end{theorem}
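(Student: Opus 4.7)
The strategy is the classical one: first establish a maximal ergodic inequality, then deduce almost-everywhere convergence by the standard ``upper/lower limit'' argument, and finally identify the integral of the limit. The nonstandard setting makes the maximal inequality especially transparent, since it reduces to a finite counting argument on a cyclic group, accessed via transfer.

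By Theorem \ref{integratebycounting}, fix an $S$-integrable internal lift $F:X\to{}^{\ast }\R$ of $f$, and for each $n\in\N$ write $A_n f(x):=\frac{1}{n}\sum_{i=0}^{n-1} f(S^i x)$, with analog $A_n F$ that lifts $A_n f$. The key step is the maximal ergodic inequality
\[
\mu_X\bigl(\{x\in X:\sup_{n\geq 1} A_n|f|(x)>\alpha\}\bigr)\ \leq\ \frac{1}{\alpha}\int |f|\,d\mu_X, \qquad \alpha>0. \quad (\ast)
\]
I would prove the internal version first: for each $M\in{}^{\ast }\N$ the set $E_M:=\{x\in X:\max_{1\leq n\leq M}A_n|F|(x)>\alpha\}$ is internal, and using the cyclic structure of $X=\{0,\dots,N-1\}$ with $S(x)=x+1\bmod N$, a greedy ``rising sun'' procedure (applied by transfer from its finite analog) produces an internal disjoint family of cyclic intervals $I_j=\{S^ix_j:0\leq i<n_j\}$ covering $E_M$ with $\sum_{i=0}^{n_j-1}|F(S^ix_j)|>\alpha n_j$. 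Summing and using disjointness yields $\alpha|E_M|\leq\sum_{y\in X}|F(y)|$; dividing by $|X|$, taking standard parts, and using $S$-integrability to identify $\frac{1}{|X|}\sum_{y}|F(y)|\approx\int|f|\,d\mu_X$ gives $(\ast)$ with $\sup$ taken over $n\leq M$ for each fixed $M\in\N$. Letting $M\to\infty$ (via monotone convergence in the standard universe) completes the proof of $(\ast)$.

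From $(\ast)$ the a.e.\ convergence is classical. Let $\overline{f}(x):=\limsup_n A_n f(x)$ and $\underline{f}(x):=\liminf_n A_n f(x)$; since $|A_n f(Sx)-A_n f(x)|\leq 2|f(x)|/n+2|f(S^n x)|/n$, both $\overline{f}$ and $\underline{f}$ are $S$-invariant modulo null sets. For rationals $\alpha<\beta$, the $S$-invariant set $E_{\alpha,\beta}:=\{\underline{f}<\alpha<\beta<\overline{f}\}$ has measure zero: applying $(\ast)$ on $E_{\alpha,\beta}$ to the integrable functions $(f-\beta)\mathbf{1}_{E_{\alpha,\beta}}$ and $(\alpha-f)\mathbf{1}_{E_{\alpha,\beta}}$ (whose ergodic maximal functions are everywhere positive on $E_{\alpha,\beta}$) yields $(\beta-\alpha)\mu_X(E_{\alpha,\beta})\leq 0$. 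Taking a countable union over rationals shows $\hat{f}(x):=\lim_n A_n f(x)$ exists for $\mu_X$-a.e.\ $x\in X$, proving (1).

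For (2) and (3), Fatou applied to $|A_n f|$ together with $\int A_n f\,d\mu_X=\int f\,d\mu_X$ (which holds because $\mu_X$ is $S$-invariant, since $S$ is a bijection of $X$ preserving normalized counting) gives $\hat{f}\in L^1$. The integral equality $\int\hat{f}\,d\mu_X=\int f\,d\mu_X$ follows by dominated convergence via the weak-$L^1$ truncation afforded by $(\ast)$: split $f=f\mathbf{1}_{\{|f|\leq K\}}+f\mathbf{1}_{\{|f|>K\}}$, handle the bounded part by bounded convergence, and control the tail uniformly in $n$ by $(\ast)$. The main obstacle is the greedy covering construction underlying $(\ast)$: one must verify carefully that, on the hyperfinite cycle, one can internally carve out the disjoint intervals $I_j$ covering $E_M$ (this is where the cyclic structure is essential, as it avoids boundary losses present in the interval case). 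Once this covering lemma is in place---and it is purely a statement about finite cyclic groups transferred to the hyperfinite one---the remainder of the argument is entirely standard.
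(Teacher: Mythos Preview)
Your approach is correct in outline but genuinely different from the paper's. The paper (following Kamae) bypasses the maximal inequality entirely: it shows directly that $\int \min\{\overline{f},m\}\,d\mu \leq \int f\,d\mu \leq \int \underline{f}\,d\mu$ by choosing, for each $x$, the least $n=\rho(x)$ for which $\sum_{i<n} G(S^i x) \leq \sum_{i<n} F(S^i x) + n\epsilon$ (with $F,G$ internal approximants of $f$ and $\min\{\overline{f},m\}$), and then uses the internal recursion $\ell_{j+1} = \ell_j + \rho(\ell_j)$ to tile $[0,N-1]$ into blocks on which the $G$-sum is controlled by the $F$-sum. The hyperfinite setting makes $\rho$ internal and its values finite (since $\rho(x)\in\N$ for every $x$), so $\sigma=\max_x\rho(x)\in\N$ and the tiling covers all but a standard-length tail; the integral inequality drops out immediately. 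All three conclusions then follow at once from $\int\overline{f}\leq\int f\leq\int\underline{f}$.

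Your route---prove a maximal inequality by a rising-sun covering on the hyperfinite cycle, then run the classical $E_{\alpha,\beta}$ argument---also works, but note a small slip: the inequality $(\ast)$ you state is the weak-type $(1,1)$ bound, whereas the $E_{\alpha,\beta}$ step actually needs Hopf's maximal ergodic theorem $\int_{\{\sup_n A_n g>0\}} g\,d\mu\geq 0$. Your covering construction yields this too (indeed more directly: the intervals $I_j$ lie inside the set where the maximal function is positive and carry nonnegative $g$-sum), so the fix is only to state the right consequence of the covering. What your approach buys is portability---it is the textbook path and generalizes immediately beyond this system. What Kamae's argument buys is economy: no maximal operator, no density or truncation argument for the integral equality, and it exploits exactly the feature that on a hyperfinite interval the stopping-time tiling can be carried out internally with a uniformly bounded stopping time.
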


\begin{proof}
Without loss of generality, we may assume that $X=[0,N-1]$ for some $N>%
\mathbb{N}$ and $f\left( x\right) \geq 0$ for $\mu _{X}$-almost every $x\in
X $. We set 
\begin{equation*}
\overline{f}(x):=\limsup_{n\rightarrow \infty }\frac{1}{n}%
\sum_{i=0}^{n-1}f(S^{i}x)
\end{equation*}%
and 
\begin{equation*}
\underline{f}(x):=\liminf_{n\rightarrow \infty }\frac{1}{n}%
\sum_{i=0}^{n-1}f(S^{i}x).
\end{equation*}%
Note that $\overline{f},\underline{f}$ are $\mu _{X}$-measurable and $S$%
-invariant. It suffices to show that $\overline{f},\underline{f}\in
L^{1}(X,\Omega ,\mu )$ and that 
\begin{equation*}
\int_{X}\overline{f}d\mu \leq \int_{X}fd\mu \leq \int_{X}\underline{f}d\mu .
\end{equation*}%
Towards this end, fix $\epsilon >0$ and $m\in \mathbb{N}$. By Lemma \ref%
{approxL1}, we may find internal functions $F,G:\left[ 0,N-1\right]
\rightarrow {}^{\ast }\mathbb{R}$ such that:

\begin{itemize}
\item for all $x\in X$, we have $f(x)\leq F(x)$ and $G(x)\leq \min \{\bar{f}%
(x),m\}$;

\item for every internal subset $B$ of $X$%
\begin{equation*}
\max \left\{ \left\vert \int_{B}fd\mu -\frac{1}{N}\sum_{x\in
B}F(x)\right\vert ,\left\vert \int_{B}\min \left\{ \bar{f},m\right\} d\mu -%
\frac{1}{N}\sum_{x\in B}G(x)\right\vert \right\} <\epsilon .
\end{equation*}
\end{itemize}

By definition of $\bar{f}$, for each $x\in X$, there is $n\in \mathbb{N}$
such that $\min \{\bar{f}(x),m\}\leq \frac{1}{n}\sum_{i=0}^{n-1}f(S^{i}x)+%
\epsilon $. For such an $n$ and $k=0,1,\ldots ,n-1$, we then have that 
\begin{equation*}
G(S^{k}x)\leq \min \{\bar{f}(S^{k}x),m\}=\min \{\bar{f}(x),m\}\leq \frac{1}{n%
}\sum_{i=0}^{n-1}f(S^{i}x)+\epsilon \leq \frac{1}{n}%
\sum_{i=0}^{n-1}F(S^{i}x)+\epsilon ,
\end{equation*}%
whence it follows that 
\begin{equation}
\sum_{i=0}^{n-1}G(S^{i}x)\leq \sum_{i=0}^{n-1}F(S^{i}x)+n\epsilon \text{.%
\label{Equation:bound}}
\end{equation}%
Since the condition in \eqref{Equation:bound} is internal, the function $%
\rho :X\rightarrow {}^{\ast }\mathbb{N}$ that sends $x$ to the least $n$
making \eqref{Equation:bound} hold for $x$ is internal. Note that $\rho
(x)\in \mathbb{N}$ for all $x\in K$, whence $\sigma :=\max_{x\in X}\rho
(x)\in \mathbb{N}$.

Now one can start computing the sum $\sum_{x=0}^{N}G(x)$ by first computing 
\begin{equation*}
\sum_{x=0}^{\rho (0)-1}G(x)=\sum_{x=0}^{\rho (0)-1}G(S^{x}0),
\end{equation*}%
which is the kind of sum appearing in \eqref{Equation:bound}. Now in order
to continue the computation using sums in which \eqref{Equation:bound}
applies, we next note that 
\begin{equation*}
\sum_{x=\rho (0)}^{\rho (0)+\rho (\rho (0))-1}G(x)=\sum_{x=0}^{\rho (\rho
(0))-1}G(S^{x}\rho (0)).
\end{equation*}%
This leads us to define, by internal recursion, the internal sequence $(\ell
_{j})$ by declaring $\ell _{0}:=0$ and $\ell _{j+1}:=\ell _{j}+\rho (\ell
_{j})$. It follows that we have 
\begin{equation*}
\sum_{x=0}^{\ell _{J}-1}G(x)=\sum_{j=0}^{J-1}\sum_{i=0}^{\rho (\ell
_{j})-1}G(S^{i}\rho \left( \ell _{j}\right) )\leq
\sum_{j=0}^{J-1}\sum_{j=0}^{\rho (\ell _{j})-1}F(S^{i}x)+\rho (\ell
_{j})\epsilon =\sum_{x=0}^{\ell _{J}-1}F(x)+\ell _{J}\epsilon .
\end{equation*}%
As a result, we have that, whenever $\ell _{J}<N$,%
\begin{equation*}
\frac{1}{N}\sum_{x=0}^{\ell _{J}-1}G(x)\leq \frac{1}{N}\sum_{x=0}^{\ell
_{J}-1}F(x)+\epsilon .
\end{equation*}%
Now take $J$ such that $N-\sigma \leq \ell _{J}<N$. Since $\sigma \in 
\mathbb{N}$ and $G(x)\leq m$ for every $x\in X$, we have that 
\begin{eqnarray*}
\int_{X}\min \{\bar{f},m\}d\mu &\leq &\frac{1}{N}\sum_{x=0}^{N-1}G(x)+%
\epsilon \approx \frac{1}{N}\sum_{x=0}^{\ell _{J}-1}G(x)+\epsilon \\
&\leq &\frac{1}{N}\sum_{x=0}^{\ell _{J}-1}F(x)+2\epsilon \approx \frac{1}{N}%
\sum_{x=0}^{N-1}F(x)+2\epsilon \leq \int_{X}fd\mu +3\epsilon .
\end{eqnarray*}%
Letting $m\rightarrow \infty $ and then $\epsilon \rightarrow 0$, we get
that $\overline{f}\in L^{1}(X,\Omega ,\mu )$ and $\int_{X}\overline{f}d\mu
\leq \int_{X}fd\mu $. The inequality $\int_{X}fd\mu \leq \int_{X}\underline{f%
}d\mu $ is proven similarly.
\end{proof}

In \cite{kamae_simple_1982}, Kamae uses the previous theorem to prove the
ergodic theorem for an arbitrary measure-preserving dynamical system. In
order to accomplish this, he proves the following result, which is
interesting in its own right.%
\index{hypercycle system}

\begin{theorem}[Universality of the hypercycle system]
\label{universality} Suppose that $(Y,\mathcal{B},\nu )$ is a standard
probability space\footnote{%
Unfortunately, \emph{standard} is used in a different sense than in the rest
of this book. Indeed, here, a standard probability space is simply a
probability space which is isomorphic to a quotient of $\left[ 0,1\right] $
endowed with the Borel $\sigma $-algebra and Lebesgue measure.} and $%
T:Y\rightarrow Y$ is an measure-preserving transformation. Then $(Y,\mathcal{%
B},\nu ,T)$ is a factor of the hypercycle system $(X,\Omega _{X},\mu _{X},S)$%
.
\end{theorem}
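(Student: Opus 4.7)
My plan is to reduce $(Y, \mathcal{B}, \nu, T)$ to a continuous system on a compact metric space and then to build $\pi$ by concatenating hyperfinitely many $T$-orbit segments whose starting points are chosen so as to sample $\nu$ correctly. For the reduction, fix a countable subalgebra $\mathcal{A}_0 \subseteq \mathcal{B}$ generating $\mathcal{B}$ modulo null sets; by closing under $T^{-j}$ for $j \geq 0$ I may assume $\mathcal{A}_0$ is $T^{-1}$-invariant. Enumerate $\mathcal{A}_0 = \{A_k\}_{k \in \N}$ and consider the Borel coding $\Phi: Y \to K := \{0,1\}^{\N \times \N_0}$ defined by $\Phi(y)_{k,n} := \chi_{A_k}(T^n y)$; it is a.e.\ injective and satisfies $\Phi \circ T = \sigma \circ \Phi$, where $\sigma$ is the continuous shift in the second coordinate. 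Replacing $(Y, \mathcal{B}, \nu, T)$ by its image, I may assume $Y$ is a compact metric space, $T$ is continuous, and $\mathcal{B}$ is Borel.

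For the construction, fix $M \in \starN$ infinite with $M \ll N$---say $M := \lfloor \sqrt{N} \rfloor$---so that $m := \lfloor N/M \rfloor$ is also infinite. Partition $X = [0, N-1]$ into consecutive blocks $X_i := [(i-1)M, iM - 1]$ for $1 \leq i \leq m$, plus a remainder $R$ of internal cardinality less than $M$. Given an internal tuple $(\alpha_1, \ldots, \alpha_m) \in {}^{\ast}(Y^m)$ to be chosen below, define
$$\pi\bigl((i-1)M + j\bigr) := \st\bigl({}^{\ast}T^j(\alpha_i)\bigr) \quad \text{for } 0 \leq j < M,$$
and take $\pi$ arbitrary on $R$. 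Compactness of $Y$ forces ${}^{\ast}\nu$-a.e.\ point of ${}^{\ast}Y$ to be nearstandard, so $\st$ is defined on the relevant images. Continuity of $T$ gives $\st({}^{\ast}T(\gamma)) = T(\st(\gamma))$ for nearstandard $\gamma$, so $\pi \circ S$ and $T \circ \pi$ agree at every point away from block boundaries and $R$; the exceptional set has Loeb measure at most $(m+M)/N \approx 0$.

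To choose the starting points, form, for each $A \in \mathcal{A}_0$, the internal statistic
$$g_A(\alpha_1, \ldots, \alpha_m) := \frac{1}{mM} \sum_{i=1}^{m} \sum_{j=0}^{M-1} \chi_{{}^{\ast}A}\bigl({}^{\ast}T^j(\alpha_i)\bigr).$$
If the $\alpha_i$ are sampled independently with law ${}^{\ast}\nu$, the $T$-invariance of $\nu$ yields expectation $\nu(A)$, and independence across $i$ together with $\chi_A \leq 1$ bounds the variance by $1/m \approx 0$. By transferred Chebyshev, for every standard $\delta > 0$ the internal set
$$E_{A,\delta} := \bigl\{(\alpha_i) \in {}^{\ast}(Y^m) : |g_A - \nu(A)| < \delta\bigr\}$$
has ${}^{\ast}\nu^m$-measure at least $1 - 1/(m\delta^2) \approx 1$, hence is nonempty. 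Countable saturation, applied to the finite-intersection family $\{E_{A_k, 1/\ell} : k,\ell \in \N\}$, produces an internal tuple $(\alpha_i)$ for which $g_{A_k} \approx \nu(A_k)$ for every $k$. Then $\mu_X(\pi^{-1}(A_k)) = \st(g_{A_k}) = \nu(A_k)$ for each $k$, and the uniqueness part of the Carath\'eodory extension theorem promotes this to $\pi_{\ast}\mu_X = \nu$ on all of $\mathcal{B}$; Loeb-measurability of $\pi^{-1}(A)$ for general $A \in \mathcal{B}$ follows from measurability on the generating algebra.

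The main obstacle is the simultaneous realization of the pushforward condition for every $A \in \mathcal{A}_0$, which is handled by the variance estimate for independent sampling combined with countable saturation. Crucially, this uses only the $T$-invariance of $\nu$ and never the pointwise ergodic theorem---precisely the result that this universality theorem is being used to derive in Kamae's next step.
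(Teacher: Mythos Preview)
Your proof is correct and takes a genuinely different route from the paper's. The paper, following Kamae, reduces to the shift on $[0,1]^{\mathbb{N}}$ and then uses a single \emph{typical} point $\alpha$---one whose orbit equidistributes against every continuous function---to define $\pi(i) = \st({}^{*}T^{i}\alpha)$ directly. The existence of such a point is a nontrivial external input, and the paper has to remark (with a reference) that it can be established without circularly invoking the ergodic theorem. Your block-sampling construction sidesteps this entirely: by choosing $m$ independent starting points and averaging over blocks, you replace the generic-point lemma with a one-line Chebyshev bound and countable saturation, which is arguably more self-contained and uses only the $T$-invariance of $\nu$.

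One point deserves to be made explicit. The identity $\mu_X(\pi^{-1}(A_k)) = \st(g_{A_k})$ compares the Loeb measure of $\{x : \st({}^{*}T^{j(x)}\alpha_{i(x)}) \in A_k\}$ with the internal density of $\{x : {}^{*}T^{j(x)}\alpha_{i(x)} \in {}^{*}A_k\}$, and these agree only when $A_k$ is clopen, so that $\st^{-1}(A_k)$ coincides with ${}^{*}A_k$ on nearstandard points. Your reduction to $\{0,1\}^{\N\times\N_0}$ does arrange this---the images of the original $A_k$ are coordinate cylinders, and the algebra they generate consists of clopen sets---but you should say so; as written, the step reads as if it held for arbitrary Borel $A_k$, which it does not. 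Once the generating algebra is clopen, $\pi^{-1}(A_k)$ differs from an internal set only on the null remainder $R$, Loeb measurability is immediate, and Carath\'eodory uniqueness finishes the job as you indicate.
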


\begin{proof}
As before, we may assume that $X=[0,N-1]$ for some $N>\mathbb{N}$. Without
loss of generality, we can assume that $(Y,\mathcal{B},\nu )$ is atomless,
and hence isomorphic to $\left[ 0,1\right] $ endowed with the Borel $\sigma $%
-algebra and the Lebesgue measure. Consider the Borel map $r:\left[ 0,1%
\right] \rightarrow \left[ 0,1\right] ^{\mathbb{N}}$ given by $r(y)\left(
n\right) =h\left( T^{n}y\right) $ and the measure $r_{\ast }\nu $ on the
Borel $\sigma $-algebra of $\left[ 0,1\right] ^{\mathbb{N}}$. Then $r$
defines an isomorphism between $\left( Y,\mathcal{B},\nu ,T\right) $ and a
factor of the unilateral Bernoulli shift on $\left[ 0,1\right] ^{\mathbb{N}}$%
. Therefore, it is enough to consider the case when $\left( Y,\mathcal{B}%
,\nu ,T\right) $ is the unilateral Bernoulli shift on $\left[ 0,1\right] ^{%
\mathbb{N}}$ endowed with the Borel $\sigma $-algebra $\mathcal{B}$ and some
shift-invariant Borel probability measure $\nu $.

We now define the factor map $\pi :X\rightarrow \lbrack 0,1]^{\mathbb{N}}$.
In order to do this, we fix $\alpha \in \lbrack 0,1]^{\mathbb{N}}$ such that 
$\lim_{n\rightarrow \infty }%
\frac{1}{n}\sum_{i=0}^{n-1}f(T^{i}\alpha )=\int_{[0,1]^{\mathbb{N}}}f(y)d\nu 
$ for all $f\in C([0,1]^{\mathbb{N}})$. Such an $\alpha $ is called \emph{%
typical} in \cite{kamae_simple_1982} and is well-known to exist.\footnote{%
Of course, one can use the ergodic theorem to prove the existence of typical
elements. However, we need a proof that typical elements exist that does not
use the ergodic theorem. One can see, for example, \cite[Lemma 2]%
{kamae_simple_1982} for such a proof.}

By transfer, one can identify ${}^{\ast }(\left[ 0,1\right] ^{\mathbb{N}})$
with the set of internal functions from ${}^{\ast }\mathbb{N}$ to ${}^{\ast }%
\left[ 0,1\right] $. By compactness of $\left[ 0,1\right] ^{\mathbb{N}}$,
one can deduce that, given $\xi \in {}^{\ast }(\left[ 0,1\right] ^{\mathbb{N}%
})$, there exists a unique element $\operatorname{st}\left( \xi \right) \in \left[
0,1\right] ^{\mathbb{N}}$ such that $\xi \thickapprox \operatorname{st}\left( \xi
\right) $, in the sense that, for every open subset $U$ of $\left[ 0,1\right]
^{\mathbb{N}}$, one has that $\xi \in {}^{\ast }U$ if and only if $\operatorname{st}%
\left( \xi \right) \in {}^{\ast }U$. Concretely, one can identify $\operatorname{st}%
\left( \xi \right) $ with the element of $\left[ 0,1\right] ^{\mathbb{N}}$
such that $\operatorname{st}\left( \xi \right) \left( n\right) =\operatorname{st}\left( \xi
\left( n\right) \right) $ for $n\in \mathbb{N}$.

The function $\mathbb{N}\rightarrow \left[ 0,1\right] ^{\mathbb{N}}$, $%
n\mapsto T^{n}\alpha $ has a nonstandard extension%
\index{nonstandard extension} ${}^{\ast }\mathbb{N}\rightarrow {}^{\ast }(%
\left[ 0,1\right] ^{\mathbb{N}})$. Given $i\in \lbrack 0,N-1]$, define $\pi
(i):=%
\operatorname{st}(T^{i}\alpha )$. We must show that $\pi _{\ast }\mu _{X}=\nu $ and
that $\left( T\circ \pi \right) (i)=\left( \pi \circ S\right) (i)$ for $\mu
_{X}$-almost every $i\in \left[ 0,N-1\right] $. For $f\in C(\left[ 0,1\right]
^{\mathbb{N}})$, we have that 
\begin{equation*}
\int_{\lbrack 0,1]^{\mathbb{N}}}f(y)d\nu =\lim_{n\rightarrow \infty }\frac{1%
}{n}\sum_{i=0}^{n-1}f(T^{i}\alpha )\thickapprox \frac{1}{N}%
\sum_{i=0}^{N-1}f\left( T^{i}\alpha \right) \approx \int_{X}\left( f\circ
\pi \right) d\mu _{X}.
\end{equation*}%
Note that the first step uses the fact that $\alpha $ is typical and the
last step uses the fact that $f$ is continuous and Theorem \ref%
{integratebycounting}. This shows that%
\begin{equation*}
\int_{\lbrack 0,1]^{\mathbb{N}}}fd\nu =\int_{X}\left( f\circ \pi \right)
d\mu _{X}=\int_{\left[ 0,1\right] ^{\mathbb{N}}}fd\pi _{\ast }\mu _{X}
\end{equation*}%
and hence $\nu =\pi _{\ast }\mu _{X}$.

To finish, we show that $\left( T\circ \pi \right) (i)=\left( \pi \circ
S\right) (i)$ for $\mu _{X}$-almost every $i\in X$. Fix $i\in \lbrack 0,N-2]$%
. Then we have 
\begin{equation*}
T(\pi (i))=T(\operatorname{st}(T^{i}\alpha ))=\operatorname{st}(T^{i+1}\alpha ))=\pi (S(i)),
\end{equation*}%
where the second equality uses the fact that $T$ is continuous.
\end{proof}

From Theorems \ref{ergodichypercycle} and \ref{universality}, we now have a
proof of the ergodic theorem for measure-preserving systems based on
standard probability spaces. It only requires one more step to obtain the
ergodic theorem in general.

\begin{corollary}[The ergodic theorem]
\index{pointwise ergodic theorem} Suppose that $(Y,\mathcal{B},\nu,T)$ is a
measure-preserving dynamical system and $f\in L^1(X,\Omega,\mu)$. Define $%
\hat{f}(x):=\lim_{n\to \infty}\frac{1}{n}\sum_{i=0}^{n-1} f(T^ix)$ whenever
this limit exists. Then:

\begin{enumerate}
\item $\hat{f}(x)$ exists for almost all $x\in Y$;

\item $\hat{f}\in L^1(Y,\mathcal{B},\nu)$;

\item $\int_Y fd\nu=\int_Y \hat{f}d\nu$.
\end{enumerate}
\end{corollary}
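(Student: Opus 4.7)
The plan is to combine Theorems \ref{ergodichypercycle} and \ref{universality} by reducing to the standard probability space setting, so that the hypercycle system can serve as a ``universal'' source of ergodic averages.

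First, I would reduce to the case in which $(Y,\mathcal{B},\nu)$ is a standard probability space. Given $f\in L^1(Y,\mathcal{B},\nu)$, let $\mathcal{B}_f\subseteq\mathcal{B}$ be the countably generated $T$-invariant sub-$\sigma$-algebra generated by $\{(T^n)^{-1}(f^{-1}(I))\mid n\geq 0,\ I\ \text{a rational interval}\}$. Both $f$ and all of its ergodic averages $\frac{1}{n}\sum_{i=0}^{n-1}f\circ T^i$ are $\mathcal{B}_f$-measurable, so any a.e.\ convergence and integral identity for $f$ with respect to $(Y,\mathcal{B}_f,\nu|_{\mathcal{B}_f},T)$ transfers back to $(Y,\mathcal{B},\nu,T)$. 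Taking the quotient by the equivalence ``$y\sim y'$ iff they agree on every set in $\mathcal{B}_f$'', one gets an isomorphic system on a standard probability space, to which Theorem \ref{universality} applies.

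Next, apply Theorem \ref{universality} to obtain a hypercycle system $(X,\mathcal{L}_X,\mu_X,S)$ together with a factor map $\pi\colon X\to Y$ such that $\pi_\ast\mu_X=\nu$ and $T\circ\pi=\pi\circ S$ $\mu_X$-a.e. Set $g:=f\circ\pi\in L^1(X,\mathcal{L}_X,\mu_X)$; the change-of-variables formula gives $\int_X g\,d\mu_X=\int_Y f\,d\nu$. By Theorem \ref{ergodichypercycle}, the limit $\hat g(x)=\lim_{n\to\infty}\frac{1}{n}\sum_{i=0}^{n-1}g(S^ix)$ exists for $\mu_X$-a.e.\ $x$, belongs to $L^1$, and satisfies $\int_X \hat g\,d\mu_X=\int_X g\,d\mu_X$.

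Finally, I would use the intertwining $T\circ\pi=\pi\circ S$ (a.e.) to rewrite $g(S^ix)=f(T^i\pi(x))$ for $\mu_X$-a.e.\ $x$ and all $i$; hence $\hat g(x)=\hat f(\pi(x))$ for $\mu_X$-a.e.\ $x$. Since $\pi_\ast\mu_X=\nu$, this gives: (1) $\hat f(y)$ exists for $\nu$-a.e.\ $y\in Y$; (2) $\hat f\in L^1(Y,\mathcal{B},\nu)$, because $\hat f\circ\pi=\hat g$ is in $L^1(X,\mathcal{L}_X,\mu_X)$ and $\pi_\ast\mu_X=\nu$; and (3) $\int_Y\hat f\,d\nu=\int_X\hat f\circ\pi\,d\mu_X=\int_X\hat g\,d\mu_X=\int_X g\,d\mu_X=\int_Y f\,d\nu$. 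The only delicate step is the initial reduction to a standard probability space, since Theorem \ref{universality} is stated only there; once that is in place, everything else is a clean pushforward/pullback computation via $\pi$.
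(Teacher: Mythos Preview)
Your proposal is correct and follows the same overall plan as the paper: reduce to a standard probability space, invoke Theorem~\ref{universality} to realize that space as a factor of a hypercycle, and pull back/push forward via Theorem~\ref{ergodichypercycle}. The paper, however, carries out the reduction step more concretely and cleanly: instead of passing to a countably generated sub-$\sigma$-algebra and taking an abstract quotient, it defines the orbit map $\tau:Y\to\mathbb{R}^{\mathbb{N}}$ by $\tau(y)(n)=f(T^ny)$, observes that $\tau\circ T=\sigma\circ\tau$ for the shift $\sigma$, and notes that the ergodic theorem for $(Y,\mathcal{B},\nu,T,f)$ is equivalent to that for $(\mathbb{R}^{\mathbb{N}},\mathcal{C},\tau_\ast\nu,\sigma,g)$ with $g(\alpha)=\alpha(0)$. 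This bypasses the somewhat delicate claim that your quotient by ``agreement on $\mathcal{B}_f$'' yields a standard probability space; in effect, $\tau$ \emph{is} the concrete realization of that quotient, landing directly in a Polish space where Theorem~\ref{universality} applies without further work.
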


\begin{proof}
Let $\tau :Y\rightarrow \mathbb{R}^{\mathbb{N}}$ be given by $\tau
(y)(n):=f(T^{n}y)$. Let $\mathcal{C}$ denote the Borel $\sigma $-algebra of $%
\mathbb{R}^{\mathbb{N}}$. Let $\sigma $ be the shift operator on $\mathbb{R}%
^{\mathbb{N}}$. Let $g:\mathbb{R}^{\mathbb{N}}\rightarrow \mathbb{R}$ be
given by $g(\alpha )=\alpha (0)$. It is then readily verified that the
conclusion of the ergodic theorem for $(Y,\mathcal{B},\nu ,T,f)$ is
equivalent to the truth of the ergodic theorem for $(\mathbb{R}^{\mathbb{N}},%
\mathcal{C},\tau _{\ast }\nu ,\sigma ,g)$, which, as we mentioned above,
follows from Theorems \ref{ergodichypercycle} and \ref{universality}.
\end{proof}

\section*{Notes and references}
The Loeb measure construction was introduced by Loeb in 1973 \cite{loeb_conversion_1975}. The  Loeb measure plays a crucial role in several applications of nonstandard methods to a wide variety of areas of mathematics, including measure theory, probability theory, and analysis. A survey of such applications can be found in \cite{cutland_loeb_2000}. The nonstandard proof of the ergodic theorem due to Kamae \cite{kamae_simple_1982,kamae_simple_1997} is just a single but insightful example of the usefulness of Loeb measure. The Loeb measure also underpins the nonstandard perspective on the Furstenberg correspondence theorem, which in turns opens the gates to application of nonstandard methods to additive number theory (see Chapter \ref{chapter_densitites} below).

%
%
%

\part{Ramsey theory}

\chapter{Ramsey's Theorem}

\section{Infinite Ramsey's Theorem}

Recall that a \emph{graph} is a pair $(V,E)$ where
$V$ is the set of \emph{vertices}, and the set of \emph{edges}
$E\subseteq V\times V$ is an anti-reflexive and symmetric binary
relation on $V$. If $X\subseteq V$ is such that $%
(x,x^{\prime })\in E$ (resp. $(x,x^{\prime })\notin E$) for all distinct $%
x,x^{\prime }\in X$, we say that $X$ is a \emph{clique} (resp. \emph{%
anticlique}) in $(V,E)$.

\begin{theorem}[Ramsey's theorem for pairs]
\label{Theorem:Ramsey-pairs}If $\left( V,E\right) $ is an infinite graph,
then $\left( V,E\right) $ either contains an infinite clique or an infinite
anticlique.
\end{theorem}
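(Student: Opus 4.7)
The plan is to deduce this via the correspondence between elements of ${}^{\ast}V$ and ultrafilters on $V$ developed in Chapter \ref{hypgenerator}. Since $V$ is infinite, Proposition \ref{finitedoesntenlarge} furnishes an $\alpha \in {}^{\ast}V \setminus V$, and the associated ultrafilter $\u_\alpha = \{A \subseteq V : \alpha \in {}^{\ast}A\}$ is then automatically nonprincipal. I will use $\u_\alpha$ as a single ``majority-vote'' device to organize what would otherwise be an iterated pigeonhole argument.

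The first step will be a dichotomy on vertices. For each $v \in V$ the neighborhood $N(v):=\{w \in V : (v,w) \in E\}$ is a subset of $V$, so exactly one of $N(v)$ or $V \setminus N(v)$ lies in $\u_\alpha$. This partitions $V$ as $V_+ \sqcup V_-$, where $V_+ := \{v \in V : N(v) \in \u_\alpha\}$; equivalently, $V_+ = \{v \in V : (v,\alpha) \in {}^{\ast}E\}$. Since $\u_\alpha$ is an ultrafilter, one of $V_+$ or $V_-$ itself lies in $\u_\alpha$. I will treat the case $V_+ \in \u_\alpha$ and aim at an infinite clique; the case $V_- \in \u_\alpha$ is entirely symmetric and produces an infinite anticlique, replacing $N(v)$ by its complement throughout.

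The second step will be a recursive construction of the clique inside $V_+$. I would set $W_0 := V_+$, and, having picked $v_0,\ldots,v_{n-1}$ at earlier stages, take $W_n := V_+ \cap N(v_0) \cap \cdots \cap N(v_{n-1})$ and choose $v_n \in W_n$. The key invariant to check is that $W_n$ remains in $\u_\alpha$ at every stage: $V_+ \in \u_\alpha$ by the choice of case, each $N(v_i) \in \u_\alpha$ because $v_i \in V_+$, and $\u_\alpha$ is closed under finite intersection. Since $\u_\alpha$ is nonprincipal, each $W_n$ is infinite, so there is always a fresh choice available for $v_n$; and by construction $(v_i,v_j) \in E$ whenever $i<j$, so $\{v_n : n \in \N\}$ is an infinite clique.

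There is no serious obstacle in this plan. The only conceptual subtlety worth flagging is the ``packaging'' role played by $\u_\alpha$: the classical proof of the theorem chooses at each stage which half of the remaining vertices to keep and verifies by a separate pigeonhole argument that one half is still infinite, while the nonstandard/ultrafilter formulation makes all of these choices coherently once and for all by committing to $\alpha$, so that the recursion collapses to the trivial observation that a filter is closed under finite intersection.
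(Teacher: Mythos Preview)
Your proof is correct. It is, however, a genuinely different route from the paper's.

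The paper's proof uses iterated hyper-extensions: it fixes $\xi\in{}^{\ast}V\setminus V$, decides the dichotomy by whether $(\xi,{}^{\ast}\xi)\in{}^{\ast\ast}E$, and then at each stage of the recursion transfers the statement ``there exists $y\in{}^{\ast}V$ with $(x_i,y)\in{}^{\ast}E$ for all $i<d$ and $(y,{}^{\ast}\xi)\in{}^{\ast\ast}E$'' (witnessed by $y=\xi$) down to $V$ to obtain the next vertex $x_d$. Your argument never passes to ${}^{\ast\ast}V$: you stay in a single extension and let the ultrafilter $\u_\alpha$ do the work, reducing the recursion to closure under finite intersection. The two dichotomies are in fact the same, since $V_+\in\u_\alpha$ unwinds (via Proposition~\ref{applyingstar}) to $\alpha\in{}^{\ast}V_+$, i.e.\ $(\alpha,{}^{\ast}\alpha)\in{}^{\ast\ast}E$; but you never need to make that translation explicit. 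What your approach buys is economy: it is the classical ultrafilter proof of Ramsey's theorem packaged through a hyperfinite generator, and requires only the material of Chapter~\ref{hypgenerator}. What the paper's approach buys is a template that scales cleanly to the $m$-regular hypergraph case (Theorem~\ref{Theorem:Ramsey-hypergraph}), where one considers $(\xi,{}^{\ast}\xi,\ldots,{}^{\ast(m-1)}\xi)\in{}^{\ast m}E$ and maintains a hierarchy of conditions at each level of iteration; the analogous ultrafilter argument for $m\geq 3$ would require iterated ultrafilter limits and becomes notationally heavier.
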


\begin{proof}
Let $\xi $ be an element of ${}^{\ast }V$ that does not belong to $V$.
Consider the element $\left( \xi ,{}^{\ast }\xi \right) \in {}^{\ast \ast }V$%
. There are now two possibilities: either $\left( \xi ,{}^{\ast }\xi \right)
\in {}^{\ast \ast }E$ or $\left( \xi ,{}^{\ast }\xi \right) \notin {}^{\ast
\ast }E$. We only treat the first case, the second case being entirely
similar. We recursively define a one-to-one sequence $\left( x_{n}\right) $
in $V$ such that the set $\left\{ x_{n}:n\in \mathbb{N}\right\} $ forms a
clique in $\left(V,E\right) $. Towards this end, suppose that $d\in \mathbb{N%
}$ and $x_0,\ldots,x_{d-1}$ are distinct elements of $V$ such that, for all $%
1\leq i<j<d$, we have

\begin{itemize}
\item $\left( x_{i},x_{j}\right) \in E$, and

\item $\left( x_{i},{}\xi \right) \in {}^{\ast }E$.
\end{itemize}

Consider now the statement \textquotedblleft there exists $y\in {}^{\ast }V$
such that, for $i<d$, $y$ is different from $x_{i}$, and $\left(
x_{i},y\right) \in {}{}^{\ast}E$, and $\left( y,{}^{\ast}\xi \right) \in {}^{\ast
\ast}E$\textquotedblright, whose truth is witnessed by $\xi$. It follows by transfer that there
exists $x_{d}\in V$ different from $x_{i}$ for $i<d$, such that $\left(
x_{i},x_{d}\right) \in E$ for $i<d$, and $\left( x_{d},\xi \right) \in
{}{}^{\ast }E$. This concludes the recursive construction. %
\end{proof}

In order to prove the full Ramsey theorem, we need the notion of a
hypergraph. Given $m\in \N$, an \emph{$m$-regular hypergraph} is a set $V$
of vertices together with a subset $E$ of $V^{m}$ that is
permutation-invariant and has the property that $\left( x_{1},\ldots
,x_{m}\right) \in E$ implies that $x_{1},\ldots ,x_{m}$ are pairwise
distinct. A \emph{clique} (resp.\ \emph{anticlique}) for $\left( V,E\right) $
is a subset $Y$ of $V$ with the property that $\left( y_{1},\ldots
,y_{m}\right) \in E$ (resp. $\left( y_{1},\ldots ,y_{m}\right) \notin E$)
for any choice of pairwise distinct elements $y_{1},\ldots ,y_{m}$ of $Y$.

\begin{theorem}[Ramsey's theorem]
\label{Theorem:Ramsey-hypergraph}If $\left( V,E\right) $ is an infinite $m$%
-regular hypergraph, then $\left( V,E\right) $ contains an infinite clique
or an infinite anticlique.
\end{theorem}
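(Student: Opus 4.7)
The plan is to adapt the proof of Theorem \ref{Theorem:Ramsey-pairs} to the $m$-ary setting using iterated hyper-extensions. I would pick any $\xi \in {}^{\ast}V \setminus V$ and consider the $m$-tuple
\[
\bar{\xi} := \bigl(\xi,\ {}^{\ast}\xi,\ {}^{\ast\ast}\xi,\ \ldots,\ {}^{(m-1)\ast}\xi\bigr) \in ({}^{m\ast}V)^m.
\]
Iterating the star map applied to $\xi \notin V$ gives ${}^{k\ast}\xi \notin {}^{k\ast}V$ for every $k \geq 0$, and combining this with the chain ${}^{(j+1)\ast}V \subseteq {}^{k\ast}V$ for $j < k$ shows that the entries of $\bar{\xi}$ are pairwise distinct. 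I would then split into two cases according to whether $\bar{\xi} \in {}^{m\ast}E$ or $\bar{\xi} \notin {}^{m\ast}E$; since anticliques for $E$ are cliques for the complementary $m$-regular hypergraph on $V$, it suffices to treat the first case and construct an infinite clique.

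The construction would produce a one-to-one sequence $(x_d)_{d \in \N}$ in $V$ by recursion, maintaining the invariant that, for every $k \in \{0, 1, \ldots, m\}$ and every choice of indices $1 \leq i_1 < \cdots < i_{m-k} \leq d$,
\[
\bigl(x_{i_1},\, \ldots,\, x_{i_{m-k}},\, \xi,\, {}^{\ast}\xi,\, \ldots,\, {}^{(k-1)\ast}\xi\bigr) \in {}^{k\ast}E.
\]
The case $k = m$ (no $x$'s appear in the tuple) is exactly the case-split hypothesis, while the case $k = 0$ asserts that $\{x_1, \ldots, x_d\}$ is a clique; passing to the full sequence then yields the desired infinite clique. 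This invariant generalizes the two conditions ``$(x_i, x_j) \in E$ for $i < j < d$'' and ``$(x_i, \xi) \in {}^{\ast}E$ for $i < d$'' from the pair case.

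At the recursive step, the existence of a suitable $x_{d+1} \in V$---distinct from the earlier $x_i$ and preserving all the $k \leq m-1$ invariants in which it appears---is an existential elementary statement in the parameters $x_1, \ldots, x_d \in V$ together with $\xi, {}^{\ast}\xi, \ldots, {}^{(m-2)\ast}\xi$. Applying the star map produces the corresponding statement one level higher, whose truth is witnessed by $y = \xi$: the distinctness $\xi \neq {}^{\ast}x_i = x_i$ follows from $\xi \notin V$, and the membership requirements are precisely the instances of the stage-$d$ invariant at level $k+1$ (with $x_{d+1}$ replaced by $\xi$), which hold by induction. The reverse direction of transfer then yields the required $x_{d+1}$ in $V$. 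I expect the main obstacle to be the bookkeeping needed to verify that ``starring'' the system of invariants at level $k$ produces exactly the stage-$d$ invariants at level $k+1$; once this alignment is checked, the recursion closes exactly as in Theorem \ref{Theorem:Ramsey-pairs}.
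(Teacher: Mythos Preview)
Your proposal is correct and follows essentially the same approach as the paper: the paper writes out only the case $m=3$ explicitly, and your general-$m$ invariant
\[
\bigl(x_{i_1},\ldots,x_{i_{m-k}},\xi,{}^{\ast}\xi,\ldots,{}^{(k-1)\ast}\xi\bigr)\in{}^{k\ast}E
\]
specializes exactly to the three bulleted conditions there, with the same transfer-and-witness-by-$\xi$ mechanism driving the recursion. Your extra verification that the iterates ${}^{j\ast}\xi$ are pairwise distinct and your observation that the anticlique case reduces to the clique case for the complementary hypergraph are both fine additions that the paper leaves implicit.
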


\begin{proof}
For simplicity, we consider the case when $m=3$. Let $\xi $ be an element of 
${}^{\ast }V$ that does not belong to $V$. As before, there are now two
cases, depending on whether $\left( \xi ,{}^{\ast }\xi ,{}^{\ast \ast }\xi
\right) $ belongs to ${}^{\ast \ast \ast }E$ or not. Once again, we only
treat the first case.

We recursively define a one-to-one sequence $\left( x_{n}\right) $ of
elements of $V$ such that $\{x_n \ : \ n\in \N\}$ forms a clique for $V$.
Towards this end, suppose that $d\in \mathbb{N}$ and $x_0,\ldots,x_{d-1}$
are distinct elements of $V$ such that, for all $1\leq i<j<k<d$, we have:

\begin{itemize}
\item $\left( x_{i},x_{j},x_{k}\right) \in E$,

\item $\left( x_{i},x_{j},\xi \right) \in {}^{\ast }E$, and

\item $\left( x_{i},\xi ,{}^{\ast }\xi \right) \in {}^{\ast \ast }E$.
\end{itemize}

Consider now the statement \textquotedblleft there exists $y\in {}^{\ast }V$
such that $y$ is different from $x_{i}$ for $1\leq i<d$, $\left(
x_{i},x_{j},y\right) \in {}^{\ast }E$ for every $1\leq i<j<d$, $\left(
x_{i},y,^{\ast }\xi \right) \in {}^{\ast \ast }E$ for every $1\leq i<d$, and 
$\left( y,{}^{\ast }\xi ,{}^{\ast \ast }\xi \right) \in {}^{\ast \ast \ast
}E $.\textquotedblright\ Note that $\xi $ witnesses the truth of the
statement in the nonstandard extension\index{nonstandard extension}. Therefore, by transfer, there is an
element $x_{d}$ of $V$ distinct from $x_{i}$ for $1\leq i<d$ for which the
above items remain true for all $1\leq i<j<k\leq d$. This completes the
recursive construction.
\end{proof}

Ramsey's theorem\index{Ramsey's theorem} is often stated in the language of colorings\index{coloring}. Given a set $%
X $ and $m\in \N$, we let $X^{[m]}$ denote the set of $m$-element subsets of 
$X $. If $X\subseteq \N$, we often identify $X^{[m]}$ with the set of pairs $\{(x_{1},\ldots
,x_{m})\in X^{m}\ :\ x_{1}<\cdots <x_{m}\}$. Given $k\in \N$, a $k$-coloring \index{coloring}
of $X^{[m]}$ is a function $c:X^{[m]}\rightarrow \{1,\ldots ,k\}$. In this
vein, we often refer to the elements of $\{1,\ldots ,k\}$ as colors.
Finally, a subset $Y\subseteq X$ is \emph{monochromatic for the coloring $c$}
if the restriction of $c$ to $Y^{\left[ m\right] }$ is constant. Here is the
statement of Ramsey's theorem for colorings.

\begin{corollary}
\label{Corollary:Ramsey-infinite}For any $k,m\in \N$, any infinite set $V$,
and any $k$-coloring $c$ of $V^{[m]}$, there is an infinite subset of $V$
that is monochromatic for the coloring $c$.
\end{corollary}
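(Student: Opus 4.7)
The plan is to reduce the general $k$-coloring statement to the $2$-coloring case, which is essentially Theorem \ref{Theorem:Ramsey-hypergraph}, by an elementary induction on $k$. The base case $k=1$ is trivial, since every subset of $V$ is monochromatic. For $k=2$, given a coloring $c\colon V^{[m]}\to\{1,2\}$, one produces an $m$-regular hypergraph on $V$ by defining
\[
E\ :=\ \{(v_1,\ldots,v_m)\in V^m\mid v_1,\ldots,v_m\ \text{pairwise distinct and}\ c(\{v_1,\ldots,v_m\})=1\},
\]
which is automatically permutation-invariant; Theorem \ref{Theorem:Ramsey-hypergraph} then supplies an infinite clique or anticlique for $(V,E)$, and this is exactly an infinite subset on which $c$ is constantly $1$ or constantly $2$.

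For the inductive step, assume the statement holds for any infinite set and any coloring with fewer than $k$ colors, where $k\geq 3$. Given $c\colon V^{[m]}\to\{1,\ldots,k\}$, I would merge the last two colors, defining a $2$-coloring $c'\colon V^{[m]}\to\{1,2\}$ by $c'(S)=1$ if $c(S)\in\{1,\ldots,k-1\}$ and $c'(S)=2$ if $c(S)=k$. By the $k=2$ case, there is an infinite $W\subseteq V$ that is $c'$-monochromatic. If $W$ has $c'$-color $2$, then $c$ is constantly equal to $k$ on $W^{[m]}$ and we are done. Otherwise $c|_{W^{[m]}}$ takes values only in $\{1,\ldots,k-1\}$, so the inductive hypothesis applied to the infinite set $W$ and the $(k-1)$-coloring $c|_{W^{[m]}}$ yields an infinite monochromatic subset of $W$, hence of $V$.

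There is no real obstacle here: once Theorem \ref{Theorem:Ramsey-hypergraph} is in hand, the only thing to check is that the coloring-to-hypergraph translation is legitimate (which is immediate from the symmetry of the fibers of $c$) and that the induction on $k$ closes. One could instead give a direct nonstandard argument entirely parallel to the proof of Theorem \ref{Theorem:Ramsey-hypergraph}, by considering the tuple $(\xi,{}^{*}\xi,\ldots,{}^{(m-1)*}\xi)$ for $\xi\in {}^{*}V\setminus V$, noting that ${}^{(m-1)*}c$ assigns it some value $i\in\{1,\ldots,k\}$ (since the finite set of colors is fixed by the star map), and then recursively extracting a monochromatic sequence of color $i$ exactly as in Theorem \ref{Theorem:Ramsey-hypergraph}; but the reduction to the two-color case is shorter and highlights that no new idea is needed beyond what has already been proved.
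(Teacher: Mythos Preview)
Your proof is correct and matches the paper's approach exactly: the paper also reduces to $k=2$ by induction and then identifies a $2$-coloring with an $m$-regular hypergraph so that Theorem~\ref{Theorem:Ramsey-hypergraph} applies directly. Your write-up is simply more explicit about how the induction on $k$ runs, and your aside about the direct nonstandard argument is a correct alternative.
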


\begin{proof}
By induction, it suffices to consider the case $k=2$. We identify a coloring 
$c:V^{[m]}\to \{1,2\}$ with the $m$-regular hypergraph $(V,E)$ satisfying $%
(x_1,\ldots,x_m)\in E$ if and only if $c(\{x_1,\ldots,x_m\})=1$ for distinct 
$x_1,\ldots,x_m\in V$. An infinite clique (resp. anticlique) in $(V,E)$
corresponds to an infinite set with color $1$ (resp. $2$), whence the
corollary is merely a restatement of our earlier version of Ramsey's theorem\index{Ramsey's theorem}.
\end{proof}

\begin{remark}
Ramsey's Theorem cannot be extended to finite colorings of 
the infinite parts $V^{[\infty]}=\{A\subseteq V\mid A\ \text{is infinite}\}$.
Indeed, pick a copy of the natural numbers $\N\subseteq V$,
pick an infinite $\alpha \in\starN\setminus\N$, and for $A\in V^{[\infty]}$
set $c(A)=1$ if the internal cardinality $|\starA\cap[1,\alpha]|$ is odd,
and $c(A)=2$ otherwise. Then 
$c:V^{[\infty]}\to\{1,2\}$ is a $2$-coloring with the property that
$X^{[\infty]}$ is \emph{not} monochromatic for any
infinite $X\subseteq V$ since, \emph{e.g.}, $c(X)\ne c(X\setminus\{x\})$
for every $x\in X$.
\end{remark}

\section{Finite Ramsey Theorem}

Corollary \ref{Corollary:Ramsey-infinite} is often referred to as the
infinite Ramsey theorem. We now deduce from it the finite Ramsey theorem. We
first need a bit of notation.

\begin{definition}
Given $k,l,m,n\in \N$, we write $l\to (n)^m_k$ if every coloring\index{coloring} of $[l]^{[m]}$ with $k$ colors has a homogeneous set of size $n$.
\end{definition}

\begin{corollary}[Finite Ramsey Theorem]
For every $k,m,n\in \N$, there is $l\in \N$ such that $l\to (k)^n_m$.
\end{corollary}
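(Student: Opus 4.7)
The plan is to deduce the finite version from the infinite version (Corollary \ref{Corollary:Ramsey-infinite}) by a standard nonstandard compactness argument, in the spirit of the hyperfinite approximation applications already developed in the book.

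Suppose, toward a contradiction, that the conclusion fails for some fixed $k, m, n \in \N$. Then for every $l \in \N$, there exists a coloring $c_l : [l]^{[m]} \to [k]$ with no monochromatic subset of size $n$. By \emph{transfer} applied to this statement (or, equivalently, by an \emph{overflow} argument applied to the internal set of those $L \in \starN$ for which a witnessing coloring exists), one obtains an infinite $L \in \starN \setminus \N$ together with an internal coloring $C : [L]^{[m]} \to [k]$ such that no internal subset of $[L]$ of internal cardinality $n$ is monochromatic for $C$.

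Now observe that $\N \subseteq [L]$, so restricting $C$ to $\N^{[m]}$ yields an honest $k$-coloring $c := C \!\upharpoonright\! \N^{[m]}$ of the $m$-element subsets of $\N$. By the infinite Ramsey theorem (Corollary \ref{Corollary:Ramsey-infinite}) applied to $c$, there is an infinite $H \subseteq \N$ which is monochromatic for $c$. Pick any $H_0 \subseteq H$ with $|H_0| = n$; since $H_0$ is a finite set of natural numbers it is internal (indeed, finite sets of internal objects are hyperfinite by Proposition \ref{hyperfinitebasics}), it is contained in $[L]$, and it is monochromatic for $C$. This contradicts the defining property of $C$.

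The main (and essentially only) step requiring care is the passage from the family of standard counterexamples $\{c_l\}_{l \in \N}$ to a single internal counterexample $C$ on a hyperfinite interval $[L]$: one must correctly formalize ``there is a $k$-coloring of $[l]^{[m]}$ with no monochromatic $n$-subset'' as an elementary property of $l$ (quantifying over $\Fun([l]^{[m]}, [k])$, which is a legitimate set parameter), so that transfer and overflow produce an internal coloring rather than merely an external one. Once this is done correctly, the remainder is the one-line combinatorial observation that a finite subset of an infinite monochromatic set furnishes the desired contradiction.
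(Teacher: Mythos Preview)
Your proof is correct, and it takes a genuinely different route from the paper's. The paper argues by forming a finitely branching tree of bad colorings (ordered by restriction) and invoking K\"onig's Lemma to extract an infinite branch, which yields a bad coloring of $\N^{[m]}$ contradicting the infinite Ramsey theorem. You instead bypass K\"onig's Lemma entirely: transfer gives an internal bad coloring $C$ on a hyperfinite interval $[L]$, and restricting $C$ to $\N^{[m]}$ already produces the bad coloring of $\N$ directly. Your approach is arguably more in keeping with the spirit of the book---it is exactly the kind of one-step nonstandard compactness argument advertised in the hyperfinite approximation section, whereas the paper's argument treats K\"onig's Lemma as a black box (albeit one proved nonstandardly earlier). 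What the tree argument buys is that it is a fully standard proof requiring no nonstandard machinery; what your argument buys is directness and a cleaner fit with the surrounding material. One small note: overflow is not actually needed---since a bad coloring exists for \emph{every} $l\in\N$, transfer of the universal statement gives one for every $L\in\starN$, so you may simply pick any infinite $L$.
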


\begin{proof}
Suppose the theorem is false for a particular choice of $k,m,n$. Then for
every $l\in \N$, there is a \textquotedblleft bad\textquotedblright\
coloring $c:[l]^{[m]}\rightarrow \{1,\ldots ,k\}$ with no monochromatic
subset of size $n$. We can form a finitely branching tree of bad colorings
with the partial order being inclusion. Since there is a bad coloring for
every such $l$, we have that the tree is infinite. By K\"{o}nig's Lemma,
there is an infinite branch. This branch corresponds to a coloring of $[\N%
]^{[m]}\rightarrow \{1,\ldots ,k\}$ with no monochromatic subset of size $n$, contradicting the Infinite Ramsey Theorem.
\end{proof}

\section{Rado's Path Decomposition Theorem}\index{Rado's Path Decomposition Theorem}

In this section, by a \emph{path in $\N$} we mean a (finite or infinite) injective sequence of natural numbers.  For a finite path $(a_0,\ldots,a_n)$ from $\N$, we refer to $a_n$ as the \emph{end of the path}.

Suppose that $c:\N^{[2]}\to \{1,\ldots,r\}$ is an $r$-coloring of $\N^{[2]}$.  For $i\in \{1,\ldots,r\}$, we say that a path $P=(a_n)$ has color $i$ if $c(\{a_n,a_{n+1}\})=i$ for all $n$.

\begin{theorem}[Rado's Path Decomposition Theorem]
Suppose that $c:\N^{[2]}\to \{1,\ldots,r\}$ is an $r$-coloring of $\N^{[2]}$.  Then there is a partition of $\N$ into paths $P_1,\ldots,P_r$ such that each $P_i$ has color $i$.
\end{theorem}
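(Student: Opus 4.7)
Plan: Prove the finite version of Rado's theorem by induction on $n$, then lift to $\N$ via K\"onig's Lemma (Theorem 2.8.6 in the book).

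For the finite version, I show that for each $n\in\N$ and each $r$-coloring $c$ of $[n]^{[2]}$, the set $[n]$ admits a partition into paths $P_1,\ldots,P_r$ with $P_i$ of color $i$ (empty paths being permitted and vacuously satisfying the color condition). The proof is by induction on $n$, the base $n=1$ being trivial. For the inductive step, given a valid partition of $[n]$ I add $n+1$ by case analysis: if some $P_i$ is empty, set $P_i:=(n+1)$; otherwise, if some endpoint $a_i$ of some $P_i$ satisfies $c(\{a_i,n+1\})=i$, extend $P_i$ at $a_i$ by $n+1$; otherwise define $\phi(i):=c(\{a_i,n+1\})$ for a chosen endpoint $a_i$ of each (nonempty) $P_i$. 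The map $\phi:\{1,\ldots,r\}\to\{1,\ldots,r\}$ has no fixed point, hence its functional digraph contains a cycle $i_1\to i_2\to\cdots\to i_k\to i_1$, and a careful rearrangement of $P_{i_1},\ldots,P_{i_k}$ that uses the edges $\{a_{i_m},n+1\}$ (which by construction have color $i_{m+1\bmod k}$) produces a new valid partition of $[n+1]$.

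For the infinite version, I strengthen the inductive step so that the rearrangement always places $n+1$ at an endpoint of its new path. Then I form the finitely branching tree $T$ whose level-$n$ nodes are the valid partitions of $[n]$ in which the most recently added vertex $n$ sits at an endpoint of its path, with parent-child relation given by the endpoint-extension of the finite induction. The strengthened finite theorem guarantees that every level of $T$ is nonempty, so K\"onig's Lemma yields an infinite branch. Along the branch each color-$i$ path grows by accretion at one of its endpoints (or appears as a fresh singleton when first nonempty), so the increasing union $P_i:=\bigcup_n P_i^{(n)}$ is a monochromatic path of color $i$, and $P_1,\ldots,P_r$ partition $\N$ as required.

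The main obstacle is the endpoint-respecting rearrangement in the cycle case of the finite induction: a naive rearrangement may leave $n+1$ in the middle of some new path, breaking the compatibility required by the K\"onig's Lemma tree. The remedy is to attach $n+1$ to a single $a_{i_m}$ via the known color-$i_{m+1}$ edge, making $n+1$ a new endpoint, and then to absorb the remaining portions of the cycle paths into one another in a color-consistent manner, using the cyclic structure provided by $\phi$. An alternative in the nonstandard spirit would be to transfer the finite version to a hyperfinite interval $[1,N]$ with $N>\N$, producing an internal partition into $r$ hyperfinite monochromatic paths; extracting a standard partition from this data is delicate, however, because consecutive standard vertices in an internal path can be separated by nonstandard vertices whose edge colors are uncontrolled, and additional saturation- or endpoint-based arguments would be needed to overcome this.
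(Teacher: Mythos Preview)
Your strategy diverges from the paper's and, as written, has two real gaps.

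\textbf{The cycle rearrangement in case~3 is not valid.} The vertex $n{+}1$ lies in exactly one path and has at most two neighbours there, so you may use at most two of the edges $\{a_{i_m},n{+}1\}$; but any two of these have \emph{distinct} colours $i_{m+1}\neq i_{m'+1}$, so a two-edge path through $n{+}1$ built from them is never monochromatic. Using only one such edge produces a colour-$i_{m+1}$ segment $(a_{i_m},n{+}1)$, but you then have two colour-$i_{m+1}$ paths (this segment and the old $P_{i_{m+1}}$) and no controlled colour-$i_{m+1}$ edge to merge them: the colours $c(\{a_{i_m},a_{i_{m+1}}\})$, $c(\{a_{i_m},b_{i_{m+1}}\})$, $c(\{b_{i_{m+1}},n{+}1\})$ are all unknown. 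The ``cyclic structure provided by $\phi$'' gives you nothing about these edges, so the promised absorption does not go through.

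\textbf{The K\"onig tree is not well-defined.} Even if a valid partition of $[n{+}1]$ with $n{+}1$ at an endpoint exists for every $n$, deleting $n{+}1$ yields a partition of $[n]$ in which $n$ need \emph{not} be at an endpoint (e.g.\ $P_1=(4)$, $P_2=(1,3,2)$ at level $4$ restricts to $P_2=(1,3,2)$ with $3$ internal). So your level-$(n{+}1)$ nodes can be orphans. If instead you take parent--child to be ``apply the inductive procedure,'' the rearrangement in case~3 moves vertices other than $n{+}1$, so $\pi_{n+1}\!\upharpoonright[n]\neq\pi_n$ and the $\pi_n$'s are not nested; there is then no limit object to take. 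What K\"onig would actually require is a \emph{hereditary} partition of each $[n]$ (every $P_i\cap[m]$ contiguous in $P_i$), and it is not at all clear such partitions exist.

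For contrast, the paper's proof does neither a finite induction nor a compactness lift. It fixes an infinite $\alpha\in{}^*\N$ and assigns each $m\in\N$ the ``colour at infinity'' $c(\{m,\alpha\})$. It then recursively builds $P_1,\dots,P_r$ directly on $\N$, maintaining the invariant that the end $e$ of each nonempty $P_i$ satisfies $c(\{e,\alpha\})=i$. To insert a new $k$ with colour-at-infinity $i$, transfer yields a fresh $f\in\N$ with $c(\{e,f\})=c(\{k,f\})=i$, and one appends $(f,k)$ to $P_i$. The ultrafilter/nonstandard element is exactly what supplies the global coherence that your K\"onig argument is missing; your closing remark about transferring the finite version to $[1,N]$ is a different (and, as you note, problematic) nonstandard route---the paper's is more direct.
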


\begin{proof}
First, fix $\alpha\in \starN$.  For $m\in \N$ and $i\in \{1,\ldots,r\}$, we say that $m$ has color $i$ if $c(\{m,\alpha\})=i$.  We now recursively define disjoint finite paths $P_{1,k},\ldots,P_{r,k}$ such that, whenever $P_{i,k}\not=\emptyset$, then the end of $P_{i,k}$ has color $i$ (in the sense of the previous sentence).  

To start, we define $P_{i,0}=\emptyset$ for each $i=1,\ldots,r$.  Now assume that $P_{i,k-1}$ has been constructed for $i=1,\ldots,r$.  If $k$ belongs to some $P_{i,k-1}$, then set $P_{i,k}:=P_{i,k-1}$ for all $i=1,\ldots,r$.  Otherwise, let $i$ be the color of $k$ and let $e$ be the end of $P_{i,k-1}$.  Since $c(\{k,\alpha\})=c(\{e,\alpha\})=i$, by transfer, we can find $f\in \N$ larger than all numbers appearing in $\bigcup_{i=1}^r P_{i,k-1}$ such that $c(\{k,f\})=c(\{e,f\})=i$.  We then set $P_{j,k}:=P_{j,k-1}$ for $j\not=i$ and $P_{i,k}:=P_{i,k-1}\concat (f,k)$.  Note that the recursive assumptions remain true.

For $i=1,\ldots,r$, we now set $P_i$ to be the union of $P_{i,k}$ for $k\in \mathbb{N}$.  It is clear that $P_1,\ldots,P_r$ are as desired.
\end{proof}

\section{Ultrafilter trees}

Given a set $X$, we let $X^{[<\infty]}$ (resp. $X^{[\infty]})$ denote the set of finite (resp. infinite) subsets of $X$.  Given $s\in \N^{[<\infty]}$ and $X\subseteq \N$, we say that $s$ is an \emph{initial segment} of $X$, denoted $s\sqsubseteq X$, if there is $j\in \N$ such that $s=\{j\in X \ : \ j\leq i\}$.  

\begin{definition}
A subset $T$ of $\N^{[<\infty]}$ is called a \emph{tree on $\N$} if $T\not=\emptyset$ and for all $s,t\in \N^{[<\infty]}$, if $s\sqsubseteq t$ and $t\in T$, then $s\in T$.
\end{definition}

For a tree $T$ on $\N$, we set
$$[T]:=\{X\in \N^{[\infty]} \ : \ \forall s\in \N^{[<\infty]}(s\sqsubseteq X \Rightarrow s\in T)\}.$$  If there is an element of $T$ that is $\sqsubseteq$-maximal with respect to the property that it is $\sqsubseteq$-comparable to every element of $T$, we call this (necessarily unique) element of $T$ the \emph{stem} of $T$, denoted $\stem(T)$. Finally, given $s\in T$, we set $T/s:=\{t\in T \ : \ s\sqsubseteq t\}$.

\begin{definition}
Let $\vec \u=\langle \u_s \ : \ s\in \N^{[<\infty]}\rangle$ be a family of nonprincipal ultrafilters on $\N$ and let $T$ be a tree on $\N$.  We say that $T$ is a \emph{$\vec \u$-tree} if it has a stem, $T/\stem(T)$ is nonempty, and for all $s\in T/\{ \stem(T)$\} , we have that $\{n\in \N \ : \ s\cup\{n\}\in T\}\in \u_s$.
\end{definition}

Note that a $\vec \u$-tree $T$ contains no $\sqsubseteq$-maximal elements and that, for every $s\in T$, there is $X\in [T]$ such that $s\sqsubseteq X$.

The goal of this section is to prove the following Ramsey-theoretic statement about ultrafilter trees, recently proven by Trujillo in \cite{trujillo_abstract_2016}:

\begin{theorem}\label{ultraramsey}
Suppose that $\vec \u=\langle \u_s \ : \ s\in \N^{[<\infty]}\rangle$ is a sequence of non-principal ultrafilters on $\N$, $T$ is a $\vec \u$-tree on $\N$, and $\mathcal{X}\subseteq \N^{[\infty]}$.  Then there is a $\vec \u$-tree $S\subseteq T$ with $\stem(S)=\stem(T)$ such that one of the following holds:
\begin{enumerate}
\item $[S]\subseteq \mathcal{X}$;
\item $[S]\cap \mathcal{X}=\emptyset$;
\item for every $\vec \u$-tree $S'$ with $S'\subseteq S$, we have $[S']\not\subseteq \mathcal{X}$ and $[S']\cap \mathcal{X}\not=\emptyset$.
\end{enumerate}
\end{theorem}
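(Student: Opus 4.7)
The plan is a fusion-style argument that uses the ultrafilter structure to dichotomize at every node of $T$. For each $t \in T$ with $t \sqsupseteq \stem(T)$, call $t$ \emph{positive} if there is a $\vec{\u}$-subtree $U \subseteq T/t$ with $\stem(U) = t$ and $[U] \subseteq \mathcal{X}$; call $t$ \emph{negative} if there is such a $U$ with $[U] \cap \mathcal{X} = \emptyset$; and otherwise call $t$ \emph{mixed}. If $\stem(T)$ is positive, the witnessing $U$ is a $\vec{\u}$-subtree of $T$ with $\stem(U) = \stem(T)$ that directly realizes alternative (1), and the negative case symmetrically gives (2). So from now on assume $\stem(T)$ is mixed.

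The key lemma is the following: if $\{n : t \cup \{n\} \text{ is positive}\} \in \u_t$, then $t$ itself is positive (and analogously with \emph{positive} replaced by \emph{negative}). To prove it, for each $n$ in that ultrafilter set pick a witnessing positive $\vec{\u}$-subtree $U_n \subseteq T/(t \cup \{n\})$ with stem $t \cup \{n\}$, and glue them via
\[
U := \{s \in T : s \sqsubseteq t\}\ \cup\ \bigcup_n U_n.
\]
The successor set of $t$ in $U$ is exactly the given ultrafilter set, which is infinite by non-principality of $\u_t$, so $\stem(U) = t$; above $t$ the successor sets of $U$ coincide with those of the $U_n$ and hence lie in the correct ultrafilters; so $U$ is a $\vec{\u}$-tree. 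Finally, $[U] = \bigcup_n [U_n] \subseteq \mathcal{X}$, so $U$ witnesses that $t$ is positive.

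Combining the key lemma with its negative analog and the basic fact that $A, B \notin \u_t$ forces $A \cup B \notin \u_t$, we obtain: at any mixed node $t$ the set $M_t := \{n : t \cup \{n\} \in T \text{ and } t \cup \{n\} \text{ is mixed}\}$ belongs to $\u_t$. Now construct $S$ by recursion on levels: put every $s \in T$ with $s \sqsubseteq \stem(T)$ into $S$, and at each mixed node $s \in S$ with $s \sqsupseteq \stem(T)$ admit exactly the successors $s \cup \{n\}$ with $n \in M_s$. By construction $S$ is a $\vec{\u}$-subtree of $T$ with $\stem(S) = \stem(T)$, and mixedness is preserved along successors, so every node of $S$ lying above $\stem(T)$ is mixed.

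To verify alternative (3), let $S' \subseteq S$ be any $\vec{\u}$-tree. Since $S' \subseteq T$, every branch of $S'$ is a branch of $T$ and hence has $\stem(T)$ as an initial segment; therefore $\stem(T) \in S'$ and $\stem(S') \sqsupseteq \stem(T)$, so $\stem(S') \in S$ is mixed. But $S'$ is itself a $\vec{\u}$-subtree of $T/\stem(S')$ with stem $\stem(S')$, and the mixedness of $\stem(S')$ rules out both $[S'] \subseteq \mathcal{X}$ and $[S'] \cap \mathcal{X} = \emptyset$. This is exactly conclusion (3). The one delicate step is the key lemma---specifically the bookkeeping that the glued tree $U$ is a $\vec{\u}$-tree with stem exactly $t$ and that $[U]$ decomposes as $\bigcup_n [U_n]$---but this is routine once the definitions are unpacked.
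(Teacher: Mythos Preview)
Your proof is correct and follows essentially the same strategy as the paper's: the positive/negative/mixed trichotomy is exactly the paper's partition into sets $A$, $B$, $C$; your key gluing lemma is the paper's Claim; and your level-by-level construction of $S$ is the content of the paper's Lemma~\ref{trujillolemma}. The only cosmetic difference is that the paper translates everything into the nonstandard setting (replacing each $\u_s$ by a generator $\alpha_s \in {}^{\ast}\N$ and phrasing ``$D \in \u_t$'' as ``$\alpha_t \in {}^{\ast}D$''), proving the equivalent Theorem~\ref{nsultraramsey} instead, but the underlying combinatorics is identical to yours.
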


Using hyperfinite generators of ultrafilters, we immediately obtain the following notion:

\begin{definition}
Let $\vec \alpha=\langle \alpha_s \ : \ s\in \N^{[<\infty]}\rangle$ be a family of infinite elements of $\starN$ and let $T$ be a tree on $\N$.  We say that $T$ is a \emph{$\vec \alpha$-tree} if, for all $s\in T/\stem(T)$, we have that $s\cup\{\alpha_s\}\in {}^{\ast}T$.
\end{definition}

Before proving Theorem \ref{ultraramsey}, we need one key lemma:

\begin{lemma}\label{trujillolemma}
Fix $\vec \alpha=\langle \alpha_s \ : \ s\in \N^{[<\infty]}\rangle$ with each $\alpha_s$ infinite.  Suppose that $C\subseteq \N^{[<\infty]}$ is such that, for all $s\in C$, we have that $s\cup\{\alpha_s\}\in {}^{\ast}C$.  Then for all $\vec \alpha$-trees $T$, if $\stem(T)\in C$, then there is a $\vec \alpha$-tree $S\subseteq T$ with $\stem(S)=\stem(T)$ such that $S/\stem(S)\subseteq C$.
\end{lemma}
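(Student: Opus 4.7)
\textbf{Proof proposal for Lemma \ref{trujillolemma}.}

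The plan is to construct $S$ by explicit formula and then verify it has the required properties. Write $s_{0}:=\stem(T)$, and define
\[
S\;:=\;\bigl\{t\in T:\ t\sqsubseteq s_{0}\bigr\}\;\cup\;\bigl\{t\in T:\ s_{0}\sqsubseteq t\ \text{and every $s'$ with } s_{0}\sqsubseteq s'\sqsubseteq t\text{ satisfies } s'\in C\bigr\}.
\]
So $S$ is obtained from $T$ by pruning, above the stem, any branch that ever leaves $C$. Clearly $S$ is a tree (downward closed under $\sqsubseteq$), $S\subseteq T$, and every $t\in S$ extending $s_0$ lies in $C$; in particular, once we check that $\stem(S)=s_{0}$, the condition $S/\stem(S)\subseteq C$ is immediate.

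To verify $\stem(S)=s_{0}$, note first that every element of $S$ is $\sqsubseteq$-comparable to $s_{0}$ because this is already true in $T$. It remains to show that $s_{0}$ has at least two immediate successors in $S$, so that no proper extension of $s_{0}$ is comparable to all of $S$. An immediate successor of $s_{0}$ in $S$ is a set $s_{0}\cup\{n\}\in T\cap C$ with $n>\max s_{0}$. Since $T$ is a $\vec{\alpha}$-tree and $s_{0}\in C$, we have $s_{0}\cup\{\alpha_{s_{0}}\}\in {}^{*}T\cap {}^{*}C$, which in ultrafilter language says $\{n\in\N:s_{0}\cup\{n\}\in T\}\cap\{n\in\N:s_{0}\cup\{n\}\in C\}\in\u_{\alpha_{s_{0}}}$. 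Because $\alpha_{s_{0}}$ is infinite, $\u_{\alpha_{s_{0}}}$ is nonprincipal and this set is infinite, yielding infinitely many immediate successors.

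The main step is to show $S$ is a $\vec{\alpha}$-tree, i.e.\ $s\cup\{\alpha_{s}\}\in {}^{*}S$ for every $s\in S/s_{0}$. Fix such an $s$; by construction $s\in C$, so the hypothesis on $C$ gives $s\cup\{\alpha_{s}\}\in {}^{*}C$. Since $T$ is a $\vec{\alpha}$-tree, $s\cup\{\alpha_{s}\}\in {}^{*}T$. Applying transfer to the defining formula of $S$ (with parameters $T,C,s_{0}$), an element lies in ${}^{*}S$ iff it lies in ${}^{*}T$ and every $\sqsubseteq$-segment of it extending $s_{0}$ lies in ${}^{*}C$. The relevant segments of $s\cup\{\alpha_{s}\}$ extending $s_{0}$ are the finite initial segments of $s$ (all in $C\subseteq {}^{*}C$ because $s\in S$) together with $s\cup\{\alpha_{s}\}$ itself (in ${}^{*}C$ as just noted). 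Hence $s\cup\{\alpha_{s}\}\in {}^{*}S$, completing the proof. I do not anticipate any serious obstacle; the whole point is that the hypothesis \emph{``$s\in C\Rightarrow s\cup\{\alpha_{s}\}\in {}^{*}C$''} is exactly the local closure property needed to keep the $\vec{\alpha}$-tree condition intact when one prunes to $C$, and the only subtlety is confirming via transfer that no new hyperfinite initial segments between $s_{0}$ and $s\cup\{\alpha_{s}\}$ can slip outside ${}^{*}C$.
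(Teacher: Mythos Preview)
Your proof is correct and takes essentially the same approach as the paper: the set $S$ you define directly coincides with the set the paper builds level by level via $L_0=\{\stem(T)\}$, $L_{n+1}=\{s\cup\{m\}:s\in L_n,\ m>\max(s),\ s\cup\{m\}\in C\cap T\}$. The only difference is in the verification that $S$ is a $\vec\alpha$-tree: the paper's recursive description lets one argue simply that $s\in L_n$ implies $s\cup\{\alpha_s\}\in{}^{*}L_{n+1}\subseteq{}^{*}S$, whereas you transfer the full defining formula of $S$ and must check that all internal initial segments between $s_0$ and $s\cup\{\alpha_s\}$ lie in ${}^{*}C$---which you correctly observe are just the standard segments of $s$ together with $s\cup\{\alpha_s\}$ itself, since $s$ is standard finite and $\alpha_s$ is infinite.
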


\begin{proof}
Suppose that $T$ is a $\vec \alpha$-tree with $\stem(T)\in H$.  We first recursively define sets $L_n\subseteq C\cap T$ as follows.  Set $L_0:=\{\stem(T)\}$.  Supposing that $L_n$ has been defined, we set $$L_{n+1}:=\{s\cup \{m\} \ : \ s\in L_n,\  m>\max(s), \text{ and }s\cup \{m\}\in C\cap T\}.$$  We now set 
$$S:=\{s\in \N^{[<\infty]} \ : \ s\sqsubseteq \stem(T)\}\cup \bigcup_{n=0}^\infty L_n.$$  We claim that this $S$ is as desired.  It follows directly by induction that $S$ is a tree on $\N$ and that $S\subseteq T$.  Moreover, by the hypothesis on $C$ and the fact that $T$ is an $\vec \alpha$-tree, we have that $S$ is also an $\vec \alpha$-tree.  It is clear that $\stem(T)\sqsubseteq \stem(S)$.  However, since $\stem(T)\cup \{\alpha_{\stem(T)}\}\in {}^{\ast}L_1$, we have that $\{n\in \N \  \ \stem(T)\cup \{n\}\in L_1\}$ is infinite, whence it follows that $\stem(S)=\stem(T)$.  Finally, $S/\stem(S)=\bigcup_{n=0}^\infty L_n\subseteq C$.
\end{proof}

We can now prove Theorem \ref{ultraramsey} in its equivalent nonstandard formulation:

\begin{theorem}\label{nsultraramsey}
Suppose that $\vec \alpha=\langle \alpha_s \ : \ s\in \N^{[<\infty]}\rangle$ is a sequence of infinite elements of $\starN$, $T$ is a $\vec \alpha$-tree on $\N$, and $\mathcal{X}\subseteq \N^{[\infty]}$.  Then there is a $\vec \alpha$-tree $S\subseteq T$ with $\stem(S)=\stem(T)$ such that one of the following holds:
\begin{enumerate}
\item $[S]\subseteq \mathcal{X}$;
\item $[S]\cap \mathcal{X}=\emptyset$;
\item for every $\vec \alpha$-tree $S'$ with $S'\subseteq S$, we have $[S']\not\subseteq \mathcal{X}$ and $[S']\cap \mathcal{X}\not=\emptyset$.
\end{enumerate}
\end{theorem}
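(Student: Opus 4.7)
The plan is to prove Theorem~\ref{nsultraramsey} by trichotomizing according to whether a $\vec\alpha$-subtree of $T$ with a prescribed stem can realize alternatives~(1) or~(2). I define the standard sets
\[
A := \{s\in T : s\sqsupseteq \stem(T),\ \exists\ \vec\alpha\text{-tree } U\subseteq T \text{ with } \stem(U)=s \text{ and } [U]\subseteq \mathcal{X}\},
\]
\[
B := \{s\in T : s\sqsupseteq \stem(T),\ \exists\ \vec\alpha\text{-tree } U\subseteq T \text{ with } \stem(U)=s \text{ and } [U]\cap\mathcal{X}=\emptyset\}.
\]
If $\stem(T)\in A$, alternative~(1) is immediate from the witnessing tree. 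Similarly $\stem(T)\in B$ gives alternative~(2). In the remaining case where $\stem(T)\notin A\cup B$, I set $C := \{s \in T : s\sqsupseteq\stem(T) \text{ and } s\notin A\cup B\}$, observe that $\stem(T)\in C$, and aim to apply Lemma~\ref{trujillolemma} to $C$.

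The main technical step, and the part I expect to be the principal obstacle, is verifying that $s\in C$ implies $s\cup\{\alpha_s\}\in {}^*C$. Since $T$ is a $\vec\alpha$-tree and $s \sqsupseteq \stem(T)$ lies in $T$, we immediately get $s\cup\{\alpha_s\}\in {}^*T$, so it remains to show $s\cup\{\alpha_s\}\notin {}^*A$ (the case of $B$ is symmetric). I will prove the contrapositive. Suppose $s\cup\{\alpha_s\}\in {}^*A$; by transfer applied to the $n$-parametrized condition ``$s\cup\{n\}\in A$'' with parameter $s$, this is equivalent to $\alpha_s\in {}^*P$ for $P:=\{n\in\N : s\cup\{n\}\in A\}$, i.e., $P\in \u_{\alpha_s}$. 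In particular $P$ is infinite, since $\alpha_s$ is infinite and hence generates a nonprincipal ultrafilter. For each $n\in P$, I fix (by choice) a witnessing $\vec\alpha$-tree $U_n\subseteq T$ with $\stem(U_n) = s\cup\{n\}$ and $[U_n]\subseteq \mathcal{X}$, and amalgamate them into
\[
U := \{t\in\N^{[<\infty]} : t\sqsubseteq s\}\ \cup\ \bigcup_{n\in P}U_n.
\]
I claim $U$ witnesses $s\in A$. The containments $U\subseteq T$ and $[U]\subseteq\mathcal{X}$ are clear from the corresponding properties of each $U_n$ and the fact that $T$ is closed under initial segments. That $\stem(U) = s$ holds because $P$ is infinite, so $U$ contains the incomparable elements $s\cup\{n_1\}, s\cup\{n_2\}$ for distinct $n_1,n_2\in P$, ruling out any proper extension of $s$ as a candidate stem. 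For the $\vec\alpha$-tree condition: at the stem, $P\subseteq\{n:s\cup\{n\}\in U\}$ together with $P\in\u_{\alpha_s}$ gives $s\cup\{\alpha_s\}\in {}^*U$ by transfer; at every $t\in U$ with $t\sqsupsetneq s$, the point $t$ lies in some $U_n$ with $s\cup\{n\}\sqsubseteq t$, so $t\cup\{\alpha_t\}\in {}^*U_n\subseteq {}^*U$. Therefore $s\in A$, contradicting $s\in C$.

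Having established the closure property, Lemma~\ref{trujillolemma} produces a $\vec\alpha$-tree $S\subseteq T$ with $\stem(S)=\stem(T)$ and $S/\stem(S)\subseteq C$. To finish, I verify alternative~(3): let $S'\subseteq S$ be any $\vec\alpha$-tree. Since $\stem(S')\in S'\subseteq S$ is comparable with $\stem(S)$, either $\stem(S')\sqsupseteq \stem(S)$ or $\stem(S')\sqsubsetneq \stem(S)$. The second possibility is ruled out by a short case analysis: $S'$ branches above $\stem(S')$ into two incomparable elements $s_1,s_2\in S$, each comparable with $\stem(S)$; if both were $\sqsubseteq\stem(S)$ they would be linearly ordered, if one were $\sqsubseteq$ and the other $\sqsupseteq$ they would again be comparable, and if both were $\sqsupseteq\stem(S)$ their longest common initial segment would be $\sqsupseteq\stem(S)\sqsupsetneq\stem(S')$, contradicting that they split immediately above $\stem(S')$. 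Hence $\stem(S')\sqsupseteq \stem(S) = \stem(T)$, placing $\stem(S')\in S/\stem(S)\cup\{\stem(S)\}\subseteq C$. But $S'$ is a $\vec\alpha$-tree contained in $T$ with stem $\stem(S')$, so $[S']\subseteq \mathcal{X}$ would force $\stem(S')\in A$ and $[S']\cap\mathcal{X}=\emptyset$ would force $\stem(S')\in B$; neither is possible, giving exactly alternative~(3).
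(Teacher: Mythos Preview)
Your proof is correct and follows essentially the same approach as the paper: define $A$, $B$ as the sets of potential stems admitting a $\vec\alpha$-subtree of $T$ realizing alternative (1) or (2), set $C$ to be the complement, prove the closure property $s\in C\Rightarrow s\cup\{\alpha_s\}\in{}^*C$ via the amalgamation $U=\bigcup_{n\in P}U_n$, and invoke Lemma~\ref{trujillolemma}. Your restriction of $A,B,C$ to $T/\stem(T)$ (rather than all of $\N^{[<\infty]}$ as the paper does) is a harmless variation, and you supply the justification for $\stem(S')\sqsupseteq\stem(S)$ that the paper leaves implicit; just make sure in that case analysis that $s_1,s_2$ are chosen as \emph{immediate} successors of $\stem(S')$ (which exist since $S'$ is a $\vec\alpha$-tree), so that their meet is exactly $\stem(S')$.
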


\begin{proof}
We introduce the following three sets:
$$A:=\{s\in \N^{[<\infty]} \ : \ \text{ there is a } \vec\alpha\text{-tree }S\subseteq T \text{ with }\stem(S)=s \text{ and }[S]\subseteq \mathcal{X}\},$$
$$B:=\{s\in \N^{[<\infty]} \ : \ \text{ there is a } \vec\alpha\text{-tree }S\subseteq T \text{ with }\stem(S)=s \text{ and }[S]\subseteq [\N]^{\infty}\setminus \mathcal{X}\},$$
$$C:=\N^{[<\infty]}\setminus (A\cup B).$$

\

\noindent \textbf{Claim:}  If $s\in C$, then $s\cup \{\alpha_s\}\in {}^{\ast}C$.

\

\noindent \textbf{Proof of claim:}  We argue by contrapositive, whence we assume that $s\cup\{\alpha_s\}\in \starA \cup {}^{\ast}B$.  We only treat the case that $s\cup \{\alpha_s\}\in \starA$, the other case being similar.  Let $D:=\{n\in \N \ : \ s\cup \{n\}\in A\}$. Note that $\alpha_s\in {}^{\ast}D$.  For each $n\in D$, let $T_n$ be a $\vec \alpha$-tree with $\stem(T_n)=s\cup\{n\}$ and $[T_n]\subseteq \mathcal{X}$.  Let $S:=\bigcup_{n\in D}T_n$.  Observe that:
\begin{enumerate}
\item[(i)] $S$ is a tree, 
\item[(ii)] $\stem(S)=s$, 
\item[(iii)] $\{s\cup \{n\} \ : \ n\in D\}\subseteq S$, and
\item[(iv)] $[S]=\bigcup_{n\in D}[T_n]\subseteq \mathcal{X}$.
\end{enumerate}

It remains to show that $S$ is a $\vec \alpha$-tree, for then $s\in A$, as desired.  Thus, given $t\in S$, we need $t\cup\{\alpha_t\}\in {}^{\ast}S$.  If $t=s$, then $s\cup\{\alpha_s\}\in {}^{\ast}S$ by item (iii) and the above observation that $\alpha_s\in {}^{\ast}D$.  Otherwise, there is $n\in D$ such that $t\in T_n/(s\cup \{n\})$.  Since $T_n$ is a $\vec\alpha$-tree, we have that $t\cup\{\alpha_t\}\in {}^{\ast}T_n\subseteq {}^{\ast}S$.  This finishes the proof of the claim.

\

It is clear that if $\stem(T)\in A$ (resp. $\stem(T)\in B$), then item (1) (resp. item (2)) of the conclusion of the theorem holds.  We may thus suppose that $\stem(T)\in C$.  By Lemma \ref{trujillolemma}, there is a $\vec \alpha$-tree $S\subseteq T$ with $\stem(S)=\stem(T)$ such that $S/\stem(S)\subseteq C$.  We claim that this $S$ is as desired.  Indeed, suppose that $S'$ is a $\vec \alpha$-tree with $S'\subseteq S$.  Then $\stem(S')\in S/\stem(S)\subseteq C$.  It follows from the definition of $C$ that $[S']\not\subseteq \mathcal{X}$ and $[S']\cap \mathcal{X}\not=\emptyset$, as desired.  
\end{proof}

We offer one application of Theorem \ref{ultraramsey}.  Given a tree $T$ on $\N$ and $n\in \N$, we set $T(n):=T\cap \N^{[n]}$.

\begin{corollary}[Ramsey's Theorem for $\vec \u$-trees]
Fix $n\in \N$ and $A\subseteq \N^{[n]}$.  Further fix a  sequence $\vec \u=\langle \u_s \ : \ s\in \N^{[<\infty]}\rangle$ of nonprincipal ultrafilters on $\N$ and a $\vec \u$-tree $T$.  Then there is a $\vec \u$-tree $S\subseteq T$ with $\stem(S)=\stem(T)$ such that either $S(n)\subseteq A$ or $S(n)\cap A=\emptyset$.
\end{corollary}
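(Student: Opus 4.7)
The plan is to derive this from Theorem \ref{ultraramsey} by choosing $\mathcal{X}$ so that membership depends only on the first $n$ elements. Writing $X|_n$ for the set consisting of the $n$ smallest elements of $X\in\N^{[\infty]}$, set
$$\mathcal{X}_A := \{X\in\N^{[\infty]} : X|_n\in A\}.$$
First I would dispense with the trivial case $|\stem(T)|\geq n$: every $t\in T(n)$ must be $\sqsubseteq$-comparable to $\stem(T)$ and has size at most $|\stem(T)|$, forcing $t = \stem(T)|_n$, so $T(n)$ is a singleton and $S:=T$ works.

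Assume now $|\stem(T)|<n$. Apply Theorem \ref{ultraramsey} to $T$ and $\mathcal{X}_A$ to obtain a $\vec\u$-subtree $S\subseteq T$ with $\stem(S)=\stem(T)$ satisfying one of (1)--(3). If case (1) holds, then for every $t\in S(n)$ one can find $X\in [S]$ extending $t$, and since $|\stem(S)|<n$ one has $X|_n = t$, so $t\in A$; hence $S(n)\subseteq A$. Case (2) is analogous, giving $S(n)\cap A=\emptyset$.

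The bulk of the work is showing case (3) cannot occur. Since each $\u_s$ is a nonprincipal ultrafilter (supported on $\{m>\max(s)\}$), I would inductively extend $\stem(S)$ by single elements inside $S$ to produce some $s^\star\in S$ with $|s^\star|=n$. Then set
$$S^\star := \{t\in S : t\sqsubseteq s^\star\text{ or }s^\star\sqsubseteq t\}$$
and verify: (i) $S^\star$ is a $\vec\u$-tree, because for $t\in S^\star$ with $t\sqsupseteq s^\star$ one has $\{m : t\cup\{m\}\in S^\star\} = \{m : t\cup\{m\}\in S\}\in\u_t$; and (ii) $\stem(S^\star) = s^\star$, since $s^\star$ is $\sqsubseteq$-comparable with every element of $S^\star$, while any proper extension of $s^\star$ in $S^\star$ is incomparable with some sibling, because $\u_{s^\star}$ nonprincipal supplies infinitely many immediate successors of $s^\star$ inside $S^\star$. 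As $\mathcal{X}_A$-membership depends only on the $n$ smallest elements, $[S^\star]\subseteq \mathcal{X}_A$ when $s^\star\in A$ and $[S^\star]\cap\mathcal{X}_A=\emptyset$ otherwise, either way contradicting case (3) applied to $S^\star$.

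The main obstacle is the stem bookkeeping: I need to confirm that the pruned tree $S^\star$ has $s^\star$, rather than some longer finite sequence, as its stem. This is precisely where nonprincipality of each $\u_s$ is essential, guaranteeing multiple immediate successors at every node extending $s^\star$ and thereby preventing any proper extension of $s^\star$ from being comparable with all of $S^\star$.
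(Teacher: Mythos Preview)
Your proposal is correct and follows essentially the same route as the paper: define $\mathcal{X}$ so that membership depends only on the first $n$ elements, apply Theorem~\ref{ultraramsey}, and rule out alternative~(3) by passing to a $\vec\u$-subtree whose stem has length at least $n$. The only difference is that the paper leaves the existence of such a subtree implicit, while you construct $S^\star$ explicitly and verify its stem and $\vec\u$-tree properties; your separate treatment of the case $|\stem(T)|\geq n$ is unnecessary (the paper's argument absorbs it by simply taking $S'=S$), but harmless.
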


\begin{proof}
For each $Y\subseteq \N$ with $|Y|\geq n$, set $r_n(Y)\in \N^{[n]}$ to be the unique $s\in \N^{[n]}$ with $s\sqsubseteq Y$.  Set $\mathcal{X}:=\{Y\in \N^{[\infty]} \ : \ r_n(Y)\in A\}$.  We apply Theorem \ref{ultraramsey} to $\vec \u$, $T$, and $\mathcal{X}$, obtaining a $\vec\u$-tree $S$ with $S\subseteq T$ and $\stem(S)=\stem(T)$.  Note that $S$ cannot satisfy item (3) in the conclusion of Theorem \ref{ultraramsey}:  if $S'$ is a $\vec\u$-tree with $S'\subseteq S$ and $|\stem(S')|\geq n$, then either $r_n(\stem(S'))\in A$ (whence $[S']\subseteq \mathcal{X}$) or $r_n(\stem(S'))\notin A$ (whence $[S']\cap \mathcal{X}=\emptyset)$.  Consequently, either $[S]\subseteq \mathcal{X}$ (whence $S(n)\subseteq A$) or $[S]\cap \mathcal{X}=\emptyset$ (whence $S(n)\cap A=\emptyset$).
\end{proof}

\section*{Notes and references} The Ramsey theorem was proved in the foundational original paper of Ramsey \cite{ramsey_problem_1930}. In fact, in this paper the theorem is obtained as an intermediate step towards establishing a result in propositional logic, hence the title ``On a problem of formal logic''. Although van der Waerden's theorem (presented in the next chapter) predates Ramsey's theorem, the latter is universally recognized as the foundational result of the area which is now called Ramsey theorem.
While Ramsey's theorem did not initially receive  too much attention, it was later ``rediscovered'' in the 1950s by Erd\H{o}s and Rado who recognized its fundamental importance and provided several applications, such as Rado's decomposition theorem \cite{rado_monochromatic_1978}. For more on the metamathematics of Rado's Decomposition Theorem, 
see \cite{cholak_rado_2016},  whose ultrafilter proof of the theorem is essentially the proof given here.  Ultrafilter trees were first introduced by Blass in \cite{blass_selective_1988} and are part of a much larger theory of \emph{local Ramsey theory}, which is extensively developed in the book \cite{todorcevic_introduction_2010}.

\chapter{van der Waerden's and Hales-Jewett Theorems}

\section{van der Waerden's theorem\label{Section:vdW}}

The van der Waerden theorem\index{van der Waerden's Theorem} is one of the earliest achievements of what is
now called Ramsey theory. Indeed, it was established by van der Waerden in 1928 
\cite{van_der_waerden_beweis_1927}, thus predating Ramsey's theorem itself. The theorem is concerned with the notion of \emph{arithmetic progressions} \index{arithmetic progression}in the
set $\mathbb{N}$ of natural numbers. More precisely, for $k\in \N$, a $k$-term arithmetic
progression in $\N$ is a set of the form $a+d[0,k):=\left\{
a,a+d,a+2d,\ldots ,a+\left( k-1\right) d\right\} $ for some $%
a,d\in \mathbb{N}$. A $k$-term arithmetic progression\index{arithmetic progression} is also called an
arithmetic progression of length $k$. An arithmetic progression in ${}^{\ast
}\mathbb{N}$ is defined in a similar fashion, where one can actually consider $%
k$-term arithmetic progressions for $k\in {}^{\ast }\mathbb{N}$. 

Recall that, for $k\in \mathbb{N}$, a $k$-\emph{coloring} of a set $A$ is a function from $A$ to the set $[1,k]=\{ 1,\dots , k\}$. A \emph{finite coloring} of a $A$ is a $k$-coloring for some $k\in \mathbb{N}$. A subset $B$ of $A$ is \emph{monochromatic} with respect to a coloring $c$ if it is contained in the preimage of $i$ under $c$ for some $i\in [1,k]$. A collection $\mathcal{C}$ of subsets of $\mathbb{N}$ is \emph{partition regular} if it is closed under supersets and, for any $A\in \mathcal{C}$ and finite coloring $c$ of $A$, there is a monochromatic $B\subseteq A$ such that $B\in \mathcal{C}$. \index{partition regular}

\begin{theorem}
The following are equivalent:

\begin{enumerate}
\item Every finite coloring of $\mathbb{N}$ admits arbitrarily long monochromatic arithmetic progressions\index{arithmetic progression}.

\item For every $r,k\in \N$, there is $l\in \N$ such that every $r$-coloring of $[1,l]$  admits a monochromatic $k$-term arithmetic
progression.

\item The property of containing arbitrarily long arithmetic progressions\index{arithmetic progression} is
partition regular.
\end{enumerate}
\end{theorem}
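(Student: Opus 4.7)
The plan is to establish the cycle $(3) \Rightarrow (1) \Rightarrow (2) \Rightarrow (3)$. The implication $(3) \Rightarrow (1)$ is immediate: the set $\mathbb{N}$ itself trivially contains arbitrarily long arithmetic progressions, so any finite coloring of $\mathbb{N}$ must have a color class with the same property by $(3)$, which in particular yields monochromatic progressions of every length.

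For $(1) \Rightarrow (2)$, I would use a compactness argument, naturally phrased in nonstandard terms given the machinery developed in the preceding chapters. Suppose $(2)$ fails for some fixed $r, k \in \mathbb{N}$. Then for each $l \in \mathbb{N}$ the set $B_l$ of $r$-colorings of $[1,l]$ admitting no monochromatic $k$-term arithmetic progression is nonempty and finite. By transfer, the statement ``$B_L \neq \emptyset$ for every $L \in \mathbb{N}$'' lifts to $\starN$, so for any infinite $N \in \starN$ one can pick an internal $r$-coloring $C : [1,N] \to [1,r]$ avoiding monochromatic $k$-term progressions. Since $[1,r]$ is finite and hence unaffected by hyper-extension, the restriction $c := C\!\!\upharpoonright_{\mathbb{N}}$ is an honest $r$-coloring of $\mathbb{N}$. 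By $(1)$, $c$ admits a monochromatic $k$-term arithmetic progression $a + d[0,k) \subseteq \mathbb{N} \subseteq [1,N]$, which is also monochromatic for $C$ by construction, contradicting $C \in {}^*B_N$. (Alternatively, one may argue by König's lemma applied to the finitely branching tree of bad colorings ordered by restriction, as was done in Theorem on $k$-colorability earlier in the text.)

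For $(2) \Rightarrow (3)$, I would suppose $A$ contains arbitrarily long arithmetic progressions and that $c : A \to [1,r]$ is a finite coloring. If no color class contained arbitrarily long progressions, then for each $i \in [1,r]$ there would exist a maximum length $k_i$ of a monochromatic $i$-colored progression inside $A$. Set $K := \max_i k_i + 1$, and let $l$ be the corresponding integer provided by $(2)$ applied to the parameters $r$ and $K$. Pick an $l$-term arithmetic progression $P = a + d[0,l) \subseteq A$, and transport the coloring $c\!\!\upharpoonright_P$ to a coloring $c' : [1,l] \to [1,r]$ via the bijection $i \mapsto a + (i-1)d$. Since any arithmetic progression in $[1,l]$ corresponds under this bijection to an arithmetic progression in $P \subseteq A$ of the same length, a monochromatic $K$-term progression for $c'$ (guaranteed by $(2)$) yields a monochromatic $K$-term progression inside $A$, contradicting the definition of $K$.

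The implications themselves involve no serious difficulty; the only place where real content enters is $(1) \Rightarrow (2)$, where the passage from an infinitary statement to a uniform finitary one requires a compactness-type principle. In the nonstandard framework this is essentially automatic once one observes that coloring with \emph{finitely} many colors is preserved under hyper-extension and restriction, which is exactly the feature that makes the transfer-and-restrict argument go through cleanly.
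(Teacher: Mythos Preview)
Your proof is correct and follows essentially the same route as the paper: the same overflow-and-restrict argument for $(1)\Rightarrow(2)$, and the same transport of the coloring along an $l$-term progression for $(2)\Rightarrow(3)$. The only cosmetic difference is that the paper handles $(2)\Rightarrow(3)$ for a $2$-partition directly (then invokes pigeonhole over $k$), whereas you argue by contradiction for a general $r$-coloring; both are fine.
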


\begin{proof}
(1)$\Rightarrow $(2) Suppose that (2) fails for some $k,r$. By overflow\index{overflow principle},
there is $L>\N$ and an internal $r$-coloring\index{coloring} of $[1,L]$ with no
monochromatic $k$-term arithmetic progression\index{arithmetic progression}. By considering the
restriction of $c$ to $\N$, we get an $r$-coloring of $\N$ with no
monochromatic $k$-term arithmetic progression, whence (1) fails.

(2)$\Rightarrow $(3) Suppose that (2) holds. Towards establishing (3), fix a
set $A$ containing arbitrarily long arithmetic progressions and a partition
of $A$ into two pieces $A=B_{1}\sqcup B_{2}$. Fix $k\in \N$. Let $l$ witness
the truth of (2) with 2 colors and $k$-term arithmetic progressions. Fix an
arithmetic progression $x+[0,l)d\subseteq A$. For $i=1,2$, let $%
C_{i}:=\{n\in \lbrack 0,l)\ :\ x+nd\in B_{i}\}$. Then there is $i\in \{1,2\}$
such that $C_{i}$ contains an arithmetic progression $y+[0,k)e$. It follows
that $(x+yd)+[0,k)de$ is a $k$-term arithmetic progression\index{arithmetic progression} contained in $%
B_{i}$. Since some $i$ must work for infinitely many $k$'s, we see that some 
$B_{i}$ contains arbitrarily long arithmetic progressions.

(3)$\Rightarrow $(1) This is obvious.
\end{proof}

%

The following is a nonstandard presentation of the proof of van der
Waerden's theorem from \cite{graham_short_1974}; see also \cite[Section 2.3]%
{promel_ramsey_2013}. First, some terminology. For $k,m\in \N$ and $g,h\in
[0,k]^m$, we say that $g$ and $h$ are equivalent, written $g\equiv h$, if $g$
and $h$ agree up to the last occurrence of $k$.

\begin{definition}
For $k,m\in \N$, let $S(m,k,r,n)$ be the statement: for any $r $-coloring\index{coloring} of 
$\left[ 1,n\right] $, there exist $a,d_{0},\ldots ,d_{m-1}\in \left[ 1,n%
\right] $ such that $a+k\sum_{j<m}d_{j}\in \left[ 1,n\right] $ and, for any $%
g,h\in \left[ 0,k\right] ^{m}$ such that $g\equiv h$, the elements $%
a+\sum_{j<m}g_{j}d_{j}$ and $a+\sum_{j<m}h_{j}d_{j}$ have the same color. We
then let $S(m,k)$ be the statement: for all $r\in \N$, there is $n\in \N$
such that $S(m,k,r,n)$ holds.
\end{definition}

We first observe that even though the statement $S(m,k,r,n)$ considers
colorings of $[1,n]$, it is readily verified that its truth implies the
corresponding statement for colorings of any interval of length $n$.

We next observe that the finitary van der Waerden\index{van der Waerden's Theorem} theorem is the statement
that $S(k,1)$ holds for all $k\in \N$. Indeed, suppose that $S(k,1)$ holds
and fix $r\in \N$. Fix $n\in \N$ such that $S(k,1,r,n)$ holds. Let $%
c:[1,n]\to [1,r]$ be an $r$-coloring of $[1,n]$. Then there is $a,d\in [1,n]$
such that $a+kd\in [1,n]$ and, since all elements of $[0,k]^1$ are
equivalent, we get that $c(a+gd)=c(a+hd)$ for all $g,h\in [0,k]$, whence we
get a monochromatic arithmetic progression\index{arithmetic progression} of length $k+1$.

If $\nu\in \starN$, then we also consider the internal statement $%
S(m,k,r,\nu)$ which is defined exactly as its standard counterpart except
that it only considers internal $r$-colorings\index{coloring} of $[1,\nu]$.

\begin{lemma}
$S(k,m)$ is equivalent to the statement: for all $r\in \N$ and all $\nu\in %
\starN\setminus \N$, we that have that $S(m,k,r,\nu)$ holds.
\end{lemma}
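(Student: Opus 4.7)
My plan is to prove this equivalence by the standard combination of transfer (for the forward direction) and underflow (for the reverse direction). The only real content is verifying that the property $S(m,k,r,\nu)$ is expressed by an internal formula in the single variable $\nu$, once $r,m,k$ are fixed standard integers; everything else is bookkeeping.

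For the forward direction, I would assume $S(k,m)$ and fix $r\in\N$ and $\nu\in\starN\setminus\N$. By hypothesis there is a standard $n\in\N$ with $S(m,k,r,n)$, and transfer yields the internal statement $S(m,k,r,n)$ in $\starN$ (with the same finite $n$). Given any internal $r$-coloring $c$ of $[1,\nu]$, its restriction $c\restriction [1,n]$ is an internal, hence standard, $r$-coloring of the finite set $[1,n]$. Applying $S(m,k,r,n)$ to this restriction produces witnesses $a,d_0,\ldots,d_{m-1}\in[1,n]$ with $a+k\sum_j d_j\in[1,n]$, and because every combination $a+\sum_j g_j d_j$ with $g\in[0,k]^m$ is bounded by $a+k\sum_j d_j\le n$, the equivalence condition for $c$ on $[1,n]$ is precisely the same as for $c$ on $[1,\nu]$. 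Thus $S(m,k,r,\nu)$ holds.

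For the reverse direction, fix $r\in\N$ and consider the set
\[
B_r\ :=\ \{\nu\in\starN\ :\ S(m,k,r,\nu)\text{ holds}\}.
\]
Since $r,m,k$ are fixed standard integers, and the statement $S(m,k,r,\nu)$ quantifies internally (\,$\forall$ internal $r$-colorings $c$ of the internal set $[1,\nu]$, $\exists a,d_0,\ldots,d_{m-1}\in[1,\nu]$, $\forall g,h\in[0,k]^m$ with $g\equiv h$\,\ldots), it is expressed by an internal formula with parameter $\nu$. By the Internal Definition Principle, $B_r$ is an internal subset of $\starN$. By hypothesis, $B_r$ contains every infinite hypernatural, so in particular it contains arbitrarily small infinite numbers; by underflow, $B_r$ meets $\N$. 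Any $n\in B_r\cap\N$ witnesses $S(m,k,r,n)$, and since $r$ was arbitrary, $S(k,m)$ holds.

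The one step to be pedantic about is the internality of $B_r$; this is where the stipulation ``$r,m,k$ fixed'' matters, since otherwise a quantification over all natural numbers would sneak in and the Internal Definition Principle would not apply cleanly. Beyond that, I do not expect any obstacle: this is a textbook application of the overflow/underflow packaging of transfer, and the proof is essentially symmetric to similar lemmas that were used in the preceding chapters (\emph{e.g.}, the characterization of finite sets or the passage from standard to internal bounds on hyperfinite intervals).
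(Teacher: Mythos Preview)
Your proposal is correct and follows essentially the same approach as the paper: restrict an internal coloring of $[1,\nu]$ to the standard interval $[1,n]$ for the forward direction, and invoke underflow on the internal set $\{\nu\in\starN : S(m,k,r,\nu)\}$ for the converse. You are more explicit than the paper about justifying the internality of $B_r$, but the underlying argument is the same.
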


\begin{proof}
First suppose that $S(k,m)$ holds. Given $r\in \N$, take $n\in \N$ such that 
$S(k,m,r,n)$ holds. Fix $\nu \in \starN\setminus \N$ and consider an
internal $r$-coloring $c$ of $[1,\nu ]$. Then $c|_{\left[ 1,n\right] }$ is
an $r$-coloring of $[1,n]$, whence the validity of $S(k,m,r,n)$ yields the
desired conclusion. Conversely, if $S(k,m,r,\nu )$ holds for all $\nu \in %
\starN\setminus \N$, then by underflow there is $n\in \N$ such that $%
S(k,m,r,n)$ holds.
\end{proof}

\begin{theorem}
$S(k,m)$ holds for all $k,m\in \N$.
\end{theorem}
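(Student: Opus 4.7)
The plan is to prove $S(m,k)$ by a double induction, outer on $k$ and inner on $m$, following the strategy of Graham--Rothschild \cite{graham_short_1974} but recast through the nonstandard equivalent formulation given in the preceding lemma. That is, it suffices to show: for every $r \in \N$ and every infinite $\nu \in \starN$, every internal $r$-coloring of $[1,\nu]$ admits parameters $a, d_0, \ldots, d_{m-1}$ witnessing $S(m,k,r,\nu)$.

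For the outer base case $k=1$, the equivalence relation on $[0,1]^m$ is coarse: two tuples are equivalent iff they agree in all positions up to (and including) the rightmost occurrence of $1$. A direct combinatorial choice of $a, d_0, \ldots, d_{m-1}$, say with $d_j$ growing rapidly and $a$ small, makes the sums $a + \sum g_j d_j$ depend only on the equivalence class of $g$, so the condition becomes vacuous. This handles $S(m,1)$ for all $m$.

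For the outer inductive step, assume $S(m', k)$ holds for every $m' \in \N$, and prove $S(m, k+1)$ for all $m$ by inner induction on $m$, the case $m=0$ being trivial. So fix $m$, assume $S(m, k+1)$, and derive $S(m+1, k+1)$. Given $r$ and an internal $r$-coloring $c$ of $[1,\nu]$, fix $M_0 \in \N$ witnessing $S(m, k+1, r, M_0)$ and pick an infinite $M \in \starN$ with $M_0 \le M \ll \nu$. Internally partition $[1, \nu]$ into $\lfloor \nu/M \rfloor$ consecutive blocks of length $M$, and define the block-level coloring $C$ sending each block to its color-pattern (an element of $[1,r]^M$). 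By the outer inductive hypothesis $S(M, k)$ transferred and applied to $C$ with $r^M$ colors, we obtain block-level parameters $a', d'_0, \ldots, d'_{M-1}$ such that equivalent elements of $[0,k]^M$ index blocks with the same color pattern. Within any such block we apply $S(m, k+1, r, M_0)$ to obtain within-block parameters $a, d_0, \ldots, d_{m-1}$.

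The final step, which is the main obstacle, is to assemble these into parameters $\tilde{a}, \tilde{d}_0, \ldots, \tilde{d}_m$ witnessing $S(m+1, k+1, r, \nu)$. The natural choice is to take $\tilde{d}_j = d_j$ for $j < m$ and to define $\tilde{d}_m$ as an $M$-scaled combination of the $d'_i$ that represents a single block-level step under the convention that the coordinate value $k+1$ at the $(m+1)$-th position signals the ``exit'' of the block structure. The delicate verification is that the equivalence relation on $[0,k+1]^{m+1}$ (agreement up to the last occurrence of $k+1$) decomposes correctly: for tuples whose $(m+1)$-th coordinate is at most $k$, the equivalence reduces to the within-block equivalence on $[0,k+1]^m$ combined with a block-level condition captured by the $[0,k]^M$-equivalence; for tuples whose $(m+1)$-th coordinate equals $k+1$, no further constraint is required. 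Matching these two combinatorial structures --- and in particular identifying why the value $k+1$ in the large cube corresponds to no constraint on the small block-level cube --- is the core Graham--Rothschild bookkeeping and is the step most at risk of a subtle off-by-one.
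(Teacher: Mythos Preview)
Your double-induction scheme is the same skeleton as the paper's lex-least-counterexample proof, but the inner step as you describe it has a real gap, not just bookkeeping.

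The first problem is the choice of $M$. You take the block length $M$ infinite and then invoke ``$S(M,k)$ transferred'' on the block-colouring with $r^M$ colours. Transfer does yield an internal statement, but with $m'=M$ and $r'=r^M$ both nonstandard you have no control over the witnessing length $n'$, so nothing forces $n'\le\lfloor\nu/M\rfloor$. The paper's move is to take the block length $M$ \emph{finite} (a standard witness for the within-block hypothesis $S(k,m{-}1,r,M)$), so that the number of block-colours $r^M$ is finite while the number of blocks $N=\lfloor\nu/M\rfloor$ is infinite; the preceding lemma then delivers the block-level statement for free.

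The second and deeper problem is that you are trying to raise the level from $k$ to $k{+}1$ inside the product step, and that requires a pigeonhole argument you never carry out. In the paper the two tasks are kept separate. The product step (the Claim) is entirely at a fixed level $k$: from $S(k,1)$ and $S(k,m{-}1)$ one obtains $S(k,m)$, using $S(k,1)$ at the block level to locate a monochromatic progression of blocks and $S(k,m{-}1)$ inside the anchor block, with the new $d_{m-1}$ equal to the block-step times $M$. The level-raising happens in a distinct outer step $S(k{-}1,r)\Rightarrow S(k,1)$: among the $r{+}1$ staircase tuples $(k{-}1,\dots,k{-}1,0,\dots,0)$ in $[0,k{-}1]^r$, two share a colour, and their difference supplies the new common difference. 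Your block-level ingredient $S(M,k)$ only gives a $[0,k]^M$-structure; turning that into a single parameter $\tilde d_m$ accommodating $g_m\in[0,k{+}1]$ is exactly this pigeonhole, not a mere ``$M$-scaled combination of the $d'_i$''. Without it, your assembled parameters cover only $g_m\in[0,k]$ and you have not actually left level $k$.

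(Your base case $k=1$ is correct, but for a simpler reason than you give: in $\{0,1\}^m$, past the last $1$ both tuples are identically $0$, so $g\equiv h$ forces $g=h$ and the colour condition is vacuous for \emph{any} choice of $a,d_j$; no ``rapidly growing $d_j$'' is needed.)
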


\begin{proof}
Suppose, towards a contradiction, that $S(k,m)$ fails for the pair $(k,m)$
and that $(k,m)$ is lexicographically least with this property.

\ 

\noindent \textbf{Claim:} $m=1$.

\noindent \textbf{Proof of Claim:} Suppose the claim is false. We obtain a
contradiction by showing that $S(k,m,r,\nu )$ holds for all $r\in \N$ and
all $\nu \in \starN\setminus \N$. Towards this end, fix $r\in \N$, $\nu \in %
\starN\setminus \N$, and an internal coloring $c:[1,\nu ]\rightarrow \lbrack
1,r]$. Since $S(k,m-1)$ is true, there is $M\in \N$ such that $S(k,m-1,r,M)$
is true. Write $\nu =NM+s$ with $0\leq s<M$. Note that $N\in \starN\setminus %
\N$. Consider the internal coloring $c_{N}:[1,N]\rightarrow \lbrack 1,r^{M}]$
given by 
\begin{equation*}
c_{N}(i):=(c((i-1)M+1),\ldots ,c((i-1)M+M)).
\end{equation*}%
Since $S(k,1,r,N)$ holds, there is an arithmetic progression\index{arithmetic progression} $%
b+d,b+2d,\ldots ,b+kd$ contained in $[1,N]$ that is monochromatic for the
coloring $c_{N}$. Next, since $S(k,m-1,r,M)$ holds, by considering $%
c|_{[(b-1)M,bM]}$, we see that there are $a,d_{0},\ldots ,d_{m-2}\in \lbrack
(b-1)M,bM]$ such that $a+k\sum_{j<m-1}d_{j}\in \lbrack (b-1)M,bM]$ and, for
any $g,h\in \left[ 0,k\right] ^{m-1}$ such that $g\equiv h$, the elements $%
a+\sum_{j<m}g_{j}d_{j}$ and $a+\sum_{j<m}h_{j}d_{j}$ have the same color
with respect to $c$.

Set $d_{m-1}:=dM$. We claim that $a,d_{0},\ldots ,d_{m-1}$ are as desired.
First note that $a+k\sum_{j<m}d_{j}\leq bM+kdM\leq NM\leq \nu $. Next
suppose that $g,h\in \lbrack 0,k]^{m}$ are such that $g\equiv h$. We wish to
show that $a+\sum_{j<m}g_{j}d_{j}$ and $a+\sum_{j<m}h_{j}d_{j}$ have the
same color. If the last occurrence of $k$ is $m-1$, then this is obvious.
Otherwise, we see that $g\mid m-1=h\mid m-1$, whence by assumption $%
a+\sum_{j<m-1}g_{j}d_{j}$ and $a+\sum_{j<m-1}h_{j}d_{j}$ have the same
color. Write $a+\sum_{j<m-1}g_{j}d_{j}=(b-1)M+p$ with $p\in \lbrack 1,M]$.
Then $a+\sum_{j<m}g_{j}d_{j}=(b-1)M+p+g_{M-1}dM=(b+g_{m-1}d-1)M+p$, which
has the same color as $(b-1)M+p$ by assumption. Likewise, $%
a+\sum_{j<m-1}h_{j}d_{j}=(b-1)M+q$ with $q\in \lbrack 1,M]$, whence $%
a+\sum_{j<m}h_{j}d_{j}=(b-1)M+q+h_{M-1}dM=(b+h_{m-1}d-1)M+q$, which has the
same color as $(b-1)M+q$ by assumption. Thus, $a+\sum_{j<m-1}g_{j}d_{j}$ and 
$a+\sum_{j<m-1}h_{j}d_{j}$ have the same color, proving the claim.

\bigskip

Since $S(k,1)$ fails, necessarily we have $k>1$. We will arrive at a
contradiction by showing that $S(k,1)$ in fact holds. Fix $r\in \mathbb{N}$, 
$\nu \in {}^{\ast }\mathbb{N}$ infinite, and an internal $r$-coloring $c$ of 
$\left[ 1,\nu \right] $. By minimality of $\left( k,1\right) $, we have that
there exist $a,d_{0},\ldots d_{r-1}\in \left[ 1,\nu \right] $ such that $%
a+r\sum_{j<r}d_{j}\in \left[ 1,\nu \right] $ and, for any $g,h\in \left[
1,k-1\right] ^{r}$ with $g\equiv h$, we have $a+\sum_{j<r}g_{j}d_{j}$ and $%
a+\sum_{j<r}h_{j}d_{j}$ have the same color. Observe that there are $r+1$ $r$%
-tuples that are obtained by concatenating a (possibly empty) $r$-tuple of $%
\left( k-1\right) $'s and a (possibly empty) $r$-tuple of $0$'s. Hence, by
the pigeonhole principle, there exist $1\leq s<t\leq r$ such that $a+\left(
k-1\right) \sum_{i<s}d_{i}$ and $a+\left( k-1\right) \sum_{i<t}d_{i}$ have
the same color. We also have that $a+\left( k-1\right) \sum_{i<s}d_{i}$ and $%
a+\left( k-1\right) \sum_{i<s}d_{i}+j\sum_{s\leq i<t}d_{i}$ have the same
color for every $j<k-1$. Therefore, setting $a^{\prime
}:=a+(k-1)\sum_{i<s}d_{i}$ and $d^{\prime }:=\sum_{s\leq i<t}d_{i}$, we have
that $a^{\prime }+jd^{\prime }$, for $j<k$, all have the same color. Since $%
\nu \in \starN\setminus \N$ and $c$ were arbitrary, this witnesses that $%
S(k,1)$ holds, yielding the desired contradiction.
\end{proof}

We will see in the next section that the Hales-Jewett theorem allows us to
immediately conclude a generalization of the van der Waerden theorem.

\section{The Hales-Jewett theorem}

Let $L$ be a \emph{finite} set (alphabet). We use the symbol $x$ to denote a 
\emph{variable }not in $L$. We let $W_{L}$ denote the set of finite strings
of elements of $L$ (called \emph{words} in $L$), and $W_{Lx}$ denote the set
of finite strings of elements of $L\cup \left\{ x\right\} $ with the
property that $x$ appears at least once (called \emph{variable words}). We
denote (variable) words by $v,w,z$ and letters by $a,b,c$. If $w$ is a
variable word\index{variable word} and $a$ is a letter, then we denote by $w\left[ a\right] $ the
word obtained from $w$ by replacing every occurrence of $x$ with $a$. For
convenience, we also set $w\left[ x\right] :=w$. The concatenation of two
(variable) words $v,w$ is denoted by $v%
\mathord{
    \mathchoice
    {\raisebox{1ex}{\scalebox{.7}{$\frown$}}}
    {\raisebox{1ex}{\scalebox{.7}{$\frown$}}}
    {\raisebox{.7ex}{\scalebox{.5}{$\frown$}}}
    {\raisebox{.7ex}{\scalebox{.5}{$\frown$}}}
  }w$.

\begin{definition}
Fix a sequence $\left( w_{n}\right) $ of variable words

\begin{enumerate}
\item The \emph{partial subsemigroup of $W_{L}$ generated by $\left(
w_{n}\right) $}, denoted $\left[ \left( w_{n}\right) \right] _{W_{L}}$, is
the set of all words $w_{n_{0}}\left[ a_{0}\right] 
\mathord{
    \mathchoice
    {\raisebox{1ex}{\scalebox{.7}{$\frown$}}}
    {\raisebox{1ex}{\scalebox{.7}{$\frown$}}}
    {\raisebox{.7ex}{\scalebox{.5}{$\frown$}}}
    {\raisebox{.7ex}{\scalebox{.5}{$\frown$}}}
  }\cdots 
\mathord{
    \mathchoice
    {\raisebox{1ex}{\scalebox{.7}{$\frown$}}}
    {\raisebox{1ex}{\scalebox{.7}{$\frown$}}}
    {\raisebox{.7ex}{\scalebox{.5}{$\frown$}}}
    {\raisebox{.7ex}{\scalebox{.5}{$\frown$}}}
  }w_{n_{k-1}}\left[ a_{k-1}\right]$, where $k\in \mathbb{N}$, $n_{0}<\cdots
<n_{k-1}$, and $\ a_{0},\ldots ,a_{k-1}\in L.$

\item The \emph{partial subsemigroup of $W_{Lx}$ generated by $\left(
w_{n}\right) $}, denoted $\left[ \left( w_{n}\right) \right] _{W_{Lx}}$, is
the set of all words $w_{n_{0}}\left[ \lambda_{0}\right] 
\mathord{
    \mathchoice
    {\raisebox{1ex}{\scalebox{.7}{$\frown$}}}
    {\raisebox{1ex}{\scalebox{.7}{$\frown$}}}
    {\raisebox{.7ex}{\scalebox{.5}{$\frown$}}}
    {\raisebox{.7ex}{\scalebox{.5}{$\frown$}}}
  }\cdots 
\mathord{
    \mathchoice
    {\raisebox{1ex}{\scalebox{.7}{$\frown$}}}
    {\raisebox{1ex}{\scalebox{.7}{$\frown$}}}
    {\raisebox{.7ex}{\scalebox{.5}{$\frown$}}}
    {\raisebox{.7ex}{\scalebox{.5}{$\frown$}}}
  }w_{n_{k-1}}\left[ \lambda_{k-1}\right]$, where $k\in \mathbb{N}$, $%
n_{0}<\cdots <n_{k-1}$, $\ \lambda_{0},\ldots ,\lambda_{k-1}\in L\cup\{x\}$,
and some $\lambda_i=x.$ %
\end{enumerate}
\end{definition}

\begin{theorem}[Infinite Hales-Jewett]
\label{Theorem:HJ}\index{Hales-Jewett's theorem}For every finite coloring\index{coloring} of $W_{L}\cup W_{Lx}$ there
exists an infinite sequence $\left( w_{n}\right) $ of variable words such
that $\left[ \left( w_{n}\right) \right] _{W_{L}}$ and $\left[ \left(
w_{n}\right) \right] _{W_{Lx}}$ are both monochromatic.
\end{theorem}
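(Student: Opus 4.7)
The plan is to apply idempotent ultrafilter methods in the Stone--\v{C}ech compactification of a suitable semigroup. Set $S := W_L \cup W_{Lx}$; the concatenation operation $\concat$ makes $S$ into a semigroup in which $W_{Lx}$ is a two-sided ideal, since any concatenation involving a variable word is again a variable word. For each $a \in L$, the substitution $\sigma_a : S \to S$, $w \mapsto w[a]$ (acting as the identity on $W_L$) is a semigroup homomorphism, and we write $\sigma_x := \operatorname{id}_S$. Each $\sigma_a$ has a unique continuous extension $\widehat{\sigma}_a : \beta S \to \beta S$; the right-continuity of $\odot$ combined with density of $S$ in $\beta S$ then shows that each $\widehat{\sigma}_a$ is in fact a homomorphism of $(\beta S, \odot)$.

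The key step is to produce an idempotent $p \in \overline{W_{Lx}} \subseteq \beta S$ that is \emph{substitution-balanced}, meaning $\widehat{\sigma}_a(p) = \widehat{\sigma}_b(p) =: q$ for all $a, b \in L$; this common value $q$ is then automatically an idempotent living in $\overline{W_L}$. Consider the set
$$T \ := \ \overline{W_{Lx}} \ \cap \ \bigcap_{a, b \in L} \bigl\{u \in \beta S : \widehat{\sigma}_a(u) = \widehat{\sigma}_b(u)\bigr\}.$$
Each equalizer $\{u : \widehat{\sigma}_a(u) = \widehat{\sigma}_b(u)\}$ is closed (by continuity) and a subsemigroup (since each $\widehat{\sigma}_a$ is a homomorphism), while $\overline{W_{Lx}}$ is a closed two-sided ideal; thus $T$ is a closed subsemigroup of $(\beta S, \odot)$. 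The main technical hurdle is to show that $T$ is nonempty. This can be handled by a further application of Ellis's theorem in the auxiliary compact right topological semigroup $(\beta S)^L$: one considers the closure of $\{(\widehat{\sigma}_a(u))_{a \in L} : u \in \overline{W_{Lx}}\}$, intersects with the diagonal, and uses that an idempotent therein lifts to a point of $T$. Ellis's theorem applied to $T$ then supplies the desired idempotent $p$.

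Given a finite coloring $c : S \to \{1, \dots, r\}$, let $i$ be the color with $A_i := c^{-1}(i) \in p$ and let $j$ be the color with $A_j := c^{-1}(j) \in q$. Setting
$$B \ := \ A_i \cap \bigcap_{a \in L} \sigma_a^{-1}(A_j),$$
we have $B \in p$ because each $\sigma_a^{-1}(A_j) \in p$ by definition of $q = \widehat{\sigma}_a(p)$. The standard Galvin--Glazer lemma for idempotents then gives that $B^\star := \{w \in B : w^{-1}B \in p\}$ lies in $p$, and that $w^{-1}B^\star \in p$ for every $w \in B^\star$. The sequence $(w_n)$ is then constructed by recursion: at stage $n$, having chosen $w_0, \dots, w_{n-1}$, one picks $w_n$ from a $p$-large refinement of $B^\star$ that encodes compatibility with all possible concatenations and substitution patterns involving previously chosen words, and one verifies by induction on $n$ that every element of $[(w_0, \dots, w_n)]_{W_{Lx}}$ has color $i$ while every element of $[(w_0, \dots, w_n)]_{W_L}$ has color $j$. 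The principal difficulty is thus concentrated in locating the balanced idempotent $p$; after that, the recursion follows the standard Hindman-type template.
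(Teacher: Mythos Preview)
Your overall architecture---locate a ``substitution-balanced'' idempotent in $\beta S$, then run a Galvin--Glazer recursion---is exactly the paper's plan translated from the nonstandard/iterated-hyperextension language back into $\beta S$. The gap is that you have not actually constructed such an idempotent, and your recursion would need more than you ask for.

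First, the argument that $T\neq\emptyset$ is circular. The map $u\mapsto(\widehat\sigma_a(u))_{a\in L}$ is continuous and $\overline{W_{Lx}}$ is compact, so the image you describe is already closed; its intersection with the diagonal is exactly the image of $T$, so asserting this is nonempty is precisely asserting $T\neq\emptyset$. Invoking Ellis in $(\beta S)^L$ does not help: an idempotent in the image need not lie on the diagonal, and the diagonal intersected with the image is only a subsemigroup once you already know it is nonempty. There is no cheap witness to $T\neq\emptyset$; producing one is the entire content of the paper's Lemma~\ref{Lemma:HJ}, which builds the balanced idempotent by an induction over the letters of $L$, at each stage absorbing one more substitution.

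Second, even an idempotent $p\in T$ would not suffice for your recursion. Set $q=\widehat\sigma_a(p)$. For a concatenation $w_{n_0}[\lambda_0]\concat\cdots\concat w_{n_{k-1}}[\lambda_{k-1}]$ with mixed $\lambda_i$'s (some in $L$, some equal to $x$), your $B^\star$ mechanism does not apply: such a word is neither of the form $v$ nor $v[a]$ for a single product $v$ of the $w_n$'s. To control, say, $w_0[a]\concat w_1$ (a variable word, so it must land in the $p$-color $A_i$), you need $A_i\in q\odot p$; since the coloring is arbitrary, you need $q\odot p=p$, and symmetrically $p\odot q=p$. These absorption identities do not follow from $p\in T$ and $p$ idempotent. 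The paper's Lemma~\ref{Lemma:HJ} establishes exactly $\varpi\concat{}^*\upsilon\sim\upsilon\concat{}^*\varpi\sim\varpi$ alongside $\varpi[a]\sim\upsilon$, and both are used in the recursive construction of Proposition~\ref{Proposition:HJ}. Your sketch of the recursion (``a $p$-large refinement\dots\ that encodes compatibility with all possible concatenations and substitution patterns'') hides precisely this missing ingredient.
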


There is also a finitary version of the Hales-Jewett theorem. Suppose that $%
x_{1},\ldots ,x_{m}$ are variables. A variable word\index{variable word} $w$ in the variables $%
x_{1},\ldots ,x_{m}$ in the alphabet $L$ is a string of symbols in $L\cup
\left\{ x_{1},\ldots ,x_{m}\right\} $ such that, for every $1\leq i\leq m$, $%
x_{i}$ occurs in $w$, and for every $1\leq i<j\leq m$, the first occurrence of $x_{i}$ precedes the first occurrence of $x_{j}$. The word $w\left[ a_{1},\ldots
,a_{m}\right] $ obtained from $w$ by substituting the variable $x_{i}$ with
the letter $a_{i}$ for $i=1,2,\ldots ,m$ is defined in the obvious way.

\begin{corollary}[Finite Hales-Jewett]
\index{Hales-Jewett's theorem}For any finite alphabet $L$ and any $r,m\in \mathbb{N}$ there exists $n\in 
\mathbb{N}$ such that for any $r$-coloring\index{coloring} of the set $W_{L}\left( n\right) $
of $L$-words of length $n$ there exist a variable word\index{variable word} $w$ of length $n$ in
the alphabet $L$ and variables $x_{1},\ldots ,x_{m}$ such that the
\textquotedblleft combinatorial $m$-subspace\textquotedblright\ $\left\{ w%
\left[ a_{1},\ldots ,a_{m}\right] :a_{1},\ldots ,a_{n}\in L\right\} $ is
monochromatic.
\end{corollary}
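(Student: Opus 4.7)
The plan is to deduce the finite version from Theorem \ref{Theorem:HJ} by a direct compactness argument, analogous to how the finite Ramsey theorem was deduced from its infinite counterpart. Suppose, for contradiction, that the conclusion fails for some fixed finite alphabet $L$ and some $r,m\in\N$. Then, for every $n\in\N$, there is a ``bad'' $r$-coloring $c_n\colon W_L(n)\to[1,r]$ admitting no monochromatic combinatorial $m$-subspace. Assemble these into a single coloring $c\colon W_L\to [1,r]$ by declaring $c(w):=c_{|w|}(w)$, and extend $c$ arbitrarily to $W_{Lx}$ so as to obtain an $r$-coloring of $W_L\cup W_{Lx}$.

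Applying Theorem \ref{Theorem:HJ} to $c$, I obtain an infinite sequence $(v_n)_{n\in\N}$ of variable words in the alphabet $L$ such that the set $[(v_n)]_{W_L}$ is monochromatic, say of color $i\in[1,r]$. I focus only on the first $m$ terms $v_0,v_1,\ldots,v_{m-1}$ and consider the subset
\[
S\ :=\ \bigl\{\,v_0[a_0]\concat v_1[a_1]\concat\cdots\concat v_{m-1}[a_{m-1}]\ :\ a_0,\ldots,a_{m-1}\in L\,\bigr\}\ \subseteq\ [(v_n)]_{W_L},
\]
which is therefore monochromatic of color $i$ under $c$. Every word in $S$ has the same length $n:=|v_0|+|v_1|+\cdots+|v_{m-1}|$, so $S$ is in fact monochromatic under the component coloring $c_n$ of $W_L(n)$.

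It remains to recognize $S$ as a combinatorial $m$-subspace of $W_L(n)$. For each $j=1,\ldots,m$, let $v_{j-1}'$ be the variable word obtained from $v_{j-1}$ by renaming its variable $x$ to $x_{j}$, and set $w:=v_0'\concat v_1'\concat\cdots\concat v_{m-1}'$. Then $w$ is a variable word of length $n$ in the variables $x_1,\ldots,x_m$, and the first occurrence of $x_{j}$ precedes the first occurrence of $x_{j+1}$ because $v_{j-1}'$ precedes $v_j'$ in the concatenation. Moreover, for any $a_1,\ldots,a_m\in L$ we have
\[
w[a_1,\ldots,a_m]\ =\ v_0[a_1]\concat v_1[a_2]\concat\cdots\concat v_{m-1}[a_m],
\]
so $S=\{w[a_1,\ldots,a_m]\ :\ a_1,\ldots,a_m\in L\}$ is precisely the combinatorial $m$-subspace generated by $w$. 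This produces a monochromatic combinatorial $m$-subspace for $c_n$, contradicting the badness of $c_n$.

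There is no real obstacle beyond the bookkeeping of the last paragraph: the crux is to notice that restricting attention to the first $m$ members of an infinite Hales--Jewett sequence, and renaming their single variables to $x_1,\ldots,x_m$, yields exactly the kind of combinatorial $m$-subspace demanded by the finite statement. The rest is a standard compactness-style reduction.
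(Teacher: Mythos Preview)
Your proof is correct and follows essentially the same route as the paper: assume bad colorings exist at every length, glue them into a single coloring of $W_L$, apply the infinite Hales--Jewett theorem, and read off a monochromatic $m$-subspace from the first $m$ variable words by renaming their variables to $x_1,\ldots,x_m$. The only cosmetic difference is that the paper phrases the gluing step as ``a compactness argument,'' whereas you write down $c(w):=c_{|w|}(w)$ explicitly; your formulation is in fact slightly more direct, since no coherence between levels is needed.
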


A combinatorial $m$-subspace for $m=1$ is usually called a \emph{%
combinatorial line}.

\begin{proof}
We let $W_{Lx}(n)$ denote the
elements of $W_{Lx}$ of length $n$ and $W_{L}\left( n\right) $ denote the
elements of $W_{L}$ of length $n$. Suppose, towards a contradiction, that
there is $r\in \N$ such that, for each $n$, there is a \textquotedblleft
bad\textquotedblright\ $r$-coloring of $W_{L}(n)$ that admits no
monochromatic combinatorial line. By a compactness argument there is an $r$%
-coloring $c$ of $W_{L}$ such that the restriction of $c$ to $W_{L}\left(
n\right) $ is a bad $r$-coloring for every $n\in \mathbb{N}$. By the
Infinite Hales-Jewett Theorem\index{Hales-Jewett's theorem}, there is a sequence $(w_{i})$ for which $%
[(w_{i})]_{W_{L}}$ is monochromatic. For $i=1,2,\ldots ,m$, rename the
variable $x$ of $w_{i}$ by $x_{i}$, and consider the variable word\index{variable word} $w:=w_{1}%
\mathord{
    \mathchoice
    {\raisebox{1ex}{\scalebox{.7}{$\frown$}}}
    {\raisebox{1ex}{\scalebox{.7}{$\frown$}}}
    {\raisebox{.7ex}{\scalebox{.5}{$\frown$}}}
    {\raisebox{.7ex}{\scalebox{.5}{$\frown$}}}
  }w_{2}%
\mathord{
    \mathchoice
    {\raisebox{1ex}{\scalebox{.7}{$\frown$}}}
    {\raisebox{1ex}{\scalebox{.7}{$\frown$}}}
    {\raisebox{.7ex}{\scalebox{.5}{$\frown$}}}
    {\raisebox{.7ex}{\scalebox{.5}{$\frown$}}}
  }\cdots 
\mathord{
    \mathchoice
    {\raisebox{1ex}{\scalebox{.7}{$\frown$}}}
    {\raisebox{1ex}{\scalebox{.7}{$\frown$}}}
    {\raisebox{.7ex}{\scalebox{.5}{$\frown$}}}
    {\raisebox{.7ex}{\scalebox{.5}{$\frown$}}}
  }w_{m}$ in the variables $\left\{ x_{1},\ldots ,x_{m}\right\} $. If $n$ is
the length of $w$, then by the choice of $w_{1},\ldots ,w_{m}$ the
combinatorial subspace $\left\{ w\left[ a_{1},\ldots ,a_{m}\right]
:a_{1},\ldots ,a_{n}\in L\right\} $ is monochromatic. This contradicts the
fact that the restriction of $c$ to $W_{L}\left( n\right) $ is a bad $r$%
-coloring\index{coloring}.
\end{proof}

From the Hales-Jewett theorem\index{Hales-Jewett's theorem} one can deduce a multidimensional
generalization of van der Waerden's theorem\index{Gallai's theorem}, known as \emph{Gallai's theorem}%
.

\begin{theorem}[Gallai]
Fix $d\in \mathbb{N}$, a finite $F\subset \mathbb{N}^{d}$, and $r\in \mathbb{%
N}$. Then there exists $n\in \mathbb{N}$ such that, for any $r$-coloring\index{coloring} of $%
\left[ -n,n\right] ^{d}$, there exist $\boldsymbol{a}\in \mathbb{N}^{d}$ and 
$c\in \mathbb{N}$ such that the \emph{affine image} $\boldsymbol{a}%
+cF:=\left\{ a+c\boldsymbol{x}:\boldsymbol{x}\in F\right\} $ of $F$ is
monochromatic.
\end{theorem}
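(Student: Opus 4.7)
\medskip

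\noindent\textbf{Proof plan.} The plan is to deduce Gallai's theorem from the Finite Hales--Jewett theorem by choosing the alphabet to be $F$ itself and interpreting words as sums of their letters in $\mathbb{Z}^d$. Write $F=\{f_1,\ldots,f_t\}\subseteq\N^d$ and set $L:=F$ (so letters of $L$ are themselves vectors in $\N^d$). Let $M:=\max_{f\in F}\|f\|_\infty$.

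First I would apply the Finite Hales--Jewett theorem to the alphabet $L$ with $r$ colors and $m=1$ to obtain an integer $N\in\N$ such that every $r$-coloring of $W_L(N)$ admits a monochromatic combinatorial line. I then set $n:=NM$, so that $[-n,n]^d$ will be large enough to contain the image of every length-$N$ word under the map I am about to define. Define $\varphi:W_L(N)\to\Z^d$ by
\[
\varphi(a_1,a_2,\ldots,a_N)\ :=\ a_1+a_2+\cdots+a_N,
\]
the sum being taken in $\Z^d$. Since each $a_i\in F\subseteq\N^d$ has $\|a_i\|_\infty\le M$, the image $\varphi(W_L(N))$ lies in $[0,n]^d\subseteq[-n,n]^d$.

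Now given any $r$-coloring $\chi$ of $[-n,n]^d$, pull it back to a coloring $\widetilde\chi:=\chi\circ\varphi$ of $W_L(N)$. By the choice of $N$, there is a variable word $w$ of length $N$ over $L$ whose combinatorial line $\{w[f]:f\in F\}$ is $\widetilde\chi$-monochromatic. If $k$ denotes the number of occurrences of the variable $x$ in $w$ and $\boldsymbol{b}\in\N_0^d$ denotes the sum of the (vector-valued) constant letters of $w$, then for every $f\in F$,
\[
\varphi(w[f])\ =\ \boldsymbol{b}+kf,
\]
so the image of the combinatorial line under $\varphi$ is exactly the affine copy $\boldsymbol{b}+kF$ of $F$. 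Since $\widetilde\chi$ is constant on the line, $\chi$ is constant on $\boldsymbol{b}+kF$, yielding a monochromatic affine image of $F$ contained in $[-n,n]^d$, with $c:=k\in\N$ and $\boldsymbol{a}:=\boldsymbol{b}$.

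The only mild obstacle is the edge case $\boldsymbol{b}=0$, which occurs when $w$ consists only of the variable $x$; in this case $\boldsymbol{a}=0\notin\N^d$ under the paper's convention $\N=\{1,2,3,\ldots\}$. I would circumvent this cosmetic issue by preprocessing: replace $n$ by $n+1$, and before pulling back $\chi$, prepend a fixed letter $f_1\in F$ to every word, i.e.\ work with the map $w\mapsto\varphi(f_1\concat w)=f_1+\varphi(w)$. Then $\boldsymbol{b}\ge f_1\in\N^d$ coordinatewise, forcing $\boldsymbol{a}\in\N^d$, and the monochromatic combinatorial line still maps to a monochromatic affine image of $F$ lying inside $[-n-1,n+1]^d$. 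This completes the reduction.
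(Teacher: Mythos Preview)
Your proof is correct and follows essentially the same approach as the paper: take the alphabet $L=F$, define the map $W_L(N)\to\Z^d$ by summing the letters, and observe that combinatorial lines map to affine images of $F$, so the result follows from Finite Hales--Jewett. You are in fact more careful than the paper about bounding the image inside $[-n,n]^d$ and about the edge case $\boldsymbol{a}\notin\N^d$; the paper's proof simply records the key mapping property and leaves these routine details implicit.
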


\begin{proof}
Consider the finite alphabet $L=F$. For $n\in \mathbb{N}$, consider the map $%
\Psi _{n}:W_{L}\left( n\right) \rightarrow \mathbb{N}^{d}$ defined by $%
\Psi_n(\left( \boldsymbol{a}_{1},\ldots ,\boldsymbol{a}_{n}\right))= 
\boldsymbol{a}_{1}+\cdots +\boldsymbol{a}_{n}$. Observe that $\Psi _{n}$
maps a combinatorial line to an affine image of $F$. Thus the conclusion
follows from the finitary Hales-Jewett theorem.
\end{proof}

In the rest of the section we present the proof of Theorem \ref{Theorem:HJ}.
Consider $W_{L}$ and $W_{L}\cup W_{Lx}$ as semigroups with respect to
concatenation. Thus their nonstandard extensions\index{nonstandard extension} ${}^{\ast }W_{L}$ and $%
{}^{\ast }W_{L}\cup {}^{\ast }W_{Lx}$ have canonical semigroup operations
with respect to the nonstandard extension\index{nonstandard extension} of the concatenation operation,
which we still denote by \textquotedblleft $%
\mathord{
    \mathchoice
    {\raisebox{1ex}{\scalebox{.7}{$\frown$}}}
    {\raisebox{1ex}{\scalebox{.7}{$\frown$}}}
    {\raisebox{.7ex}{\scalebox{.5}{$\frown$}}}
    {\raisebox{.7ex}{\scalebox{.5}{$\frown$}}}
  }$\textquotedblright . The elements of ${}^{\ast }W_{L}$ can be regarded
as hyperfinite strings of elements of ${}^{\ast }L$, and similarly for $%
{}^{\ast }W_{Lx}$. For every $a\in L\cup \left\{ x\right\} $ we also denote
by $\varpi \mapsto \varpi \left[ a\right] $ the nonstandard extension\index{nonstandard extension} of the
substitution operation $W_{Lx}\rightarrow W_{L}$, $w\mapsto w\left[ a\right] 
$.

\begin{lemma}
\label{Lemma:HJ}There exists a $u$-idempotent\index{$u$-idempotent} $\varpi {}$ in ${}^{\ast
}W_{Lx}$ and a $u$-idempotent\index{$u$-idempotent} $\upsilon \in {}^{\ast }W_{L}$ such that $%
\varpi 
\mathord{
    \mathchoice
    {\raisebox{1ex}{\scalebox{.7}{$\frown$}}}
    {\raisebox{1ex}{\scalebox{.7}{$\frown$}}}
    {\raisebox{.7ex}{\scalebox{.5}{$\frown$}}}
    {\raisebox{.7ex}{\scalebox{.5}{$\frown$}}}
  } {}^{\ast }\upsilon \sim \upsilon 
\mathord{
    \mathchoice
    {\raisebox{1ex}{\scalebox{.7}{$\frown$}}}
    {\raisebox{1ex}{\scalebox{.7}{$\frown$}}}
    {\raisebox{.7ex}{\scalebox{.5}{$\frown$}}}
    {\raisebox{.7ex}{\scalebox{.5}{$\frown$}}}
  } {}^{\ast }\varpi \sim \varpi $ and $\varpi \left[ a\right] \sim \upsilon 
$ for every $a\in L$.
\end{lemma}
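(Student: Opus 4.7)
The plan is to apply Corollary 6.6 (Ellis's theorem for $u$-subsemigroups) to a carefully chosen closed $u$-subsemigroup of a nonstandard product, and then extract $\varpi, \upsilon$ from the resulting $u$-idempotent. First I would observe that $S := W_L \cup W_{Lx}$ is a semigroup under concatenation with $W_{Lx}$ as a two-sided ideal, and that for each $a \in L$ the substitution $\tau_a : S \to W_L$ (the identity on $W_L$, and $w \mapsto w[a]$ on $W_{Lx}$) is a semigroup homomorphism. By transfer this structure passes to ${}^*S$, so that $(\omega_1 \concat \omega_2)[a] = \omega_1[a] \concat \omega_2[a]$ and $({}^*\omega)[a] = {}^*(\omega[a])$. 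Since substitution corresponds to the well-defined push-forward on ultrafilters, $u$-equivalence is preserved: $\omega \sim \omega'$ implies $\omega[a] \sim \omega'[a]$.

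Next I would consider the product semigroup $W_{Lx} \times W_L$ (componentwise concatenation) and the subset $T \subseteq {}^*W_{Lx} \times {}^*W_L$ of pairs $(\omega, \mu)$ satisfying (a) $\omega[a] \sim \mu$ for every $a \in L$, (b) $\omega \concat {}^*\mu \sim \omega$, and (c) $\mu \concat {}^*\omega \sim \omega$. The closure of $T$ is routine: each $u$-equivalence condition of the form $\alpha \sim \beta$ (where $\alpha, \beta$ are internal functions of $(\omega,\mu)$) is an intersection over $X \subseteq W_L$ of clopen sets of the form ``$\alpha \in {}^*X$ iff $\beta \in {}^*X$''. For the $u$-subsemigroup property, one uses the preservation of $u$-equivalence under $(\alpha, \beta) \mapsto \alpha \concat {}^*\beta$ (Chapter~4 exercise) together with the transfer identity $(\omega_1 \concat {}^*\omega_2)[a] = \omega_1[a] \concat {}^*(\omega_2[a])$; a short computation verifies that $(\omega_1, \mu_1), (\omega_2, \mu_2) \in T$ implies $(\omega_1 \concat {}^*\omega_2, \mu_1 \concat {}^*\mu_2) \in T$.

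The main obstacle is showing that $T$ is nonempty. Translating to ultrafilters, this amounts to producing idempotents $p \in \beta W_{Lx}$ and $q \in \beta W_L$ with $\beta\tau_a(p) = q$ for every $a \in L$ and $p \odot q = q \odot p = p$. The plan is to exploit the minimal-ideal structure of $\beta S$: because $W_{Lx}$ is an ideal, the closed subsemigroup $\beta W_{Lx}$ contains the smallest ideal $K(\beta S)$. For each $a \in L$, the continuous homomorphism $\beta\tau_a : \beta S \to \beta W_L$ fixes $\beta W_L$ pointwise and hence carries $K(\beta S)$ onto $K(\beta W_L)$. I would pick a minimal left ideal $M \subseteq K(\beta S)$ and a minimal idempotent $p \in M$; exploiting the fact that $\beta\tau_a$ restricts to the identity on $\beta W_L$, one refines $p$ (within $M$) so that the images $q_a := \beta\tau_a(p)$ coincide in a common minimal idempotent $q \in K(\beta W_L)$, and standard facts about minimal idempotents in compact right-topological semigroups give the mutual absorption $p \odot q = q \odot p = p$. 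Transferring to the nonstandard side via hyperfinite generators produces a witness in $T$.

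With $T$ nonempty, Corollary 6.6 supplies a $u$-idempotent $(\varpi, \upsilon) \in T$. The $u$-idempotency of the pair unpacks as $\varpi \concat {}^*\varpi \sim \varpi$ and $\upsilon \concat {}^*\upsilon \sim \upsilon$; membership in $T$ then yields $\varpi[a] \sim \upsilon$ for every $a \in L$ and $\varpi \concat {}^*\upsilon \sim \upsilon \concat {}^*\varpi \sim \varpi$, which are exactly the conclusions of the lemma. The hardest step throughout is the nonemptiness of $T$: closure and the subsemigroup property are routine manipulations with transfer and $u$-equivalence, but the construction of a joint-idempotent pair $(p,q)$ requires the delicate minimal-ideal argument sketched above, which is where the real combinatorial content of the lemma is concentrated.
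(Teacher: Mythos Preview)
There is a genuine gap: your set $T$ is \emph{not} a $u$-subsemigroup, so Corollary~\ref{existenceofuidempotents} cannot be invoked. Pass to ultrafilters and write $p_i=\mathcal U_{\omega_i}$, $q_i=\mathcal U_{\mu_i}$. To verify condition~(b) for the product pair you must show
\[
(p_1\odot p_2)\odot(q_1\odot q_2)=p_1\odot p_2,
\]
i.e.\ $p_1\odot(p_2\odot q_1)\odot q_2=p_1\odot p_2$. But the hypotheses give you only $p_i\odot q_i=p_i$ and $q_i\odot p_i=p_i$; the cross-term $p_2\odot q_1$ is completely uncontrolled (and $\tau_a(p_i)=q_i$ does not help, since it relates each $p_i$ to its \emph{own} $q_i$). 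Condition~(c) fails for the symmetric reason. So the ``short computation'' you promise does not exist.

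The paper avoids exactly this obstruction by \emph{fixing} $\upsilon$ before searching for $\varpi$. It enumerates $L=\{a_1,\dots,a_m\}$ and recursively builds $u$-idempotents $\upsilon_k\in{}^*W_L$ and $\varpi_k\in{}^*W_{Lx}$ so that $\varpi_k[a_i]\sim\upsilon_k$ and $\varpi_k\concat{}^*\upsilon_i\sim\upsilon_i\concat{}^*\varpi_k\sim\varpi_k$ for $i\le k$. At each stage the relevant set
\[
\{z\in{}^*W_{Lx}: z[a_i]\sim\upsilon_{k+1}\text{ and }z\concat{}^*\upsilon_i\sim z\text{ for }i\le k+1\}
\]
\emph{is} a closed $u$-semigroup precisely because $\upsilon_{k+1}$ is a fixed idempotent parameter: for $z_1,z_2$ in this set one has $(z_1\concat{}^*z_2)\concat{}^{**}\upsilon_i\sim z_1\concat{}^*(z_2\concat{}^*\upsilon_i)\sim z_1\concat{}^*z_2$, with no cross-term appearing. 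Your minimal-ideal sketch (``refine $p$ within $M$ so that all $\beta\tau_a(p)$ coincide'') is essentially a promissory note for this same recursion; once you carry it out letter by letter you recover the paper's proof, and the product-$T$ scaffolding becomes unnecessary.
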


\begin{proof}
Fix an enumeration $\left\{ a_{1},\ldots ,a_{m}\right\} $ of $L$. We define,
by recursion on $k=1,\ldots ,m$, $u$-idempotent\index{$u$-idempotent} elements $\varpi _{1},\ldots
,\varpi _{m}$ of ${}^{\ast }W_{Lx}$ and $\upsilon _{1},\ldots ,\upsilon _{m}$
of ${}^{\ast }W_{L}$ such that, for $1\leq i\leq j\leq m$,

\begin{enumerate}
\item $\varpi _{j}\left[ a_{i}\right] \sim \upsilon_j $, and

\item $\varpi _{j}\sim \varpi _{j} 
\mathord{
    \mathchoice
    {\raisebox{1ex}{\scalebox{.7}{$\frown$}}}
    {\raisebox{1ex}{\scalebox{.7}{$\frown$}}}
    {\raisebox{.7ex}{\scalebox{.5}{$\frown$}}}
    {\raisebox{.7ex}{\scalebox{.5}{$\frown$}}}
  } {}^{\ast }\upsilon _{i}\sim \upsilon _{i} 
\mathord{
    \mathchoice
    {\raisebox{1ex}{\scalebox{.7}{$\frown$}}}
    {\raisebox{1ex}{\scalebox{.7}{$\frown$}}}
    {\raisebox{.7ex}{\scalebox{.5}{$\frown$}}}
    {\raisebox{.7ex}{\scalebox{.5}{$\frown$}}}
  } {}^{\ast }\varpi _{j}$.
\end{enumerate}

\noindent Supposing this has been done, the conclusion of the lemma holds by
taking $\varpi:=\varpi_m$ and $\upsilon:=\upsilon_m$.

To begin, we let $\varpi _{0}$ be any nontrival $u$-idempotent\index{$u$-idempotent} element of $%
{}^{\ast }W_{Lx}$ and set $\upsilon _{1}:=\varpi _{0}\left[ a_{1}\right] $,
which we note is an idempotent element of ${}^\ast W_L$. Let $\rho _{1}$ be
an element of ${}^{\ast }W_{Lx}$ such that $\rho _{1} \sim \varpi _{0} 
\mathord{
    \mathchoice
    {\raisebox{1ex}{\scalebox{.7}{$\frown$}}}
    {\raisebox{1ex}{\scalebox{.7}{$\frown$}}}
    {\raisebox{.7ex}{\scalebox{.5}{$\frown$}}}
    {\raisebox{.7ex}{\scalebox{.5}{$\frown$}}}
  } {}^{\ast }\upsilon _{1}$. Observe that $\rho _{1}\left[ a_{1}\right]
\sim \upsilon _{1}$ and $\rho _{1} 
\mathord{
    \mathchoice
    {\raisebox{1ex}{\scalebox{.7}{$\frown$}}}
    {\raisebox{1ex}{\scalebox{.7}{$\frown$}}}
    {\raisebox{.7ex}{\scalebox{.5}{$\frown$}}}
    {\raisebox{.7ex}{\scalebox{.5}{$\frown$}}}
  } {}^{\ast }\upsilon _{1}\sim \rho _{1}$. Thus, the compact $u$-semigroup\index{$u$-semigroup} 
\begin{equation*}
\left\{ z\in {}^{\ast }W_{Lx}:z\left[ a_{1}\right] \sim \upsilon _{1} \text{
and }z \mathord{ \mathchoice {\raisebox{1ex}{\scalebox{.7}{$\frown$}}}
{\raisebox{1ex}{\scalebox{.7}{$\frown$}}}
{\raisebox{.7ex}{\scalebox{.5}{$\frown$}}}
{\raisebox{.7ex}{\scalebox{.5}{$\frown$}}} } {}^{\ast }\upsilon _{1}\sim
z\right\}
\end{equation*}%
is nonempty, whence it contains a $u$-idempotent\index{$u$-idempotent} $\beta _{1}$. We now fix $%
\varpi _{1}\in {}^{\ast}W_{Lx}$ such that $\varpi_1\sim \upsilon _{1} 
\mathord{
    \mathchoice
    {\raisebox{1ex}{\scalebox{.7}{$\frown$}}}
    {\raisebox{1ex}{\scalebox{.7}{$\frown$}}}
    {\raisebox{.7ex}{\scalebox{.5}{$\frown$}}}
    {\raisebox{.7ex}{\scalebox{.5}{$\frown$}}}
  } {}^{\ast }\beta _{1}$. It follows now that $\varpi_1$ is $u$-idempotent\index{$u$-idempotent}
and $\varpi_1$ and $\upsilon_1$ satisfy (1) and (2) above.

Suppose that $\varpi _{i},\upsilon _{i}$ have been defined for $1\leq i\leq
k<m$ satisfying (1) and (2) above. Set $\upsilon _{k+1}:=\varpi _{k}\left[
a_{k+1}\right] $. Observe that $\upsilon _{k+1}\sim \upsilon _{k+1} 
\mathord{
    \mathchoice
    {\raisebox{1ex}{\scalebox{.7}{$\frown$}}}
    {\raisebox{1ex}{\scalebox{.7}{$\frown$}}}
    {\raisebox{.7ex}{\scalebox{.5}{$\frown$}}}
    {\raisebox{.7ex}{\scalebox{.5}{$\frown$}}}
  } {}^{\ast }\upsilon _{i}\sim \upsilon _{i} 
\mathord{
    \mathchoice
    {\raisebox{1ex}{\scalebox{.7}{$\frown$}}}
    {\raisebox{1ex}{\scalebox{.7}{$\frown$}}}
    {\raisebox{.7ex}{\scalebox{.5}{$\frown$}}}
    {\raisebox{.7ex}{\scalebox{.5}{$\frown$}}}
  } {}^{\ast }\upsilon _{k+1}$ for $1\leq i\leq k+1$. Let $\rho_{k+1}$ be an
element of ${}^{\ast}W_{Lx}$ such that $\rho _{k+1}\sim \varpi _{k} 
\mathord{
    \mathchoice
    {\raisebox{1ex}{\scalebox{.7}{$\frown$}}}
    {\raisebox{1ex}{\scalebox{.7}{$\frown$}}}
    {\raisebox{.7ex}{\scalebox{.5}{$\frown$}}}
    {\raisebox{.7ex}{\scalebox{.5}{$\frown$}}}
  } {}^{\ast }\upsilon _{k+1}$. Observe that $\upsilon _{i} 
\mathord{
    \mathchoice
    {\raisebox{1ex}{\scalebox{.7}{$\frown$}}}
    {\raisebox{1ex}{\scalebox{.7}{$\frown$}}}
    {\raisebox{.7ex}{\scalebox{.5}{$\frown$}}}
    {\raisebox{.7ex}{\scalebox{.5}{$\frown$}}}
  } {}^{\ast }\rho _{k+1}\sim \rho _{k+1} 
\mathord{
    \mathchoice
    {\raisebox{1ex}{\scalebox{.7}{$\frown$}}}
    {\raisebox{1ex}{\scalebox{.7}{$\frown$}}}
    {\raisebox{.7ex}{\scalebox{.5}{$\frown$}}}
    {\raisebox{.7ex}{\scalebox{.5}{$\frown$}}}
  } {}^{\ast }\upsilon _{i}\sim \rho _{k+1}$ and $\rho _{k+1}\left[ a_{i}%
\right] \sim \upsilon _{k+1}$ for $1\leq i\leq k+1$. Thus, the compact $u$%
-semigroup%
\begin{equation*}
\left\{ z\in {}^{\ast }W_{Lx}:z\left[ a_{i}\right] \sim \upsilon _{k+1} 
\text{ and }z{} \mathord{ \mathchoice
{\raisebox{1ex}{\scalebox{.7}{$\frown$}}}
{\raisebox{1ex}{\scalebox{.7}{$\frown$}}}
{\raisebox{.7ex}{\scalebox{.5}{$\frown$}}}
{\raisebox{.7ex}{\scalebox{.5}{$\frown$}}} } {}^{\ast }\upsilon _{i}\sim z 
\text{ for }1\leq i\leq k+1\right\}
\end{equation*}%
is nonempty, whence it contains a $u$-idempotent\index{$u$-idempotent} element $\beta _{k+1} $.
Finally, fix $\varpi_{k+1}$ in ${}^{\ast}W_{Lx}$ such that $\varpi
_{k+1}\sim \upsilon _{k+1} 
\mathord{
    \mathchoice
    {\raisebox{1ex}{\scalebox{.7}{$\frown$}}}
    {\raisebox{1ex}{\scalebox{.7}{$\frown$}}}
    {\raisebox{.7ex}{\scalebox{.5}{$\frown$}}}
    {\raisebox{.7ex}{\scalebox{.5}{$\frown$}}}
  } {}^{\ast }\beta _{k+1}$. It follows that $\varpi_{k+1}$ is $u$%
-idempotent and (1) and (2) continue to hold for $\varpi_i$ and $\upsilon_i$
for $1\leq i\leq k+1$. This completes the recursive construction and the
proof of the lemma.
\end{proof}

In the statement of the following proposition, we assume that $\varpi$ and $%
\upsilon$ are as in the conclusion of Lemma \ref{Lemma:HJ}.

\begin{proposition}
\label{Proposition:HJ}Suppose that $A\subset W_{L}$ and $B\subset W_{Lx}$
are such that $\upsilon \in {}^{\ast }A$ and $\varpi \in {}^{\ast }B$. Then
there exists an infinite sequence $\left( w_{n}\right) $ in $W_{Lx}$ such
that $\left[ \left( w_{n}\right) \right] _{W_{L}}$ is contained in $A$ and $%
\left[ \left( w_{n}\right) \right] _{W_{Lx}}$ is contained in $B$.
\end{proposition}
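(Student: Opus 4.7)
I will construct the sequence $(w_n)$ by recursion on $n$, maintaining at each stage subsets $A_n \subseteq A$ of $W_L$ and $B_n \subseteq B$ of $W_{Lx}$ satisfying the following invariant: $\upsilon \in {}^*A_n$ and $\varpi \in {}^*B_n$; for every $v$ in $[(w_i)_{i \le n}]_{W_L}$ together with the empty product, one has $v\concat u \in A$ for every $u \in A_n$ and $v\concat u \in B$ for every $u \in B_n$; and for every $v \in [(w_i)_{i \le n}]_{W_{Lx}}$, one has $v\concat u \in B$ for every $u \in A_n \cup B_n$. Setting $A_0 := A$ and $B_0 := B$ establishes the base case, and the conclusion of the proposition follows from the invariant because every element of $[(w_n)]_{W_L} \cup [(w_n)]_{W_{Lx}}$ involves only finitely many of the $w_i$'s and so lies in some finite-stage partial subsemigroup.

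For the inductive step, let $\phi_a : W_{Lx} \to W_L$ denote the substitution map $w \mapsto w[a]$, and set
\[
T_n := B_n \cap \bigcap_{a \in L} \phi_a^{-1}(A_n).
\]
Using $\varpi \in {}^*B_n$ together with the identities $\varpi[a] = {}^*\phi_a(\varpi) \sim \upsilon \in {}^*A_n$ from Lemma~\ref{Lemma:HJ} (which give $\varpi \in {}^*\phi_a^{-1}(A_n)$ for each $a \in L$), one verifies $\varpi \in {}^*T_n$, so $T_n$ is nonempty. Pick any $w_{n+1} \in T_n$. Then $w_{n+1}\in B$ and $w_{n+1}[a]\in A$ for every $a\in L$, and the stage-$n$ invariant immediately gives $[(w_i)_{i\le n+1}]_{W_L}\subseteq A$ and $[(w_i)_{i\le n+1}]_{W_{Lx}}\subseteq B$.

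The main obstacle is constructing $A_{n+1}$ and $B_{n+1}$. The natural candidates are
\[
A_{n+1} := A_n \cap \bigcap_{v \in P^L_{n+1}}\! v^{-1}A \;\cap\! \bigcap_{v \in P^{Lx}_{n+1}}\! v^{-1}B, \qquad B_{n+1} := B_n \cap \bigcap_{v \in P^L_{n+1}}\! v^{-1}B \;\cap\! \bigcap_{v \in P^{Lx}_{n+1}}\! v^{-1}B,
\]
where $P^L_{n+1}, P^{Lx}_{n+1}$ are the partial subsemigroups at stage $n+1$ and $v^{-1}X := \{u : v \concat u \in X\}$. The inclusion $A_{n+1}\subseteq A$, $B_{n+1}\subseteq B$ and the extension of the concatenation conditions to stage $n{+}1$ are built into the definition; what must be verified is that $\upsilon \in {}^*A_{n+1}$ and $\varpi \in {}^*B_{n+1}$. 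This amounts to showing, for each genuinely new $v \in P^L_{n+1}$, that $v \concat \upsilon \in {}^*A$ and $v \concat \varpi \in {}^*B$, and analogously for each new $v \in P^{Lx}_{n+1}$.

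To arrange these absorptions, I would strengthen the inductive choice by a Galvin–Glazer-style starring trick: before picking $w_{n+1}$, replace $A_n$ and $B_n$ with the smaller sets
\[
A_n^\star := \{\,s \in A_n : s^{-1}A_n \in \mathcal{U}_\upsilon \text{ and } s^{-1}B_n \in \mathcal{U}_\varpi\,\}, \qquad B_n^\star := \{\,s \in B_n : s^{-1}B_n \in \mathcal{U}_\varpi\,\},
\]
which, using the four $u$-equivalences $\upsilon\concat{}^*\upsilon \sim \upsilon$, $\varpi\concat{}^*\varpi \sim \varpi$, $\varpi\concat{}^*\upsilon \sim \varpi$, and $\upsilon\concat{}^*\varpi \sim \varpi$ of Lemma~\ref{Lemma:HJ}, still contain $\upsilon$ and $\varpi$ respectively in their hyperextensions. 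Choosing $w_{n+1} \in T_n$ defined in terms of these starred sets (rather than of $A_n, B_n$ themselves) then propagates the absorption property to the newly formed products $v$: for any such $v$, right-concatenation by $\upsilon$ or $\varpi$ remains in ${}^*A$ or ${}^*B$ (of the appropriate kind), closing the induction. The hardest point will be orchestrating this consistently across the four combinations—$v$ in $W_L$ versus $W_{Lx}$, paired with concatenation by $\upsilon$ versus $\varpi$—so that each of the four $u$-equivalences of Lemma~\ref{Lemma:HJ} is invoked for exactly the right combination at each step.
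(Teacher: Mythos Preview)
Your recursion scheme is on the right track and can be made to work, but the definition of $B_n^\star$ has a genuine gap. You set $B_n^\star = \{s \in B_n : s^{-1}B_n \in \mathcal{U}_\varpi\}$, with no $\upsilon$-absorption clause. Now take a new product $v = v' \concat w_{n+1} \in P^{Lx}_{n+1}$ with $\lambda = x$ and $v' \in P^L_n$ (or empty). To verify $\upsilon \in {}^*A_{n+1}$ you need $v \concat \upsilon \in {}^*B$, which via the stage-$n$ invariant (transferred) reduces to $w_{n+1} \concat \upsilon \in {}^*B_n$. Your $B_n^\star$ does not deliver this; it only gives $w_{n+1} \concat \varpi \in {}^*B_n$. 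The repair is straightforward---add the clause $s^{-1}B_n \in \mathcal{U}_\upsilon$ to $B_n^\star$, and observe that $\varpi \concat {}^*\upsilon \sim \varpi \in {}^*B_n$ keeps $\varpi$ in the starred set---but as written the induction does not close. This is exactly the kind of orchestration problem you flagged at the end, and it does bite.

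The paper sidesteps all of this dual bookkeeping with a single observation: since $W_L$ and $W_{Lx}$ are disjoint, one may set $C := A \cup B$ and carry only one invariant, namely that every partial product $w_{n_1}[a_{n_1}] \concat \cdots \concat w_{n_k}[a_{n_k}]$ lies in $C$ and that appending $\varpi[a]$ (for any $a \in L \cup \{x\}$) to such a product lands in ${}^*C$. The second clause holds because $\varpi[a]$ is $u$-equivalent to $\upsilon$ or $\varpi$ and all four identities of Lemma~\ref{Lemma:HJ} feed into the same target ${}^*C$; transfer (witnessed by $\varpi$ itself) then produces the next $w_n$ directly, with no starring. At the end, any product in $[(w_n)]_{W_L}$ lies in $C \cap W_L = A$ and any product in $[(w_n)]_{W_{Lx}}$ lies in $C \cap W_{Lx} = B$, so the sorting into $A$ versus $B$ is automatic. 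This collapses your four combinations into one and eliminates the need to track $A_n$, $B_n$, and their starred variants separately.
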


\begin{proof}
Set $C:=A\cup B$. Observe that $\varpi $ satisfies, for every $a,b\in L\cup
\left\{ x\right\} $, 
\begin{eqnarray*}
\varpi \left[ a\right] &\in &{}^{\ast }C \\
\varpi \left[ a\right] \mathord{ \mathchoice
{\raisebox{1ex}{\scalebox{.7}{$\frown$}}}
{\raisebox{1ex}{\scalebox{.7}{$\frown$}}}
{\raisebox{.7ex}{\scalebox{.5}{$\frown$}}}
{\raisebox{.7ex}{\scalebox{.5}{$\frown$}}} } {}{}^{\ast }\varpi \left[ b%
\right] &\in &{}^{\ast \ast }C\text{.}
\end{eqnarray*}%
Therefore, by transfer, there exists $w_{0}\in W_{Lx}$ that satisfies, for
every $a_{0},a_{1}\in L\cup \left\{ x\right\} $, 
\begin{eqnarray*}
w_{0}\left[ a_{0}\right] &\in &C \\
w_{0}\left[ a_{0}\right] \mathord{ \mathchoice
{\raisebox{1ex}{\scalebox{.7}{$\frown$}}}
{\raisebox{1ex}{\scalebox{.7}{$\frown$}}}
{\raisebox{.7ex}{\scalebox{.5}{$\frown$}}}
{\raisebox{.7ex}{\scalebox{.5}{$\frown$}}} }{}\varpi \left[ a_{1}\right]
&\in &{}^{\ast }C\text{.}
\end{eqnarray*}%
From this we also have, for every $a_{0},a_{1},b\in L\cup \left\{ x\right\} $%
, that,%
\begin{equation*}
w_{0}\left[ a_{0}\right] \mathord{ \mathchoice
{\raisebox{1ex}{\scalebox{.7}{$\frown$}}}
{\raisebox{1ex}{\scalebox{.7}{$\frown$}}}
{\raisebox{.7ex}{\scalebox{.5}{$\frown$}}}
{\raisebox{.7ex}{\scalebox{.5}{$\frown$}}} }\varpi \left[ a_{1}\right] %
\mathord{ \mathchoice {\raisebox{1ex}{\scalebox{.7}{$\frown$}}}
{\raisebox{1ex}{\scalebox{.7}{$\frown$}}}
{\raisebox{.7ex}{\scalebox{.5}{$\frown$}}}
{\raisebox{.7ex}{\scalebox{.5}{$\frown$}}} } {}{}^{\ast }\varpi \left[ b%
\right] \in {}^{\ast \ast }C\text{.}
\end{equation*}%
Therefore, by transfer, there exists $w_{1}\in W_{Lx}$ that satisfies, for
every $a_{0},a_{1},a_{2}\in L\cup \left\{ x\right\} $:%
\begin{eqnarray*}
w_{0}\left[ a_{0}\right] &\in &C \\
w_{1}\left[ a_{1}\right] &\in &C \\
w_{0}\left[ a_{0}\right] \mathord{ \mathchoice
{\raisebox{1ex}{\scalebox{.7}{$\frown$}}}
{\raisebox{1ex}{\scalebox{.7}{$\frown$}}}
{\raisebox{.7ex}{\scalebox{.5}{$\frown$}}}
{\raisebox{.7ex}{\scalebox{.5}{$\frown$}}} }w_{1}\left[ a_{1}\right] &\in &C
\\
w_{0}\left[ a_{0}\right] \mathord{ \mathchoice
{\raisebox{1ex}{\scalebox{.7}{$\frown$}}}
{\raisebox{1ex}{\scalebox{.7}{$\frown$}}}
{\raisebox{.7ex}{\scalebox{.5}{$\frown$}}}
{\raisebox{.7ex}{\scalebox{.5}{$\frown$}}} }\varpi \left[ a_{2}\right] &\in
&{}^{\ast }C \\
w_{1}\left[ a_{1}\right] \mathord{ \mathchoice
{\raisebox{1ex}{\scalebox{.7}{$\frown$}}}
{\raisebox{1ex}{\scalebox{.7}{$\frown$}}}
{\raisebox{.7ex}{\scalebox{.5}{$\frown$}}}
{\raisebox{.7ex}{\scalebox{.5}{$\frown$}}} }\varpi \left[ a_{2}\right] &\in
&{}^{\ast }C \\
w_{0}\left[ a_{0}\right] \mathord{ \mathchoice
{\raisebox{1ex}{\scalebox{.7}{$\frown$}}}
{\raisebox{1ex}{\scalebox{.7}{$\frown$}}}
{\raisebox{.7ex}{\scalebox{.5}{$\frown$}}}
{\raisebox{.7ex}{\scalebox{.5}{$\frown$}}} }w_{1}\left[ a_{1}\right] %
\mathord{ \mathchoice {\raisebox{1ex}{\scalebox{.7}{$\frown$}}}
{\raisebox{1ex}{\scalebox{.7}{$\frown$}}}
{\raisebox{.7ex}{\scalebox{.5}{$\frown$}}}
{\raisebox{.7ex}{\scalebox{.5}{$\frown$}}} }\varpi \left[ a_{2}\right] &\in
&{}^{\ast }C\text{.}
\end{eqnarray*}%
Proceeding recursively, one can assume that at the $n$-th step elements $%
w_{0},\ldots ,w_{n-1}$ of $W_{Lx}$ have been defined such that, for every $%
n_{1}<\cdots <n_{k}<n$ and $a_{0},\ldots ,a_{n-1},a\in L\cup \left\{
x\right\} $, one has that%
\begin{eqnarray*}
w_{n_{1}}\left[ a_{n_{1}}\right] \mathord{ \mathchoice
{\raisebox{1ex}{\scalebox{.7}{$\frown$}}}
{\raisebox{1ex}{\scalebox{.7}{$\frown$}}}
{\raisebox{.7ex}{\scalebox{.5}{$\frown$}}}
{\raisebox{.7ex}{\scalebox{.5}{$\frown$}}} }\cdots \mathord{ \mathchoice
{\raisebox{1ex}{\scalebox{.7}{$\frown$}}}
{\raisebox{1ex}{\scalebox{.7}{$\frown$}}}
{\raisebox{.7ex}{\scalebox{.5}{$\frown$}}}
{\raisebox{.7ex}{\scalebox{.5}{$\frown$}}} }w_{n_{k}}\left[ a_{n_{k}}\right]
&\in &C \\
w_{n_{1}}\left[ a_{n_{1}}\right] \mathord{ \mathchoice
{\raisebox{1ex}{\scalebox{.7}{$\frown$}}}
{\raisebox{1ex}{\scalebox{.7}{$\frown$}}}
{\raisebox{.7ex}{\scalebox{.5}{$\frown$}}}
{\raisebox{.7ex}{\scalebox{.5}{$\frown$}}} }\cdots \mathord{ \mathchoice
{\raisebox{1ex}{\scalebox{.7}{$\frown$}}}
{\raisebox{1ex}{\scalebox{.7}{$\frown$}}}
{\raisebox{.7ex}{\scalebox{.5}{$\frown$}}}
{\raisebox{.7ex}{\scalebox{.5}{$\frown$}}} }w_{n_{k}}\left[ a_{n_{k}}\right] %
\mathord{ \mathchoice {\raisebox{1ex}{\scalebox{.7}{$\frown$}}}
{\raisebox{1ex}{\scalebox{.7}{$\frown$}}}
{\raisebox{.7ex}{\scalebox{.5}{$\frown$}}}
{\raisebox{.7ex}{\scalebox{.5}{$\frown$}}} }\varpi \left[ a\right] &\in
&{}^{\ast }C\text{.}
\end{eqnarray*}%
From this one deduces also that for every $a,b\in L\cup \left\{ x\right\} $
one has that%
\begin{equation*}
w_{n_{1}}\left[ a_{n_{1}}\right] \mathord{ \mathchoice
{\raisebox{1ex}{\scalebox{.7}{$\frown$}}}
{\raisebox{1ex}{\scalebox{.7}{$\frown$}}}
{\raisebox{.7ex}{\scalebox{.5}{$\frown$}}}
{\raisebox{.7ex}{\scalebox{.5}{$\frown$}}} }\cdots \mathord{ \mathchoice
{\raisebox{1ex}{\scalebox{.7}{$\frown$}}}
{\raisebox{1ex}{\scalebox{.7}{$\frown$}}}
{\raisebox{.7ex}{\scalebox{.5}{$\frown$}}}
{\raisebox{.7ex}{\scalebox{.5}{$\frown$}}} }w_{n_{k}}\left[ a_{n_{k}}\right] %
\mathord{ \mathchoice {\raisebox{1ex}{\scalebox{.7}{$\frown$}}}
{\raisebox{1ex}{\scalebox{.7}{$\frown$}}}
{\raisebox{.7ex}{\scalebox{.5}{$\frown$}}}
{\raisebox{.7ex}{\scalebox{.5}{$\frown$}}} }\varpi \left[ a\right] \mathord{
\mathchoice {\raisebox{1ex}{\scalebox{.7}{$\frown$}}}
{\raisebox{1ex}{\scalebox{.7}{$\frown$}}}
{\raisebox{.7ex}{\scalebox{.5}{$\frown$}}}
{\raisebox{.7ex}{\scalebox{.5}{$\frown$}}} } {}{}^{\ast }\varpi \left[ b%
\right] \in {}^{\ast \ast }C\text{.}
\end{equation*}%
Hence, by transfer one obtains $w_{n}\in W_{Lx}$ such that for every $%
n_{1}<\cdots <n_{k}\leq n$ and $a_{0},\ldots ,a_{n},a\in L\cup \left\{
x\right\} $, one has that%
\begin{eqnarray*}
w_{n_{1}}\left[ a_{n_{1}}\right] &\mathord{ \mathchoice
{\raisebox{1ex}{\scalebox{.7}{$\frown$}}}
{\raisebox{1ex}{\scalebox{.7}{$\frown$}}}
{\raisebox{.7ex}{\scalebox{.5}{$\frown$}}}
{\raisebox{.7ex}{\scalebox{.5}{$\frown$}}} }&\cdots \mathord{ \mathchoice
{\raisebox{1ex}{\scalebox{.7}{$\frown$}}}
{\raisebox{1ex}{\scalebox{.7}{$\frown$}}}
{\raisebox{.7ex}{\scalebox{.5}{$\frown$}}}
{\raisebox{.7ex}{\scalebox{.5}{$\frown$}}} }w_{n_{k}}\left[ a_{n_{k}}\right]
\in C \\
w_{n_{1}}\left[ a_{n_{1}}\right] &\mathord{ \mathchoice
{\raisebox{1ex}{\scalebox{.7}{$\frown$}}}
{\raisebox{1ex}{\scalebox{.7}{$\frown$}}}
{\raisebox{.7ex}{\scalebox{.5}{$\frown$}}}
{\raisebox{.7ex}{\scalebox{.5}{$\frown$}}} }&\cdots \mathord{ \mathchoice
{\raisebox{1ex}{\scalebox{.7}{$\frown$}}}
{\raisebox{1ex}{\scalebox{.7}{$\frown$}}}
{\raisebox{.7ex}{\scalebox{.5}{$\frown$}}}
{\raisebox{.7ex}{\scalebox{.5}{$\frown$}}} }w_{n_{k}}\left[ a_{n_{k}}\right] %
\mathord{ \mathchoice {\raisebox{1ex}{\scalebox{.7}{$\frown$}}}
{\raisebox{1ex}{\scalebox{.7}{$\frown$}}}
{\raisebox{.7ex}{\scalebox{.5}{$\frown$}}}
{\raisebox{.7ex}{\scalebox{.5}{$\frown$}}} }\varpi \left[ a\right] \in
{}^{\ast }C\text{.}
\end{eqnarray*}%
This concludes the recursive construction.
\end{proof}

Theorem \ref{Theorem:HJ} now follows immediately from Proposition \ref%
{Proposition:HJ}. Indeed, if $\left\{ A_{1},\ldots ,A_{r}\right\} $ is a
finite coloring\index{coloring} of $W_{L}\cup W_{Lx}$, then there exist $1\leq i,j\leq r$
such that $\upsilon \in {}^{\ast }A_{i}$ and $\varpi \in {}^{\ast }A_{j}$.

\section*{Notes and references} Van der Waerden's theorem \cite{van_der_waerden_beweis_1927} is chronologically one of the first results in Ramsey theory, only preceded by Schur's lemma on Schur triples \cite{schur_kongruenz_1916}. Both van der Waerden's theorem and Schur's lemma were motivated by problems in modular arithmetic; see also \cite[Chapter 2]{promel_ramsey_2013}. 

The Hales-Jewett theorem \cite{hales_regularity_1963} is an abstract Ramsey-theoretic result motivated by the mathematical study of positional games such as ``Tick-Tack-Toe'' or ``Go Moku''. The original proof of Hales and Jewett from \cite{hales_regularity_1963} was finitary and purely combinatorial. An infinitary proof was given by Bergelson, Blass, and Hindman in \cite{bergelson_partition_1994}; see also \cite[Chapter 2]{todorcevic_introduction_2010}. 
Combinatorial lines and combinatorial subspaces are also the object of the Graham--Rothschild theorem \cite{graham_ramseys_1971}. This was motivated by a conjecture of Rota on a geometric analogue of Ramsey's theorem. The conjecture was eventually established using similar methods by Graham, Leeb, and Rothschild \cite{graham_ramseys_1972}.

\chapter{From Hindman to Gowers}

\label{chapter_hindman_gowers}
\section{Hindman's theorem}

Hindman's theorem is another fundamental pigeonhole principle, which
considers the combinatorial configurations provided by sets of finite sums
of infinite sequences.

\begin{definition}

\begin{enumerate}
\item Given $F\subseteq \N$ finite and $(c_n)$ a sequence of distinct
elements from $A$, define $c_F:=\sum_{n\in F} c_n$, with the convention that 
$c_\emptyset=0$.

\item Given a (finite or infinite) sequence $(c_{n})$ of distinct elements
from $\N$, set $\FS((c_{n})):=\{c_{F}\ :\ F\subseteq \N\text{ finite,
nonempty}\}$.

\item We say that $A\subseteq \N$ is an \emph{FS-set} if there is an
infinite sequence $(c_{n})$ of distinct elements from $\N$ such that $\FS%
((c_{n}))\subseteq A$.
\end{enumerate}
\end{definition}

%

We first note, using the notation from Section \ref{Section:vdW}, that $%
S(m,2)$ implies the following theorem:

\begin{theorem}[Folkman's theorem]\index{Folkman's theorem}
For any $m,r\in \N$, there is $n\in \N$ such that, for any $r$-coloring of $%
[1,n]$, there are $d_0,\ldots,d_{m-1}\in [1,n]$ such that $\FS(d_n)$ is
monochromatic. 
\end{theorem}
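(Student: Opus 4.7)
My plan is to invoke $S(m,2)$, which holds for all $m\in\N$ by the results of Section~\ref{Section:vdW}, in order to extract from the given $r$-coloring of $[1,n]$ a monochromatic configuration from which a Folkman sequence can be read off. Concretely, I will let $n$ be the integer provided by $S(m,2)$ for $r$ colors, apply $S(m,2,r,n)$ to the coloring, and thereby obtain $a,d_{0},\ldots,d_{m-1}\in[1,n]$ such that, whenever $g,h\in[0,2]^{m}$ are equivalent, the elements $a+\sum_{j<m}g_{j}d_{j}$ and $a+\sum_{j<m}h_{j}d_{j}$ share a color.

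The central observation will be that all $2^{m}$ tuples in $\{0,1\}^{m}\subseteq[0,2]^{m}$ lie in a single $\equiv$-equivalence class, since none of them contains the value $k=2$ and so they vacuously agree up to the (non-existent) last occurrence of $2$. It will follow immediately that the affine $m$-cube
\[
\{a+\textstyle\sum_{j\in F}d_{j}\,:\,F\subseteq[m]\}
\]
is monochromatic in some color $c$; in particular, the translate $a+\FS((d_{0},\ldots,d_{m-1}))$ of the finite-sum set is monochromatic.

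To actually exhibit a Folkman sequence (rather than just a translate of one), I plan to take advantage of the fact that the affine cube contains the point $a$ itself, so one can relabel by setting $d'_{0}:=a+d_{0}$ and $d'_{j}:=d_{j}$ for $1\le j\le m-1$. Then for every nonempty $F\subseteq\{0,1,\ldots,m-1\}$ with $0\in F$, the sum $d'_{F}$ equals $a+d_{F}$, which sits in the affine cube and has color $c$. The main obstacle will then be the case $0\notin F$, where $d'_{F}=d_{F}$ is not a priori controlled by the monochromaticity of the affine cube. I expect this gap to be closed either by exploiting the finer equivalence classes supplied by $S(m,2)$ (those prescribing monochromatic sub-cubes centered at points $a+2d_{\ell}+\cdots$), from which a pigeonhole argument on the finitely many possible colors forces alignment with $c$, or by upgrading to $S(M,2)$ for a dimension $M$ substantially larger than $m$ and carving out a Folkman configuration from the resulting richer family of nested affine cubes via a Ramsey-type selection. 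Overcoming this ``no-shift'' case is the essential technical step the plan must supply.
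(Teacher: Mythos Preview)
The paper gives no proof beyond the sentence ``$S(m,2)$ implies the following theorem,'' so there is nothing detailed to compare against. Your first step---observing that every tuple in $\{0,1\}^m$ lies in a single $\equiv$-class (vacuously, since no entry equals $2$) and hence that the affine $m$-cube $\{a+\sum_{j\in F}d_j:F\subseteq\{0,\dots,m-1\}\}$ is monochromatic---is correct and is almost certainly what the authors intend.

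You are also right that this is not yet a monochromatic $\FS$-set, and that your relabelling $d_0':=a+d_0$, $d_j':=d_j$ ($j\ge 1$) breaks down exactly on sums with $0\notin F$. This gap is genuine: a single application of $S(m,2)$ yields only a monochromatic affine cube (Hilbert's 1892 cube lemma), which is strictly weaker than the $\FS$-version of Folkman. Your first proposed fix---pigeonholing over the finer $\equiv$-classes---does not work as described: those classes give monochromatic cubes with \emph{different} translates $a+2\sum_{j\in J}d_j$, and their colours bear no a~priori relation to one another or to the colour of the main cube, so matching two of them gives an arithmetic progression at best, not an $\FS$-configuration. Your second suggestion---pass to $S(M,2)$ for some $M\gg m$---is the right direction, but the actual argument is an induction on $m$ (use the inductive Folkman bound for $m-1$ to choose $M$, find an $(m-1)$-element $\FS$-set among suitable sums of the $d_j$'s inside the large cube, and absorb the translate $a$ into the remaining generator), not a one-shot extraction. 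Until that induction is written out, the proposal is incomplete.
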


In particular, for any finite coloring\index{coloring} of $\N$, there are arbitrarily large
finite sequences $\left( c_{1},\ldots ,c_{n}\right) $ in $\N$ such
that $\FS(c_{1},\ldots ,c_{n})$ is monochromatic. The main result of this
chapter, due to Hindman, allows us to find an \emph{infinite} sequence $%
\left( c_{n}\right) $ in $\N$ such that $\FS((c_{n}))$ is monochromatic.
Just as the infinite Ramsey theorem cannot just be deduced from its finite
form, Hindman's theorem\index{Hindman's theorem} cannot simply be deduced from Folkman's theorem\index{Folkman's theorem}.

\begin{theorem}
Suppose that $\alpha \in \starN$ is $u$-idempotent\index{$u$-idempotent}. Then for every $%
A\subseteq \N$, if $\alpha\in \starA$, then $A$ is an $\FS$-set. 
\end{theorem}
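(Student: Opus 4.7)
The plan is to run the classical Galvin--Glazer style recursion, using $u$-idempotence at each step to refine $A$. Unfolding the condition $\alpha + {}^{*}\alpha \sim \alpha$ for the semigroup $(\N,+)$ gives $\u_\alpha \oplus \u_\alpha = \u_\alpha$, or equivalently, for every $B \subseteq \N$, the condition $B \in \u_\alpha$ is equivalent to $\{m \in \N : B - m \in \u_\alpha\} \in \u_\alpha$, where $B - m := \{k \in \N : m+k \in B\}$. The key consequence I will exploit is: whenever $B \in \u_\alpha$, the set $B^\sharp := \{m \in B : B - m \in \u_\alpha\}$ also lies in $\u_\alpha$, hence is infinite. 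Note that $\alpha$ is necessarily infinite, since otherwise $\alpha + {}^{*}\alpha = 2\alpha \not\sim \alpha$, so $\u_\alpha$ is nonprincipal and each element of $\u_\alpha$ is indeed infinite.

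The main step is to construct recursively a strictly increasing sequence $c_1 < c_2 < \cdots$ in $\N$ together with a decreasing chain $A = A_0 \supseteq A_1 \supseteq \cdots$ of subsets of $A$, maintaining the invariants that $A_n \in \u_\alpha$ and $A_n \subseteq A \cap \bigcap_{\emptyset \neq F \subseteq [n]}(A - c_F)$. Given $A_n$, I would apply the key consequence to choose $c_{n+1} \in A_n^\sharp$ with $c_{n+1} > c_n$ (possible since $A_n^\sharp$ is infinite), and set $A_{n+1} := A_n \cap (A_n - c_{n+1})$; this lies in $\u_\alpha$ because both $A_n$ and $A_n - c_{n+1}$ do, by the defining property of $c_{n+1} \in A_n^\sharp$.

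The verification that $\FS(c_1,\ldots,c_{n+1}) \subseteq A$ is then automatic from the invariants: any new nonempty $F \subseteq [n+1]$ containing $n+1$ can be written as $F = \{n+1\} \cup F'$ with $F' \subseteq [n]$, and since $c_{n+1} \in A_n \subseteq A - c_{F'}$ (reading $c_\emptyset = 0$ and $A - 0 = A$), we get $c_F = c_{n+1} + c_{F'} \in A$. The analogous inclusion $A_{n+1} \subseteq A - c_F$ for new $F$ follows by the same bookkeeping applied to elements of $A_{n+1} \subseteq A_n - c_{n+1}$. Honestly, there is no real obstacle here---the entire argument is driven by iterating the single unfolded form of $u$-idempotence, and the only genuine subtlety is purely notational, namely translating the compact statement $\alpha + {}^{*}\alpha \sim \alpha$ into its usable combinatorial form as the recursion engine above.
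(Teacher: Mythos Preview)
Your argument is correct: it is the classical Galvin--Glazer recursion, driven by the key fact that $B\in\u_\alpha$ implies $B^\sharp=\{m\in B: B-m\in\u_\alpha\}\in\u_\alpha$, which you correctly derive from $u$-idempotence. The decreasing chain $A_n$ and the bookkeeping are fine.

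The paper's proof is the same recursion, but it stays entirely inside the iterated-hyperextension language rather than translating back to ultrafilters. Instead of carrying the sets $A_n\in\u_\alpha$, the paper maintains the invariant $x_F+\alpha\in{}^{\ast}A$ for every finite $F$ already built (which is equivalent to your $A_n\in\u_\alpha$). The inductive step is then a single application of transfer: since $\alpha+{}^{\ast}\alpha\sim\alpha$, each $x_F+\alpha+{}^{\ast}\alpha\in{}^{\ast\ast}A$, so $w=\alpha$ witnesses the internal statement ``there is $w>x_n$ with $x_F+w\in{}^{\ast}A$ and $x_F+w+{}^{\ast}\alpha\in{}^{\ast\ast}A$ for all $F$''; transferring down produces the next $x_{n+1}\in\N$. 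What the paper's route buys is a direct illustration of the iterated-star machinery (the very point of this part of the book); what your route buys is that it makes the identity with the well-known ultrafilter proof completely transparent. Mathematically the two are interchangeable unfoldings of the same idea.
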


\begin{proof}
We define by recursion $x_{0}<x_{1}<\cdots <x_{n}$ such that $x_{F}\in A$
and $x_{F}+\alpha \in {}^{\ast }A$ for any $F\subseteq \left\{ 0,1,\ldots
,n\right\} $. Note that, since $\alpha $ is idempotent, we also have that $%
x_{F}+\alpha +{}^{\ast }\alpha \in {}^{\ast \ast }A$. Suppose that these
have been defined up to $n$. The statement \textquotedblleft there exists $%
w\in \starN$ such that $w>x_{n}$ and, for every subset $F$ of $\left\{
0,1,2,\ldots ,n\right\} $, $x_{F}+w\in {}^{\ast }A$ and $x_{F}+w+\staralpha%
\in {}^{\ast \ast }A$\textquotedblright\ holds. So, as witnessed by $w=\alpha$, 
by transfer there exists 
$x_{n+1}\in \N$ larger than $x_{n}$ such that $x_{F}+x_{n+1}\in A$ and $%
x_{F}+x_{n+1}+\alpha \in {}^{\ast }A$ for any $F\subseteq \left\{ 0,1,\ldots
,n\right\} $. This concludes the recursive construction.
\end{proof}

\begin{corollary}[Hindman]
For any finite coloring\index{coloring} of $\mathbb{N}$, there is a color that is an $\FS$%
-set. 
\end{corollary}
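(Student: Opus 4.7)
The plan is to combine the previous theorem with the existence of a $u$-idempotent element in $\starN$, which is guaranteed by the machinery developed in the chapter on idempotents. This makes the corollary essentially a one-line deduction.

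First, I would invoke the existence of a $u$-idempotent $\alpha \in \starN$ for the additive semigroup $(\N,+)$. This follows from Corollary \ref{existenceofuidempotents}: the whole set $\starN$ (or more precisely, a suitable nonempty closed $u$-subsemigroup of $\starS$) contains a $u$-idempotent element. Equivalently, by Ellis's theorem applied to the compact right topological semigroup $(\beta \N, \oplus)$, there is an idempotent ultrafilter $\mathcal{U}$, and any hyperfinite generator $\alpha$ of $\mathcal{U}$ is by definition $u$-idempotent, i.e.\ $\alpha + {}^{\ast}\alpha \sim \alpha$.

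Next, given a finite coloring $\N = A_1 \sqcup A_2 \sqcup \cdots \sqcup A_r$, transfer gives
\[
\starN \ =\ {}^{\ast}A_1 \cup {}^{\ast}A_2 \cup \cdots \cup {}^{\ast}A_r.
\]
Hence there exists some index $i \in \{1,\ldots,r\}$ with $\alpha \in {}^{\ast}A_i$. Applying the previous theorem to $A := A_i$ and the $u$-idempotent $\alpha$, we conclude that $A_i$ is an $\FS$-set, which is exactly what was to be shown.

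There is no real obstacle here, since all the heavy lifting has been done: the existence of idempotent ultrafilters (equivalently, $u$-idempotents in $\starN$) via Ellis's theorem, and the extraction of an infinite sequence $(x_n)$ with $\FS((x_n)) \subseteq A$ from the condition $\alpha \in \starA$ via the previous theorem. The only thing worth emphasizing in the writeup is the translation between the idempotent ultrafilter viewpoint and the $u$-idempotent nonstandard viewpoint, so that the reader sees how ``$\alpha$ exists'' is justified inside the framework established earlier in the manuscript.
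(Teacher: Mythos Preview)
Your proposal is correct and follows essentially the same approach as the paper's proof: fix a $u$-idempotent $\alpha\in\starN$, find the color $C_i$ with $\alpha\in{}^{\ast}C_i$, and apply the preceding theorem. The paper's version is terser (it does not spell out the justification for the existence of $\alpha$ or the transfer step), but the argument is the same.
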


\begin{proof}
Let $\N:=C_1\sqcup\cdots \sqcup C_r$ be a finite coloring of $\N$. Let $%
\alpha$ be a $u$-idempotent\index{$u$-idempotent} element of $\starN$ and let $i$ be such that $%
\alpha\in {}^{\ast}C_i$. The result now follows from the previous theorem.
\end{proof}

\begin{lemma}
Suppose that $(c_n)$ is a sequence of distinct elements from $\N$. Then
there is an idempotent $\alpha\in \starN$ such that $\alpha\in{}^{\ast}\FS%
((c_n))$.
\end{lemma}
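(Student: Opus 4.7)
The plan is to produce a $u$-idempotent $\alpha$ inside a suitable closed $u$-subsemigroup of $\starN$ that is contained in ${}^{\ast}\FS((c_n))$, and then invoke Corollary \ref{existenceofuidempotents}. The closed $u$-subsemigroup will be built from the ``tail'' finite-sum sets of $(c_n)$, which are the key objects in every version of Hindman's argument.

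For each $n\in\N$, set $T_n:=\FS((c_k)_{k\ge n})$, so that $T_1\supseteq T_2\supseteq\cdots$ and $T_1=\FS((c_n))$. Define
$$K\ :=\ \bigcap_{n\in\N}{}^{\ast}T_n\ \subseteq\ \starN.$$
I would first observe that $K$ is a nonempty closed subset of $\starN$ in the $u$-topology: each ${}^{\ast}T_n$ is a basic clopen set, and the descending family $\{{}^{\ast}T_n\}_{n\in\N}$ consists of internal sets with the finite intersection property, so countable saturation yields $K\ne\emptyset$. Clearly $K\subseteq{}^{\ast}T_1={}^{\ast}\FS((c_n))$, so every element of $K$ already lies in ${}^{\ast}\FS((c_n))$.

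The heart of the argument is verifying that $K$ is a $u$-subsemigroup. Fix $\alpha,\beta\in K$. Since $\u_{\alpha+{}^{\ast}\beta}=\u_\alpha\oplus\u_\beta$ (from the previous chapter), and since in a sufficiently saturated universe the map $\gamma\mapsto\u_\gamma$ onto $\beta\N$ is surjective (Proposition \ref{prop-enlarging}), it suffices to show $T_n\in\u_\alpha\oplus\u_\beta$ for every $n$. The standard combinatorial fact I would use is: if $s\in T_n$, write $s=c_F$ with $F\subseteq\{n,n+1,\ldots\}$ finite, and let $m>\max F$; then $T_m\subseteq T_n-s$, since for any $t=c_G\in T_m$ with $G\subseteq\{m,m+1,\ldots\}$ the sets $F,G$ are disjoint, so $s+t=c_{F\cup G}\in T_n$. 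Because $\beta\in K$ we have $T_m\in\u_\beta$, hence $T_n-s\in\u_\beta$ for every $s\in T_n$. Thus $T_n\subseteq\{s\in\N:T_n-s\in\u_\beta\}$; since the larger set is then in $\u_\alpha$ (as $T_n\in\u_\alpha$), we conclude $T_n\in\u_\alpha\oplus\u_\beta$, as required. This yields $\gamma\in K$ with $\alpha+{}^{\ast}\beta\sim\gamma$, confirming that $K$ is a $u$-subsemigroup.

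Having established that $K$ is a nonempty closed $u$-subsemigroup of $\starN$, Corollary \ref{existenceofuidempotents} delivers a $u$-idempotent $\alpha\in K$, which by construction also lies in ${}^{\ast}\FS((c_n))$, completing the proof. The only genuinely delicate point is lining up the semigroup law on $\beta\N$ with the nonstandard addition across iterated extensions, so that the ``$T_n-s\supseteq T_m$'' computation really certifies membership of $\alpha+{}^{\ast}\beta$ in ${}^{\ast\ast}T_n$; once that dictionary is in place, the remainder is the familiar tail argument.
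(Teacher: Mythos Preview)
Your proof is correct and follows the same overall strategy as the paper: you build the same intersection $K=\bigcap_n{}^{\ast}T_n$, verify it is a nonempty closed $u$-subsemigroup, and invoke Corollary~\ref{existenceofuidempotents}. The only difference is in the verification that $K$ is a $u$-subsemigroup: you translate to $\beta\N$ and run the standard Galvin--Glazer tail computation $T_m\subseteq T_n-s$, whereas the paper stays inside the iterated nonstandard extension, writing $\alpha=c_F$ for a hyperfinite $F$ and then transferring $\beta\in\bigcap_m{}^{\ast}T_m$ to obtain a hyperfinite $G$ with $\min G>\max F$ and ${}^{\ast}\beta=c_G$, so that $\alpha+{}^{\ast}\beta=c_{F\cup G}\in{}^{\ast\ast}T_n$ directly. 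Both arguments encode the same combinatorial fact; the paper's version is a bit more native to the nonstandard framework, while yours makes the connection to the classical ultrafilter proof more transparent.
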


\begin{proof}
For each $m$, let $U_{m}$ be the closed subset ${}^{\ast }\FS((c_{n})_{n\geq
m})$ of ${}^{\ast }\mathbb{N}$. By compactness, we have that $%
S:=\bigcap_{m}U_{m}$ is a nonempty closed subset of $\starN$. We claim that $%
S$ is a $u$-subsemigroup of ${}^{\ast }\mathbb{N}$. Indeed, suppose that $%
\alpha ,\beta \in S$ and let $\gamma \in {}^{\ast }\mathbb{N}$ such that $%
\alpha +{}^{\ast }\beta \sim \gamma $. We claim that $\gamma \in S$. Fix $%
m\in \mathbb{N}$. We must show that ${}\gamma \in {}^{\ast }\FS%
((c_{n})_{n\geq m})$ or, equivalently, $\alpha +{}^{\ast }\beta \in {}^{\ast
\ast }\FS((c_{n})_{n\geq m})$. Write $\alpha =c_{F}$ for some hyperfinite $%
F\subseteq \{n\in \starN\ :\ n\geq m\}$. By transferring the fact that $%
\beta \in \bigcap_{m}S_{m}$, there is hyperfinite $G\subseteq \{n\in
{}^{\ast \ast }\N\ :\ n>\max (F)\}$ such that ${}^{\ast }\beta =c_{G}$, and so
$\alpha +{}^{\ast }\beta =c_{F}+c_{G}\in {}^{\ast \ast }\FS%
((c_{n})_{n\geq m})$. 

It follows that $S$ is a nonempty closed $u$-subsemigroup of $\starN$,
whence, by Corollary \ref{existenceofuidempotents}, there is an idempotent $%
\alpha \in S$, which, in particular, implies that $\alpha \in {}^{\ast }\FS%
((c_{n}))$.
\end{proof}

\begin{corollary}[Strong Hindman's Theorem]\label{stronghindman}
Suppose that $C$ is an FS-set and $C$ is partitioned into finitely many
pieces $C_1,\ldots,C_n$. Then some $C_i$ is an FS-set.
\end{corollary}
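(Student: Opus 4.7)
The plan is to combine the two results immediately preceding the corollary: the lemma produces a $u$-idempotent sitting inside the hyperextension of the FS-set of any given sequence, and the theorem converts membership of a $u$-idempotent in $\starA$ into the conclusion that $A$ itself is an FS-set. Partition regularity of the class of FS-sets will then follow from the obvious fact that ultrafilters (equivalently, hyper-extensions) split finite partitions.

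First I would use the hypothesis on $C$ to fix an infinite sequence $(c_n)$ of distinct natural numbers with $\FS((c_n)) \subseteq C$. Applying the preceding lemma to $(c_n)$, I obtain a $u$-idempotent element $\alpha \in \starN$ such that $\alpha \in {}^*\FS((c_n))$. By transfer of the inclusion $\FS((c_n)) \subseteq C$, we get $\alpha \in {}^*C$.

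Next, since $C = C_1 \sqcup \cdots \sqcup C_n$ is a finite partition, by transfer we also have ${}^*C = {}^*C_1 \cup \cdots \cup {}^*C_n$. Hence there exists some $i \in \{1,\ldots,n\}$ with $\alpha \in {}^*C_i$. Finally, applying the theorem that opens this section to the $u$-idempotent $\alpha$ and the set $C_i$, we conclude that $C_i$ is an FS-set, which is exactly what was to be shown.

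There is essentially no obstacle here, as all the technical work has already been done: the existence of a $u$-idempotent inside ${}^*\FS((c_n))$ (the nontrivial ingredient, which rests on Ellis' theorem via Corollary~\ref{existenceofuidempotents}) is packaged in the preceding lemma, and the combinatorial extraction of an FS-sequence from membership in a $u$-idempotent is packaged in the preceding theorem. The only mildly subtle point to mention is why Hindman's theorem itself (applied to the coloring $C_1,\ldots,C_n$ of $C$, extended arbitrarily to $\N$) does not suffice: it would produce an FS-set monochromatic for the coloring on $\N$, but with no guarantee of sitting inside $C$. The nonstandard argument avoids this because the $u$-idempotent is chosen \emph{inside} ${}^*\FS((c_n))$, which already lives in ${}^*C$.
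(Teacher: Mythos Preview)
Your proof is correct and follows exactly the same approach as the paper: pick a $u$-idempotent $\alpha\in{}^{\ast}\FS((c_n))\subseteq{}^{\ast}C$ via the preceding lemma, locate it in some ${}^{\ast}C_i$, and invoke the preceding theorem. Your added remark on why a direct appeal to Hindman's theorem on $\N$ would not suffice is a nice clarification that the paper omits.
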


\begin{proof}
Take $(c_n)$ such that $\FS((c_n))\subseteq C$. Take $\alpha\in \starN$ $u$%
-idempotent such that $\alpha\in {}^{\ast}\FS((c_n))$. Then $\alpha\in
{}^{\ast}C$ as well, whence $\alpha\in{}^{\ast}C_i$ for a unique $%
i=1,\ldots,n$, and this $C_i$ is itself thus an FS-set.
\end{proof}

\begin{exercise}
Let $\operatorname{Idem}:=\{\alpha\in \starN \ : \ \alpha \text{ is $u$-idempotent}\}$.  Prove that $\alpha\in \overline{\operatorname{Idem}}$ if and only if:  for every $A\subseteq \N$, if $\alpha\in \starA$, then $A$ is an FS-set.  Here, $\overline{\operatorname{Idem}}$ denotes the closure of $\operatorname{Idem}$ in the $u$-topology.
\end{exercise}

%

\section{The Milliken-Taylor theorem}

We denote by $\mathbb{N}^{\left[ m\right] }$ the set of subsets of $\mathbb{N%
}$ of size $m$. We identify $\mathbb{N}^{\left[ m\right] }$ with the set of
ordered $m$-tuples of elements of $\mathbb{N}$ increasingly ordered. If $F,G$
are finite subsets of $\mathbb{N}$, we write $F<G$ if either one of them is
empty, or they are both nonempty and the maximum of $F$ is smaller than the
minimum of $G$. Recall that for $F\subseteq \N$ finite, we use the notation $%
x_{F}$ for $\sum_{i\in F}x_{i}$, where we declare $x_{F}=0$ when $F$ is
empty.

The goal of this section is to prove the following:

\begin{theorem}[Milliken-Taylor]
\label{MT} For any $m\in \mathbb{N}$ and finite coloring\index{coloring} of $\mathbb{N}^{%
\left[ m\right] }$, there exists an increasing sequence $\left( x_{n}\right) 
$ in $\mathbb{N}$ such that the set of elements of the form $\left\{
x_{F_{1}},\ldots ,x_{F_{m}}\right\}$ for finite nonempty subsets $%
F_{1}<\cdots <F_{m}$ of $\mathbb{N}$ is monochromatic.
\end{theorem}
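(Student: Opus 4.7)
The plan is to generalize the nonstandard proof of Hindman's theorem given above. Fix a $u$-idempotent $\alpha \in \starN$ and set $\alpha_j := {}^{j\ast}\alpha$ for $j \ge 0$. Since the $\alpha_j$'s live at strictly increasing star-levels, the $m$-tuple $(\alpha_0, \alpha_1, \ldots, \alpha_{m-1})$ is strictly increasing and lies in ${}^{m\ast}\N^{[m]}$; choose the color $i$ with $(\alpha_0, \ldots, \alpha_{m-1}) \in {}^{m\ast}C_i$, where $C_i := c^{-1}(i)$. This $i$ will be the monochromatic color.

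I would construct $x_0 < x_1 < \cdots$ in $\N$ recursively, maintaining the following strong invariant at stage $n$: for every $0 \le k \le m-1$, every chain $F_1 < F_2 < \cdots < F_k$ of nonempty finite subsets of $\{0, \ldots, n\}$, and every $G \subseteq \{0, \ldots, n\}$ satisfying $G = \emptyset$ or $F_k < G$,
\[
(x_{F_1}, \ldots, x_{F_k}, x_G + \alpha_0, \alpha_1, \alpha_2, \ldots, \alpha_{m-k-1}) \in {}^{(m-k)\ast}C_i,
\]
with the convention $x_\emptyset = 0$, and additionally $(x_{F_1}, \ldots, x_{F_m}) \in C_i$ for every chain $F_1 < \cdots < F_m$ of nonempty subsets of $\{0, \ldots, n\}$. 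The $(k, G) = (0, \emptyset)$ case of the first clause is just the choice of $i$, so the invariant holds vacuously at stage $n = -1$.

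For the recursive step, the new conditions at stage $n+1$ all place $n+1$ inside the last $F$-block or inside $G$, so they take the uniform shape $(x_{F_1}, \ldots, x_{F_\ell}, x_H + x_{n+1} + \alpha_0, \alpha_1, \ldots, \alpha_{m-\ell-1}) \in {}^{(m-\ell)\ast}C_i$ together with the ``$k = m$'' analogue $(x_{F_1}, \ldots, x_{F_{m-1}}, x_H + x_{n+1}) \in C_i$. In Hindman-style fashion I would show that the starred-up version of the corresponding existential statement is witnessed by $w = \alpha_0$ (so each $\alpha_j$ becomes $\alpha_{j+1}$ and each ${}^{(m-\ell)\ast}C_i$ becomes ${}^{(m-\ell+1)\ast}C_i$), and then invoke transfer to obtain a standard $x_{n+1} > x_n$. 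Checking the witness works amounts to verifying
\[
(x_{F_1}, \ldots, x_{F_\ell}, x_H + \alpha_0 + \alpha_1, \alpha_2, \ldots, \alpha_{m-\ell}) \in {}^{(m-\ell+1)\ast}C_i;
\]
applying one star to the stage-$n$ invariant at $(k, G) = (\ell, H)$ gives the same tuple but with $x_H + \alpha_1$ in place of $x_H + \alpha_0 + \alpha_1$, so it suffices to show that this substitution preserves membership.

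The main obstacle is precisely this last substitution. I would dispatch it by peeling off the trailing coordinates $\alpha_2, \alpha_3, \ldots, \alpha_{m-\ell}$ one at a time: for fixed standard $x_{F_1}, \ldots, x_{F_\ell}$, each $\alpha_j$ generates $\u = \u_\alpha$ at its level, so membership in ${}^{(m-\ell+1)\ast}C_i$ reduces to membership of the single remaining coordinate in a set of the form ${}^{\ast\ast}E$ for some standard $E \subseteq \N$ obtained from $C_i$ by iterated $\u$-slices. The $u$-idempotency identity $\u_{\alpha_0 + \alpha_1} = \u_\alpha \odot \u_\alpha = \u_\alpha = \u_{\alpha_1}$ then yields $x_H + \alpha_0 + \alpha_1 \in {}^{\ast\ast}(E - x_H)$ if and only if $x_H + \alpha_1 \in {}^{\ast\ast}(E - x_H)$, closing the verification. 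Reading off the $k = m$ clause of the invariant at all stages assembles into the desired sequence $(x_n)$ whose $m$-fold finite-sum tuples are monochromatic of color $i$.
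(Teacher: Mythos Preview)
Your approach is correct and is essentially the same as the paper's proof of Proposition~\ref{MTNS}: fix a $u$-idempotent $\alpha$, pick the color containing $(\alpha,{}^{\ast}\alpha,\ldots,{}^{\ast(m-1)}\alpha)$, maintain exactly the invariant you wrote (which is a reparametrization of the paper's two displayed conditions), and at each stage use transfer with witness $w=\alpha$, invoking idempotency for the ``$x_H+\alpha_0+\alpha_1$ versus $x_H+\alpha_1$'' substitution. Your peeling argument makes explicit what the paper leaves as ``since $\alpha$ is $u$-idempotent, we also have\ldots''. One small bookkeeping slip: when $n+1$ lands in the last $F$-block $F_k$ with $G=\emptyset$, the new condition has shape $(x_{F_1},\ldots,x_{F_{k-1}},x_{H'}+x_{n+1},\alpha_0,\alpha_1,\ldots)$ \emph{without} the $+\alpha_0$ on the $(k{-}1+1)$-st coordinate, so it is not quite your ``uniform shape''; but this case is the easy one, since after starring and setting $w=\alpha_0$ it becomes literally the stage-$n$ invariant at $(k-1,H')$, with no idempotency needed.
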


We note that the Milliken-Taylor theorem\index{Milliken-Taylor's Theorem} is a simultaneous generalization of
Ramsey's theorem (by taking the finite sets $F_1,\ldots,F_m$ to have
cardinality one) and Hindman's theorem\index{Hindman's theorem} (by taking $m=1$).

The heart of the nonstandard approach is the following:

\begin{proposition}\label{prop-mt}
\label{MTNS} Suppose that $m\in \mathbb{N}$ and $\alpha \in {}^{\ast }%
\mathbb{N}$ is $u$-idempotent\index{$u$-idempotent}. If $A\subset \mathbb{N}^{\left[ m\right] }$
is such that ${}\left\{ \alpha ,{}^{\ast }\alpha ,\ldots ,{}^{\ast
(m-1)}\alpha \right\} \in {}^{\ast m}A$, then there exists an increasing
sequence $\left( x_{n}\right) $ in $\mathbb{N}$ such that $\left\{
x_{F_{1}},x_{F_{2}},\ldots ,x_{F_{m}}\right\} \in A$ for any finite nonempty
subsets $F_{1}<\cdots <F_{m}$ of $\mathbb{N}$.
\end{proposition}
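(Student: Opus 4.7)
The plan is to mimic the nonstandard proof of Hindman's theorem given just above, replacing the one-dimensional invariant ``$x_F+\alpha\in{}^{\ast}A$ for all $F\subseteq\{0,\ldots,n\}$'' by a stronger family of conditions suited to the $m$-dimensional target. Concretely, I would recursively construct an increasing sequence $x_0<x_1<\cdots$ in $\N$ maintaining the following enhanced invariant $(\mathrm{EInv}_n)$: for every $0\le k\le m$, every nonempty $F_1<\cdots<F_k\subseteq\{0,\ldots,n\}$, and every $G\subseteq\{0,\ldots,n\}$ with either $G=\emptyset$ or $G>F_k$, one has
\[ (x_{F_1},\ldots,x_{F_k},\,x_G+\alpha,\,{}^{\ast}\alpha,\,\ldots,\,{}^{\ast(m-k-1)}\alpha)\ \in\ {}^{\ast(m-k)}A \]
when $k<m$, while when $k=m$ (which forces $G=\emptyset$) we require $(x_{F_1},\ldots,x_{F_m})\in A$. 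With the convention $x_\emptyset=0$, the case $k=0$, $G=\emptyset$ is exactly the hypothesis $(\alpha,{}^{\ast}\alpha,\ldots,{}^{\ast(m-1)}\alpha)\in{}^{\ast m}A$, so the base case is immediate.

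For the step $n\to n+1$, the only genuinely new conditions in $(\mathrm{EInv}_{n+1})$ are those in which the index $n+1$ occurs either in $F_k$ (Case 1) or in $G$ (Case 2). In each case the condition on $x_{n+1}$ has the form ``$x_{n+1}\in T-x_H$'' for some $H\subseteq\{0,\ldots,n\}$ and some $T\subseteq\N$ defined in terms of $\alpha$ and a hyper-extension of $A$. A finite intersection of such sets will belong to $\u_\alpha$ provided each individual $T-x_H$ does; since $\u_\alpha$ is nonprincipal, any such member contains a value larger than $x_n$, furnishing the required $x_{n+1}$. To verify that each $T-x_H$ lies in $\u_\alpha$, I would use $y=\alpha$ as the tentative witness in the starred version of the condition produced by transfer. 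In Case 1 the resulting assertion reduces directly to $(\mathrm{EInv}_n)$ at index $k-1$. In Case 2 it reduces, after shifting by $(x_{F_1},\ldots,x_{F_k},x_{G'})$, to the following merging lemma, which is the main technical hurdle.

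\emph{Merging lemma.} For any $D\subseteq\N^l$ with $(\alpha,{}^{\ast}\alpha,\ldots,{}^{\ast(l-1)}\alpha)\in{}^{\ast l}D$, one has $(\alpha+{}^{\ast}\alpha,\,{}^{\ast\ast}\alpha,\,\ldots,\,{}^{\ast l}\alpha)\in{}^{\ast(l+1)}D$. This is precisely where $u$-idempotency enters, and I would prove it by setting $E:=\{c\in\N:(c,\alpha,\ldots,{}^{\ast(l-2)}\alpha)\in{}^{\ast(l-1)}D\}$, observing that the hypothesis unpacks to $E\in\u_\alpha$ (equivalently, $\alpha\in{}^{\ast}E$), applying $u$-idempotency to conclude $\alpha+{}^{\ast}\alpha\in{}^{\ast\ast}E$, and finally unwinding the starred definition of $E$ to recover the desired tuple membership in ${}^{\ast(l+1)}D$.

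The main obstacle, as already signaled, is choosing the correct strength of the invariant: the naive invariant retaining only the case $G=\emptyset$ (the direct analogue of Hindman's invariant) is not closed under the recursion, because it fails to supply the shift-invariance $E-x_G\in\u_\alpha$ that is needed whenever the new index $n+1$ is adjoined to a preexisting sum $x_G$ in a ``later'' slot. Building the parameter $G$ explicitly into $(\mathrm{EInv}_n)$ carries along the required $x_G$-shifted instances of $u$-idempotency, closing the loop. Once the sequence $(x_n)$ is constructed, applying the case $k=m$, $G=\emptyset$ of $(\mathrm{EInv}_n)$ at $n=\max F_m$ yields $\{x_{F_1},\ldots,x_{F_m}\}\in A$ for every $F_1<\cdots<F_m$, completing the proof.
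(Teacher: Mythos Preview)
Your proposal is correct and follows essentially the same route as the paper. Your enhanced invariant $(\mathrm{EInv}_n)$ is a repackaging of the paper's two-clause invariant: the paper maintains, for each $1\le j\le m$ and $F_1<\cdots<F_j$ with $F_1,\ldots,F_{j-1}$ nonempty, both
\[
\{x_{F_1},\ldots,x_{F_j},\alpha,\ldots,{}^{*(m-j-1)}\alpha\}\in{}^{*(m-j)}A
\quad\text{and}\quad
\{x_{F_1},\ldots,x_{F_{j-1}},x_{F_j}+\alpha,{}^{*}\alpha,\ldots,{}^{*(m-j)}\alpha\}\in{}^{*(m-j+1)}A,
\]
which under the relabeling $k=j-1$, $G=F_j$ is exactly your single clause with the $G$ parameter. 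Your Case~1 and Case~2 analysis corresponds precisely to the paper's transfer step, where $y=\alpha$ witnesses the existential and reduces the two displayed conditions respectively to the old second clause and to the ``doubled'' version $x_{F_j}+\alpha+{}^{*}\alpha$ obtained from idempotency. What you call the merging lemma is asserted in the paper in one line (``Since $\alpha$ is $u$-idempotent, we also have\ldots''); you make it explicit and sketch its proof via the auxiliary set $E$, which is a legitimate subset of $\N$ in the single-universe framework and transfers as you describe. So the only difference is packaging: you unify the invariant and isolate the idempotency step, while the paper leaves the invariant in two pieces and absorbs the merging lemma into a single sentence.
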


\begin{proof}
We define by recursion an increasing sequence $(x_n)$ such that%
\begin{equation*}
\left\{x_{F_1},x_{F_2},\ldots,x_{F_j},\alpha,{}^{\ast}\alpha,\ldots,{}^{\ast
(m-j-1)}\alpha\right\}\in {}^{\ast (m-j)}A
\end{equation*}
and 
\begin{equation*}
\left\{ x_{F_{1}},x_{F_{2}},\ldots ,x_{F_{j-1}},x_{F_{j}}+{}\alpha ,{}^{\ast
}\alpha ,{}^{\ast \ast }\alpha ,\ldots ,{}^{\ast (m-j)}\alpha \right\} \in
{}^{\ast (m-j+1)}A
\end{equation*}
for every $1\leq j\leq m$ and finite $F_{1}<\cdots<F_{j}$ such that $%
F_{1},\ldots ,F_{j-1}$ are nonempty. It is clear that the sequence $(x_n)$
satisfies the conclusion of the proposition.

Suppose that we have constructed $x_1<\cdots<x_{n-1}$ satisfying the
recursive construction (where of course now $F_1,\ldots,F_j$ are subsets of $%
\{1,\ldots,n-1\}$). Since $\alpha $ is $u$-idempotent\index{$u$-idempotent}, we also have, for any 
$1\leq j\leq m$ and $F_{1},\ldots ,F_{j}$ as above, that%
\begin{equation*}
\left\{ x_{F_{1}},x_{F_{2}},\ldots ,x_{F_{j-1}},x_{F_{j}}+{}\alpha +{}^{\ast
}\alpha ,{}{}^{\ast \ast }\alpha ,\ldots ,{}^{\ast (m-j+1)}\alpha \right\}
\in {}^{\ast (m-j+2)}A\text{.}
\end{equation*}%
Therefore, by transfer there exists $x_{n}>x_{n-1}$ such that%
\begin{equation*}
\left\{ x_{F_{1}},x_{F_{2}},\ldots ,x_{F_{j-1}},x_{F_{j}}+{}x_{n},{}\alpha
,{}^{\ast }\alpha ,\ldots ,{}^{\ast (m-j-1)}\alpha \right\} \in {}^{\ast
(m-j)}A
\end{equation*}%
and%
\begin{equation*}
\left\{ x_{F_{1}},x_{F_{2}},\ldots ,x_{F_{j-1}},x_{F_{j}}+{}x_{n}+{}\alpha
,{}{}^{\ast }\alpha ,\ldots ,{}^{\ast (m-j)}\alpha \right\} \in {}^{\ast
(m-j+1)}A
\end{equation*}%
for any $1\leq j\leq m$ and $F_{1}<\cdots<F_{j}$ contained in $\left\{
1,2,\ldots ,n-1\right\} $ such that $F_{1},\ldots ,F_{j-1}$ are nonempty.
This concludes the recursive construction and the proof of the proposition.
\end{proof}

Theorem \ref{MT} follows immediately from Proposition \ref{MTNS}. Indeed,
suppose $\N^{[m]}=A_1\sqcup \cdots \sqcup A_r$ is a partition of $\N^{[m]}$.
Fix $\alpha\in \starN$ a $u$-idempotent\index{$u$-idempotent}. Let $i\in \{1,\ldots,r\}$ be such
that $\left\{ \alpha ,{}^{\ast }\alpha ,\ldots ,{}^{\ast m-1}\alpha \right\}
\in {}^{\ast m}A_i$. Then $A_i$ is the desired color.

Observe now that if $\lambda \in \mathbb{N}$ and $\alpha \sim \alpha +{}^{\ast
}\alpha $, then $\lambda \alpha \sim \lambda \alpha +\lambda {}^{\ast }\alpha $. Hence the same
proofs as above shows the following slight strengthening of Proposition \ref{prop-mt},
and hence of the Milliken-Taylor theorem\index{Milliken-Taylor's Theorem}.

\begin{proposition}
Suppose that $m\in \mathbb{N}$, $\lambda _{1},\ldots ,\lambda _{m}\in \mathbb{N}$, and $%
\alpha \in {}^{\ast }\mathbb{N}$ is $u$-idempotent\index{$u$-idempotent}. If $A\subset \mathbb{N}^{%
\left[ m\right] }$ is such that ${}\left\{ \alpha ,{}^{\ast }\alpha ,\ldots
,{}^{\ast (m-1)}\alpha \right\} \in {}^{\ast m}A$, then there exists an
increasing sequence $\left( x_{n}\right) $ in $\mathbb{N}$ such that $%
\left\{ \lambda _{1}x_{F_{1}},\ldots ,\lambda _{m}x_{F_{m}}\right\} \in A$ for any finite
nonempty subsets $F_{1}<\cdots <F_{m}$ of $\mathbb{N}$.
\end{proposition}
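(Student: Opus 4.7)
The strategy is to mimic, verbatim but with the coefficients $\lambda_i$ threaded through, the recursive construction used in the proof of Proposition~\ref{prop-mt}. The single algebraic ingredient that makes this go through is the observation highlighted in the preceding remark: since $\alpha$ is $u$-idempotent we have $\alpha\sim\alpha+{}^{\ast}\alpha$, and applying the (continuous, $\sim$-respecting) multiplication-by-$\lambda_j$ map yields $\lambda_j\alpha\sim\lambda_j\alpha+\lambda_j{}^{\ast}\alpha$. This is the only place where the $u$-idempotency of $\alpha$ enters the proof of Proposition~\ref{prop-mt}, so preserving it at each $j$ suffices.

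Concretely, I would construct $x_1<x_2<\cdots$ in $\N$ by recursion so as to maintain, at stage $n$, the two invariants
$$\left\{\lambda_1 x_{F_1},\ldots,\lambda_j x_{F_j},\lambda_{j+1}\alpha,\lambda_{j+2}{}^{\ast}\alpha,\ldots,\lambda_m{}^{\ast(m-j-1)}\alpha\right\}\in{}^{\ast(m-j)}A$$
and
$$\left\{\lambda_1 x_{F_1},\ldots,\lambda_{j-1}x_{F_{j-1}},\lambda_j x_{F_j}+\lambda_j\alpha,\lambda_{j+1}{}^{\ast}\alpha,\ldots,\lambda_m{}^{\ast(m-j)}\alpha\right\}\in{}^{\ast(m-j+1)}A$$
for every $1\le j\le m$ and all finite $F_1<\cdots<F_j$ contained in $\{1,\ldots,n\}$ with $F_1,\ldots,F_{j-1}$ nonempty. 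The base case reduces to the hypothesis $\{\alpha,{}^{\ast}\alpha,\ldots,{}^{\ast(m-1)}\alpha\}\in{}^{\ast m}A$ together with its idempotent shift, both of which now involve the $\lambda_i$'s but survive because the observation $\lambda_i\alpha\sim\lambda_i\alpha+\lambda_i{}^{\ast}\alpha$ is available at every index. Specializing to $j=m$, which forces all $F_i$ nonempty, yields the desired conclusion $\{\lambda_1 x_{F_1},\ldots,\lambda_m x_{F_m}\}\in A$.

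The recursive step from $n-1$ to $n$ proceeds exactly as in Proposition~\ref{prop-mt}: apply the ``doubling'' $\lambda_j\alpha\sim\lambda_j\alpha+\lambda_j{}^{\ast}\alpha$ to the second invariant to obtain
$$\left\{\lambda_1 x_{F_1},\ldots,\lambda_{j-1}x_{F_{j-1}},\lambda_j x_{F_j}+\lambda_j\alpha+\lambda_j{}^{\ast}\alpha,\lambda_{j+1}{}^{\ast\ast}\alpha,\ldots,\lambda_m{}^{\ast(m-j+1)}\alpha\right\}\in{}^{\ast(m-j+2)}A,$$
and then transfer -- applied to the finite conjunction, over all admissible $(j,F_1,\ldots,F_j)$, of the statement that some $w>x_{n-1}$ can play the role of the leading $\alpha$ -- furnishes an actual $x_n\in\N$ with $x_n>x_{n-1}$ for which both invariants remain valid when $F_j$ is allowed to contain $n$. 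Here the identity $\lambda_j x_{F_j\cup\{n\}}=\lambda_j x_{F_j}+\lambda_j x_n$ is what converts the substitution $\alpha\mapsto x_n$ into the set-theoretic extension $F_j\mapsto F_j\cup\{n\}$; note that the coefficient $\lambda_j$ attached to the leading $\alpha$ is precisely the coefficient attached to $x_n$, which is why the same $\lambda_j$ must be used in the idempotency step of the matching index.

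The main obstacle is purely bookkeeping: at each stage one must keep track of which coefficient $\lambda_i$ multiplies which slot, and verify that every invocation of idempotency and every application of transfer respects this index-matching. No new conceptual ingredient is needed beyond Proposition~\ref{prop-mt} together with the remark $\lambda_j\alpha\sim\lambda_j\alpha+\lambda_j{}^{\ast}\alpha$, which is precisely why the authors write ``the same proofs as above shows'' the strengthened statement.
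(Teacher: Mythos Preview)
Your approach is exactly what the paper intends; the one-line observation $\lambda_j\alpha\sim\lambda_j\alpha+\lambda_j{}^{\ast}\alpha$ is the only new ingredient, and threading the coefficients through the invariants of Proposition~\ref{prop-mt} is precisely how to implement it. The recursive step you describe is correct.

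There is, however, a genuine problem at the base case that your write-up papers over. Specializing your second invariant to $j=1$, $F_1=\emptyset$ gives
\[
\{\lambda_1\alpha,\lambda_2{}^{\ast}\alpha,\ldots,\lambda_m{}^{\ast(m-1)}\alpha\}\in{}^{\ast m}A,
\]
whereas the proposition's stated hypothesis is $\{\alpha,{}^{\ast}\alpha,\ldots,{}^{\ast(m-1)}\alpha\}\in{}^{\ast m}A$. These are distinct conditions on $A$ (they amount to $A$ lying in the different tensor ultrafilters $\lambda_1\mathcal U_\alpha\otimes\cdots\otimes\lambda_m\mathcal U_\alpha$ versus $\mathcal U_\alpha\otimes\cdots\otimes\mathcal U_\alpha$), and the observation $\lambda_i\alpha\sim\lambda_i\alpha+\lambda_i{}^{\ast}\alpha$ does nothing to bridge them. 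Your sentence claiming the base case ``reduces to the hypothesis'' is therefore not justified. This is almost certainly a misprint in the proposition as printed: the hypothesis should carry the $\lambda_i$'s. With that correction everything you wrote is right, and the deduction of the subsequent theorem is unaffected (one simply picks the color containing $\{\lambda_1\alpha,\ldots,\lambda_m{}^{\ast(m-1)}\alpha\}$). But you should have flagged the discrepancy rather than waved it away.
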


\begin{theorem}
For any $m\in \mathbb{N}$, $\lambda _{1},\ldots ,\lambda _{m}\in \mathbb{N}$, and finite
coloring of $\mathbb{N}^{\left[ m\right] }$, there exists an increasing
sequence $\left( x_{n}\right) $ in $\mathbb{N}$ such that the set of
elements of the form $\left\{ \lambda _{1}x_{F_{1}},\ldots ,\lambda _{m}x_{F_{m}}\right\}$ 
for finite nonempty subsets $F_{1}<\cdots <F_{m}$ of $\mathbb{N}$ is
monochromatic.
\end{theorem}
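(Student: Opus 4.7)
The plan is to derive this theorem from the strengthened proposition stated immediately above it, following exactly the same template used to derive Theorem \ref{MT} from Proposition \ref{prop-mt}. The only nontrivial content in the coloring version has already been absorbed into the strengthened proposition, so this is a short deduction.

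First I would fix a $u$-idempotent $\alpha \in \starN$. Such an element exists by Corollary \ref{existenceofuidempotents}: indeed, $\starN$ itself is a nonempty closed $u$-subsemigroup of $\starN$, so it contains a $u$-idempotent. (Alternatively, one may invoke Ellis' theorem together with the correspondence between $u$-idempotents and idempotent ultrafilters in $\beta \N$.)

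Next, given a finite partition $\N^{[m]} = A_1 \sqcup \cdots \sqcup A_r$, I would consider the hyperfinite set
\[
S := \{\alpha,\ {}^{\ast}\alpha,\ \ldots,\ {}^{\ast(m-1)}\alpha\} \in {}^{\ast m}(\N^{[m]}).
\]
(Here $S$ genuinely has $m$ distinct elements because, by transfer from the fact that $\N$ is a proper initial segment of $\starN$, each iterate $\starN \subsetneq {}^{\ast\ast}\N \subsetneq \cdots$ is a proper initial segment of the next, so $\alpha < {}^{\ast}\alpha < \cdots < {}^{\ast(m-1)}\alpha$.) Applying the star map $m$ times to the partition, we obtain a partition ${}^{\ast m}(\N^{[m]}) = {}^{\ast m}A_1 \sqcup \cdots \sqcup {}^{\ast m}A_r$, and hence there is a unique $i \in \{1,\ldots,r\}$ with $S \in {}^{\ast m}A_i$.

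Finally, I would apply the strengthened proposition to $A := A_i$ and this $\alpha$. That proposition produces an increasing sequence $(x_n)$ in $\N$ such that $\{\lambda_1 x_{F_1}, \ldots, \lambda_m x_{F_m}\} \in A_i$ for every choice of finite nonempty subsets $F_1 < \cdots < F_m$ of $\N$. Thus the entire collection of such tuples has color $i$, which is exactly the monochromaticity claimed by the theorem. There is no substantive obstacle here: all the combinatorial and recursive content sits inside the strengthened proposition (which in turn rests on the $u$-idempotency $\alpha \sim \alpha + {}^{\ast}\alpha$ and the observation that this passes through multiplication by a standard $\lambda$), and the coloring version follows simply by transferring the partition to the appropriate iterated hyperextension.
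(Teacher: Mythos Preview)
Your proposal is correct and follows essentially the same approach as the paper: the paper explicitly states that the same deduction used to derive Theorem \ref{MT} from Proposition \ref{MTNS} (fix a $u$-idempotent $\alpha$, locate the color $i$ with $\{\alpha,{}^{\ast}\alpha,\ldots,{}^{\ast(m-1)}\alpha\}\in{}^{\ast m}A_i$, then invoke the proposition) yields this theorem from the strengthened proposition. Your write-up simply fleshes out the details the paper leaves implicit.
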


From the previous theorem, it is straightforward to deduce an ``additive''
version:

\begin{corollary}
For any $m\in \mathbb{N}$, $c_{1},\ldots ,c_{m}\in \mathbb{N}$, and finite
coloring\index{coloring} of $\mathbb{N}$, there exists an increasing sequence $\left(
x_{n}\right) $ in $\mathbb{N}$ such that the set of elements of the form $%
c_{1}x_{F_{1}}+\cdots +c_{m}x_{F_{m}}$ for finite nonempty subsets $%
F_{1}<\cdots <F_{m}$ of $\mathbb{N}$ is monochromatic.
\end{corollary}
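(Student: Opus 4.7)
The plan is to reduce this additive statement to the (already established) ``multiplicative'' form of the Milliken-Taylor theorem, the one where the monochromatic family has the shape $\{\lambda_{1}x_{F_{1}},\ldots,\lambda_{m}x_{F_{m}}\}$. The reduction is almost tautological once one sets up the right induced coloring: given a finite coloring $c\colon \N\to[r]$, define $\tilde c\colon \N^{[m]}\to[r]$ by
\[
\tilde c(\{y_{1}<\cdots <y_{m}\})\ :=\ c(y_{1}+y_{2}+\cdots +y_{m}).
\]
Apply the previous theorem with multipliers $\lambda_{i}=c_{i}$ to obtain an increasing sequence $(x_{n})$ in $\N$ and a color $i_{0}\in[r]$ such that every $m$-element set of the form $\{c_{1}x_{F_{1}},\ldots ,c_{m}x_{F_{m}}\}$, with $F_{1}<\cdots <F_{m}$ finite nonempty, has $\tilde c$-color $i_{0}$.

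The one small technical nuisance is that, \emph{a priori}, the numbers $c_{1}x_{F_{1}},\ldots ,c_{m}x_{F_{m}}$ need neither be distinct nor appear in increasing order (the $c_{i}$ are not forced to be monotone, and $F_{i}$ can be much larger than $F_{i+1}$ in cardinality), so the evaluation of $\tilde c$ on this collection is not automatically equal to $c(c_{1}x_{F_{1}}+\cdots +c_{m}x_{F_{m}})$. I would fix this by passing to a sufficiently rapidly growing subsequence $(y_{n})$ of $(x_{n})$: choose $n_{1}<n_{2}<\cdots$ with $x_{n_{k+1}}>M\cdot\sum_{j\le n_{k}}x_{j}$, where $M:=\max_{i}c_{i}/\min_{i}c_{i}+1$, and set $y_{k}:=x_{n_{k}}$. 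Then for any $F_{1}<\cdots <F_{m}$ one has $y_{F_{i+1}}\ge y_{\min F_{i+1}}> M\sum_{j\le\max F_{i}}y_{j}\ge M\,y_{F_{i}}$, which forces $c_{1}y_{F_{1}}<c_{2}y_{F_{2}}<\cdots <c_{m}y_{F_{m}}$.

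With this ordering in hand, the set $E_{F_{1},\ldots,F_{m}}:=\{c_{1}y_{F_{1}},\ldots,c_{m}y_{F_{m}}\}$ genuinely has $m$ elements and its sum (as a set) equals $c_{1}y_{F_{1}}+\cdots +c_{m}y_{F_{m}}$. Since $(y_{n})$ is a subsequence of $(x_{n})$, each $E_{F_{1},\ldots,F_{m}}$ is of the form $\{c_{1}x_{G_{1}},\ldots ,c_{m}x_{G_{m}}\}$ with $G_{1}<\cdots <G_{m}$, so the previous theorem gives $\tilde c(E_{F_{1},\ldots,F_{m}})=i_{0}$, i.e.
\[
c\bigl(c_{1}y_{F_{1}}+\cdots +c_{m}y_{F_{m}}\bigr)=i_{0}.
\]
Thus $(y_{n})$ witnesses the corollary.

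I expect no serious obstacles: the only non-cosmetic point is the subsequence argument in the second paragraph, which is needed precisely to turn an unordered-set statement into an ordered-sum statement and to rule out the degenerate case where two of the $c_{i}y_{F_{i}}$ coincide. Everything else is a direct rewriting of the hypothesis through the induced coloring $\tilde c$.
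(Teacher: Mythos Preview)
Your proof is correct and is precisely the reduction the paper intends: it declares the corollary ``straightforward to deduce'' from the preceding theorem on colorings of $\N^{[m]}$ and gives no further argument. Your subsequence step to force the $c_{i}y_{F_{i}}$ to be distinct carefully handles a point the paper leaves implicit (and since $\tilde c$ is defined via a commutative sum, mere distinctness already suffices---the full ordering you establish is more than needed).
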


\section{Gowers' theorem}

\begin{definition}
For $k\in \mathbb{N}$, we let $\mathrm{FIN}_{k}$\index{$FIN_k$} denote the set of
functions $b:\mathbb{N}\rightarrow \{0,1,\ldots ,k\}$ with $\mathrm{Supp}(b)$
finite and such that $k$ belongs to the range of $b$. Here, $\mathrm{Supp}%
\left( b\right) :=\{n\in \N\ :\ b(n)\not=0\}$ is the \emph{support of $b$}. We
extend the definition of $\mathrm{FIN}_{k}$ to $k=0$ by setting $\mathrm{FIN}%
_{0}$ to consist of the function on $\N$ that is identically $0$.
\end{definition}

Note that, after identifying a subset of $\mathbb{N}$ with its
characteristic function, $\mathrm{FIN}_{1}$ is simply the set of nonempty
finite subsets of $\mathbb{N}$. We endow $\mathrm{FIN}_{k}$ with a partial semigroup operation $\left(
b_{0},b_{1}\right) \mapsto b_{0}+b_{1}$ which is defined only when $\mathrm{%
Supp}\left( b_{0}\right) <\mathrm{Supp}\left( b_{1}\right) $. 

By transfer, ${}^{\ast }\mathrm{FIN}_{k}$ is the set of internal functions $%
b:{}^{\ast }\mathbb{N}\rightarrow \{0,1,\ldots ,k\}$ with hyperfinite
support that have $k$ in their range. The partial semigroup operation on $%
\mathrm{FIN}_k$ extends also to ${}^{\ast}\mathrm{FIN}_k$. We say that $%
\alpha\in {}^{\ast }\mathrm{FIN}_{k}$ is \emph{cofinite} if its support is
disjoint from $\mathbb{N}$. Thus, if $\alpha,\beta\in {}^{\ast}\mathrm{FIN}%
_k $ are cofinite and $i<j$, then the sum ${}^{\ast i}\alpha+{}^{\ast
j}\beta $ exists.

Gowers' original theorem\index{Gowers' Theorem} considers the \emph{tetris operation} $T:\mathrm{FIN%
}_{k}\rightarrow \mathrm{FIN}_{k-1}$\index{tetris operation} given by $T(b)(n):=\max \left\{
b(n)-1,0\right\} $. In this section, we prove a more general version of
Gowers' theorem by considering a wider variety of functions $\mathrm{FIN}%
_{k}\rightarrow \mathrm{FIN}_{j}$ for $j\leq k$. First, for $k\in \N$, by a 
\emph{regressive map on $k$} or \emph{generalized tetris operation}\index{tetris operation!generalized}\index{regressive map}, we mean a nondecreasing surjection $%
f:[0,k]\rightarrow \lbrack 0,f(k)]$. Given a regressive map $f$ on $k$,
one can define a corresponding operation $f:\mathrm{FIN}_{k}\rightarrow 
\mathrm{FIN}_{f(k)}$ by setting $f\left( b\right) :=f\circ b$. Note also
that if $l\leq k$, then $f|_{\left[ 0,l\right] }$ is a regressive map\index{regressive map} on $l
$, whence we can also consider $f:\mathrm{FIN}_{l}\rightarrow \mathrm{FIN}%
_{f(l)}$.

Given $n\in \N$, we set $\mathrm{FIN}_{[0,n]}:=\bigcup_{k=0}^{n}\mathrm{FIN}%
_{k}$ . Note that $\mathrm{FIN}_{\left[ 0,n\right] }$ is also a partial
semigroup given by pointwise addition and defined on pairs of functions with
disjoint supports. If $f$ is a regressive map\index{regressive map} on $n$, then as we already
recalled, $f|_{\left[ 0,k\right] }$ is a regressive map\index{regressive map} on $k$ for $1\leq
k\leq n$, whence $f$ yields a function $f:\mathrm{FIN}_{\left[ 0,n\right]
}\rightarrow \mathrm{FIN}_{\left[ 0,f(n)\right] }$.

Given a regressive map\index{regressive map} $f$ on $n$, we get the nonstandard extension\index{nonstandard extension} $%
f:{}^{\ast }\mathrm{FIN}_{n}\rightarrow {}^{\ast }\mathrm{FIN}_{f(n)}$ and $%
f:{}^{\ast }\mathrm{FIN}_{\left[ 0,n\right] }\rightarrow {}^{\ast }\mathrm{%
FIN}_{\left[ 0,f(n)\right] }$. In addition, if $\alpha ,\beta \in {}^{\ast }%
\mathrm{FIN}_{\left[ 0,n\right] }$ are cofinite and $i<j$, then ${}^{\ast
i}\alpha +{}^{\ast j}\beta $ exists and $f({}^{\ast i}\alpha +{}^{\ast
j}\beta )=f({}^{\ast i}\alpha )+f({}^{\ast j}\beta )$.

If $\alpha _{k}\in {}^{\ast }\mathrm{FIN}_{k}$ for $k=1,\ldots ,n$, we say
that a tuple $\langle\alpha _{1},\ldots ,\alpha _{n}\rangle$ is \emph{coherent} if $f(\alpha
_{k})\sim \alpha _{f(k)}$ for all $k=1,\ldots ,n$ and all regressive maps $%
f$ on $n$. 
It is easy to verify that the set $Z$ of all cofinite coherent tuples is a compact $u$-semigroup.
We note that $Z$ is nonempty. Indeed, let $%
\alpha _{1}\in {}^{\ast }\mathrm{FIN}_{1}$ be any cofinite element. For $%
k=2,\ldots ,n$, let $\alpha _{k}\in {}^{\ast }\mathrm{FIN}_{k}$ have the
same support as $\alpha _{1}$ and take only the values $0$ and $k$. It is
immediate that $(\alpha _{1},\ldots ,\alpha _{n})\in Z$.

Finally, we introduce some convenient notation. Given $\alpha _{0},\alpha
_{1},\ldots ,\alpha _{j}\in {}^{\ast }\mathrm{FIN}_{\left[ 0,n\right] }$ and 
$j\in \N$, we set 
\begin{equation*}
\bigoplus_{i=0}^{j}\alpha _{i}:=\alpha _{0}+{}^{\ast }\alpha _{1}+\cdots
+{}^{\ast j}\alpha _{j}\text{.}
\end{equation*}%
Thus, if each $\alpha _{i}$ is cofinite and $f$ is a regressive map on $n$,
we have the convenient equation 
\begin{equation*}
f(\bigoplus_{i=1}^{j}\alpha _{i})=\bigoplus_{i=1}^{j}f(\alpha _{i}).
\end{equation*}

\begin{lemma}
\label{Lemma:Gowers}Fix $n\in \mathbb{N}$. Then, for $k=1,\ldots,n$, there
exist cofinite $u$-idempotents $\alpha _{k}\in {}^{\ast }\mathrm{FIN}_{k}$
such that:

\begin{enumerate}
\item $\langle\alpha_1,\ldots,\alpha_n\rangle$ is a coherent tuple, and

\item $\alpha _{j}+{}^{\ast }\alpha _{k}\sim \alpha _{k}+{}^{\ast }\alpha
_{j}\sim \alpha _{k}$ for every $1\leq j\leq k\leq n$.
\end{enumerate}
\end{lemma}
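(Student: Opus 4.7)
My plan is to mimic the proof of Lemma \ref{Lemma:HJ}, which relies on the existence of $u$-idempotents in nonempty closed $u$-subsemigroups (Corollary \ref{existenceofuidempotents}). I will construct the $\alpha_k$'s by recursion on $k = 1, 2, \ldots, n$, at each stage applying Corollary \ref{existenceofuidempotents} to a carefully chosen compact $u$-subsemigroup of ${}^{\ast}\mathrm{FIN}_k$ that simultaneously encodes the coherence conditions (via the generalized tetris maps) and the mutual absorption conditions relative to the previously constructed $\alpha_1, \ldots, \alpha_{k-1}$.

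For the base case $k=1$, the set of cofinite elements of ${}^{\ast}\mathrm{FIN}_1$ is a nonempty closed $u$-subsemigroup, and Corollary \ref{existenceofuidempotents} yields a cofinite $u$-idempotent $\alpha_1$. For the recursive step, having constructed $\alpha_1, \ldots, \alpha_{k-1}$, I define $D_k$ to be the set of cofinite $\delta \in {}^{\ast}\mathrm{FIN}_k$ satisfying: (i) $f(\delta) \sim \alpha_{f(k)}$ for every regressive map $f$ on $n$ with $f(k) < k$; and (ii) $\alpha_j + {}^{\ast}\delta \sim \delta + {}^{\ast}\alpha_j \sim \delta$ for every $j < k$. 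Closure of $D_k$ in the $u$-topology is direct, and that $D_k$ is a $u$-subsemigroup follows from the $u$-idempotency of the $\alpha_j$, the inductive absorption hypothesis, and the fact that the generalized tetris operations commute with the partial sum and preserve $\sim$. Once $D_k$ is established as nonempty, Corollary \ref{existenceofuidempotents} produces a $u$-idempotent $\alpha_k \in D_k$, which by construction has all the required properties.

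The main obstacle is showing that $D_k$ is nonempty. Following the HJ blueprint, I would build a candidate $\delta$ from $\alpha_1, \ldots, \alpha_{k-1}$ together with an auxiliary cofinite $u$-idempotent $\eta \in {}^{\ast}\mathrm{FIN}_n$ by forming sums such as $\alpha_1 + {}^{\ast}\alpha_2 + \cdots + {}^{\ast(k-2)}\alpha_{k-1} + {}^{\ast(k-1)}\eta$, which lives in an iterated nonstandard extension and whose $\sim$-class is realized by some $\delta \in {}^{\ast}\mathrm{FIN}_k$. Condition (ii) for $\delta$ then follows from a cascading application of the inductive absorption, as the ``largest'' term of the sum absorbs the preceding ones. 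Condition (i) requires the additional property that $f(\eta) \sim \alpha_{f(k)}$ for each relevant $f$; securing this amounts to a preliminary application of Corollary \ref{existenceofuidempotents} to a compact $u$-subsemigroup of ${}^{\ast}\mathrm{FIN}_n$ that enforces these coherence relations, exactly parallel to how the auxiliary idempotents $\beta_j$ are produced inside the main recursion of Lemma \ref{Lemma:HJ}.
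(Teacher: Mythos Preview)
Your level-by-level recursion has a real gap at the nonemptiness step, and the analogy with Lemma~\ref{Lemma:HJ} breaks down precisely there. In Hales--Jewett the object carried through the recursion is $\varpi_k\in{}^{\ast}W_{Lx}$, which lives at the \emph{top} level; the lower-level data $\upsilon_j$ are derived from it by substitution, and the witness $\rho_{k+1}\sim\varpi_k\concat{}^{\ast}\upsilon_{k+1}$ for nonemptiness of the relevant semigroup is built directly from $\varpi_k$. In your scheme, at stage $k$ only $\alpha_1,\ldots,\alpha_{k-1}$ are available, and to exhibit any element of $D_k$ (or of the auxiliary semigroup for $\eta$) you must produce some $\eta\in{}^{\ast}\mathrm{FIN}_k$ satisfying $f(\eta)\sim\alpha_{f(k)}$ for every regressive $f$ --- but there is no higher-level object to build it from. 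Invoking Ellis again only relocates the problem: you still owe at least one element of that coherence-enforcing set. (Two smaller issues: your candidate sum lands in $\mathrm{FIN}_n$ if $\eta\in{}^{\ast}\mathrm{FIN}_n$, so presumably you mean $\eta\in{}^{\ast}\mathrm{FIN}_k$; and your cascading-absorption argument handles $\alpha_j+{}^{\ast}\delta\sim\delta$ but not $\delta+{}^{\ast}\alpha_j\sim\delta$, which additionally forces $\eta+{}^{\ast}\alpha_j\sim\eta$.) Extending a partial coherent tuple by one level is not automatic: there is no single lift $\ell:\mathrm{FIN}_{k-1}\to\mathrm{FIN}_k$ that is a simultaneous section of all regressive maps $[0,k]\to[0,k-1]$, so the naive constructions of $\eta$ from $\alpha_{k-1}$ fail.

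The paper's proof finesses this by running the recursion inside the compact $u$-semigroup $Z$ of full cofinite coherent $n$-tuples. At each stage the state is an idempotent tuple $\boldsymbol{\alpha}^{(k)}=(\alpha_1^{(k)},\ldots,\alpha_n^{(k)})\in Z$, so coherent elements at \emph{every} level $1,\ldots,n$ are always on hand; the witness $\boldsymbol{\beta}$ showing $Z_k\neq\emptyset$ is given by $\beta_j\sim\bigoplus_{i=0}^{j-1}\alpha_{j-i}^{(k)}$, which for $j>k$ crucially uses the higher coordinates $\alpha_j^{(k)}$ that your bottom-up recursion never constructs. Only after all the tuples $\boldsymbol{\alpha}^{(1)},\ldots,\boldsymbol{\alpha}^{(n)}$ are built does one assemble the final answer $\alpha_k\sim\bigoplus_{i=1}^{k}\alpha_i^{(i)}$.
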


\begin{proof}
We define, by recursion on $k=1,2,\ldots ,n$, a sequence of $u$-idempotents 
\begin{equation*}
\boldsymbol{\alpha }^{(k)}=(\alpha _{1}^{(k)},\ldots ,\alpha _{n}^{(k)})\in Z
\end{equation*}
such that, for $1\leq i\leq j\leq k\leq n$, one has that

\begin{enumerate}
\item[(a)] $\alpha _{i}^{(k)}\sim\alpha _{i}^{(j)}$,

\item[(b)] $\alpha _{j}^{(k)}+{}^{\ast }\alpha _{i}^{(k)}\sim \alpha
_{j}^{(k)}$.
\end{enumerate}
\index{Gowers' Theorem!for multiple tetris operations}
To begin the construction, let $\boldsymbol{\alpha }^{(1)}$ be any
idempotent element of $Z$. Now suppose now that $k<n$ and $\boldsymbol{%
\alpha }^{(1)},\ldots ,\boldsymbol{\alpha }^{(k)}$ have been constructed
satisfying (a) and (b). Consider the closed $u$-semigroup\index{$u$-semigroup} $Z_k$ consisting
of sequences $\boldsymbol{\beta }=\left( \beta _{1},\ldots ,\beta
_{k}\right) \in Z$ such that:

\begin{enumerate}
\item[(i)] $\beta _{j}\sim \alpha _{j}^{(k)}$ for $1\leq j\leq k$, and

\item[(ii)] $\beta _{j}+{}^{\ast }\beta _{i}\sim \beta _{j}$ for $1\leq
i<j\leq n$ and $1\leq i\leq k$.
\end{enumerate}

We claim that $Z_{k}$ is nonempty. Indeed, we claim it contains the sequence 
$\boldsymbol{\beta }=(\beta _{1},\ldots ,\beta _{k})$, where $\beta _{j}\in
{}^{\ast }\mathrm{FIN}_{j}$ is such that 
\begin{equation*}
\beta _{j}\sim \bigoplus_{i=0}^{j-1}\alpha _{j-i}^{(k)}.
\end{equation*}%
To see that $\boldsymbol{\beta }$ is coherent, fix a regressive map\index{regressive map} $f$ on 
$n$. For a given $j\in \lbrack 1,k]$, we have that%
\begin{equation*}
f\left( \beta _{j}\right) \sim \bigoplus_{i=0}^{j-1}f(\alpha
_{j-i}^{(k)})\sim \bigoplus_{i=0}^{j-1}\alpha _{f(j-i)}^{(k)}\sim
\bigoplus_{i=0}^{f(j)-1}\alpha _{f(j)-i}^{(k)}\sim \beta _{f(j)}.
\end{equation*}%
The second equivalence uses that $\boldsymbol{\alpha }^{(k)}$ is coherent,
while the third equivalence uses that $f$ is a regressive map\index{regressive map} and that $%
\boldsymbol{\alpha }^{(k)}$ is a $u$-idempotent\index{$u$-idempotent}. 
Next observe that, since $\boldsymbol{\alpha }^{(k)}$ satisfies (b), we have
that $\beta _{j}\sim \bigoplus_{i=0}^{j-k}\alpha _{j-i}^{(k)}$ for $%
j=1,\ldots ,n$, and, moreover, that $\beta _{j}\sim \alpha _{j}^{(k)}$ for $%
j=1,2,\ldots ,k$. Thus, if $1\leq i<j\leq n$ and $1\leq i\leq k$, it follows
that 
\begin{equation*}
\beta _{j}+{}^{\ast }\beta _{i}\sim \bigoplus_{i=0}^{j-k}\alpha
_{j-i}^{(k)}+{}^{\ast k}\alpha _{i}\sim \bigoplus_{i=0}^{j-k}\alpha
_{j-i}^{(k)},
\end{equation*}%
where the last equivalence follows from (b). This concludes the proof that $%
\boldsymbol{\beta }$ belongs to $Z_{k}$.

Since $Z_{k}$ is a nonempty closed $u$-semigroup\index{$u$-semigroup}, it contains an idempotent $%
\boldsymbol{\alpha }^{(k+1)}$. It is clear that $\boldsymbol{\alpha }^{(k+1)}
$ satisfies (a) and (b). This concludes the recursive construction.

For $k=1,\ldots ,n$, we fix $\alpha _{k}\in {}^{\ast }\mathrm{FIN}_{k}$ such
that%
\begin{equation*}
\alpha _{k}\sim \bigoplus_{i=1}^{k}\alpha _{i}^{(i)}\text{.}
\end{equation*}%
We claim that $\alpha _{1},\ldots ,\alpha _{n}$ are as in the conclusion of
the lemma. Towards this end, first fix a regressive map\index{regressive map} $f$ on $n$. We
then have that%
\begin{equation*}
f\left( \alpha _{j}\right) \sim \bigoplus_{i=1}^{k}f(\alpha _{i}^{(i)})\sim
\bigoplus_{i=1}^{k}\alpha _{f(i)}^{(i)}\sim \bigoplus_{i=1}^{f(k)}\alpha
_{i}^{(i)}\sim \alpha _{f(j)},
\end{equation*}%
where the second to last step uses the fact that $f$ is a regressive map\index{regressive map},
that that the $\alpha _{i}^{(k)}$'s are $u$-idempotent\index{$u$-idempotent}, and that (a) holds.
We thus have that $\alpha _{1},\ldots ,\alpha _{n}$ are coherent. We now
show that (2) holds. Fix $1\leq j\leq k\leq n$. We then have 
\begin{equation*}
\alpha _{k}+{}^{\ast }\alpha _{j}\sim \bigoplus_{i=1}^{k}\alpha
_{i}^{(i)}+\bigoplus_{i=1}^{j}\alpha _{i}^{(i)}\sim
\bigoplus_{i=1}^{k}\alpha _{i}^{(i)}\sim \alpha _{k},
\end{equation*}%
where the second to last equivalence repeatedly uses the fact that $\alpha
_{k}^{(k)}+{}^{\ast }\alpha _{i}^{(i)}\sim \alpha _{k}^{(k)}$ for $1\leq
i\leq k$. 
A similar computation shows that $\alpha _{j}+{}^{\ast }\alpha _{k}\sim
\alpha _{k}$, establishing (2) and finishing the proof of the lemma.
\end{proof}

We say that a sequence $\left( x_{i}\right) $ in $\mathrm{FIN}_{n}$ is a 
\emph{block sequence} if $\mathrm{Supp}\left( x_{i}\right) <\mathrm{Supp}%
\left( x_{j}\right) $ for $i<j$.

\begin{theorem}
Suppose that $\alpha _{k}\in {}^{\ast }\mathrm{FIN}_{k}$ for $k=1,2,\ldots ,n
$ are as in the previous lemma. For $k=1,\ldots ,n$, suppose that $%
A_{k}\subset \mathrm{FIN}_{k}$ is such that ${}\alpha _{k}\in {}^{\ast }A_{k}
$. Then there exists a block sequence $\left( x_{i}\right) $ in $\mathrm{FIN}%
_{n}$ such that, for every finite sequence $f_{1},\ldots ,f_{\ell }$ of
regressive maps on $n$, we have $f_{1}\left( x_{1}\right) +\cdots +f_{\ell
}\left( x_{\ell }\right) \in A_{\max (f_{1}(n),\ldots ,f_{\ell}(n))}$.
\end{theorem}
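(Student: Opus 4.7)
The plan is to construct $(x_i)$ recursively, maintaining a strong invariant that can be continued via a nonstandard witness---namely $\alpha_n$---at each stage. The finitely many conditions required at each step will all hold in the ultrafilter $\mathcal{U}_{\alpha_n}$, so a suitable $x_N$ can be extracted from a finite intersection of its members.

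At stage $N-1$ I would maintain: for every $j\ge 0$, every $1\le i_1<\cdots<i_j\le N-1$, and every tuple of regressive maps $f_1,\ldots,f_j$ on $n$ with each $f_i(n)\ge 1$, writing $y:=f_1(x_{i_1})+\cdots+f_j(x_{i_j})$ and $p:=\max_i f_i(n)$ (with $y:=0$, $p:=0$ if $j=0$), we have (a) $y\in A_p$ whenever $j\ge 1$, and (b) $y+\alpha_q\in {}^{\ast}A_{\max(p,q)}$ for every $q\in[1,n]$. The empty base case reduces (b) to $\alpha_q\in {}^{\ast}A_q$, which is a hypothesis.

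For the recursive step, I want $x_N\in\mathrm{FIN}_n$ with $\mathrm{Supp}(x_N)>\mathrm{Supp}(x_{N-1})$ such that, for every old prefix $y'$ (with maximum $p'$) and every regressive $f$ on $n$ with $f(n)\ge 1$, (a$'$) $y'+f(x_N)\in A_{\max(p',f(n))}$ and (b$'$) $y'+f(x_N)+\alpha_q\in {}^{\ast}A_{\max(p',f(n),q)}$ for every $q$. I claim $\alpha_n$ witnesses these in the nonstandard universe. For (a$'$), apply (b) with $q=f(n)$ to obtain $y'+\alpha_{f(n)}\in {}^{\ast}A_{\max(p',f(n))}$; coherence ($f(\alpha_n)\sim\alpha_{f(n)}$), together with the fact that $\sim$ is preserved under adding a fixed standard element on the left, yields $y'+f(\alpha_n)\in {}^{\ast}A_{\max(p',f(n))}$. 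For (b$'$), apply (b) with $q^{\ast}:=\max(f(n),q)$ to get $y'+\alpha_{q^{\ast}}\in {}^{\ast}A_{\max(p',f(n),q)}$; combining coherence with the absorption identities of the previous lemma gives $f(\alpha_n)+{}^{\ast}\alpha_q\sim\alpha_{f(n)}+{}^{\ast}\alpha_q\sim\alpha_{q^{\ast}}$, and hence $y'+f(\alpha_n)+{}^{\ast}\alpha_q\in {}^{\ast\ast}A_{\max(p',f(n),q)}$.

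Since at stage $N$ there are only finitely many old prefixes $y'$, finitely many regressive maps on $n$, and finitely many values of $q\in[1,n]$, the corresponding standard sets
\[
D_{y',f}:=\{\xi\in\mathrm{FIN}_n:\mathrm{Supp}(\xi)>\mathrm{Supp}(x_{N-1}),\ y'+f(\xi)\in A_{\max(p',f(n))}\}
\]
and
\[
E_{y',f,q}:=\{\xi\in\mathrm{FIN}_n:\mathrm{Supp}(\xi)>\mathrm{Supp}(x_{N-1}),\ y'+f(\xi)+\alpha_q\in {}^{\ast}A_{\max(p',f(n),q)}\}
\]
all belong to $\mathcal{U}_{\alpha_n}$ (by the witness computation above and the definition of the ultrafilter generated by $\alpha_n$). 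Their finite intersection is therefore nonempty, and any member of it serves as $x_N$. Iterating produces the desired block sequence, and invariant (a) applied to the full sequence is exactly the conclusion. The main technical obstacle is recognizing that the ``infinite future'' needed to keep the recursion going collapses to the single $+\alpha_q$ clause in (b); this collapse rests essentially on the absorption identities $\alpha_j+{}^{\ast}\alpha_k\sim\alpha_k$ for $j\le k$ and on coherence under regressive maps, both guaranteed by the preceding lemma.
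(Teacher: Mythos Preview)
Your proof is correct and follows essentially the same approach as the paper. Both arguments recursively build the block sequence while maintaining, in addition to the target membership condition, an auxiliary ``one extra nonstandard term'' invariant that permits continuation; your clause (b) with $y+\alpha_q\in{}^{\ast}A_{\max(p,q)}$ is equivalent, via coherence $f(\alpha_n)\sim\alpha_{f(n)}$, to the paper's clause $f_1(x_1)+\cdots+f_d(x_d)+f_{d+1}(\alpha_n)\in{}^{\ast}A_{\max(\ldots)}$, and your extraction of $x_N$ through the finite intersection property in $\mathcal{U}_{\alpha_n}$ is just a rephrasing of the paper's direct appeal to transfer.
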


\begin{proof}
By recursion on $d$, we define a block sequence $(x_{d})$ in $\mathrm{FIN}%
_{n}$ such that, for every sequence $f_{1},\ldots ,f_{d+1}$ of regressive maps $n$, we have%
\begin{equation*}
f_{1}\left( x_{1}\right) +\cdots +f_{d}\left( x_{d}\right) \in A_{\max
(f_{1}(n),\ldots ,f_{d}(n))}
\end{equation*}%
and%
\begin{equation*}
f_{1}\left( x_{1}\right) +\cdots +f_{d}\left( x_{d}\right) +f_{d+1}\left(
\alpha _{n}\right) \in {}^{\ast }A_{\max (f_{1}(n),\ldots ,f_{d+1}(n))}\text{%
.}
\end{equation*}%
Suppose that $x_{1},\ldots ,x_{d}$ has been constructed satisfying the
displayed properties. Suppose that $f_{1},\ldots ,f_{d+2}$ are regressive maps on $n$. Then since 
\begin{equation*}
f_{d+1}(\alpha _{n})+f_{d+2}({}^{\ast }\alpha _{n})\sim \alpha
_{f_{d+1}(n)}+{}^{\ast }\alpha _{f_{d+2}(n)}\sim \alpha _{\max
(f_{d+1}(n),f_{d+2}(n))}\sim f_{d+p}(\alpha _{n}),
\end{equation*}%
where $p\in \{1,2\}$ is such that $\max (f_{d+1}(n),f_{d+2}(n))=f_{d+p}(n)$,
the inductive hypothesis allows us to conclude that%
\begin{equation*}
f_{1}\left( x_{1}\right) +\cdots +f_{d-1}\left( x_{d}\right) +f_{d+1}\left(
\alpha _{n}\right) +f_{d+2}\left( {}^{\ast }\alpha _{n}\right) \in {}^{\ast
\ast }A_{\max (f_{1}(n),\ldots ,f_{d+2}(n))}\text{.}
\end{equation*}%
Therefore, by transfer, we obtain $x_{d+1}\in \mathrm{FIN}_{n}$ such that $%
\mathrm{Supp}\left( x_{d+1}\right) >\mathrm{Supp}\left( x_{d}\right) $, and,
for any sequence $f_{1},\ldots ,f_{d+2}$ of regressive maps on $n$, we
have that%
\begin{equation*}
f_{1}\left( x_{1}\right) +\cdots +f_{d}\left( x_{d+1}\right) \in {}A_{\max
(f_{1}(n),\ldots ,f_{d+1}(n))}
\end{equation*}%
and 
\begin{equation*}
f_{1}\left( x_{1}\right) +\cdots +f_{d+1}\left( x_{d+1}\right)
+f_{d+2}\left( {}\alpha _{n}\right) \in {}^{\ast }A_{\max (f_{1}(n),\ldots
,f_{d+2}(n))}\text{.}
\end{equation*}%
This concludes the recursive construction.
\end{proof}

\begin{corollary}[Generalized Gowers]
For any finite coloring of $\mathrm{FIN}_{n}$, there exists a block sequence 
$\left( x_{i}\right) $ in $\mathrm{FIN}_{n}$ such that the set of elements
of the form $f_{1}\left( x_{1}\right) +\cdots +f_{\ell}\left( x_{\ell}\right) $
for $\ell\in \mathbb{N}$ where $f_{1},\ldots ,f_{\ell }$ 
and regressive maps on $n$
such that $n=\max (f_{1}(n),\ldots ,f_{\ell}(n))$, is monochromatic.
\end{corollary}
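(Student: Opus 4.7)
The plan is to derive this as an immediate consequence of the preceding theorem, exactly in the same spirit as Hindman's theorem follows from its ``$u$-idempotent'' version. The only subtlety is that the hypothesis concerning $n = \max(f_1(n),\ldots,f_\ell(n))$ means we only need to control membership of sums in the partition pieces of $\mathrm{FIN}_n$, not in the (uncolored) sets $\mathrm{FIN}_k$ for $k<n$.

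More precisely, suppose $\mathrm{FIN}_n = C_1 \sqcup \cdots \sqcup C_r$ is the given finite coloring. I would first invoke Lemma \ref{Lemma:Gowers} to produce cofinite $u$-idempotents $\alpha_1 \in {}^{\ast}\mathrm{FIN}_1,\ldots, \alpha_n \in {}^{\ast}\mathrm{FIN}_n$ which are coherent and satisfy $\alpha_j + {}^{\ast}\alpha_k \sim \alpha_k + {}^{\ast}\alpha_j \sim \alpha_k$ for $j\leq k$. Since $\alpha_n \in {}^{\ast}\mathrm{FIN}_n = \bigsqcup_{i=1}^r {}^{\ast}C_i$, there is a unique $i \in \{1,\ldots,r\}$ such that $\alpha_n \in {}^{\ast}C_i$.

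Next I would set up the data required by the preceding theorem by defining $A_n := C_i$ and, for $1 \leq k < n$, simply $A_k := \mathrm{FIN}_k$. Then trivially $\alpha_k \in {}^{\ast}A_k$ for every $k=1,\ldots,n$. Applying the preceding theorem yields a block sequence $(x_i)$ in $\mathrm{FIN}_n$ such that, for every finite tuple $f_1,\ldots,f_\ell$ of regressive maps on $n$, one has
\[
f_1(x_1) + \cdots + f_\ell(x_\ell) \in A_{\max(f_1(n),\ldots,f_\ell(n))}.
\]
In particular, whenever $\max(f_1(n),\ldots,f_\ell(n)) = n$, the sum belongs to $A_n = C_i$, which is precisely the desired monochromaticity.

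There is no real obstacle here; the work has all been done in Lemma \ref{Lemma:Gowers} and the preceding theorem. The only thing to point out explicitly is that the trivial choice $A_k := \mathrm{FIN}_k$ for $k<n$ is legitimate because those indices do not contribute any constraint (any element automatically lies in $\mathrm{FIN}_k$), so the single color class $C_i$ is enough to carry the argument.
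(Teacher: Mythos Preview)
Your proof is correct and follows exactly the paper's approach: pick the color $C_i$ containing $\alpha_n$, set $A_n := C_i$, and apply the preceding theorem. The paper's proof is even terser and leaves the choice of $A_k$ for $k<n$ implicit, but your explicit choice $A_k := \mathrm{FIN}_k$ is precisely the natural one.
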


\begin{proof}
If $\mathrm{FIN}_n=B_1\sqcup \cdots \sqcup B_r$ is a partition of $\mathrm{%
FIN}_n$, apply the previous theorem with $A_n:=B_i$ where $\alpha_n\in
{}^{\ast}B_i$.
\end{proof}

Gowers' original theorem is a special case of the previous corollary by
taking each $f_{i}$ to be an iterate of the tetris operation. One can also
obtain a common generalization of Gowers' theorem and the Milliken-Taylor
theorem. We let $\mathrm{FIN}_{k}^{\left[ m\right] }$ be the set of $m$%
-tuples $\left( x_{1},\ldots ,x_{m}\right) $ in $\mathrm{FIN}_{k}$ such that 
$\mathrm{Supp}\left( x_{i}\right) <\mathrm{Supp}\left( x_{j}\right) $ for $%
1\leq i<j\leq m$. Suppose that $\left( x_{d}\right) $ is a sequence in $%
\mathrm{FIN}_{n}$. Suppose that $F=\left\{ a_{1},\ldots ,a_{r}\right\} $ is
a finite nonempty subset of $\mathbb{N}$. We let $\mathcal{S}\left(
F,k\right) $ be the set of tuples $\boldsymbol{f}=\left( f_{j}\right) _{j\in
F}$ such that $f_{j}:\left\{ 0,1,\ldots ,n\right\} \rightarrow \left\{
0,1,\ldots ,k_{j}\right\} $ is a nondecreasnig surjection and $\max \left\{
k_{j}:j\in F\right\} =k$. For such an element $\boldsymbol{f}$ we let $x_{%
\boldsymbol{f}}$ be the sum $f_{a_{1}}\left( x_{a_{1}}\right) +\cdots
+f_{a_{r}}\left( x_{a_{r}}\right) $. When $F$ is empty, by convention we let 
$\mathcal{S}\left( F,k\right) $ contain a single element $\boldsymbol{f}%
=\varnothing $, and in such case $x_{\boldsymbol{f}}=0$.

\begin{theorem}
Let $\langle\alpha _{1},\ldots ,\alpha _{n}\rangle$ be as in Lemma \ref{Lemma:Gowers}.
Suppose that $A_{k}\subset \mathrm{FIN}_{k}^{\left[ m\right] }$ for $%
k=1,2\ldots ,n$ is such that $\left( \alpha _{k},{}^{\ast }\alpha
_{k},\ldots ,{}^{\ast (m-1)}\alpha _{k}\right) \in {}^{\ast m}A_{k}$. Then
there exists a block sequence $\left( x_{d}\right) $ in $\mathrm{FIN}_{n}$
such that, given $k\in \{1,\ldots,n\}$, nonempty finite subsets $%
F_1<\ldots<F_m$ of $\N$, and $f_i\in \mathcal{S}(F_i,k)$ for $i=1,\ldots,m$,
we have that $\{x_{\boldsymbol{f}_1},\ldots,x_{\boldsymbol{f}_m}\}\in A_k$.%
%
%
%
%
%
%
\end{theorem}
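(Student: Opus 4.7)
The theorem simultaneously generalizes the Milliken--Taylor theorem (Proposition \ref{MTNS}) and the Generalized Gowers theorem earlier in the section. The plan is to adapt the recursive construction from the proof of Proposition \ref{MTNS}, replacing the single $u$-idempotent $\alpha$ there by the coherent family of $u$-idempotents $\alpha_1,\ldots,\alpha_n$ furnished by Lemma \ref{Lemma:Gowers}, and using regressive maps on $n$ to convert tails involving $\alpha_n$ into the appropriate $\alpha_k$.

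First, I would formulate the double invariant to be maintained at each stage $d$ of the construction of the block sequence $(x_d)$ in $\mathrm{FIN}_n$. The main invariant asserts that, for every $k\in\{1,\ldots,n\}$, every $j\in\{0,1,\ldots,m\}$, every chain $F_1<\cdots<F_j$ of nonempty finite subsets of $\{1,\ldots,d\}$, and every choice of $\boldsymbol{f}_i\in\mathcal{S}(F_i,k)$, one has
\[
\{x_{\boldsymbol{f}_1},\ldots,x_{\boldsymbol{f}_j},\alpha_k,{}^{\ast}\alpha_k,\ldots,{}^{\ast(m-j-1)}\alpha_k\}\in{}^{\ast(m-j)}A_k.
\]
The case $j=m$ is the desired conclusion, while $d=0$, $j=0$ is the hypothesis. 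Paired with this is a \emph{pumping} invariant analogous to the second statement in the proof of Proposition \ref{MTNS}: for the same data together with an arbitrary regressive map $g$ on $n$ with $g(n)\leq k$, the tuple in which $g(\alpha_n)$ has been grafted onto the last block $x_{\boldsymbol{f}_j}$ (or placed alone, when $F_j$ is to be extended) and the subsequent $\alpha_k$-tail has been shifted one star up lies in ${}^{\ast(m-j+1)}A_k$.

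Next, I would carry out the recursive step. Because the indices $F_1<\cdots<F_j$ sit inside $\{1,\ldots,d+1\}$, only the last block $F_j$ can contain the new index $d+1$, and the component of $\boldsymbol{f}_j$ at $d+1$ is some regressive map $g$ on $n$ with $g(n)\leq k$. Rewriting the stage-$(d+1)$ invariants in terms of $g(x_{d+1})$ and appealing to the pumping invariants at stage $d$ with $g(\alpha_n)$ in place of $g(x_{d+1})$, one packages the totality of new requirements at stage $d+1$ into a single internal statement about an element $\beta\in{}^{\ast}\mathrm{FIN}_n$ with support exceeding that of $x_d$. That statement is witnessed in the nonstandard universe by $\beta=\alpha_n$, and transfer produces a standard $x_{d+1}\in\mathrm{FIN}_n$ making the invariants true at stage $d+1$.

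The crux is the reduction that feeds transfer: from the stage-$d$ invariants, one must derive, at the next level up, both the plain and pumped stage-$(d+1)$ invariants with $g(\alpha_n)$ in place of $g(x_{d+1})$. Here one uses the coherence identity $g(\alpha_n)\sim\alpha_{g(n)}$ together with the absorption laws $\alpha_k+{}^{\ast}\alpha_{k'}\sim\alpha_k\sim\alpha_{k'}+{}^{\ast}\alpha_k$ valid for $k'\leq k$, supplied by Lemma \ref{Lemma:Gowers}(2): coherence rewrites $g(\alpha_n)$ as $\alpha_{g(n)}$, after which the absorption laws merge $\alpha_{g(n)}$ into the neighboring $\alpha_k$-tail, matching an already-verified instance of the stage-$d$ invariants. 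The main obstacle is the combinatorial bookkeeping needed to show that every configuration of $(k,j,F_1,\ldots,F_j,\boldsymbol{f}_1,\ldots,\boldsymbol{f}_j,g)$ at stage $d+1$ reduces, through these identities, to one already known at stage $d$, and that all such reductions assemble into a single internal sentence to which transfer can be applied. Once this bookkeeping is in place, the recursion proceeds exactly as in Proposition \ref{MTNS}, and reading off the case $j=m$ from the completed block sequence gives the theorem.
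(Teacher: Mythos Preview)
Your proposal is correct and follows essentially the same approach as the paper: a recursive construction of the block sequence maintaining a pair of invariants (a ``main'' one and a ``pumped'' one), with the recursive step powered by the coherence identity $g(\alpha_n)\sim\alpha_{g(n)}$ and the absorption laws $\alpha_{k'}+{}^{\ast}\alpha_k\sim\alpha_k\sim\alpha_k+{}^{\ast}\alpha_{k'}$ from Lemma \ref{Lemma:Gowers}, then transfer with witness $\alpha_n$. The only minor difference is that the paper states its pumping invariant with $x_{\boldsymbol{f}_j}+\alpha_k$ alone (rather than with $g(\alpha_n)$ for general regressive $g$ with $g(n)\le k$) and then derives the $\alpha_s$-versions for $s\le k$ via absorption at each step; your formulation front-loads this into the invariant itself, which is a harmless repackaging.
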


\begin{proof}
We define by recursion a block sequence $\left( x_{d}\right) $ in $\mathrm{%
FIN}_{n}$ such that, for all $k\in \{1,\ldots ,n\}$, all $1\leq j\leq m$,
all finite $F_{1},\ldots ,F_{j}\subseteq \N$ with $F_{1}<\cdots <F_{j}$ and $%
F_{1},\ldots ,F_{j-1}$ nonempty, and all $\boldsymbol{f_{i}}\in \mathcal{S}%
(F_{i},k)$, we have 
\begin{equation*}
\left\{ x_{\boldsymbol{f}_{1}},x_{\boldsymbol{f}_{2}},\ldots ,x_{\boldsymbol{%
f}_{j}},\alpha _{k},{}^{\ast }\alpha _{k},\ldots ,{}^{\ast (m-j-1)}\alpha
_{k}\right\} \in {}^{\ast (m-j)}A_{k}
\end{equation*}%
and 
\begin{equation*}
\left\{ x_{\boldsymbol{f}_{1}},x_{\boldsymbol{f}_{2}},\ldots ,x_{\boldsymbol{%
f}_{j-1}},x_{\boldsymbol{f}_{j}}+{}\alpha _{k},{}^{\ast }\alpha_k ,{}^{\ast
\ast }\alpha _{k},\ldots ,{}^{\ast (m-j)}\alpha _{k}\right\} \in {}^{\ast
(m-j+1)}A_{k}\text{.}
\end{equation*}%
It is clear that the sequence $(x_{d})$ is as desired. 

Suppose that $x_{1},\ldots ,x_{d}$ have been constructed satisfying the
above assumption. From the properties of the sequence $\alpha _{1},\ldots
,\alpha _{n}$, we see that the second condition also implies, for all $1\leq
s\leq k$:%
\begin{equation*}
\{x_{\boldsymbol{f}_{1}},\ldots ,x_{\boldsymbol{f}_{j}}+\alpha _{k}+{}^{\ast
}\alpha _{s},{}^{\ast \ast }\alpha _{k},\ldots ,{}^{\ast \left( m-j+1\right)
}\alpha _{k}\}\in {}{}^{\ast (m-j+2)}A_{k}
\end{equation*}%
and%
\begin{equation*}
\{x_{\boldsymbol{f}_{1}},\ldots ,x_{\boldsymbol{f}_{j}}+\alpha _{s}+{}^{\ast
}\alpha _{k},{}^{\ast \ast }\alpha _{k},\ldots ,{}^{\ast \ast \left(
m-j+1\right) }\alpha _{k}\}\in {}{}^{\ast (m-j+2)}A_{k}\text{.}
\end{equation*}%
It follows from transfer that we can find $x_{d+1}$ with $\mathrm{Supp}%
(x_{d+1})>\mathrm{Supp}(x_{d})$ as desired.
\end{proof}

\section*{Notes and references} The finitary form of Hindman's theorem, initially conjectured by Graham and Rothschild in \cite{graham_ramseys_1971}, was first proved by Hindman by purely combinatorial methods \cite{hindman_finite_1974}. It had been previously observed by Galvin---see also \cite{hindman_existence_1972}---that the existence of an idempotent ultrafilters (which was unknown at the time) implies the infinitary version of Hindman's theorem. The existence of idempotent ultrafilters was later established by Glazer; see \cite{comfort_ultrafilters:_1977}. Remarkably, Hindman's original combinatorial proof was significantly more technical and required a substantial amount of bookkeeping. Another short proof of Hindman's theorem was also obtained shortly later by Baumgartner \cite{baumgartner_short_1974}.

Gowers' theorem \cite{gowers_lipschitz_1992} was motivated by a problem on the geometry of the Banach space $c_0$. While Gowers' original proof was infinitary and used ultrafilter methods, explicit purely combinatorial proofs of the corresponding finitary statement were later obtained by Ojeda-Aristizabal \cite{ojeda-aristizabal_finite_2015} and Tyros \cite{tyros_primitive_2015}.  The more general version of Gowers' theorem presented in this chapter was established in \cite{lupini_gowers_2017}. This answered a question of Barto\v{s}ov\'{a} and Kwiatkowska from \cite{bartosova_lelek_2013}, where the corresponding finitary version is proved with different methods.

\chapter{Partition regularity of equations}\label{partreg}

\section{Characterizations of partition regularity}

Let $F(X_1,\ldots,X_n)$ be a polynomial over $\Z$.  We begin with a proposition giving a nonstandard characterization of ultrafilters all of whose sets contain zeroes of $F$.

\begin{proposition}\label{charultrasolutions}
Suppose that $\u\in \beta\N$.  The following are equivalent:
\begin{enumerate}
\item For every $A\in \u$, there are [distinct] $x_1,\ldots,x_n\in A$ such that $F(x_1,\ldots,x_n)=0$.
\item There exists $k\in \N$ and [distinct] $\alpha_1,\ldots,\alpha_n\in {}^{k\ast}\N$ such that $\u=\u_{\alpha_i}$ for all $i=1,\ldots,n$ and $F(\alpha_1,\ldots,\alpha_n)=0$.
\end{enumerate}
\end{proposition}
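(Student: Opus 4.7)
The plan is to prove both implications; $(2) \Rightarrow (1)$ is essentially a transfer argument, while the substance lies in $(1) \Rightarrow (2)$, which I handle by building an ultrafilter on $\N^n$ concentrated on the zero set of $F$ whose coordinate marginals are all $\u$, and then realizing it as a hyperfinite generator.

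For $(2) \Rightarrow (1)$, fix any $A \in \u$. Since each $\alpha_i$ generates $\u$, we have $\alpha_i \in {}^{k\ast}A$ for every $i$, so the tuple $(\alpha_1,\ldots,\alpha_n)$ lies in ${}^{k\ast}(A^n)$ and, together with the equation $F(\alpha_1,\ldots,\alpha_n)=0$, witnesses that the $k$-th hyperextension of $\{(x_1,\ldots,x_n)\in A^n : F(x_1,\ldots,x_n)=0\}$ is nonempty. Applying transfer $k$ times downward produces a solution in $A^n$ itself. In the distinct case the extra condition $\alpha_i \neq \alpha_j$ transfers in exactly the same fashion.

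For $(1) \Rightarrow (2)$, set $T := \{(x_1,\ldots,x_n) \in \N^n : F(x_1,\ldots,x_n) = 0\}$ and, for $1 \leq i \leq n$ and $A \in \u$, write $B_{i,A} := \pi_i^{-1}(A)$. The key step is to verify that the family
$$\mathcal{F} := \{T\} \cup \{B_{i,A} : 1 \leq i \leq n,\ A \in \u\}$$
(augmented by each $\Delta_{ij} := \{(x_1,\ldots,x_n) : x_i \neq x_j\}$ in the distinct case) has the finite intersection property. Given a finite subfamily, for each coordinate $i$ collect the $\u$-sets appearing there and let $A_i \in \u$ be their intersection (set $A_i = \N$ if none appears); then $A := A_1 \cap \cdots \cap A_n \in \u$ and the chosen finite intersection contains $T \cap A^n$, which is nonempty by hypothesis $(1)$. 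Extend $\mathcal{F}$ to an ultrafilter $\mathcal{W}$ on $\N^n$.

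Finally, assuming the $(2^{\aleph_0})^+$-enlarging property on the nonstandard universe, Proposition~\ref{prop-enlarging} applied to $\mathcal{W}$ (an ultrafilter on the countable set $\N^n$) yields $\beta \in {}^{\ast}(\N^n) = ({}^{\ast}\N)^n$ with $\mathcal{W} = \u_\beta$. Writing $\beta = (\alpha_1,\ldots,\alpha_n)$ with $\alpha_i \in {}^{\ast}\N$, an unfolding of the definitions shows that the $i$-th coordinate marginal of $\u_\beta$ is precisely $\u_{\alpha_i}$, so $\u_{\alpha_i} = \u$ for each $i$ by the construction of $\mathcal{F}$. Since $T \in \mathcal{W}$, we have $\beta \in {}^{\ast}T$, i.e.\ $F(\alpha_1,\ldots,\alpha_n) = 0$; and in the distinct case the sets $\Delta_{ij} \in \mathcal{W}$ force $\alpha_i \neq \alpha_j$. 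Thus $k = 1$ suffices, and the only mildly subtle point is packaging the members of $\u$ by coordinate in the FIP verification so as to reduce directly to the hypothesis.
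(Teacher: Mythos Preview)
Your proof is correct. For $(2)\Rightarrow(1)$ you argue exactly as the paper does (transfer $k$ times), and for $(1)\Rightarrow(2)$ you reach the same conclusion ($k=1$ suffices) by a genuinely different but equally valid route.

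The paper's argument for $(1)\Rightarrow(2)$ is a one-step saturation: for each $A\in\u$ it forms the internal set
\[
X_A=\Bigl\{(\alpha_1,\ldots,\alpha_n)\in({}^{\ast}\N)^n:\bigl[\textstyle\bigwedge_{i\neq j}\alpha_i\neq\alpha_j\bigr]\wedge\textstyle\bigwedge_i\alpha_i\in{}^{\ast}A\wedge{}^{\ast}F(\alpha_1,\ldots,\alpha_n)=0\Bigr\},
\]
observes that $(X_A)_{A\in\u}$ has the finite intersection property (since $\u$ is closed under finite intersections and hypothesis~(1) applies to the intersection), and invokes saturation to find a tuple in $\bigcap_{A\in\u}X_A$. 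Your approach instead carries out the finite-intersection argument entirely at the standard level---building an ultrafilter $\mathcal{W}$ on $\N^n$ with the right coordinate marginals and concentrated on the zero set of $F$---and only then passes to the nonstandard universe via Proposition~\ref{prop-enlarging} to realize $\mathcal{W}$ by a point of $({}^{\ast}\N)^n$. Both arguments require the same strength of enlarging/saturation (indexed by $|\u|=2^{\aleph_0}$). The paper's version is slightly more economical; yours has the minor conceptual advantage of cleanly separating the standard combinatorics from the nonstandard realization, and makes explicit the intermediate ultrafilter on $\N^n$ whose marginals are all $\u$.
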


\begin{proof}
First assume that (1) holds.  For $A\in \u$, set 
\begin{equation*}
X_{A}:=\left\{(\alpha _{1},\ldots ,\alpha _{n})\in \starN^{n}\ :\
[\bigwedge_{i\not=j}\alpha _{i}\not=\alpha _{j}]\wedge
\bigwedge_{i}\alpha _{i}\in {}^{\ast }A\wedge {}^{\ast }F(\alpha _{1},\ldots ,\alpha _{n})=0\right\}.
\end{equation*}%
It is clear that the family $(X_A)$ has the finite intersection property, so by saturation there is $(\alpha_1,\ldots,\alpha_n)\in \bigcap_A X_A$; this tuple witnesses the truth of (2).

Conversely, suppose that (2) holds.  Suppose that $\alpha _{1},\ldots ,\alpha _{n}\in {}^{k\ast}\N$ are as
in (2).  Suppose that $%
A\in \mathcal{U}$. Then the statement \textquotedblleft there exist $i\in
\left\{ 1,2,\ldots ,r\right\} $ and [distinct] $x_{1},\ldots ,x_{n}\in
{}^{\ast }{}^{k\ast}A$ such that $F(x_{1},\ldots ,x_{n})=0$%
\textquotedblright\ holds in the $k$-fold iterated nonstandard universe, as witnessed by $%
\alpha _{1},\ldots ,\alpha _{n}$; the desired conclusion follows from
$k$ applications of transfer.

\end{proof}

\begin{definition}
An ultrafilter $\u$ is a \emph{witness} of the [injective] partition
regularity of the equation $F(X_1,\ldots,X_n)=0$ when $\u$
satisfies the equivalent conditions of Proposition 
\ref{charultrasolutions}. In this case, we also simply say that
$\u$ an [injective] \emph{$F$-witness}.
\end{definition}

We now connect this notion with the standard Ramsey-theoretic notion of partition regular equation.

\begin{definition}
A polynomial $F(X_{1},\ldots ,X_{n})$ is said to be \emph{[injectively] partition regular%
} (on the natural numbers $\N$)
if, for every finite partition $\mathbb{N}=C_{1}\sqcup \cdots \sqcup C_{r}$, 
there exists $i\in \{1,\ldots ,r\}$ and there exist [distinct] $%
x_{1},\ldots ,x_{n}\in C_{i}$ such that $F(x_{1},\ldots ,x_{n})=0$.\index{partition regular}\index{partition regular!injectively}
\end{definition}

\begin{proposition}
\label{ultraregularity} 
$F(X_1,\ldots,X_n)$ is [injectively] partition regular if and only if there is an [injective] 
$F$-witness if and only there exist [distinct] $u$-equivalent $\alpha_1\sim\ldots\sim\alpha_n$
such that $F(\alpha_1,\ldots,\alpha_n)=0$.
\end{proposition}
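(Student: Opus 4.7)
The plan is to establish the cycle (C) $\Rightarrow$ (B) $\Rightarrow$ (A) $\Rightarrow$ (C). The first two implications are quick corollaries of Proposition~\ref{charultrasolutions} and the definition of an $F$-witness; the real content lies in (A) $\Rightarrow$ (C), which I plan to obtain by a saturation argument against the family of all finite colorings of $\N$.

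For (C) $\Rightarrow$ (B): given [distinct] $u$-equivalent $\alpha_1 \sim \cdots \sim \alpha_n$ satisfying $F(\alpha_1,\ldots,\alpha_n)=0$, I would first view all the $\alpha_i$ as living in a common iterate ${}^{k\ast}\N$ via the natural inclusions ${}^{j\ast}\N \subseteq {}^{k\ast}\N$ for $j \le k$; this does not alter the generated ultrafilters (since $A \in \u_\alpha$ is equivalent to $\alpha \in {}^{k\ast}A$ regardless of which level $\alpha$ originally lived at), and the equation is by definition interpreted at this common level. Then $\u := \u_{\alpha_1} = \cdots = \u_{\alpha_n}$ meets condition~(2) of Proposition~\ref{charultrasolutions}, so it is an [injective] $F$-witness. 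Conversely, for (B) $\Rightarrow$ (A): given an $F$-witness $\u$ and a finite partition $\N = C_1 \sqcup \cdots \sqcup C_r$, exactly one $C_i$ belongs to $\u$, and the defining property of the witness provides the desired [distinct] $x_1,\ldots,x_n \in C_i$ with $F(x_1,\ldots,x_n) = 0$.

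The main content is (A) $\Rightarrow$ (C). For each finite coloring $c: \N \to [r]$, set
$$M_c := \{(x_1,\ldots,x_n) \in \N^n : F(x_1,\ldots,x_n) = 0,\; c(x_1) = \cdots = c(x_n)\; [\text{and the } x_i \text{ distinct}]\}.$$
Partition regularity of $F$ guarantees $M_c \neq \emptyset$, and for finitely many colorings $c_1,\ldots,c_m$ their common refinement $(c_1,\ldots,c_m): \N \to \prod_{j} [r_j]$ is again a finite coloring whose monochromatic solutions are automatically monochromatic for each $c_j$; thus $M_{(c_1,\ldots,c_m)} \subseteq M_{c_1} \cap \cdots \cap M_{c_m}$, and the family $\{M_c\}_c$ has the finite intersection property. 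Applying the enlarging property to this family (at a sufficiently large cardinal) produces $(\alpha_1,\ldots,\alpha_n) \in \bigcap_c {}^\ast M_c$. In particular $F(\alpha_1,\ldots,\alpha_n) = 0$ [with the $\alpha_i$ pairwise distinct]; moreover, specializing the defining property to the characteristic coloring $\chi_A: \N \to \{0,1\}$ of an arbitrary $A \subseteq \N$ gives $\alpha_i \in {}^\ast A \Leftrightarrow \alpha_j \in {}^\ast A$, i.e., $\u_{\alpha_i} = \u_{\alpha_j}$ for all $i,j$, so $\alpha_1 \sim \cdots \sim \alpha_n$.

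No single step is especially hard, but the conceptual heart is the (A) $\Rightarrow$ (C) step: the finite intersection property rests on recognizing that common refinement of colorings reduces simultaneous monochromaticity to a single instance of partition regularity. In the injective case, distinctness of the $\alpha_i$ is built into each $M_c$ and therefore survives the intersection; it also automatically forces the $\alpha_i$ to be nonstandard, since any two $u$-equivalent elements of $\N$ must coincide.
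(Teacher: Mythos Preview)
Your proof is correct and follows essentially the same approach as the paper. The paper's version of (A) $\Rightarrow$ (C) indexes the family by subsets $A \subseteq \N$ rather than colorings, setting $Y_A := \{(\alpha_1,\ldots,\alpha_n)\in\starN^n : [\alpha_i\ \text{distinct}] \wedge (\alpha_i\in{}^*A \Leftrightarrow \alpha_j\in{}^*A) \wedge {}^*F(\alpha_1,\ldots,\alpha_n)=0\}$, and verifies the finite intersection property by passing to the atoms of the Boolean algebra generated by finitely many $A_i$'s---which is exactly your common-refinement step in disguise, since those atoms are precisely the color classes of the refined coloring $(\chi_{A_1},\ldots,\chi_{A_m})$. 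The paper then invokes saturation on the internal sets $Y_A$ where you invoke enlarging on the hyper-extensions ${}^*M_c$; both deliver the same tuple in $\starN^n$.
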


\begin{proof}
First suppose that $F(X_{1},\ldots ,X_{n})=0$ is
[injectively] partition regular. Given $A\subseteq \mathbb{N}$, consider the
set 
\begin{equation*}
Y_{A}:=\{(\alpha _{1},\ldots ,\alpha _{n})\in \starN^{n}\ :\
[\bigwedge_{i\not=j}\alpha _{i}\not=\alpha _{j}]\wedge
\bigwedge_{i,j}(\alpha _{i}\in {}^{\ast }A\leftrightarrow \alpha _{j}\in
{}^{\ast }A)\wedge {}^{\ast }F(\alpha _{1},\ldots ,\alpha _{n})=0\}.
\end{equation*}%
Observe that the family $(Y_{A})_{A\subseteq \mathbb{N}}$ has the finite
intersection property. Indeed, given $A_{1},\ldots ,A_{m}\subseteq \mathbb{N}
$, let $C_{1},\ldots ,C_{k}$ be the atoms of the boolean algebra generated
by $A_{1},\ldots ,A_{m}$. Since the equation $F(X_{1},\ldots ,X_{n})=0$ is
[injectively] partition regular, there is $i\in \{1,\ldots ,k\}$ and
[distinct] $x_{1},\ldots ,x_{n}\in C_{i}$ such that $F(x_{1},\ldots ,x_{n})=0
$; it follows that $(x_{1},\ldots ,x_{n})\in \bigcap_{i=1}^{m}Y_{A_{i}}$.
Thus, by saturation, there is $(\alpha _{1},\ldots ,\alpha _{n})\in
\bigcap_{A\subseteq \mathbb{N}}Y_{A}$. Then
these $\alpha _{1},\ldots ,\alpha _{n}\in\starN
$ are $u$-equivalent and $F(\alpha_1,\ldots,\alpha_n)=0$.
Clearly, $\u=\u_{\alpha_i}$ is the desired $F$-witness.



The converse direction is trivial.
\end{proof}


As an example, let us
give a nonstandard proof of the following result, first shown by
T.C. Brown and V. R\H{o}dl \cite{brown_monochromatic_1991}.

\begin{theorem}
A homogeneous equation $P(X_1,\ldots,X_n)=0$ is [injectively]
partition regular if and only if the corresponding
equation with reciprocals $P(1/X_1,\ldots,1/X_n)=0$ is
[injectively] partition regular.
\end{theorem}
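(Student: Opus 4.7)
The two directions are symmetric. Indeed, a direct computation using homogeneity of $P$ (of degree $d$) shows that $\widetilde{\widetilde{P}}(X_1,\ldots,X_n) = (X_1\cdots X_n)^{d(n-2)} \cdot P(X_1,\ldots,X_n)$, so $P$ and $\widetilde{\widetilde{P}}$ have the same solutions with all coordinates in $\N$. Thus it is enough to show that if $P$ is [injectively] partition regular, then so is $\tilde P$, where, by clearing denominators,
$$\tilde P(Y_1,\ldots,Y_n)\ :=\ (Y_1\cdots Y_n)^d\,P\!\left(\tfrac{1}{Y_1},\ldots,\tfrac{1}{Y_n}\right)\ =\ P\!\left(\prod_{j\neq 1}Y_j,\ \ldots,\ \prod_{j\neq n}Y_j\right).$$

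The target is to apply Proposition \ref{ultraregularity} to $\tilde P$, i.e., to produce [distinct] $u$-equivalent $\beta_1\sim\cdots\sim\beta_n$ with $\tilde P(\beta_1,\ldots,\beta_n)=0$. The natural candidate: start from Proposition \ref{ultraregularity} applied to $P$, obtaining [distinct] $u$-equivalent $\alpha_1\sim\cdots\sim\alpha_n$ with $P(\alpha_1,\ldots,\alpha_n)=0$, and set $\beta_i:=\prod_{j\neq i}\alpha_j$. A direct calculation gives $\prod_{j\neq i}\beta_j=\alpha_i\cdot M$ with $M=(\alpha_1\cdots\alpha_n)^{n-2}$, and hence, by homogeneity,
$$\tilde P(\beta_1,\ldots,\beta_n)\ =\ P(\alpha_1 M,\ldots,\alpha_n M)\ =\ M^d\,P(\alpha_1,\ldots,\alpha_n)\ =\ 0.$$
Injectivity of the $\alpha_i$ passes to injectivity of the $\beta_i$ (since $\beta_i=\beta_j$ forces $\alpha_i=\alpha_j$). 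So the only thing to verify is that $\beta_1\sim\cdots\sim\beta_n$.

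To secure the $u$-equivalence of the $\beta_i$, I would first upgrade the witness ultrafilter to be multiplicatively idempotent. Let $W\subseteq\beta\N$ be the set of $F$-witnesses of $P$; by Proposition \ref{ultraregularity}, $W\neq\emptyset$, and it is routine that $W$ is closed. The key point is that $W$ is a $\odot$-subsemigroup of $(\beta\N,\odot)$ (where $\odot$ extends multiplication): if $\u,\v\in W$ and $A\in\u\odot\v$, pick $x$ in the $\u$-large set $\{x\in\N:x^{-1}A\in\v\}$; since $x^{-1}A\in\v$ contains a $P$-solution $(y_1,\ldots,y_n)$, homogeneity gives that $(xy_1,\ldots,xy_n)\in A^n$ is also a $P$-solution. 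Hence $\u\odot\v\in W$. By Ellis's theorem applied to the compact right-topological semigroup $(\beta\N,\odot)$, $W$ contains a $\odot$-idempotent $\u$.

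The main obstacle is the final step: converting $\odot$-idempotency of $\u$ into $u$-equivalence of the $\beta_i$'s. Same-level products of $u$-equivalent elements are not in general $u$-equivalent, so one must pass to iterated hyperextensions, where the clean identity $\u_{\alpha\cdot{}^*\beta}=\u_\alpha\odot\u_\beta$ from Chapter \ref{hypgenerator} combined with $\u\odot\u=\u$ can be exploited. Concretely, the plan is to produce $\alpha_1,\ldots,\alpha_n\in\starN$ all generating $\u$ such that, in ${}^{(n-1)*}\N$, the identity $P\bigl(\alpha_1,{}^*\alpha_2,{}^{**}\alpha_3,\ldots,{}^{*(n-1)}\alpha_n\bigr)=0$ holds; such an iterated tuple can be extracted by a saturation argument from the combined partition regularity property enjoyed by $\u$ (for any $A_1,\ldots,A_n\in\u$ there is a $P$-solution $(x_1,\ldots,x_n)$ with $x_i\in A_i$, obtained by intersecting). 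Setting $\gamma_i:={}^{*(i-1)}\alpha_i$ and $\beta_i:=\prod_{j\neq i}\gamma_j\in{}^{(n-1)*}\N$, the homogeneity computation carries through to give $\tilde P(\beta_1,\ldots,\beta_n)=0$, while iterated application of $\u_{\alpha\cdot{}^*\beta}=\u_\alpha\odot\u_\beta$ together with $\u\odot\cdots\odot\u=\u$ shows that each $\beta_i$ generates $\u$. Hence $\beta_1\sim\cdots\sim\beta_n$, and Proposition \ref{ultraregularity} (in the iterated form of Proposition \ref{charultrasolutions}) yields that $\tilde P$ is [injectively] partition regular.
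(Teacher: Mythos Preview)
There is a genuine gap at the decisive step. Your saturation argument only produces a \emph{same-level} solution $(\alpha_1,\ldots,\alpha_n)\in(\starN)^n$ with $P(\alpha_1,\ldots,\alpha_n)=0$; it does not, and cannot, produce the staggered identity $P(\alpha_1,{}^*\alpha_2,\ldots,{}^{*(n-1)}\alpha_n)=0$ that you need. In fact such staggered solutions do not exist in general, because $\starN$ is an initial segment of ${}^{2*}\N$, which is an initial segment of ${}^{3*}\N$, and so on: the arguments $\alpha_1,{}^*\alpha_2,{}^{**}\alpha_3,\ldots$ (when infinite) live at incomparable orders of magnitude. Concretely, for the homogeneous polynomial $P(X_1,X_2,X_3)=X_1+X_2-2X_3$, the equation $\alpha_1+{}^*\alpha_2=2\cdot{}^{**}\alpha_3$ is impossible with $\alpha_3$ infinite, since the left-hand side lies in ${}^{2*}\N$ while the right-hand side lies in ${}^{3*}\N\setminus{}^{2*}\N$. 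So the very staggering that makes your products $\beta_i=\prod_{j\ne i}\gamma_j$ generate $\u$ is exactly what destroys $P(\gamma_1,\ldots,\gamma_n)=0$; the two requirements are in direct conflict.

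The paper's argument sidesteps this tension and needs no idempotent ultrafilters. Given $\alpha_1\sim\cdots\sim\alpha_n$ in $\starN$ with $P(\alpha_1,\ldots,\alpha_n)=0$, pick any infinite $\xi\in\starN$ and set $\zeta_i:={}^*(\xi!)/\alpha_i\in{}^{2*}\N$ (well-defined because ${}^*\xi>\alpha_i$). Homogeneity of degree $d$ gives immediately
\[
P(1/\zeta_1,\ldots,1/\zeta_n)\;=\;\bigl(1/{}^*(\xi!)\bigr)^{d}\,P(\alpha_1,\ldots,\alpha_n)\;=\;0.
\]
The implication $\alpha_i\sim\alpha_j\Rightarrow\zeta_i\sim\zeta_j$ holds because, with $h(k,m):=k!/m$, one has $\u_{\zeta_i}=h\bigl(\u_{({}^*\xi,\alpha_i)}\bigr)$ and $\u_{({}^*\xi,\alpha_i)}$ depends only on $\u_\xi$ and $\u_{\alpha_i}$ (it is the swap of $\u_{\alpha_i}\otimes\u_\xi=\u_{(\alpha_i,{}^*\xi)}$). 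Distinctness of the $\alpha_i$ transfers to distinctness of the $\zeta_i$ trivially. This is both shorter and avoids the unfixable obstruction in your approach.
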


\begin{proof}
Assume first that $P(X_1,\ldots,X_n)=0$ is partition regular,
and let $\alpha_1,\ldots,\alpha_n\in\starN$ be such that $P(\alpha_1,\ldots,\alpha_n)=0$.
Pick any infinite $\xi\in\starN$. Note that ${}^\ast\xi>\alpha_i$, and so
the factorial $({}^\ast\xi)!={}^\ast(\xi!)$ is a multiple of $\alpha_i$ for all $i$.
Let $\zeta_i:={}^\ast(\xi!)/\alpha_i\in{}^{**}\N$. Then
$\alpha_1\sim\ldots\sim\alpha_n\Rightarrow 
\zeta_1\sim\ldots\sim\zeta_n$ and
$P(1/\zeta_1,\ldots,1/\zeta_n)=(1/{}^\ast(\xi!))^d P(\alpha_1,\ldots,\alpha_n)=0$,
where $d$ is the degree of $P$. This shows that $P(1/X_1,\ldots,1/X_n)$ is
partition regular. Clearly, by the same argument also the converse implication follows.
Finally, note that the $\alpha_i$'s are distinct if and only if
the $\zeta_i$'s are distinct, and so the equivalence holds also in the injective case.

\end{proof}

\section{Rado's theorem}

In this section we use the characterization of partition regularity shown above
to prove the following version of the classical theorem of Rado for a single equation:
 

\begin{theorem}
Suppose that $k>2$ and $c_1,\ldots,c_k\in \Z$ are such that $c_1+\cdots+c_k=0
$. Then the equation $c_1X_1+\cdots+c_kX_k=0$ is injectively partition
regular.
\end{theorem}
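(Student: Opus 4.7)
The plan is to reduce the claim to Gallai's theorem from Chapter 6; Proposition~\ref{ultraregularity} then immediately supplies the equivalent nonstandard formulation, so one need only verify injective partition regularity in the standard sense.

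First I will produce distinct integers $r_1,\ldots,r_k \in \N$ satisfying $c_1 r_1 + \cdots + c_k r_k = 0$. The solution set of this single linear equation is a sublattice $L \subseteq \Z^k$ of rank $k - 1 \geq 2$. Since $\sum_i c_i = 0$, the diagonal vector $(1,\ldots,1)$ lies in $L$; and (assuming at least two of the coefficients are nonzero, which one must tacitly require to avoid trivial obstructions to injectivity) $L$ also contains the linearly independent vector $c_j \mathbf{e}_i - c_i \mathbf{e}_j$ for some $i \neq j$ with $c_i c_j \neq 0$. A suitable integer combination of these two vectors yields an element of $L$ with pairwise distinct coordinates, and adding a large multiple of the all-ones vector---permissible because $\sum c_i = 0$---moves this solution into $\N^k$.

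Now fix an arbitrary finite coloring of $\N$. Applying Gallai's theorem to the finite set $F := \{r_1,\ldots,r_k\} \subset \N$ produces $a, d \in \N$ such that the affine image $\{a + d r_i : 1 \leq i \leq k\}$ is monochromatic. Setting $x_i := a + d r_i$, one computes
\[
\sum_{i=1}^k c_i x_i \;=\; a \sum_i c_i \;+\; d \sum_i c_i r_i \;=\; 0,
\]
while the $x_i$ are pairwise distinct because the $r_i$ are distinct and $d \geq 1$. Thus every finite coloring of $\N$ admits a monochromatic, pairwise-distinct $k$-tuple of solutions to $c_1 X_1 + \cdots + c_k X_k = 0$, which is precisely the desired injective partition regularity.

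The main obstacle is the first step: securing the distinct integer tuple $(r_i)$ with $\sum c_i r_i = 0$. The rank-of-lattice argument covers all genuinely nondegenerate coefficient configurations, but one must be careful to rule out cases like only two nonzero $c_i$'s (where the equation forces two of the $x$'s to coincide). Once the $r_i$ are in hand, Gallai's theorem does essentially all the remaining work, and the reformulation via $u$-equivalent hyperfinite generators is automatic from Proposition~\ref{ultraregularity}.
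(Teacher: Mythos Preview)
Your strategy---find a single distinct-coordinate solution $(r_1,\ldots,r_k)$ and then dilate it via Gallai/van der Waerden---is correct and is the classical combinatorial route to this case of Rado's theorem. It differs genuinely from the paper's proof, which instead builds explicit polynomials $P_1,\ldots,P_k$ and uses the idempotent-ultrafilter machinery (Corollary~\ref{Corollary:part} together with Lemma~\ref{uequiv}) to show that $a_0\mathcal U\oplus\cdots\oplus a_{k-2}\mathcal U$ witnesses injective partition regularity for \emph{every} idempotent $\mathcal U$. Your argument is shorter and more elementary; the paper's buys a concrete description of the witnessing ultrafilter, which is what is actually needed for the subsequent applications in the chapter (e.g.\ Proposition~\ref{luperi}, where one wants a \emph{multiplicatively idempotent} witness).

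That said, your Step~1 has a real gap for $k\geq 4$. Any integer combination of $(1,\ldots,1)$ and $c_j\mathbf e_i-c_i\mathbf e_j$ leaves all coordinates outside positions $i,j$ equal to one another, so two lattice vectors cannot produce pairwise distinct entries once $k\geq 4$. The claim itself is fine: assuming all $c_\ell\neq 0$, the hyperplane $\{\sum c_\ell x_\ell=0\}$ coincides with no diagonal $\{x_i=x_j\}$ (equality would force all but two of the $c_\ell$ to vanish), hence over $\Q$ it is not contained in the finite union of those diagonals, and a generic rational point on it has distinct coordinates; clear denominators and translate by a large multiple of $(1,\ldots,1)$ to land in $\N^k$. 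Replace the two-vector sketch with this. Note also that your parenthetical hypothesis ``at least two coefficients nonzero'' is too weak: if exactly two are nonzero then $\sum c_i=0$ forces them to be negatives of each other, the equation reads $X_i=X_j$, and no injective solution exists. The correct tacit nondegeneracy assumption is that at least three of the $c_i$ are nonzero.
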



Indeed, we will prove a strengthening of Rado's theorem below. First, given
a polynomial $P(X):=\sum_{j=0}^{n}b_{j}X^{j}\in \mathbb{Z}\left[ X\right]
$ and $\xi \in \starZ$, set $\tilde{P}(\xi ):=\sum_{j=0}^{n}b_{j}{}^{j\ast
{}}\xi \in {}^{(j+1)\ast {}}\mathbb{Z}$. We note the following corollary of
Proposition \ref{ultraregularity}.

\begin{corollary}
\label{Corollary:part} Suppose that $c_{1},\ldots ,c_{k}\in \mathbb{Z}$ are such that
there exist [distinct] polynomials $P_{1}(X),\ldots ,P_{k}(X)\in 
\mathbb{Z}[X]$ and $\xi ,\eta \in \starN$ for which

\begin{enumerate}
\item $c_1P_1(X)+\cdots+c_kP_k(X)=0$, and

\item $\tilde{P}_i(\xi)\sim \eta$ for each $i=1,\ldots,k$.
\end{enumerate}

\noindent Then $\mathcal{U}_\eta$ witnesses that $c_1X_1+\cdots+c_kX_k=0$ is
[injectively] partition regular.
\end{corollary}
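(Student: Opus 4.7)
The approach is to verify condition (2) of Proposition~\ref{charultrasolutions} directly for the ultrafilter $\u_\eta$ and the polynomial $F(X_1,\ldots,X_k):=c_1X_1+\cdots+c_kX_k$, using the explicit tuple $\alpha_i:=\tilde P_i(\xi)$ as witnesses. To add and compare these elements I would view them all inside a single iterated extension ${}^{N\ast}\N$ with $N$ exceeding every $\deg(P_i)+1$; this is legitimate because ${}^{j\ast}\N$ is an initial segment of ${}^{N\ast}\N$ for $j\le N$.

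The proof then reduces to two routine verifications. First, I would check the algebraic identity
\[
c_1\tilde P_1(\xi)+\cdots+c_k\tilde P_k(\xi)=0.
\]
Writing $P_i(X)=\sum_j b_{i,j}X^j$, we have $\tilde P_i(\xi)=\sum_j b_{i,j}\,{}^{j\ast}\xi$ by definition, and interchanging the (finite) order of summation yields
\[
\sum_{i=1}^k c_i\tilde P_i(\xi)\ =\ \sum_{j}\Bigl(\sum_{i=1}^k c_i b_{i,j}\Bigr){}^{j\ast}\xi.
\]
Hypothesis (1) states $\sum_i c_iP_i(X)=0$ as an element of $\Z[X]$, which is exactly the assertion that each inner coefficient sum $\sum_i c_i b_{i,j}$ vanishes, and so the outer sum is $0$. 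Second, hypothesis (2) provides $\alpha_i=\tilde P_i(\xi)\sim\eta$, that is, $\u_{\alpha_i}=\u_\eta$ for every $i$. Thus the tuple $(\alpha_1,\ldots,\alpha_k)$ satisfies all the requirements of condition (2) of Proposition~\ref{charultrasolutions} for $\u_\eta$ and $F$, showing that $\u_\eta$ is an $F$-witness.

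For the injective version, the additional task --- and the main (if minor) obstacle --- is to show that the $\alpha_i$ are pairwise distinct when the $P_i$ are assumed pairwise distinct. For $i\ne j$, set $Q:=P_i-P_j\in\Z[X]\setminus\{0\}$ and let $a_L\ne 0$ be its top coefficient. The case $L=0$ is immediate since $\tilde Q(\xi)=a_0\ne 0$. For $L\ge 1$, I would iterate the initial-segment property (from Proposition~\ref{initialsegment} and its iterated analogue) to observe that ${}^{L\ast}\xi\in{}^{(L+1)\ast}\N\setminus{}^{L\ast}\N$ dominates every element of ${}^{L\ast}\N$; since the lower-order sum $\sum_{l<L}a_l\,{}^{l\ast}\xi$ lies in ${}^{L\ast}\N$, the magnitude of $a_L\,{}^{L\ast}\xi$ strictly exceeds it, forcing $\tilde Q(\xi)\ne 0$. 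This argument requires $\xi$ to be infinite, which is automatic in the only case of interest, since if $\xi\in\N$ then $\u_\eta$ is principal and cannot witness the injective partition regularity of a nontrivial linear equation.
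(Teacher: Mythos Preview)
Your proof is correct and follows essentially the same approach as the paper: set $\alpha_i:=\tilde P_i(\xi)$, verify that the $\alpha_i$ are $u$-equivalent to $\eta$ and satisfy the linear equation, and for the injective case argue that distinct polynomials yield distinct values by comparing the top-degree term against the lower-order ones. Your treatment is in fact slightly more careful than the paper's, which asserts $c_1\alpha_1+\cdots+c_k\alpha_k=0$ without expanding the coefficients and does not explicitly address the (degenerate) case $\xi\in\N$; your references to Proposition~\ref{charultrasolutions} versus the paper's Proposition~\ref{ultraregularity} are interchangeable here.
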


\begin{proof}
For each $i=1,\ldots,k$, let $\alpha_i:=\tilde{P}_i(\xi)$; by assumption,
for each $i$ we have $\mathcal{U}_{\alpha_i}=\mathcal{U}_\eta$. It is also
clear that $c_1\alpha_1+\cdots+c_k\alpha_k=0$. By the previous proposition,
we have that $\mathcal{U}_\eta$ witnesses the partition regularity of $%
c_1X_1+\cdots+c_kX_k=0$.

Suppose in addition that the $P_i$'s are distinct; to conclude injective
partition regularity, we must show that the $\alpha_i$'s are distinct.
Suppose that $\alpha_i=\alpha_j$, that is, $\tilde{P}_i(\xi)=\tilde{P}_j(\xi)
$. Write $P_i(X):=\sum_{l=0}^m r_l X^l$ and $P_j(X)=\sum_{l=0}^m s_l X^l$, where at least one between $r_m$ and $s_m$ is nonzero. We
then have that $(r_m-s_m){}^{m\ast}\xi=-\sum_{l=0}^{m-1}(r_l-s_l){}^{l
\ast}\xi$. The only way that this is possible is that $r_m=s_m=0$;
continuing inductively in this manner, we see that $P_i=P_j$, yielding the
desired contradiction.
\end{proof}

In light of the previous corollary, it will be useful to find a standard
condition on a family of polynomials $P_1,\ldots,P_k\in \Z[X]$ such that, for every idempotent $\xi\in \starN$, we have that all $\tilde{P}_i(\xi)$'s
are $u$-equivalent. The next definition captures such a condition.

\begin{definition}
Following \cite{di_nasso_iterated_2015},
we define the equivalence relation $\approx_u$ on finite strings of integers
to be the smallest equivalence relation satisfying the following three
properties:

\begin{itemize}
\item $\emptyset \approx_u \langle 0 \rangle$;

\item If $a\in \mathbb{Z}$, then $\langle a\rangle\approx_u \langle
a,a\rangle$;

\item If $\sigma\approx_u \sigma^{\prime }$ and $\tau\approx_u\tau^{\prime }$%
, then concatenations $\sigma\tau \approx_u \sigma^{\prime }\tau^{\prime }$.
\end{itemize}

If $P,Q\in \mathbb{Z}[X]$ are polynomials, then we write $P\approx_u Q$ to
mean that their strings of coefficients are $u$-equivalent.
\end{definition}

\begin{lemma}
\label{uequiv} Let $P,Q\in \mathbb{Z}[X]$ have positive leading coefficient.
If $P\approx _{u}Q$, then for
every idempotent $\xi \in \starN$, we have $\tilde{P}(\xi )\sim \tilde{Q}%
(\xi )$.
\end{lemma}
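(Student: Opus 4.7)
The plan is to induct on the derivation of $P \approx_u Q$, proving the stronger statement $\u_{\tilde{P}(\xi)} = \u_{\tilde{Q}(\xi)}$ as ultrafilters (on $\mathbb{Z}$, restricting to $\mathbb{N}$ at the end). The main ingredient is the factorization
\[
\u_{\tilde{P}(\xi)} \;=\; (b_0 \u_\xi) \odot (b_1 \u_\xi) \odot \cdots \odot (b_{n-1} \u_\xi)
\]
for $P(X) = \sum_{j=0}^{n-1} b_j X^j$, which is an instance of the earlier exercise $\u_{a_0 \alpha_0 + a_1{}^*\alpha_1 + \cdots + a_k {}^{k*}\alpha_k} = a_0\u_{\alpha_0} \oplus \cdots \oplus a_k\u_{\alpha_k}$ specialized to $\alpha_i := \xi$ and $a_i := b_i$, with the ambient semigroup $(\beta\mathbb{Z}, \odot)$.

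A second ingredient is that, since $\xi$ is $u$-idempotent, $a\u_\xi$ is itself idempotent in $(\beta\mathbb{Z}, \odot)$ for every $a \in \mathbb{Z}$:
\[
a\u_\xi \odot a\u_\xi \;=\; \u_{a\xi + a{}^*\xi} \;=\; a\u_{\xi + {}^*\xi} \;=\; a(\u_\xi \odot \u_\xi) \;=\; a\u_\xi.
\]

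With these two observations the induction is essentially automatic. The base case $\emptyset \approx_u \langle 0 \rangle$ is trivial (both sides give $\tilde{P}(\xi)=0$), and the base case $\langle a \rangle \approx_u \langle a, a \rangle$ reduces to $\u_{\tilde{Q}(\xi)} = (a\u_\xi) \odot (a\u_\xi) = a\u_\xi = \u_{\tilde{P}(\xi)}$. The equivalence-relation axioms are inherited from $\sim$. For the concatenation step $\sigma\tau \approx_u \sigma'\tau'$ (with $\sigma \approx_u \sigma'$ and $\tau \approx_u \tau'$), the factorization gives $\u_{\tilde{P}_{\sigma\tau}(\xi)} = \u_{\tilde{P}_\sigma(\xi)} \odot \u_{\tilde{P}_\tau(\xi)}$ and similarly for the primed side; the inductive hypotheses then close the argument. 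Crucially, the possible length mismatch $|\sigma| \neq |\sigma'|$ causes no trouble: in the identity $\tilde{P}_{\sigma\tau}(\xi) = \tilde{P}_\sigma(\xi) + {}^{|\sigma|*}\tilde{P}_\tau(\xi)$, the number of stars applied to $\tilde{P}_\tau(\xi)$ is invisible to its generated ultrafilter on $\mathbb{Z}$, since $\alpha \sim {}^*\alpha$ always.

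The main technical point I expect to have to check carefully is extending the ``ultrafilter factorization'' machinery from $\mathbb{N}$-coefficients, as stated in the earlier exercises, to arbitrary $\mathbb{Z}$-coefficients. This is natural---$\odot$ is well-defined on $\beta\mathbb{Z}$ since $\mathbb{Z}$ is a semigroup, and $a\u := \u_{a\alpha}$ makes sense for every $a \in \mathbb{Z}$---but some bookkeeping is needed to interpret intermediate (possibly signed) products $a_i\u_\xi \odot a_j\u_\xi$ as ultrafilters on $\mathbb{Z}$ rather than on $\mathbb{N}$. The assumption of positive leading coefficient enters only at the very end, guaranteeing that $\tilde{P}(\xi)$ and $\tilde{Q}(\xi)$ are positive and infinite, so that the equality of their ultrafilters on $\mathbb{Z}$ restricts to equality as ultrafilters on $\mathbb{N}$, i.e., to $\tilde{P}(\xi)\sim\tilde{Q}(\xi)$.
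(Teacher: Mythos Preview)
Your proposal is correct and follows essentially the same approach as the paper. Both arguments induct on the derivation of $\approx_u$; the paper records the two needed facts directly in nonstandard form (duplicating a single coefficient $a_i$ preserves $\sim$, and concatenation is compatible with $\sim$), while you organize the same argument through the explicit factorization $\u_{\tilde{P}(\xi)} = (b_0\u_\xi)\odot\cdots\odot(b_n\u_\xi)$ in $\beta\Z$, which makes those two facts immediate from the idempotence of each $a\u_\xi$ and the associativity of $\odot$.
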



\begin{proof}
Fix an idempotent $\xi\in \starN$. The lemma follows from the following
facts:

\begin{itemize}
\item $\sum_{j=0}^{m}a_{j}{}^{j\ast }\xi \sim \sum_{j=0}^{i}a_{j}{}^{j\ast
}\xi +a_{i}{}^{(i+1)\ast }\xi +\sum_{j=i+1}^{m}a_{j}{}^{(j+1)\ast }\xi $;

\item If $\sum_{j=0}^{m}a_{j}{}^{j\ast }\xi \sim \sum_{j=0}^{m^{\prime
}}a_{j}^{\prime j\ast }\xi $ and $\sum_{j=0}^{n}b_{j}{}^{j\ast }\xi \sim
\sum_{j=0}^{n^{\prime }}b_{j}^{\prime j\ast }\xi $, then 
\begin{equation*}
\sum_{j=0}^{m}a_{j}{}^{j\ast }\xi +\sum_{j=0}^{n}b_{j}{}^{(j+m)\ast }\xi
\sim \sum_{j=0}^{m^{\prime }}a_{j}^{\prime j\ast }\xi +\sum_{j=0}^{n^{\prime
}}b_{j}^{\prime (j+m^{\prime })\ast }\xi .
\end{equation*}
\end{itemize}
\end{proof}

We should mention that the converse of the previous lemma is true in an even
stronger form, namely that if $\tilde{P}(\xi )\sim \tilde{Q}(\xi )$ for 
\emph{some} idempotent $\xi \in \starN$, then $P\approx _{u}Q$. This follows
from \cite[Theorem T]{Maleki_Solving_2000}.

We can now give the nonstandard proof of the above mentioned version
of Rado's theorem. In fact, we prove
the more precise statement:

\begin{theorem}
Suppose that $k>2$ and $c_{1},\ldots ,c_{k}\in \mathbb{Z}$ are such that $%
c_{1}+\cdots +c_{k}=0$. Then there exists $a_{0}\,\ldots ,a_{k-2}\in \mathbb{%
N}$ such that, for every idempotent ultrafilter $\mathcal{U}$, we have that $%
a_{0}\mathcal{U}\oplus \cdots \oplus a_{k-2}\mathcal{U}$ witnesses the
injective partition regularity of the equation $c_{1}X_{1}+\cdots
+c_{k}X_{k}=0$.
\end{theorem}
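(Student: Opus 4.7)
The plan is to reduce the theorem to a purely algebraic existence statement via Corollary~\ref{Corollary:part} and Lemma~\ref{uequiv}. Specifically, it suffices to exhibit positive integers $a_0,\ldots,a_{k-2}$ and $k$ pairwise distinct polynomials $P_1,\ldots,P_k\in\Z[X]$ with positive leading coefficients whose coefficient strings are all $\approx_u$-equivalent to that of $Q(X):=a_0+a_1X+\cdots+a_{k-2}X^{k-2}$ and which satisfy the polynomial identity $c_1P_1+\cdots+c_kP_k=0$. Granting this, I would fix any idempotent $\alpha\in\starN$ generating $\mathcal{U}$: Lemma~\ref{uequiv} gives $\tilde P_i(\alpha)\sim\tilde Q(\alpha)=:\eta$ for every $i$; an earlier exercise identifies $\mathcal{U}_\eta$ with $a_0\mathcal{U}\oplus a_1\mathcal{U}\oplus\cdots\oplus a_{k-2}\mathcal{U}$; and since the $\tilde P_i(\alpha)$ are distinct (by the leading-coefficient argument in the proof of Corollary~\ref{Corollary:part}) and satisfy $\sum_ic_i\tilde P_i(\alpha)=0$, Corollary~\ref{Corollary:part} concludes that $\mathcal{U}_\eta$ is an injective $F$-witness, which is exactly what the theorem demands.

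The construction of the $a_j$'s and $P_i$'s is driven by Abel summation. Writing $s_j:=c_1+\cdots+c_j$, so that $s_0=s_k=0$, one has
\[
\sum_{i=1}^k c_i P_i(X)=\sum_{j=1}^{k-1}s_j\bigl(P_j(X)-P_{j+1}(X)\bigr).
\]
I would design each $P_i$ so that its coefficient string lists the values $a_0,a_1,\ldots,a_{k-2}$ in order, with the $j$-th value occurring over a block of consecutive positions (of length varying with $i$) and with zeros permitted in between; any such string reduces under $\approx_u$ to that of $Q$, so the equivalence requirement is automatic. Arranging that consecutive pairs $P_j,P_{j+1}$ differ only at a single block boundary, each difference $P_j-P_{j+1}$ becomes a monomial multiple of $(a_{\ell-1}-a_\ell)$ for some $\ell$, and the identity $\sum_ic_iP_i=0$ decouples, at each power $X^\ell$, into ``trivial'' constraints (columns where all $P_i$ share a common coefficient $a_j$, which vanish thanks to $\sum c_i=0$) and ``transition'' constraints of the form $s_j(a_{\ell-1}-a_\ell)=0$.

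The main obstacle is to satisfy all the transition constraints simultaneously in positive integers precisely in the ``non-degenerate'' Rado case, where no proper subset of $\{c_1,\ldots,c_k\}$ sums to zero, so that every intermediate partial sum $s_j$ is nonzero under every permutation of the $c_i$. Here the hypothesis $k>2$ enters crucially: since $\sum c_i=0$ with not all $c_i$ vanishing, there must exist indices $p\ne q$ with $c_p>0$ and $c_q<0$, and after permuting the $P_i$ so that these two indices govern a single transition column, the corresponding ratio constraint $a_{\ell-1}/a_\ell=-c_q/c_p$ is solvable in positive rationals; a common scaling then clears all denominators and delivers integer $a_j$'s. Distinctness of the $P_i$ is ensured by forcing the block-length pattern to genuinely change from $i$ to $i+1$. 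Carrying out this combinatorial recipe explicitly is the technical heart of the argument, and is done in detail in \cite{di_nasso_iterated_2015}.
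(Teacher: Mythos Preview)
Your reduction to Corollary~\ref{Corollary:part} and Lemma~\ref{uequiv} is exactly the paper's strategy: one must exhibit positive integers $a_0,\ldots,a_{k-2}$ and distinct polynomials $P_1,\ldots,P_k$ with $P_i\approx_u Q:=\sum_j a_jX^j$ and $\sum_i c_iP_i=0$. The gap is in your combinatorial construction. If consecutive $P_j,P_{j+1}$ differ only by shifting a single block boundary, then $P_j-P_{j+1}$ really is a monomial, and Abel summation yields exactly the constraint you wrote, $s_j(a_{\ell-1}-a_\ell)=0$, at each transition column. In the non-degenerate case where every $s_j\ne0$, this forces $a_{\ell-1}=a_\ell$ for every consecutive pair, so all the $a_j$'s collapse to a single value and all your $P_i$'s become the same polynomial, destroying the distinctness you need. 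Your final paragraph then switches to a ``ratio constraint $a_{\ell-1}/a_\ell=-c_q/c_p$'' involving individual $c_p,c_q$ rather than partial sums; that does not follow from your Abel-summation setup, and in any case you only discuss one transition when there are $k-2$ of them to satisfy.

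The paper's fix is small but decisive: rather than shifting block boundaries, it inserts a single \emph{zero} into the coefficient string at a position that varies with $i$ (with $P_1$ and $P_k$ instead doubling the top and bottom $a_j$). Concretely, $P_i=\sum_{j=0}^{k-i-1}a_jX^j+\sum_{j=k-i+1}^{k-1}a_{j-1}X^j$ for $2\le i\le k-1$, so at column $X^\ell$ the coefficient of $P_i$ is $a_\ell$ for small $i$, then $0$ for $i=k-\ell$, then $a_{\ell-1}$ for large $i$. Now the column constraint is $s_{k-\ell-1}a_\ell - s_{k-\ell}a_{\ell-1}=0$, a genuine ratio condition $a_\ell/a_{\ell-1}=s_{k-\ell}/s_{k-\ell-1}$. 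Ordering the $c_i$'s decreasingly makes the partial sums $s_1,\ldots,s_{k-1}$ a concave sequence with $s_0=s_k=0$, hence strictly positive in between (unless all $c_i=0$); so every ratio is positive and the $a_j$'s can be chosen recursively in $\mathbb{N}$. This is the ``technical heart'' you defer to the literature, and it hinges precisely on replacing your monomial differences by binomial ones.
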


\begin{proof}
Without loss of generality, we will assume that $c_{1}\geq c_{2}\geq \cdots
\geq c_{k}$. By Corollary \ref{Corollary:part} and Lemma \ref{uequiv}, we need to find 
$a_{0},\ldots ,a_{k-2}\in \mathbb{N}$ and distinct $P_{1}(X),\ldots
,P_{k}(X)\in \mathbb{Z}[X]$ such that $c_{1}P_{1}(X)+\cdots
+c_{k}P_{k}(X)=0$ and such that $P_{i}(X)\approx
_{u}\sum_{j=0}^{k-2}a_{j}X^{j}$ for each $i=1,\ldots ,k$. For appropriate $%
a_{0},\ldots ,a_{k-2}$, the following polynomials will be as needed:

\begin{itemize}
\item $P_1(X):=\sum_{j=0}^{k-2}a_jX^j+a_{k-2}X^{k-1}$;

\item $P_i(X):=\sum_{j=0}^{k-i-1}a_jX^j+\sum_{j=k-i+1}^{k-1}a_{j-1}X^j$ for $%
2\leq i\leq k-1$,

\item $P_k(X):=a_0+\sum_{j=1}^{k-1}a_{j-1}X^j$.
\end{itemize}

It is straightforward to check that $P_{i}(X)\approx
_{u}\sum_{j=0}^{k-2}a_{j}X^{j}$ for each $i=1,\ldots ,k$. Furthermore, since 
$a_{0},\ldots ,a_{k-2}$ are nonzero, the polynomials $P_{1}\left( X\right)
,\ldots ,P_{k}\left( X\right) $ are mutually distinct. It remains to show
that there are $a_{0},\ldots ,a_{k-2}\in \mathbb{N}$ for which 
$c_{1}P_{1}(X)+\cdots +c_{k}P_{k}(X)=0$. Since 
$c_{1}+\cdots +c_{k}=0$, the constant and leading terms of $%
c_{1}P_{1}(X)+\cdots +c_{k}P_{k}(X)$ are zero. So the equation $%
c_{1}P_{1}(X)+\cdots +c_{k}P_{k}(X)=0$ is equivalent to the system of
equations $\left( c_{1}+\cdots +c_{k-i}\right) \cdot a_{i-1}+\left(
c_{k-i+2}+\cdots +c_{k}\right) \cdot a_{i-2}$ for $i=1,2,\ldots ,k-1$. One
can then easily define recursively elements $a_{0},a_{1},\ldots ,a_{k-2}$
satisfying all these equations.
\end{proof}

We conclude this section by using the nonstandard characterization of partition regularity to prove the following theorem of Hindman, giving an example of a class of \emph{nonlinear} partition regular equations:

\begin{theorem}\label{hindmanPR}
For any $m,n\in \N$, the equation $x_1+\cdots+x_m-y_1\cdots y_n=0$ is injectively partition regular.
\end{theorem}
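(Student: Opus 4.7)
The plan is to apply Proposition \ref{charultrasolutions}: for the polynomial $F(X_1,\dots,X_m,Y_1,\dots,Y_n)=X_1+\cdots+X_m-Y_1\cdots Y_n$, it suffices to find $k\in\N$ and pairwise distinct elements $\alpha_1,\dots,\alpha_m,\beta_1,\dots,\beta_n\in{}^{k\ast}\N$, all $u$-equivalent to a common ultrafilter, satisfying $\alpha_1+\cdots+\alpha_m=\beta_1\cdots\beta_n$. The whole construction will rest on a carefully chosen multiplicative $u$-idempotent $\xi\in\starN$.

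For the product side, I would set $\beta_j:={}^{(j-1)\ast}\xi$ for $j=1,\dots,n$. These are pairwise distinct because the inequality ${}^{(j-1)\ast}\xi<{}^{j\ast}\xi$ persists in each successive iterated hyperextension, and each is $u$-equivalent to $\xi$ since $\u_{{}^{j\ast}\xi}=\u_{\xi}$. Iterating the identity $\u_\alpha\odot\u_\beta=\u_{\alpha\cdot{}^{\ast}\beta}$ together with the multiplicative idempotency $\u_\xi\odot\u_\xi=\u_\xi$ yields that the product $P:=\beta_1\cdots\beta_n=\xi\cdot{}^{\ast}\xi\cdots{}^{(n-1)\ast}\xi$ is again $u$-equivalent to $\xi$.

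The main obstacle is the sum side: we must decompose $P$ as $\alpha_1+\cdots+\alpha_m$ with the $\alpha_i$ pairwise distinct, distinct from the $\beta_j$'s, and all $u$-equivalent to $\xi$. Since additive shifts and subtractions do not preserve $u$-equivalence in general, naive decompositions such as $(P/m,\dots,P/m)$ perturbed by constants fail to produce $u$-equivalent summands. To address this I would refine the choice of $\xi$ so that $\u_\xi$ is a multiplicative $u$-idempotent whose ultrafilter additionally lies in the closure of the additive idempotents of $\beta\N$; such a $\xi$ exists by applying Ellis' theorem (via Corollary \ref{existenceofuidempotents}) inside the smallest additive ideal $K(\beta\N,\oplus)$, which is known to be a compact $\odot$-subsemigroup of $\beta\N$. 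For such a $\xi$, every $A\in\u_\xi$ is simultaneously an FS-set (by the exercise following Corollary \ref{stronghindman}) and has the multiplicative Hindman structure coming from $\u_\xi\odot\u_\xi=\u_\xi$, so each $A\in\u_\xi$ carries the combined additive/multiplicative richness needed.

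To package this combined structure into the required decomposition, I would consider, for each $A\in\u_\xi$, the internal set
\[
\mathcal{F}_A:=\{(a_1,\dots,a_m)\in({}^{(n-1)\ast}A)^m : a_i\text{ pairwise distinct, }a_1+\cdots+a_m=P\},
\]
and argue that the family $\{\mathcal{F}_A\}_{A\in\u_\xi}$ has the finite intersection property. By transfer this amounts to showing that for any finite list $A_1,\dots,A_r\in\u_\xi$, their intersection (which lies in $\u_\xi$) contains $m$ pairwise distinct elements whose sum equals an appropriate realization of the product $P$; this is where the combined multiplicative Hindman structure (to generate the product realization) and additive Hindman structure (to express that realization as a sum of $m$ distinct elements) of $A_1\cap\cdots\cap A_r$ are both exploited essentially. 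Applying saturation to $\{\mathcal{F}_A\}_{A\in\u_\xi}$ then produces $\alpha_1,\dots,\alpha_m$ in a suitably iterated hyperextension, each forced to be $u$-equivalent to $\xi$ by virtue of belonging to ${}^{(n-1)\ast}A$ for every $A\in\u_\xi$. Distinctness of the $\alpha_i$'s from the $\beta_j$'s can be arranged by moving to a sufficiently high iterated level ${}^{k\ast}\N$, completing the witness demanded by Proposition \ref{charultrasolutions}.
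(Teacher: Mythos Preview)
Your approach has a genuine gap in the saturation argument. You fix the product $P=\xi\cdot{}^{\ast}\xi\cdots{}^{(n-1)\ast}\xi$ first and then try to decompose this \emph{specific} hypernatural as a sum $\alpha_1+\cdots+\alpha_m$ with each $\alpha_i\in{}^{(n-1)\ast}A$ for all $A\in\u_\xi$. But the additive Hindman structure you invoke (every $A\in\u_\xi$ is an FS-set) only produces many sums \emph{inside} $A$; it says nothing about decomposing a prescribed element as such a sum. Your transfer claim (``their intersection contains $m$ pairwise distinct elements whose sum equals an appropriate realization of the product $P$'') is not a well-formulated standard statement, and without it the nonemptiness of each $\mathcal{F}_A$ is simply unproven. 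Even with a multiplicative idempotent lying in the closure of the additive idempotents, there is no result in the paper (or an obvious one outside it) guaranteeing that each member of $\u_\xi$ contains distinct $a_1,\dots,a_m,b_1,\dots,b_n$ with $a_1+\cdots+a_m=b_1\cdots b_n$.

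The paper avoids this difficulty by reversing the order of the two operations. First, the homogeneous \emph{linear} equation $x_1+\cdots+x_m=y$ is injectively partition regular (by Rado's theorem), so by Proposition~\ref{luperi} there is a \emph{multiplicatively} idempotent $\u$ that witnesses it: one obtains distinct $\alpha_1,\dots,\alpha_m,\beta\in\starN$, all generating $\u$, with $\alpha_1+\cdots+\alpha_m=\beta$. Now multiply: set $\gamma_i:=\alpha_i\cdot{}^{\ast}\beta\cdots{}^{(n-1)\ast}\beta$ and $\beta_j:={}^{(j-1)\ast}\beta$. Distributivity gives $\gamma_1+\cdots+\gamma_m=\beta\cdot{}^{\ast}\beta\cdots{}^{(n-1)\ast}\beta=\beta_1\cdots\beta_n$, multiplicative idempotency of $\u$ makes every $\gamma_i$ and $\beta_j$ a generator of $\u$, and distinctness is immediate. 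The point you missed is that Proposition~\ref{luperi} hands you the additive decomposition for free once you recognize that the linear collapse of the equation is homogeneous, and then a single multiplication (rather than a decomposition) finishes the job.
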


The idea of the nonstandard proof is due to L. Luperi Baglini \cite{luperi_baglini_partition_2014}.  The following proposition is the key idea.

\begin{proposition}\label{luperi}
Suppose that $P(X_1,\ldots,X_n)$ is a homogeneous linear equation that is injectively partition regular.  Then there is a \emph{multiplicatively idempotent} $\u\in \beta \N$ (that is, $\u\odot \u=\u$) that witnesses the injective partition regularity of $P$.
\end{proposition}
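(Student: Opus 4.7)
\textbf{Proof plan for Proposition \ref{luperi}.}

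The plan is to apply Ellis' theorem to the multiplicative structure on $\beta\N$. Let $(\beta\N,\odot_\cdot)$ denote the compact right topological semigroup obtained by extending the semigroup $(\N,\cdot)$ as in Chapter 1. Let
\[
W := \{\,\u\in\beta\N : \u\text{ is an injective }P\text{-witness}\,\}.
\]
The goal is to show that $W$ is a nonempty closed subsemigroup of $(\beta\N,\odot_\cdot)$; an idempotent of $W$ guaranteed by Ellis' theorem will then be the desired multiplicatively idempotent ultrafilter. Nonemptiness is immediate from the hypothesis together with Proposition \ref{ultraregularity}.

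For closedness, note that $\u\in W$ if and only if every $A\in\u$ satisfies the property $\Phi(A)$: ``$A$ contains distinct $x_1,\ldots,x_n$ with $P(x_1,\ldots,x_n)=0$''. Thus $W=\mathcal{C}_\Phi$ in the notation of Exercise \ref{ex-Phiclosed}, and hence is closed.

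The heart of the proof, and the step I expect to be the key point, is showing that $W$ is closed under $\odot_\cdot$; in fact I will verify the stronger statement that $W$ is a left ideal, namely $\u\odot_\cdot \v \in W$ whenever $\v\in W$ and $\u\in\beta\N$ is arbitrary. This is where the homogeneity and linearity of $P$ enter. Fix $A\in \u\odot_\cdot\v$; by definition the set $B:=\{s\in\N : s^{-1}A\in\v\}$ belongs to $\u$, where $s^{-1}A=\{t\in\N : st\in A\}$. Pick any $s\in B$, so that $s^{-1}A\in\v$. Since $\v\in W$, there exist distinct $t_1,\ldots,t_n\in s^{-1}A$ with $P(t_1,\ldots,t_n)=0$. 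Writing $P(X_1,\ldots,X_n)=c_1X_1+\cdots+c_nX_n$, the elements $st_1,\ldots,st_n\in A$ are pairwise distinct (since $s\neq 0$ and the $t_i$ are distinct) and satisfy
\[
P(st_1,\ldots,st_n)=s\,\bigl(c_1t_1+\cdots+c_nt_n\bigr)=s\cdot P(t_1,\ldots,t_n)=0,
\]
so $A$ contains a distinct solution. As $A\in\u\odot_\cdot\v$ was arbitrary, $\u\odot_\cdot\v\in W$.

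Finally, since $(\beta\N,\odot_\cdot)$ is a compact right topological semigroup by Theorem \ref{betaSsemitop}, and $W$ is a nonempty closed subsemigroup, Ellis' theorem produces an idempotent $\u\in W$, i.e.\ a multiplicatively idempotent ultrafilter that witnesses the injective partition regularity of $P$. The only real subtlety is the left-ideal step above, and once one observes that homogeneity lets the scalar $s$ factor out of $P$, the argument is routine.
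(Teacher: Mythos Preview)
Your proof is correct and follows essentially the same strategy as the paper: show that the set of injective $P$-witnesses is a nonempty closed subsemigroup (in fact, an ideal) of $(\beta\N,\odot_\cdot)$, then invoke Ellis' theorem. The only differences are cosmetic: you verify the ideal property by a direct ultrafilter computation (pick $s$, scale the solution in $s^{-1}A$ by $s$), whereas the paper works with nonstandard generators $\alpha_1,\ldots,\alpha_n$ and ${}^*\beta$ and uses $P(\alpha_1{}^*\beta,\ldots,\alpha_n{}^*\beta)={}^*\beta^d\,P(\alpha_1,\ldots,\alpha_n)$; also, you check the left-ideal inclusion $\beta\N\odot_\cdot W\subseteq W$ while the paper checks the right-ideal inclusion first and asserts the other side is similar. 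Neither difference affects the argument.
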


\begin{proof}
Let $I_P$ be the set of $P$-witnesses.  It suffices to show that $I_P$ is a nonempty, closed subsemigroup of $(\beta \N,\odot)$.  $I_P$ is nonempty by definition.  $I_P$ is closed since
it consists of those ultrafilters whose members $A$ all satisfy the property
$\Phi(A)$ of containing a tuple that is solution of $P$
(see Exercise \ref{ex-Phiclosed}).
Finally, we show that $I_P$ is closed under multiplication.  In fact, we show that $I_P$ is a two-sided ideal.  Suppose that $\u\in I_P$ and $\mathcal{V}\in \beta \N$.  Take distinct $\alpha_1,\ldots,\alpha_n$ such that $\u=\u_{\alpha_i}$ for $i=1,\ldots,n$
and $P(\alpha_1,\ldots,\alpha_n)=0$.  Also let $\beta$ be such that $\mathcal{V}=\u_\beta$.  We then have that $\alpha_1{}^{\ast}\beta, \ldots,\alpha_n{}^{\ast}\beta$ are distinct generators of $\u\odot \mathcal{V}$ and, setting $d$ to be the degree of $P$, we have
$$P(\alpha_1{}^{\ast}\beta,\ldots,\alpha_n{}^{\ast}\beta)={}^{\ast}\beta^dP(\alpha_1,\ldots,\alpha_n)=0.$$  It follows that $\u\odot \mathcal{V}$ belongs to $I_P$.  The proof that $I_P$ is a right-ideal is similar and left to the reader.
\end{proof}

We now prove Theorem \ref{hindmanPR} in the simple case $m=2$ and $n=3$.  Since $x_1+x_2-y=0$ is homogeneous and injectively partition regular by Rado's theorem, Proposition \ref{luperi} implies that we may find a multiplicative idempotent ultrafilter $\u$ that witnesses the injective partition regularity of $\u$.  Take distinct $\alpha_1,\alpha_2,\beta\in \starN$ all of which generate $\u$ and for which $\alpha_1+\alpha_2=\beta$.  For $i=1,2$, set $\gamma_i:=\alpha_i{}^{\ast}\beta^{\ast\ast}\beta$.  Note that $\gamma_1$ and $\gamma_2$ are also distinct generators of $\u$ and $\gamma_1+\gamma_2=\beta{}^{\ast}\beta^{\ast\ast}\beta$, whence $\gamma_1,\gamma_2,\beta,{}^{\ast}\beta,{}^{\ast\ast}\beta$ witness the injective partition regularity of $x_1+x_2-y_1\cdot y_2\cdot y_3=0$.

\section{Non-partition regularity of some equations}

Nonstandard methods have also played a role in establishing the
non-partition regularity of equations. We present here the simplest examples
of this type of result.\index{partition regular}

\begin{theorem}[\protect\cite{di_nasso_fermat-like_2016}]
Let $P(x_{1},\ldots ,x_{h}):=a_{1}x_{1}^{n_{1}}+\cdots +a_{h}x_{h}^{n_{h}}$,
with $n_{1}<\cdots <n_{h}$, where each $a_{i}\in \mathbb{Z}$ is odd and $h$
is odd. Then $P(x_{1},\ldots ,x_{h})=0$ is not partition regular.
\end{theorem}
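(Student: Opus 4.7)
The plan is to prove the contrapositive via Proposition \ref{ultraregularity}: if $P=0$ were partition regular, there would exist $u$-equivalent $\alpha_1,\ldots,\alpha_h\in\starN$ satisfying $\sum_i a_i\alpha_i^{n_i}=0$, and I aim to rule this out by a $2$-adic valuation argument. Setting $V_i:=v_2(\alpha_i)$ and using that each $a_i$ is odd, one has $v_2(a_i\alpha_i^{n_i})=n_iV_i$; so if the quantities $n_iV_i$ turn out to be pairwise distinct, then the $2$-adic valuation of $\sum_i a_i\alpha_i^{n_i}$ equals the unique minimum $n_iV_i$, which is a bona fide hypernatural and contradicts $\sum_i a_i\alpha_i^{n_i}=0$. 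Thus the whole task reduces to establishing distinctness of the $n_iV_i$, with one exceptional subcase handled by a direct parity argument that uses the oddness of $h$.

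Since the $\alpha_i$ are $u$-equivalent and $v_2$ is a standard function, the $V_i$ are pairwise $u$-equivalent, so they all generate a common ultrafilter $\u_V$ on $\N_0$. I expect a clean dichotomy. If $\u_V$ is principal, then $\u_V=\u_k$ for some standard $k\in\N_0$, and Proposition \ref{simproperties}(1) forces $V_i=k$ for every $i$. When $k\geq 1$ the numbers $n_ik$ are automatically pairwise distinct since the $n_i$ are, and the valuation argument above closes the proof. When $k=0$, every $\alpha_i$ is odd, so every term $a_i\alpha_i^{n_i}$ is odd, and the sum of $h$ odd hyperintegers has parity $h\bmod 2=1$, contradicting that the sum vanishes---this is precisely where the hypothesis that $h$ is odd is used.

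The delicate case is when $\u_V$ is non-principal, in which case every $V_i$ is infinite and $u$-equivalent hyperintegers need not coincide, so a collision $n_iV_i=n_jV_j$ is a priori conceivable. I will rule it out by Proposition \ref{simproperties}(3). Writing $\gcd(n_i,n_j)=d$, $n_i=dp$, $n_j=dq$ with $\gcd(p,q)=1$, the identity $n_iV_i=n_jV_j$ becomes $V_j=(p/q)V_i$, which requires $q\mid V_i$ and, since $n_i\neq n_j$, also $p\neq q$. Define $f\colon\N\to\N$ by $f(V)=pV/q$ when $V>0$ and $q\mid V$, and $f(V)=V$ otherwise. Then $f(V_i)=V_j$, while $V_i\sim V_j$ gives $f(V_i)\sim V_i$; Proposition \ref{simproperties}(3) then forces $f(V_i)=V_i$, so $V_i=V_j$, which in turn forces $n_i=n_j$, the desired contradiction.

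The main conceptual obstacle is precisely this non-principal case: naive manipulations of $u$-equivalence only yield congruences of the $V_i$ modulo every standard integer, which is far weaker than the equality we need, so the Ramsey-theoretic content of Theorem \ref{fixedpointfree} (channelled through Proposition \ref{simproperties}(3)) is essential in order to upgrade the rational identity $V_j=(p/q)V_i$ to a genuine equality of hyperintegers. Everything else reduces to routine $2$-adic bookkeeping.
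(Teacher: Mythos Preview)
Your proof is correct and follows essentially the same strategy as the paper: pass to $u$-equivalent $\alpha_1,\ldots,\alpha_h$ via Proposition \ref{ultraregularity}, set $V_i=v_2(\alpha_i)$, and use Proposition \ref{simproperties}(3) to control collisions among the $n_iV_i$, with the oddness of $h$ handling the case where all $\alpha_i$ are odd.

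The paper organizes the argument a bit more economically, avoiding your principal/non-principal case split. Instead of first isolating the case $V_i=0$ and then separately ruling out collisions when the $V_i$ are infinite, the paper shows in one stroke that \emph{any} collision $n_i\nu_i=n_j\nu_j$ forces $\nu_i=0$: from $\nu_j\sim\nu_i$ one gets $n_i\nu_i=n_j\nu_j\sim n_j\nu_i$, and then Proposition \ref{simproperties}(3) applied to $n_i\nu_i$ (with the map $x\mapsto (n_j/n_i)x$ on multiples of $n_i$) gives $n_i\nu_i=n_j\nu_i$, hence $\nu_i=0$; $u$-equivalence then makes all $\nu_k=0$, and the parity contradiction finishes. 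Your gcd reduction and the subsequent use of infiniteness of $V_i$ in the non-principal case are correct but not needed: the collision argument already lands you in the all-odd case regardless.
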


\begin{proof}
Suppose, towards a contradiction, that there are $u$-equivalent $\xi
_{1},\ldots ,\xi _{h}\in \starN$ such that $P(\xi _{1},\ldots ,\xi _{h})=0$.
Let $f,g:\mathbb{N}\rightarrow \mathbb{N}$ be such that, for all $x\in 
\mathbb{N}$, we have $x=2^{f(x)}g(x)$ with $g(x)$ odd. Then, for each $%
i,j=1,\ldots ,h$, we have $f(\xi _{i})\sim f(\xi _{j})$. Set $\nu
_{i}:=f(\xi _{i})$ and $\zeta _{i}:=g(\xi _{i})$.

We next claim that, for distinct $i,j\in \{1,\ldots,h\}$, we have $%
n_i\nu_i\not=n_j\nu_j$. Indeed, if $n_i\nu_i=n_j\nu_j$, then $%
n_i\nu_i=n_j\nu_j\sim n_j\nu_i$, whence $n_i\nu_i=n_j\nu_i$ by Proposition \ref{simproperties} and
hence $\nu_i=0$. Since the $\nu_k$'s are all $u$-equivalent, it follows that 
$\nu_k=0$ for each $k$, whence each $\xi_i$ is odd. But then since $h$ is
odd, we have that $P(\xi_1,\ldots,\xi_h)$ is odd, contradicting that $%
P(\xi_1,\ldots,\xi_h)=0$.

By the previous paragraph, we can let $i\in \{1,\ldots,k\}$ be the unique
index for which $n_i\nu_i< n_j\nu_j$ for all $j=1,\ldots,k$. By factoring
out $2^{n_i\nu_i}$ from the equation $P(\xi_1,\ldots,\xi_h)=0$, we obtain
the contradiction 
\begin{equation*}
0=a_i\zeta_i^{n_i}+\sum_{j\not=i}a_j2^{n_j\nu_j-n_i\nu_i}\zeta_j^{n_j}\equiv
1 \mod 2.
\end{equation*}
\end{proof}

From the previous theorem, we see that many ``Fermat-like'' equations are
not partition regular:

\begin{corollary}
Suppose that $k,m,n$ are distinct positive natural numbers. Then the
equation $x^m+y^n=z^k$ is not partition regular.
\end{corollary}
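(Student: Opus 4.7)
The plan is to derive this corollary as a direct application of the preceding theorem. I would rewrite the Fermat-like equation $x^m + y^n = z^k$ in the homogeneous form $x^m + y^n - z^k = 0$, so that it fits the shape $a_1 x_1^{n_1} + a_2 x_2^{n_2} + a_3 x_3^{n_3} = 0$ required by the theorem. After possibly permuting the variables, I would arrange the exponents in strictly increasing order, which is possible precisely because $k, m, n$ are assumed to be distinct; this is the only place where the distinctness hypothesis is used.

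Next I would check that the two remaining hypotheses of the theorem hold. The number of monomials is $h = 3$, which is odd. The coefficients, after the reordering, are drawn from $\{+1, -1\}$ and are therefore all odd integers. Hence all three hypotheses of the preceding theorem are satisfied by the rewritten equation.

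Applying the theorem then yields that $x^m + y^n - z^k = 0$ is not partition regular on $\mathbb{N}$, and consequently neither is the equivalent equation $x^m + y^n = z^k$. There is essentially no obstacle here, since the entire combinatorial content has been packaged into the preceding theorem; the corollary amounts to observing that the hypotheses apply. The only step requiring any care at all is to notice that the strict ordering $n_1 < n_2 < n_3$ in the theorem is a mild bookkeeping requirement, available exactly because $k, m, n$ are pairwise distinct.
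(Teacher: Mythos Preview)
Your proposal is correct and matches the paper's approach exactly: the corollary is stated immediately after the theorem with the remark that ``from the previous theorem, we see that many `Fermat-like' equations are not partition regular,'' and no separate proof is given. Your verification of the three hypotheses (odd $h=3$, odd coefficients $\pm 1$, and distinct exponents permitting a strict ordering) is precisely the intended justification.
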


In \cite{di_nasso_fermat-like_2016}, the previous corollary is extended to allow $m
$ and $n$ to be equal, in which case the equations are shown to be not
partition regular (as long as, in the case when $m=n=k-1$, one excludes the
trivial solution $x=y=z=2$). The methods are similar to the previous proof.
To further illustrate the methods, we conclude by treating two
simple cases.

The following result was first proven by Csikivari, Gyarmati, and Sarkozy
in \cite{csikvari_density_2012}. The nonstandard proof given below
uses the same argument as in \cite{green_monochromatic_2016}.

\begin{theorem}
If one excludes the trivial solution $x=y=z=2$, then the equation $x+y=z^{2}$
is not partition regular.
\end{theorem}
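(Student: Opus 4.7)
The plan is to use Proposition \ref{ultraregularity} to reduce the problem to showing that the only $u$-equivalent solution of $\xi+\eta=\zeta^{2}$ in $\starN$ is the trivial $\xi=\eta=\zeta=2$; the non-partition regularity (excluding this trivial case) then follows. The key standard function will be $\mu:\N\to\N_{0}$ defined by $\mu(n):=\lfloor\log_{2}n\rfloor$. Since $\mu$ is standard, the hypothesis $\xi\sim\eta\sim\zeta$ transports to $a\sim b\sim c$, where $a:=\mu(\xi)$, $b:=\mu(\eta)$, $c:=\mu(\zeta)$.

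First I would compare bit lengths. By transfer of the defining inequality $2^{\mu(n)}\le n<2^{\mu(n)+1}$, one gets $\mu(\zeta^{2})\in\{2c,2c+1\}$. In the symmetric case $a=b$, the inclusion $\xi,\eta\in[2^{a},2^{a+1})$ forces $\xi+\eta\in[2^{a+1},2^{a+2})$, so $\mu(\xi+\eta)=a+1$; in the asymmetric case $a<b$, a short estimate gives $\mu(\xi+\eta)\in\{b,b+1\}$. Equating $\mu(\xi+\eta)=\mu(\zeta^{2})$ therefore yields a relation of the form $\max(a,b)=2c+k$ for some integer $k\in\{-1,0,1\}$.

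Next I would apply Proposition \ref{simproperties}(3) to pin $c$ down. The relation $\max(a,b)=2c+k$ together with $\max(a,b)\sim c$ and the standard function $n\mapsto 2n+k$ yields $2c+k=c$, so $c$ must equal the standard integer $-k\in\{-1,0,1\}$. Working through the resulting sub-cases shows that only one non-contradictory case survives in the symmetric branch: $c=1$, $a=b=1$, forcing $\xi,\eta,\zeta\in\{2,3\}$; the equation $\xi+\eta=\zeta^{2}$ combined with $\xi\sim\eta$ (which, for standard $\xi,\eta$, forces $\xi=\eta$) then pins down $\xi=\eta=\zeta=2$. The asymmetric case $a<b$ will be ruled out entirely: the forced values are distinct small standard integers (e.g.\ $a=0$ and $b=1$, leading to the candidate $(1,3,2)$), but then $\xi\sim\eta$ with $\xi$ standard forces $\xi=\eta$, contradicting $a\ne b$.

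The hard part will be the boundary analysis needed to apply Proposition \ref{simproperties}(3) safely: the function $n\mapsto 2n+k$ must map $\N$ to $\N$ (or, what is essentially the same, one uses the evident extension of the proposition to $\N_{0}$), which requires ruling out borderline values of $c$ before invoking the rigidity lemma. A second subtlety lies in the asymmetric case, where one must carefully invoke the fact that distinct standard natural numbers generate distinct principal ultrafilters in order to eliminate the remaining candidate asymmetric solutions. Once these points are handled, the finitely many cases are exhausted, only the trivial solution survives among $u$-equivalent solutions, and Proposition \ref{ultraregularity} yields that, excluding the trivial solution, $x+y=z^{2}$ is not partition regular.
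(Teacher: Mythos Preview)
Your proposal is correct and follows essentially the same strategy as the paper: use the bit-length function $\mu(n)=\lfloor\log_2 n\rfloor$, relate $\mu(\xi+\eta)$ to $2\mu(\zeta)$ up to a bounded error, and then invoke Proposition~\ref{simproperties}(3) to force the key quantity to be standard and small. The paper's execution is a bit more streamlined: after noting that standard $u$-equivalent elements are equal (so one may assume $\xi,\eta,\zeta$ infinite from the outset), it takes without loss of generality $\alpha:=\max(\xi,\eta)$ and $a:=\mu(\alpha)$ (avoiding your symmetric/asymmetric split), and applies the halving map $n\mapsto\lfloor n/2\rfloor$ (or $\lfloor n/2\rfloor+1$) to $a$ rather than your doubling map $n\mapsto 2n+k$ to $c$; the immediate contradiction $a\in\{0,1,2\}$ against $a$ infinite then bypasses your endgame enumeration of small candidate solutions.
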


\begin{proof}
Suppose, towards a contradiction, that $\alpha ,\beta ,\gamma \in \starN%
\setminus \mathbb{N}$ are such that $\alpha \sim \beta \sim \gamma $ and $%
\alpha +\beta =\gamma ^{2}$. 
Without loss of generality, assume $\alpha\ge\beta$.
Let $f:\N\to\N$ be the function defined by
$2^{f(n)}\le n<2^{f(n)+1}$ for every $n$, and set $a:=f(\alpha)$. Notice that $a$ is infinite, as otherwise $\alpha$ would be finite and
$\alpha\sim\beta\sim\gamma$ would imply $\alpha=\beta=\gamma=2$, contrary to our hypothesis.  Now observe that 
$$2^a\le\alpha<\alpha+\beta=\gamma^2\le 2\alpha<2\cdot 2^{a+1}\ \Rightarrow\
2^{\frac{a}{2}}<\gamma<2^{\frac{a}{2}+1}.$$
This shows that either $f(\gamma)=\lfloor\frac{a}{2}\rfloor$ or
$f(\gamma)=\lfloor\frac{a}{2}\rfloor+1$. Since $a=f(\alpha)\sim f(\gamma)$,
we have either $a\sim\lfloor\frac{a}{2}\rfloor$ or $a\sim \lfloor\frac{a}{2}\rfloor+1$.
In both cases we reach a contradiction as we would either have $a=\lfloor\frac{a}{2}\rfloor$ (which cannot occur since $a$ is positive),
or $a=\lfloor\frac{a}{2}\rfloor+1$ (and hence $a=1$ or $a=2$; the former is impossible and the latter has been excluded by hypothesis).
\end{proof}

\begin{theorem}
The equation $x^2+y^2=z$ is not partition regular.
\end{theorem}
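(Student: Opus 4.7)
\medskip

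\noindent\textbf{Proof plan.} My plan is to mirror the nonstandard argument just given for the equation $x+y=z^{2}$, using the characterization of partition regularity in Proposition~\ref{ultraregularity} together with the rigidity properties of $u$-equivalent hypernatural numbers from Proposition~\ref{simproperties}. Suppose for contradiction that $\alpha,\beta,\gamma\in\starN$ are pairwise $u$-equivalent and satisfy $\alpha^{2}+\beta^{2}=\gamma$. First I would rule out the finite case: if $\alpha\in\N$, then $u$-equivalence forces $\alpha=\beta=\gamma$ by Proposition~\ref{simproperties}(1), but $2\alpha^{2}=\alpha$ has no solution in $\N$. Hence $\alpha,\beta,\gamma$ are all infinite.

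Next I would introduce the binary-length function $f:\N\to\N$ defined by $2^{f(n)}\le n<2^{f(n)+1}$, set $a:=f(\alpha)$ and $b:=f(\beta)$, and assume without loss of generality $a\ge b$. From $2^{a}\le\alpha<2^{a+1}$ and $\beta<2^{a+1}$ I obtain the sandwich
\[
2^{2a}\le\alpha^{2}\le \alpha^{2}+\beta^{2}=\gamma<2^{2a+3},
\]
so $f(\gamma)=2a+k$ for some $k\in\{0,1,2\}$. Since $\alpha\sim\gamma$, Proposition~\ref{simproperties}(2) gives $a=f(\alpha)\sim f(\gamma)=2a+k$. I then finish by applying Proposition~\ref{simproperties}(3) to the map $g_{k}(n):=2n+k$ on $\N$: from $g_{k}(a)\sim a$ we conclude $g_{k}(a)=a$, i.e.\ $a=-k$. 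For $k\ge 1$ this is absurd since $a\in\starN$, while $k=0$ forces $a=0$, hence $\alpha=1$, contradicting the fact that $\alpha$ is infinite.

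The argument has no real obstacle; the only points requiring care are (i) checking that the lower bound $\gamma\ge\alpha^{2}\ge 2^{2a}$ together with the upper bound $\gamma<2\cdot 2^{2a+2}=2^{2a+3}$ really does pin $f(\gamma)$ into the three-element set $\{2a,2a+1,2a+2\}$, and (ii) noting that the rigidity statement Proposition~\ref{simproperties}(3) must be invoked separately for each of the three values of $k$, but in every case yields the same immediate contradiction with the infiniteness of $a$.
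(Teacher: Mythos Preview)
Your argument is correct. It differs from the paper's proof in an interesting way: the paper uses the $2$-adic valuation, writing $\alpha=2^{a}\alpha_{1}$, $\beta=2^{b}\beta_{1}$, $\gamma=2^{c}\gamma_{1}$ with $\alpha_{1},\beta_{1},\gamma_{1}$ odd, and then splits into the cases $a\ne b$ and $a=b$. In the unequal case the odd part of $\alpha^{2}+\beta^{2}$ is visibly odd and one reads off $c=2\min(a,b)$; in the equal case one needs the extra observation $\alpha_{1}^{2}+\beta_{1}^{2}\equiv 2\pmod 4$ to conclude $c=2a+1$. Either way $c\sim a$ gives the contradiction via Proposition~\ref{simproperties}(3).

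Your route instead recycles the binary-length function $f$ from the preceding proof of the non-regularity of $x+y=z^{2}$, and simply sandwiches $\gamma$ between $2^{2a}$ and $2^{2a+3}$. This avoids both the case split and the $\bmod\ 4$ computation, at the cost of having three possible values of $k$ to dispatch rather than one. Both arguments are equally short; yours is arguably more uniform with the immediately preceding theorem, while the paper's $2$-adic approach is closer in spirit to the earlier ``Fermat-like'' argument and generalizes more readily when one wants exact control of valuations rather than just order of magnitude.
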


\begin{proof}
Notice first that the given equation does not have constant solutions.
Then suppose, towards a contradiction, that $\alpha ,\beta ,\gamma$
are infinite hypernatural numbers \index{hypernatural number} such that
$\alpha \sim \beta \sim \gamma $ and $%
\alpha^2 +\beta^2 =\gamma $. 
Notice that $\alpha,\beta,\gamma$ are even numbers, since
they cannot all be odd. Then we can write
\begin{equation*}
\alpha =2^{a}\alpha _{1},\quad \beta =2^{b}\beta _{1},\quad \gamma
=2^{c}\gamma _{1},
\end{equation*}%
with positive $a\sim b\sim c$ and 
with $\alpha _{1}\sim\beta _{1}\sim\gamma _{1}$ odd.

\

\noindent \textbf{Case 1:} $a<b$.
We then have that 
$2^{2a}(\alpha_1^2+2^{2b-2a}\beta_1^2)=2^{c}\gamma_1$. 
Since $\alpha_1^2+2^{2b-2a}\beta_1^2$ and $\gamma_1$ are odd, it follows
that $2a=c\sim a$, whence $2a=a$ by Proposition \ref{simproperties} and
hence $a=0$, a contradiction. 
If $b>a$ the proof is entirely similar.

\

\noindent \textbf{Case 2:} $a=b$.
In this case we have the equality
$2^{2a}(\alpha_1^2+\beta_1^2)=2^{c}\gamma_1$. 
Since $\alpha_1,\beta_1$ are odd, $\alpha_1^2+\beta_1^2\equiv 2\mod 4$,
and so $2^c\gamma_1=2^{2a+1}\alpha_2$ for a suitable odd number $\alpha_2$.
But then $2a+1=c\sim a$, whence $2a+1=a$, and we again obtain a contradiction.
\end{proof}

Our current knowledge concerning partition regularity of nonlinear equations is still very fragmentary and, unlike the case of linear equations, no general necessary and sufficient conditions are known. An outstanding open problem in the area concerns the partition regularity of the Pythagorean equation $x^2 +y^2 =z^2$. 

\section*{Notes and references}

Rado's theorem is one of the first general results in Ramsey theory \cite{rado_studien_1933}, building on previous work of Hilbert and Rado's advisor Schur. In particular, Rado's Theorem (in its extended
version about systems of equations) subsumes van der Waerden's Theorem on arithmetic progressions. Since then, only fragmented and isolated progress has been obtained in the study of partition regularity of more general (nonlinear) equations. Recently, a breakthrough was obtained in \cite{di_nasso_iterated_2015,di_nasso_hypernatural_2015,luperi_baglini_nonstandard_2015,di_nasso_ramsey_2016}, where very general necessary criteria for partition regularity of Diophantine equations are obtained using nonstandard methods and iterated hyperextensions, as well as sufficient criteria using algebra in the Stone-\v{C}ech compactification. The study of partition regularity of Diophantine equations can be seen as a particular instance of the more general problem of establishing the partition regularity of arbitrary configurations. One outstanding such problem, recently settled positively by Moreira using topological dynamics \cite{moreira_monochromatic_2017}, was the problem of partition regularity of the configuration $\{ x,x+y,xy\}$ in $\mathbb{N}$. It is still unknown at the time of writing whether the configuration $\{ x, y, x+y,xy\}$ in $\mathbb{N}$ is partition regular.

%
%
%

\part{Combinatorial Number Theory}

\chapter{Densities and structural properties}
\label{chapter_densitites}
\section{Densities}

In this section, $A$ and $B$ denote subsets of $\N$.
Recall that $\delta(A,n)=\frac{|A\cap [1,n]|}{n}$.

\begin{definition}

\

\begin{enumerate}
\item The \emph{upper density }\index{density!upper} of $A$ is defined to be
$$\overline{d}(A):=\limsup_{n\to \infty}\delta(A,n).$$ 
\item The \emph{lower density } of $A$ is defined to be
$$\underline{d}(A):=\liminf_{n\to \infty}\delta(A,n).$$
\item If $\overline{d}(A)=\underline{d}(A)$, then we call this common value the \emph{density of $A$} and denote it by $d(A)$.
\end{enumerate}
\end{definition}

The following exercise concerns the nonstandard characterizations of the aforementioned densities.
\begin{exercise}
Prove that 
$$\overline{d}\left( A\right)=\max\{\st(\delta(A,N)) \ : \ N\in \starN\setminus \N\} =\max\{\mu_N(\starA) \ : \ N\in \starN\setminus \N\},$$
where $\mu_N$ is the Loeb measure on $[1,N]$.
State and prove the corresponding statement for lower density.
\end{exercise}

The previous exercise illustrates why the nonstandard approach to densities is so powerful.  Indeed, while densities often ``feel'' like measures, they lack some of the key properties that measures possess.  However, the nonstandard approach allows us to treat densities as measures, thus making it possible to use techniques from measure theory and ergodic theory.

There is something artificial in the definitions of upper and lower density in that one is always required to take samples from initial segments of the natural numbers.  We would like to consider a more uniform notion of density which allows one to consider sets that are somewhat dense even though they do not appear to be so when considering only initial segments.  This leads us to the concept of (upper) Banach density.  In order to defined Banach density, we first need to establish a basic lemma from real analysis, whose nonstandard proof is quite elegant.

\begin{lemma}[Fekete]
Suppose that $\left( a_{n}\right) $ is a \emph{subadditive} sequence of positive real numbers, that is, $a_{m+n}\leq a_{m}+a_{n}$ for all $m,n$. Then the sequence $%
\left( \frac{1}{n}a_{n}\right) $ converges to $\inf \left\{ \frac{1}{n}%
a_{n}:n\in \mathbb{N}\right\} $.
\end{lemma}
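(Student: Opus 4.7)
The plan is to use the nonstandard characterization of limits: $\lim_{n\to\infty} \frac{a_n}{n} = L$ if and only if $\st\bigl(\frac{a_N}{N}\bigr) = L$ for every infinite $N \in \starN$. Set $L := \inf_{n\in\N} \frac{a_n}{n}$ (which is a finite nonnegative real since each $a_n > 0$ and the sequence is bounded below by $0$). I would prove that $\st\bigl(\frac{a_N}{N}\bigr) = L$ for every infinite $N$, which yields the lemma.

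First, the easy direction: by definition of infimum, $L \le \frac{a_n}{n}$ for every $n \in \N$. By transfer, $L \le \frac{a_N}{N}$ for every $N \in \starN$, so $\st\bigl(\frac{a_N}{N}\bigr) \ge L$ for every infinite $N$.

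For the reverse inequality, fix $\epsilon > 0$ and choose $m \in \N$ with $\frac{a_m}{m} < L + \epsilon$. An easy induction on $q$ using subadditivity gives the standard inequality: for all $q \in \N$ and all $r \in \{1,\ldots,m-1\}$, one has $a_{qm+r} \le q\, a_m + a_r$, and also $a_{qm} \le q\, a_m$. Let $C := \max\{a_1,\ldots,a_{m-1}\}$, a fixed real. Then for every $n \ge m$, writing $n = qm + r$ with $0 \le r < m$, we have $a_n \le q a_m + C$, hence
\[
\frac{a_n}{n} \;\le\; \frac{q a_m}{qm + r} + \frac{C}{n} \;\le\; \frac{a_m}{m} + \frac{C}{n}.
\]
Now I transfer this: for every $N \in \starN$ with $N \ge m$, there exist unique $Q \in \starN$ and $R \in \{0,1,\ldots,m-1\}$ with $N = Qm + R$, and $\frac{a_N}{N} \le \frac{a_m}{m} + \frac{C}{N}$. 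If $N$ is infinite, then $\frac{C}{N}$ is infinitesimal, so $\st\bigl(\frac{a_N}{N}\bigr) \le \frac{a_m}{m} < L + \epsilon$.

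Since $\epsilon > 0$ was arbitrary, $\st\bigl(\frac{a_N}{N}\bigr) \le L$ for every infinite $N$, and combined with the easy direction we get equality. The only real care required is in verifying that the iterated subadditivity inequality $a_{qm+r} \le q a_m + a_r$ is genuinely an elementary statement (it is, quantifying over $q,m,r \in \N$) so that transfer applies to infinite $Q$; this is the one step where one could slip up, but it is straightforward.
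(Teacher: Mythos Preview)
Your proof is correct, but it takes a different route from the paper's. You choose a \emph{standard} $m$ with $\frac{a_m}{m} < L + \epsilon$, establish the classical estimate $\frac{a_n}{n} \le \frac{a_m}{m} + \frac{C}{n}$ by finite division with remainder, and then transfer this inequality to infinite $N$, where $\frac{C}{N}$ becomes infinitesimal. This is essentially the textbook standard proof of Fekete's lemma with a thin nonstandard coda. The paper instead works more intrinsically in the nonstandard universe: it picks an \emph{infinite} $\nu_0$ with $\frac{a_{\nu_0}}{\nu_0} \approx \ell$ (such a witness exists by transfer of the infimum property), and then for $\nu \ge \mu\nu_0$ with $\mu$ infinite performs the division $\nu = r\nu_0 + s$ in $\starN$, bounding $\frac{a_\nu}{\nu} \le \frac{a_{\nu_0}}{\nu_0} + \frac{1}{\mu} \approx \ell$. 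Your approach has the virtue of being close to the familiar argument and requiring only a single transfer; the paper's approach illustrates a characteristically nonstandard maneuver (realizing an infimum at an infinite index and dividing by an infinite quantity) that recurs elsewhere in the subject.
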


\begin{proof}
After normalizing, we may suppose that $a_{1}=1$. This implies that $\frac{1%
}{n}a_{n}\leq 1$ for every $n\in \mathbb{N}$. Set $\ell:=\inf\{%
\frac{1}{n}a_{n} \ : \ n\in \N\}$. By transfer, there exists $\nu
_{0}\in {}^{\ast }\mathbb{N}$ infinite such that $\frac{1}{\nu _{0}}a_{\nu
_{0}}\thickapprox \ell $. Furtermore $\st\left( \frac{1}{\nu }a_{\nu }\right)\geq\ell$ for every $\nu \in {}^{\ast }\mathbb{N}$. Fix an infinite $%
\mu \in {}^{\ast }\mathbb{N}$ and observe that for $\nu \geq \mu \nu _{0}$
one can write $\nu =r\nu _{0}+s$ where $r\geq \mu $ and $s<\nu _{0}$.
Therefore%
\begin{equation*}
\frac{1}{\nu }a_{\nu }\leq \frac{ra_{\nu _{0}}+a_{s}}{r\nu _{0}+s}\leq \frac{%
a_{\nu _{0}}}{\nu _{0}}+\frac{a_{s}}{\mu s}\leq \frac{a_{\nu _{0}}}{\nu _{0}}%
+\frac{1}{\mu }\thickapprox \frac{a_{\nu _{0}}}{\nu _{0}}\thickapprox \ell 
\text{.}
\end{equation*}%
It follows that $\frac{1}{\nu }a_{\nu }\thickapprox \ell $ for every $\nu
\geq \mu \nu _{0}$, whence by transfer we have that, for every $\varepsilon >0$,
there exists $n_{0}\in \mathbb{N}$ such that $\left\vert \frac{1}{n}%
a_{n}-\ell \right\vert <\varepsilon $ for every $n\geq n_{0}$. Therefore the
sequence $\left( \frac{1}{n}a_{n}\right) $ converges to $\ell $.
\end{proof}

For each $n$, set $$\Delta_n(A):=\max\{\delta(A,I) \ : \ I\subseteq \N \text{ is an interval of length }n\}.$$  It is straightforward to verify that $(\Delta_n(A))$ is subadditive, whence, by Fekete's Lemma, we have that the sequence $(\Delta_n(A))$ converges to $\inf_n \Delta_n(A)$.

\begin{definition}
We define the \emph{Banach density of $A$} to be
$$\BD(A)=\lim_{n\to \infty} \Delta_n(A)=\inf_n\Delta_n(A).$$
\end{definition}

\begin{remark}
Unlike upper and lower densities, the notion of Banach density actually makes sense in any amenable (semi)group, although we will not take up this direction in this book.
\end{remark}

If $(I_n)$ is a sequence of intervals in $\N$ such that $\lim_{n\to \infty}|I_n|=\infty$ and $\BD(A)=\lim_{n\to \infty}\delta(A,I_n)$, then we say that $(I_n)$ \emph{witnesses the Banach density of $A$}.

Here is the nonstandard characterization of Banach density:



\begin{exercise}
For any $N\in \starN\setminus \N$, we have
$$\BD(A)=\max\{\st(\delta(\starA,I)) \ : \ I\subseteq \starN 
\text{ is an interval of length }N\}.$$
\end{exercise}

As above, if $I$ is an infinite hyperfinite interval such that $\BD(A)=\st(\delta(A,I))$, we also say that $I$ \emph{witnesses the Banach density of $A$}.

\begin{exercise}
Give an example of a set $A\subseteq \N$ such that $\overline{d}(A)=0$ but $\BD(A)=1$.
\end{exercise}

\begin{exercise}
Prove that Banach density is translation-invariant:  $\BD(A+n)=\BD(A)$,
where $A+n=\{a+n : a\in A\}$.
\end{exercise}

Banach density is also subadditive:
\begin{proposition}\label{BDsubadd}
For any $A,B\subseteq \N$, we have $\BD(A\cup B)\leq \BD(A)+\BD(B)$.  
\end{proposition}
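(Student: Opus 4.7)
The plan is to invoke the nonstandard characterization of Banach density from the preceding exercise, which reduces the statement to a transparent counting inequality on a single hyperfinite interval. First I would apply that characterization to $A\cup B$: pick an infinite $N \in \starN \setminus \N$ and a hyperfinite interval $I \subseteq \starN$ of length $N$ such that
$$\BD(A\cup B) = \st(\delta({}^*(A\cup B), I)) = \st\!\left(\frac{|{}^*(A\cup B) \cap I|}{|I|}\right).$$
By transfer we have ${}^*(A\cup B) = \starA \cup \starB$, so the internal cardinality satisfies the elementary union bound $|(\starA \cup \starB) \cap I| \leq |\starA \cap I| + |\starB \cap I|$.

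Dividing by $|I|$ and applying $\st$ (which preserves weak inequalities and is additive on finite hyperreals) yields
$$\BD(A\cup B) \leq \st(\delta(\starA, I)) + \st(\delta(\starB, I)).$$
The last step is to bound each summand: since $I$ is a hyperfinite interval of length $N$, the nonstandard characterization applied with the \emph{same} infinite $N$ gives $\st(\delta(\starA, I)) \leq \BD(A)$ and $\st(\delta(\starB, I)) \leq \BD(B)$, because $\BD(A)$ is realized as the maximum of $\st(\delta(\starA, J))$ over length-$N$ hyperfinite intervals $J$. Combining these inequalities concludes the proof. There is no real obstacle here; the only point worth emphasizing is that we select $I$ to witness the Banach density of $A\cup B$ first, and then exploit the fact that the same $I$ may be used as a \emph{test} interval (not necessarily a witness) for $\BD(A)$ and $\BD(B)$.
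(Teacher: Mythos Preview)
Your proof is correct and follows exactly the same approach as the paper: pick a hyperfinite interval $I$ witnessing $\BD(A\cup B)$, apply the union bound $\delta(\starA\cup\starB,I)\leq \delta(\starA,I)+\delta(\starB,I)$, and then bound each summand by $\BD(A)$ and $\BD(B)$ respectively using the nonstandard characterization. The paper's version is simply more terse.
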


\begin{proof}
Let $I$ be an infinite hyperfinite interval witnessing the Banach density of $A\cup B$.  Then
$$\BD(A\cup B)=\st(\delta(A\cup B,I))\leq \st(\delta(A,I))+\st(\delta(B,I))\leq \BD(A)+\BD(B).$$  
\end{proof}
The following ``fattening'' result is often useful.

\begin{proposition}\label{fat}
If $\BD(A)>0$, then $\lim_{k\to \infty}\BD(A+[-k,k])=1$.
\end{proposition}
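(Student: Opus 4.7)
Because the sets $A + [-k,k]$ are nested in $k$, the sequence $\bigl(\BD(A + [-k, k])\bigr)_{k}$ is monotonically nondecreasing and bounded above by $1$, so the limit $c := \lim_{k \to \infty} \BD(A + [-k,k])$ exists in $[0,1]$. The task is to show $c = 1$; I will argue by contradiction, assuming $c < 1$ and deriving a contradiction with $\BD(A) > 0$. Set $\beta := 1 - c > 0$.

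The first step is to aggregate the complements into a single set. By subadditivity of Banach density (Proposition~\ref{BDsubadd}), $\BD\bigl((A+[-k,k])^c\bigr) \ge \beta$ for every $k$. The nonstandard characterization of $\BD$ furnishes, for each $k$, an infinite hyperfinite interval $I_k$ along which $\st\bigl(\delta({}^{\ast}((A+[-k,k])^c), I_k)\bigr) \ge \beta$. Since the sets ${}^{\ast}((A+[-k,k])^c)$ are nested decreasingly in $k$, the conditions ``$I$ is an infinite hyperfinite interval of length at least $m$ and $\delta({}^{\ast}((A+[-k,k])^c), I) \ge \beta/2$'' for $k, m$ ranging over $\N$ form a countable family with the finite intersection property. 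By countable saturation, there is a single infinite hyperfinite interval $I$ satisfying $\st\bigl(\delta({}^{\ast}((A+[-k,k])^c), I)\bigr) \ge \beta/2$ simultaneously for every standard $k$. Writing $E_k := {}^{\ast}((A+[-k,k])^c) \cap I$, the $E_k$ are internal with $\mu_I(E_k) \ge \beta/2$, and $E_k \supseteq E_{k+1}$; taking the intersection and using continuity of Loeb measure from above for this decreasing sequence, the Loeb-measurable set
\[
S := \bigcap_{k \in \N} E_k = \bigl\{\, x \in I : d(x, {}^{\ast}A) \text{ is infinite}\,\bigr\}
\]
satisfies $\mu_I(S) \ge \beta/2 > 0$.

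The main step, and the main obstacle, is to turn this positive-measure set of ``infinitely ${}^{\ast}A$-isolated'' points in $I$ into a contradiction with $\BD(A) > 0$. The plan here is to exploit translation-invariance: for each standard $j \in \Z$, the shifted interval $I - j$ is still infinite hyperfinite, and one can show by the same saturation argument that the analogous set $S_j \subseteq I - j$ has $\mu_{I-j}(S_j) \ge \beta/2$. Averaging the indicator of ${}^{\ast}A$ over a suitable hyperfinite window of such shifts (i.e., applying a Fubini-type argument to the hyperfinite interval $I \times [1,L]$ with $L$ infinite) and exploiting the pointwise ergodic theorem in the hypercycle system (Theorem~\ref{ergodichypercycle}) on the ambient interval, one deduces that the Loeb-average Banach density of ${}^{\ast}A$ along these shifts is at most $1 - \beta/2 = c < 1$, while simultaneously $\BD(A) > 0$ requires some shift to attain density arbitrarily close to $\BD(A) > 0$; the careful combinatorial balancing of ``$A$-dense'' shifts against ``$S$-dense'' shifts then yields the contradiction. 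The technical heart of the argument is this mass-transport/averaging step, which is where the interplay between Banach density (a supremum over intervals) and Loeb measure (an actual measure) is crucial.
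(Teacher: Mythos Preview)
Your setup has the right instinct but takes an unnecessary detour, and the ``main step'' is not actually an argument. First the detour: you invoke saturation to manufacture a single interval $I$ on which all the $E_k$ have Loeb measure $\ge \beta/2$. This is unnecessary and costs you the key piece of data. Since $\BD(A+[-k,k]) \le c$ and $\BD$ is a \emph{maximum} over infinite hyperfinite intervals, the bound $\mu_J\bigl({}^{\ast}(A+[-k,k])\bigr) \le c$ already holds on \emph{every} infinite hyperfinite $J$, for every $k$. So you should simply take $I$ to witness $\BD(A)=r$; then $\mu_I\bigl(\bigcup_k {}^{\ast}(A+[-k,k])\bigr) \le c$ automatically, and $\mu_I(S) \ge \beta$. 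By chasing the complements instead, your interval $I$ carries no information about ${}^{\ast}A$ itself, which is why your final paragraph has nowhere to go.

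That final paragraph is the real gap. Shifting $I$ by standard $j$ changes nothing; the identity $1-\beta/2 = c$ is false ($1-\beta/2 = (1+c)/2$); and ``careful combinatorial balancing'' is a placeholder, not a proof. There \emph{is} a clean ergodic argument, but it is not the one you sketch: with $I$ chosen to witness $\BD(A)=r$, apply Theorem~\ref{ergodichypercycle} to $f = 1_{{}^{\ast}A\cap I}$, so $\int_I \hat f\, d\mu_I = r$. One has $\hat f(x) \le r$ wherever the limit exists, since $\delta({}^{\ast}A,[x,x+n)) \le \Delta_n(A)\to r$ by transfer; and $\hat f(x)=0$ for $x\in S$, since such $x$ are infinitely far from ${}^{\ast}A$. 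Hence $r = \int_I \hat f \le r\,\mu_I(I\setminus S) \le rc < r$, a contradiction. This single inequality is the entire content of the ``main step,'' and it is absent from your sketch.

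For comparison, the paper's proof is direct and avoids the ergodic theorem altogether. Setting $a_k := \max_x |A\cap[x+1,x+k]|$ (so $a_k/k\to r$), it shows $\BD(A+[-k,k]) \ge rk/a_k$ by partitioning a hyperfinite interval of length $Nk$ on which ${}^{\ast}A$ has density $\approx r$ into $N$ blocks of length $k$: each block holds at most $a_k$ points of ${}^{\ast}A$, so at least $rNk/a_k$ blocks are nonempty, and each nonempty block is contained in ${}^{\ast}A+[-k,k]$. This is both shorter and more quantitative than the ergodic route.
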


\begin{proof}
Set $r:=\BD(A)$.  For each $k$, set $a_k:=\max_{x\in \mathbb{N}}|A\cap [x+1,x+k]|$, so $r=\lim_{k\to \infty}a_k/k$.  By the Squeeze Theorem, it suffices to show that $\BD(A+[-k,k])\geq \frac{r\cdot k}{a_k}$ for all $k$.  Towards this end, fix $k\in \N$ and $N\in \starN\setminus \N$ and take $x\in \starN$ such that $s:=|{}^{\ast}A\cap [x+1,x+N\cdot k]|/N \cdot k\approx r$.  For $i=0,1,\ldots,N-1$, set $J_i:=[x+ik+1,x+(i+1)k]$.  Set $\Lambda:=\{i \ | \ {}^{\ast}A\cap J_i\not=\emptyset\}$; observe that $\Lambda$ is internal.  We then have
$$s=\frac{|{}^{\ast}A\cap [x+1,x+N\cdot k]|}{N \cdot k}=\frac{\sum_{i\in \Lambda}|{}^{\ast}A\cap J_i|}{N\cdot k}\leq \frac{|\Lambda|\cdot a_k}{N \cdot k},$$ whence we can conclude that $|\Lambda|\geq s\cdot N \cdot k/a_k$.  Now note that if $i\in \Lambda$, then $J_i\subseteq {}^{\ast}A+[-k,k]$, so
$$\frac{|({}^{\ast}A+[-k,k])\cap [x+1,x+N\cdot k]|}{N \cdot k}\geq \frac{|\Lambda|\cdot k}{N \cdot k}\geq s\cdot k/a_k.$$  It follows that $\BD(A+[-k,k])\geq r \cdot k/a_k$.
\end{proof}

%



\section{Structural properties}

We now move on to consider structural notions of largeness.  In this section, $A$ continues to denote a subset of $\N$.

\begin{definition}
$A$ is \emph{thick}\index{thick} if and only if $A$ contains arbitrarily long intervals.  
\end{definition}

\begin{proposition}
$A$ is thick\index{thick} if and only if there is an infinite hyperfinite interval $I$ contained in $\starA$.
\end{proposition}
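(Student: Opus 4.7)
The plan is to prove both directions by a direct application of transfer, with overflow handling the forward direction.

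For the forward direction, I would translate thickness into the first-order statement $(\forall n \in \N)(\exists x \in \N)([x, x+n] \subseteq A)$. Applying transfer, one obtains $(\forall n \in \starN)(\exists x \in \starN)([x, x+n] \subseteq \starA)$. Specializing to any infinite $N \in \starN \setminus \N$ produces $x \in \starN$ with $I := [x, x+N] \subseteq \starA$, and $I$ is an infinite hyperfinite interval. Alternatively, one could define the internal set $B := \{n \in \starN : (\exists x \in \starN)([x,x+n] \subseteq \starA)\}$; thickness gives $\N \subseteq B$, and overflow then produces an infinite $N \in B$.

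For the backward direction, suppose $I = [a, b] \subseteq \starA$ is an infinite hyperfinite interval, so $b - a \in \starN \setminus \N$. Given any $n \in \N$, we have $n \leq b - a$, so the internal statement $(\exists x \in \starN)([x,x+n] \subseteq \starA)$ holds, witnessed by $x = a$. Transferring this statement back down yields $(\exists x \in \N)([x,x+n] \subseteq A)$, so $A$ contains an interval of length $n+1$. Since $n$ was arbitrary, $A$ is thick.

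Neither direction presents any real obstacle; the only thing to be slightly careful about is to phrase the quantifier correctly so that the property in question is elementary (bounding the quantifier by $\starN$) and then to invoke transfer in the appropriate direction. This is essentially the same pattern as the overflow principle (Proposition \ref{overflow}), just applied to the two-variable formula describing the existence of an initial interval of a given length inside $A$.
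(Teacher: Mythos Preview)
Your proof is correct and follows essentially the same approach as the paper: transfer for the backward direction, and overflow (or, as you note, direct transfer of the universally quantified statement) for the forward direction. The paper phrases the forward direction via overflow on the internal set $\{\alpha\in \starN : \starA \text{ contains an interval of length }\alpha\}$, which is exactly your set $B$.
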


\begin{proof}
The backwards direction follows directly from transfer.  The forwards direction follows from the overflow principle\index{overflow principle} applied to the 
internal set $\{\alpha\in \starN \ : \ \starA \text{ contains an interval of length }\alpha\}$.
\end{proof}

\begin{corollary}
$A$ is thick\index{thick} if and only if $\BD(A)=1$.
\end{corollary}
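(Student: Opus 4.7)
The plan is to prove both implications of the corollary using a mixture of the nonstandard characterization of Banach density and the preceding nonstandard characterization of thickness. Neither direction looks deep; each is essentially a one-step calculation once the right object is produced.

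For the forward implication, I would assume $A$ is thick and invoke the preceding proposition to obtain an infinite hyperfinite interval $I \subseteq \starA$. Setting $N := |I| \in \starN \setminus \N$, I note that $\delta(\starA, I) = |I|/|I| = 1$. The nonstandard characterization of Banach density stated in the exercise just before the corollary then gives
\[
\BD(A) \;=\; \max\{\st(\delta(\starA, J)) : J \subseteq \starN \text{ an interval of length } N\} \;\geq\; \st(\delta(\starA, I)) \;=\; 1,
\]
forcing $\BD(A) = 1$ since $\BD(A) \leq 1$ trivially.

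For the backward implication, I would use the purely standard form $\BD(A) = \inf_n \Delta_n(A)$ furnished by Fekete's Lemma. Assuming $\BD(A) = 1$ and observing that $\Delta_n(A) \leq 1$ for every $n$, the infimum equals $1$ only if $\Delta_n(A) = 1$ for all $n$. The key observation is that for a fixed $n$, $\delta(A, I)$ takes values in the finite set $\{0, 1/n, \ldots, 1\}$ as $I$ ranges over intervals of length $n$, so $\Delta_n(A)$ is an attained maximum, and $\Delta_n(A) = 1$ means there exists an interval $I$ of length $n$ with $\delta(A, I) = 1$, i.e.\ $I \subseteq A$. Since $n$ was arbitrary, $A$ contains arbitrarily long intervals and is therefore thick by definition.

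No serious obstacle is expected; the only subtlety is the finite-range remark that makes $\Delta_n(A) = 1$ equivalent to the existence of an interval of length $n$ inside $A$ (rather than merely a sequence of intervals on which the density tends to $1$). Alternatively, one can phrase the backward direction nonstandardly: transferring the statement ``for every $n$, there exists an interval $I \subseteq \N$ of length $n$ with $I \subseteq A$'' to $\starN$ and applying it to an infinite $N \in \starN \setminus \N$ yields an infinite hyperfinite interval inside $\starA$, from which thickness follows by the preceding proposition. I would likely present the second, nonstandard, formulation for stylistic consistency with the rest of the section.
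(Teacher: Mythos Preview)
Your proof is correct. The forward direction matches the paper's (which simply says ``obvious''), and your explicit argument via the preceding proposition is exactly what is intended.

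For the backward direction, you take a genuinely different route from the paper. You work entirely with the standard characterization $\BD(A)=\inf_n \Delta_n(A)$ and observe that $\Delta_n(A)=1$ forces an interval of length $n$ inside $A$; your finite-range remark is the right justification for why the maximum is attained. The paper instead argues by contrapositive in the nonstandard setting: assuming $A$ is not thick, it fixes $m$ such that $A$ contains no interval of length $m$, chooses an infinite $N$ divisible by $m$ and an interval $I$ of length $N$ witnessing $\BD(A)$, partitions $I$ into $N/m$ blocks of length $m$, and notes by transfer that each block misses $\starA$, yielding $\BD(A)\leq 1-1/m$. Your argument is shorter and more elementary, while the paper's is more in keeping with the nonstandard style of the chapter and illustrates the pigeonhole-on-a-hyperfinite-interval technique. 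Your suggested ``nonstandard alternative'' at the end is redundant: once you have established that $A$ contains intervals of every finite length, you have already verified the definition of thickness, so invoking transfer and the preceding proposition adds nothing.
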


\begin{proof}
The forwards direction is obvious.  For the backwards direction, let $N\in \starN$ be divisible by all elements of $\N$ and let $I$ be a hyperfinite interval of length $N$ witnessing the Banach density of $A$.  If $A$ is not thick\index{thick}, then there is $m$ such that $m\mid N$ and $A$ does not contain any intervals of length $m$.  Divide $I$ into $N/m$ many intervals of length $m$.  By transfer, each such interval contains an element of $\starN\setminus \starA$.  Thus
$$\BD(A)=\st(\delta(A,I))\leq \st\left(\frac{N-N/m}{N}\right)=1-1/m.$$ 
\end{proof}

\begin{definition}
$A$ is \emph{syndetic}\index{syndetic} if $\N\setminus A$ is not thick\index{thick}.
\end{definition}

Equivalently, $A$ is syndetic\index{syndetic} if there is $m$ such that all gaps of $A$ are of size at most $m$.
\begin{proposition}
$A$ is syndetic\index{syndetic} if and only if all gaps of $\starA$ are finite. 
\end{proposition}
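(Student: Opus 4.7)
The plan is to prove both directions by relating syndeticity directly to the thickness of the complement and then applying transfer together with the nonstandard characterization of thickness that was just established.

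For the forward direction, I would assume that $A$ is syndetic, so by definition there exists $m \in \N$ such that every gap of $A$ has size at most $m$. This is an elementary property of $A$: it can be written as ``for every $n \in \N$, the interval $[n, n+m]$ meets $A$.'' Applying transfer, for every $\nu \in \starN$, the hyperfinite interval $[\nu, \nu+m]$ meets $\starA$. Hence every gap of $\starA$ has size at most $m$, so in particular every gap is finite.

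For the backward direction, I would argue by contrapositive: suppose $A$ is not syndetic. Then by definition $\N \setminus A$ is thick. By the previous proposition characterizing thick sets, $\starN \setminus \starA$ contains an infinite hyperfinite interval $I = [\alpha, \beta]$ with $\beta - \alpha$ infinite. This interval lies in a gap of $\starA$ (or can be extended to one by pushing out to the nearest elements of $\starA$ on either side, if they exist), and thus $\starA$ has a gap of infinite length, contradicting the assumption that all gaps of $\starA$ are finite.

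Neither direction poses a serious obstacle; the only minor subtlety is to be clean in the backward direction about the possibility that the infinite interval $I$ extends all the way to $+\infty$ within $\starN \setminus \starA$ (in which case, strictly speaking, one might prefer to work with the equivalent formulation ``$\starN \setminus \starA$ contains an infinite hyperfinite interval''). Either way, the content reduces to transfer plus the already-proved nonstandard characterization of thickness, so the proof will be very short.
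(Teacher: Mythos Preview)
Your proof is correct. The forward direction matches the paper exactly: transfer the finite bound $m$ on gap sizes.

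For the backward direction you take a different route from the paper. You argue by contrapositive, reducing to the already-proved nonstandard characterization of thickness applied to $\N\setminus A$. The paper instead argues directly via underflow: it sets
\[
X:=\{\alpha\in \starN : \text{all gaps of }\starA \text{ have size at most }\alpha\},
\]
notes that $X$ is internal and, by the hypothesis that all gaps of $\starA$ are finite, contains every infinite $\alpha$; underflow then yields a finite $m\in X$, so all gaps of $A$ are bounded by $m$. Your approach is arguably cleaner in that it exploits the syndetic/thick duality and recycles the previous proposition, while the paper's approach is self-contained and illustrates underflow explicitly. Both are short and equally valid; your minor worry about the gap extending to infinity is harmless, since an infinite hyperfinite interval disjoint from $\starA$ is already what ``infinite gap'' means here.
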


\begin{proof}
The forward direction is immediate by transfer.  For the backwards direction, consider the set $$X:=\{\alpha\in \starN \ : \ \text{all gaps of }\starA \text{ are of size at most }\alpha\}.$$  By assumption, $X$ contains all elements of $\starN\setminus \N$, so by underflow, there is $m\in X\cap \N$.  In particular, all gaps of $A$ are of size at most $m$.
\end{proof}

\begin{definition}
$A$ is \emph{piecewise syndetic}\index{syndetic!piecewise} if there is a finite set $F\subseteq \N$ such that $A+F$ is thick\index{thick}.
\end{definition}

\begin{proposition}
If $A$ is piecewise syndetic\index{syndetic!piecewise}, then $\BD(A)>0$.  More precisely, 
if $F$ is a finite set such that $A+F$ is thick, then $\BD(A)\geq 1/|F|$.

\end{proposition}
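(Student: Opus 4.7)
The plan is to combine three facts established earlier in this section: thick sets have Banach density one, Banach density is translation-invariant, and Banach density is finitely subadditive (Proposition \ref{BDsubadd}). Given these, the proof should be essentially a one-line computation, so there is no serious obstacle to overcome — the only thing to verify is that $A+F$ decomposes correctly as a finite union of translates.

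First I would write $A+F = \bigcup_{n \in F}(A+n)$, which is immediate from the definition. Since by hypothesis $A+F$ is thick, the corollary characterizing thick sets gives $\BD(A+F) = 1$. Next, applying Proposition \ref{BDsubadd} iteratively (or its obvious $|F|$-fold generalization, which follows by the same argument since one can take the same interval $I$ witnessing $\BD(A+F)$ and split the count over the $|F|$ translates), I get
$$1 = \BD(A+F) = \BD\Bigl(\bigcup_{n \in F}(A+n)\Bigr) \leq \sum_{n \in F} \BD(A+n).$$
Then using translation invariance of Banach density (which is the previous exercise), each summand equals $\BD(A)$, so $1 \leq |F| \cdot \BD(A)$, giving $\BD(A) \geq 1/|F|$ as desired.

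The only subtle point is that Proposition \ref{BDsubadd} is stated for two sets, not for finite unions, so I would either explicitly iterate it or note that the same nonstandard proof works verbatim: pick an infinite hyperfinite interval $I$ witnessing $\BD(A+F)$, and observe
$$\BD(A+F) = \st(\delta(A+F, I)) \leq \sum_{n \in F} \st(\delta(A+n, I)) \leq \sum_{n \in F} \BD(A+n).$$
This directly yields the bound and makes the argument self-contained. The first assertion of the proposition ($\BD(A) > 0$) is then an immediate consequence since $|F|$ is finite.
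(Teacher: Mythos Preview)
Your proof is correct and follows essentially the same approach as the paper: decompose $A+F$ as a union of translates, use that thick sets have Banach density $1$, apply subadditivity (Proposition \ref{BDsubadd}) together with translation invariance, and conclude $1 \leq |F|\cdot\BD(A)$. The paper's version is slightly more compressed, but the argument is identical.
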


\begin{proof}
Take finite $F\subseteq \N$ such that $A+F$ is thick\index{thick}.  Since Banach density is translation invariant, by Proposition \ref{BDsubadd}, we have
$$1=\BD(\N)=\BD(\bigcup_{x\in F}(A+x))\leq |F|\cdot \BD(A).$$
\end{proof}

The notion of being piecewise syndetic\index{syndetic!piecewise} is very robust in that it has many interesting reformulations:

\begin{proposition}
For $A\subseteq \N$, the following are equivalent:
\begin{enumerate}
\item $A$ is piecewise syndetic\index{syndetic!piecewise};
\item there is $m\in \N$ such that $A+[0,m]$ is thick\index{thick};
\item there is $k\in \N$ such that for every $N>\N$, there is a hyperfinite interval $I$ of length $N$ such that $\starA$ has gaps of size at most $k$ on $I$;
\item for every $N>\N$, there is a hyperfinite interval $I$ of length $N$ such that all gaps of $\starA$ on $I$ are finite;
\item there is $k\in \N$ and there is an infinite hyperfinite interval $I$ such that $\starA$ has gaps of size at most $k$ on $I$;
\item there is an infinite hyperfinite interval $I$ such that all gaps of $\starA$ on $I$ are finite;
\item there is $k\in \N$ such that, for every $n\in \N$, there is an interval $I\subseteq \N$ of length $n$ such that the gaps of $A$ on $I$ are of size at most $k$;
\item there is a thick\index{thick} set $B$ and a syndetic\index{syndetic} set $C$ such that $A=B\cap C$.
\end{enumerate}
\end{proposition}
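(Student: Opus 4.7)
The plan is to establish the cycle $(1) \Rightarrow (2) \Rightarrow (7) \Rightarrow (3) \Rightarrow (4) \Rightarrow (6) \Rightarrow (5) \Rightarrow (2)$, together with the trivial weakenings $(3) \Rightarrow (5)$ and $(5) \Rightarrow (6)$, and then prove $(1) \Leftrightarrow (8)$ as a separate equivalence (making use of the already-established $(1) \Leftrightarrow (2)$). The bridge between the standard conditions (1), (2), (7), (8) and the nonstandard conditions (3)--(6) is the transfer principle in one direction and an underflow/standard-part argument in the other.

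First, I would dispatch the easy standard implications. For $(1) \Rightarrow (2)$, simply set $m := \max F$, so $A + F \subseteq A + [0,m]$, making $A+[0,m]$ thick. For $(2) \Rightarrow (7)$, given that $A+[0,m]$ is thick, for each $n$ I would pick an interval $J \subseteq A+[0,m]$ of length $n+2m$; every $y \in J$ satisfies $A \cap [y-m,y] \neq \emptyset$, and by applying this with $y = a_1 + m + 1$ for consecutive elements $a_1 < a_2$ of $A$ in the middle sub-interval $I \subseteq J$ of length $n$, one forces $a_2 - a_1 \le m+1$, yielding (7) with $k = m+1$. The passage $(7) \Rightarrow (3)$ is immediate from transfer applied to the $\Pi_1$-statement ``for every $n$ there is an interval of length $n$ with $A$-gaps bounded by $k$.'' The implications $(3) \Rightarrow (4)$, $(3) \Rightarrow (5)$, $(5) \Rightarrow (6)$, and $(4) \Rightarrow (6)$ are all trivial weakenings.

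Next, I would handle the nonstandard step $(6) \Rightarrow (5)$ via underflow. Given an infinite hyperfinite interval $I$ on which all gaps of ${}^\ast A$ are finite, I note that $M := \max\{\beta - \alpha : \alpha,\beta \text{ consecutive in } {}^\ast A \cap I\}$ is an internal element of ${}^\ast \N$ (and $|{}^\ast A \cap I| \ge 2$, else an infinite gap exists). If $M$ were infinite, there would be an infinite gap, contradicting (6); hence $M \in \N$, which is (5). To close the cycle, $(5) \Rightarrow (2)$: if ${}^\ast A$ has gaps $\le k$ on an infinite hyperfinite interval $I$, then for every $\beta \in I$ with $\beta - k \ge \min I$ one has ${}^\ast A \cap [\beta - k, \beta] \ne \emptyset$, i.e., $\beta \in {}^\ast(A + [0,k])$. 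Thus ${}^\ast(A + [0,k])$ contains a hyperfinite interval of infinite length, so by the nonstandard characterization of thickness, $A + [0,k]$ is thick.

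Finally, I turn to $(1) \Leftrightarrow (8)$. For $(8) \Rightarrow (1)$, write $A = B \cap C$ with $B$ thick and $C$ syndetic (so $C + [0,k] = \N$ for some $k$). Given any $n$, pick an interval $J \subseteq B$ of length $n+k$; for every $y$ in the sub-interval $J' := [\min J + k, \max J]$ of length $n$, there is $c \in C \cap [y-k, y] \subseteq J \subseteq B$, so $c \in B \cap C = A$ and $y \in A + [0,k]$, proving $A+[0,k]$ is thick, hence (1). For $(1) \Rightarrow (8)$, I would use the equivalence $(1) \Leftrightarrow (7)$ just established: fix $k$ and choose pairwise disjoint intervals $I_n$ of length $n$ on which $A$ has gaps $\le k$, spaced so that consecutive $I_n$'s are separated by more than $k$. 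Set $B := (\bigcup_n I_n) \cup A$ (thick, and $A \subseteq B$) and $C := A \cup (\N \setminus B)$. A direct computation gives $B \cap C = A$, and one checks that the gaps of $C$ are bounded: outside $B$, every element is in $C$; inside each $I_n$, the gaps of $C$ coincide with those of $A \cap I_n$ (and are therefore $\le k$); and boundary gaps are controlled by the separation of the $I_n$'s. This last verification---choosing the intervals $I_n$ carefully enough that the boundary elements of each $I_n$ are close to elements of $A$ (which can be arranged by shrinking $I_n$ to start and end at $A$-points)---is the main obstacle and requires the most careful bookkeeping; a cleaner alternative would be to strengthen (7) to the form ``$I \subseteq A + [-k,k]$'' (equivalent to (2)) before performing the construction, so that every point of $I_n$ is automatically within $k$ of an element of $A \cap I_n$.
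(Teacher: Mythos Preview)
Your proof is correct and follows essentially the same route as the paper's: the same cycle through the conditions (with only cosmetic differences in ordering), the same underflow for $(6)\Rightarrow(5)$, and the same $B = A \cup \bigcup_n I_n$, $C = A \cup (\N\setminus B)$ construction for $(7)\Rightarrow(8)$. Your anxiety about boundary gaps in that last step is unwarranted, however: under the natural reading of ``gaps of $A$ on $I$ are at most $k$'' (namely, every sub-interval of $I$ of length $k+1$ meets $A$, which is exactly what $I\subseteq A+[0,k]$ gives and what you already used in $(5)\Rightarrow(2)$), the argument goes through immediately---if $J$ is a $(k+1)$-interval disjoint from $C$, then $J\subseteq B$ and $J\cap A=\emptyset$, so $J\subseteq\bigcup_n I_n$, hence $J\subseteq I_n$ for a single $n$ (the $I_n$'s being non-adjacent), contradicting the gap bound on $I_n$. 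No shrinking or extra bookkeeping is needed; the same reading of ``gaps'' also cleans up your $(6)\Rightarrow(5)$, where your $M$ should be the maximum over \emph{all} gap sizes including the initial and terminal ones.
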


\begin{proof}
Clearly (1) and (2) are equivalent and (3) implies (4).  Now assume that (3) fails.  In particular, if $X$ is the set of $k\in \starN$ for which there is a hyperfinite interval $I$ of length greater than $k$ on which $\starA$ has gaps of size greater than $k$, then $X$ contains all standard natural numbers.  By \index{overflow principle}, there is an infinite element of $X$, whence (4) fails.  Thus, (3) and (4) are equivalent.  (5) clearly implies (6) and (6) implies (5) follows from a familiar underflow argument.  (5) and (7) are also equivalent by transfer-overflow.

We now show (2) impies (3).  Fix $N>\N$.  By (2) and transfer, there is an interval $[x,x+N)\subseteq \starA+[0,m]$.  Thus, on $[x,x+N)$, $\starA$ has gaps of size at most $m$.

Clearly $(3)\Rightarrow (5)$.  Now suppose that (5) holds.  Choose $k\in \N$ and $M,N\in \starN$ such that $M<N$ and $N-M>\N$ such that $\starA$ has gaps of size at most $k$ on $[M,N]$.  Then $[M+k,N]\subseteq \starA+[0,k]$.  It follows by transfer that $A+[0,k]$ is thick\index{thick}, whence (2) holds.

Thus far, we have proven that (1)-(7) are equivalent.  Now assume that (7) holds and take $k\in \N$ and intervals $I_n\subseteq \N$ of length $n$ such that $A$ has gaps of size at most $k$ on each $I_n$.  Without loss of generality, the $I_n$'s are of distance at least $k+1$ from each other.  Let $B:=A\cup \bigcup_n I_n$ and let $C:=A\cup (\N\setminus B)$.  Clearly $B$ is thick\index{thick}.  To see that $C$ is syndetic\index{syndetic}, suppose that $J$ is an interval of size $k+1$ disjoint from $C$.  Then $J$ is disjoint from $A$ and $J\subseteq B$, whence $J\subseteq \bigcup_n I_n$.  Since the $I_n$'s are of distance at least $k+1$ from each other, $J\subseteq I_n$ for some $n$.  Thus, $J$ represents a gap of $A$ on $I_n$ of size $k+1$, yielding a contradiction.  It is clear that $A=B\cap C$.

Finally, we prove that (8) implies (7).  Indeed, suppose that $A=B\cap C$ with $B$ thick\index{thick} and $C$ syndetic\index{syndetic}.  Suppose that $k\in \N$ is such that all gaps of $C$ are of size at most $k$.  Fix $n\in \N$ and let $I$ be an interval of length $n$ contained in $B$.  If $J$ is an interval contained in $I$ of size $k+1$, then $J\cap C\not=\emptyset$, whence $J\cap A\not=\emptyset$ and (7) holds.
\end{proof}

Item (7) in the previous proposition explains the name piecewise syndetic\index{syndetic!piecewise}.  The following is not obvious from the definition:

\begin{corollary}
The notion of being piecewise is partition regular, meaning that if $A$ is piecewise syndetic\index{syndetic!piecewise} and $A=A_1\sqcup A_2$, then $A_i$ is piecewise syndetic for some $i=1,2$.
\end{corollary}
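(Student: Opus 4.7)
The plan is to argue by contrapositive, using the nonstandard characterization in item (5) of the preceding proposition. I will suppose that $A = A_1 \sqcup A_2$ with neither $A_1$ nor $A_2$ piecewise syndetic, and show that $A$ itself cannot be piecewise syndetic. The key insight is a nonstandard pigeonhole phenomenon: inside a ``dense patch'' of $\starA$, any infinite gap of $\starA_1$ must be filled, up to bounded gaps, by $\starA_2$.

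Assume, for contradiction, that $A$ is piecewise syndetic. By (5), there exist $k \in \N$ and an infinite hyperfinite interval $I$ such that $\starA$ has gaps of size at most $k$ on $I$; that is, every $y \in I$ is within distance $k$ of some element of $\starA$. Since $A_1$ is not piecewise syndetic, this particular $I$ cannot witness (6) for $A_1$, so $\starA_1$ must have an infinite gap on $I$: there exist $a,b \in I$ with $b - a$ infinite such that $[a,b] \cap \starA_1 = \emptyset$.

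Now consider the slightly shrunken interval $J := [a+k,\, b-k]$, which is still infinite and hyperfinite because $b-a$ is infinite while $k \in \N$. For any $y \in J$, the gap condition on $I$ yields $\alpha \in \starA$ with $|y - \alpha| \leq k$; but then $\alpha \in [y-k,\, y+k] \subseteq [a,b]$, so $\alpha \notin \starA_1$, forcing $\alpha \in \starA_2$. Hence every $y \in J$ is within distance $k$ of $\starA_2$, i.e., $\starA_2$ has gaps of size at most $k$ on the infinite hyperfinite interval $J$. Applying (5) to $A_2$, we conclude that $A_2$ is piecewise syndetic, contradicting our assumption.

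The only mildly delicate point is the boundary shrinkage from $[a,b]$ to $J = [a+k,b-k]$, which ensures that the witness $\alpha \in \starA$ of proximity to $y$ actually lies inside $[a,b]$ and therefore inside $\starA_2$ rather than $\starA_1$. Once this is done, the whole proof is transparent in the nonstandard framework, requiring no combinatorial bookkeeping beyond the overflow arguments already packaged into the equivalences (5) and (6).
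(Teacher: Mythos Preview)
Your proof is correct and follows essentially the same approach as the paper: find an infinite hyperfinite interval on which $\starA$ has only small gaps, locate inside it an infinite gap of $\starA_1$, and observe that $\starA_2$ must fill that gap with small gaps. The only difference is that the paper invokes characterization (6) (``all gaps finite'') rather than (5) (``gaps $\le k$''), which lets it skip the boundary shrinkage from $[a,b]$ to $[a+k,b-k]$ and conclude in one line that any gap of $\starA_2$ on $J$ is a gap of $\starA$ on $I$ and hence finite.
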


\begin{proof}
Suppose that $I$ is an infinite hyperfinite interval such that all gaps of $\starA$ on $I$ are finite.  Suppose that $I$ does not witness that $A_1$ is piecewise syndetic.  Then there is an infinite hyperfinite interval $J\subseteq I$ such that $J\cap \starA_1=\emptyset$.  It then follows that any gap of ${}^{\ast}A_2$ on $J$ must be finite, whence $J$ witnesses that $A_2$ is piecewise syndetic\index{syndetic!piecewise}.
\end{proof}

\begin{remark}
We note that neither thickness nor syndeticity are partition regular notions.  Indeed, if $A$ is the set of even numbers and $B$ is the set of odd numbers, then neither $A$ nor $B$ is thick\index{thick} but their union certainly is.  For syndeticity, let $(x_n)$ be the sequence defined by $x_1=1$ and $x_{n+1}:=x_n+n$.  Set $C:=\bigcup_{n \text{ even}}[x_n,x_n+n)$ and $D:=\bigcup_{n \text{ odd}}[x_n,x_n+n)$.  Then neither $C$ nor $D$ are syndetic\index{syndetic} but their union is $\N$, a syndetic set.
\end{remark}

The following is a nice consequence of the partition regularity of the notion of piecewise syndetic\index{syndetic!piecewise}.

\begin{corollary}
van der Waerden's theorem is equivalent to the statement that piecewise syndetic sets contain arbitrarily long arithmetic progressions.
\end{corollary}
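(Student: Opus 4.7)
The plan is to prove each implication directly, with the backward direction being essentially immediate from the partition regularity of piecewise syndeticity, and the forward direction requiring a coloring argument that uses the finitary van der Waerden theorem.

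For the backward direction, suppose that every piecewise syndetic set contains arbitrarily long arithmetic progressions, and let $\N = C_1 \sqcup \cdots \sqcup C_r$ be a finite coloring. Since $\N$ is trivially piecewise syndetic (it is thick), the partition regularity of piecewise syndeticity (established in the corollary just above in the excerpt) guarantees that some $C_i$ is piecewise syndetic. By hypothesis, this $C_i$ contains arbitrarily long arithmetic progressions, which is precisely what van der Waerden's theorem asserts.

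For the forward direction, assume van der Waerden's theorem (in its finitary form $S(m,1)$, which we proved earlier). Let $A$ be piecewise syndetic and fix $m \in \N$; we want to find an $m$-term AP inside $A$. By one of the characterizations in the proposition above, there is $k \in \N$ such that, for arbitrarily large $n$, there is an interval $I \subseteq \N$ of length $n$ on which all gaps of $A$ have size at most $k$; in particular, every element of $I$ lies within distance $k$ of an element of $A$, so $I \subseteq A + [0,k]$. Let $n$ be the finitary van der Waerden number guaranteeing a monochromatic $m$-term AP for every $(k+1)$-coloring of an interval of length $n$, and take such an $I$. Define $c : I \to \{0,1,\ldots,k\}$ by $c(x) := \min\{ i \in [0,k] : x - i \in A\}$, which is well-defined by the choice of $I$. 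By van der Waerden, there is an $m$-term AP $x, x+d, \ldots, x+(m-1)d$ in $I$ which is monochromatic of color $i$, and then $x-i, x-i+d, \ldots, x-i+(m-1)d$ is an $m$-term AP contained in $A$, as desired.

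Neither direction presents a serious obstacle: the backward direction is a one-line consequence of partition regularity, and the forward direction is a standard ``shift to the nearest point of $A$'' coloring trick combined with the finitary van der Waerden theorem. The only mild care needed is to verify that the coloring $c$ is defined on all of $I$, which is ensured by the gap bound $k$ on $I$, and that the resulting shifted progression still lies in $A$, which follows from the definition of $c$.
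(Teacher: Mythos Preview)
Your argument matches the paper's: partition regularity handles the backward direction, and for the forward direction you color a long interval by the minimal shift landing in $A$ and invoke finitary van der Waerden. One small wrinkle: from the bounded-gap characterization (item (7)) you use, the coloring $c(x)=\min\{i\in[0,k]:x-i\in A\}$ need not be defined on the leftmost $k$ elements of $I$, since the nearest $A$-point might lie to the right or outside $I$; the paper sidesteps this by instead using item (2) and taking $I$ inside the thick set $A+[0,m]$ directly, which makes $c$ well-defined on all of $I$---you can achieve the same effect by discarding the first $k$ points of your $I$.
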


\begin{proof}
First suppose that van der Waerden's theorem holds and let $A$ be a piecewise syndetic set.  Fix $k\in \N$; we wish to show that $A$ contains an arithmetic progression\index{arithmetic progression} of length $k$.  Take $m$ such that $A+[0,m]$ is thick\index{thick}.  Let $l$ be sufficiently large such that when intervals of length $l$ are partitioned into $m+1$ pieces, then there is a monochromatic arithmetic progression of length $k$.  Let $I\subseteq A+[0,m]$ be an interval of length $l$.  Without loss of generality, we may suppose that the left endpoint of $I$ is greater than $m$.  Let $c$ be the coloring\index{coloring} of $I$ given by $c(x):=$ the least $i\in [0,m]$ such that $x\in A+i$.  Then there is $i\in [0,m]$ and $x,d$ such that $x,x+d,\ldots,x+(k-1)d\in A+i$.  It follows that $(x-i),(x-i)+d,\ldots,(x-i)+(k-1)d\in A$.    

Conversely, suppose that piecewise syndetic sets\index{syndetic!piecewise} contain arbitrarily long arithmetic progressions\index{arithmetic progression}.  Fix a finite coloring $c$ of the natural numbers.  Since being piecewise syndetic is partition regular, some color is piecewise syndetic, whence contains arbitrarily long arithmetic progressions by assumption.
\end{proof}

\section{Working in $\Z$}

We now describe what the above densities and structural properties mean in the group $\Z$ as opposed to the semigroup $\N$.  Thus, in this section, $A$ now denotes a subset of $\Z$.

It is rather straightforward to define the appropriate notions of density.  Indeed, given any sequence $(I_n)$ of intervals in $\mathbb{Z}$ with $\lim_{n\to \infty}|I_n|=\infty$, we define
$$\overline{d}_{(I_n)}:=\limsup_{n\to \infty}\delta(A,I_n)$$ and
$$\underline{d}_{(I_n)}:=\liminf_{n\to \infty}\delta(A,I_n).$$

When $I_n=[-n,n]$ for each $n$, we simply write $\overline{d}(A)$ (resp. $\underline{d}(A)$) and speak of the \emph{upper} (resp. \emph{lower}) density of $A$.  Finally, we define the \emph{upper Banach density of $A$} to be 
$$\BD(A)=\lim_{n\to \infty}\max_{x\in \N}\delta(A,[x-n,x+n]).$$
Of course, one must verify that this limit exists, but this is proven in the exact same way as in the case of subsets of $\N$.

\begin{exercise}
Prove that 
$$\BD(A):=\max\{\overline{d}_{(I_n)}(A) \ : \ (I_n) \text{ a sequence of intervals with }\lim_{n\to \infty}|I_n|=\infty\}.$$
\end{exercise}

The notions of thickness and syndeticity for subsets of $\Z$ remains unchanged:  $A$ is thick\index{thick} if $A$ contains arbitrarily long intervals and $A$ is syndetic if $\Z\setminus A$ is not thick\index{thick}.  Similarly, $A$ is piecewise syndetic if there is a finite set $F\subseteq \Z$ such that $A+F$ is thick\index{thick}.  The following lemma is almost immediate:

\begin{lemma}\label{pwsZ}
$A$ is piecewise syndetic\index{syndetic!piecewise} if and only if there is a finite set $F\subseteq \Z$ such that, for every finite $L\subseteq \Z$, we have $\bigcap_{x\in L}(A+F+x)\not=\emptyset$.
\end{lemma}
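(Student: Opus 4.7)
The plan is to prove both directions using the elementary reformulation that $y \in \bigcap_{x \in L}(A+F+x)$ is equivalent to $y - L \subseteq A+F$, so the displayed condition amounts to saying that $A+F$ contains a translate of $-L$ for every finite $L \subseteq \Z$.

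For the forward direction, I would start by assuming $A$ is piecewise syndetic and picking a finite set $F \subseteq \Z$ such that $A+F$ is thick. Given a finite $L \subseteq \Z$, choose $n \in \N$ with $L \subseteq [-n,n]$. Since $A+F$ is thick, it contains some interval $[y-n, y+n]$; then for every $x \in L$ we have $y - x \in [y-n, y+n] \subseteq A+F$, so $y \in A+F+x$, giving $y \in \bigcap_{x \in L}(A+F+x)$.

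For the backward direction, I would use a nonstandard overflow argument. Note that $F$ is finite and standard so ${}^{\ast}(A+F) = {}^{\ast}A + F$. Consider the internal set
\[
\Omega := \{N \in {}^{\ast}\N : \bigcap_{x \in [-N,N]}({}^{\ast}A + F + x) \neq \emptyset \}.
\]
By the hypothesis applied with $L = [-n,n]$ and transfer, $\Omega$ contains every $n \in \N$, so by overflow there exists $N \in {}^{\ast}\N \setminus \N$ with $N \in \Omega$. Picking $y$ in that intersection yields $y - [-N,N] \subseteq {}^{\ast}A + F = {}^{\ast}(A+F)$, i.e., ${}^{\ast}(A+F)$ contains the infinite hyperfinite interval $[y-N, y+N]$. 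By transfer applied to the statement ``$A+F$ contains an interval of length $n$'' for each $n \in \N$, we conclude $A+F$ is thick, hence $A$ is piecewise syndetic.

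Neither direction looks like it hides a serious obstacle: the forward direction is a direct consequence of thickness, and the backward direction is a textbook overflow argument once one observes that the hypothesis is precisely a finite-intersection / transferable statement. The only mild subtlety to flag is the identification ${}^{\ast}(A+F) = {}^{\ast}A + F$, which is immediate because $F$ is finite and standard, so $\starF = F$ componentwise.
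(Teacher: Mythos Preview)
The paper does not actually prove this lemma; it simply labels it ``almost immediate'' and states it without argument. Your proof is correct in both directions.

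That said, your backward direction is more elaborate than needed. The very reformulation you open with---that $y \in \bigcap_{x \in L}(A+F+x)$ is equivalent to $y - L \subseteq A+F$---already does all the work. Taking $L = [-n,n]$, the hypothesis hands you some $y$ with $[y-n,y+n] \subseteq A+F$; since $n$ is arbitrary, $A+F$ contains arbitrarily long intervals and is thick. No overflow or transfer is required, and this fully elementary route is presumably why the paper deems the lemma immediate. Your nonstandard argument is valid (and the internality of $\Omega$ and the identification ${}^{\ast}(A+F) = {}^{\ast}A + F$ are fine), but it introduces machinery where none is needed.
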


\begin{exercise}
Formulate and verify all of the nonstandard equivalents of the above density and structural notions developed in the previous two sections for subsets of $\Z$.
\end{exercise}

The following well-known fact about difference sets has a nice nonstandard proof.

\begin{proposition}\label{differencesyndetic}
Suppose that $A\subseteq \mathbb{Z}$ is such that $\BD(A)>0$.  Then $A-A$ is syndetic.  In fact, if $\BD(A)=r$, then there is a finite set $F\subseteq \mathbb{Z}$ with $|F|\leq \frac{1}{r}$ such that $(A-A)+F=\mathbb{Z}$.
\end{proposition}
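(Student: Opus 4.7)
My plan is to construct $F$ greedily and then apply a nonstandard volume-packing argument inside a single hyperfinite interval to bound $|F|$.

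First, I would fix an infinite hyperfinite interval $I\subseteq \starZ$ witnessing $\BD(A)=r$, so that $\st(|\starA \cap I|/|I|) = r$. A preliminary observation I will need is that for any standard $n \in \Z$ the translate $\starA + n$ retains the same normalized count in $I$ up to infinitesimals: since $|I \cap (\starA + n)| = |(I - n) \cap \starA|$ and the symmetric difference $I \triangle (I-n)$ has size at most $2|n|$ (a finite number) while $|I|$ is infinite, we have $\st(|I \cap (\starA+n)|/|I|) = r$.

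Next I would build $F$ greedily: start with $F_0 := \emptyset$, and having chosen $F_k = \{n_1, \ldots, n_k\}$, stop if $(A-A) + F_k = \Z$; otherwise pick any $n_{k+1} \notin (A-A) + F_k$. This choice forces $n_{k+1} - n_i \notin A-A$ for every $i \leq k$, and by induction every distinct pair in $F_k$ has difference avoiding $A-A$. Since $(A + n_i) \cap (A + n_j) \neq \emptyset$ holds exactly when $n_j - n_i \in A-A$, the standard translates $A + n_i$ are pairwise disjoint in $\Z$, and hence, by transfer, the hyper-extensions $\starA + n_i$ are pairwise disjoint in $\starZ$.

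Restricting these disjoint sets to $I$ and summing yields
$$\sum_{i=1}^{k}\frac{|I \cap (\starA + n_i)|}{|I|} = \frac{\left|I \cap \bigcup_{i \leq k}(\starA + n_i)\right|}{|I|} \leq 1,$$
so taking standard parts and applying the density-stability observation gives $kr \leq 1$, i.e., $k \leq 1/r$. Thus the greedy procedure must terminate after at most $\lfloor 1/r \rfloor$ steps, producing the desired finite $F$ with $(A-A) + F = \Z$ and $|F| \leq 1/r$. The only subtle point to watch is the density-stability of the witnessing interval under bounded standard translations; once that is in hand, the rest is a routine volume-packing count.
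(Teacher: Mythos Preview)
Your proof is correct and follows essentially the same approach as the paper: greedy selection of shifts, pairwise disjointness of the corresponding translates, and a volume-packing bound inside a hyperfinite interval witnessing the Banach density. The paper factors the argument through an intermediate lemma (applied to the internal set $E:=(\starA - x)\cap[1,N]$ and its difference set $E-E$) and then transfers back, whereas you run the greedy step directly on $A-A$ in the standard world; but the core idea and the bound $|F|\le 1/r$ are obtained identically.
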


First, we need a lemma.

\begin{lemma}\label{deltasetnonstandard}
Let $N\in\starN\setminus\N$.
Suppose that $E\subseteq [1,N]$ is an internal set such that $\delta(E,N)\approx r$.  Then there is a finite $F\subseteq \mathbb{Z}$ with $|F|\leq 1/r$ such that $\mathbb{Z}\subseteq (E-E)+F$.
\end{lemma}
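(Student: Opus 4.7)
The plan is to construct $F$ by a greedy procedure, picking one standard integer at a time that is not yet covered, and to bound the number of steps using an internal cardinality count on disjoint translates of $E$.

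More precisely, I would proceed as follows. Set $F_0 := \emptyset$, and inductively suppose $F_k = \{f_1, \ldots, f_k\} \subseteq \mathbb{Z}$ has been chosen. If $\mathbb{Z} \subseteq (E-E) + F_k$, stop and take $F := F_k$. Otherwise, pick any $f_{k+1} \in \mathbb{Z} \setminus ((E-E) + F_k)$ and set $F_{k+1} := F_k \cup \{f_{k+1}\}$. The key property ensured at each step is that $f_{k+1} - f_i \notin E - E$ for all $i \leq k$, which by definition of $E - E$ means $(E + f_{k+1}) \cap (E + f_i) = \emptyset$. By induction, the translates $E + f_1, \ldots, E + f_k$ are pairwise disjoint internal subsets of $\starZ$.

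The main (and only) step is to bound $k$. Suppose the process has produced $F_k$ with $k \geq 1$. Let $M := \max_{1 \leq i \leq k} |f_i|$, which is a standard natural number since each $f_i \in \mathbb{Z}$. Since $E \subseteq [1, N]$ and each $f_i \in [-M, M]$, all the translates $E + f_i$ lie in the internal interval $[1 - M, N + M]$, which has internal cardinality $N + 2M$. By the pairwise disjointness and the fact that each $|E + f_i| = |E|$, we obtain
\[
k \cdot |E| \;=\; \sum_{i=1}^k |E + f_i| \;\leq\; N + 2M.
\]
Dividing by $N$ and taking standard parts gives $k \cdot r \leq 1$, since $|E|/N \approx r$ and $2M/N \approx 0$. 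Hence $k \leq 1/r$.

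Therefore the process must terminate after at most $\lfloor 1/r \rfloor$ steps, producing a finite $F \subseteq \mathbb{Z}$ with $|F| \leq 1/r$ and $\mathbb{Z} \subseteq (E-E) + F$, as desired. There is no real obstacle here beyond being careful that the $f_i$ are chosen in the \emph{standard} integers so that $M$ stays finite; this is permitted since $\mathbb{Z} \setminus ((E-E) + F_k)$ is a (standard) subset of $\mathbb{Z}$ whenever it is nonempty.
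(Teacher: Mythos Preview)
Your proof is correct and follows essentially the same greedy approach as the paper: pick uncovered standard integers one at a time, observe that the resulting translates $E+f_i$ are pairwise disjoint, and bound the number of steps by a cardinality count. The only cosmetic difference is that the paper counts inside $[1,N]$ using $\delta((E+x_i),N)\approx r$ for finite $x_i$, whereas you count inside $[1-M,N+M]$; these are the same argument.
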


\begin{proof}
Fix $x_1\in \N$.  If $\mathbb{Z}\subseteq (E-E)+x_1$, then take $F=\{x_1\}$.  Otherwise, take $x_2\notin (E-E)+\{x_1\}$.  If $\mathbb{Z}\subseteq (E-E)+\{x_1,x_2\}$, then take $F=\{x_1,x_2\}$.  Otherwise, take $x_3\notin (E-E)+\{x_1,x_2\}$.

Suppose that $x_1,\ldots,x_k$ have been constructed in this fashion.  Note that the sets $E+x_i$, for $i=1,\ldots,k$, are pairwise disjoint.  Since each $x_i\in \mathbb{Z}$ and $N$ is infinite,
we have that $\delta((E+x_i),N)\approx r$.  It follows that 
$$\delta\left(\bigcup_{i=1}^k (E+x_i),N\right)=\frac{\sum_{i=1}^k|(E+x_i)\cap [1,N]|}{N}\approx kr.$$  It follows that the process must stop after $k$-many steps,
with $k\leq \frac{1}{r}$.
\end{proof}

\begin{proof}[of Proposition \ref{differencesyndetic}]
Set $r:=\BD(A)$.  Fix and infinite $N$ and take $x\in \starN$ such that $\delta({}^{\ast}A,[x+1,x+N])\approx r$.  Set $E:=({}^{\ast}A-x)\cap [1,N]$.  Then $\delta(E,N)\approx r$, whence there is finite $F\subseteq \mathbb{Z}$ with $|F|\leq 1/r$ such that $\mathbb{Z}\subseteq (E-E)+F$.  It follows that $\mathbb{Z}\subseteq ({}^{\ast}A-{}^{\ast}A)+F$, whence it follows by transfer that $\mathbb{Z}=(A-A)+F$.
\end{proof}

The analog of Proposition \ref{fat} for $\Z$ is also true:

\begin{proposition}\label{fatZ}
If $\BD(A)>0$, then $\lim_{k\to \infty}\BD(A+[-k,k])=1$.
\end{proposition}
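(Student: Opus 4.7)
The plan is to mimic the proof of Proposition \ref{fat} essentially verbatim, since the argument there is intrinsically ``local'' (it works inside a single long interval) and does not use any special feature of $\N$ versus $\Z$. In fact, the only difference is that now the optimal windows for Banach density may drift arbitrarily far in either direction along $\Z$, but we are free to pass to the nonstandard setting and choose any internal $x \in \starZ$ for the left endpoint of the witnessing interval.

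Concretely, I would set $r := \BD(A)$ and, for each $k \in \N$, define
\[
a_k := \max_{x \in \Z} |A \cap [x+1, x+k]|.
\]
A routine check (analogous to the Fekete-style argument used to define Banach density) shows that $a_k / k \to r$ as $k \to \infty$. By the Squeeze Theorem, since $\BD(A + [-k,k]) \leq 1$, it suffices to prove the lower bound
\[
\BD(A + [-k,k]) \;\geq\; \frac{r \cdot k}{a_k}
\]
for every $k$, because the right-hand side tends to $1$.

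To establish this lower bound, I would fix $k \in \N$ and $N \in \starN \setminus \N$, then use the nonstandard characterization of $\BD$ to choose $x \in \starZ$ with
\[
s \;:=\; \frac{|\starA \cap [x+1, x + Nk]|}{Nk} \;\approx\; r.
\]
Partition $[x+1, x+Nk]$ into the $N$ consecutive hyperfinite intervals $J_i := [x + ik + 1, x + (i+1)k]$ for $i = 0, 1, \ldots, N-1$, and let $\Lambda := \{i : \starA \cap J_i \neq \emptyset\}$, which is internal. Since each $J_i$ has length $k$ and $\starA$ can contribute at most $a_k$ points to any such interval (by transfer of the defining property of $a_k$), we get
\[
s \;=\; \frac{\sum_{i \in \Lambda} |\starA \cap J_i|}{Nk} \;\leq\; \frac{|\Lambda| \cdot a_k}{Nk},
\]
so $|\Lambda| \geq s N k / a_k$. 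On the other hand, if $i \in \Lambda$ then $J_i \subseteq \starA + [-k, k]$ (any point of $J_i$ is within distance $k$ of a point of $\starA \cap J_i$), hence
\[
\frac{|(\starA + [-k,k]) \cap [x+1, x+Nk]|}{Nk} \;\geq\; \frac{|\Lambda| \cdot k}{Nk} \;\geq\; \frac{s \cdot k}{a_k} \;\approx\; \frac{r \cdot k}{a_k}.
\]
Taking standard parts yields $\BD(A + [-k,k]) \geq r k / a_k$, as required.

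I do not expect any genuine obstacle here: the only thing to be slightly careful about is the definition of $a_k$ over $x \in \Z$ (rather than $x \in \N$) and the fact that the maximum in the nonstandard characterization of $\BD$ in $\Z$ is attained at some $x \in \starZ$, not $\starN$. Both are immediate from the $\Z$-versions of density already set up in this section.
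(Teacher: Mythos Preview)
Your proposal is correct and is precisely the argument the paper has in mind: the paper states Proposition~\ref{fatZ} as the direct $\Z$-analogue of Proposition~\ref{fat} and does not spell out a separate proof, implicitly leaving it to the reader to repeat the proof of Proposition~\ref{fat} verbatim with $x$ ranging over $\starZ$ instead of $\starN$. Your write-up does exactly that, and the small adjustments you flag (defining $a_k$ over $\Z$ and taking the witnessing $x\in\starZ$) are the only changes needed.
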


However, for our purposes in Section \ref{quant}, we will need a more precise result.  Note that, a priori, for every $\epsilon>0$, there is $k_\epsilon$ and infinite hyperfinite interval $I_\epsilon$ such that $\delta(\starA+[-k_\epsilon,k_\epsilon],I_\epsilon)>1-\epsilon$.  The next proposition tells us that we can take a single interval $I$ to work for each $\epsilon$.  The proof is heavily inspired by the proof of \cite[Lemma 3.2]{beiglbock_sumset_2010}.

\begin{proposition}\label{goodinterval}
Suppose that $\BD(A)>0$.  Then there is an infinite hyperfinite interval $I\subseteq \Z$ such that, for every $\epsilon>0$, there is $k$ for which $\delta(\starA+[-k,k],I)>1-\epsilon$.
\end{proposition}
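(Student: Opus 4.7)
The plan is to show that any infinite hyperfinite interval $I$ which witnesses $\BD(A)$ already satisfies the conclusion, via essentially the same chunk-counting argument used in the proof of Proposition \ref{fat}. Set $r := \BD(A) > 0$, and use the nonstandard characterization of Banach density for $\mathbb{Z}$ (from the ``formulate and verify'' exercise immediately preceding the proposition) to choose an infinite hyperfinite interval $I = [x+1, x+L] \subseteq {}^\ast\mathbb{Z}$ with $\st(\delta(\starA, I)) = r$. I claim that this single $I$ works for every $\epsilon > 0$, without any need for saturation-style diagonalization.

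To verify the claim, fix a standard positive integer $k$ and set $a_k := k\cdot \Delta_k(A)$, i.e.\ the maximum of $|A\cap J|$ over standard intervals $J \subseteq \mathbb{Z}$ of length $k$. Partition $I$ into $M := \lfloor L/k\rfloor$ consecutive length-$k$ chunks $J_0,\ldots,J_{M-1}$, together with a boundary remainder of length strictly less than $k$, and let $\Lambda := \{ i < M : \starA \cap J_i \neq \emptyset\}$, an internal subset of $[0,M)$. The key geometric point is that every $J_i$ with $i \in \Lambda$ is contained in $\starA + [-k,k]$, because each of its points lies within distance less than $k$ of the witnessing element of $\starA \cap J_i$. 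Transferring the definition of $a_k$ gives $|\starA \cap J_i| \leq a_k$, and together with $\st(|\starA\cap I|/L)=r$ this yields
\[
|\starA \cap I| \;\leq\; |\Lambda|\cdot a_k + k, \qquad \st\!\left(\frac{|\Lambda|\,k}{L}\right) \;\geq\; \frac{r\,k}{a_k},
\]
and hence $\st\bigl(\delta(\starA+[-k,k], I)\bigr) \geq \st(|\Lambda|\,k/L) \geq r\,k/a_k$.

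Finally, let $k$ range over standard positive integers. By Fekete's lemma we have $a_k/k = \Delta_k(A) \to \BD(A) = r$, so $r\,k/a_k \to 1$ as $k \to \infty$. Thus, given any $\epsilon > 0$, choosing a standard $k$ with $r\,k/a_k > 1-\epsilon$ produces $\delta(\starA+[-k,k], I) > 1-\epsilon$, which is exactly the conclusion. The only mildly delicate bookkeeping point is verifying that the length-$(<k)$ remainder of the chunk partition contributes only an infinitesimal error to the density estimate; this is immediate because $k$ is standard while $L$ is infinite, so the remainder has negligible relative size and vanishes under the standard-part map.
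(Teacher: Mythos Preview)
Your proof is correct and considerably simpler than the paper's. You observe that any interval $I$ witnessing $\BD(A)$ already satisfies the conclusion, by running the chunk-counting argument from Proposition~\ref{fat} directly on $I$: the same inequality $\st(\delta(\starA+[-k,k],I))\geq rk/a_k$ holds on the fixed interval, and since $a_k/k\to r$ by Fekete, $rk/a_k\to 1$. The paper instead builds a F\o lner-type set $G=G_K\subseteq I_N$ via saturation, shows $\delta(\starA+[-k,k],G)\to 1$, and then uses a Dominated Convergence argument to locate a genuine interval $I=t+J$ that also works; in particular, the paper never observes that its auxiliary parameter $r:=\sup_k\mu_{I_N}(G_k)$ is already equal to $1$ by exactly your computation. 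Your argument exploits the specific $\mathbb{Z}$-structure (intervals partition cleanly into length-$k$ blocks, and $\Delta_k(A)\to\BD(A)$ via subadditivity), whereas the paper's proof---as the chapter's endnotes indicate---is adapted from \cite{di_nasso_high_2016}, which treats arbitrary countable amenable groups where such a direct chunk argument is unavailable and one must pass through F\o lner approximations. So your route is the right one for $\mathbb{Z}$; the paper's route buys generality at the cost of simplicity.
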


\begin{proof}
Pick $(I_n)$ a sequence of intervals in $\Z$ winessing the Banach density of $A$ and such that, for every $k$, we have that $\lim_{n\to \infty}\delta(A+[-k,k],I_n)$ exists.  (This is possible by a simple diagonalization argument.)  Fix an infinite $N$ and, for each $\alpha\in \starN$, set $G_\alpha:=(\starA+[-\alpha,\alpha])\cap I_N$.  Set $r:=\sup_{k\in \N} \mu_{I_N}(G_k)$.

\

\noindent \textbf{Claim:}  There is $K>\N$ such that:
\begin{enumerate}
\item[(i)]  For every $l\in \Z$, $\frac{|(l+G_K)\triangle G_K|}{|G_K|}\approx 0$.
\item[(ii)]  $\frac{|G_K|}{|I_N|}\approx r$.
\end{enumerate}

\

\noindent \textbf{Proof of Claim:}  For each $l\in \Z$, set $X_l$ to be the set of $\alpha\in \starN$ such that:
\begin{enumerate}
\item[(a)]  $\alpha\geq l$;
\item[(b)]  For all $x\in \Z$ with $|x|\leq l$, we have $\frac{|(x+G_\alpha)\triangle G_\alpha|}{|G_\alpha|}<\frac{1}{l}$;
\item[(c)]  $\left|\frac{|G_\alpha|}{|I_N|}-r\right|<\frac{1}{l}$.
\end{enumerate}
Since each $X_l$ is internal and unbounded in $\N$, by saturation there is $K\in \bigcap_l X_l$.  This $K$ is as desired.

\

Fix $K$ as in the Claim and set $G:=G_K$ and $\mu:=\mu_G$.  For $k\in \N$, we then have that
$$\delta(\starA+[-k,k],G)=\frac{|(\starA+[-k,k])\cap I_N|}{|G|}\approx \delta(\starA+[-k,k],I_N)\cdot \frac{1}{r},$$ whence we see that $\delta(\starA+[-k,k],G)\to 1$ as $k\to \infty$.\footnote{At this point, we may note that $G$ satisfies the conclusion of the proposition except that it is not an interval but instead a \emph{Folner approximation} for $\Z$.  While this would suffice for our purposes in Section \ref{quant}, we wanted to avoid having to introduce the theory of Folner approximations and instead opted to work a bit harder to obtain the above cleaner statement.}   

Now take $J$ to be an infinite hyperfinite interval such that $\frac{|(l+G_K)\triangle G_K|}{|G_K|}\approx 0$ for all $l\in J$; this is possible as a consequence of the
overflow principle\index{overflow principle}.  We claim that there is $t\in G$ such that $I:=t+J$ is as desired.  

For each $k$, take $n_k$ such that $\delta(\starA+[-n_k,n_k],G)>1-\frac{1}{k}$; without loss of generality, we may assume that $(n_k)$ is an increasing sequence.  Set $B_k:=\starA+[-n_k,n_k]$ and set $g_k:G\to [0,1]$ to be the $\l_G$-measurable function given by $g_k(t):=\st(\delta(B_k,t+J))$.  For each $t\in G$, we have that $(g_k(t))$ is a bounded nondecreasing sequence, whence converges to a limit $g(t)$.  By the Dominated Convergence Theorem, we have that $\int_G g(t)d\mu=\lim_{k\to \infty}\int_G g_k(t)d\mu$.  Now note that
$$\int_G g_k(t)d\mu\approx \frac{1}{|G|}\sum_{t\in G}\delta(B_k,t+J)=\frac{1}{|I|}\sum_{x\in J}\delta(B_k,x+G)\approx \delta(B_k,G)>1-\frac{1}{k}.$$  It follows that $\int_G g(t)d\mu=1$, whence $g(t)=1$ for some $t\in G$.  It is then clear that $I:=t+J$ is as desired.
\end{proof}

We call $I$ as in the conclusion of Proposition \ref{goodinterval} \emph{good for $A$}.  One can also prove the previous proposition using a Lebesgue Density Theorem for cut spaces; see \cite{di_nasso_high_2015}.

\section{Furstenberg's Correspondence Principle}
We end this chapter by explaining the nonstandard take on Furstenberg's correspondence principle.


\begin{theorem}[Furstenberg's Correspondence Principle]
Suppose that $A\subseteq \mathbb{Z}$ is such that $\BD(A)>0$.  Then there is a measure-preserving dynamical system $(X,\mathcal{B},\nu,T)$ and a measurable set $A_0\in \mathcal{B}$ such that $\nu(A_0)=\BD(A)$ and such that, for any finite set $F\subseteq \mathbb{Z}$, we have:
$$\BD\left(\bigcap_{i\in F} (A-i)\right)\geq \nu\left(\bigcap_{i\in F}T^{-i}(A_0)\right).$$
\end{theorem}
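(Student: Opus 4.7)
The plan is to construct the desired measure-preserving dynamical system as a hypercycle system built from an interval witnessing the Banach density of $A$. Let $r := \BD(A) > 0$ and, using the nonstandard characterization, pick an infinite $N \in \starN$ and an integer $M \in \starZ$ such that the hyperfinite interval $I := [M, M+N-1]$ satisfies $\st(\delta(\starA, I)) = r$. Take $X := [0, N-1]$ together with the hypercycle shift $T := S$ (so $T(x) = x+1$ for $x < N-1$ and $T(N-1) = 0$), and let $\nu := \mu_X$ be the Loeb measure on the $\sigma$-algebra $\mathcal{B} := \l_X$. Set
\[
A_0 := (\starA \cap I) - M \subseteq X,
\]
which is an internal (hence Loeb measurable) subset of $X$ with $|A_0| = |\starA \cap I|$, so that $\nu(A_0) = \st(|\starA \cap I|/N) = \st(\delta(\starA, I)) = \BD(A)$, giving the first required equality.

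Now fix a finite set $F \subseteq \Z$. The key observation is that the hypercycle shift coincides with ordinary translation except on a negligible boundary region. Set $k := \max_{i \in F} |i|$, which is a standard natural number, and let
\[
B := \{x \in X : T^i(x) \neq x + i \text{ for some } i \in F\}.
\]
Since the hypercycle shift only ``wraps around'' at the endpoints, $B$ is an internal set of cardinality at most $2k$, and hence $\nu(B) = 0$. For every $x \in X \setminus B$, we have the equivalence
\[
x \in \bigcap_{i \in F} T^{-i}(A_0) \;\Longleftrightarrow\; x + i \in A_0 \text{ for all } i \in F \;\Longleftrightarrow\; x + M \in \bigcap_{i \in F}(\starA - i).
\]
Using that the star map commutes with finite intersections and with translation by standard integers, the right-hand side is $x + M \in {}^{\ast}C$, where $C := \bigcap_{i \in F}(A - i)$.

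Consequently, up to the $\nu$-null set $B$, the set $\bigcap_{i \in F} T^{-i}(A_0)$ equals $({}^{\ast}C \cap I) - M$, so
\[
\nu\Bigl(\bigcap_{i \in F} T^{-i}(A_0)\Bigr) = \st\bigl(\delta({}^{\ast}C, I)\bigr).
\]
The conclusion then follows directly from the nonstandard characterization $\BD(C) = \max\{\st(\delta({}^{\ast}C, J)) : J \text{ an infinite hyperfinite interval}\}$, which gives $\st(\delta({}^{\ast}C, I)) \leq \BD(C)$, as desired. The only subtlety worth noting is the handling of the wrap-around set $B$, but this is an entirely routine point since $F$ is finite and $N$ is infinite, so there is no real obstacle: once the hypercycle system is set up as the dynamical model, the correspondence is almost immediate.
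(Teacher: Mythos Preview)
Your proof is correct and follows exactly the approach the paper has in mind: the paper's proof simply says to take a hyperfinite interval $I$ witnessing $\BD(A)$, use the hypercycle system on it with $A_0 := \starA \cap I$, and declares the verification ``easy.'' You have supplied precisely that verification (with a harmless translation of $I$ to $[0,N-1]$), including the routine handling of the wrap-around set and the appeal to the nonstandard characterization of Banach density.
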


\begin{proof}
Fix $I\subseteq {}^{\ast} \mathbb{Z}$ witnessing the Banach density of $A$.  It is easy to verify that the hypercycle system $(I,\Omega,\mu,S)$ introduced in Section \ref{hypercycle} of Chapter \ref{Loeb} and the set $A_0:=\starA \cap I$ are as desired.
\end{proof}

Let us mention the ergodic-theoretic fact that Furstenberg proved\index{Furstenberg's Multiple Recurrence Theorem}:

\begin{theorem}[Furstenberg Multiple Recurrence Theorem]
Suppose that $(X,\mathcal{B},\nu,T)$ is a measure-preserving dynamical system, $A\in \mathcal{B}$ is such that $\nu(A)>0$, and $k\in \N$ is given.  Then there exists $n\in \N$ such that $\nu(A\cap T^{-n}(A)\cap T^{-2n}(A)\cap \cdots \cap T^{-(k-1)n}(A))>0$.
\end{theorem}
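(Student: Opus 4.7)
The plan is to prove this via Furstenberg's structure theorem for measure-preserving systems, since the ergodic-theoretic tools available via Loeb measure (in particular the pointwise ergodic theorem already at our disposal) are not by themselves strong enough, and some genuinely new structural input is required. As a preliminary reduction, I would pass to an ergodic system via the ergodic decomposition: if the multiple recurrence inequality $\nu(A \cap T^{-n}A \cap \cdots \cap T^{-(k-1)n}A) > 0$ fails for every $n$ on $(X,\mathcal{B},\nu,T)$, then, by Fatou/Fubini applied to the disintegration of $\nu$ into its ergodic components, it must fail for every $n$ on a positive-measure set of ergodic components, and at least one such component still assigns positive measure to $A$. Thus it suffices to establish the conclusion for ergodic systems.

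Call a system \emph{SZ${}_k$} if it satisfies the conclusion of the theorem for every positive-measure set $A$, for the fixed value of $k$. The heart of the proof is then to verify the following four closure properties for the class of ergodic SZ${}_k$-systems:
\begin{enumerate}
\item The one-point trivial system is SZ${}_k$.
\item SZ${}_k$ is preserved under \emph{compact (isometric) extensions}.
\item SZ${}_k$ is preserved under \emph{weakly mixing extensions}.
\item SZ${}_k$ is preserved under inverse limits.
\end{enumerate}
Granting these, the Furstenberg--Zimmer structure theorem, which asserts that every ergodic system is an inverse limit of a transfinite tower of alternating compact and weakly mixing extensions of the trivial system, yields the result: one runs transfinite induction up the tower, using (2) and (3) at successor stages and (4) at limits, starting from (1).

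Among the four steps, (1) is trivial and (4) is a routine martingale/approximation argument. Step (3), the weakly mixing case, is the ``soft'' Ramsey-theoretic step: using a van der Corput style lemma one shows that for a weakly mixing extension $\pi\colon Y\to X$ and bounded measurable functions $f_1,\ldots,f_{k-1}$ on $Y$ whose conditional expectations over $X$ vanish, the multilinear averages $\frac{1}{N}\sum_{n=1}^{N} \prod_{i=1}^{k-1} f_i\circ T^{in}$ tend to zero in $L^2$; this reduces the recurrence count on $Y$ to the one on $X$, where it is assumed positive.

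The main obstacle — and the hardest single step — will be (2), closure under compact extensions. The idea is that in a compact extension, the orbit of $\nu$-almost every point projects to an ``almost periodic'' orbit in the fiber, in the sense that for every $\varepsilon>0$ the set of return times $\{n : \|U_T^n f - f\|_{L^2(\nu)} < \varepsilon\}$ (suitably relativized over the base factor $X$) is syndetic. One then combines this relative almost periodicity with the inductive SZ${}_{k}$-hypothesis on $X$ and an application of van der Waerden's theorem — which provides, inside the syndetic set of good return times, arithmetic progressions of length $k$ with common difference tuned to a good base-level recurrence $n$ — to produce a single $n$ for which all of $A, T^{-n}A, \ldots, T^{-(k-1)n}A$ have substantial joint intersection. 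Carrying out this argument rigorously requires the technology of Hilbert $L^2(X)$-modules, conditional measures along fibers, and generalized eigenfunctions, and it is here that the bulk of the work lies.
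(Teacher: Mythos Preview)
The paper does not prove this theorem at all: it is stated there without proof, introduced merely as ``the ergodic-theoretic fact that Furstenberg proved,'' and then combined with the nonstandard Furstenberg Correspondence Principle to deduce Szemer\'edi's Theorem. So there is nothing in the paper to compare your argument against.

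That said, the outline you give is essentially Furstenberg's original proof via the Furstenberg--Zimmer structure theorem, and the four-step scheme (trivial base case, compact extensions via van der Waerden, weakly mixing extensions via van der Corput, inverse limits) is the correct architecture. Your identification of the compact-extension step as the hard one is accurate. One small caution: the reduction to ergodic systems via ergodic decomposition is fine in spirit, but one typically proves the stronger uniform statement
\[
\liminf_{N\to\infty}\frac{1}{N}\sum_{n=1}^{N}\nu\bigl(A\cap T^{-n}A\cap\cdots\cap T^{-(k-1)n}A\bigr)>0
\]
for ergodic systems, so that integrating over the ergodic decomposition actually yields a positive lower bound for the original system; merely knowing that \emph{some} $n$ works for each ergodic component does not immediately give a single $n$ working globally. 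This is a standard refinement and your sketch would accommodate it, but it is worth making explicit.
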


Notice that the above theorem, coupled with the Furstenberg Correspondence Principle\index{Furstenberg's Correspondence Principle}, yields Furstenberg's proof of Szemer\'edi's Theorem \index{Szemer\'edi's Theorem}.

\begin{theorem}[Szemeredi's Theorem]
If $A\subseteq \Z$ is such that $\BD(A)>0$, then $A$ contains arbitrarily long arithmetic progressions\index{arithmetic progression}.
\end{theorem}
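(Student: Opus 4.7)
The plan is to deduce Szemerédi's Theorem as a direct combination of the two results stated immediately above: Furstenberg's Correspondence Principle and Furstenberg's Multiple Recurrence Theorem. The whole argument should be short, since all the heavy lifting has been done.

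First, I would fix $A \subseteq \Z$ with $\BD(A) > 0$ and $k \in \N$, and apply the Correspondence Principle to produce a measure-preserving dynamical system $(X,\mathcal{B},\nu,T)$ and a measurable set $A_0 \in \mathcal{B}$ with $\nu(A_0) = \BD(A) > 0$ satisfying the stated inequality on Banach densities of intersections. Next, I would apply the Multiple Recurrence Theorem to this system and to $A_0$, with the given $k$, obtaining $n \in \N$ such that
\[
\nu\!\left(A_0 \cap T^{-n}(A_0) \cap T^{-2n}(A_0) \cap \cdots \cap T^{-(k-1)n}(A_0)\right) > 0.
\]

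Then, with $F := \{0, n, 2n, \ldots, (k-1)n\}$, the Correspondence Principle gives
\[
\BD\!\left(\bigcap_{i=0}^{k-1}(A - in)\right) \;\geq\; \nu\!\left(\bigcap_{i=0}^{k-1} T^{-in}(A_0)\right) \;>\; 0.
\]
In particular the set on the left is nonempty, so any element $a$ in it yields $a, a+n, a+2n, \ldots, a+(k-1)n \in A$, which is an arithmetic progression of length $k$ in $A$.

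There is essentially no obstacle: both ingredients have been handed to us as black boxes, and the role of the final theorem in the narrative is simply to record the punchline of Furstenberg's approach. The only thing to be careful about is choosing the finite set $F$ in the Correspondence Principle to match the shape of the intersection produced by the Multiple Recurrence Theorem; once $F$ is taken to be the arithmetic progression $\{0, n, 2n, \ldots, (k-1)n\}$, the two results dovetail perfectly.
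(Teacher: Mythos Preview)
Your proposal is correct and is precisely the argument the paper has in mind: the text immediately preceding the theorem says ``Notice that the above theorem, coupled with the Furstenberg Correspondence Principle, yields Furstenberg's proof of Szemer\'edi's Theorem,'' and your write-up simply spells out that one-line deduction.
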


Szemeredi's Theorem is the density version of van der Waerden's theorem and was originally proven by Szemeredi in \cite{szemeredi_sets_1975}.

We end this chapter giving a simpler application of the correspondence principle used by Bergelson in \cite{bergelson_density_1986} to give a quantitative version of Schur's Theorem.

Suppose that $c:\N\to \{1,\ldots,m\}$ is an $m$-coloring of $\N$.  Then Schur's theorem states that there is $i\in \{1,\ldots,m\}$ and $a,b\in \N$ such that $c(a)=c(b)=c(a+b)=i$.  (Note that Schur's theorem is an immediate corollary of Rado's Theorem.)  It is natural to ask whether or not a quantitative Schur's theorem could hold in the sense that there should be some color $C_i$ such that there are many $a,b\in \N$ with $c(a)=c(b)=c(a+b)=i$.  In \cite{bergelson_density_1986}, Bergelson proved the following precise version of that result:

\begin{theorem}\label{bergelsonschur}
Suppose that $c:\N\to \{1,\ldots,m\}$ is an $m$-coloring of $\N$ and $C_i:=\{n\in \N \ : \ c(n)=i\}$.  For  $i\in \{1,\ldots,n\}$ and $\epsilon>0$, set
$$R_{i,\epsilon}:=\{n\in C_i \ : \ \overline{d}(C_i\cap (C_i-n))\geq \overline{d}(C_i)^2-\epsilon\}.$$  
Then there is $i\in \{1,\ldots,n\}$ such that, for every $\epsilon>0$, we have $\overline{d}(R_{i,\epsilon})>0$.
\end{theorem}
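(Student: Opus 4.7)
The strategy is a nonstandard realization of Furstenberg's Correspondence Principle via the hypercycle system, combined with an ergodic/Cauchy--Schwarz argument to locate the correct color. I would first choose an infinite $N\in\starN$, by a diagonalization, so that $\alpha_j := \mu_N(\starC_j\cap [1,N])$ is well-defined for every $j$, with $\sum_j \alpha_j = 1$; by taking $N$ along an appropriate subsequence one can simultaneously arrange $\alpha_{i_0}=d_{i_0}:=\overline{d}(C_{i_0})$ for a distinguished color $i_0$ (for instance one maximizing $d_i$). Working in the hypercycle system $([1,N],\mathcal{L}_N,\mu_N,S)$ with cyclic shift $S$, and writing $A_j:=\starC_j\cap[1,N]$, the standard identity $\sum_{n=1}^{N}|A\cap(A-n)|=|A|^2$ in $\Z/N\Z$ immediately gives
\[
\mathbb{E}_{n\in [1,N]}\bigl[\mu_N(A_j\cap(A_j-n))\bigr]=\alpha_j^2
\]
for each color $j$.

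The heart of the argument is to restrict this averaging to $n\in A_j$ for some suitably chosen $j$. Summing over colors,
\[
\sum_j \mathbb{E}_{n}\bigl[\mathbf{1}_{A_j}(n)\,\mu_N(A_j\cap(A_j-n))\bigr]=\mathbb{E}_{m,n}\bigl[\mathbf{1}[c(m)=c(n)=c(m+n)]\bigr],
\]
which is the density of monochromatic Schur triples in $[1,N]$. A Cauchy--Schwarz / mean-ergodic lower bound for this Schur-triple density, combined with pigeonholing over the $m$ colors, will produce a color $i^*$ satisfying $\mathbb{E}_{n\in A_{i^*}}[\mu_N(A_{i^*}\cap(A_{i^*}-n))]\geq\alpha_{i^*}^2$. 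A Markov-type argument then shows that for every $\epsilon>0$ the internal set $E_{\epsilon}:=\{n\in A_{i^*}:\mu_N(A_{i^*}\cap(A_{i^*}-n))\geq\alpha_{i^*}^2-\epsilon\}$ has positive Loeb measure. Because $\mu_N(\starB\cap[1,N])\leq\overline{d}(B)$ for any $B\subseteq\N$ when $[1,N]$ is an initial segment, pushing $E_{\epsilon}$ back to $\N$ yields positive upper density of $\{n\in C_{i^*}:\overline{d}(C_{i^*}\cap(C_{i^*}-n))\geq\alpha_{i^*}^2-\epsilon\}$.

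The main obstacle is to align the pigeonhole-selected color $i^*$ with the requirement $\alpha_{i^*}=d_{i^*}$, since the statement of $R_{i,\epsilon}$ involves $d_i^2$ rather than the possibly smaller $\alpha_i^2$. To obtain a single color that works for \emph{all} $\epsilon>0$ at once, I would iterate the argument along a sequence $\epsilon_k\to 0$, choosing a new infinite $N_k$ at each stage. Since only finitely many colors are available, some $i^*$ is selected for infinitely many indices $k$, and by coordinating the $N_k$ so that each witnesses $d_{i^*}$ from below, one can upgrade via the monotonicity $R_{i^*,\epsilon}\subseteq R_{i^*,\epsilon'}$ (for $\epsilon\leq\epsilon'$) from positive upper density at the $\epsilon_k$ to positive upper density at every $\epsilon>0$. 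The hardest step will be establishing the Cauchy--Schwarz lower bound on the hypercycle Schur triple density; in spirit this plays the role of Khintchine's recurrence theorem in the standard ergodic-theoretic proof, and can be handled by an $L^2$-projection argument against the $\sigma$-algebra of $S$-invariant sets.
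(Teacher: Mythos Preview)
Your approach has a genuine gap at the central step. The inequality you need---that for some color $i^*$ one has $\mathbb{E}_{n\in A_{i^*}}[\mu_N(A_{i^*}\cap(A_{i^*}-n))]\geq\alpha_{i^*}^2$---does \emph{not} follow from any Cauchy--Schwarz or $L^2$-projection argument. The mean ergodic theorem controls the \emph{unweighted} average $\mathbb{E}_{n\in[1,N]}[\mu_N(A\cap(A-n))]\approx\alpha^2$, but once you restrict to $n\in A$ you are asking for a lower bound on the Schur-triple count $|\{(m,n):m,n,m+n\in A\}|$, and this can be strictly smaller than $\alpha^3 N^2$ for a single color. (For instance, in $\Z/N\Z$ with $A=[N/4,3N/4)$ one computes roughly $N^2/16$ triples against $\alpha^3 N^2=N^2/8$.) Even if a total bound $\sum_j T_j\geq\sum_j\alpha_j^3 N^2$ held, ordinary pigeonhole would not produce an individual $j$ with $T_j\geq\alpha_j^3 N^2$. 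And the projection onto $S$-invariants is useless here: the hypercycle shift is ergodic, so that $\sigma$-algebra is trivial. Finally, your alignment fix is circular: you propose to ``coordinate the $N_k$ so that each witnesses $d_{i^*}$'', but $i^*$ is only determined \emph{after} the $N_k$ are chosen, and one cannot arrange $\mu_{N}(\starC_j)=\overline{d}(C_j)$ for all $j$ simultaneously since these quantities need not sum to $1$.

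The paper avoids Schur-triple counting entirely. It introduces the notion of a \emph{set of nice recurrence} (a set $R$ such that for every system and every positive-measure $A$ there is $n\in R$ with $\mu(A\cap T^{-n}A)\geq\mu(A)^2-\epsilon$), and shows via Ramsey's theorem that whenever a thick set is finitely partitioned, one piece is a set of nice recurrence. The proof then argues by contradiction: if $\overline{d}(R_{i,p})=0$ for all $i$, the sets $D_i:=C_i\setminus R_{i,p}$ still have $\overline{d}(D_i)=\overline{d}(C_i)$ and their union remains thick, so some $D_i$ is a nice recurrence set. Applying this to the hypercycle system on an interval $[1,N]$ that witnesses $\overline{d}(D_i)$ produces an $n\in D_i$ with $\overline{d}(C_i\cap(C_i-n))\geq\overline{d}(C_i)^2-\epsilon$, contradicting $n\notin R_{i,p}$. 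The Ramsey step is exactly what supplies the combinatorial input (the existence of monochromatic difference sets) that your Cauchy--Schwarz attempt cannot.
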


We more or less follow Bergelson's original proof except we use the nonstandard version of the Furstenberg correspondence principle.  

\begin{definition}
We call $R\subseteq \N$ a \emph{set of nice recurrence} if:  given any dynamical system $(X,\mathcal{B},\mu,T)$, any $\mu(B)>0$, and any $\epsilon>0$, there is $n\in R$ such that $\mu(A\cap T^{-n}A)\geq \mu(A)^2-\epsilon$.
\end{definition}

\begin{proposition}\label{differencenice}
Let $S\subseteq \N$ be an infinite set.  Then $S-S$ is a set of nice recurrence.
\end{proposition}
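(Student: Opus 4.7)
The plan is to realize the indicator functions of the translates $T^{-s}A$ (for $s\in S$) as a bounded family in $L^2(X,\mu)$ and then apply a standard Cauchy--Schwarz / pigeonhole argument to produce a near-square inner product. Throughout, recall that since $(X,\mathcal{B},\mu,T)$ is a measure-preserving system with $\mu$ a probability measure, for any $s,t\geq 0$ one has $\mu(T^{-s}A\cap T^{-t}A)=\mu(A\cap T^{-(t-s)}A)$ when $s\leq t$.

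Given $\epsilon>0$ and $\mu(A)>0$, I would first enumerate $S$ as $s_1<s_2<\cdots$ and, for each $N\in\mathbb{N}$, set $S_N:=\{s_1,\ldots,s_N\}$ and $f_i:=\chi_{T^{-s_i}A}\in L^2(X,\mu)$. Then $\int f_i\,d\mu=\mu(A)$ for every $i$, and since $\mu(X)=1$, Cauchy--Schwarz gives
\[
\Bigl\|\tfrac{1}{N}\sum_{i=1}^{N} f_i\Bigr\|_2^2\ \geq\ \Bigl(\int \tfrac{1}{N}\sum_{i=1}^N f_i\,d\mu\Bigr)^2\ =\ \mu(A)^2.
\]
Expanding the squared norm yields
\[
\frac{1}{N^2}\sum_{i,j=1}^{N}\langle f_i,f_j\rangle\ \geq\ \mu(A)^2,
\]
and the diagonal contributes $\tfrac{1}{N^2}\sum_{i=1}^{N}\mu(A)=\mu(A)/N$.

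The next step is the pigeonhole: subtracting off the diagonal gives
\[
\frac{1}{N(N-1)}\sum_{i\neq j}\langle f_i,f_j\rangle\ \geq\ \frac{N}{N-1}\Bigl(\mu(A)^2-\frac{\mu(A)}{N}\Bigr),
\]
which exceeds $\mu(A)^2-\epsilon$ once $N$ is chosen large enough (depending only on $\mu(A)$ and $\epsilon$). Fix such an $N$. Then there exist indices $i\neq j$ with $\langle f_i,f_j\rangle\geq \mu(A)^2-\epsilon$; by symmetry, assume $i<j$ and set $n:=s_j-s_i\in S-S$, so that by measure-preservation
\[
\mu(A\cap T^{-n}A)\ =\ \mu(T^{-s_i}A\cap T^{-s_j}A)\ =\ \langle f_i,f_j\rangle\ \geq\ \mu(A)^2-\epsilon,
\]
as required.

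There is no serious obstacle here; the only point that needs care is the bookkeeping to see that the small-diagonal correction $\mu(A)/N$ can be absorbed into $\epsilon$, which is handled by taking $N$ large. The content of the proof is really just the Hilbert-space calculation together with the observation that the difference of the two indices produced by pigeonhole lies in $S-S$, which uses nothing more than that $S$ is infinite.
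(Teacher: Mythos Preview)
Your proof is correct and is exactly the approach the paper has in mind: the paper's proof simply asserts that ``it is straightforward to check that there must exist $i<j$ such that $\mu(T^{-s_i}A\cap T^{-s_j}A)\geq \mu(A)^2-\epsilon$'' and then concludes as you do, while you have supplied the standard Cauchy--Schwarz/pigeonhole computation that justifies this claim.
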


\begin{proof}
Let $(s_i)$ be an enumeration of $S$ in increasing order.  It is straightforward to check that there must exist $i<j$ such that $\mu(T^{-s_i}A\cap T^{-s_j}A)\geq \mu(A)^2-\epsilon$.  It follows that $\mu(A\cap T^{-(s_j-s_i)}A)\geq \mu(A)^2-\epsilon$, as desired.
\end{proof}

\begin{exercise}\label{thickdifference}
If $E\subseteq \N$ is thick, then there is an infinite set $S\subseteq \N$ such that $S-S\subseteq E$.
\end{exercise}

\begin{corollary}\label{thicknice}
Suppose that $E\subseteq \N$ is thick and $E=C_1\cup \cdots \cup C_k$ is a partition of $E$.  Then some $C_i$ is a set of nice recurrence. 
\end{corollary}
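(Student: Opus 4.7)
The plan is to combine Exercise \ref{thickdifference} with the Ramsey theorem for pairs (Corollary \ref{Corollary:Ramsey-infinite}) and then appeal to Proposition \ref{differencenice}. First I would invoke Exercise \ref{thickdifference} to obtain an infinite set $S \subseteq \N$ whose difference set satisfies $S - S \subseteq E$. Enumerating $S = \{s_1 < s_2 < \cdots\}$, every positive difference $s_j - s_i$ (with $i<j$) lies in $E = C_1 \sqcup \cdots \sqcup C_k$, so it belongs to a unique $C_\ell$.

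Next I would define a $k$-coloring $\chi : S^{[2]} \to \{1,\ldots,k\}$ by setting $\chi(\{s_i, s_j\}) := \ell$ where $\ell$ is the unique index with $s_j - s_i \in C_\ell$ (for $i < j$). By Corollary \ref{Corollary:Ramsey-infinite} applied to this coloring of $S^{[2]}$, there exists an infinite $\chi$-monochromatic subset $S' \subseteq S$, with corresponding color $i \in \{1,\ldots,k\}$. By construction, every positive element of $S' - S'$ lies in $C_i$.

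Finally, by Proposition \ref{differencenice} the set $S' - S'$ (which by the proof of that proposition is witnessed by strictly positive elements of the form $s_j - s_i$ with $s_i < s_j$ in $S'$) is a set of nice recurrence. Since all such witnesses lie in $C_i$, given any measure-preserving system $(X,\mathcal{B},\mu,T)$, any $A \in \mathcal{B}$ with $\mu(A) > 0$, and any $\epsilon > 0$, the argument of Proposition \ref{differencenice} produces $i<j$ with $s_j - s_i \in C_i$ and $\mu(A \cap T^{-(s_j-s_i)}A) \geq \mu(A)^2 - \epsilon$. This exhibits $C_i$ as a set of nice recurrence, completing the proof.

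There is no real obstacle here; the argument is a direct Ramsey-theoretic pigeonhole on top of the two ingredients already at hand. The only minor point to verify is that Proposition \ref{differencenice} can be read as producing \emph{positive} recurrence times from an infinite subset of $\N$, which is immediate from its proof (the witnesses are of the form $s_j - s_i$ with $s_i < s_j$).
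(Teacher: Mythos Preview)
Your proof is correct and follows the same route as the paper: obtain $S$ with $S-S\subseteq E$ via Exercise~\ref{thickdifference}, color pairs from $S$ by the index of the piece containing their difference, apply Ramsey's theorem to extract a monochromatic infinite $S'$, and then invoke Proposition~\ref{differencenice}. Your added care about the recurrence times being \emph{positive} differences is a welcome clarification that the paper glosses over.
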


\begin{proof}
By Exercise \ref{thickdifference}, we may take $S\subseteq \N$ such that $S-S\subseteq E$.  Define a coloring $c:S\to \{1,\ldots,k\}$ by declaring, for $s,s'\in S$ with $s<s'$, that $c(\{s,s'\}):=i$ if $c(s-s')=i$.  By Ramsey's theorem, there is an infinite $S'\subseteq S$ and $i\in \{1,\ldots,k\}$ such that $c([S']^2)=\{i\}$.  It follows that $S'-S'\subseteq C_i$.  By Proposition \ref{differencenice}, $S'-S'$, and hence $C_i$, is a nice set of recurrence.
\end{proof}

We are now ready to give the proof of Theorem \ref{bergelsonschur}.  First, without loss of generality, we may assume that there is $k\in \{1,\ldots,m\}$ such that $\overline{d}(C_i)>0$ for $i=1,\ldots,k$ and $C_1\cup\cdots\cup C_k$ is thick.  For ease of notation, for $p\in \N$, let $R_{i,p}:=R_{i,1/p}$.  It suffices to show that, for each $p\in \N$, there is $i_p\in \{1,\ldots,k\}$ such that $\overline{d}(R_{i_p,p})>0$.  Indeed, if this is the case, then by the Pigeonhole Principle, there is some $i\in \{1,\ldots,m\}$ such that $i_p=i$ for infinitely many $p$; this $i$ is as desired.

Towards this end, fix $p\in \N$ and, again for ease of notation, set $R_i:=R_{i,p}$.  Suppose, towards a contradiction, that $\overline{d}(R_{i})=0$ for each $i=1,\ldots,k$.  Set $D_i:=C_i\setminus R_i$.  Then $\overline{d}(D_i)=\overline{d}(C_i)$ and $D_1\cup \cdots D_k$ is thick.  By Corollary \ref{thicknice}, there is $i\in \{1,\ldots,k\}$ such that $D_i$ is a nice set of recurrence.  Take $N>\N$ such that $\overline{d}(D_i)=\mu_N({}^{\ast}D_i)$.  By applying the fact that $D_i$ is a nice set of recurrence to the hypercycle system based on $[1,N]$ and the measurable set $A:={}^{\ast}D_i\cap [1,N]$, we get that there is $n\in D_i$ such that 
$$\overline{d}(C_i\cap (C_i-n))\geq \overline{d}(D_i\cap (D_i-n))\geq \mu_N(A\cap T^{-n}A)\geq \mu(A)^2-\epsilon=\overline{d}(C_i)^2-\epsilon,$$  contradicting the fact that $n\notin R_i$.

\section*{Notes and references}  The first appearance of nonstandard methods in connection with densities and structural properties seems to be Leth's dissertation and subsequent article \cite{leth_some_1988}.  Proposition \ref{fat} was first proven by Hindman in \cite{hindman_density_1982}.  Partition regularity of piecewise syndeticity was first proven by Brown in \cite{brown_interesting_1971}.  Proposition \ref{differencesyndetic} was first proven by F\o lner in \cite{folner_generalization_1954}; the nonstandard proof is due to Di Nasso \cite{di_nasso_embeddability_2014}.  Furstenberg's Correspondence Principle was first established in \cite{furstenberg_ergodic_1977} where he gave his ergodic-theoretic proof of Szemer\'edi's theorem.  The nonstandard approach to the Furstenberg Correspondence Principle seems to have a somewhat nebulous history.  Indeed, while it was surely known to many experts that one could use hypercycle systems to prove the Furstenberg Correspondence Principle, the first appearance of this idea in the literature seems to be generalizations of the Furstenberg Correspondence due to Townser appearing in the paper \cite{towsner_convergence_2009}.

\chapter{Working in the remote realm}


\section{Remote realms and finite embeddability}

A useful combinatorial notion is the following:

\begin{definition}\label{def-finiteembeddability}\index{finite embeddability}
Let $X,Y$ be sets of integers.
We say that $X$ is \emph{finitely embeddable} in $Y$, 
and write $X\lhd Y$, if every finite configuration $F\subseteq X$
has a shifted copy $t+F\subseteq Y$.
\end{definition}

Finite embeddability preserves most of the fundamental
combinatorial notions that are commonly considered in combinatorics
of integer numbers.

\begin{proposition}\label{feproperties}
\
\begin{enumerate}
\item
A set is $\lhd$-maximal if and only if it is thick.
\item
If $X$ contains an arithmetic progression of length $k$
and distance $d$ and $X\lhd Y$, then
also $Y$ also contains an arithmetic progression of length $k$
and distance $d$.
\item
If $X$ is piecewise syndetic and $X\lhd Y$, then
also $Y$ is piecewise syndetic.
\item
If $X\lhd Y$, then $\BD(X)\le\BD(Y)$.
\end{enumerate}
\end{proposition}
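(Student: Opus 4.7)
All four items will follow routinely from the definition of $\lhd$ once we recognise that each of the listed properties (thickness, containing a given arithmetic progression, piecewise syndeticity, having Banach density at least $r$) is witnessed by a finite configuration (possibly one for each parameter of approximation). The plan is therefore, in each case, to extract a suitable finite subset $F\subseteq X$ and then use the definition of $\lhd$ to translate a shifted copy of $F$ into $Y$.

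For (1), I would show both directions directly. If $Y$ is thick, then given any $X$ and any finite $F\subseteq X$ of diameter $d$, thickness supplies an interval $I\subseteq Y$ of length $d+1$, and a suitable shift places $F$ inside $I$; hence every $X$ satisfies $X\lhd Y$. Conversely, if $Y$ is $\lhd$-maximal, then $[1,n]\lhd Y$ for every $n$, so $Y$ contains arbitrarily long intervals and is thick. For (2), an arithmetic progression of length $k$ and common difference $d$ is already a finite configuration, so the definition of $\lhd$ gives a shifted copy inside $Y$ verbatim.

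For (3), I will use the equivalence established in the previous chapter: $X$ is piecewise syndetic if and only if there is $k\in\N$ such that, for every $n\in\N$, some interval $I_n\subseteq\Z$ of length $n$ has the property that all gaps of $X$ inside $I_n$ are of size at most $k$. Given such a $k$ and $n$, set $F_n := X\cap I_n$, a finite subset of $X$. Applying $X\lhd Y$, pick $t_n$ with $t_n+F_n\subseteq Y$. Then on the interval $t_n+I_n$, which has length $n$, the set $Y$ contains $t_n+F_n$, whose gaps inside $t_n+I_n$ are still bounded by $k$. Thus the same $k$ witnesses piecewise syndeticity of $Y$.

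For (4), let $r<\BD(X)$; I will show $r\leq\BD(Y)$. By definition of Banach density, there is $n$ and an interval $I$ of length $n$ with $|X\cap I|/n>r$. Set $F:=X\cap I$, a finite subset of $X$. By $X\lhd Y$ there is $t$ with $t+F\subseteq Y$, so
\[
\delta(Y,t+I)\;\geq\;\frac{|F|}{n}\;>\;r,
\]
and hence $\Delta_n(Y)>r$, which gives $\BD(Y)\geq\Delta_n(Y)>r$. Letting $r\uparrow\BD(X)$ yields $\BD(X)\leq\BD(Y)$.

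I do not anticipate any genuine obstacle: every item reduces to applying the definition of $\lhd$ to a carefully chosen finite witness, and the only step requiring more than a line of thought is (3), where one must invoke the finitary characterisation of piecewise syndeticity rather than the thickness-of-$X+F$ formulation, so that the witnessing configuration is actually finite.
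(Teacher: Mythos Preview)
Your arguments for (1)--(3) are correct and match what the paper intends (the paper proves (1) in one line and leaves (3) and (4) to the reader, so your explicit arguments via the finitary characterisation of piecewise syndeticity are exactly the sort of thing expected).

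There is, however, a genuine slip in your proof of (4). You correctly find, for a given $r<\BD(X)$, some $n$ and an interval $I$ of length $n$ with $\delta(X,I)>r$, and then conclude $\Delta_n(Y)>r$. But then you write ``which gives $\BD(Y)\geq\Delta_n(Y)>r$''. The inequality $\BD(Y)\geq\Delta_n(Y)$ goes the wrong way: by Fekete's lemma, $\BD(Y)=\inf_m\Delta_m(Y)\leq\Delta_n(Y)$. So from $\Delta_n(Y)>r$ for a \emph{single} $n$ you cannot conclude anything about $\BD(Y)$.

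The fix is easy and uses the same idea. Since $\BD(X)=\inf_n\Delta_n(X)$, the inequality $r<\BD(X)$ gives $\Delta_n(X)>r$ for \emph{every} $n$. So for each $n$ there is an interval $I_n$ of length $n$ with $\delta(X,I_n)>r$; taking $F_n:=X\cap I_n$ and shifting, you obtain $\Delta_n(Y)>r$ for every $n$, whence $\BD(Y)=\inf_n\Delta_n(Y)\geq r$. Now let $r\uparrow\BD(X)$ as you intended.
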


\begin{proof}
(1). Clearly $X$ is maximal if and only if $\N\lhd X$
if and only if every finite interval $[1,n]$
has a shifted copy $[x+1,x+n]\subseteq X$.
(2) is trivial.  We leave the proofs of (3) and (4) to the reader.
\end{proof}

We stress the fact that while piecewise syndeticity is preserved
under finite embeddability, 
the property of being syndetic is not. Similarly,
the upper Banach density is preserved or increased 
under finite embeddability, but the upper asymptotic 
density is not.
A list of basic properties is itemized below.

\begin{proposition}
\
\begin{enumerate}
\item
If $X\lhd Y$ and $Y\lhd Z$, then $X\lhd Z$.
\item
If $X\lhd Y$ and $X'\lhd Y'$, then $X-X'\lhd Y-Y'$.
\item
If $X\lhd Y$,
then $\bigcap_{t\in G}(X-t)\lhd \bigcap_{t\in G}(Y-t)$ for every finite $G$.
\end{enumerate}
\end{proposition}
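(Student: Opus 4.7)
My plan is to prove each of the three items by a direct finite/combinatorial argument from Definition \ref{def-finiteembeddability}, taking an arbitrary finite subset $F$ of the left-hand set and producing an explicit translate contained in the right-hand set. No saturation or nonstandard machinery is needed, since every required shift can be obtained from finitely many applications of the assumed finite embeddings; the main thing to watch is bookkeeping of which finite configurations one is transporting.

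For item (1), I would let $F \subseteq X$ be finite. Since $X \lhd Y$, there is $s$ with $s + F \subseteq Y$, and $s+F$ is itself a finite subset of $Y$; applying $Y \lhd Z$ to this finite set yields some $t$ with $t + (s+F) \subseteq Z$, so $(t+s) + F \subseteq Z$. For item (2), I would take a finite $F \subseteq X - X'$ and write its elements as differences $f_i = x_i - x_i'$ with $x_i \in X$, $x_i' \in X'$. The sets $A := \{x_1,\ldots,x_n\}$ and $A' := \{x_1',\ldots,x_n'\}$ are finite subsets of $X$ and $X'$ respectively, so there exist $s,s'$ with $s+A \subseteq Y$ and $s'+A' \subseteq Y'$. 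Then $(s-s') + f_i = (s+x_i) - (s'+x_i') \in Y - Y'$, giving $(s-s') + F \subseteq Y - Y'$.

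Item (3) is the one requiring the most care, but the key trick is to enlarge $F$ before transporting it. Given a finite $G$ and a finite set $F \subseteq \bigcap_{t \in G}(X - t)$, the defining property of $F$ says precisely that $F + G \subseteq X$. Since $F + G$ is still finite, $X \lhd Y$ yields some $s$ with $s + (F + G) \subseteq Y$, i.e., $(s+f) + t \in Y$ for every $f \in F$ and $t \in G$. Rewriting, $s + f \in Y - t$ for each $t \in G$, so $s + F \subseteq \bigcap_{t \in G}(Y - t)$, as required. The only real obstacle in any of the three parts is precisely this passage from $F$ to the enlarged configuration $F+G$ in item (3); once one sees that the shift that works for $F+G$ inside $Y$ automatically works for $F$ inside the intersection, the argument is immediate.
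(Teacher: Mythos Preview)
Your proof is correct and follows essentially the same approach as the paper's: in each item you take a finite configuration, enlarge it appropriately (to $s+F$ in (1), to the sets $A,A'$ of numerators and denominators in (2), and to $F+G$ in (3)), and then apply the assumed finite embeddings to obtain the required shift. The paper's argument for (2) phrases things via $F\subseteq G-G'$ rather than an explicit enumeration, but this is only a cosmetic difference.
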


\begin{proof}
$(1)$ is straightforward from the definition of $\lhd$.

$(2)$. Given a finite $F\subseteq X-X'$, let $G\subseteq X$ and $G'\subseteq X'$
be finite sets such that $F\subseteq G-G'$. By the hypotheses,
there exist $t,t'$ such that $t+G\subseteq Y$ and
$t'+G'\subseteq Y'$. Then,
$(t-t')+F\subseteq(t+G)-(t'+G')\subseteq Y-Y'$.

$(3)$. Let a finite set $F\subseteq\bigcap_{t\in G}(X-t)$ be given.
Notice that $F+G\subseteq X$, so we can pick an element $w$ such that
$w+(F+G)\subseteq Y$. Then, $w+F\subseteq\bigcap_{t\in G}Y-t$.
\end{proof}

In a nonstandard setting, the finite embeddability $X\lhd Y$
means that $X$ is contained in some ``remote realm"
of the hyper-extension $\starY$.
This notion can be also
characterized in terms of \emph{ultrafilter-shifts}, as defined
by M.~Beiglb\"ock \cite{Beiglbock_Ultrafilter_2011}.

\begin{proposition}\label{fe}
Let $X,Y\subseteq\N$. Then the following are equivalent:
\begin{enumerate}
\item
$X\lhd Y$.
\item
$a+X\subseteq\starY$ for some $a\in\starN$.
\item
There exists an ultrafilter $\U$ on $\N$ such that
$X\subseteq Y-\U$, where the ultrafilter shift
$Y-\U:=\{x : Y-x\in\U\}$.
\end{enumerate}
\end{proposition}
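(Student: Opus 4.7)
The plan is to prove the cyclic chain $(1) \Rightarrow (2) \Rightarrow (3) \Rightarrow (1)$, since each implication is short once set up correctly. The key observation tying everything together is the identity ${}^{\ast}(Y - x) = \starY - x$ for $x \in \N$, which is just transfer applied to the definition of the translate $Y - x = \{y \in \N : y + x \in Y\}$. I would first state this and then treat each implication in turn.

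For $(1) \Rightarrow (2)$, I would use the enlarging property applied to the family $\F := \{Y - x : x \in X\}$ of subsets of $\N$. The finite intersection property for $\F$ is precisely the content of (1): given $x_1, \dots, x_n \in X$, condition (1) yields $t \in \N$ with $t + x_i \in Y$ for all $i$, so $t \in \bigcap_i (Y - x_i)$. Assuming the nonstandard universe has the $(|X|)^+$-enlarging property, we obtain $a \in \bigcap_{x \in X} {}^{\ast}(Y - x) = \bigcap_{x \in X} (\starY - x)$, which says exactly that $a + X \subseteq \starY$. (If one prefers to avoid enlarging assumptions beyond countable saturation, one can instead use hyperfinite approximation: embed $X$ in a hyperfinite $H \subseteq \starX$ and apply transfer to a suitable finitary statement; but the enlarging-property approach is cleanest.)

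For $(2) \Rightarrow (3)$, given $a \in \starN$ with $a + X \subseteq \starY$, the candidate ultrafilter is $\U := \U_a$. For each $x \in X$, the assumption gives $a + x \in \starY$, equivalently $a \in \starY - x = {}^{\ast}(Y - x)$, which by definition of $\U_a$ says $Y - x \in \U_a$, i.e.\ $x \in Y - \U_a$. Hence $X \subseteq Y - \U$, as required.

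Finally, $(3) \Rightarrow (1)$ is a direct calculation that does not use nonstandard methods at all. Given a finite $F = \{x_1, \dots, x_n\} \subseteq X \subseteq Y - \U$, each $Y - x_i$ belongs to $\U$, so the finite intersection $\bigcap_{i=1}^n (Y - x_i)$ lies in $\U$ and in particular is nonempty; any $t$ in this intersection witnesses $t + F \subseteq Y$. I expect no real obstacle in any step; the only point that requires care is making explicit the saturation hypothesis used in $(1) \Rightarrow (2)$, since $X$ may be infinite and the family $\F$ may have cardinality $|X|$.
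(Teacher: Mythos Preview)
Your proof is correct and matches the paper's argument essentially verbatim for $(2)\Rightarrow(3)$ and $(3)\Rightarrow(1)$. The only difference is in $(1)\Rightarrow(2)$: the paper enumerates $X=\{x_n:n\in\N\}$, observes that $\bigcap_{i=1}^n(\starY-x_i)\ne\emptyset$ for every finite $n$, and applies \emph{overspill} to obtain an infinite $N$ with $\bigcap_{i=1}^N(\starY-x_i)\ne\emptyset$, whereas you invoke the enlarging property on the family $\{Y-x:x\in X\}$ directly. Since $X\subseteq\N$ is countable, your enlarging hypothesis is just the $\aleph_1$-enlarging property, which already follows from countable saturation---so your parenthetical remark about ``enlarging assumptions beyond countable saturation'' is unnecessary; nothing extra is needed. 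Both routes are equally short here; the overspill argument has the minor advantage of making the countability of $X$ do the work visibly, while your formulation is cleaner to state.
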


\begin{proof}
$(1)\Rightarrow(2)$. 
Enumerate $X=\{x_n\mid n\in\N\}$.
By the hypothesis, the finite intersection
$\bigcap_{i=1}^n(Y-x_i)\ne\emptyset$. Then, by \emph{overspill},
there exists an infinite $N\in\starN$ such that
$\bigcap_{i=1}^N(\starY-x_i)$ is non-empty.
If $a\in\starN$ is in that intersection,
then clearly $a+x_i\in\starY$ for all $i\in\N$.

$(2)\Rightarrow(3)$.
Let $\U=\U_a$ be the ultrafilter
generated by $a\in\starN$. For every $x\in X$,
by the hypothesis, $a+x\in\starY\Rightarrow a\in{}^*(Y-x)$,
and hence $Y-x\in\U$, \emph{i.e.}, $x\in Y-\U$, as desired.

$(3)\Rightarrow(1)$. Given a finite $F\subseteq X$,
the set $\bigcap_{x\in F}(Y-x)$ is nonempty, because
it is a finite intersection of elements of $\U$.
If $t\in\Z$ is any element in that intersection, then
$t+F\subseteq Y$.
\end{proof}

One can also considers a notion
of \emph{dense embeddability} $X\lhd_d Y$
when every finite configuration $F\subseteq X$
has ``densely-many'' shifted copies included in $Y$,
\emph{i.e.}, if the intersection
$\bigcap_{x\in F}(Y-x)=\{t\in\Z\mid t+F\subseteq Y\}$
has positive upper Banach density (see \cite{di_nasso_embeddability_2014}).
A natural notion of finite embeddability can also
be defined between ultrafilters on $\N$,
by putting $\U\lhd \V$ when for every $B\in\V$ there exists $A\in\U$
with $A\lhd B$ (see \cite{blass_finite_2015}).

\section{Banach density as Shnirelmann density in the remote realm}

The title in this chapter refers to looking at copies of $\N$ starting at some infinite element $a\in \starN$ and then connecting some density of the set of points of this copy of $\N$ that lie in the nonstandard extension\index{nonstandard extension} of a set $A$ and some other density of the original set $A$ itself.  In this regard, given $A\subseteq \N$ and $a\in \starN$, we set $\overline{d}(\starA-a):=\overline{d}((\starA-a)\cap \N)$ and likewise for other notions of density.  We warn the reader that, in general, we do not identify $\starA-a$ and $(\starA-a)\cap \N$ \emph{as sets}, but since we have not defined the density of a subset of $\starN$, our convention should not cause too much confusion. 




The key observation of Renling Jin is that there is a strong converse to item (3) of Proposition \ref{feproperties}.  

\begin{proposition}
Suppose that $A\subseteq \N$ is such that $\BD(A)=r$.  Let $I$ be an interval of infinite hyperfinite length witnessing the Banach density of $A$.  Then for $\mu_I$-almost all $x\in I$, we have $d({}^{\ast}A-x)=r$.  
\end{proposition}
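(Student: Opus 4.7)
For $x \in \starN$ and $n \in \N$, set $f_n(x) := |\starA \cap [x+1, x+n]|/n$, so that the conclusion $d(\starA - x) = r$ is equivalent to $\lim_{n \to \infty} f_n(x) = r$. My plan is to derive this from the pointwise ergodic theorem for the hypercycle system on $I$, combined with a simple transfer-based upper bound.

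For the upper bound, observe that $\delta(A, J) \le \Delta_n(A)$ for every standard interval $J \subseteq \N$ of length $n$, and hence by transfer the same inequality holds for every internal interval of length $n$ in $\starN$. In particular, $f_n(x) \le \Delta_n(A)$ for every $x \in \starN$ and every $n \in \N$; since $\Delta_n(A) \to r$, we conclude $\limsup_n f_n(x) \le r$ for every $x \in \starN$.

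Next, let $S : I \to I$ be the hypercycle shift and let $g := \chi_{\starA \cap I}$. Because $I$ witnesses the Banach density of $A$, $\int_I g \, d\mu_I = \mu_I(\starA \cap I) = r$. Applying Theorem \ref{ergodichypercycle} to $g$ yields, for $\mu_I$-almost every $x$, the existence of $\hat g(x) := \lim_n \frac{1}{n} \sum_{k=0}^{n-1} g(S^k x)$, together with $\int_I \hat g \, d\mu_I = r$. The set of $x \in I$ for which $\max(I) - x$ is finite is countable, hence of Loeb measure zero; for every other $x \in I$ and every $n \in \N$, the hypercycle shift has not yet wrapped around, so $S^k x = x + k$ for $0 \le k \le n-1$ and the ergodic average equals $|\starA \cap [x, x+n-1]|/n$, which differs from $f_n(x)$ by at most $1/n$. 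Therefore, for $\mu_I$-a.e.\ $x \in I$, $\lim_n f_n(x)$ exists and equals $\hat g(x)$.

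To conclude, combine the pointwise bound $\hat g \le r$ a.e.\ with $\int_I \hat g \, d\mu_I = r = r \cdot \mu_I(I)$ to force $\hat g = r$ $\mu_I$-a.e., so that $\lim_n f_n(x) = r$ and hence $d(\starA - x) = r$ for $\mu_I$-almost every $x \in I$. The only subtlety is the wrap-around behavior of the hypercycle shift, which is handled by discarding the measure-zero set of points at finite distance from $\max(I)$; once that is done, the result reduces to the ergodic theorem for the hypercycle system together with the translation-invariant upper bound $f_n \le \Delta_n(A)$ coming from transfer.
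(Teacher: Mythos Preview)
Your proof is correct and follows essentially the same approach as the paper: apply the ergodic theorem for the hypercycle system to $\chi_{\starA\cap I}$, discard the $\mu_I$-null set where the shift wraps around, establish the pointwise upper bound $\hat g\le r$, and conclude $\hat g=r$ a.e.\ from $\int_I \hat g\,d\mu_I=r$. Your upper bound via $f_n\le\Delta_n(A)$ (by transfer) is slightly more direct than the paper's contradiction argument, and you correctly invoke part (3) of Theorem~\ref{ergodichypercycle} for $\int_I\hat g=r$ where the paper re-derives this via Dominated Convergence, but these are cosmetic differences.
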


\begin{proof}
Write $I=[H,K]$ and consider the hypercycle system $(I,\l_i,\mu_I,S)$.  Let $f$ denote the characteristic function of ${}^{\ast}A\cap I$.  It follows that, for $x\in I^\#:=\bigcap_{n\in \N}[H,K-n]$, we have that
$$\frac{1}{n}\sum_{m=0}^{n-1}f(S^m(x))=\delta(\starA,[x,x+n-1]).$$ 
By the ergodic theorem for hypercycles (Theorem \ref{ergodichypercycle}), there is a $\l_I$-measurable function $\hat{f}$ such that, for $\mu_I$-almost all $x\in I$, we have that $$\lim_{n\to \infty}\frac{1}{n}\sum_{m=0}^{n-1}f(S^m(x))=\hat{f}(x).$$  Since $I^\#$ is a $\mu_I$-conull set, we will thus be finished if we can show that $\bar{f}$ is $\mu_I$-almost everywhere equal to $r$ on $I^\#$. 

Towards this end, first note that $\hat{f}(x)\leq r$ for $\mu_I$-almost all $x\in I^\#$.  Indeed, if $\hat{f}(x)>r$ for a positive measure set of $x\in I^\#$, then there would be some $x\in I^\#$ with $d({}^{\ast}A-x)>r$, whence $\BD(A)>r$ by transfer, yielding a contradiction.

Next note that, by the Dominated Convergence Theorem, we have that
$$\int_I \hat{f}(x)d\mu_I=\lim_{n\to \infty}\int_I \frac{1}{n}\sum_{m=0}^{n-1}f(S^m(x))d\mu_I=r,$$ where the last equality follows from the fact that $S$ is measure-preserving and that $\int_I \hat{f}(x)d\mu_I=\mu_I({}^{\ast}A)=r$.  By a standard measure theory argument, we have that $\hat{f}(x)=r$ for almost all $x\in I^\#$.
\end{proof}

\begin{remark}
In the context of the previous proposition, since $\mu_I(\starA)>0$, we can conclude that there is $x\in \starA$ such that $d(\starA-x)=r$.
\end{remark}

Summarizing what we have seen thus far:

\begin{theorem}\label{BDequiv}
For $A\subseteq \N$, the following are equivalent:
\begin{enumerate}
\item $\BD(A)\geq r$.
\item There is $B\lhd A$ such that $\underline{d}(A)\geq r$.
\item For any infinite hyperfinite interval $I$ witnessing the Banach density of $A$, we have $d({}^{\ast}A-x)\geq r$ for $\mu_I$-almost all $x\in I$.
\end{enumerate}
\end{theorem}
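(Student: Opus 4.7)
\textbf{Plan for the proof of Theorem \ref{BDequiv}.}

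The cleanest route is a cyclic implication $(1)\Rightarrow(3)\Rightarrow(2)\Rightarrow(1)$, exploiting the preceding proposition for the difficult implication and Propositions \ref{feproperties} and \ref{fe} for the easy ones. I expect no real obstacle here; the work has already been done in the proposition that precedes the theorem, and the remaining pieces are essentially definition-chasing.

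For $(1)\Rightarrow(3)$: this is precisely the content of the proposition just proved (combined with the remark that the conull set is nonempty), so nothing new is required beyond invoking it.

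For $(3)\Rightarrow(2)$: assuming $(3)$, fix an infinite hyperfinite interval $I$ witnessing $\BD(A)$, which has positive Loeb measure. Since $d({}^*A-x)\geq r$ holds $\mu_I$-almost everywhere, the set of such $x$ has positive measure and is in particular nonempty; pick any such $x\in I$. Define $B:=({}^*A-x)\cap\N$. Then $\underline{d}(B)=d({}^*A-x)\geq r$ by definition. Moreover $x+B\subseteq {}^*A$, so by the equivalence $(1)\Leftrightarrow(2)$ of Proposition \ref{fe} (witnessed by $a=x$) we get $B\lhd A$.

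For $(2)\Rightarrow(1)$: this is immediate from Proposition \ref{feproperties}(4), which yields $\BD(A)\geq\BD(B)$, together with the trivial inequality $\BD(B)\geq\underline{d}(B)\geq r$.

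The only subtle point worth flagging is that item (2) as stated in the theorem reads ``$\underline{d}(A)\geq r$'', which must be a typo for ``$\underline{d}(B)\geq r$''; otherwise the implication $(2)\Rightarrow(1)$ would be trivial and $(1)\Rightarrow(2)$ would be false in general (a set can have large Banach density while having lower asymptotic density zero, as in an earlier exercise). Under the corrected reading, the three conditions match the slogan in the chapter title: Banach density of $A$ equals lower (even Shnirelmann-type) density of a shifted copy of $A$ sitting in a remote realm of ${}^*A$.
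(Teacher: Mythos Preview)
Your proposal is correct and matches the paper's approach exactly: the theorem is introduced in the paper with the phrase ``Summarizing what we have seen thus far'', so it is stated without a separate proof, relying precisely on the preceding proposition for $(1)\Rightarrow(3)$, on Proposition~\ref{fe} for $(3)\Rightarrow(2)$, and on Proposition~\ref{feproperties}(4) for $(2)\Rightarrow(1)$. You have also correctly identified the typo in item~(2), which should read $\underline{d}(B)\geq r$.
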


We now introduce a new notion of density.

\begin{definition}
For $A\subseteq \N$, we define the \emph{Shnirelman density}\index{density!Shnirelman} of $A$ to be
$$\sigma(A):=\inf_{n\geq 1}\delta(A,n).$$
\end{definition}

It is clear from the definition that $\underline{d}(A)\geq \sigma(A)$.  Note that the Shnirelman density\index{density!Shnirelman} is very sensitive to what happens for ``small'' $n$.  For example, if $1\notin A$, then $\sigma(A)=0$.  On the other hand, knowing that $\sigma(A)\geq r$ is a fairly strong assumption and thus there are nice structural results for sets of positive Shnirelman density\index{density!Shnirelman}.  We will return to this topic in the next section.

A crucial idea of Jin was to add one more equivalence to the above theorem, namely that there is $B\lhd A$ such that $\sigma(B)\geq r$; in this way, one can prove Banach density parallels of theorems about Shnirelman density\index{density!Shnirelman}.  To add this equivalence, one first needs a standard lemma.

\begin{lemma}
Suppose that $A\subseteq \N$ is such that $\underline{d}(A)=r$.  Then for every $\epsilon>0$, there is $n_0\in \N$ such that $\sigma(A-n_0)\geq r-\epsilon$.
\end{lemma}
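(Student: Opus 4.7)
The plan is to argue by contradiction. Fix $\epsilon > 0$ and suppose the conclusion fails. Unwinding the definition of Shnirelman density, the negation of the conclusion says: for every $n_0 \in \N$ there exists $n \geq 1$ such that
\[
\frac{|A \cap [n_0+1, n_0+n]|}{n} < r - \epsilon.
\]
From this hypothesis I will recursively construct an increasing sequence of natural numbers $N_k \to \infty$ for which $\delta(A, N_k) < r - \epsilon$, directly contradicting $\underline{d}(A) = r$.

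To carry out the construction, set $N_0 := 0$. Given $N_{k-1}$, apply the negated statement with $n_0 := N_{k-1}$ to obtain some $n_k \geq 1$ with
\[
|A \cap [N_{k-1}+1, N_{k-1}+n_k]| < (r-\epsilon)\, n_k,
\]
and set $N_k := N_{k-1} + n_k$. Since each $n_k \geq 1$, the sequence $(N_k)$ is strictly increasing, and in particular $N_k \to \infty$.

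Summing the partition estimates over $i = 1, \dots, k$ gives the telescoping bound
\[
|A \cap [1, N_k]| \;=\; \sum_{i=1}^{k} |A \cap [N_{i-1}+1, N_i]| \;<\; (r - \epsilon) \sum_{i=1}^{k} n_i \;=\; (r-\epsilon)\, N_k,
\]
so $\delta(A, N_k) < r - \epsilon$ for every $k$. Since $N_k \to \infty$, this forces $\underline{d}(A) \leq r - \epsilon$, contradicting the hypothesis $\underline{d}(A) = r$. There is no real obstacle here: the argument is a clean pigeonhole-style concatenation that only uses the definitions of $\underline{d}$ and $\sigma$, with no appeal to nonstandard machinery needed. (One could also phrase this via overflow, extending the sequence $(N_k)$ into the hyperfinite realm, but it adds no content.)
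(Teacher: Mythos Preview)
Your proof is correct and is essentially identical to the paper's own argument: both assume the conclusion fails, recursively concatenate intervals on which the density of $A$ falls below $r-\epsilon$, and observe that the resulting increasing sequence of endpoints witnesses $\underline{d}(A)\le r-\epsilon$. Your write-up is in fact slightly more explicit about the telescoping sum than the paper's.
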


\begin{proof}
Suppose that the lemma is false for a given $\epsilon>0$.  In particular, $\sigma(A)<r-\epsilon$, so there is $n_0\in \N$ such that $\delta(A,n_0)<r-\epsilon$.  Since $n_0$ does not witness the truth of the lemma, there is $n_1\in \N$ such that $\delta((A-n_0),n_1)<r-\epsilon$.  Continuing in this way, we find a sequence $(n_i)$ of natural numbers
such that, for all $i$, we have $\delta(A,[n_0+\ldots+n_i+1,n_0+\ldots+n_i+n_{i+1}])<r-\epsilon$
for all $i$. In consequence, the increasing sequence $(\sum_{i\le k}n_i)$
witnesses that $\underline{d}(A)\leq r-\epsilon$, yielding a contradiction.
\end{proof}

\begin{proposition}\label{Proposition:conversion}
Suppose that $\BD(A)\geq r$.  Then there is $B\lhd A$ such that $\sigma(B)\geq r$.
\end{proposition}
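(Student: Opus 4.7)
Proof plan. By Proposition \ref{fe}, exhibiting $B\lhd A$ with $\sigma(B)\geq r$ is equivalent to producing an element $a\in\starN$ such that $|\starA\cap[a+1,a+n]|\geq rn$ for every $n\in\N$; indeed, the set $B:=(\starA-a)\cap\N$ then satisfies $a+B\subseteq\starA$, and $|B\cap[1,n]|=|\starA\cap[a+1,a+n]|$ for each standard $n$. So the whole task reduces to locating such an $a$ in the remote realm, and my plan is to assemble it by countable saturation from approximate solutions coming from the preceding lemma.

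To produce the approximate solutions, first invoke Theorem \ref{BDequiv} to obtain a set $B_0\lhd A$ with $\underline{d}(B_0)\geq r$. For each positive rational $\epsilon$, the preceding lemma yields some $n_\epsilon\in\N$ with $\sigma(B_0-n_\epsilon)\geq r-\epsilon$. Setting $C_\epsilon:=(B_0-n_\epsilon)\cap\N$, the relation $C_\epsilon\lhd A$ is immediate from the nonstandard characterization: if $b_0\in\starN$ witnesses $b_0+B_0\subseteq\starA$, then $a_\epsilon:=b_0+n_\epsilon$ satisfies $a_\epsilon+C_\epsilon\subseteq\starA$. Because $\sigma(C_\epsilon)\geq r-\epsilon$, this shift $a_\epsilon$ satisfies
\[
|\starA\cap[a_\epsilon+1,a_\epsilon+n]|\ \geq\ |C_\epsilon\cap[1,n]|\ \geq\ (r-\epsilon)n
\]
for every standard $n\in\N$ simultaneously.

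Now I would glue these approximate solutions via countable saturation. For each $n\in\N$ and each positive rational $\epsilon$, define the internal set
\[
V_{n,\epsilon}\ :=\ \{a\in\starN \ :\ |\starA\cap[a+1,a+n]|\geq (r-\epsilon)n\}.
\]
The countable family $\{V_{n,\epsilon}\}$ has the finite intersection property: given finitely many constraints $(n_i,\epsilon_i)$, set $\epsilon:=\min_i\epsilon_i$ and observe that the element $a_\epsilon$ from the previous paragraph lies in every $V_{n_i,\epsilon_i}$, since $(r-\epsilon)n_i\geq(r-\epsilon_i)n_i$. By countable saturation, pick $a\in\bigcap_{n,\epsilon}V_{n,\epsilon}$; then $|\starA\cap[a+1,a+n]|\geq rn$ for every $n\in\N$, and setting $B:=(\starA-a)\cap\N$ finishes the proof.

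The crux is the jump from the approximate Shnirelman bound $r-\epsilon$ (which is all the preceding lemma directly delivers by a genuine standard translation) to the exact bound $r$. No finite shift of a given lower-density $r$ set will in general achieve $\sigma\geq r$, so a strictly nonstandard move is needed; countable saturation is exactly what allows one to amalgamate the various $a_\epsilon$'s into a single point of $\starN$ whose initial segments in $\starA$ simultaneously realize the target density $r$ in every finite window.
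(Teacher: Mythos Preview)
Your proof is correct. Both you and the paper pass from the approximate Shnirelmann bound $r-\epsilon$ (supplied by the preceding lemma) to the exact bound $r$ via countable saturation, but the executions differ. The paper first takes $y\in\starN$ with $d(\starA-y)\geq r$, applies the lemma inside $\starN$ to get shifts $z_n$ with $\sigma(\starA-z_n)\geq r-1/n$, uses overflow to extend each density bound to a hyperfinite window $[1,K_n]$, uses saturation to find a common infinite $K\leq K_n$, and finally overflows the parameter $n$ to an infinite $N$ to obtain a single $x$ with $\delta(\starA-x,m)\geq r-1/N$ for all $m\leq K$. You instead apply saturation directly to the countable family $\{V_{n,\epsilon}\}$ of internal sets, which is cleaner and avoids both overflow steps. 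The paper's detour buys an internal statement---a density lower bound $r-1/N$ valid on an entire hyperfinite initial segment---which is marginally stronger but not needed for the proposition; your route gets straight to the point.
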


\begin{proof}
We seek $x\in \starN$ such that $\sigma(\starA-x)\geq r$.  Take $y\in \starN$ such that $d({}^{\ast}A-y)\geq r$.  By the previous lemma, for each $n\in \N$, there is $z_n\in \starN$ with $z_n\geq y$ such that $\sigma({}^{\ast}A-z_n)\geq r-1/n$.  By overflow\index{overflow principle}, for each $n\in \N$, there is infinite $K_n\in \starN$ such that, for each $m\leq K_n$, we have
$$\delta(({}^{\ast}A-z_n),m)\geq r-1/n.$$
Take an infinite $K\in \starN$ such that $K\leq K_n$ for each $n$.  
(This is possible by countable saturation.)
Let $$D:=\{\alpha\in \starN \ : \ (\exists z\in \starN)(\forall m\leq K)\delta(({}^{\ast}A-z),m)\geq r-1/\alpha\}.$$  Then $D$ is internal and $\N\subseteq D$, whence by 
overflow\index{overflow principle} there is infinite $N\in D$.  Take $x\in \starN$ such that $\delta(({}^{\ast}A-x),m)\geq r-1/N$ for all $m\leq N$.  In particular, for all $m\in \N$, we have $\delta(({}^{\ast}A-x),m)\geq r$, whence this $x$ is as desired.
\end{proof}

Theorem \ref{BDequiv} and Proposition \ref{Proposition:conversion} immediately yield:

\begin{corollary}
$\BD(A)\geq r$ if and only if there is $B\lhd A$ such that $\sigma(B)\geq r$.
\end{corollary}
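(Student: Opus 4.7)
The plan is to observe that this corollary is essentially an immediate combination of two results already established in the text, namely Theorem \ref{BDequiv} and Proposition \ref{Proposition:conversion}, together with the trivial inequality $\sigma(B)\le\underline{d}(B)$ which is noted right after the definition of Shnirelman density.

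For the forward direction, I would simply invoke Proposition \ref{Proposition:conversion}, which provides exactly the conclusion: if $\BD(A)\ge r$, then there exists $B\lhd A$ with $\sigma(B)\ge r$. Nothing further is required.

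For the reverse direction, suppose $B\lhd A$ with $\sigma(B)\ge r$. Since $\underline{d}(B)\ge\sigma(B)\ge r$ by the trivial inequality between the two densities, the set $B$ witnesses condition (2) of Theorem \ref{BDequiv} for $A$ (noting that the statement of that theorem requires a $B\lhd A$ with $\underline{d}(B)\ge r$). Applying the implication (2)$\Rightarrow$(1) of Theorem \ref{BDequiv}, we conclude $\BD(A)\ge r$. Alternatively, and equivalently, one could argue directly: $\BD(A)\ge\BD(B)$ by Proposition \ref{feproperties}(4), and $\BD(B)\ge\underline{d}(B)\ge\sigma(B)\ge r$.

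There is no real obstacle here; the content of the corollary is entirely absorbed by Proposition \ref{Proposition:conversion}, whose proof (using overflow and countable saturation to find the shifting element $x\in{}^{\ast}\N$ with $\sigma(\starA-x)\ge r$) carries all the genuine combinatorial work. The corollary merely packages the converse, which is soft, together with the hard direction to produce a clean characterization of Banach density via finite embeddability and Shnirelman density.
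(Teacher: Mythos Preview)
Your proposal is correct and matches the paper's approach exactly: the paper simply states that the corollary follows immediately from Theorem \ref{BDequiv} and Proposition \ref{Proposition:conversion}, which is precisely what you do (with the reverse direction handled via $\sigma(B)\le\underline{d}(B)$ and the equivalence in Theorem \ref{BDequiv}, or equivalently via Proposition \ref{feproperties}(4)). Note that condition (2) in Theorem \ref{BDequiv} as printed has a typo---it should read $\underline{d}(B)\ge r$ rather than $\underline{d}(A)\ge r$---and you have correctly read the intended statement.
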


We end this section with a curious application of Proposition \ref{Proposition:conversion}.  We will make more serious use of this technique in the next section.

\begin{proposition}
Szemeredi's Theorem is equivalent to the following (apparently weaker statement):  
There exists $\epsilon>0$ such that every set $A\subseteq\N$
with $\sigma(A)\geq 1-\epsilon$ contains arbitrarily long arithmetic progressions.
\end{proposition}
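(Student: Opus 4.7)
The plan is to establish the two directions separately. The forward direction (Szemer\'edi's Theorem implies the weaker statement) is immediate for any $\epsilon<1$, since $\sigma(A)\ge 1-\epsilon$ implies $\BD(A)\ge\underline{d}(A)\ge\sigma(A)>0$.

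For the nontrivial direction, I will assume the weaker statement holds for some fixed $\epsilon>0$ and deduce that every $A\subseteq\N$ with $\BD(A)>0$ contains arbitrarily long arithmetic progressions. The key idea is to chain together three tools already available in the excerpt: the fattening proposition (Proposition \ref{fat}), the conversion from Banach density to Shnirelman density via finite embeddability (Proposition \ref{Proposition:conversion}), and van der Waerden's theorem itself. Fix $A\subseteq\N$ with $\BD(A)>0$ and a target length $k\in\N$. First, I would apply Proposition \ref{fat} to produce some $j\in\N$ with $\BD(A+[-j,j])\ge 1-\epsilon$. Then Proposition \ref{Proposition:conversion} yields a set $B\lhd A+[-j,j]$ with $\sigma(B)\ge 1-\epsilon$. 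By the weaker hypothesis, $B$ contains arbitrarily long arithmetic progressions, and by Proposition \ref{feproperties}(2) the same arithmetic progressions (up to translation) appear in $A+[-j,j]$.

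The remaining step is to pass from a long arithmetic progression in the fattened set $A+[-j,j]$ back to a progression of length $k$ in $A$ itself. This is where van der Waerden's theorem enters. Let $N=N(k,2j+1)\in\N$ be large enough that every $(2j+1)$-coloring of $[0,N-1]$ admits a monochromatic $k$-term arithmetic progression. By the previous paragraph, $A+[-j,j]$ contains a progression $\{a+id:0\le i<N\}$ of length $N$. For each $i$, choose $c_i\in[-j,j]$ with $a+id-c_i\in A$; the map $i\mapsto c_i$ is a $(2j+1)$-coloring of $[0,N-1]$. By the choice of $N$, some color class contains an arithmetic progression $\{i_0+rd':0\le r<k\}$, on which $c_i$ is constantly equal to some $c\in[-j,j]$. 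Then $\{a+(i_0+rd')d-c:0\le r<k\}$ is an arithmetic progression of length $k$ contained in $A$, with common difference $dd'$, completing the proof.

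The main conceptual point—already the heart of the section—is that Proposition \ref{Proposition:conversion} lets one upgrade the ``soft'' quantity $\BD$ to the ``rigid'' quantity $\sigma$ at the cost of passing to a finitely embedded copy, and Proposition \ref{fat} lets one push $\BD$ arbitrarily close to $1$ by fattening. The only nonroutine ingredient in the argument itself is the van der Waerden pigeonhole at the end, whose role is simply to absorb the bounded ``error'' introduced by the fattening. No step appears delicate, so I do not expect a serious obstacle.
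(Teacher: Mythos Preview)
Your proof is correct and follows the same strategy as the paper: fatten via Proposition \ref{fat}, convert to Shnirelman density via Proposition \ref{Proposition:conversion}, apply the hypothesis, and use van der Waerden to undo the fattening. The only cosmetic difference is that the paper invokes van der Waerden through the partition regularity of ``contains arbitrarily long arithmetic progressions'' applied to the decomposition $A+[0,k]=\bigcup_{i\in[0,k]}(A+i)$, whereas you spell out an explicit coloring argument for each fixed target length.
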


\begin{proof}
Fix $A\subseteq \N$ with $\BD(A)>0$; we wish to show that $A$ contains arbitrarily long arithmetic progressions.  By Proposition \ref{fat}, there is $k\in \N$ such that $\BD(A+[0,k])\geq 1-\epsilon$.  If $A+[0,k]$ contains arbitrarily long arithmetic progressions, then by van der Waerden's theorem, there is $i\in [0,k]$ such that $A+i$ contains arbitrarily long arithmetic progressions, whence so does $A$.  It follows that we may assume that $\BD(A)\geq 1-\epsilon$.

By Proposition \ref{Proposition:conversion}, we have $B\lhd A$ such that $\sigma(B)\geq 1-\epsilon$, whence, by assumption, we have that $B$ contains arbitrarily long arithmetic progressions\index{arithmetic progression}, and hence so does $A$.
\end{proof}

%
%
%

\section{Applications}

We use the ideas from the preceding section to derive some Banach density versions of theorems about Shnirelman density\index{density!Shnirelman}.  We first recall the following result of Shnirleman (see, for example, \cite[page 8]{halberstam_sequences_1983}):

\begin{theorem}
Suppose that $A\subseteq \N_0$ is such that $0\in A$ and $\sigma(A)>0$.  Then $A$ is a \emph{basis}, that is, there is $h\in \N$ such that $\Sigma_h(A)=\mathbb{N}$.
\end{theorem}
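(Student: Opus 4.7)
The plan is to follow the classical two-step Shnirelmann argument, which does not require any nonstandard input since we are handling Shnirelmann density (a very rigid notion that behaves well under sumsets) rather than Banach density. The overall strategy is: first show that iterated sumsets of $A$ have Shnirelmann density bounded below by $1-(1-\sigma(A))^h$, so for $h$ large enough some $h$-fold sumset of $A$ has Shnirelmann density at least $1/2$; then show that any set containing $0$ with Shnirelmann density at least $1/2$ is already a basis of order $2$. Combining these yields that $\Sigma_{2h}(A)=\N$ for appropriate $h$.

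First I would establish \emph{Shnirelmann's inequality}: if $A,B\subseteq\N_0$ with $0\in A\cap B$, then $\sigma(A+B)\ge \sigma(A)+\sigma(B)-\sigma(A)\sigma(B)$. For fixed $n$, enumerate $A\cap[1,n]=\{a_1<\cdots<a_k\}$ and count the elements of $(A+B)\cap[1,n]$ by disjointly packaging (a) each $a_i$ itself, and (b) elements $a_i+b$ with $b\in B$ lying in the gap $(a_i,a_{i+1})$, together with a similar tail-gap contribution. Using $|B\cap[1,m]|\ge \sigma(B)m$ on each gap length $m=a_{i+1}-a_i-1$ and summing the telescoping bounds produces the stated inequality. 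This is the main combinatorial step and the one where one must be careful; everything else is formal.

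Second, I would prove: if $0\in A$ and $\sigma(A)\ge 1/2$, then $A+A=\N_0$. Given $n\ge 1$, both $A\cap[0,n]$ and $(n-A)\cap[0,n]$ have size at least $n/2+1$, so their union cannot fit inside $[0,n]$, forcing a common element and hence a representation $n=a+a'$ with $a,a'\in A$.

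Finally I would iterate the first lemma. Writing $\alpha:=\sigma(A)>0$ and applying Shnirelmann's inequality inductively to $A$ and $\Sigma_{h}(A)$ (both contain $0$), I obtain $\sigma(\Sigma_h(A))\ge 1-(1-\alpha)^h$. Picking $h$ large enough that $(1-\alpha)^h\le 1/2$ and setting $B:=\Sigma_h(A)$, the second lemma gives $\Sigma_{2h}(A)\supseteq B+B=\N_0$, so $A$ is a basis of order $2h$, concluding the proof. The main obstacle in this plan is the clean combinatorial bookkeeping in Shnirelmann's inequality; conceptually the argument is straightforward, but the gap decomposition must be organized so that the counted elements of $A+B$ are genuinely distinct.
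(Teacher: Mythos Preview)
Your proposal is correct and is essentially Shnirelmann's classical argument. However, the paper does not actually prove this statement: it is stated as a known result with a reference to \cite[page 8]{halberstam_sequences_1983}, and is then used as a black box in the proof of the Banach density analogue (Theorem \ref{banachbasis}). So there is no proof in the paper to compare against; your write-up simply supplies the standard proof that the paper omits.
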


Using nonstandard methods, Jin was able to prove a Banach density version of the aforementioned result:

\begin{theorem}\label{banachbasis}
Suppose that $A\subseteq \N$ is such that $\gcd(A-\min(A))=1$ and $\BD(A)>0$.  Then $A$ is a \emph{Banach basis}, that is, there is $h\in \N$ such that $\Sigma_h(A)$ is thick\index{thick}.
\end{theorem}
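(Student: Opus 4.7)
First replace $A$ by $A - \min A$: then $\min(A) = 0$, $0 \in A$, $\gcd(A) = 1$, $\BD(A) = r > 0$, and thickness of $\Sigma_h$ is preserved, since $\Sigma_h(A - \min A) = \Sigma_h(A) - h \min A$. The plan is to produce a set $B \subseteq \N_0$ with $\sigma(B) > 0$ together with a point $x \in \starA$ such that $x + B \subseteq \starA$; this is a strengthening of Proposition \ref{Proposition:conversion}, where additionally the nonstandard translate is required to lie in $\starA$.

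\textbf{Why this suffices.} Granting such $B$ and $x$: since $\sigma(B) > 0$ forces $1 \in B$, we automatically have $\gcd(B) = 1$. Iterating Mann's inequality on $B \cup \{0\}$ gives $h \in \N$ with $\sigma(\Sigma_h(B \cup \{0\})) = 1$, and since $0 \in \Sigma_h(B \cup \{0\})$ this upgrades to $\Sigma_h(B \cup \{0\}) = \N_0$. Because $\{x\} \cup (x+B) \subseteq \starA$, expanding the $h$-fold sumset (letting $k$ denote the number of summands of the form $x+b$) gives
\[
\Sigma_h(\starA)\;\supseteq\;\Sigma_h\bigl(\{x\}\cup(x+B)\bigr)\;=\;\bigcup_{k=0}^{h}\bigl(hx + \Sigma_k(B)\bigr)\;=\;hx + \Sigma_h(B\cup\{0\})\;=\;hx+\N_0.
\]
Hence for every $m \in \N$ the interval $[hx, hx+m]$ lies in ${}^{\ast}\Sigma_h(A)$. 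The internal set $\{m \in \starN_0 : [hx, hx+m] \subseteq {}^{\ast}\Sigma_h(A)\}$ contains $\N$, so by overflow it contains some infinite $M$. Thus ${}^{\ast}\Sigma_h(A)$ contains an infinite hyperfinite interval and, by transfer, $\Sigma_h(A)$ is thick.

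\textbf{Constructing $B$ and $x$; the main obstacle.} Let $I$ be an infinite hyperfinite interval witnessing $\BD(A) = r$, equipped with the hypercycle system $(I, \l_I, \mu_I, S)$ of Section~\ref{hypercycle}. By the ergodic theorem for the hypercycle (Theorem~\ref{ergodichypercycle}), $\hat f(x) := \lim_n \delta(\starA, [x+1, x+n])$ exists for $\mu_I$-almost every $x$, is $T$-invariant, and $\int \hat f \, d\mu_I = r$. Since $\hat f$ is the conditional expectation of $\chi_{\starA}$ on the $T$-invariant $\sigma$-algebra, $\int_{\starA} \hat f \, d\mu_I = \int \hat f^2 \, d\mu_I \geq r^2 > 0$, so some $x_0 \in \starA$ satisfies $\underline{d}((\starA - x_0) \cap \N) = \hat f(x_0) > 0$. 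The technical heart of the argument is then to upgrade $\underline{d}$ to $\sigma$ while keeping the translate in $\starA$: one must establish a strengthened form of the lemma preceding Proposition~\ref{Proposition:conversion}, namely that if $C \subseteq \N_0$ has $\gcd(C) = 1$ and $\underline{d}(C) \geq s > 0$, then for every $\epsilon > 0$ there is $n_0 \in C$ with $\sigma((C - n_0) \cap \N) \geq s - \epsilon$. Applying this strengthened lemma to $C = (\starA - x_0) \cap \N_0$ (which has $\gcd(C) = 1$ because $\gcd(C - C) \mid \gcd(\starA - \starA) = \gcd(A) = 1$ and $0 \in C$), and setting $x := x_0 + n_0 \in \starA$ and $B := (\starA - x) \cap \N_0$, yields the required pair. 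The hypothesis $\gcd(C)=1$ is essential in this strengthened lemma: without it, e.g.\ $C = \{0, K, 2K, \ldots\}$ has $\underline{d}(C) = 1/K > 0$, yet every shift by $n_0 \in C$ yields a set contained in $K\N$, giving $\sigma = 0$. The proof should adapt the recursive construction of the original lemma, selecting each successive anchor from $C$ itself and using $\gcd(C) = 1$ to ensure that the covering of $[1,N]$ by density-failure intervals together with the interspersed anchor-gaps (in which $C$ has no elements) still forces the contradicting inequality $\underline{d}(C) < s$.
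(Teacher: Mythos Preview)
Your plan has a genuine gap: the target you set yourself—finding $x\in\starA$ and $B\subseteq\N$ with $\sigma(B)>0$ and $x+B\subseteq\starA$—is outright impossible for some $A$ satisfying the hypotheses. Take $A=\{0\}\cup\{3,5,7,9,\ldots\}$, so $\min A=0$, $\gcd(A)=1$, and $\BD(A)=\tfrac12$. Since $\sigma(B)>0$ forces $1\in B$, you would need $x,x+1\in\starA$; by transfer this gives two consecutive elements of $A$, and $A$ has none. So no such $(x,B)$ exists. This same $A$ kills both auxiliary claims: your ``strengthened lemma'' fails for $C=A$ (for every $n_0\in C$, $(C-n_0)\cap\N$ is contained in $2\N$ if $n_0\ge 3$, or misses $1$ if $n_0=0$, hence has $\sigma=0$), and the assertion $\gcd\bigl((\starA-x_0)\cap\N_0\bigr)=1$ fails for every infinite $x_0\in\starA$, since then $(\starA-x_0)\cap\N_0=2\N_0$.

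The paper avoids this obstruction by invoking the gcd hypothesis \emph{before} passing to Shnirelmann density rather than after. Since $\gcd(A-\min A)=1$, some $\Sigma_m(A)$ already contains two consecutive integers $c,c+1$. One then takes $a\in\starN$ with $\sigma(\starA-a+1)\ge r$ directly from Proposition~\ref{Proposition:conversion} (and $\sigma>0$ forces $1\in\starA-a+1$, i.e.\ $a\in\starA$). Now
\[
\Sigma_{m+1}(\starA)-(a+c)\ \supseteq\ \starA+\{c,c+1\}-(a+c)\ =\ (\starA-a)\cup(\starA-a+1),
\]
which contains $0$ (since $a+c\in\starA+\Sigma_m(A)$) and has Shnirelmann density $\ge r$. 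Shnirelmann's basis theorem and overflow finish. The point is that the consecutive elements live in a sumset $\Sigma_m(A)$, not in $A$ itself; trying to force them into $\starA$ is exactly what your approach attempts, and the example above shows why that cannot work.
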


Note that we must assume that $\gcd(A-\min(A))=1$, for if $\gcd(A-\min(A))=c>1$, then $hA\subseteq \{h\min(A)+nc \ : \ n\in \N\}$, which does not contain arbitrarily long intervals.

\begin{proof}[of Theorem \ref{banachbasis}]
Suppose $\BD(A)=r$ and $\gcd(A-\min(A))=1$.  The latter property guarantees the existence of $m\in \N$ such that $\Sigma_m(A-\min(A))$ contains two consecutive numbers, whence $c,c+1\in \Sigma_m(A)$ for some $c\in \N$.  By Proposition \ref{Proposition:conversion}, there is $a\in \starN$ such that $\sigma({}^{\ast}A-a+1)\geq r$.  In particular, $a\in {}^{\ast}A$.  Consequently, we have
$$\sigma(\Sigma_{1+m}({}^{\ast}A)-a-c)\geq \sigma({}^{\ast}A+\{c,c+1\}-a-c)\geq \sigma({}^{\ast}A-a+1)\geq r.$$  Since $0\in \Sigma_{1+m}({}^{\ast}A)-a-c$, Shnirleman's theorem implies that there is $n$ such that $\N\subseteq \Sigma_n(\Sigma_{1+m}({}^{\ast}A)-a-c)$.  By \index{overflow principle}, there is $N$ such that $[0,N]\subseteq \Sigma_n(\Sigma_{1+m}({}^{\ast}A)-a-c)$.  Set $h:=n(1+m)$, so $[0,N]+n(a+c)\subseteq {}^{\ast}(\Sigma_h(A))$.  By transfer, $\Sigma_h(A)$ contains arbitrarily long intervals.
\end{proof}


With similar methods, one can prove the Banach density analogue of the following theorem of Mann (see, for example,  \cite[page 5]{halberstam_sequences_1983}):

\begin{theorem}
\label{Theorem:Mann}Given $A,B\subseteq \mathbb{N}_0$ such that $0\in A\cap B$, we have $\sigma
\left( A+B\right) \geq \min \left\{ \sigma \left( A\right) +\sigma \left(
B\right) ,1\right\} $.
\end{theorem}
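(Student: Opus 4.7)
The plan is to prove Mann's theorem by means of the classical Dyson $e$-transform, a purely combinatorial device that does not require any of the nonstandard machinery developed in the preceding chapters. Writing $\alpha := \sigma(A)$ and $\beta := \sigma(B)$, I would first reduce to the case $\alpha + \beta \leq 1$ (in the complementary case one must show $A + B = \mathbb{N}$, but this follows from a straightforward continuity argument once the main case is in hand). Rather than attacking $\sigma(A+B) \geq \alpha + \beta$ globally, I would prove the finitary statement $|(A+B) \cap [1,n]| \geq (\alpha + \beta) n$ for each fixed $n \geq 1$, by induction on a well-chosen combinatorial parameter of the pair $(A, B)$ after restricting to $[0,n]$.

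The Dyson $e$-transform sends a pair $(A, B)$ of subsets of $\mathbb{N}_0$ containing $0$, together with an element $e \in A$, to the new pair
\[
A^{(e)} := A \cup (B+e), \qquad B^{(e)} := B \cap (A-e).
\]
Its crucial properties are: (i) $0 \in A^{(e)} \cap B^{(e)}$; (ii) the sumset inclusion $A^{(e)} + B^{(e)} \subseteq A + B$; and (iii) a combinatorial inclusion–exclusion identity that links $|A^{(e)} \cap [1,m]| + |B^{(e)} \cap [1,m]|$ to $|A \cap [1,m]| + |B \cap [1,m]|$ in a way that preserves the lower bound $(\alpha + \beta) m$. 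The strategy is then to iterate the transform, each time choosing $e$ so as to exploit a purported ``gap'' in $A+B \cap [1,n]$, driving the pair $(A,B)$ toward a terminal configuration $(A^*, B^*)$ for which the density bound on $A^* + B^*$ becomes transparent—typically the stage at which $A^*$ is an initial segment $[0, k]$ of $\mathbb{N}_0$, so that $A^* + B^*$ contains $B^* + [0,k]$ and the Shnirelman estimate is immediate.

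The main obstacle will be two-fold: first, selecting the right $e \in A$ at each step of the induction so that a well-defined discrete parameter strictly decreases while properties (ii) and (iii) are maintained; and second, handling the boundary effects in the identity (iii) when one restricts to the interval $[1,n]$, since naive inclusion–exclusion introduces $O(e)$ error terms that have to be absorbed into the bookkeeping. Since this is a classical and well-documented result (cf.\ the citation to Halberstam–Richert), I expect the authors intend to quote Mann's theorem as a black box; the nonstandard contribution of the section is not to reprove it, but rather to combine it, via the finite-embeddability bridge of Proposition \ref{Proposition:conversion}, to derive a Banach density analog in the style of Theorem \ref{banachbasis}.
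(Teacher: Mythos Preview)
Your anticipation in the final paragraph is exactly right: the paper does not prove Mann's theorem at all but simply states it with a reference to \cite[page 5]{halberstam_sequences_1983}, and then uses it (together with Besicovitch's theorem, also quoted) as a black box input to the nonstandard proof of the Banach-density analogue, Theorem~\ref{Theorem:Banach-Mann}. So there is nothing on the paper's side to compare your Dyson $e$-transform sketch against; the sketch itself is the standard classical route and is perfectly viable.

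One small remark on the sketch: the case $\alpha+\beta>1$ does not need a ``continuity argument'' deduced from the main case. It is handled directly by pigeonhole: for each $n\geq 1$, the sets $A\cap[0,n]$ and $n-(B\cap[0,n])$ both lie in $[0,n]$ and have combined cardinality at least $(1+\alpha n)+(1+\beta n)>n+1$, so they intersect, giving $n\in A+B$. This is worth doing first, since it cleanly isolates the nontrivial regime $\alpha+\beta\leq 1$ where the $e$-transform does its work.
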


Observe that the exact statement of Mann's theorem is false if one replaces Shnirelman density\index{density!Shnirelman} by Banach density.  Indeed, if $A$ and $B$ are both the set of even numbers, then $\BD\left( A+B\right) =\frac{1}{2}$ but $\BD\left( A\right)
+\BD\left( B\right) =1$.  However, if one replaces $A+B$ by $A+B+\{0,1\}$, the Banach density version of Mann's theorem is true.

\begin{theorem}
\label{Theorem:Banach-Mann}Given $A,B\subseteq \mathbb{N}$, we have $%
\BD\left( A+B+\left\{ 0,1\right\} \right) \geq \min \left\{ \BD\left( A\right)
+\BD\left( B\right) ,1\right\} $.
\end{theorem}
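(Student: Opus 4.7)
The strategy is to mirror the approach taken in the proof of Theorem \ref{banachbasis} (the Banach basis theorem): convert the Banach density hypotheses into Shnirelman density statements in a remote realm via Proposition \ref{Proposition:conversion}, apply the classical Mann theorem (Theorem \ref{Theorem:Mann}) there, then transfer the conclusion back as a statement about Banach density. Let $\alpha:=\BD(A)$ and $\beta:=\BD(B)$. The cases $\alpha=0$ or $\beta=0$ are trivial (pick any $a_0\in A$ and use $\BD(A+B+\{0,1\})\ge \BD(a_0+B)=\beta$), so assume $\alpha,\beta>0$ and $\alpha+\beta\le 1$.

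First I would invoke Proposition \ref{Proposition:conversion} twice to obtain $a,b\in\starN$ with $\sigma(\starA-a)\ge\alpha$ and $\sigma(\starB-b)\ge\beta$. Since $\alpha,\beta>0$, the Shnirelman inequality at $n=1$ forces $a+1\in\starA$ and $b+1\in\starB$. I would then introduce the (external) subsets of $\N_0$
\[
A^{\ast}:=\{0\}\cup\bigl((\starA-a)\cap\N\bigr),\qquad B^{\ast}:=\{0\}\cup\bigl((\starB-b)\cap\N\bigr),
\]
which contain $0$ and satisfy $\sigma(A^{\ast})\ge\alpha$, $\sigma(B^{\ast})\ge\beta$. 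Applying Mann's theorem (whose proof makes no use of the set being ``standard''; it is a purely combinatorial fact about subsets of $\N_0$) yields $\sigma(A^{\ast}+B^{\ast})\ge\alpha+\beta$.

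Second, I would verify by a short case analysis on summands that
\[
(A^{\ast}+B^{\ast})\cap\N\ \subseteq\ \bigl({}^{\ast}(A+B+\{0,1\})-(a+b+1)\bigr)\cap\N.
\]
Indeed: for $x\in(\starA-a)\cap\N$ and $y\in(\starB-b)\cap\N$, $x+y=(x+a)+(y+b)-(a+b)$ lies in $\starA+\starB-(a+b)\subseteq\starA+\starB+\{0,1\}-(a+b+1)$. For the ``cross'' terms $0+y$, we use $a+1\in\starA$ together with $y+b\in\starB$ to get $y=(a+1)+(y+b)-(a+b+1)\in(\starA+\starB)-(a+b+1)$, and symmetrically for $x+0$. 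Combined with the Mann lower bound, this gives
\[
\bigl|{}^{\ast}(A+B+\{0,1\})\cap[a+b+2,\,a+b+1+n]\bigr|\ \ge\ (\alpha+\beta)\,n \quad\text{for every } n\in\N.
\]

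Finally, for each fixed $n\in\N$ the displayed inequality is a true internal statement witnessing the existence of a window of length $n$ in $\starN$ in which ${}^{\ast}(A+B+\{0,1\})$ has relative size at least $(\alpha+\beta)n$; by transfer the same is true in $\N$, so $\Delta_n(A+B+\{0,1\})\ge \alpha+\beta$ for every $n$, whence $\BD(A+B+\{0,1\})\ge\alpha+\beta$. The case $\alpha+\beta>1$ is identical with Mann giving $\sigma=1$, yielding $\BD=1$. The only delicate step is the shift bookkeeping in the case analysis, where the role of the extra $\{0,1\}$ is precisely to absorb the unit mismatch arising from the fact that Proposition \ref{Proposition:conversion} forces $a+1\in\starA$ (rather than $a\in\starA$), so that ``cross'' sums land in $\starA+\starB+1$ rather than in $\starA+\starB$.
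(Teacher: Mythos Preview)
Your proof is correct and takes a genuinely different route from the paper's. Both proofs begin identically, invoking Proposition~\ref{Proposition:conversion} to pass to shifts $\starA-a$ and $\starB-b$ with large Shnirelman density, and both end by reading the Banach density conclusion back from a Shnirelman density bound in the remote realm. The difference is in the additive-combinatorial core. You append $0$ to both shifted sets and apply Mann's theorem (Theorem~\ref{Theorem:Mann}) directly; the price is the three-case inclusion check, where the artificial $0$'s produce ``cross'' sums that land in $\starA+\starB+1$ rather than $\starA+\starB$, and the $\{0,1\}$ absorbs exactly this unit offset. The paper instead folds the $\{0,1\}$ into $B$ from the outset, working with $\starB+\{0,1\}-b$, and then appeals to Besicovitch's theorem rather than Mann's; the price there is the Claim verifying Besicovitch's stronger density hypothesis $|(\starB+\{0,1\})\cap[b+1,b+n]|\geq s(n+1)$, which requires its own case analysis on initial segments. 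Your approach has the advantage of using only the more familiar Mann theorem and a shorter case analysis; the paper's approach makes the role of the $\{0,1\}$ more transparent as a density-boosting device on $B$ alone, and avoids manufacturing the element $0$ in both sets.
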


The idea behind the proof of Theorem \ref{Theorem:Banach-Mann} is, as
before, to reduce to the case of Shnirelman density\index{density!Shnirelman} by replacing the given sets with hyperfinite shifts. In the course of the proof of Theorem \ref%
{Theorem:Banach-Mann}, we will need to use the following fact from additive number
theory (see, for example,  \cite[page 6]{halberstam_sequences_1983}):

\begin{theorem}[Besicovitch's theorem]
Suppose $A,B\subseteq \mathbb{N}$ and $s\in [0,1]$ are such that $1\in A
$, $0\in B$, and $\left\vert B\cap \lbrack 1,n]\right\vert \geq s \left(
n+1\right) $ for every $n\in \mathbb{N}$.  Then $\sigma \left( A+B\right)
\geq \min \left\{ \sigma \left( A\right) +\sigma \left( B\right) ,1\right\} $. 
\end{theorem}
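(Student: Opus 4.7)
Besicovitch's theorem is a classical result from additive number theory whose proof is elementary but technically delicate, and no nonstandard machinery is needed. Setting $C := A+B$, $C(n) := |C\cap[1,n]|$, $\alpha := \sigma(A)$, and $\gamma := \min\{\alpha+s,1\}$, the plan is to prove by induction that $C(n) \geq \gamma n$ for every $n \geq 1$. Since the hypothesis $|B\cap[1,n]|\geq s(n+1)$ is strictly stronger than $\sigma(B) \geq s$, this estimate will imply the claimed bound $\sigma(A+B) \geq \min\{\sigma(A)+\sigma(B),1\}$.

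The strategy will be a ``counting in gaps'' argument. For fixed $n$, I would enumerate $A\cap[1,n] = \{1 = a_1 < a_2 < \cdots < a_k\}$, set $a_{k+1} := n+1$, and put $\ell_i := a_{i+1}-a_i$, so the blocks $[a_i,a_{i+1})$ partition $[1,n]$. Since $1 \in A$ forces $a_1 = 1$ and $0 \in B$ gives $a_i + (B \cap [0,\ell_i-1]) \subseteq C \cap [a_i,a_{i+1})$, the hypothesis applied at $m = \ell_i - 1$ yields the sharp local bound $|C \cap [a_i,a_{i+1})| \geq 1 + s\ell_i$ whenever $\ell_i \geq 2$; when $\ell_i = 1$ only the trivial bound $\geq 1$ is immediately available, so a naive summation gives $C(n) \geq A(n) + s(n-M)$, losing a factor $sM$ from the ``unit gaps'' ($\ell_i = 1$).

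The main obstacle is precisely the recovery of this $sM$ loss. I would address it via a minimal-counterexample induction: let $n$ be the smallest index with $C(n) < \gamma n$. If $n \in C$, then $C(n) = C(n-1) + 1 \geq \gamma(n-1) + 1 \geq \gamma n$ (using $\gamma \leq 1$), contradicting minimality; so $n \notin C$ and in particular $n \notin A$. Writing $a := \max(A\cap[1,n])$ and $r := n-a \geq 1$, the local bound on the terminal block $[a,n]$ (which has length $r+1 \geq 2$) gives $|C \cap (a,n]| \geq s(r+1)$, and combined with the inductive hypothesis $C(a) \geq \gamma a$ we obtain $C(n) \geq \gamma a + s(r+1)$. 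The identity $\gamma a + s(r+1) - \gamma n = s - (\gamma - s)r$ reveals that the unusual ``$+1$'' in the hypothesis is precisely what supplies the extra $+s$ needed to close the inequality when $r$ is small. The hardest remaining piece will be when $r$ is large: here the simple two-block split is not tight enough, and one must extend the argument by iterating the minimal-counterexample reduction down through the gap structure of $A$, or equivalently by a ``second round'' of gap-counting that recycles the contribution $\alpha a$ from within $[1,a]$ into compensation for the missing $s$-mass in the terminal block, with the $\min$-with-$1$ clause absorbing the boundary case $\alpha + s \geq 1$. Once this balancing is executed carefully, the induction closes and $\sigma(A+B) \geq \gamma$ follows, which yields the theorem.
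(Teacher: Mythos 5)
The first thing to say is that the paper does not prove this theorem at all: it is quoted from Halberstam--Roth (the reference given on page 6 of \emph{Sequences}) and used as a black box in the proof of the Banach-density version of Mann's theorem, so the only question is whether your argument stands on its own. It does not yet, because the outline stops exactly where the content of the theorem begins. Your reduction is fine as far as it goes: the minimal-counterexample setup, the case $n\in C$ dispatched by $\gamma\le 1$, the split at $a=\max(A\cap[1,n])$, and the identity $\gamma a+s(r+1)-\gamma n=s-(\gamma-s)r$ are all correct, but they close the induction only when $r\le s/(\gamma-s)$, i.e.\ roughly $r\lesssim s/\sigma(A)$. For large $r$ you propose to ``recycle the contribution $\alpha a$ from within $[1,a]$,'' but the inductive hypothesis $C(a)\ge\gamma a$ carries no surplus to recycle, and the surplus that a long terminal gap really does force on $A$ (namely $A(a)=A(n)\ge\alpha(a+r)$, so $A$ runs $\alpha r$ ahead of its guaranteed count on $[1,a]$) cannot be converted into a surplus for $C$ by any gap-local, one-translate-per-gap count: such counts --- including your own ``naive summation'' --- only ever guarantee bounds of Schnirelmann type $\sigma(A)+s-\sigma(A)s$, not $\sigma(A)+s$. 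Closing the remaining deficit requires playing the hypothesis $|B\cap[1,m]|\ge s(m+1)$ at several scales against the run/gap structure of $A$ simultaneously (equivalently, combining several translates $a+B$ inside a single long gap, or redistributing the density surplus globally), and this is precisely where the classical proofs of Besicovitch and of Halberstam--Roth do their real work, by an argument in the spirit of the Mann/Dyson proofs. So ``once this balancing is executed carefully, the induction closes'' is not a routine verification to be filled in; it is the theorem, and as written your proposal has a genuine gap at its center.

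A smaller but real logical slip: you claim that proving $\sigma(A+B)\ge\min\{\sigma(A)+s,1\}$ ``will imply the claimed bound'' $\sigma(A+B)\ge\min\{\sigma(A)+\sigma(B),1\}$. This is backwards, since the hypothesis gives $\sigma(B)\ge s$, so your target is the \emph{weaker} of the two conclusions. In fact the conclusion with $\sigma(B)$, as printed in the statement, is false as it stands (take $s=0$, $A$ the residues $1,2$ and $B$ the residues $0,1$ modulo $4$: then $\sigma(A)=\frac12$, $\sigma(B)=\frac13$, but $A+B$ omits every multiple of $4$, so $\sigma(A+B)=\frac34<\frac56$); the intended classical conclusion has $s$ in place of $\sigma(B)$, and that is exactly the form the paper invokes in the proof of the Banach--Mann theorem. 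So your choice of $\gamma=\min\{\sigma(A)+s,1\}$ is the right target, but you should say that the printed $\sigma(B)$ is a misprint for $s$ rather than claim your estimate implies it.
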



\begin{proof}[of Theorem \protect\ref{Theorem:Banach-Mann}]
Set $r :=\BD\left( A\right) $ and $s :=\BD\left( B\right) $. We can
assume, without loss of generality, that $r \leq s \leq 1/2$. By
Proposition \ref{Proposition:conversion}, one can find $a\in {}^{\ast }A$ and 
$b\in {}^{\ast }B$ such that $\sigma \left( {}^{\ast }A-a+1{}\right) \geq
r $ and $\sigma \left( {}^{\ast }B-b+1\right) \geq s $. 

\

\noindent \textbf{Claim:}  For every $n\in \mathbb{N}$, one has that $\left\vert \left( {}^{\ast
}B+\left\{ 0,1\right\} \right) \cap \lbrack b+1,b+n]\right\vert \geq s
\left( n+1\right) $. 

\noindent \textbf{Proof of Claim:}  Let $[1,k_{0}]$ be the largest initial segment
of $\mathbb{N}$ contained in $\left( {}^{\ast }B+\left\{ 0,1\right\}
-b\right) \cap \mathbb{N}$ (if no such $k_0$ exists, then the claim is clearly true) and let $[1,k_{1}]$ be the largest initial
segment of $\mathbb{N}$ disjoint from${}\left( (^{\ast }B+\{0,1\})-\left(
b+k_{0}\right) \right) \cap \mathbb{N}$. We note the following:

\begin{itemize}
\item For $1\leq n\leq k_{0}$, we have that
\begin{equation*}
\left\vert( {}^{\ast }B+\left\{ 0,1\right\}) \cap \lbrack b+1,b+n]\right\vert
=n\geq (n+1)/2\geq s \left( n+1\right). 
\end{equation*}

\item For $k_{0}+1\leq n<k_0+ k_{1}$, since $%
\sigma \left( {}^{\ast }B-b+1\right) \geq s $, we have that %
\begin{eqnarray*}
\left\vert ({}^{\ast }B+\left\{ 0,1\right\}) \cap \lbrack b+1,b+n]\right\vert 
&\geq &\left\vert {}^{\ast }\left( B+1\right) \cap \lbrack
b+1,b+n]\right\vert  \\
&=&\left\vert {}^{\ast }\left( B+1\right) \cap \lbrack b+1,b+n+1]\right\vert 
\\
&\geq &s \left( n+1\right) \text{.}
\end{eqnarray*}

\item For $n\geq k_0+ k_{1}$, since $k_0+k_{1}+1\in {}^{\ast }B$, $%
k_0+k_1+1\notin {}^{\ast }B+1$, and $\sigma
\left( {}^{\ast }B-b+1\right) \geq s $, we have
that%
\begin{eqnarray*}
\left\vert ({}^{\ast }B+\left\{ 0,1\right\}) \cap \lbrack b+1,b+n]\right\vert 
&\geq &\left\vert {}^{\ast }\left( B+1\right) \cap \lbrack
b+1,b+n]\right\vert +1 \\
&\geq &s n+1\geq s \left( n+1\right) \text{.}
\end{eqnarray*}
\end{itemize}
These observations conclude the proof of the claim. 

One can now apply Besicovitch's
theorem to $\starA-a+1$ and ${}^{\ast}B+\{0,1\}-b$ (intersected with $\N$) to conclude that%
\begin{equation*}
\sigma \left( \left( ^{\ast }A-a+1\right) +\left( ^{\ast }B+\left\{
0,1\right\} -b\right) \right) \geq \min \left\{ \sigma \left( ^{\ast
}A-a+1\right) +s ,1\right\} \geq r +s \text{.}
\end{equation*}%
Finally, observe that ${}$%
\begin{equation*}
\left( ^{\ast }A-a+1\right) +\left( ^{\ast }B+\left\{ 0,1\right\} -b\right)
={}^{\ast }\left( A+B+\left\{ 0,1\right\} \right) -\left( a+b\right) \text{.}
\end{equation*}%
Hence
\begin{equation*}
\BD\left( A+B+\left\{ 0,1\right\} \right) \geq \sigma \left( {}^{\ast }\left(
A+B+\left\{ 0,1\right\} \right) -\left( a+b\right) \right) \geq r
+s \text{.}
\end{equation*}
\end{proof}

\section*{Notes and references}  The notion of finite embeddability was isolated 
and studied in \cite{di_nasso_embeddability_2014}, although it was implicit in several
previous papers of additive number theory.  The material in Sections 12.2 and 12.3 is from Renling Jin's paper \cite{Jin_Nonstandard_2001}. 

\chapter{Jin's Sumset Theorem}\label{sumset}

\section{The statement of Jin's Sumset Theorem and some standard consequences}
\begin{definition}
An initial segment $\mathrm{U}$ of $\starN_0$ is a \emph{cut} if $\mathrm{U}+\mathrm{U}\subseteq \mathrm{U}$.
\end{definition}

\begin{exercise}
If $\mathrm{U}$ is a cut, then either $\mathrm{U}$ is external or else $\mathrm{U}=\starN$.
\end{exercise}

\begin{example}\label{cutex}

\

\begin{enumerate}
\item $\N$ is a cut.
\item If $N$ is an infinite element of $\starN$, then $\mathrm{U}_N:=\{x\in \starN \ : \ \frac{x}{N}\approx 0\}$ is a cut.
\end{enumerate}
\end{example}

Fix a cut $\mathrm{U}$ of $\starN$ and suppose that $\mathrm{U}\subseteq [0,N)$.  Given $x,y\in \starN$, we write $x\sim_\mathrm{U}y$ if $|x-y|\in \mathrm{U}$; note that $\sim_\mathrm{U}$ is an equivalence relation on $\starN$.  We let $[x]_{\mathrm{U},N}$, or simply $[x]_N$ if no confusion can arise, denote the equivalence class of $x$ under $\sim_\mathrm{U}$ and we let $[0,N)/\mathrm{U}$ denote the set of equivalence classes.  We let $\pi_\mathrm{U}:[0,N)\to [0,N)/\mathrm{U}$ denote the quotient map.  The linear order on $[0,N)$ descends to a linear order on $[0,N)/\mathrm{U}$.  Moreover, one can push forward the Loeb\index{Loeb measure} measure on $[0,N)$ to a measure on $[0,N)/\mathrm{U}$, which we also refer to as Loeb\index{Loeb measure} measure.

\begin{example}\label{LL}
Fix $N\in \starN$ infinite and consider the cut $\mathrm{U}_N$ from Example \ref{cutex}.  Note that the surjection $f:[0,N)\to [0,1]$ given by $f(\beta):=\st(\beta/N)$ descends to a bijection of ordered sets $f:[0,N)/\mathrm{U}_N\to [0,1]$.  The discussion in Section \ref{LebesgueLoeb} of Chapter \ref{Loeb} shows that the measure on $[0,1]$ induced by the Loeb\index{Loeb measure} measure on $[0,N)/\mathrm{U}_N$ via $f$ is precisely Lebesgue measure.
\end{example}

For any cut $\mathrm{U}$ contained in $[0,N)$, the set $[0,N)/\mathrm{U}$ has a natural topology induced from the linear order, whence it makes sense to talk about category notions in $[0,N)/\mathrm{U}$.  (This was first considered in \cite{keisler_meager_1991}.)  It will be convenient to translate the category notions from $[0,N)/\mathrm{U}$ back to $[0,N]$:

\begin{definition}
$A\subseteq [0,N)$ is \emph{$\mathrm{U}$-nowhere dense} if $\pi_\mathrm{U}(A)$ is nowhere dense in $[0,N)/U$.  More concretely:  $A$ is $\mathrm{U}$-nowhere dense if, given any $a<b$ in $[0,N)$ with $b-a>\mathrm{U}$, there is $[c,d]\subseteq [a,b]$ with $d-c>\mathrm{U}$ such that $[c,d]\subseteq [0,N)\setminus A$.  If $A$ is not $\mathrm{U}$-nowhere dense, we say that $A$ is \emph{$\mathrm{U}$-somewhere dense}.
\end{definition}

Recall the following famous theorem of Steinhaus:

\begin{theorem}\label{Lebesgueinterval}
If $C,D\subseteq [0,1]$ have positive Lebesgue measure, then $C+ D$ contains an interval. 
\end{theorem}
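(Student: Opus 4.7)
The plan is to give the classical density-point proof of Steinhaus's theorem, since the result is recalled here as motivation and the Loeb-measure/cut machinery developed in Example \ref{LL} is not strictly needed for the statement itself (only for its Banach-density analog that Jin's Sumset Theorem will provide). First, I would invoke the Lebesgue Density Theorem to pick a density point $c\in C$ and a density point $d\in D$. By definition, for every $\epsilon>0$ there exists $\delta>0$ such that
\[
\lambda\bigl(C\cap(c-\delta,c+\delta)\bigr)>(1-\epsilon)\cdot 2\delta
\quad\text{and}\quad
\lambda\bigl(D\cap(d-\delta,d+\delta)\bigr)>(1-\epsilon)\cdot 2\delta.
\]
Fix $\epsilon<1/4$ and the corresponding $\delta>0$, and let $I_c:=(c-\delta,c+\delta)$, $I_d:=(d-\delta,d+\delta)$.

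Second, I would claim that the open interval $(c+d-\delta,\,c+d+\delta)$ is contained in $C+D$. To see this, fix an arbitrary $t$ with $|t-(c+d)|<\delta$ and consider the two Lebesgue-measurable sets $C\cap I_c$ and $t-(D\cap I_d)$. The second set is a translate of $D\cap I_d$ and hence has measure strictly greater than $(1-\epsilon)\cdot 2\delta$, and it lies in the interval $(t-d-\delta,\,t-d+\delta)$. Both sets are therefore contained in the union $I_c\cup(I_c+(t-c-d))$, which is an interval of length at most $2\delta+|t-c-d|<3\delta$. Since the sum of their measures exceeds $2(1-\epsilon)\cdot 2\delta>3\delta$, the two sets must intersect: there exist $x\in C\cap I_c$ and $y\in D\cap I_d$ with $x=t-y$, so $t=x+y\in C+D$, as desired.

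The only real input is the Lebesgue Density Theorem, which I would take as black-boxed from measure theory; the rest is a short measure-pigeonhole argument. The main point requiring care is keeping track of the containing interval and the constants so that the measure inequality $2(1-\epsilon)\cdot 2\delta>3\delta$ yields a genuine intersection (any $\epsilon<1/4$ suffices, but the bookkeeping must be done carefully). If instead one wanted a proof in the nonstandard spirit of this chapter, one could replace the use of the Lebesgue Density Theorem by its Loeb-measure counterpart via the identification $[0,N)/U_N\cong[0,1]$ of Example \ref{LL}, lifting $C$ and $D$ to internal sets $A,B\subseteq[0,N-1]$ of positive Loeb measure and carrying out the same pigeonhole inside a hyperfinite subinterval; but conceptually the argument is unchanged.
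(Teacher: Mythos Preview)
Your proof is correct: this is the standard density-point proof of Steinhaus's theorem, and the bookkeeping with $\epsilon<1/4$ is fine.

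The paper itself does not give a direct proof of this statement. It is recalled as a classical fact to motivate the question that Jin's Sumset Theorem (Theorem~\ref{jinsumset}) answers, and immediately after that theorem the reader is asked (as an exercise) to recover Theorem~\ref{Lebesgueinterval} from it. The intended route is precisely the identification of Example~\ref{LL}: lift $C$ and $D$ to internal subsets $A,B\subseteq[0,N)$ of positive Loeb measure via $\st_I^{-1}$, apply Jin's theorem with the cut $\mathrm{U}_N$ to conclude that $A\oplus_N B$ is $\mathrm{U}_N$-somewhere dense, and then push this back down to an honest interval inside $C+D$. So your classical argument and the paper's intended derivation are genuinely different: yours uses only the Lebesgue density theorem and a pigeonhole, while the paper's point is that Steinhaus is the special case $\mathrm{U}=\mathrm{U}_N$ of the much more general nonstandard result. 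Your final paragraph gestures at a nonstandard version, but note that the paper's derivation is not a repackaging of the density-point argument in Loeb terms; it is an application of Jin's theorem, whose proof is a separate (and more involved) combinatorial argument.
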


For $x,y\in [0,N)$, set $x\oplus_N y:=x+y \mod N$.  For $A,B\subseteq [0,N)$, set $$A\oplus_N B:=\{x\oplus_N y \ : \ x\in A, y\in B\}.$$  In light of Example \ref{LL}, Theorem \ref{Lebesgueinterval} says that whenever $A,B\subseteq [0,N)$ are internal sets of positive Loeb\index{Loeb measure} measure, then $A\oplus_N B$ is $\mathrm{U}_N$-somewhere dense.  Keisler and Leth asked whether or not this is the case for any cut.  Jin answered this positively in \cite{jin_sumset_2002}:

\begin{theorem}[Jin's Sumset Theorem]\label{jinsumset}
If $\mathrm{U}\subseteq [0,N)$ is a cut and $A,B\subseteq [0,N)$ are internal sets with positive Loeb\index{Loeb measure} measure, then $A\oplus_N B$ is $\mathrm{U}$-somewhere dense.
\end{theorem}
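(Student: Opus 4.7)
The strategy is to argue by contradiction, combining a quantitative covering consequence of $\mathrm{U}$-nowhere density with a double-counting argument that exploits the positive Loeb measures of $A$ and $B$. Assume $C := A \oplus_N B$ is $\mathrm{U}$-nowhere dense and set $\alpha := \mu_N(A) > 0$, $\beta := \mu_N(B) > 0$; fix a standard $\eta$ with $0 < \eta < \alpha\beta$.

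The first task is to upgrade the purely qualitative $\mathrm{U}$-nowhere density of the internal set $C$ to a quantitative covering statement: I would establish the existence of an internal interval $I \subseteq [0,N)$ of infinite length $L$, a hyperfinite $H > \mathrm{U}$, and an internal disjoint family of sub-intervals $J_1,\ldots,J_K \subseteq I$, each of length at least $H$ and each disjoint from $C$, satisfying $|I \setminus \bigsqcup_k J_k| < \eta L$. The idea is to iteratively peel off large gaps guaranteed by nowhere density, organised as an internal hyperfinite Vitali-type recursion inside $I$, and then use overflow to turn the external "for every $\eta$" aspect of the construction into an internal uniform bound.

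Once the covering is in hand, I would arrange a matching lower bound on the pair count. Setting $S_J := |\{(a,b) \in A \times B : a \oplus_N b \in J\}|$, the averaging identity $\sum_{t\in [0,N)} S_{I+t} = L|A||B| \approx \alpha\beta LN^2$ produces a translate $I+t$ with $S_{I+t} \gtrsim \alpha\beta L N$. Since $\mathrm{U}$-nowhere density is translation-invariant, the covering applies to this translate too, so we may relabel and assume $I$ simultaneously has the gap structure and a large pair count. Because each $J_k$ is disjoint from $C$, every pair counted by $S_I$ must have $a\oplus_N b$ in the residual $E := I \setminus \bigsqcup_k J_k$; the trivial pointwise bound $|\{(a,b) : a\oplus_N b = x\}| \leq \min(|A|,|B|) \leq N$ then yields
\[
S_I \;=\; \sum_{x \in E}\bigl|\{(a,b) \in A \times B : a\oplus_N b = x\}\bigr| \;\leq\; |E|\cdot N \;\leq\; \eta L N,
\]
contradicting $S_I \gtrsim \alpha\beta L N$ since $\eta < \alpha\beta$.

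The main obstacle is the covering lemma in the first step. The definition of $\mathrm{U}$-nowhere density guarantees only a single $\mathrm{U}$-sized gap inside any interval of length exceeding $\mathrm{U}$, with no lower bound on how much of the host interval that gap covers, so a naive iteration could leave most of $I$ uncovered. I expect to resolve this by first using saturation to fix a hyperfinite scale $H > \mathrm{U}$ and then carrying out the Vitali-type extraction as an internal hyperfinite recursion at scale $H$, exploiting the internality of $C$ to iterate indefinitely. The subtle point is that the gap lengths supplied by nowhere density are bounded below only by some element of $\starN \setminus \mathrm{U}$ and not uniformly by $H$, so a preliminary overflow argument is required to produce a single scale at which gaps can be extracted simultaneously across all host blocks of $I$; this is the step where I anticipate the bulk of the technical work.
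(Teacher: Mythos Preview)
Your covering lemma cannot be proved in the form your argument requires, and this is a fatal gap. Unwinding the logic, your averaging step needs the covering to hold not just for the single interval $I$ you first produce but for whichever translate $I+t$ maximizes the pair count; equivalently you need $|C\cap I'|<\eta|I'|$ for all intervals $I'$ of a fixed length $L>\mathrm{U}$, which forces $\mu_N(C)\le\eta$. But $\mathrm{U}$-nowhere density is a purely topological condition on the quotient $[0,N)/\mathrm{U}$ and says nothing about Loeb measure: for $\mathrm{U}=\mathrm{U}_N$ one can build an internal fat Cantor set $C_K$ by removing middle intervals of length $\eta\cdot 4^{-k}N$ at stages $k=1,\dots,K$ with $K$ infinite, obtaining a $\mathrm{U}_N$-nowhere dense set with $|C_K|>(1-\eta/2)N$. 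The preliminary overflow you anticipate, producing a single scale $H>\mathrm{U}$ at which gaps can be found uniformly, fails for exactly this example: inside the leftmost stage-$k$ piece (for $k$ finite but large) every gap of $C_K$ has length at most $\eta\cdot 4^{-k-1}N$, which lies below any prescribed $H>\mathrm{U}_N$ once $k$ is chosen large enough.

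The underlying problem is that your argument treats $C$ as an arbitrary $\mathrm{U}$-nowhere dense internal set and never uses that $C=A\oplus_N B$; for arbitrary such $C$ the conclusion you want is simply false. Jin's proof exploits the sumset structure in an essential way. It runs an extremal argument over all moduli $H>\mathrm{U}$ and all bad pairs $(A,B)$, setting $r:=\sup\mu_H(A)$ and then $s:=\sup\{\mu_H(B):\mu_H(A)>r-\epsilon\}$, and derives a contradiction by constructing from a near-optimal pair a new bad pair $(A',B')$ at a smaller modulus $K$ with $\mu_K(B')>s+\delta$. The sumset structure enters through the observation that $A\oplus_H(B\oplus_H[-k,k])=(A\oplus_H B)\oplus_H[-k,k]$ remains $\mathrm{U}$-nowhere dense for every $k\in\mathrm{U}$, so $\mu_H(B\oplus_H[-k,k])\le s$ for such $k$; since $\mathrm{U}$ is external this bound persists for some $K>\mathrm{U}$ with $K/H\approx 0$, and a pigeonhole argument on blocks of length $K$ then locates subintervals where $A$ and $B$ are denser than $r-\epsilon$ and $s+\delta$ respectively.
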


\begin{exercise}
Prove Theorem \ref{Lebesgueinterval} from Theorem \ref{jinsumset}.
\end{exercise}

We will prove Theorem \ref{jinsumset} in the next section.  We now prove the following standard corollary of Theorem \ref{jinsumset}, which is often also referred to as Jin's sumset theorem, although this consequence was known to Leth beforehand.

\begin{corollary}\label{jinleth}
Suppose that $A,B\subseteq \N$ have positive Banach density\index{density!Banach}.  Then $A+B$ is piecewise syndetic\index{syndetic!piecewise}.
\end{corollary}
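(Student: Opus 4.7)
The plan is to reduce the statement to a direct application of Theorem \ref{jinsumset} with the cut $\mathrm{U}=\N$, and then extract a standard piecewise syndeticity conclusion using the nonstandard characterization of that notion from the chapter on densities and structural properties. The key is to transport $A$ and $B$ into internal subsets of a common hyperfinite interval $[0,M)$ where we can honestly apply $\oplus_M$ without losing information to wrap-around.

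First, I would fix an infinite $N\in \starN$ together with hyperfinite intervals $I_A=[a,a+N]$ and $I_B=[b,b+N]$ witnessing $\BD(A)$ and $\BD(B)$, and set $A':=(\starA - a)\cap [0,N]$ and $B':=(\starB - b)\cap [0,N]$. These are internal, and by construction $\st(|A'|/(N+1))=\BD(A)>0$ and similarly for $B'$. Setting $M:=2N+1$, we have $A',B'\subseteq [0,M)$ with positive Loeb measure in $[0,M)$ (the measure merely halves). Crucially, since $A'+B'\subseteq [0,2N]\subsetneq [0,M)$, the regular sumset coincides with the modular sumset: $A'+B' = A'\oplus_M B'$.

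Now apply Theorem \ref{jinsumset} with the cut $\mathrm{U}:=\N\subseteq [0,M)$. This gives internal $c<d$ in $[0,M)$ with $d-c>\N$ such that every subinterval $[c',d']\subseteq [c,d]$ of infinite length meets $A'+B'$; equivalently, every gap of $A'+B'$ on $[c,d]$ is finite. To promote this to a uniform finite bound, consider the internal set
\[
X:=\{k\in\starN : \text{every gap of } A'+B' \text{ on } [c,d] \text{ has size } \le k\},
\]
which is internal because $A'+B'$ is (it is the image of the internal set $A'\times B'$ under the internal function $(x,y)\mapsto x+y$). Every infinite $K$ trivially bounds every finite gap, so $X$ contains every infinite hypernatural; by underflow (Proposition \ref{overflow} applied to the upward-closed set $X$, or rather its version for underflow in the same chapter), there is a finite $k\in\N\cap X$.

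Finally, shifting by $a+b$, the hyperfinite interval $[c+a+b,\,d+a+b]$ has infinite length and satisfies
\[
A'+B'+(a+b)\subseteq \starA+\starB,
\]
so all gaps of $\starA+\starB$ on this interval are also at most $k$. By the nonstandard characterization of piecewise syndeticity established in the previous chapter (the existence of an infinite hyperfinite interval on which the nonstandard extension has only finite, uniformly bounded gaps), $A+B$ is piecewise syndetic. The substantive ingredient here is of course Theorem \ref{jinsumset}; the only nontrivial standard manoeuvre in the reduction is the underflow step turning "no infinite gap" into "a uniform finite gap bound", and the minor bookkeeping of choosing $M=2N+1$ so that $\oplus_M$ coincides with $+$ on the relevant range.
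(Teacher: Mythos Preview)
Your proof is correct and follows essentially the same route as the paper's own proof: translate $A$ and $B$ into a common hyperfinite interval, enlarge to $[0,2N)$ (the paper) or $[0,2N+1)$ (you) so that $\oplus$ coincides with ordinary $+$, apply Theorem \ref{jinsumset} with the cut $\N$ to get an infinite interval on which the sumset has only finite gaps, extract a uniform finite bound by an underflow/overflow argument, and translate back to invoke the nonstandard characterization of piecewise syndeticity. Your bookkeeping is slightly more explicit (the intersection with $[0,N]$ and the justification that $X$ is internal), but the argument is the same.
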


\begin{proof}
Set $r:=\BD(A)$ and $s:=\BD(B)$.  Fix $N\in \starN$ infinite and take $x,y\in \starN$ such that
$$\delta({}^{\ast}A\cap [x,x+N)|)\approx r, \quad \delta({}^{\ast}B\cap [y,y+N))\approx s.$$  Let $C:={}^{\ast}A-x$ and $D:={}^{\ast}B-y$, so we may view $C$ and $D$ as internal subsets of $[0,2N)$ of positive Loeb\index{Loeb measure} measure.  By Jin's theorem applied to the cut $\N$, we have that $C\oplus_{2N}D=C+D$ is $\N$-somewhere dense, that is, there is a hyperfinite interval $I$ such that all gaps of $C+D$ on $I$ have finite length.  By \index{overflow principle}, there is $m\in \N$ such that all gaps of $C+D$ on $I$ have length at most $m$.  Therefore, $x+y+I\subseteq {}^{\ast}(A+B+[0,m])$.  By transfer, for any $k\in \N$, $A+B+[0,m]$ contains an interval of length $k$, whence $A+B$ is piecewise syndetic.  
\end{proof}

It is interesting to compare the previous corollary to Proposition \ref{differencesyndetic}.  It is also interesting to point out that Corollary \ref{jinleth} can also be used to give an alternative proof of Theorem \ref{banachbasis}.  Indeed, suppose $\BD(A)>0$ and $\gcd(A-\min(A))=1$.  Then there is $h\in \N$ such that $A+A+[0,h]$ is thick\index{thick}.  It follows that $A+A+[x,x+h]$ is thick\index{thick} for all $x\in \N$.  As in the proof of Theorem \ref{banachbasis}, take $m$ and consecutive $a,a+1\in \Sigma_m(A)$.  Note that, for all $i=0,1,\ldots,h$, we have that $ha+i=i(a+1)+(h-i)a\in \Sigma_{hm}(A)$.  It follows that $A+A+[ha,ha+h]\subseteq \Sigma_{hm+2}(A)$, whence $\Sigma_{hm+2}(A)$ is thick\index{thick}.

\section{Jin's proof of the sumset theorem}

We now turn to the proof of Theorem \ref{jinsumset} given in \cite{jin_sumset_2002}.  Suppose, towards a contradiction, that there is a cut $\mathrm{U}$ for which the theorem is false.  If $H>\mathrm{U}$ and $A,B\subseteq [0,H)$ are internal, we say that $(A,B)$ is \emph{$(H,\mathrm{U})$-bad} if $\mu_H(A),\mu_H(B)>0$ and $A\oplus_H B$ is $\mathrm{U}$-nowhere dense.  We set 
$$r:=\sup\{\mu_H(A) \ : \ (A,B) \text{ is $(H,\mathrm{U})$ bad for some $H>\mathrm{U}$ and some } B\subseteq [0,H)\}.$$  By assumption, $r>0$.  We fix $\epsilon>0$ sufficiently small.  We then set
$$s:=\sup\{\mu_H(B) \ : (A,B) \text{ is $(H,\mathrm{U})$-bad for some $H>\mathrm{U}$ and some $A\subseteq [0,H)$ with }\mu_H(A)>r-\epsilon\}.$$  

By the definition of $r$, we have that $s>0$.  Also, by the symmetry of the definition of $r$, we have that $r\geq s$.  The following is slightly less obvious:

\

\noindent \textbf{Claim 1:}  $s<\frac{1}{2}+\epsilon$.

\noindent \textbf{Proof of Claim 1:}  Suppose, towards a contradiction, that $s\geq \frac{1}{2}+\epsilon$.  We may thus find $H>\N$ and an $(H,\mathrm{U})$-bad pair $(A,B)$ with $\mu_H(A)>\frac{1}{2}$ and $\mu_H(B)>\frac{1}{2}$.  Since addition modulo $H$ is translation invariant, it follows that for any $x\in [0,H)$, we have that $A\cap (x\ominus_H B)\not=\emptyset$, whence $A\oplus_H B=[0,H-1)$, which is a serious contradiction to the fact that $A\oplus_H B$ is $\mathrm{U}$-nowhere dense.

\

We now fix $\delta>0$ sufficiently small, $H>\mathrm{U}$ and an $(H,\mathrm{U})$-bad $(A,B)$ such that $\mu_H(A)>r-\epsilon$ and $\mu_H(B)>s-\delta$.  We will obtain a contradiction by producing $K>\mathrm{U}$ and $(K,\mathrm{U})$-bad $(A',B')$ such that $\mu_K(A')>r-\epsilon$ and $\mu_K(B')>s+\delta$, contradicting the definition of $s$.  

We first show that it suffices to find $K>U$ such that $K/H\approx 0$ and such that there are hyperfinite intervals $I,J\subseteq [0,H)$ of length $K$ for which
$$\st\left(\frac{|A\cap I|}{K}\right)>r-\epsilon \text{ and }\st\left(\frac{|B\cap J|}{K}\right)>s+\delta.$$

\noindent Indeed, suppose that $I:=[a,a+K)$ and $J:=[b,b+K)$ are as above.  Let $A':=(A\cap I)-a$ and $B':=(B\cap J)-b$.  Then $\mu_K(A')>r-\epsilon$ and $\mu_K(B')>s+\delta$.  It remains to see that $(A',B')$ is $(K,U)$-bad.  Since $A\oplus_H B$ is $\mathrm{U}$-nowhere dense, it is clear that $(A\cap I)\oplus_H (B\cap J)$ is also $\mathrm{U}$-nowhere dense.  Since $A'\oplus_H B'=((A\cap I)\oplus_H(B\cap J))\ominus (a+b)$, we have that $A'\oplus_H B'$ is $\mathrm{U}$-nowhere dense.  Since $K/H$ is infinitesimal, we have that $A'\oplus_H B'=A'\oplus_{2K}B'$.  It follows that $A'\oplus_K B'$ is the union of two $\mathrm{U}$-nowhere dense subsets of $[0,K)$, whence is also $\mathrm{U}$-nowhere dense, and thus $(A',B')$ is $(K,\mathrm{U})$-bad, as desired.

We now work towards finding the appropriate $K$.  By the definition of $\mathrm{U}$-nowhere dense, we have, for every $k\in \mathrm{U}$, that $A\oplus_H(B\oplus_H [-k,k]))=(A\oplus_H B)\oplus_H[-k,k]$ is $\mathrm{U}$-nowhere dense.  By the definition of $s$, it follows that $\mu_H(B\oplus_H [-k,k])\leq s$ for each $k\in \mathrm{U}$.  Since $U$ is external and closed under addition, it follows that there is $K>\mathrm{U}$ with $K/H$ infinitesimal such that $$\frac{|B\oplus_H [-K,K]|}{H}\leq s+\frac{\delta}{2}.$$  We finish by showing that this $K$ is as desired.

Let $\mathcal{I}:=\{[iK,(i+1)K) \ : \ 0\leq i\leq H/K -1\}$ be a partition of $[0,H-1)$ into intervals of length $K$ (with a negligible tail omitted).  Let $X:=\{i\in [0,H/K-1] \ : \ [iK,(i+1)K-1)\cap B=\emptyset\}$.

\

\noindent \textbf{Claim 2:}  $\frac{|X|}{|\mathcal{I}|}> \frac{1}{3}$.

\noindent \textbf{Proof of Claim 2:}  Suppose, towards a contradiction, that $\frac{|X|}{|\mathcal{I}|}\leq  \frac{1}{3}$.  Fix $i\notin X$ and $x\in [iK,(i+1)K)$.  Write $x=iK+j$ with $j\in [0,K-1]$.  Since $i\notin X$, there is $l\in [0,K-1)$ such that $iK+l\in B$.  It follows that $x=(iK+l)+(j-l)\in B\oplus_H [-K,K]$.  Consequently,
$$|B\oplus_H [-K,K]|\geq \sum_{i\notin X}K\geq \frac{2}{3}(H/K-1)\cdot K=\frac{2}{3}H-\frac{2}{3}K,$$
whence $$\frac{|B\oplus_H [-K,K]|}{H}\geq \frac{2}{3}-\frac{2}{3}\frac{K}{H}\approx \frac{2}{3},$$ which, for sufficiently small $\epsilon$ and $\delta$, contradicts the fact that $\frac{|B\oplus_H [-K,K]|}{H}\leq s+\frac{\delta}{2}$.

\

Let $\mathcal{I}':=\{[iK,(i+1)K) \ : \ i\notin X\}$.  As explained above, the following claim completes the proof of the theorem.

\

\noindent \textbf{Claim 3:}  There are $I,J\in \mathcal{I}$ such that
$$\st\left(\frac{|A\cap I|}{K}\right)>r-\epsilon \text{ and }\st\left(\frac{|B\cap J|}{K}\right)>s+\delta.$$

\noindent \textbf{Proof of Claim 3:}  We only prove the existence of $J$; the proof of the existence of $I$ is similar (and easier).  Suppose, towards a contradiction, that $\st(\frac{|B\cap J|}{K})\leq s+\delta$ for all $J\in \mathcal{I}$.  We then have
$$s-\delta<\frac{|B\cap [0,H-1)|}{H}=\frac{1}{H}\sum_{J\in \mathcal{I}'}|B\cap [iK,(i+1)K)|\leq \frac{1}{H}\cdot \frac{2}{3}\cdot(H/K)\cdot(s+\delta)K=\frac{2}{3}(s+\delta).$$  If $\delta\leq \frac{s}{5}$, then this yields a contradiction.

\section{Beiglb\"ock's proof}

It is straightforward to verify that Corollary \ref{jinleth} is also true for subsets of $\Z$:

\begin{corollary}\label{jinlethZ}
If $A,B\subseteq \Z$ are such that $\BD(A),\BD(B)>0$, then $A+B$ is piecewise syndetic.  
\end{corollary}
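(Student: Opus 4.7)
The plan is to mimic the proof of Corollary~\ref{jinleth} almost verbatim, since the hypothesis of Jin's Sumset Theorem (Theorem~\ref{jinsumset}) involves only internal subsets of a hyperfinite interval and does not care whether the ambient group is $\N$ or $\Z$.

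First I would set $r := \BD(A)$ and $s := \BD(B)$ and, using the nonstandard characterization of Banach density for subsets of $\Z$ (the exercise following the $\Z$-definition of $\BD$ in Section~11.3), fix an infinite $N \in \starN$ together with $x, y \in \starZ$ such that
$$\st\bigl(\delta(\starA \cap [x, x+N))\bigr) = r \quad \text{and} \quad \st\bigl(\delta(\starB \cap [y, y+N))\bigr) = s.$$
Then I would consider the translates $C := (\starA - x) \cap [0, N)$ and $D := (\starB - y) \cap [0, N)$ as internal subsets of $[0, 2N)$ of positive Loeb measure.

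Next I would apply Theorem~\ref{jinsumset} to $C$ and $D$ inside $[0, 2N)$ with the cut $\mathrm{U} := \N$. Because $C$ and $D$ are both contained in $[0, N)$, no wrap-around occurs and $C \oplus_{2N} D$ coincides with the honest sumset $C + D$. Jin's theorem then produces a hyperfinite sub-interval $I \subseteq [0, 2N)$ of infinite length on which every gap of $C + D$ is finite, so by the overflow principle there is some $m \in \N$ such that all gaps of $C + D$ on $I$ have length at most $m$. Translating back by $x + y$ yields an internal interval of infinite length contained in ${}^{\ast}(A + B + [0, m])$, and transfer then gives that $A + B + [0, m]$ is thick in $\Z$. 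Hence $A + B$ is piecewise syndetic in $\Z$.

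The only (mild) point that needs verification is the $\Z$-analog of the nonstandard characterization of Banach density used in the first step, which is exactly the content of the earlier exercise and presents no real difficulty. Once that is in hand there is no genuine obstacle: the argument is a routine adaptation of the $\N$ case, and it is for precisely this reason that the same name ``Jin's sumset theorem'' is customarily attached to the $\Z$ statement as well.
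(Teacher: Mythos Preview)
Your proposal is correct and is precisely the ``straightforward verification'' that the paper alludes to but does not spell out: the text introduces Corollary~\ref{jinlethZ} with the sentence ``It is straightforward to verify that Corollary~\ref{jinleth} is also true for subsets of $\Z$'' and gives no further argument for the direct adaptation.

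That said, the paper then devotes the remainder of the section to a genuinely different proof, namely Beiglb\"ock's ultrafilter argument. There one fixes an invariant mean $\ell$ on $\Z$ with $\ell(1_B)=\BD(B)$, pushes it forward to a Borel probability measure on $\beta\Z$, and uses Fatou's lemma to produce a nonprincipal ultrafilter $\u$ with $\BD((-A)\cap(B-\u))>0$; Proposition~\ref{differencesyndetic} then gives a syndetic set inside $A+(B-\u)$, and a short finite-intersection argument upgrades this to piecewise syndeticity of $A+B$. Your route is shorter and stays entirely within the Loeb-measure/cut framework already developed for Theorem~\ref{jinsumset}; Beiglb\"ock's route avoids cuts altogether, works directly with $\beta\Z$ and invariant means, and is the one that generalizes cleanly to arbitrary countable amenable groups.
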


In this section, we give Beiglb\"ock's ultrafilter\index{ultrafilter} proof of Corollary \ref{jinlethZ} appearing in \cite{Beiglbock_Ultrafilter_2011}.  We first start with some preliminary facts on invariant means on $\Z$.

\begin{definition}
An \emph{invariant mean} on $\Z$ is a linear functional $\ell:B(\Z)\to \R$ that satisfies the following properties:
\begin{enumerate}
\item $\ell$ is positive, that is, $\ell(f)\geq 0$ if $f\geq 0$;
\item $\ell(1)=1$; and
\item $\ell(k.f)=\ell(f)$ for all $k\in \Z$ and $f\in B(\Z)$, where $(k.f)(x):=f(x-k)$.
\end{enumerate}
\end{definition}

There are many invariant means on $\Z$:

\begin{exercise}\label{meanexercise}
Suppose that $(I_n)$ is a sequence of intervals in $\Z$ with $|I_n|\to \infty$ as $n\to \infty$.  Fix $\u\in \beta\N$.  Define, for $f\in B(\Z)$, $\ell(f)=\lim_\u (\frac{1}{|I_n|}\sum_{x\in I_n}f(x))$.  Show that $\ell$ is an invariant mean on $\Z$.  
\end{exercise}

In fact, we have:

\begin{lemma}
For every $A\subseteq \Z$, there is an invariant mean $\ell$ on $\Z$ such that $\ell(1_A)=\BD(A)$.
\end{lemma}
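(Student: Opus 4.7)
The plan is to invoke Exercise \ref{meanexercise} directly, feeding it a sequence of intervals that witnesses the Banach density of $A$.

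First, by the definition of $\BD(A)$ as $\lim_{n\to\infty}\Delta_n(A)$ together with the fact that each $\Delta_n(A)$ is realized by some interval of length $n$, I can select a sequence of intervals $(I_n)$ in $\Z$ with $|I_n|\to\infty$ such that $\delta(A,I_n)\to\BD(A)$. (Equivalently, I could pick the intervals via the nonstandard characterization: take $N\in{}^*\N\setminus\N$ and $x\in{}^*\Z$ with $\delta({}^*A,[x+1,x+N])\approx\BD(A)$, then transfer to produce the sequence $(I_n)$.) Fix any $\mathcal{U}\in\beta\N$ (principal or not---it does not matter for this argument, since the sequence of densities $\delta(A,I_n)$ already converges in the usual sense).

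Next, apply Exercise \ref{meanexercise} to this choice of $(I_n)$ and $\mathcal{U}$: this produces an invariant mean $\ell:B(\Z)\to\R$ defined by
\[
\ell(f)=\lim_{\mathcal{U}}\frac{1}{|I_n|}\sum_{x\in I_n}f(x).
\]
Applying $\ell$ to the characteristic function $1_A$ gives
\[
\ell(1_A)=\lim_{\mathcal{U}}\frac{|A\cap I_n|}{|I_n|}=\lim_{\mathcal{U}}\delta(A,I_n).
\]
Since $\delta(A,I_n)\to\BD(A)$ in the ordinary sense as $n\to\infty$, the $\mathcal{U}$-limit agrees with the ordinary limit, so $\ell(1_A)=\BD(A)$, as required.

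There is no real obstacle here: all the work is packaged inside Exercise \ref{meanexercise} (which supplies positivity, normalization, and translation invariance via the standard ultralimit argument on Cesàro-type averages along a Følner sequence in $\Z$). The only substantive point is the choice of the sequence $(I_n)$, and this is immediate from the definition of upper Banach density.
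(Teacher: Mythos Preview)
Your approach is essentially identical to the paper's: choose a sequence of intervals $(I_n)$ witnessing $\BD(A)$, feed it into Exercise~\ref{meanexercise}, and read off $\ell(1_A)=\BD(A)$.

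One small correction: your parenthetical remark that $\mathcal{U}$ may be principal is not right. If $\mathcal{U}$ is principal at some $n_0$, then $\ell(f)=\frac{1}{|I_{n_0}|}\sum_{x\in I_{n_0}}f(x)$ is just averaging over a single finite interval, which is not translation invariant. You are correct that the value $\ell(1_A)$ would still come out to $\BD(A)$ along a convergent sequence, but the resulting $\ell$ would fail to be an invariant mean. The paper explicitly takes $\mathcal{U}$ nonprincipal, and Exercise~\ref{meanexercise} should be read with that hypothesis for the invariance conclusion to hold.
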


\begin{proof}
Let $(I_n)$ be a sequence of intervals witnessing the Banach density\index{density!Banach} of $A$.  Fix nonprincipal $\u\in \beta \Z$.  Define $\ell$ as in Exercise \ref{meanexercise} for these choices of $(I_n)$ and $\u$.  It is clear that $\ell(1_A)=\BD(A)$.
\end{proof}

\begin{lemma}
For every invariant mean $\ell$ on $\Z$, there is a regular Borel probability  measure $\nu$ on $\beta \Z$ such that $\ell(1_A)=\nu(\overline{A})$ for every $A\subseteq \Z$.
\end{lemma}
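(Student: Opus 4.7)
The plan is to invoke the Riesz representation theorem after transporting $\ell$ along the canonical isomorphism $\ell^\infty(\Z) \cong C(\beta\Z)$ that was established in an exercise of Chapter~1. Recall that this isomorphism is given by $f \mapsto \beta f$, where $\beta f : \beta\Z \to \R$ is the unique continuous extension of $f : \Z \to \R$ (viewing $\Z$ as dense in $\beta\Z$ via principal ultrafilters).

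First I would define $\widetilde\ell : C(\beta\Z) \to \R$ by $\widetilde\ell(g) := \ell(g|_\Z)$, i.e., pulling $\ell$ back through the isomorphism $f \mapsto \beta f$. Since $\ell$ is linear, positive, and satisfies $\ell(1) = 1$, the functional $\widetilde\ell$ is a positive linear functional on $C(\beta\Z)$ with $\widetilde\ell(1) = 1$. Because $\beta\Z$ is compact Hausdorff, the Riesz representation theorem supplies a unique regular Borel probability measure $\nu$ on $\beta\Z$ such that $\widetilde\ell(g) = \int_{\beta\Z} g\, d\nu$ for every $g \in C(\beta\Z)$.

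It remains to verify the equality $\ell(1_A) = \nu(\overline{A})$ for every $A \subseteq \Z$. The key point is that the closure $\overline{A}$ coincides with the basic clopen set $U_A = \{\u \in \beta\Z : A \in \u\}$ (this is an exercise from Chapter~1), so $1_{\overline{A}}$ is a continuous function on $\beta\Z$. Moreover, $1_{\overline{A}}$ extends $1_A$ from $\Z$ to $\beta\Z$ continuously: it equals $1$ on $\overline{A}$ (a clopen neighborhood of every point of $A$) and equals $0$ on the complementary clopen set $\overline{\Z \setminus A} = U_{\Z\setminus A}$. By uniqueness of the continuous extension, $\beta(1_A) = 1_{\overline{A}}$. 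Therefore
\[
\ell(1_A) \;=\; \widetilde\ell(\beta(1_A)) \;=\; \widetilde\ell(1_{\overline{A}}) \;=\; \int_{\beta\Z} 1_{\overline{A}}\, d\nu \;=\; \nu(\overline{A}),
\]
as required.

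There is no real obstacle here: the argument is essentially a bookkeeping exercise combining the Stone--\v{C}ech universal property (which turns $\ell$ into a functional on $C(\beta\Z)$), the Riesz representation theorem (which produces the measure $\nu$), and the identification $\overline{A} = U_A$ (which ensures that $\beta(1_A)$ is the indicator of the closure). The translation-invariance of $\ell$ was not needed for this lemma, though of course it will be used in the sequel to extract dynamical information about $\nu$.
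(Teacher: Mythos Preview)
Your proof is correct and follows essentially the same approach as the paper: transport $\ell$ to $C(\beta\Z)$ via the isomorphism $f\mapsto\beta f$, apply Riesz to obtain $\nu$, and then use $\beta(1_A)=1_{\overline{A}}$ (since $\overline{A}=U_A$ is clopen) to conclude $\ell(1_A)=\nu(\overline{A})$. The paper's proof is terser but identical in substance; your observation that translation-invariance is not actually used in this lemma is also correct.
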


\begin{proof}
Fix a mean $\ell$ on $\Z$.  Since $f\mapsto \beta f$ yields an isomorphism $B(\Z)\cong C(\beta \Z)$, the Riesz Representation Theorem yields a regular Borel probability measure $\nu$ on $\beta \Z$ such that $\ell(f)=\int_{\beta\Z} (\beta f) d\nu$ for all $f\in B(\Z)$.  In particular, $$\ell(1_A)=\int_{\beta \Z} (\beta 1_{A})d\nu =\nu(\overline{A}).$$
\end{proof}

The following lemma is the key to Beiglb\"ock's proof of Corollary \ref{jinlethZ}.

\begin{lemma}\label{Blemma}
For any $A,B\subseteq \Z$, there is $\u\in \beta \Z$ such that $\BD(A\cap (B-\u))\geq \BD(A)\cdot \BD(B)$.
\end{lemma}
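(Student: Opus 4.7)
The plan is to average the inequality $\ell_A(1_{A \cap (B-\u)}) \leq \BD(A \cap (B-\u))$ over $\u$ against a probability measure on $\beta\Z$ associated with $\BD(B)$, and then to show that the average equals $\BD(A)\cdot\BD(B)$; some individual $\u$ must then realize the required bound.

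First I would invoke the two preceding lemmas to produce invariant means $\ell_A, \ell_B$ on $\Z$ with $\ell_A(1_A) = \BD(A)$ and $\ell_B(1_B) = \BD(B)$, together with the associated regular Borel probability measures $\nu_A, \nu_B$ on $\beta\Z$ satisfying $\ell(1_E) = \nu(\overline{E})$ for every $E \subseteq \Z$. A short averaging argument---consider $g(y) := \tfrac{1}{n}\sum_{x \in I} 1_E(y + x)$ for an interval $I$ of length $n$, which by the invariance $\ell(g) = \ell(1_E)$ and the bound $\|g\|_\infty \leq \Delta_n(E)$ gives $\ell(1_E) \leq \Delta_n(E)$ for every $n$---shows that every invariant mean satisfies $\ell(1_E) \leq \BD(E)$. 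It therefore suffices to establish
\[
\int_{\beta\Z} \ell_A\bigl(1_{A \cap (B - \u)}\bigr)\, d\nu_B(\u) \;=\; \BD(A)\cdot\BD(B).
\]

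To compute this integral, I use the Banach algebra isomorphism $\ell^\infty(\Z) \cong C(\beta\Z)$ sending $f \mapsto \beta f$; since $\overline{E}$ is clopen in $\beta\Z$, we have $\beta(1_E) = 1_{\overline{E}}$, and since $\beta$ is multiplicative, $\beta(1_{A\cap(B-\u)}) = 1_{\overline{A}}\cdot 1_{\overline{B-\u}}$. Writing $\ell_A(h) = \int (\beta h)\, d\nu_A$ and exchanging the order of integration via Fubini yields
\[
\int_{\beta\Z} \ell_A\bigl(1_{A \cap (B-\u)}\bigr)\, d\nu_B(\u) \;=\; \int_{\beta\Z} 1_{\overline{A}}(\V)\cdot \phi(\V)\, d\nu_A(\V),
\]
where $\phi(\V) := \int_{\beta\Z} 1_{\overline{B-\u}}(\V)\, d\nu_B(\u)$. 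The use of Fubini is legitimate because $1_{\overline{B-\u}}(\V) = 1_{\overline{B}}(\V\oplus \u)$ and the map $(\u,\V)\mapsto \V\oplus\u$ is separately continuous on $\beta\Z\times\beta\Z$ by Theorem~\ref{betaSsemitop}, hence jointly Borel measurable by the classical fact that separately continuous functions on products of compact Hausdorff spaces are Baire (and thus Borel) measurable.

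It remains to show $\phi\equiv\BD(B)$. For each fixed $\u$, the map $\V\mapsto 1_{\overline{B}}(\V\oplus\u)$ is continuous (right translations in $\beta\Z$ are continuous), so dominated convergence shows $\phi$ is continuous on $\beta\Z$. On the dense subset $\Z\subset\beta\Z$, for $\V=v\in\Z$ the translation invariance of $\nu_B$ gives
\[
\phi(v) \;=\; \nu_B(\overline{B-v}) \;=\; \ell_B(1_{B-v}) \;=\; \ell_B(1_B) \;=\; \BD(B),
\]
so $\phi\equiv\BD(B)$ on $\beta\Z$. Substituting back yields $\int \ell_A(1_{A\cap(B-\u)})\, d\nu_B(\u) = \BD(B)\cdot\nu_A(\overline{A}) = \BD(A)\cdot\BD(B)$, and the lemma follows because some $\u$ must then realize $\ell_A(1_{A\cap(B-\u)})\geq \BD(A)\cdot\BD(B)$, with the left side dominated by $\BD(A\cap (B-\u))$. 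The main obstacle is the Fubini exchange: it rests on joint Borel measurability of $(\u,\V)\mapsto \V\oplus\u$, which in turn depends on $\beta\Z$ being only a right topological semigroup, so one cannot appeal to joint continuity and must invoke the Baire-measurability theorem for separately continuous functions.
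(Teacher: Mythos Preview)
Your argument has a genuine gap at the separate continuity claim. Theorem~\ref{betaSsemitop} only shows that $(\beta\Z,\oplus)$ is a \emph{right} topological semigroup: for fixed $\u$ the map $\V\mapsto\V\oplus\u$ is continuous, but it does \emph{not} follow that $\u\mapsto\V\oplus\u$ is continuous for fixed $\V$, and in fact this fails whenever $\V$ is nonprincipal. So $(\u,\V)\mapsto 1_{\overline{B}}(\V\oplus\u)$ is not separately continuous, and the Baire-measurability result you invoke for the Fubini exchange does not apply. The same gap undermines your argument that $\phi$ is continuous: even granting continuity of $\V\mapsto g(\V,\u)$ for each $\u$, dominated convergence along \emph{nets} is not valid, and $\beta\Z$ is not first countable, so you cannot reduce to sequences. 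Thus neither the joint measurability needed for Fubini nor the claimed continuity of $\phi$ is established.

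The paper's proof avoids these difficulties by dispensing with the second mean $\ell_A$ altogether. It fixes only the mean $\ell$ realizing $\BD(B)$, takes a sequence of intervals $(I_n)$ witnessing $\BD(A)$, and works with the functions
\[
f_n(\u)=\delta(A\cap(B-\u),I_n)=\frac{1}{|I_n|}\sum_{k\in A\cap I_n}1_{\overline{B-k}}(\u),
\]
each of which is a \emph{finite} sum of indicators of clopen sets, hence genuinely continuous on $\beta\Z$. One then applies Fatou's lemma to the sequence $(f_n)$ and uses invariance of $\ell$ to compute $\int f_n\,d\nu=\delta(A,I_n)\cdot\BD(B)$, giving $\int\limsup_n f_n\,d\nu\geq\BD(A)\cdot\BD(B)$; since $\limsup_n f_n(\u)\leq\BD(A\cap(B-\u))$, the conclusion follows. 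The point is that replacing $\ell_A$ by finite-interval averages keeps everything inside $C(\beta\Z)$ and reduces the limiting step to an honest sequence, which is exactly what your two-mean approach loses.
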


\begin{proof}
Fix an invariant mean $\ell$ on $\Z$ such that $\ell(1_B)=\BD(B)$ and let $\nu$ be the associated Borel probability measure on $\beta\Z$.  Let $(I_n)$ be a sequence of intervals witnessing the Banach density\index{density!Banach} of $A$.  Define $f_n:\beta \Z\to [0,1]$ by $$f_n(\u):=\delta((A\cap (B-\u),I_n)=\frac{1}{|I_n|}\sum_{k\in A\cap I_n}1_{\overline{{B-k}}}(\u).$$  Set $f(\u):=\limsup_n f_n(\u)$ and note that $f(\u)\leq \BD(A\cap (B-\u))$ for all $\u\in \beta\Z$.  Fatou's Lemma implies
$$\int_{\beta \Z} fd\nu \geq \limsup_n \int_{\beta \Z}\frac{1}{|I_n|}\sum_{k\in A\cap I_n}1_{U_{B-k}}d\nu=\limsup_n \frac{1}{|I_n|}\sum_{k\in I_n\cap A} \ell(1_{B-k}).$$  Since $\ell$ is invariant, the latter term is equal to $ \limsup_n \delta(A,I_n)\cdot \ell(1_B)=\BD(A)\cdot \BD(B)$.  Thus, we have shown $\int_{\beta\Z} fd\nu\geq \BD(A)\cdot \BD(B)$.  In particular, there is some $\u\in \Z$ such that $f(\u) \geq \BD(A)\cdot\BD(B)$, as desired.
\end{proof}

Notice that, in the notation of the above proof, $\mu(\Z)=0$, whence we can take $\u$ as in the conclusion of the lemma to be nonprincipal.

We can now give Beiglb\"ock's proof of Corollary \ref{jinlethZ}.  Assume that $\BD(A),\BD(B)>0$.  Apply the previous lemma with $A$ replaced by $-A$ (which has the same Banach density\index{density!Banach}), obtaining $\u\in \beta \Z$ such that $C:=(-A)\cap (B-\u)$ has positive Banach density\index{density!Banach}.  By Lemma \ref{differencesyndetic}, $C-C$ is syndetic; since $C-C\subseteq A+(B-\u)$, we have that $A+(B-\u)$ is also syndetic. 

Suppose $s\in A+(B-\u)$.  Then for some $a\in A$, $B-(s-a)\in \u$, whence $a+B-s\in \u$ and hence $A+B-s\in \u$.  Thus, for any finite set $s_1,\ldots,s_n\in A+(B-\u)$, we have $\bigcap_{i=1}^n (A+B-s_i)\in \u$, and, in particular, is nonempty, meaning there is $t\in \Z$ such that $t+\{s_1,\ldots,s_n\}\subseteq A+B$.  We claim that this implies that $A+B$ is piecewise syndetic\index{syndetic!piecewise}.  Indeed, take $F\subseteq \Z$ such that $F+A+(B-\u)=\Z$.  We claim that $F+A+B$ contains arbitrarily long intervals.  To see this, fix $n\in \N$ and, for $i=1,\ldots,n$ take $s_i\in A+(B-\u)$ such that $i\in F+s_i$.  Take $t\in \Z$ such that $t+\{s_1,\ldots,s_n\}\subseteq A+B$.  Then $t+[1,n]\subseteq t+F+\{s_1,\ldots,s_n\}\subseteq F+(A+B)$, completing the proof.

\section{A proof with an explicit bound}

A proof of Corollary \ref{jinlethZ} can be given by using a simple
counting argument of finite combinatorics in the nonstandard setting.
In this way, one also obtains an explicit bound on the
number of shifts of the sumset that are needed to produce a thick set\index{thick}.

\begin{lemma}
Let $C\subseteq[1,n]$ and $D\subseteq[1,m]$ be finite
sets of natural numbers. Then there exists $k\le n$ such that
$$\frac{|(C-k)\cap D|}{m}\ \ge\
\frac{|C|}{n}\cdot\frac{|D|}{m}\ -\ \frac{|D|}{n}.$$
\end{lemma}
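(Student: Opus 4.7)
The plan is to use a direct double-counting (averaging) argument. Define the quantity
\[
S \;:=\; \sum_{k=1}^{n}\bigl|(C-k)\cap D\bigr|,
\]
and bound it from below; then an application of pigeonhole will yield a shift $k\le n$ achieving at least the average value $S/n$, and dividing by $m$ will produce the stated inequality.

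To bound $S$ from below, I would rewrite $(C-k)\cap D=\{d\in D : d+k\in C\}$ and exchange the order of summation:
\[
S \;=\; \sum_{d\in D}\bigl|\{k\in[1,n] : d+k\in C\}\bigr|
\;=\; \sum_{d\in D}\bigl|C\cap[d+1,\,d+n]\bigr|.
\]
Since $C\subseteq[1,n]$, the intersection $C\cap[d+1,d+n]$ equals $C\cap[d+1,n]$, so its cardinality is $|C|-|C\cap[1,d]|\ge |C|-d$. Using $d\le m$ for each $d\in D$ then gives $|C\cap[d+1,n]|\ge|C|-m$, and hence
\[
S \;\ge\; |D|\bigl(|C|-m\bigr) \;=\; |C|\,|D|-m\,|D|.
\]

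By averaging, there is some $k\in[1,n]$ with $|(C-k)\cap D|\ge S/n\ge (|C|\,|D|-m\,|D|)/n$. Dividing both sides by $m$ delivers precisely
\[
\frac{|(C-k)\cap D|}{m}\;\ge\;\frac{|C|}{n}\cdot\frac{|D|}{m}-\frac{|D|}{n},
\]
as required.

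There is essentially no serious obstacle here: the lemma is purely finitary and combinatorial, and the only thing to be careful about is the bookkeeping in the double sum, together with noting that the lower bound $|C|-m$ may be negative (in which case the conclusion is vacuous or trivial, but the inequality still holds since the averaging argument is valid regardless of sign). No appeal to nonstandard analysis is needed for this step; the lemma will then be used as the finitary engine that, via transfer and overflow, yields the explicit bound on the number of shifts in the sumset theorem.
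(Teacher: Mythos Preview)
Your proof is correct and is essentially the same double-counting/averaging argument as the paper's: both compute $\sum_{k=1}^{n}\sum_{d\in D}\chi_C(k+d)$ in two ways, bound the inner sum over $k$ by $|C|-d\ge |C|-m$, and apply pigeonhole. The paper packages this with an error term $e(d)$ satisfying $|e(d)|\le d\le m$, but the content is identical.
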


\begin{proof}
If $\chi:[1,n]\to\{0,1\}$ is the characteristic function of $C$,
then for every $d\in D$, we have
$$\frac{1}{n}\cdot\sum_{k=1}^{n}\chi(k+d)\ =\
\frac{|C\cap[1+d,n+d]|}{n}\ =\
\frac{|C|}{n}+\frac{e(d)}{n}$$
where $|e(d)|\le d$. Then:
\begin{eqnarray*}
\frac{1}{n}\cdot\sum_{k=1}^n\left(\frac{1}{m}\cdot\sum_{d\in D}\chi(k+d)\right) &  = &
\frac{1}{m}\cdot\sum_{d\in D}\left(\frac{1}{n}\cdot\sum_{x=1}^n\chi(k+d)\right)
\\
{} & = &
\frac{1}{m}\cdot\sum_{d\in D}\frac{|C|}{n}\ +\
\frac{1}{n m}\cdot\sum_{d\in D}e(d)\ =\ \frac{|C|}{n}\cdot\frac{|D|}{m}\ +\ e
\end{eqnarray*}
where
$$|e|\ =\ \left|\frac{1}{n m}\sum_{d\in D}e(d)\right|\ \le\
\frac{1}{n m}\sum_{d\in D}|e(d)|\ \le\ \frac{1}{n m}\cdot\sum_{d\in D}d\ \le\
\frac{1}{n m}\sum_{d\in D}m\ =\ \frac{|D|}{n}.$$
By the \emph{pigeonhole principle}, there must exist
at least one number $k\le n$ such that
$$\frac{|(C-k)\cap D|}{m}\ =\ 
\frac{|(D+k)\cap C|}{m}\ =\ 
\frac{1}{m}\cdot\sum_{d\in D}\chi(k+d)\ \ge\
\frac{|C|}{n}\cdot\frac{|D|}{m}\ -\ \frac{|D|}{n}.$$
\end{proof}

\begin{theorem}
Let $A,B\subseteq\Z$ have positive Banach densities
$\BD(A)=\alpha>0$ and $\BD(B)=\beta>0$.
Then there exists a finite set $F$ with $|F|\le\frac{1}{\alpha\beta}$
such that $(A+B)+F$ is thick\index{thick}. In particular, $A+B$ is piecewise syndetic.
\end{theorem}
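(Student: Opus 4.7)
The plan is to iterate the counting lemma in the nonstandard setting. I would begin by realizing the Banach densities hyperfinitely: choose infinite $N, M \in \starN$ with $M/N \approx 0$ and $a, b \in \starZ$ so that
\[
C := (\starA \cap [a+1, a+N]) - a \subseteq [1,N], \qquad D := (\starB \cap [b+1, b+M]) - b \subseteq [1,M]
\]
satisfy $|C|/N \approx \alpha$ and $|D|/M \approx \beta$. Because $M/N$ is infinitesimal, the error term $|D'|/N$ appearing in (the transferred version of) the counting lemma is infinitesimal for every internal $D' \subseteq D$; applying the lemma to $C$ and such a $D'$ therefore yields some $k \in [1,N]$ with $|(C-k) \cap D'|/M \gtrsim \alpha \cdot |D'|/M$ up to infinitesimals.

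I would then iterate. Setting $D_0 := D$ and $D_i := D_{i-1} \setminus (C - k_i)$, where $k_i$ is produced by the lemma at step $i$, the cumulative covered mass telescopes:
\[
\sum_{i=1}^{s}\frac{|(C - k_i) \cap D_{i-1}|}{M} \;=\; \frac{|D_0| - |D_s|}{M} \;\le\; \beta.
\]
Enforcing a per-step contribution of at least $\alpha\beta$ (up to infinitesimals) during the iteration, and halting as soon as this lower bound can no longer be guaranteed, yields $s \le \lfloor 1/(\alpha\beta) \rfloor$. The collected shifts then provide a covering $D \setminus D_s \subseteq \bigcup_{i \le s}(C - k_i)$, which, translated back via the shifts by $a$ and $b$, produces a long hyperfinite interval contained in $\star(A+B) + F'$, where $F' = \{-k_i - a : i \le s\}$. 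Standardizing $F'$ to a standard $F \subseteq \Z$ of the same cardinality and invoking transfer then gives that $(A+B) + F$ is thick, and piecewise syndeticity of $A+B$ follows immediately from the definition.

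The main obstacle I anticipate is making the sharp bound $s \le 1/(\alpha\beta)$ rigorous: a naive greedy iteration yields only the geometric decrease $|D_i|/M \le (1-\alpha)^i \beta$, hence $O(\log(1/(\alpha\beta))/\alpha)$ many steps rather than the claimed linear count. The correct argument must either maintain the invariant ``per-step contribution $\ge \alpha\beta$'' (stopping the iteration early while still arranging that the partial covering of $D$ produced is enough to force a long interval in the sumset), or employ a non-greedy/double-counting selection of the $k_i$'s that distributes the $\beta$-mass budget more evenly across steps. A secondary subtlety is that the shifts $k_i$ produced nonstandardly may themselves be nonstandard elements of $[1,N]$; this can be handled either by a translation/overflow step extracting a standard $F$ of the same size, or by executing the iteration directly in the standard setting (with $n \gg m$ finite) and then applying a pigeonhole argument over the interval length $L$ to pull out a single $F$ that works uniformly.
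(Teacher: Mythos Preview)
There is a genuine structural gap. Your iteration produces a covering $D\setminus D_s\subseteq\bigcup_{i\le s}(C-k_i)$, but this does \emph{not} yield a long interval inside ${}^*(A+B)+F'$, and the sentence ``translated back via the shifts by $a$ and $b$, produces a long hyperfinite interval contained in $\star(A+B)+F'$'' is unjustified. If $d\in C-k_i$ then $d+k_i+a\in\starA$ and $d+b\in\starB$; neither the sum $(d+k_i+a)+(d+b)=2d+k_i+a+b$ nor the difference $k_i+a-b$ sweeps out an interval as $d$ ranges over $D$. Covering $D$ (or even $[1,M]$) by translates of $C$ simply does not encode the statement ``an interval lies in $\starA+\starB+F$''. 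This is also why your greedy bound degenerates to $O(\alpha^{-1}\log(1/\alpha\beta))$: the iteration is aimed at the wrong target.

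The paper's argument is quite different and avoids both problems at once. Place $D$ inside a shifted copy of $-\starB$ rather than $\starB$: take $D=(-\starB-\Xi)\cap[1,\nu]$ with $|D|/\nu\approx\beta$. Apply the counting lemma \emph{once} to obtain a single shift $\zeta$ with $E:=(C-\zeta)\cap D$ satisfying $|E|/\nu\gtrsim\alpha\beta$. Now $E$ sits simultaneously in a translate of $\starA$ and in a translate of $-\starB$, so $E-E$ lies inside a translate of $\starA+\starB$. The difference-set Lemma~\ref{deltasetnonstandard} (applied to $E\subseteq[1,\nu]$ of density $\approx\alpha\beta$) then gives a \emph{standard} finite $F$ with $|F|\le 1/(\alpha\beta)$ and $\Z\subseteq(E-E)+F$; overflow upgrades this to an infinite hyperfinite interval inside $(E-E)+F\subseteq{}^*(A+B)+F+\text{const}$, and transfer finishes. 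The sharp bound $|F|\le 1/(\alpha\beta)$ thus comes directly from the difference-set lemma, not from any iteration.
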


\begin{proof}
Pick infinite $\nu,N\in\starN$ such that $\nu/N\approx 0$, and pick intervals
$[\Omega+1,\Omega+N]$ and $[\Xi+1,\Xi+\nu]$ 
such that
$$\frac{|\starA\cap[\Omega+1,\Omega+N]|}{N}\,\approx\,\alpha\quad\text{and}\quad
\frac{|(-\starB)\cap[\Xi+1,\Xi+\nu]|}{\nu}\,\approx\,\beta.$$
By applying the nonstandard version of the previous lemma to the hyperfinite sets
$C=(\starA-\Omega)\cap[1,N]\subseteq[1,N]$
and $D=(-\starB-\Xi)\cap[1,\nu]$, one obtains the existence
of a number $\zeta$ such that
$$\frac{|(C-\zeta)\cap D|}{\nu}\ \ge\
\frac{|C|}{N}\cdot\frac{|D|}{\nu}\ -\ \frac{|D|}{N}\ \approx\ \alpha\beta.$$
Finally, apply Lemma \ref{deltasetnonstandard}
to the internal set $E=(C-\zeta)\cap D\subseteq[1,\nu]$.
Since $|E|/\nu\approx\alpha\beta$, there exists a finite $F\subset\Z$
with $|F|\le\frac{1}{\alpha\beta}$ and such that $\Z\subseteq (E-E)+F$,
and hence, by \emph{overflow}, $I\subseteq(E-E)+F$ for some
infinite interval $I$. Since $E\subseteq\starA-\Omega$ and $E\subset-\starB-\Xi$,
it follows that ${}^*(A+B+F)=\starA+\starB+F$ includes the infinite interval 
$I+\Omega+\Xi+\zeta$, and hence it is thick\index{thick}.
\end{proof}

\section{Quantitative strengthenings}\label{quant}

We end this chapter by proving some technical strengthenings of Corollary \ref{jinlethZ}.  Indeed, in light of Lemma \ref{pwsZ}, the following theorem can be viewed as a ``quantitative'' strengthening of Corollary \ref{jinlethZ}:

\begin{theorem}\label{quantjinleth}
Suppose that $(I_n)$ is a sequence of intervals with $|I_n|\to \infty$ as $n\to \infty$.  Suppose that $A,B\subseteq \Z$ and $\BD(B)>0$.  Then:
\begin{enumerate}
\item If $\overline{d}_{(I_n)}(A)\geq r$, then there is a finite set $F\subseteq \Z$ such that, for every finite set $L\subseteq \Z$, we have $$\overline{d}_{(I_n)}\left(\bigcap_{x\in L}(A+B+F+x)\right)\geq r.$$
\item If $\underline{d}_{(I_n)}(A)\geq r$, then for every $\epsilon>0$, there is a finite set $F\subseteq \Z$ such that, for every finite set $L\subseteq \Z$, we have $$\underline{d}_{(I_n)}\left(\bigcap_{x\in L}(A+B+F+x)\right)\geq r-\epsilon.$$
\end{enumerate}
\end{theorem}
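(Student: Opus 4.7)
The plan is to combine Proposition \ref{goodinterval} applied to $B$ with a careful nonstandard density argument. From Proposition \ref{goodinterval} I would obtain an infinite hyperfinite interval $J\subseteq \starZ$ good for $B$: for every $\epsilon'>0$, there is $k\in \N$ with $\delta(\starB+[-k,k],J)>1-\epsilon'$. Additionally, I will exploit the F\o lner-type quasi-invariance of $J$ implicit in the proof of that proposition---the fact that $|(J+x)\triangle J|/|J|\approx 0$ for each standard $x\in \Z$---since this is the ingredient that permits $L$-uniform estimates.

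For Part (2), fix $\epsilon>0$ and pick $k$ so that $F:=[-k,k]$ satisfies $\delta(\starB+F,J)>1-\epsilon$. The key inclusion
\[
\bigcap_{x\in L}(\starA+\starB+F+x)\;\supseteq\; \starA + C_L, \qquad \text{where } C_L:=\bigcap_{x\in L}(\starB+F+x),
\]
reduces the problem to finding $c\in C_L$ such that the translate $\starA+c$ has density at least $r-\epsilon$ on $I_N$. By choosing $J$ via saturation so that $|J|$ is infinitesimal relative to $|I_N|$, any $c\in C_L\cap J$ satisfies $\delta(\starA+c,I_N)=\delta(\starA,I_N-c)\approx \delta(\starA,I_N)\geq r$, since the shifted interval $I_N-c$ coincides with $I_N$ up to a boundary of negligible relative size. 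Part (1) then follows from a simpler version of the same argument: given $L$, I would first choose $N=N_L>\N$ with $\delta(\starA,I_{N_L})\gtrsim r$ and apply the above with $N=N_L$, obtaining the stronger bound $\geq r$ by letting the auxiliary $\epsilon$ tend to $0$.

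The main obstacle, as I see it, is the $L$-uniform lower bound on $|C_L\cap J|/|J|$. A naive union bound yields only $1-|L|\epsilon$, which is vacuous for $|L|\geq 1/\epsilon$. To overcome this, I would argue that the F\o lner property of $J$ makes the ``bad'' sets $(\starB+F+x)^c\cap J = ((\starB+F)^c+x)\cap J$ for standard $x$ coincide up to Loeb-null sets with the single set $(\starB+F)^c\cap J$, whose Loeb measure is at most $\epsilon$. Formalising this correlation and deducing that $\mu_J\bigl(\bigcup_{x\in L}(\starB+F+x)^c\bigr)\leq \epsilon$ uniformly in $|L|$ will require an inner-regularity or monotone-convergence argument on the Loeb probability space $(J,\mathcal{L}_J,\mu_J)$, tightly combined with the F\o lner property. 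This uniformity step is the core technical difficulty of the proof.
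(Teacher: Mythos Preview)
Your approach has a genuine gap at the uniformity step. The F\o lner property of the good interval (your $J$, the paper's $I$) gives $\mu_I\bigl(((\starB+F)^c+x)\cap I\bigr) = \mu_I\bigl((\starB+F)^c\cap I\bigr)$ for each standard $x$---the translates have the \emph{same Loeb measure}---but it does not force them to \emph{coincide up to null sets}. If $(\starB+F)^c\cap I$ sits in a single residue class modulo some $d>1$, translating by any $x\not\equiv 0\pmod d$ produces a disjoint set of the same measure, and the union over $L=\{0,1,\ldots,d-1\}$ already has measure $d\epsilon$. No inner-regularity or monotone-convergence manoeuvre can repair this; the obstruction is set-theoretic, not measure-theoretic. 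Worse, requiring $C_L\cap I\neq\emptyset$ for \emph{all} finite $L$ amounts to asking that $\starB+F$ contain arbitrarily long intervals, i.e.\ that $B$ be piecewise syndetic---which is strictly stronger than $\BD(B)>0$ (e.g.\ the set whose $i$-th gap equals one plus the $2$-adic valuation of $i$ has Banach density $\tfrac12$ but is not piecewise syndetic).

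The paper takes a dual route that sidesteps the union bound entirely. Lemma~\ref{technicallemma} supplies $N$ with $\frac{1}{|I_N|}\sum_{y\in I_N}\delta(y-C,I)\gtrsim r$ (where $C=\starA\cap I_N$), so that $\Gamma:=\{y\in I_N:\delta(y-C,I)\geq s\}$ satisfies $\mu_{I_N}(\Gamma)\geq r$ for some $s>0$. One then chooses $F$ with $\delta(\starB+F,I)>1-s/2$; by the very translation-invariance you observed, $\delta(\starB+F+x,I)>1-s$ for \emph{each} standard $x$ individually. For $y\in\Gamma$ and any $x\in L$, the subsets $y-C$ and $\starB+F+x$ of $I$ have densities $\geq s$ and $>1-s$, hence intersect, giving $y\in\starA+\starB+F+x$. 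Thus $\Gamma\subseteq{}^*\!\bigl(\bigcap_{x\in L}(A+B+F+x)\bigr)$ for every finite $L$, and $\Gamma$ is independent of $L$. The idea you are missing is that the witnessing element of $\starA$ is allowed to depend on $x$; by seeking a single shift $c$ valid for all $x\in L$ simultaneously, you force yourself into the hopeless union bound.
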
 

In connection with item (2) of the previous theorem, it will turn out that $F$ depends only on $B$ and $\epsilon$ (but not on $A$ or $(I_n)$).  Moreover, item (2) is false if $r-\epsilon$ is replaced by $r$; see \cite{di_nasso_high_2016}.

In order to prove Theorem \ref{quantjinleth}, we need a preparatory counting lemma.  

%
%

\begin{lemma}\label{technicallemma}
Suppose that $(I_n)$ is a sequence of intervals in $\Z$ such that $|I_n|\to \infty$ as $n\to \infty$.  Further suppose $I$ is an infinite hyperfinite interval in ${}^{\ast}\Z$ and $A\subseteq \Z$.
\begin{enumerate}
\item If $\overline{d}_{(I_n)}\left( A \right)\geq r$, then there is $N>\N$ such that 
\begin{equation*}
\delta(\starA,I_N)\gtrsim r \text{\quad and\quad }\frac{1}{\left \vert I_N\right \vert}\sum_{x\in I_N}\delta(x-({}^*A\cap I_N),I)\gtrsim r\text{.} \quad(\dagger)
\end{equation*}%
\item If $\underline{d}_{(I_n)}\left( A\right)> r$, then there is $N_0>\N$ such that $(\dagger)$ holds for all $N\geq N_0$.
\end{enumerate}
\end{lemma}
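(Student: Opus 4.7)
The plan is to reduce both statements to a single algebraic identity plus a saturation argument. Write $I = [m, m+L-1]$ and let $M_I := \max\{|y| : y \in I\}$; note $M_I$ is infinite since $I$ is an infinite hyperfinite interval. The heart of the proof is the following computation: set $A_N := {}^{\ast}A \cap I_N$ and switch the order of summation (writing $a = x-y$) to get
\[
\frac{1}{|I_N|}\sum_{x \in I_N}\delta(x - A_N, I) \;=\; \frac{1}{|I_N|\,|I|}\sum_{y \in I}|A_N \cap (I_N - y)|.
\]
Since $A_N \subseteq I_N$ and $|I_N \setminus (I_N - y)| = \min(|y|,|I_N|)$, we have $|A_N \cap (I_N - y)| \geq |A_N| - |y|$, so the display above is bounded below by
\[
\delta({}^{\ast}A, I_N) \;-\; \frac{1}{|I_N|}\cdot\frac{1}{|I|}\sum_{y \in I}|y| \;\geq\; \delta({}^{\ast}A, I_N) \;-\; \frac{M_I}{|I_N|}.
\]
Consequently, once we arrange that $M_I/|I_N|$ is infinitesimal, the second half of $(\dagger)$ follows automatically from the first.

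For (1), I would fix the countable family of internal sets
\[
F_{k,\epsilon} := \{N \in {}^{\ast}\N : |I_N| > k\cdot M_I \ \wedge\ \delta({}^{\ast}A, I_N) > r - \epsilon\}, \qquad k \in \N,\ \epsilon \in \Q^{>0}.
\]
Each $F_{k,\epsilon}$ is nonempty: applying transfer to the standard statement ``$\forall M\in\N\ \forall n_0\in\N\ \exists n > n_0\ (|I_n| > M \wedge \delta(A,I_n) > r - \epsilon)$'' (a consequence of $|I_n|\to\infty$ and $\limsup \delta(A,I_n)\geq r$) with the internal bounds $M = k M_I$ and $n_0 > \N$ produces some nonstandard $N \in F_{k,\epsilon}$. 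The family has the finite intersection property since it is decreasing in $(k,\epsilon)$; countable saturation gives an $N \in \bigcap_{k,\epsilon} F_{k,\epsilon}$. Such an $N$ is necessarily infinite, satisfies $\delta({}^{\ast}A, I_N) \gtrsim r$, and makes $M_I/|I_N|$ infinitesimal, so the key estimate supplies the remaining half of $(\dagger)$.

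For (2), the nonstandard characterization of liminf (if $\liminf a_n = s$ then $\st(a_N) \geq s$ for every $N > \N$) gives $\delta({}^{\ast}A, I_N) \gtrsim \underline{d}_{(I_n)}(A) > r$ for all $N > \N$, so the first half of $(\dagger)$ holds for every such $N$. It remains to find $N_0 > \N$ past which $M_I/|I_N|$ is infinitesimal. For this let
\[
E_k := \{N_0 \in {}^{\ast}\N : \forall N \geq N_0,\ |I_N| > k\cdot M_I\}, \qquad k \in \N.
\]
Each $E_k$ is internal and nonempty by transfer of $|I_n|\to\infty$ (applied with the internal threshold $kM_I$), and the family $(E_k)_k$ is decreasing. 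By countable saturation, $\bigcap_k E_k$ contains some $N_0$, which is automatically nonstandard (any standard $N_0$ would force $|I_{N_0}|/M_I$ to be infinite, impossible since $|I_{N_0}|$ is standard). Then for every $N \geq N_0$, $M_I/|I_N|$ is infinitesimal, and the key estimate closes the argument.

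The main obstacle in this proof is really just recognizing the correct quantitative bookkeeping: once one spots that the error term in the Fubini-style rearrangement is controlled by $M_I/|I_N|$, both parts reduce to standard saturation arguments for choosing $N$ (or $N_0$) so that $|I_N|$ dwarfs the scale of the fixed interval $I$ while simultaneously witnessing the density hypothesis. The proof uses no ergodic-theoretic machinery beyond this elementary averaging identity.
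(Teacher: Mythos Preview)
Your proof is correct and follows the same approach as the paper: both perform the Fubini-style swap to rewrite the average as $\frac{1}{|I|}\sum_{y\in I}\frac{|A_N\cap(I_N-y)|}{|I_N|}$ and then control the error term by arranging $|I_N|$ to be large relative to the scale of $I$. The only difference is organizational: the paper packages the density condition and the F{\o}lner-type condition $\frac{1}{|I_n|}\sum_{y\in J}|(I_n-y)\triangle I_n|<2^{-k}$ into a single standard statement and extracts $N$ (respectively $N_0$) by one application of transfer with $J=I$ and $k=K>\N$, whereas you separate the two requirements and obtain your $N$ via countable saturation over the family $\{F_{k,\epsilon}\}$.
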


\begin{proof}
For (1), first apply transfer to the statement ``for every finite interval $J\subseteq \Z$ and every natural number $k$, there exists $n\geq k$ such that
\begin{equation*}
\delta(A,I_n)>r -2^{-k}\text{\quad and\quad }\frac{1}{\left\vert
I_{n}\right\vert }\sum_{x\in J}\left\vert (I_{n}-x)\bigtriangleup
I_{n}\right\vert <2^{-k}\text{.\textquotedblright }
\end{equation*}%
Fix $K>\N$ and let $N$ be the result of applying the transferred statement to $I$ and $K$.  Set $C={}^{\ast }A\cap
I_{N }$ and let $\chi _{C}$ denote the characteristic function of $C$. We
have%
\begin{eqnarray*}
\frac{1}{\left\vert I_{N }\right\vert }\sum_{x\in I_{N }}\delta((x-C),Y) &=&\frac{1}{%
\left\vert I_{N }\right\vert }\sum_{x\in I_{N }}\frac{1}{\left\vert
I\right\vert }\sum_{y\in I}\chi _{C}(x-y) \\
&=&\frac{1}{\left\vert I\right\vert }\sum_{y\in I}\frac{\left\vert C\cap
(I_{N }-y)\right\vert }{\left\vert I_{N }\right\vert } \\
&\geq &\frac{\left\vert C\right\vert }{\left\vert I_{N }\right\vert }%
-\sum_{y\in I}\frac{\left\vert (I_{N }-y)\bigtriangleup I_{N
}\right\vert }{\left\vert I_{N }\right\vert } \\
&\thickapprox &r \text{.}
\end{eqnarray*}%
For (2), apply transfer to the statement ``for every finite interval $J\subseteq \Z$ and every natural number $k$, there exists $n_0\geq k$ such that, for all $n\geq n_0$, 
\begin{equation*}
\delta(A,I_n)>r -2^{-n_0}\text{\quad and\quad }\frac{1}{\left\vert
I_{n}\right\vert }\sum_{x\in J}\left\vert (I_{n}-x)\bigtriangleup
I_{n}\right\vert <2^{-n_0}\text{.\textquotedblright }
\end{equation*}%
Once again, fix $K>\N$ and let $N_0$ be the result of applying the transferred statement to $I$ and $K$.  As above, this $N_0$ is as desired.
\end{proof}




\begin{proof}[of Theorem \ref{quantjinleth}]
Fix an infinite hyperfinite interval $I$ that is good for $B$.  (See Proposition \ref{goodinterval}.)

For (1), assume that $\overline{d}_{(I_n)}(A)\geq r$.
Let $N$ be as in part (1) of Lemma \ref{technicallemma} applied to $I$ and $A$.  Once again, set $C:={}^*A\cap I_N$.  
Consider the $\mu _{I_{N }}$-measurable function%
\begin{equation*}
f\left( x\right) =\mathrm{st}(\delta(x-C,I)).
\end{equation*}%
By Lemma \ref{integratebycounting}, we have that
\begin{equation*}
\int_{I_{N }}fd\mu _{I_{N }}=\mathrm{st}\left( \frac{1}{|I_{N }|}%
\sum_{x\in I_{N }}\delta(x-C,I)\right) \geq r ,
\end{equation*}%
whence there is some standard $s>0$ such that $\mu _{I_{N }}\left( \left\{ x\in I_{N }:f\left( x\right) \geq 2s\right\}
\right) \geq r \text{.}$
%
%
%
Setting $\Gamma =\left\{ x\in I_{N }:\delta(x-C,I)\geq s\right\}$, we have that $\mu_{I_N}(\Gamma)\geq r$.
Since $I$ is good for $B$, we may take a finite subset $F$ of $\Z$ such that%
\begin{equation*}
\delta({}^{\ast }(B+F),I)%
>1-\frac{s}{2}.
\end{equation*}%
Fix $x\in \Z$.  Since $I$ is infinite, we have that%
\begin{equation*}
\delta({}^{\ast }\left( B+F+x\right),I)=\delta({}^{\ast }\left( B+F\right),(I-x))\approx \delta({}^{\ast }\left( B+F\right),I),
\end{equation*}%
whence $\delta({}^{\ast }\left( B+F+x\right),I)>1-s.$
Thus, for any $y\in \Gamma$, we have that $(y-C)\cap {}^*(B+F+x)\not=\emptyset$.
In particular, if $L$ is a finite subset of $\Z$, then $\Gamma \subseteq \;^{\ast }\left( \bigcap_{x\in L}A+B+F+x\right) \text{.}$
Therefore%
\begin{equation*}
\overline{d}_{(I_n)}\left( \bigcap_{x\in L}A+B+F+x\right) \geq \mu_{I_N}\left({}^*( \bigcap_{x\in L}A+B+F+x)\right)\geq \mu_{I_N}(\Gamma)\geq r \text{.}
\end{equation*}
This establishes (1).

Towards (2), note that we may suppose that $\underline{d}_{(I_n)}(A)>r$.  Fix $N _{0}>\N$ as in part (2) of Lemma \ref{technicallemma} applied to $I$ and $A$.  Fix $N \geq N _{0}$ and standard $\varepsilon >0$ with $\varepsilon
<r $.
Set 
\begin{equation*}
\Lambda:=\left\{ x\in I_{N }:\delta((x-C),I)\geq \varepsilon \right\}
\end{equation*}%
and observe that $\frac{\left\vert \Lambda\right\vert }{\left\vert I_{N }\right\vert }%
>r -\varepsilon \text{.}$
Since $I$ is good for $B$, we may fix a finite subset $F$ of $\Z$ such that%
\begin{equation*}
\delta({}^{\ast }(B+F),I)%
>1-\frac{\varepsilon}{2} \text{.}
\end{equation*}%
Fix $x\in \Z$.  Since $I$ is infinite, arguing as in the proof of part (1), we conclude that%
\begin{equation*}
\delta{}^{\ast }\left( B+F+x\right),I)>1-\varepsilon \text{.}
\end{equation*}%
Fix $L\subseteq \Z$ finite.  As in the proof of part (1), it follows that $\Lambda\subseteq \;^{\ast }\left( \bigcap_{x\in L}A+B+F+x\right)$
whence
\begin{equation*}
\delta\left({}^{\ast }\left( \bigcap_{x\in L}A+B+F+x\right),I_{N
}\right)\geq \frac{\left\vert \Lambda\right\vert }{\left\vert I_{N }\right\vert }> r
-\varepsilon \text{.}
\end{equation*}%
Since the previous inequality held for every $N \geq N _{0}$, by transfer we can conclude that there is $n_0$ such that, for all $n\geq n_0$, we have 
$$\delta\left(\left( \bigcap_{x\in L}A+B+F+x\right),I_{n
}\right)\geq r-\epsilon,$$
whence it follows that
\begin{equation*}
\underline{d}_{(I_n)}\left( \bigcap_{x\in L}A+B+F+x\right) \geq r
-\varepsilon \text{.}
\end{equation*}
\end{proof}

\section*{Notes and references} 
The space of cuts was first studied in the paper \cite{jin_sumset_2002} and Jin's Sumset Theorem solved Problem 9.13 in that paper negatively.  Jin gives a purely standard, finitary version of his proof of the Sumset Theorem in \cite{jin_standardizing_2004}; a simplified elementary standard proof was then given in \cite{di_nasso_elementary_2014}.
The proof given in Section 13.4 is due to Di Nasso \cite{di_nasso_embeddability_2014}.  The original proof of Theorem \ref{quantjinleth} given in \cite{di_nasso_high_2015} used a Lebesgue Density Theorem for the cut spaces $[0,H]/\mathrm{U}$.  Indeed, one can give a nice proof of Theorem \ref{Lebesgueinterval} using the standard Lebesgue density theorem and Example \ref{LL} suggested that perhaps a general Lebesgue density theorem holds for cut spaces.  Once this was established, the fact that one has many density points was used to strengthen the sumset theorem in the above manner.  The proof given in this chapter follows \cite{di_nasso_high_2016}, which actually works for all countable amenable groups rather than just $\Z$; other than the fact that Proposition \ref{fat} is more difficult to prove for amenable groups than it is for $\Z$, there is not much added difficulty in generalizing to the amenable situation.  We should also mention that the amenable group version of Corollary \ref{jinlethZ} was first proven by Beiglb\"ock, Bergelson, and Fish in \cite{beiglbock_sumset_2010}.

\chapter{Sumset configurations in sets of positive density}

\section{Erd\H{o}s' conjecture}

Just as Szemeredi's theorem is a ``density'' version of van der Waerden's theorem, it is natural to wonder if the density version of Hindman's theorem is true, namely:  does every set of positive density contain an FS set?  It is clear that the answer to this question is:  no!  Indeed, the set of odd numbers has positive density, but does not even contain $\operatorname{PS}(B)$ for any infinite set $B$.  Here, $\operatorname{PS}(B):=\{b+b' \ : \ b,b'\in B, \ b\not=b'\}$.  This example is easily fixed if we allow ourselves to translate the original set, so Erd\H{o}s conjectured that this was the only obstruction to a weak density version of Hindman's theorem, namely:  if $A\subseteq \N$ has positive density, then there is $t\in \N$ and infinite $B\subseteq A$ such that $t+\operatorname{FS}(B)\subseteq A$.  Straus provided a counterexample to this conjecture\footnote{It still seems to be open whether or not a set of positive density (of any kind) must contain a translate of $\operatorname{PS}(B)$ for some infinite $B$.} (see \cite{erdos_survey_1980}), whence Erd\H{o}s changed his conjecture to the following, which we often refer to as Erd\H{o}s' sumset conjecture\index{Erd\H{o}s' Sumset Conjecture} (see \cite{Nathanson_Sumsets_1980} and \cite[page 85]{Erdos_Old_1980}):

\begin{conjecturenew}\label{Erdosconj}
Suppose that $A\subseteq \N$ is such that $\underline{d}(A)>0$.  Then there exist infinite sets $B$ and $C$ such that $B+C\subseteq A$.
\end{conjecturenew}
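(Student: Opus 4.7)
The plan is to combine the Loeb-measure formulation of Furstenberg's correspondence (Chapter \ref{Loeb}) with an inductive construction in the spirit of the Galvin--Glazer proof of Hindman's theorem and Beiglb\"ock's ultrafilter proof of Jin's sumset theorem (Chapter \ref{sumset}). Pick $N>\N$ witnessing the lower density of $A$, so that $\mu_N(\starA)=r$ for some $r\ge\underline{d}(A)>0$, and equip $X=[1,N]$ with the hypercycle system $(X,\l_N,\mu_N,S)$, $S(x)=x+1 \bmod N$. Writing $A_0:=\starA\cap X$, Conjecture \ref{Erdosconj} is equivalent, by transfer, to the existence of two infinite standard sequences $(b_n),(c_n)$ with $b_i+c_j\in A$ for all $i,j$. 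Nonstandardly it suffices to produce $\beta,\gamma\in\starN\setminus\N$ and standard sequences chosen coherently with the ultrafilters $\u_\beta,\u_\gamma$, so that each partial sum lies in $\starA$ and descends by transfer.

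The construction of $(b_n)$ and $(c_n)$ would proceed recursively. At stage $n$, assume $b_0<\cdots<b_n$ and $c_0<\cdots<c_n$ have been chosen with $b_i+c_j\in A$ for $i,j\le n$, and that the level sets
\[
A^{(b)}_n:=\bigcap_{j\le n}(A-c_j),\qquad A^{(c)}_n:=\bigcap_{i\le n}(A-b_i)
\]
are still ``large''. First I would pick $c_{n+1}\in A^{(c)}_n$ with $c_{n+1}>c_n$; then I would pick $b_{n+1}\in A^{(b)}_n\cap(A-c_{n+1})$ with $b_{n+1}>b_n$. The decisive issue is to maintain the induction hypothesis, i.e.\ to ensure that $A^{(b)}_n$ and $A^{(c)}_n$ retain enough largeness, ideally positive upper Banach density, after each adjunction. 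A naive argument collapses here, since intersecting with one additional translate can in principle destroy density altogether.

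The hard part will be controlling these intersections uniformly. The Loeb-measure approach would single out a ``structured'' subset $A^\ast\subseteq A_0$ behaving like a characteristic factor: for $\mu_N$-almost every $x\in A^\ast$, the return set $R_x:=\{n\in\N:S^nx\in A^\ast\}$ should have positive density \emph{and} sufficient additional structure that finite intersections of its translates remain large. Concretely, I would try to isolate a Loeb-measurable sub-$\sigma$-algebra $\mathcal{F}\subseteq\l_N$ such that $\mathbb{E}[\mathbf{1}_{A_0}\mid\mathcal{F}]$ is $S$-almost-periodic, and then extract sumset configurations from the almost-periodicity (compare Proposition \ref{fat} and the density-boosting arguments of Chapter \ref{sumset}). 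At each stage the candidates $b_{n+1}$ and $c_{n+1}$ would be drawn from the positive-measure set of points whose conditional density under $\mathcal{F}$ is close to $r$; almost-periodicity should then guarantee the compatibility with all earlier constraints.

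The principal obstacle, and the reason this conjecture resisted elementary attack for decades, is that density-positive sets need not carry any additive idempotent structure in the Stone--\v{C}ech sense, so the ultrafilter proof of Hindman's theorem from Chapter \ref{chapter_hindman_gowers} is not directly available. One must instead extract a weaker ``asymptotic'' form of idempotency from the dynamical system attached to $A$, for instance by producing $\alpha,\beta\in\starN$ such that $\alpha+{}^{\ast}\beta$ agrees with $\alpha$ on a positive-measure set of shifts, rather than as ultrafilters. Whether the nonstandard framework gives a cleaner route than the characteristic-factor analysis of Moreira--Richter--Robertson is unclear, but the Loeb-measure setting at least recasts the required recurrence phenomena as genuine measure-theoretic statements amenable to the toolkit of Chapter \ref{Loeb}, and this is the direction I would pursue.
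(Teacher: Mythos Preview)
The paper does not contain a proof of this statement: it is explicitly labeled and treated as a \emph{conjecture}. The text notes that Moreira, Richter, and Robertson established it (indeed for the weaker hypothesis $\BD(A)>0$), but then says ``The proof of the previous theorem is significantly beyond the scope of this book.'' What the paper actually proves in this chapter is the partial result Theorem~\ref{highdensityerdos} (the case $\BD(A)>\tfrac12$, plus the $1$-shift version in Proposition~\ref{oneshift}), using Lemma~\ref{Lemma:ergodic} to produce a set $L$ with $\limsup$-density at least $r$ on which all finite intersections $A\cap\bigcap_{x\in F}(A-x)$ stay infinite, and then exploiting the strict inequality $r>\tfrac12$ to force the Loeb-measure intersections $\mu_{I_N}({}^*L\cap({}^*A-d_n))$ to remain positive.

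Your write-up is not a proof but a candid outline of the obstacles, and you identify the crux correctly: the recursive construction breaks down because you cannot guarantee that $A^{(b)}_n$ and $A^{(c)}_n$ remain large after each adjunction. The paper's argument for $\BD(A)>\tfrac12$ handles exactly this via the crude inclusion-exclusion bound $\mu_{I_N}({}^*L)+\mu_{I_N}({}^*A-d_n)\ge 2r>1$, which is unavailable once $r\le\tfrac12$. Your suggestion to pass to an almost-periodic factor and extract an ``asymptotic idempotent'' is in the right spirit --- this is essentially what Moreira--Richter--Robertson do --- but turning it into an actual proof requires the full ergodic structure theory (in particular a suitable characteristic-factor decomposition and control of the so-called Erd\H{o}s cubes), none of which your sketch supplies. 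So there is a genuine gap: you have located the difficulty but not overcome it, and neither does the paper.
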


 
%

It will be convenient to give a name to sets satisfying the conclusion of Conjecture \ref{Erdosconj}\index{Erd\H{o}s' Sumset Conjecture}.

\begin{definition}
We say that $A\subseteq \N$ has the \emph{sumset property} if there are infinite sets $B,C\subseteq\N$ such that $B+C\subseteq A$.
\end{definition}

Many sets that are structurally large have the sumset property as indicated by the following proposition.  While this result follows from standard results in the literature, we prefer to give the following elegant argument of Leth.\footnote{Indeed, if $A$ is piecewise syndetic, then $A+[0,k]$ is thick for some $k\in \N$.  Thick sets are easily seen to contain FS-sets, whence, by the Strong version of Hindman's theorem (Corollary \ref{stronghindman}), $A+i$ contains an FS-set for some $i\in [0,k]$.  It follows immediately that $A$ has the sumset property.}

\begin{proposition}
If $A$ is piecewise syndetic\index{syndetic!piecewise}, then $A$ has the sumset property.  More precisely, there is an infinite set $B\subseteq \N$ and $k\in \N$ such that $\operatorname{PS}(B)-k\subseteq A$.
\end{proposition}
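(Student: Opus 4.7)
The plan is to reduce to the Strong Hindman Theorem (Corollary \ref{stronghindman}). First, by the earlier characterization of piecewise syndetic sets, since $A$ is piecewise syndetic there exists $k \in \N$ such that $A + [0,k]$ is thick. The idea is to exhibit $A + [0,k]$ as an $\operatorname{FS}$-set, then use the partition regularity of $\operatorname{FS}$-sets to push the conclusion inside a single translate $A + i$, and finally extract $B$ from the resulting Hindman sequence.

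To show that the thick set $T := A + [0,k]$ is an $\operatorname{FS}$-set, I would recursively construct a sequence $c_1 < c_2 < \cdots$ of distinct natural numbers with $\operatorname{FS}((c_n)) \subseteq T$ as follows. Having chosen $c_1 < \cdots < c_n$, set $s_n := c_1 + \cdots + c_n$, and use thickness of $T$ to pick $c_{n+1} > s_n$ with $[c_{n+1},\, c_{n+1} + s_n] \subseteq T$. Any new finite sum of distinct $c_j$'s involving $c_{n+1}$ has the form $c_{n+1} + t$ with $t \in \{0\} \cup \operatorname{FS}((c_1, \ldots, c_n))$, hence $t \leq s_n$, so the sum lies in $[c_{n+1},\, c_{n+1} + s_n] \subseteq T$. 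This completes the recursion and shows $\operatorname{FS}((c_n)) \subseteq A + [0,k] = \bigcup_{i=0}^{k} (A + i)$.

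Now partition $\operatorname{FS}((c_n))$ into pieces $C_0, \ldots, C_k$ by sending each element $x$ to the least index $i \in [0,k]$ for which $x \in A + i$. By Corollary \ref{stronghindman}, some piece $C_i$ is itself an $\operatorname{FS}$-set, so there is an infinite sequence $(d_n)$ of distinct elements of $\N$ with $\operatorname{FS}((d_n)) \subseteq C_i \subseteq A + i$. Setting $B := \{d_n : n \in \N\}$, every element of $\operatorname{PS}(B)$ is of the form $d_m + d_n$ for distinct indices $m \neq n$, and so lies in $\operatorname{FS}((d_n)) \subseteq A + i$. Consequently $\operatorname{PS}(B) - i \subseteq A$, which is the desired conclusion with the role of $k$ in the statement played by $i \in [0,k] \subseteq \N$. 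The only mildly delicate point is the recursion exhibiting thickness implies $\operatorname{FS}$, but once that (and Strong Hindman) are in hand the argument is essentially a bookkeeping observation: the finite shift $[0,k]$ arising from piecewise syndeticity is absorbed precisely by the color partition that Strong Hindman is designed to handle.
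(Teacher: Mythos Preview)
Your proof is correct and is essentially the argument the paper itself sketches in the footnote immediately preceding the proposition: fatten $A$ to a thick set $A+[0,k]$, observe that thick sets contain FS-sets, apply the Strong Hindman theorem (Corollary~\ref{stronghindman}) to the finite partition $\bigcup_{i=0}^k(A+i)$, and extract $B$ from the resulting Hindman sequence. The paper, however, deliberately gives a different proof in the body, attributed to Leth, which avoids Hindman's theorem altogether. That argument works directly in the nonstandard universe: one takes an infinite hyperfinite interval $[a,b]$ on which ${}^*A$ has only finite gaps, sets $L:=({}^*A-a)\cap\N$, and then recursively builds $B=\{b_0<b_1<\cdots\}\subseteq L$ using transfer at each step to push down a witness from ${}^*L$. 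Your route is cleaner conceptually---it reduces everything to the partition regularity of FS-sets---but it leans on a substantial black box. The paper's route is more self-contained and showcases the nonstandard machinery directly, which is presumably why the authors prefer it here; it also makes the constant $k$ completely explicit as the position of the first element of ${}^*A$ past $a$, rather than emerging nonconstructively from a color class.
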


\begin{proof}
Since $A$ is piecewise syndetic, there exists $m$ and an interval $[a,b]$ in $^{\ast}\mathbb{N}$ with $a$ and $b-a$ infinite
such that $^{\ast}A$ has no gaps of size larger than $m$ on $[a,b]$. \ Set $L:=(^{\ast}A-a)\cap \N$, so that $a+L\subseteq$
$^{\ast}A.$  Let $l$ be the first element in $^{\ast}L$ greater than or equal to $a$. \ Set $k:=l-a$.  Since $L$
contains no gaps of size larger than $m$, we know that $0\leq k\leq m$.  We now have: \
\[
l-k+L\subseteq\text{ }^{\ast}A\text{ and }l\in\text{ }^{\ast}L.
\]

Take $b_0\in L$ arbitrary.  Assume now that $b_0<b_1<\cdots<b_n\in L$ have been chosen so that $b_i+b_j-k\in A$ for $1\leq i<j\leq n$.  Since the statement ``there is $l\in {}^{\ast}L$ such that $l>b_n$ and $l-k+b_i\in {}^{\ast}A$ for $i=1,\ldots,n$'' is true, by transfer there is $b_{n+1}\in L$ such that $b_{n+1}>b_n$ and $b_i+b_{n-1}-k\in A$ for $i=1,\ldots,n$.  The set $B:=\{b_0,b_1,b_2,\ldots\}$ defined this way is as desired.
\end{proof}

The first progress on Erd\H{o}s' conjecture was made by Nathanson in \cite{Nathanson_Sumsets_1980}, where he proved the following:

\begin{theorem}[Nathanson]\label{nathansontheorem}
If $\BD(A)>0$ and $n\in\N$, then there are $B,C$ with $\BD(B)>0$ and $|C|=n$ such that $B+C\subseteq A$.
\end{theorem}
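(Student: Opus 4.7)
The plan is to produce an $n$-element set $C = \{c_1 < c_2 < \cdots < c_n\} \subseteq \mathbb{N}$ with the property that $\BD\!\left(\bigcap_{i=1}^n (A - c_i)\right) > 0$; then $B := \bigcap_{i=1}^n (A - c_i)$ has positive Banach density and satisfies $B + C \subseteq A$ by construction. These shifts will be extracted from an averaging argument over $n$-tuples in $[1, L]^n$ for a sufficiently large $L \in \mathbb{N}$, with Jensen's inequality providing the key lower bound.

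Write $\alpha := \BD(A) > 0$. Fix an infinite $N \in \starN$ and a hyperfinite interval $I \subseteq \starN$ of length $N$ with $\mu_I(\starA) = \alpha$. For each $L \in \mathbb{N}$, define the internal (hence Loeb measurable) function
\[
g_L(x) := \frac{1}{L} \sum_{c=1}^{L} \mathbf{1}_{\starA}(x + c) \ : \ I \to [0,1].
\]
Since for each finite $c$ the shift $I + c$ differs from $I$ in only finitely many elements, we have $\mu_I(\starA - c) = \alpha$, and linearity of the integral gives $\int_I g_L\, d\mu_I = \alpha$. As $\mu_I$ is a probability measure and $t \mapsto t^n$ is convex, Jensen's inequality yields $\int_I g_L^n \, d\mu_I \geq \alpha^n$.

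Expanding $g_L(x)^n = L^{-n} \sum_{(c_1,\ldots,c_n) \in [1,L]^n} \mathbf{1}_{\bigcap_i (\starA - c_i)}(x)$ and applying term-by-term integration,
\[
\frac{1}{L^n} \sum_{(c_1, \ldots, c_n) \in [1,L]^n} \mu_I\!\left( \bigcap_{i=1}^n (\starA - c_i) \right) \;\geq\; \alpha^n.
\]
At most $\binom{n}{2} L^{n-1}$ of the $L^n$ tuples have a repeated entry, and each contributes at most $1$ to the sum, so restricting the sum to tuples with pairwise distinct coordinates still yields an average of at least $\alpha^n - \binom{n}{2}/L$. Choosing $L$ large enough that this is at least $\alpha^n/2$ produces, by pigeonhole, a particular $n$-tuple with distinct entries $c_1, \ldots, c_n \in [1, L]$ such that $\mu_I(\bigcap_i (\starA - c_i)) \geq \alpha^n/2$. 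Since transfer gives $\bigcap_i (\starA - c_i) = {}^{\ast}(\bigcap_i (A - c_i))$, this is precisely the statement that $\BD(\bigcap_i (A - c_i)) \geq \alpha^n/2 > 0$. Reordering $c_1 < \cdots < c_n$ and setting $C := \{c_1, \ldots, c_n\}$ and $B := \bigcap_i (A - c_i)$ finishes the argument.

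The main step is the Jensen bound: the first-moment computation is immediate, but it is the convexity of $t \mapsto t^n$ that converts the averaged density $\alpha$ into a meaningful lower bound $\alpha^n$ on the $n$-fold intersection density. The only obstacle I anticipate is the cleanup required to guarantee that the $c_i$'s can be chosen distinct, but the crude counting above shows that non-injective tuples are a negligible fraction of all tuples when $L$ is much larger than $n/\alpha^n$, so this is a routine perturbation rather than a genuine difficulty.
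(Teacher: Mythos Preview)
Your proof is correct and proceeds by a genuinely different route from the paper's. The paper establishes Nathanson's theorem by induction on $n$, with the inductive step supplied by Kazhdan's lemma: given any $B$ of positive Banach density and any threshold $t$, the shifts $\starB + t, \starB + 2t, \ldots$ inside a witnessing Loeb interval cannot be pairwise almost-disjoint, so some $\starB \cap (\starB + c)$ has positive measure for $c \geq t$, yielding $B' := B \cap (A - c)$ with $\BD(B') > 0$. Iterating $n$ times builds up $C$ one element at a time. You instead handle all $n$ shifts simultaneously via the moment bound $\int g_L^n \geq (\int g_L)^n = \alpha^n$, then read off a good $n$-tuple by pigeonhole. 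Your method is slicker and delivers the quantitative estimate $\BD(B) \geq \alpha^n - o_L(1)$, which the paper's iterative argument does not directly provide (each invocation of Kazhdan's lemma gives only \emph{some} positive measure for the intersection). On the other hand, the inductive approach makes it transparent that the elements of $C$ can be chosen to exceed any prescribed thresholds, and the single-step Kazhdan lemma is of independent interest. The two arguments are cousins: your Jensen bound for $n=2$ is essentially a sharpened form of the disjointness-pigeonhole in Kazhdan's lemma.
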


This theorem follows immediately by induction using the following lemma.  We take the opportunity here to give a short nonstandard proof.

\begin{lemma}[Kazhdan]
Suppose that $\BD(A)>0$ and $t\in\N$.  Then there is $B\subseteq A$ with $\BD(B)>0$ and $c\geq t$ such that $B+c\subseteq A$.
\end{lemma}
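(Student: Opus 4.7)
Plan: Since $B:=A\cap(A-c)$ automatically gives $B\subseteq A$ and $B+c\subseteq A$, the whole task reduces to producing a single integer $c\geq t$ such that $\BD(A\cap(A-c))>0$. I will obtain such a $c$ by a Loeb-measure pigeonhole argument applied to finitely many shifts of $A$, all shifted by multiples of $t$ so that the resulting difference is automatically $\geq t$.

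Set $r:=\BD(A)>0$ and fix a standard integer $k$ with $(k+1)r>1$ (e.g.\ $k:=\lceil 1/r\rceil$). By the nonstandard characterization of Banach density, choose an infinite hyperfinite interval $I\subseteq\starN$ with $\mu_I(\starA)=r$. For each $j\in\{0,1,\dots,k\}$ define the internal set $S_j:=(\starA-jt)\cap I$. Because $jt$ is finite and $|I|$ is infinite, the sets $\starA\cap I$ and $(\starA-jt)\cap I=\starA\cap(I+jt)-jt$ differ by at most $2jt$ elements on the boundary, an infinitesimal proportion of $|I|$; hence $\mu_I(S_j)=r$ for every $j=0,1,\dots,k$.

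Since $\mu_I$ is a probability measure on $I$ and $\sum_{j=0}^{k}\mu_I(S_j)=(k+1)r>1$, the sets $S_0,\dots,S_k$ cannot be pairwise $\mu_I$-almost-disjoint, so there exist $0\leq i<j\leq k$ with $\mu_I(S_i\cap S_j)>0$. Writing out what $S_i\cap S_j$ means and translating by $it$ (which again changes Loeb measure only by an infinitesimal boundary correction), this gives $\mu_I\bigl(\starA\cap(\starA-(j-i)t)\bigr)>0$. Set $c:=(j-i)t\geq t$ and $B:=A\cap(A-c)$. Then $\mu_I(\starB)>0$, so by the nonstandard characterization of Banach density (applied with $N:=|I|$) we conclude $\BD(B)\geq\st(|\starB\cap I|/|I|)>0$. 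By construction $B\subseteq A$ and $B+c\subseteq A$, completing the proof.

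The only step requiring care is the verification that shifting by the finite amount $jt$ does not change the Loeb measure inside $I$; this is a routine boundary estimate but is the single point where one must use that $|I|$ is infinite. Everything else is the trivial measure-theoretic pigeonhole $(k+1)r>1$ combined with translation of the intersection into the form $\starA\cap(\starA-c)$.
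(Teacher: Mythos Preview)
Your proof is correct and takes essentially the same approach as the paper's own proof: both pick an infinite hyperfinite interval $I$ witnessing the Banach density of $A$, observe that the shifts of $\starA$ by multiples of $t$ all have Loeb measure $r$ in $I$, and apply a measure-theoretic pigeonhole to find two shifts with positive-measure intersection. The paper is slightly terser (it considers the infinite list $\starA+t,\starA+2t,\ldots$ and says they cannot all be pairwise almost-disjoint, rather than fixing $k$ with $(k+1)r>1$ in advance), and it writes $B:=(A\cap(A+c))-c$ rather than your equivalent $B:=A\cap(A-c)$; but these are cosmetic differences only.
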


\begin{proof}
Let $I$ be an infinite interval such that $\mu_I(\starA)=\BD(A)$.  It follows that $\mu_I(\starA+t),\mu_I(\starA+2t),\ldots$ cannot all be pairwise almost everywhere disjoint, whence there are $k\leq l$ such that $\mu_I((\starA+kt)\cap (\starA+lt))>0$, whence $\BD((A+kt)\cap (A+lt))>0$.  Let $c:=|k-l|t$, so $\BD(A\cap (A+c))>0$.  Let $B:=(A\cap (A+c))-c$.  Then this $B$ and $c$ are as desired.
\end{proof}


After Nathanson's result, there had been very little progress made on proving Conjecture \ref{Erdosconj}.  In 2015, Di Nasso, Goldbring, Leth, Lupini, Jin, and Mahlburg proved the following result \cite{di_nasso_sumset_2015}:

\begin{theorem}\label{highdensityerdos}

\

\begin{enumerate}
\item If $\BD(A)>\frac{1}{2}$, then $A$ has the sumset property.
\item If $\BD(A)>0$, then there is $k\in \mathbb{N}$ such that $A\cup (A+k)$ has the sumset property.
\end{enumerate}
\end{theorem}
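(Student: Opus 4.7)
The plan is to construct the desired infinite sequences $(b_n)$ and $(c_n)$ via a back-and-forth recursion, anchored by a nonstandard element $\alpha \in \starN$ that serves as an external witness at every stage.

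For part (1), with $\BD(A) > 1/2$, Theorem 12.2.1 provides $\alpha \in \starN$ such that $D := (\starA - \alpha) \cap \N$ has $\underline{d}(D) > 1/2$; in particular, $c + \alpha \in \starA$ for every $c \in D$. A crucial observation is that, for every finite $F \subseteq D$, the intersection $\bigcap_{c \in F}(A - c)$ is infinite: indeed, $\alpha \in \bigcap_{c \in F}(\starA - c) = {}^{\ast}\!\bigcap_{c \in F}(A - c)$, so by overflow this set contains arbitrarily large standard elements. I would then build $(b_n) \subseteq \N$ and $(c_n) \subseteq D$ by alternately choosing $b_{n+1}$ in $\bigcap_{j \leq n}(A - c_j)$ (infinite by the previous observation) and choosing $c_{n+1} \in D \cap \bigcap_{i \leq n+1}(A - b_i)$. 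The hypothesis $\underline{d}(D) > 1/2$, combined with a careful choice of each $b_{n+1}$ to be compatible with the nonstandard setup, should ensure the $c$-step remains feasible.

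For part (2), when $\BD(A) > 0$ is possibly arbitrarily small, the naive reduction to part (1) via $A \cup (A+k)$ fails, since $\BD(A \cup (A+k)) \leq 2\BD(A)$ may stay below $1/2$. The plan is instead to modify the recursion to permit each sum $b_i + c_j$ to lie either in $A$ or in $A + k$ for a uniformly chosen $k$. First, I would apply Kazhdan's Lemma iteratively to produce an infinite sequence $(c_n)$ with $\bigcap_{i \leq n}(A - c_i)$ having positive Banach density for every $n$; then at each stage of the construction of $(b_n)$, one colors each candidate pair $(b, c)$ by the least $i$ in a fixed finite range $[0,K]$ (coming from Proposition 12.1.6) with $b + c - i \in A$, and extracts via an infinite Ramsey-type argument a monochromatic subsequence that commits to a single $k$.

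The main obstacle for both parts will be to verify that the back-and-forth recursion can be continued indefinitely. For part (1), this reduces to showing that $D \cap \bigcap_{i \leq n+1}(A - b_i)$ is infinite at every stage, which requires both the density hypothesis $\underline{d}(D) > 1/2$ and a compatibility condition on the $b_i$'s (enforced by realizing each $b_i$ as a standard projection of an internal element of $\starA$ aligned with $\alpha$, so that the density of $D$ propagates into the intersection). For part (2), the hard part will be the uniform choice of $k$: one must show that the Ramsey-extracted monochromatic subsequence does not commit to a $k$ incompatible with further extension, which I expect to be handled by combining the partition regularity of piecewise syndeticity (Jin's theorem, Corollary 13.1.2) with a saturation argument to distill a globally consistent $k$ from the finitary coloring data.
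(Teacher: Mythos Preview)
Your overall architecture for part (1) --- anchor at a nonstandard $\alpha$, set $D=({}^*\!A-\alpha)\cap\N$, and run a back-and-forth --- matches the paper's in spirit, but there is a genuine gap at exactly the step you flag. Knowing only that $\underline{d}(D)>\tfrac12$ gives you no control over $D\cap\bigcap_{i\le n+1}(A-b_i)$, because you have no density information about $A-b_i$ on the \emph{same} sequence of intervals that witnesses the density of $D$. Your proposed fix (``realize each $b_i$ as a standard projection of an internal element of ${}^*\!A$ aligned with $\alpha$'') does not supply such information. The paper resolves this with an additional lemma (an ergodic-type averaging argument over a hypercycle): it produces a set $L$ with $\limsup_n\delta(L,I_n)\ge r$ where $(I_n)$ \emph{witnesses} $\BD(A)$, together with the property that $A\cap\bigcap_{x\in F}(A-x)$ is infinite for every finite $F\subseteq L$. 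The alignment with $(I_n)$ is the whole point: on a single infinite $I_N$ one then has both $\mu_{I_N}({}^*L)\ge r$ and $\mu_{I_N}({}^*\!A-d)\ge r$ for every standard $d$, so $r>\tfrac12$ forces positive-measure intersections. A separate sequence $D\subseteq A$ is built from the second property of $L$, and the back-and-forth alternates between $L$ and $D$. Your single set $D$ is trying to play both roles without the interval alignment, and that is why the $c$-step stalls.

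For part (2) you are working much too hard, and your outline (iterated Kazhdan, finite-range coloring, Ramsey extraction, Jin's theorem) is both vague and unnecessary. The paper's route is a short reduction to part (1): use Proposition~\ref{fat} to find $n$ with $\BD(A+[-n,n])>\tfrac12$, apply part (1) and the nonstandard characterization of the sumset property (Corollary~\ref{nonstandardsumset}) to get infinite $\beta,\gamma$ with $\beta+{}^*\gamma,\ \gamma+{}^*\beta\in{}^{**}(A+[-n,n])$, choose $i,j\in[-n,n]$ with $\beta+{}^*\gamma\in{}^{**}\!A+i$ and $\gamma+{}^*\beta\in{}^{**}\!A+j$, and set $k=j-i$; then $\beta+{}^*(\gamma-i)\in{}^{**}\!A$ and $(\gamma-i)+{}^*\beta\in{}^{**}\!A+k$, and Proposition~\ref{nonstandardsumsetshift} gives the desired $B,C$ with $b_i+c_j\in A$ for $i\le j$ and $b_i+c_j\in A+k$ for $i>j$, hence $B+C\subseteq A\cup(A+k)$. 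No Ramsey argument or uniform extraction of $k$ is needed --- the single pair $(i,j)$ determines $k$ immediately.
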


In the same paper, the authors establish that \emph{pseudorandom} sets also satisfy the sumset property.  Very recently, significantly building upon the ideas from \cite{di_nasso_sumset_2015}, Moreira, Richter, and Robertson proved Conjecture \ref{Erdosconj} in a very strong form \cite{moreira_proof_2018}:

\begin{theorem}
If $\BD(A)>0$, then $A$ has the sumset property.
\end{theorem}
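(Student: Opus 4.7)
The plan is to proceed via the Furstenberg correspondence principle, translating the combinatorial statement into a dynamical one. Applying the nonstandard form of the correspondence (via a hypercycle system on an infinite hyperfinite interval $I$ witnessing the Banach density of $A$), I would obtain a measure-preserving system $(X, \mathcal{B}, \mu, T)$ and a measurable set $\tilde{A} \in \mathcal{B}$ with $\mu(\tilde{A}) = \BD(A) > 0$, in such a way that producing infinite $B, C \subseteq \mathbb{N}$ with $B + C \subseteq A$ reduces to producing two infinite sequences $B = (b_n)$, $C = (c_n)$ in $\mathbb{N}$ together with a single point $x \in X$ satisfying $T^{b_n + c_m} x \in \tilde{A}$ for all $n, m$.

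First I would carry out the ``easy'' half by a recursive/Ramsey-style construction: using Poincar\'e multiple recurrence (and, as a toy model, Nathanson's Theorem~\ref{nathansontheorem}), build an infinite set $C = \{c_1 < c_2 < \cdots\}$ such that for every finite $F \subseteq C$ the multiple intersection $A_F := \bigcap_{c \in F} T^{-c}\tilde{A}$ has positive measure. By countable saturation one can even produce an internal hyperfinite extension of $C$ in $\starN$ with the analogous internal property. The harder task is then to select $B$: one seeks an infinite sequence $(b_n)$ and a common point $x$ with $T^{b_n} x \in A_F$ for every finite $F \subseteq C$ and every $n$, which is exactly what is needed so that $b_n + c \in A$ for every $b_n \in B$ and $c \in C$.

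The main step is a structural analysis of $(X, \mu, T)$ via the orthogonal decomposition of $L^2(X,\mu)$ into the Kronecker factor $\mathcal{K}$ (the largest factor with pure point spectrum) and its weakly mixing complement $\mathcal{K}^\perp$. In the weakly mixing component, correlation decay together with a van der Corput / large-deviation argument should show that generic choices of $B$ and $C$ contribute independently and can be absorbed into an error term. In the Kronecker component, the return orbits $T^c x$ are governed by equidistribution on a compact abelian group, so the problem reduces to finding infinite $B, C \subseteq \mathbb{N}$ whose sumset $B+C$ lies in a prescribed Bohr neighborhood of the identity; here one uses the strict positivity of $\mu(\tilde{A})$ (rather than mere non-nullness) to secure enough ``Bohr room'' for a greedy construction.

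The hard part will be precisely the Kronecker factor: a Bohr neighborhood of $0$ in $\mathbb{Z}$ need not by itself contain an infinite sumset $B + C$, so one must argue carefully that the projection of $1_{\tilde{A}}$ onto $\mathcal{K}$ is sufficiently spread out to support the construction, and then show that the two extractions (one in the Kronecker factor, one in its weakly mixing complement) can be performed compatibly, producing a single pair $(B, C)$ valid for the full decomposition. Following the approach of Moreira--Richter--Robertson, I would introduce an intermediate notion of a dynamical ``sumset configuration'' (or $\Delta$-pattern) inside a measure-preserving system, prove its existence in compact group rotations of positive measure by a saturation / ultrafilter limit argument, and then lift this pattern up through the tower of factors back to $\mathbb{N}$ and hence to $A$ itself.
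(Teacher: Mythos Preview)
The paper does not prove this theorem. Immediately after stating it, the authors write: ``The proof of the previous theorem is significantly beyond the scope of this book.'' They attribute the result to Moreira, Richter, and Robertson \cite{moreira_proof_2018}, and the surrounding chapter proves only the weaker Theorem~\ref{highdensityerdos} (the case $\BD(A)>\tfrac12$, and the one-shift version for arbitrary positive Banach density). The only hint the paper gives toward the full proof is the remark after Lemma~\ref{Lemma:ergodic} that ``a significant strengthening of the previous lemma was one of the main ingredients in the full resolution of Conjecture~\ref{Erdosconj}.''

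Your outline is in the spirit of the Moreira--Richter--Robertson argument, and you correctly locate the difficulty in the structured (Kronecker) part rather than the weakly mixing part. But the sketch is still schematic at exactly the point where the real work lies: the dichotomy ``Kronecker factor versus weakly mixing complement'' is not by itself enough, and the actual proof does not proceed by a single splitting of $L^2$ followed by compatible extractions. Rather, it hinges on an iterated refinement (producing what they call \emph{Erd\H{o}s progressions}) together with a quantitative strengthening of the density-point lemma (the ``significant strengthening'' the paper alludes to), which is what allows one to thread the two infinite sets through the same generic point. Your proposal names the right ingredients but does not yet contain the mechanism that makes them fit together; since the paper offers no proof to compare against, there is nothing further to reconcile.
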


The proof of the previous theorem is significantly beyond the scope of this book.  However, we believe that there is value in giving the proof of Theorem \ref{highdensityerdos} as it is a perfect example of the utility of nonstandard techniques in combinatorial number theory; the proof will be given in the next section. 

We end this section by establishing a nonstandard reformulation of the sumset property.  We will actually need the following more general statement:

\begin{proposition}\label{nonstandardsumsetshift}
Given $A\subseteq \N$ and $k\in \mathbb{Z}$, the following are equivalent:
\begin{enumerate}
\item there exists $B=\{b_1<b_2<\cdots\}$ and $C=\{c_1<c_2<\cdots\}$ such that $b_i+c_j\in A$ for $i\leq j$ and $b_i+c_j\in A+k$ for $i>j$;
\item there exist nonprincipal ultrafilters $\mathcal{U}$ and $\mathcal{V}$ on $\N$ such that $A\in \mathcal{U}\oplus \mathcal{V}$ and $A+k\in \mathcal{V}\oplus \mathcal{U}$;
\item there exist infinite $\beta,\gamma \in \starN$ such that $\beta+{}^{\ast}\gamma\in {}^{\ast\ast}A$ and $\gamma+^{\ast}\beta\in {}^{\ast\ast}A+k$.
\end{enumerate}
\end{proposition}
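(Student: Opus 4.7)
The plan is to show $(2) \Leftrightarrow (3)$ directly from the correspondence between ultrafilters and their hyperfinite generators, and then prove $(1) \Leftrightarrow (2)$ by an ultrafilter-based argument: $(1) \Rightarrow (2)$ by extending naturally occurring filters, and $(2) \Rightarrow (1)$ by a standard recursive construction.

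For $(2) \Leftrightarrow (3)$, this is immediate from Proposition 4.2.1 (the identity $\mathcal{U}_\alpha \odot \mathcal{U}_\beta = \mathcal{U}_{\alpha \cdot {}^*\beta}$ applied to the additive semigroup $(\N,+)$), since $\mathcal{U} \oplus \mathcal{V} = \mathcal{U}_{\beta + {}^*\gamma}$ exactly when $\mathcal{U} = \mathcal{U}_\beta$ and $\mathcal{V} = \mathcal{U}_\gamma$, and $D \in \mathcal{U}_\delta$ iff $\delta \in {}^{\ast}D$. This also uses that every nonprincipal ultrafilter has a nonstandard generator (under the mild enlarging assumption in force throughout the book).

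For $(1) \Rightarrow (2)$, given the sequences $B$ and $C$, I would let $\mathcal{U}$ be any nonprincipal ultrafilter extending the Fr\'echet filter on $B$ (so $\{b_n, b_{n+1}, \ldots\} \in \mathcal{U}$ for every $n$), and similarly let $\mathcal{V}$ be a nonprincipal ultrafilter extending the Fr\'echet filter on $C$. For each $i$, the set $A - b_i$ contains the tail $\{c_i, c_{i+1}, \ldots\}$ by hypothesis, so $A - b_i \in \mathcal{V}$; hence $B \subseteq \{n : A - n \in \mathcal{V}\}$, which places this set in $\mathcal{U}$ and yields $A \in \mathcal{U} \oplus \mathcal{V}$. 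A symmetric argument using $(A+k) - c_j \supseteq \{b_{j+1}, b_{j+2}, \ldots\}$ gives $A+k \in \mathcal{V} \oplus \mathcal{U}$.

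For $(2) \Rightarrow (1)$, I would recursively build $b_1 < b_2 < \cdots$ and $c_1 < c_2 < \cdots$ while maintaining the auxiliary invariants $b_i + \gamma \in {}^{\ast}A$ (equivalently, $\{c : b_i + c \in A\} \in \mathcal{V}$) and $c_j + \beta \in {}^{\ast}(A+k)$ (equivalently, $\{b : b + c_j \in A+k\} \in \mathcal{U}$). Starting from the fact that $\{n : n + \gamma \in {}^{\ast}A\} \in \mathcal{U}$ (which is what $A \in \mathcal{U}\oplus\mathcal{V}$ translates to) and $\{n : n+\beta \in {}^{\ast}(A+k)\} \in \mathcal{V}$, at each stage one picks $b_{n+1}$ inside a finite intersection of $\mathcal{U}$-sets forcing $b_{n+1} + c_j \in A+k$ for all $j \leq n$ together with the invariant, and then picks $c_{n+1}$ inside a finite intersection of $\mathcal{V}$-sets forcing $b_i + c_{n+1} \in A$ for all $i \leq n+1$ together with its invariant; largeness of these intersections (and nonprincipality) lets us keep the sequences strictly increasing. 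The main obstacle is simply bookkeeping the auxiliary conditions so that the invariants propagate and each step remains possible; this is a routine but careful variant of the standard recursive construction that appears in Hindman-type arguments (compare the proof of Proposition 7.1.2 / Proposition 7.2.1 above).
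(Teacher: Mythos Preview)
Your proposal is correct and essentially matches the paper's proof. The only cosmetic difference is that the paper argues cyclically $(1)\Rightarrow(2)\Rightarrow(3)\Rightarrow(1)$, doing the recursive construction as $(3)\Rightarrow(1)$ via transfer (maintaining the invariants $b_i+\gamma\in{}^{\ast}A$ and $c_j+\beta\in{}^{\ast}A+k$ exactly as you suggest), whereas you pair off $(2)\Leftrightarrow(3)$ and $(1)\Leftrightarrow(2)$; the content is identical.
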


\begin{proof}
First suppose that (1) holds as witnessed by $B$ and $C$.  By assumption, the collection of sets
$$\{B\}\cup\{A-c \ : \ c\in C\}$$ has the finite intersection property with the Frech\'et filter, whence there is a nonprincipal ultrafilter $\mathcal{U}$\index{ultrafilter} on $\N$ extending this family.  Likewise, there is a nonprincipal ultrafilter $\mathcal{V}$ on $\N$ extending the family $\{C-k\}\cup \{A-b \ : \ b\in B\}$.  These $\mathcal{U}$ and $\mathcal{V}$ are as desired.

Next, given (2), take $\beta,\gamma\in \starN$ such that $\mathcal{U}=\mathcal{U}_\beta$ and $\mathcal{V}=\mathcal{U}_\gamma$.  These $\beta$ and $\gamma$ are as desired.

Finally, suppose that $\beta,\gamma\in \starN$ are as in (3).  We define $B=\{b_1<b_2<b_3<\cdots\}$ and $C=\{c_1<c_2<c_3<\cdots\}$ recursively as follows.  Suppose that $b_i$ and $c_j$ for $i,j=1,\ldots,n$ have been constructed so that, for all $i,j$ we have: 
\begin{itemize}
\item $b_i+c_j\in A$ if $i\leq j$;
\item $b_i+c_j\in A+k$ if $i>j$;
\item $b_i+\gamma\in {}^{\ast}A$;
\item $c_j+\beta\in {}^{\ast}A+k$.
\end{itemize}
Applying transfer to the statement ``there is $x\in \starN$ such that $x+c_j\in {}^{\ast}A+k$ for $j=1,\ldots,n$ and $x>b_n$ and $x+{}^{\ast}\gamma\in {}^{\ast\ast}A$'' (which is witnessed by $\beta$), we get $b_{n+1}\in \N$ such that $b_{n+1}>b_n$, $b_{n+1}+c_j\in A+k$ for $j=1,\ldots,n$ and for which $b_{n+1}+\gamma\in {}^{\ast}A$.  Next, apply transfer to the statement ``there is $y\in \starN$ such that $b_i+y\in {}^{\ast}A$ for $i=1,\ldots,n+1$ and $y>c_n$ and $y+{}^{\ast}\beta \in {}^{\ast\ast}A+k$'' (which is witnessed by $\gamma$), we get $c_{n+1}\in \N$ such that $c_{n+1}>c_n$ and for which $b_i+c_{n+1}\in A$ for $i=1,\ldots,n+1$ and for which $c_{n+1}+\beta\in {}^{\ast}A$.  This completes the recursive construction.
\end{proof}

Taking $k=0$ in the previous proposition yields a nonstandard reformulation of the sumset property.

\begin{corollary}\label{nonstandardsumset}
Given $A\subseteq \N$, the following are equivalent:
\begin{enumerate}
\item $A$ has the sumset property;
\item there exist nonprincipal ultrafilters $\mathcal{U}$ and $\mathcal{V}$ on $\N$ such that $A\in (\mathcal{U}\oplus \mathcal{V}) \cap (\mathcal{V}\oplus \mathcal{U})$;
\item there exist infinite $\xi,\eta \in \starN$ such that $\xi+{}^{\ast}\eta,\eta+^{\ast}\xi\in {}^{\ast\ast}A$.
\end{enumerate}
\end{corollary}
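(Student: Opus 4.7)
The plan is essentially to observe that this corollary is the special case $k=0$ of Proposition \ref{nonstandardsumsetshift}, which has just been proved, so no new argument is required beyond unpacking what each of the three conditions reduces to when the shift $k$ vanishes.

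First I would verify that condition (1) of Proposition \ref{nonstandardsumsetshift} with $k=0$ is literally the sumset property. Indeed, when $k=0$, the two clauses ``$b_i+c_j\in A$ for $i\leq j$'' and ``$b_i+c_j\in A+k$ for $i>j$'' collapse to the single statement $b_i+c_j\in A$ for all $i,j$, which says exactly that $B+C\subseteq A$ for the infinite sets $B=\{b_1<b_2<\cdots\}$ and $C=\{c_1<c_2<\cdots\}$. Conversely, given infinite $B,C\subseteq\N$ with $B+C\subseteq A$, one may enumerate them increasingly and verify both clauses of (1) trivially.

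Next I would note that conditions (2) and (3) of Proposition \ref{nonstandardsumsetshift} with $k=0$ read verbatim as conditions (2) and (3) of the present corollary: the requirements $A\in\mathcal{U}\oplus\mathcal{V}$ and $A+k\in\mathcal{V}\oplus\mathcal{U}$ become $A\in(\mathcal{U}\oplus\mathcal{V})\cap(\mathcal{V}\oplus\mathcal{U})$, and likewise $\beta+{}^*\gamma\in{}^{**}A$ and $\gamma+{}^*\beta\in{}^{**}(A+0)={}^{**}A$.

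Thus I would simply invoke Proposition \ref{nonstandardsumsetshift} applied to $A$ with shift $k=0$, and conclude the equivalence of (1), (2), and (3). There is no genuine obstacle here; the only thing one must be mildly careful about is making sure that the reformulation of (1) with $k=0$ yields precisely $B+C\subseteq A$ without any residual asymmetry between the $i\leq j$ and $i>j$ cases, which it clearly does.
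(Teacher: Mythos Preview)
Your proposal is correct and matches the paper's approach exactly: the paper simply states that taking $k=0$ in Proposition~\ref{nonstandardsumsetshift} yields this corollary, and provides no further proof. Your unpacking of why the $k=0$ case of each condition coincides with the corresponding condition here is entirely accurate.
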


\section{A 1-shift version of Erd\H{o}s' conjecture}

In this section, we prove Theorem \ref{highdensityerdos}.  We first show how the first part of that theorem, together with Proposition \ref{nonstandardsumsetshift}, yields the second item of the theorem, which we state in an even more precise form.


\begin{proposition}\label{oneshift}
Suppose that $\BD(A)>0$.  Then there exists $B=\{b_1<b_2<\cdots\}$, $C=\{c_1<c_2<\cdots\}$, and $k\in \N$ such that $b_i+c_j\in A$ for $i\leq j$ and $b_i+c_j\in A+k$ for $i>j$.
\end{proposition}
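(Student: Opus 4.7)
The plan is to reduce to Theorem~\ref{highdensityerdos}(1) by thickening $A$ to a set of Banach density greater than $\tfrac{1}{2}$, and then reading off the shift $k$ from the two (possibly different) translates of $A$ that absorb the two halves of the resulting symmetric sumset configuration. By Proposition~\ref{fat}, since $\BD(A) > 0$, I can fix $m \in \N$ with $\BD(A + [0,m]) > \tfrac{1}{2}$, and set $A' := A + [0,m]$. Applying Theorem~\ref{highdensityerdos}(1) to $A'$, together with the nonstandard reformulation of the sumset property from Corollary~\ref{nonstandardsumset}, yields infinite $\xi, \eta \in \starN$ for which both $\xi + {}^{\ast}\eta$ and $\eta + {}^{\ast}\xi$ lie in ${}^{\ast\ast}A'$.

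Next I extract the shift. Since ${}^{\ast\ast}A' = \bigcup_{l=0}^{m}({}^{\ast\ast}A + l)$, there exist $i, j \in [0, m]$ with $\xi + {}^{\ast}\eta - i \in {}^{\ast\ast}A$ and $\eta + {}^{\ast}\xi - j \in {}^{\ast\ast}A$. After swapping the roles of $\xi$ and $\eta$ if necessary, I may assume $i \leq j$. In the principal case $i < j$, I set $\beta := \xi - i$, $\gamma := \eta$ and $k := j - i \in \N$. Both $\beta$ and $\gamma$ remain infinite, and a direct calculation gives
\[
\beta + {}^{\ast}\gamma = \xi + {}^{\ast}\eta - i \in {}^{\ast\ast}A
\qquad \text{and} \qquad
\gamma + {}^{\ast}\beta = \eta + {}^{\ast}\xi - i = (\eta + {}^{\ast}\xi - j) + k \in {}^{\ast\ast}A + k.
\]
Feeding the triple $(\beta, \gamma, k)$ into Proposition~\ref{nonstandardsumsetshift} then produces sequences $B = \{b_1 < b_2 < \cdots\}$ and $C = \{c_1 < c_2 < \cdots\}$ realizing exactly the shifted configuration demanded by the proposition.

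The main technical obstacle I anticipate is the degenerate case $i = j$. Here the very same construction already furnishes $\beta + {}^{\ast}\gamma, \gamma + {}^{\ast}\beta \in {}^{\ast\ast}A$, so by Corollary~\ref{nonstandardsumset} the set $A$ itself already has the (unshifted) sumset property; one still has to produce a strictly positive $k \in \N$. My approach would be to exploit the freedom in the choice of $(\xi, \eta)$ provided by Theorem~\ref{highdensityerdos}(1): the set of such pairs is rich, and one expects that a rigid coincidence of the form $I_1(\xi,\eta) := \{l \in [0,m] : \xi + {}^{\ast}\eta - l \in {}^{\ast\ast}A\}$ equal to the analogous $I_2(\xi,\eta)$ and both being the same singleton for every admissible pair would force $A'$ to look essentially like a single translate of $A$, in conflict with $\BD(A') > \tfrac{1}{2}$. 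Granted this degeneracy can be avoided, the rest of the argument is a routine assembly of the nonstandard machinery already developed.
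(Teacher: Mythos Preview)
Your argument is essentially identical to the paper's: fatten $A$ to $A+[-n,n]$ (the paper) or $A+[0,m]$ (you) to get Banach density above $\tfrac12$, invoke Theorem~\ref{highdensityerdos}(1) and Corollary~\ref{nonstandardsumset} to obtain infinite $\beta,\gamma$ with both $\beta+{}^{\ast}\gamma$ and $\gamma+{}^{\ast}\beta$ in the thickened set, read off shifts $i,j$, set $k=j-i$, shift one generator by $i$, and feed the result into Proposition~\ref{nonstandardsumsetshift}. The only cosmetic difference is that the paper subtracts $i$ from $\gamma$ while you subtract it from $\xi$; both work.

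Regarding the degenerate case $i=j$: the paper does not handle it either---it simply writes ``Without loss of generality, $i<j$'' and moves on. The swap $\beta\leftrightarrow\gamma$ only yields $i\le j$, so strictly speaking both arguments leave the same point open. The natural resolution is that when $i=j$ one obtains $\beta+{}^{\ast}\gamma,\gamma+{}^{\ast}\beta\in{}^{\ast\ast}A$, so $A$ already has the full sumset property, and Proposition~\ref{nonstandardsumsetshift} applies with $k=0$; the statement should really read $k\in\N_0$ (or one regards $k=0$ as the strongest instance). Your final paragraph attempting to force $i\ne j$ by varying the pair $(\xi,\eta)$ is vague, and the heuristic that a persistent coincidence would contradict $\BD(A')>\tfrac12$ is not justified; you should drop it and simply allow $k=0$, as the paper implicitly does.
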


\begin{proof}
By Proposition \ref{fat}, we may fix $n\in \N$ such that $\BD(A+[-n,n])>\frac{1}{2}$.  By Theorem \ref{highdensityerdos}(1) and Corollary \ref{nonstandardsumset}, we may take infinite $\beta,\gamma \in \starN$ such that $\beta+{}^{\ast}\gamma,\gamma+{}^{\ast}\beta\in {}^{\ast\ast}A+[-n,n]$.  Take $i,j\in [-n,n]$ such that $\beta+{}^{\ast}\gamma\in {}^{\ast\ast}A+i$ and $\gamma+{}^{\ast}\beta\in {}^{\ast\ast}A+j$.  Without loss of generality, $i<j$.  Set $k:=j-i$.  Then $\beta+{}^{\ast}(\gamma-i)\in {}^{\ast\ast}A$ and $(\gamma-i)+{}^{\ast}\beta\in {}^{\ast\ast}A+k$, whence the conclusion holds by Proposition \ref{nonstandardsumsetshift}.
\end{proof}

In order to prove the first item in Theorem \ref{highdensityerdos}, we need one technical lemma:

\begin{lemma}\label{Lemma:ergodic}
Suppose that $\BD(A)=r>0$.  Suppose further that $(I_n)$ is a sequence of intervals with witnessing the Banach density\index{density!Banach} of $A$.  Then there is $L\subseteq \N$ satisfying:
\begin{enumerate}
\item $\limsup_{n\to \infty}\frac{|L\cap I_n|}{|I_n|}\geq r$;
\item for all finite $F\subseteq L$, $A\cap \bigcap_{x\in F}(A-x)$ is infinite
\end{enumerate}
\end{lemma}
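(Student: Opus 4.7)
The natural candidate for $L$ is $L_\alpha := \{n\in\N : \alpha+n\in \starA\}$ for a carefully chosen $\alpha\in \starA$. With such a parametrization, condition (2) comes almost for free from transfer: if $F=\{n_1,\ldots,n_k\}\subseteq L_\alpha$ and $\alpha$ is infinite, then $\alpha,\alpha+n_1,\ldots,\alpha+n_k\in \starA$ with $\alpha>\N$, so for every $M\in\N$ the internal statement ``$\exists y>M$ in $\starN$ with $y,y+n_1,\ldots,y+n_k\in \starA$'' is true, and transfer yields infinitely many $a\in A\cap\bigcap_{x\in F}(A-x)$. So the real work is to choose $\alpha$ so that $L_\alpha$ has upper density $\ge r$ along $(I_n)$. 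The hard part is producing a \emph{single} infinite $\alpha\in \starA$ that simultaneously makes $|\starA\cap(\alpha+I_n)|/|I_n|$ close to $r$ for infinitely many standard $n$.

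\textbf{Setting up the framework.} Pick an infinite $N\in\starN$ such that $\mu_{I_N}(\starA)=r$; this is possible because $(I_n)$ witnesses the Banach density of $A$. For each standard $n$, define the internal function $\phi_n:I_N\to[0,1]$ by $\phi_n(\alpha)=|\starA\cap(\alpha+I_n)|/|I_n|$ (for $\alpha$ with $\alpha+I_n\subseteq I_N$, which omits only a negligible boundary of Loeb measure $0$). Note that $|L_\alpha\cap I_n|/|I_n|=\phi_n(\alpha)$ is a standard rational, so $\limsup_{n\to\infty}\phi_n(\alpha)$ is exactly what condition (1) asks to be $\ge r$.

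\textbf{Fubini averaging and a Markov bound.} By switching the order of summation,
\[
\frac{1}{|I_N|}\sum_{\alpha\in I_N}\phi_n(\alpha)=\frac{1}{|I_n|}\sum_{k\in I_n}\frac{|\starA\cap(I_N+k)|}{|I_N|}\approx r,
\]
since for each standard $k$ the translate $I_N+k$ agrees with $I_N$ up to a set of relative size $O(1/|I_N|)\approx 0$. Hence $\int\phi_n\,d\mu_{I_N}=r$. On the other hand, $\BD(A)=r$ together with transfer gives, for every $\epsilon'>0$, a threshold $m_0$ such that $\phi_n(\alpha)\le r+\epsilon'$ whenever $|I_n|\ge m_0$, uniformly in $\alpha$. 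Combining these two facts via the elementary inequality
\[
r=\int\phi_n\,d\mu_{I_N}\le (r+\epsilon')\cdot\mu_{I_N}(\phi_n\ge r-\epsilon)+(r-\epsilon)\cdot(1-\mu_{I_N}(\phi_n\ge r-\epsilon))
\]
gives $\mu_{I_N}(\phi_n\ge r-\epsilon)\ge \epsilon/(\epsilon+\epsilon')$, which can be made as close to $1$ as we wish by choosing $n$ with $|I_n|$ large enough relative to $\epsilon$.

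\textbf{Saturation to produce $\alpha$.} For each $k\in\N$ choose $n_k$ (strictly increasing) with $|I_{n_k}|$ so large that the measure of the internal set
\[
E_k:=\starA\cap\{\alpha\in I_N:\phi_{n_k}(\alpha)\ge r-1/k\}
\]
is at least $r-r\cdot 2^{-(k+2)}$ (achievable by the previous paragraph, choosing $\epsilon'_k$ small enough compared to $1/k$). Then $\mu_{I_N}\bigl(\bigcap_k E_k\bigr)\ge r-r/4>0$, so by countable saturation $\bigcap_k E_k$ is nonempty. Since the set of finite elements of $I_N$ has Loeb measure $0$, we can pick an infinite $\alpha\in\bigcap_k E_k$. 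This $\alpha$ belongs to $\starA$, and $\phi_{n_k}(\alpha)\ge r-1/k$ for every $k$, giving $\limsup_n|L_\alpha\cap I_n|/|I_n|\ge r$. Combined with the transfer argument above for (2), $L:=L_\alpha$ works. The principal technical obstacle is precisely the uniform control in the Markov step that lets the countable intersection $\bigcap_k E_k$ retain positive Loeb measure regardless of the size of $r$, which is why the parameters $\epsilon_k=1/k$ and $\epsilon'_k\ll r/(k\,2^k)$ must be coupled carefully.
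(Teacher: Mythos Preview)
Your proof is correct and follows essentially the same strategy as the paper: pick an infinite hyperfinite interval $I_N$ witnessing $\BD(A)$, use a Fubini/averaging argument together with the uniform upper bound $\phi_n\le r+\epsilon'$ coming from $\BD(A)=r$ to show that $\{\alpha:\phi_{n}(\alpha)\ge r-\epsilon\}$ has Loeb measure close to $1$ for suitably large standard $n$, and then intersect countably many such sets to locate an infinite $\alpha\in\starA$ with $L_\alpha=(\starA-\alpha)\cap\N$ as the desired $L$.

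The only differences from the paper are cosmetic. The paper phrases the key step as an underflow argument (showing the condition holds for all sufficiently small \emph{infinite} indices $K$ and then dropping to a standard $n_i$), whereas you work directly with finite $n$ via the quantitative Markov bound; and the paper first picks an arbitrary $y_0$ in the intersection and then rounds to the nearest $x_0\in\starA$, whereas you intersect with $\starA$ from the outset. Your version is arguably a bit cleaner on both counts. One small remark: the nonemptiness of $\bigcap_k E_k$ minus the finite elements is justified by the positive Loeb measure you already established (finite elements form a null set), so the invocation of ``countable saturation'' there is not quite the right label, though the conclusion stands.
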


\begin{proof}
First, we note that it suffices to find $L$ satisfying (1) and 
\begin{enumerate}
\item[(2')] there is $x_0\in {}^{\ast}A\setminus A$ such that $x_0+L\subseteq {}^{\ast}A$. 
\end{enumerate}
 Indeed, given finite $F\subseteq L$ and $K\subseteq \N$, $x_0$ witnesses the truth of ``there exists $x\in {}^\ast A$ such that $x+F\subseteq {}^{\ast}A$ and $x\notin K$'' whence, by transfer, such an $x$ can be found in $\N$, establishing (2).  
 
 In the rest of the proof, we fix infinite $H\in \starN$ and let $\mu$ denote Loeb\index{Loeb measure} measure on $I_H$.  In addition, for any $\alpha\in \starN$ and hyperfinite $X\subseteq \starN$, we set $d_\alpha(X):=\frac{|X|}{|I_\alpha|}$.  Finally, we fix $\epsilon\in (0,\frac{1}{2})$.
 
Next we remark that it suffices to find a sequence $X_1,X_2,\ldots$ of internal subsets of $I_H$ and an increasing sequence $n_1<n_2<\cdots$ of natural numbers such that, for each $i$, we have:
\begin{enumerate}
\item[(i)] $\mu(X_i)\geq 1-\epsilon^i$ and, 
\item[(ii)] for each $x\in X_i$, we have $d_{n_i}({}^{\ast}A\cap (x+I_{n_i}))\geq r-\frac{1}{i}$.
\end{enumerate}

Indeed, suppose that this has been accomplished and set $X:=\bigcap_i X_i$.  Then $X$ is Loeb\index{Loeb measure} measurable and $\mu(X)>0$.  Fix $y_0\in X\setminus \N$ arbitrary and set $x_0$ to be the minimum element of ${}^{\ast}A$ that is greater than or equal to $y_0$; note that $x_0-y_0\in \N$ since $y_0\in X$.  Set $L:=({}^{\ast}A-x_0)\cap \N$; note that (2') is trivially satisfied.  To see that (1) holds, note that
 $$\limsup_{i\to \infty}d_{n_i}(L\cap I_{n_i})=\limsup_{i\to \infty}d_{n_i}({}^{\ast}A\cap (x_0+I_{n_i}))=\limsup_{i\to \infty}d_{n_i}({}^{\ast}A\cap (y_0+I_{n_i}))\geq r,$$ where the last inequality follows from the fact that $y_0\in X$.
 
 Thus, to finish the lemma, it suffices to construct the sequences $(X_i)$ and $(n_i)$.  Suppose that $X_1,\ldots,X_{i-1}$ and $n_1<\cdots<n_{i-1}$ have been constructed satisfying the conditions above.  For $\alpha\in \starN$, set $$Y_\alpha:=\{x\in I_H \ : \ d_\alpha({}^{\ast}A\cap (x+I_m))\geq r-\frac{1}{i}\}.$$  Set $Z:=\{\alpha\in \starN \ : \ n_{i-1}<\alpha \text{ and } d_H(Y_\alpha)>1-\epsilon^i\}$.  Note that $Z$ is internal.  It will be enough to show that $Z$ contains all sufficiently small infinite elements of $\starN$, for then, by underflow, there is $n_i\in Z\cap \N$.  Setting $X_i:=Y_{n_i}$, these choices of $X_i$ and $n_i$ will be as desired.

We now work towards proving that $Z$ contains all sufficiently small infinite elements of $\starN$.  First, we remark that we may assume, without loss of generality, that the sequences $(|I_n|)$ and $(b_n)$ are increasing, where $b_n$ denotes the right endpoint of $I_n$.  Fix $K\in \starN\setminus \N$ such that $2b_K/|I_H|\approx 0$.  We finish the proof of the lemma by proving that $K\in Z$, which we claim follows from the following two facts:
\begin{enumerate}
\item[(a)] for all $x\in I_H$, $\st(d_K(\starA\cap (x+I_K)))\leq r$;
\item[(b)] $\frac{1}{|I_H|}\sum_{x\in I_H}d_K(\starA\cap (x+I_K))\approx r$.
\end{enumerate}

To see that these facts imply that $K\in Z$, for $x\in I_H$, set $f(x):=d_K(\starA\cap (x+I_K))$.  It is enough to show that $f(x)\approx r$ for $\mu$-almost all $x\in I_H$.  Given $n$, let $A_n:=\{x\in I_H \ : \ f(x)<r-\frac{1}{n}\}$.  Suppose, towards a contradiction, that $\mu(A_n)=s>0$.  By (a), we may fix a positive infinitesimal $\eta$ such that $f(x)\leq r+\eta$ for all $x\in I_H$.  We then have
$$\frac{1}{|I_H|}\sum_{x\in I_H}f(x)=\frac{1}{|I_H|}\left[\sum_{x\in A_n}f(x)+\sum_{x\notin A_n}f(x)\right]<s(r-\frac{1}{n})+(1-s)(r+\eta).$$  Since the right-hand side of the above display is appreciably less than $s$, we get a contradiction to (b).

It remains to establish (a) and (b).  (a) follows immediately from the fact that $\BD(A)=r$.  To see (b), we first observe that
$$\frac{1}{|I_H|}\sum_{x\in I_H}d_K(\starA\cap (x+I_K))=\frac{1}{|I_K|}\sum_{y\in I_K}\frac{1}{|I_H|}\sum_{x\in I_H} \chi_{\starA}(x+y).$$  Fix $y\in I_K$.  Since $|\sum_{x\in I_H} \chi_{\starA}(x+y)-|\starA\cap I_H||\leq 2y\leq 2b_K$, we have that 
$$\left|\frac{1}{|I_H|}\sum_{x\in I_H} \chi_{\starA}(x+y)-d_H(\starA)\right|\approx 0.$$  Since a hyperfinite average of infinitesimals is infinitesimal, we see that
$$\frac{1}{|I_H|}\sum_{x\in I_H}d_K(\starA\cap (x+I_K))\approx \frac{1}{|I_K|}\sum_{y\in I_K}d_H(\starA)\approx r,$$ establishing (b).    
\end{proof}

\begin{remark}
A significant strengthening of the previous lemma was one of the main ingredients in the full resolution of Conjecture \ref{Erdosconj} given in \cite{}.
\end{remark}


\begin{proof}[of Theorem \ref{highdensityerdos}]
Set $r:=\BD(A)$.  Let $(I_n)$ witness the Banach density\index{density!Banach} of $A$ and let $L:=(l_n)$ be as in the previous lemma.  We may then define an increasing sequence $D:=(d_n)$ contained in $A$ such that $l_i+d_n\in A$ for $i\leq n$.\footnote{Notice that at this point we have another proof of Nathanson's Theorem \ref{nathansontheorem}:  if we set $B:=\{d_n,d_{n+1},\ldots\}$ and $C:=\{l_1,\ldots,l_n\}$, then $B+C\subseteq A$.}  Now take $N$ such that $\mu_{I_N}({}^{\ast}L)\geq r$.  Note also that $\mu_{I_N}(\starA-d_n)\geq r$ for any $n$.  Since $r>1/2$, for any $n$ we have that $\mu_{I_N}({}^{\ast}L\cap ({}^{\ast}A-d_n))\geq 2r-1>0$.  By a standard measure theory fact, by passing to a subsequence of $D$ if necessary, we may assume that, for each $n$, we have that $\mu_{I_N}({}^{\ast}L\cap \bigcap_{i\leq n}({}^{\ast}A-d_i))>0$.  In particular, for every $n$, we have that $L\cap \bigcap_{i\leq n}(A-d_i)$ is infinite.

We may now conclude as follow.  Fix $b_1\in L$ arbitrary and take $c_1\in D$ such that $b_1+c_1\in A$.  Now assume that $b_1<\cdots<b_n$ and $c_1<\cdots<c_n$ are taken from $L$ and $D$ respectively such that $b_i+c_j\in A$ for all $i,j=1,\ldots,n$.  By assumption, we may find $b_{n+1}\in L\cap \bigcap_{i\leq n}(A-c_i)$ with $b_{n+1}>b_n$ and then we may take $c_{n+1}\in D$ such that $b_i+c_{n+1}\in A$ for $i=1,\ldots,n+1$.
\end{proof}

\section{A weak density version of Folkman's theorem}

At the beginning of this chapter, we discussed the fact that the density version of Hindman's theorem is false.  In fact, the odd numbers also show that the density version of Folkman's theorem is also false.  (Recall that Folkman's theorem stated that for any finite coloring\index{coloring} of $\N$, there are arbitrarily large finite sets $G$ such that $\FS(G)$ are monochromatic.)
However, we can use Lemma \ref{Lemma:ergodic} to prove a weak density version of Folkman's theorem.  Indeed, the proof of Lemma \ref%
{Lemma:ergodic} yields the following:

\begin{lemma}\label{Lemma:ergodic2}
Suppose that $A\subseteq \N$ is such that $\BD(A)\geq r$.  Then there is $\alpha\in \starA\setminus A$ such that $\BD(A-\alpha)\geq r$.
\end{lemma}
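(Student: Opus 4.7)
The plan is to observe that the statement of Lemma \ref{Lemma:ergodic2} is essentially already proved in the course of establishing Lemma \ref{Lemma:ergodic}, once one reinterprets the set $L$ constructed there. To make this precise, first I will fix a sequence $(I_n)$ of intervals witnessing the Banach density of $A$, and read the conclusion ``$\BD(A-\alpha)\geq r$'' as asserting that the standard set $L_\alpha := (\starA - \alpha)\cap \N \subseteq \N$ has $\BD(L_\alpha)\geq r$. This is the only sensible interpretation, since $\alpha$ is nonstandard and Banach density is only defined for standard subsets of $\N$.

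With this reading fixed, I will run verbatim the construction from the proof of Lemma \ref{Lemma:ergodic}: pick an infinite $H\in\starN$ with $2b_K/|I_H|\approx 0$ for all sufficiently small infinite $K$, produce the internal sets $Y_\alpha := \{x\in I_H : d_\alpha(\starA\cap (x+I_\alpha))\geq r - 1/i\}$, and use overflow/underflow to build a sequence of $X_i\subseteq I_H$ with $\mu(X_i)\geq 1-\epsilon^i$, so that $X := \bigcap_i X_i$ has $\mu(X)>0$. Then pick any $y_0\in X\setminus \N$, let $x_0$ be the least element of $\starA$ that is $\geq y_0$ (so $x_0-y_0\in\N$ and $x_0\in\starA\setminus A$), and set $\alpha := x_0$.

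Setting $L := L_\alpha = (\starA-\alpha)\cap \N$, the computation already carried out in Lemma \ref{Lemma:ergodic} gives
\[
\limsup_{i\to\infty}\frac{|L\cap I_{n_i}|}{|I_{n_i}|} = \limsup_{i\to\infty} d_{n_i}(\starA\cap (x_0 + I_{n_i})) = \limsup_{i\to\infty} d_{n_i}(\starA\cap (y_0 + I_{n_i})) \geq r,
\]
where the middle equality uses that $x_0-y_0\in\N$ (so shifting by a standard amount does not affect the limsup of densities over $I_{n_i}$) and the last inequality uses $y_0\in X \subseteq X_i$ for every $i$. Since $\overline{d}_{(I_{n_i})}(L)\leq \BD(L)$, this yields $\BD(L)\geq r$, which is the desired conclusion.

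The main (and only) obstacle is purely notational: clarifying the meaning of $A-\alpha$ for nonstandard $\alpha$ and checking that the $L$ produced by the argument of Lemma \ref{Lemma:ergodic} is exactly this standard set. No genuinely new construction or estimate is needed beyond what was already done there; the quantities ``$\mu(X_i)\geq 1-\epsilon^i$'' from that proof are in fact stronger than what Lemma \ref{Lemma:ergodic2} demands, so even simpler bookkeeping would suffice if one wanted a self-contained write-up.
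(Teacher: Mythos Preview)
Your proposal is correct and matches the paper's own treatment: the paper simply states that the proof of Lemma~\ref{Lemma:ergodic} yields Lemma~\ref{Lemma:ergodic2}, and you have spelled out exactly how, taking $\alpha=x_0\in\starA\setminus A$ from that construction and observing that $L=(\starA-\alpha)\cap\N$ satisfies $\limsup_i \delta(L,I_{n_i})\geq r$, hence $\BD(A-\alpha)\geq r$. There is nothing to add.
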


One should compare the previous lemma with Beiglbock's Lemma \ref{Blemma}.  Indeed, a special case of (the nonstandard formulation of) Lemma \ref{Blemma} yields $\alpha\in \starN\setminus \N$ such that $\BD(\starA-\alpha)\geq \BD(A)$; the previous lemma is stronger in that it allows us to find $\alpha\in \starA$.  We can now prove the aformentioned weak version of a density Folkman theorem.


\begin{theorem}\label{weakfolkman}
Fix $k\in \mathbb{N}$ and suppose $%
A\subseteq \N$ is such that $\BD(A)>0$. Then there exist
increasing sequences $(x_{n}^{(i)})$ for $i=0,1,2,\ldots ,k$ such that, for any $i$ and any $n_i\leq n_{i+1}\leq \cdots \leq n_k$, we have $%
x_{n_{i}}^{(i)}+x_{n_{i+1}}^{(i+1)}+\cdots +x_{n_{k}}^{(k)}\in A$.
\end{theorem}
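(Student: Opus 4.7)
The plan is to iterate Lemma~\ref{Lemma:ergodic2} to obtain a descending chain of sets of positive Banach density together with nonstandard generators, and then to build the standard sequences layer by layer by transfer, strengthening the target conclusion to a stronger invariant that makes the induction go through.

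First, I would set $r := \BD(A) > 0$ and $B_k := A$, and assume inductively that $B_j \subseteq \N$ has $\BD(B_j) \geq r$. Applying Lemma~\ref{Lemma:ergodic2} to $B_j$ yields $\alpha_j \in \starB_j \setminus B_j$ (necessarily an infinite element of $\starN$, since $\starB_j \cap \N = B_j$) such that $B_{j-1} := (\starB_j - \alpha_j) \cap \N$ satisfies $\BD(B_{j-1}) \geq r$. Iterating down to $j=1$ produces sets $B_0, B_1, \ldots, B_k = A$, each of Banach density at least $r$, together with infinite elements $\alpha_1, \ldots, \alpha_k \in \starN \setminus \N$ satisfying the crucial inclusion
\begin{equation*}
B_{j-1} + \alpha_j \subseteq \starB_j \qquad (j = 1, \ldots, k),
\end{equation*}
immediate from the definition of $B_{j-1}$.

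Next, I would construct increasing sequences $(x^{(i)}_n)_{n \geq 1}$ in $\N$ for $i = 0, 1, \ldots, k$ satisfying the strengthened invariant
\begin{equation*}
x^{(i)}_{n_i} + x^{(i+1)}_{n_{i+1}} + \cdots + x^{(j)}_{n_j} \in B_j \qquad \text{whenever } 0 \leq i \leq j \leq k \text{ and } n_i \leq \cdots \leq n_j.
\end{equation*}
The conclusion of the theorem is then the case $j = k$, since $B_k = A$. The construction proceeds by induction on $n$, and within each stage $n$ the elements $x^{(0)}_n, x^{(1)}_n, \ldots, x^{(k)}_n$ are chosen in this order. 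For $j \geq 1$, the finitely many constraints on $x^{(j)}_n$---membership in $B_j$, strict increase past $x^{(j)}_{n-1}$, and the new tail-sum conditions $x^{(i)}_{n_i} + \cdots + x^{(j-1)}_{n_{j-1}} + x^{(j)}_n \in B_j$ for $0 \leq i \leq j-1$ and $n_i \leq \cdots \leq n_{j-1} \leq n$---are simultaneously satisfied in the nonstandard extension by $\alpha_j$: it lies in $\starB_j$, exceeds $x^{(j)}_{n-1}$ because it is infinite, and each tail $x^{(i)}_{n_i} + \cdots + x^{(j-1)}_{n_{j-1}}$ already lies in $B_{j-1}$ by the inductive hypothesis at level $j-1$, so adding $\alpha_j$ lands in $\starB_j$ via the inclusion above. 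Transfer then yields a standard $x^{(j)}_n$ with the required properties. For $j = 0$, the only constraints are $x^{(0)}_n \in B_0$ and $x^{(0)}_n > x^{(0)}_{n-1}$, which is trivial because $B_0$ is infinite.

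The main obstacle is identifying the right invariant. Asking only for full sums to land in $A$ is not inductive: at stage $n$ one must be able to extend a partial tail sum by a new summand $x^{(j)}_n$ and keep the result in $B_j$, so that a subsequent choice involving $\alpha_{j+1}$ (or a later $x^{(j+1)}_{n_{j+1}}$) can invoke $B_j + \alpha_{j+1} \subseteq \starB_{j+1}$. Strengthening the target from ``full sums land in $A$'' to ``partial tail sums of layers $i, \ldots, j$ land in $B_j$'' converts each layer-wise choice into a finite internal condition automatically witnessed by $\alpha_j$; after that, the combinatorial bookkeeping and the transfer arguments are routine.
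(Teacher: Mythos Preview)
Your proof is correct and follows essentially the same approach as the paper: iterate Lemma~\ref{Lemma:ergodic2} to produce the chain $B_0,\ldots,B_k=A$ with $B_{j-1}+\alpha_j\subseteq\starB_j$, then build the sequences stage by stage and layer by layer using $\alpha_j$ as a transfer witness. The only cosmetic difference is that the paper packages the invariant via auxiliary finite sets $A_n^{(i)}\subseteq A^{(i)}$ (tracking the partial tail sums accumulated so far), whereas you state the invariant directly as ``$x^{(i)}_{n_i}+\cdots+x^{(j)}_{n_j}\in B_j$''; these are equivalent bookkeeping devices for the same argument.
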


The reason we think of the previous theorem as a weak density version of Folkman's theorem is that if all of the sequences were identical, then we would in particular have a set of size $k$ all of whose finite sums belong to $A$.  

\begin{proof}[of Theorem \ref{weakfolkman}]
Set $A=A^{(k)}$. Repeatedly applying Lemma \ref{Lemma:ergodic2}, one can define, for $%
i=0,1,\ldots,k$, subsets $A^{(i)}$ of $\mathbb{N}$ and $\alpha _{i}\in
{}^{\ast }A^{(i)}$ such that $A^{(i)}+\alpha _{i+1}\subseteq {}^{\ast }A^{(i+1)}$%
for all $i<k$.  We then define the sequences $(x_{n}^{(i)})$ for $i=0,1,2,\ldots ,k$ and finite subsets $A_n^{(i)}$ of $A^{(i)}$ so that:
\begin{itemize}
\item for $i=0,1,\ldots,k$ and any $n$, we have $x^{(i)}_n\in A_n^{(i)}$,
\item for $i=0,1,\ldots,k$ and any $n\leq m$, we have $A^{(i)}_n\subseteq A^{(i)}_m$, and
\item for $i=0,1,\ldots,k-1$ and any $n\leq m$, we have $A_n^{(i)}+x^{(i+1)}_m\subseteq A_m^{(i+1)}$.
\end{itemize}
It is clear that the sequences $(x_n^{(i)})$ defined in this manner satisfy the conclusion of the theorem.  Suppose that the sequences $(x_n^{(i)})$ and $A^{(i)}_n$ have been defined for $n<m$.  We now define $x^{(i)}_m$ and $A^{(i)}_m$ by recursion for $i=0,1,\ldots,k$.  We set $x^{(0)}_m$ to be any member of $A^{(0)}$ larger than $x^{(0)}_{m-1}$ and set $A^{(0)}_m:=A^{(0)}_{m-1}\cup \{x^{(0)}_{m-1}\}$.  Supposing that the construction has been carried out up through $i<k$, by transfer of the fact that $A^{(i)}_m+\alpha^{(i+1)}\subseteq \starA^{(i+1)}$, we can find $x^{(i+1)}_m\in A^{(i+1)}$ larger than $x^{(i+1)}_{m-1}$ such that $A^{(i)}_m+x^{(i+1)}_m\subseteq A^{(i+1)}$.  We then define $A^{(i+1)}_m:=A^{(i+1)}_{m-1}\cup (A^{(i)}_m+x^{(i+1)}_m)$.  This completes the recursive construction and the proof of the theorem.
\end{proof}

The usual compactness argument gives a finitary version:

\begin{corollary}
Suppose that $k\in \mathbb{N}$ and $%
\varepsilon >0$ are given. Then there exists $m$ such that for any
interval $I$ of length at least $m$ and any subset $A$ of $I$ such that $%
\left\vert A\right\vert >\varepsilon \left\vert I\right\vert $, there exist $%
(x_{n}^{(i)})$ for $i,n\in \left\{ 0,1,\ldots ,k\right\} $ such that $%
x_{n_{i}}^{(i)}+x_{n_{i+1}}^{(i+1)}+\cdots +x_{n_{\ell
-1}}^{(k)}\in A$ for any $i=0,1,\ldots ,k $ and any $0\leq n_{i}\leq n_{i+1}\leq \cdots
\leq n_{\ell -1}\leq k$.
\end{corollary}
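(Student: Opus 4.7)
Plan: I would prove this finitary corollary by a nonstandard contradiction argument using Theorem~\ref{weakfolkman}. Suppose, for contradiction, that the conclusion fails for some $k \in \mathbb{N}$ and $\varepsilon > 0$. By transferring the negation to the nonstandard universe, for every infinite $N \in \starN$ there exist an internal interval $I$ of internal length at least $N$ and an internal $A \subseteq I$ with $|A|/|I| > \varepsilon$ such that $A$ contains no internal Folkman configuration of the stated form. Fix such $N$, $I$, and $A$, and assume (by internal translation) that $I = [1, L]$ for some infinite $L \in \starN$.

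A Loeb-measure averaging argument inside $I$, combined with countable saturation, produces an element $p \in I$ with $p$ and $L - p$ both infinite such that, for every standard $r \in \mathbb{N}$, $|A \cap [p+1, p+r]|/r \geq \varepsilon$. Define $A^{*} := \{n \in \mathbb{N} : p + n \in A\} \subseteq \mathbb{N}$. Since $|A^{*} \cap [1, r]| = |A \cap [p+1, p+r]|$ for each standard $r$, $A^{*}$ is a standard subset of $\mathbb{N}$ with $\BD(A^{*}) \geq \varepsilon$. Applying Theorem~\ref{weakfolkman} to $A^{*}$ yields increasing sequences $(y^{(i)}_n)_{n \in \mathbb{N}}$ for $i = 0, 1, \ldots, k$ such that every required sum $y^{(i)}_{n_i} + \cdots + y^{(k)}_{n_k}$ lies in $A^{*}$; equivalently, $p + (y^{(i)}_{n_i} + \cdots + y^{(k)}_{n_k}) \in A$.

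Restrict to $n \in \{0, 1, \ldots, k\}$ and set $z^{(i)}_n := y^{(i)}_n$ for $i < k$ and $z^{(k)}_n := y^{(k)}_n + p$. Then each required sum at level $i$ of the $z$-configuration equals
$$\sum_{j=i}^{k-1} y^{(j)}_{n_j} + (y^{(k)}_{n_k} + p) = p + \bigl(y^{(i)}_{n_i} + \cdots + y^{(k)}_{n_k}\bigr),$$
which lies in $A$ by the previous paragraph. The sequences $(z^{(i)}_n)_n$ remain strictly increasing, and all $z^{(i)}_n$ lie in $I$ (trivially for $i < k$ since they are standard and $L$ is infinite, while $z^{(k)}_n$ is itself a required sum, hence in $A \subseteq I$). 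The resulting finite function $z : \{0,\ldots,k\}^2 \to \starN$ is internal, so $(z^{(i)}_n)_{i, n \in \{0,\ldots,k\}}$ is an internal Folkman configuration inside $A$, contradicting the defining property of $A$.

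The main obstacle is establishing the existence of the good point $p \in I$ satisfying the uniform density condition $|A \cap [p+1, p+r]|/r \geq \varepsilon$ for every standard $r \in \mathbb{N}$. This is a discrete Lebesgue-density statement for the Loeb measure on $I$: a Fubini computation shows that, for each fixed standard $r$, the average over $p$ in a long sub-interval of $I$ of the sampled density $|A \cap [p+1, p+r]|/r$ equals $|A|/|I| > \varepsilon$ up to negligible boundary corrections, so the set of ``bad'' $p$'s for scale $r$ has internally measurable deficit; applying countable saturation across the countable family of constraints indexed by standard $r$ produces a single $p$ that works simultaneously at every standard scale.
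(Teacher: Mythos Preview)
Your overall strategy is correct and is exactly what the paper's terse ``usual compactness argument'' intends: overflow to an internal counterexample $(I,A)$, pull back along a shift $p$ to a standard set $A^{*}\subseteq\mathbb{N}$ of positive Banach density, apply Theorem~\ref{weakfolkman} to $A^{*}$, and then push the resulting finite configuration forward by adding $p$ only to the top row $z^{(k)}_n$. The observation that the finite tuple $(z^{(i)}_n)$ is automatically internal, and therefore contradicts the defining property of $A$, is exactly right.

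The one genuine gap is in your density-point step. You assert that averaging plus countable saturation produces $p$ with $|A\cap[p+1,p+r]|/r\geq\varepsilon$ for \emph{every} standard $r$, i.e.\ $\sigma(A^{*})\geq\varepsilon$. But the Fubini computation only shows that for each fixed $r$ the \emph{average} local density over $p$ is $\approx|A|/L>\varepsilon$; it does not give any lower bound on the Loeb measure of the good set $G_r=\{p:|A\cap[p+1,p+r]|/r\geq\varepsilon\}$, and hence does not establish the finite intersection property for $\{G_r\}_{r\in\mathbb{N}}$ that saturation would need. Worse, overflow only delivers $|A|/L>\varepsilon$ with the excess possibly infinitesimal, and in that regime no such $p$ need exist. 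Fortunately you only need $\BD(A^{*})>0$, and this follows directly from the hypercycle ergodic theorem (Theorem~\ref{ergodichypercycle}): with $f=\chi_A$, for $\mu_L$-almost every $p$ the limit $\hat f(p)=\lim_{r\to\infty}|A\cap[p+1,p+r]|/r$ exists, and $\int\hat f\,d\mu_L=\mu_L(A)\geq\varepsilon>0$, so there is $p$ (which may be taken with $p$ and $L-p$ both infinite) satisfying $d(A^{*})=\hat f(p)>0$. With this repair the rest of your argument goes through verbatim.
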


\section*{Notes and references}  The proof of Corollary \ref{oneshift} from Theorem \ref{highdensityerdos} given in \cite{di_nasso_sumset_2015} proceeds via Ramsey's theorem.  The ultrafilter reformulation of the sumset property was first observed by Di Nasso and was used to give this alternate derivation of Corollary \ref{oneshift} from Theorem \ref{highdensityerdos}.  The paper \cite{di_nasso_sumset_2015} also presents a version of Theorem \ref{highdensityerdos} and Corollary \ref{oneshift} for countable amenable groups.  Likewise, the paper \cite{moreira_proof_2018} presents a version of the solution to Conjecture \ref{Erdosconj} for countable amenable groups.

\chapter{Near arithmetic progressions\index{arithmetic progression} in sparse sets}

\section{The main theorem}

Szemeredi's theorem says that relatively dense sets contain arithmetic progressions.  The purpose of this chapter is to present a result of Leth from \cite{leth_near_2006} which shows that certain sparse sets contain ``near'' arithmetic progressions.  Our first task is to make precise what ``near'' means in the previous sentence.

\begin{definition}
Fix $w\in \N_0$ and $t,d\in \N$.\footnote{In this chapter, we deviate somewhat from our conventions so as to match up with the notation from \cite{leth_near_2006}.}   A \emph{$(t,d,w)$-progression} is a set of the form 
$$\B(b,t,d,w):=\bigcup_{i=0}^{t-1}[b+id,b+id+w].$$  By a \emph{block progression} we mean a $(t,d,w)$-progression for some $t,d,w$. 
\end{definition}
Note that a $(t,d,0)$-progression is the same thing as a $t$-term arithmetic progression with difference $d$.

\begin{definition}
If $A\subseteq \N$, we say that $A$ \emph{nearly contains a $(t,d,w)$-progression}\index{$(t,d,w)$-progression} if there is a $(t,d,w)$-progression $\B(b,t,d,w)$ such that $A\cap [b+id,b+id+w]\not=\emptyset$ for each $i=1,\ldots,t-1$. 
\end{definition} 

Thus, if $A$ nearly contains a $(t,d,0)$-progression, then $A$ actually contains a $t$-term arithmetic progression.  Consequently, when $A$ nearly contains a $(t,d,w)$-progression with ``small'' $w$, then this says that $A$ is ``close'' to containing an arithmetic progression.  The main result of this chapter allows us to conclude that even relatively sparse sets with a certain amount of density regularity nearly contain block progressions satisfying a further homogeneity assumption that we now describe.

\begin{definition}\label{homog}
Suppose that $A\subseteq \N$, $I$ is an interval in $\N$, and $0<s<1$.  We say that $A$ \emph{nearly contains a $(t,d,w)$-progression in $I$ with homogeneity $s$}\index{$(t,d,w)$-progression} if there is some $\B(b,t,d,w)$ contained in $I$ such that the following two conditions hold for all $i,j=0,1,\ldots,t-1$:
\begin{enumerate}
\item[(i)] $\delta(A,[b+id,b+id+w])\geq (1-s)\delta(A,I)$
\item[(ii)] $\delta(A,[b+id,b+id+w])\geq (1-s)\delta(A,[b+jd,b+jd+w])$.
\end{enumerate}
\end{definition}

Thus, for small $s$, we see that $A$ meets each block in a density that is roughly the same throughout and that is roughly the same as on the entire interval.  

The density regularity condition roughly requires that on sufficiently large subintervals of $I$, the density does not increase too rapidly.  Here is the precise formulation:

\begin{definition}
Suppose that $I\subseteq \N$ is an interval, $r\in \mathbb{R}^{>1}$, and $m\in \N$.  We say that $A\subseteq I$ has the \emph{$(m,r)$-density property on $I$} if, whenever $J\subseteq I$ is an interval with $|J|/|I|\geq 1/m$, then $\delta(A,J)\leq r\delta(A,I)$.
\end{definition}

Of course, given any $m\in \N$ and $A\subseteq I$, there is $r\in \mathbb{R}^{>1}$ such that $A$ has the $(m,r)$-density property on $I$.  The notion becomes interesting when we think of $r$ as fixed.

Given a hyperfinite interval $I\subseteq \starN$, $r\in {}^{\ast}\mathbb{R}^{>1}$ and $M\in \starN$, we say that an internal set $A\subseteq I$ has the \emph{internal $(M,r)$-density property on $I$} if the conclusion of the definition above holds for internal subintervals $J$ of $I$.

\begin{lemma}\label{Lipshitz}
Suppose that $A\subseteq [1,N]$ is an internal set with the internal $(M,r)$-density property for some $M>\N$.  Let $f:[0,1]\to [0,1]$ be the (standard) function given by $$f(x):=\st\left(\frac{|A\cap [1,xN]|}{|A\cap [1,N]|}\right).$$  Then $f$ is a Lipschitz function with Lipschitz constant $r$.
\end{lemma}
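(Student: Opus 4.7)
The plan is to verify the Lipschitz estimate $f(y)-f(x) \leq r(y-x)$ for standard $x < y$ in $[0,1]$ by applying the internal $(M,r)$-density property to the hyperfinite subinterval of $[1,N]$ corresponding to $[xN,yN]$, and then taking standard parts. No cleverness is needed; the only care required is to check that this subinterval has length comparable to $(y-x)N$, which is much larger than $N/M$ because $M$ is infinite and $y-x$ is a standard positive number.

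More concretely, I would proceed as follows. Fix standard $x,y \in [0,1]$ with $x<y$, set $a := \lfloor xN \rfloor$ and $b := \lfloor yN \rfloor$, and consider the internal subinterval $J := [a+1,b] \subseteq [1,N]$. Then $|J| = b-a$ satisfies $|J|/N \approx y-x$; since $y-x$ is a standard positive real and $1/M$ is infinitesimal (as $M > \N$), we have $|J|/N \geq 1/M$. The internal $(M,r)$-density property therefore yields
\[
\frac{|A \cap J|}{|J|} \ \leq \ r\, \frac{|A \cap [1,N]|}{N},
\]
which rearranges to
\[
\frac{|A \cap J|}{|A \cap [1,N]|} \ \leq \ r \cdot \frac{|J|}{N} \ \approx \ r(y-x).
\]
Since $|A \cap J| = |A \cap [1,b]| - |A \cap [1,a]|$, taking standard parts gives $f(y) - f(x) \leq r(y-x)$, which is the desired Lipschitz bound. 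The case $x = y$ is trivial, and one handles the endpoints $x=0$ or $y=1$ by the obvious conventions ($f(0)=0$ and $f(1)=1$).

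The only mildly delicate point, which I would flag but not labor over, is that $f$ is a priori only defined as the standard part of a ratio, so one must note that $|A\cap[1,N]|$ is implicitly assumed nonzero (otherwise the statement is vacuous) and that the floor adjustments $\lfloor xN \rfloor$ versus $xN$ contribute only an infinitesimal discrepancy after dividing by $N$, which disappears upon taking standard parts. There is no real obstacle here; the proof is essentially a one-line application of the density hypothesis to the right internal interval.
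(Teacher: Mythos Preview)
Your proof is correct and essentially identical to the paper's: both pick hypernatural endpoints $a,b$ (the paper writes $K,L$ with $x=\st(K/N)$, $y=\st(L/N)$) realizing $x$ and $y$, observe that $(b-a)/N$ is noninfinitesimal and hence exceeds $1/M$, apply the $(M,r)$-density property to $[a,b]$, and take standard parts. The only cosmetic difference is that you explicitly use floors and flag the minor well-definedness issues, which the paper leaves implicit.
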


\begin{proof}
Fix $x<y$ in $[0,1]$.  Write $x:=\st(K/N)$ and $y:=\st(L/N)$.  Since $y-x\not=0$, we have that $\frac{L-K}{N}$ is not infinitesimal; in particular, $\frac{L-K}{N}>1/M$.  Since $A$ has the $(M,r)$-density property on $[1,N]$, we have that $\delta(A,[K,L])\leq r\delta(A,[1,N])$.  Thus, it follows that
$$f(y)-f(x)=\st\left(\frac{|A\cap [K,L]|}{|A\cap [1,N]|}\right)=\st\left(\delta(A,[K,L]\frac{L-K}{|A\cap [1,N]|}\right)\leq r\st\left(\frac{L-K}{N}\right)=r(y-x).$$ 
\end{proof}

Here is the main result of this section:

\begin{theorem}[Leth]\label{leth}
Fix functions $g,h:\mathbb{R}^{>0}\to \mathbb{R}^{>0}$ such that $h$ is increasing and $g(x)\to \infty$ as $x\to \infty$.  Fix also $s>0$, $r>1$, and $j,t\in \N$.  Then there is $m=m(g,h,s,r,t,j)\in \N$ such that, for all $n>m$, whenever $I$ is an interval of length $n$ and $A\subseteq I$ is nonempty and has the $(m,r)$-density property on $I$, then $A$ contains a $(t,d,w)$-almost progression with homogeneity $s$ such that $w/d<h(d/n)$ and $1/g(m)<d/n<1/j$.
\end{theorem}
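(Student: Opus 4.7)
I would argue by contradiction via nonstandard analysis. If the conclusion failed, then for each standard $m$ there would exist an interval $I_m$ of length $n_m > m$ and a nonempty $A_m \subseteq I_m$ with the $(m,r)$-density property on $I_m$ such that $A_m$ contains no $(t,d,w)$-almost progression with homogeneity $s$ meeting the size constraints. By overflow and $\aleph_1$-saturation, this produces at some infinite $M \in \starN$ an internal interval $I = [1,N]$ with $N > M$ and an internal nonempty $A \subseteq I$ with the internal $(M,r)$-density property, admitting no such internal almost-progression; here $g(M)$ is automatically infinite since $g(x)\to\infty$. By Lemma~\ref{Lipshitz}, the function $f:[0,1]\to[0,1]$ defined by $f(x):=\st(|A\cap[1,xN]|/|A|)$ is Lipschitz with constant $r$, $f(0)=0$, $f(1)=1$; in particular $\int_0^1 f'\,dx = 1$ and $0 \leq f' \leq r$ almost everywhere.

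The main analytic step is to exhibit a $t$-term arithmetic progression in $[0,1]$ at whose points $f'$ is nearly constant and at least $1-s/2$. Fix $\epsilon := s/10$. An averaging argument based on the inequality $1 = \int f' \leq r\mu\{f' \geq 1-s/2\} + (1-s/2)(1 - \mu\{f' \geq 1-s/2\})$ gives $\mu\{f' \geq 1-s/2\} \geq (s/2)/(r-1+s/2) > 0$. Partitioning $[1-s/2,r]$ into finitely many multiplicative intervals of the form $[c, c(1+\epsilon)]$, pigeonhole yields $c \geq 1-s/2$ with $E := \{x : f'(x) \in [c, c(1+\epsilon)]\}$ of positive Lebesgue measure. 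Intersecting $E$ with one of the $\lceil j \rceil$ subintervals of $[0,1]$ of length $1/j$ on which it has positive measure, and discarding the null set of non-differentiability points of $f$, I invoke Szemer\'edi's theorem for positive-measure subsets of $[0,1]$ (a routine consequence of the integer version via discretization) to obtain an arithmetic progression $\beta, \beta + \delta, \ldots, \beta + (t-1)\delta$ in $E$ with $0 < \delta < 1/j$.

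Finally I pull back to the hyperfinite scale. Differentiability of $f$ at each of the $t$ chosen points yields a standard $\omega$ with $0 < \omega < \delta h(\delta)$ satisfying $\bigl|(f(\beta+i\delta+\omega) - f(\beta+i\delta))/\omega - f'(\beta+i\delta)\bigr| < \epsilon c$ simultaneously for $0 \leq i < t$, placing each such difference quotient in $[c(1-\epsilon), c(1+2\epsilon)]$. Setting $b := \lceil \beta N \rceil$, $d := \lceil \delta N \rceil$, $w := \lfloor \omega N \rfloor$, the internal block progression $\B(b,t,d,w)$ lies in $I$, and the defining formula for $f$ gives $\delta(A,[b+id,b+id+w])/\delta(A,I) \approx (f(\beta+i\delta+\omega)-f(\beta+i\delta))/\omega$. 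With $\epsilon = s/10$ and $c \geq 1-s/2$, both conditions of Definition~\ref{homog} hold with tolerance $s$; moreover $1/g(M) < d/N$ because $1/g(M)$ is infinitesimal while $d/N \geq \delta > 0$, $d/N < 1/j$ because $\delta < 1/j$, and $w/d \leq \omega/\delta < h(\delta) \leq h(d/N)$ by monotonicity of $h$. This contradicts the nonexistence of such an internal almost-progression. The chief obstacle is the quantitative coordination of $c$ and $\epsilon$: $c$ must exceed $1-s/2$ to survive the $\epsilon$-perturbation in condition (i), while $\epsilon$ must be small enough that difference quotients at different AP points fall within $(1-s)$ of one another for condition (ii); fixing $\epsilon = s/10$ resolves both after routine bookkeeping.
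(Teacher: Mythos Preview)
Your argument is correct and follows the same overall architecture as the paper's proof: pass to an infinite $M$ via underflow/contradiction, use the Lipschitz function $f$ from Lemma~\ref{Lipshitz}, locate a $t$-term arithmetic progression inside a level set of $f'$ where $f'$ is bounded below away from zero, and then pick a small standard $\omega$ via differentiability to control the block densities. The bookkeeping with $\epsilon=s/10$ and $c\geq 1-s/2$ is fine.

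The one substantive difference is the tool you invoke to produce the arithmetic progression in $E$. You appeal to Szemer\'edi's theorem (transferred to positive-measure subsets of $[0,1]$ by discretization). The paper instead uses the elementary Lemma~\ref{LebesgueAP}: any set of positive Lebesgue measure contains a $t$-term arithmetic progression of \emph{every} sufficiently small common difference, which follows directly from the Lebesgue density theorem (or Steinhaus) without any deep combinatorics. This matters for two reasons. First, your discretization step would itself typically pass through a density-point argument, so you are implicitly using the elementary fact anyway and then stacking Szemer\'edi on top of it unnecessarily. Second, the stronger form of Lemma~\ref{LebesgueAP}---that the common difference can be prescribed to be any value below some threshold---is exactly what lets the paper remark that $d$ can be taken to be a power of $2$, a feature needed in the proof of Theorem~\ref{AP} later in the chapter. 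Your Szemer\'edi route gives only \emph{some} small difference, which suffices for Theorem~\ref{leth} as stated but loses this refinement.
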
 

Roughly speaking, if $A$ has sufficient density regularity, then $A$ contains an almost-progression with ``small'' $w$ (small compared to the distance of the progression).

The proof of the theorem relies on the following standard lemma; see \cite[Lemma 1]{leth_near_2006}.

\begin{lemma}\label{LebesgueAP}
Suppose that $E\subseteq \mathbb{R}$ has positive Lebesgue measure and $t\in \N$.  Then there is $v>0$ such that, for all $0<u<v$, there is an arithmetic progression in $E$ of length $t$ and difference $u$.
\end{lemma}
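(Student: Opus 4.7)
The plan is to deduce the lemma from the Lebesgue density theorem together with a simple union-bound estimate. First I would pick a point $x_{0}\in E$ of Lebesgue density $1$, which exists since $E$ has positive measure. Given $t\in\mathbb{N}$, I will fix any $\epsilon>0$ with $2\epsilon t^{2}<1$ (e.g. $\epsilon=1/(4t^{2})$), and then use the density hypothesis at $x_{0}$ to choose $\delta>0$ such that
$$|E\cap(x_{0}-h,x_{0}+h)|\ \ge\ (1-\epsilon)\cdot 2h\qquad\text{for every }0<h\le\delta.$$
Finally I will set $v:=\delta/t$ and show that this value works.

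Fix $0<u<v$, so that $tu<\delta$, and let $I:=(x_{0},x_{0}+u)$. For each $i\in\{0,1,\ldots,t-1\}$ the set of $x\in I$ with $x+iu\in E$ is the translate $(E-iu)\cap I$, so what needs to be proved is that $\bigcap_{i=0}^{t-1}(E-iu)\cap I\ne\emptyset$. The key observation is the identity $|I\setminus(E-iu)|=|(I+iu)\setminus E|$, combined with the fact that $I+iu\subseteq(x_{0},x_{0}+tu)\subseteq(x_{0}-\delta,x_{0}+\delta)$; the density estimate at $x_{0}$ (applied with $h=tu$) therefore gives $|(x_{0},x_{0}+tu)\setminus E|\le 2\epsilon tu$, and hence $|I\setminus(E-iu)|\le 2\epsilon tu$ for every such $i$.

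A union bound then yields
$$\Bigl|I\setminus\bigcap_{i=0}^{t-1}(E-iu)\Bigr|\ \le\ \sum_{i=0}^{t-1}|I\setminus(E-iu)|\ \le\ 2\epsilon t^{2}u\ <\ u\ =\ |I|,$$
so the intersection has positive measure inside $I$. Any point $x$ in this intersection furnishes an arithmetic progression $x,x+u,\ldots,x+(t-1)u$ contained in $E$, completing the argument. The only subtle point is the choice of $\epsilon$ and the bookkeeping that aligns the length $tu$ of the ``swept'' interval with the scale $\delta$ on which $x_{0}$ already looks like a density point; everything else is a routine translation computation. (Alternatively, one could give a nonstandard proof by transferring the problem to an infinitesimal scale around a density point of ${}^{\ast}\!E$ and using Loeb measure, but the Lebesgue-density argument above is the shortest route.)
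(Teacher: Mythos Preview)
Your argument is correct: the Lebesgue density theorem, together with the translation identity $|I\setminus(E-iu)|=|(I+iu)\setminus E|$ and a union bound, yields a point $x\in I$ with $x,x+u,\ldots,x+(t-1)u\in E$. The bookkeeping is right: $tu<\delta$ ensures the density estimate applies at scale $h=tu$, giving $|(x_{0},x_{0}+tu)\setminus E|\le 2\epsilon tu$, and then $2\epsilon t^{2}<1$ makes the union bound strict.

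As for comparison, the paper does not actually prove this lemma; it simply cites \cite[Lemma~1]{leth_near_2006} and calls the result standard. Your density-point argument is exactly the standard route one would expect, so there is nothing to contrast.
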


We stress that in the previous lemma, $u$ and $v$ are real numbers.


\begin{proof}[of Theorem \ref{leth}]
Fix $g,h,s,r,j,t$ as in the statement of Theorem \ref{leth}.  We show that the conclusion holds for all infinite $M$, whence by underflow there exists $m\in \N$ as desired.  Thus, we fix $M>\N$ and consider $N>M$, an interval $I\subseteq \starN$ of length $N$, and a hyperfinite subset $A\subseteq I$ that has the internal $(M,r)$-density property on $I$.  Without loss of generality, we may assume that $I=[1,N]$.  
Suppose that we can find $B,D,W\in \starN$ and standard $c>0$ such that $[B,B+(t-1)D+W]\subseteq [1,N]$ and, for all $i=0,1,\ldots,t-1$, we have:
$$\delta(A,[1,N])(c-\frac{s}{2})\leq \delta(A,[B+iD,B+iD+W])\leq \delta(A,[1,N])(c+\frac{s}{4}).  \quad (\dagger)$$
We claim that $A$ nearly contains the internal $(t,D,W)$- progression $\B(B,t,D,W)$ with homogeneity $s$.  Indeed, item (i) of Definition \ref{homog} is clear.  For item (ii), observe that
$$\delta(A,[B+iD,B+iD+W])\geq \delta(A,[1,N])(c-\frac{s}{2})\geq \delta(A,[B+jD,B+jD+W])(\frac{c-\frac{s}{2}}{c+\frac{s}{4}})$$ and note that $\frac{c-\frac{s}{2}}{c+\frac{s}{4}}>1-s$.  Thus, it suffices to find $B,D,W,c$ satisfying $(\dagger)$ and for which $W/D<h(D/N)$ and $1/g(M)<D/N<1/j$.

Let $f$ be defined as in the statement of Lemma \ref{Lipshitz}.  Set $b:=\st(B/N)$, $d:=\st(D/N)$, and $w:=\st(W/N)$.  Assume that $w\not=0$.  Then we have that
$$\st\left(\frac{\delta(A,[B+iD,B+iD+W])}{\delta(A,[1,N])}\right)=\frac{f(b+id+w)-f(b+id)}{w}.$$  We thus want to find $B,D,W$ and $c$ satisfying
$$c-\frac{s}{2}< \frac{f(b+id+w)-f(b+id)}{w}<c+\frac{s}{4}. \quad (\dagger\dagger)$$

Now the middle term in $(\dagger \dagger)$ looks like a difference quotient and the idea is to show that one can bound $f'(b+id)$ for $i=0,1,\ldots,t-1$.
%
%
%
%
Indeed, by Lemma \ref{Lipshitz}, $f$ is Lipshitz, whence it is absolutely continuous.  In particular, by the Fundamental Theorem of Calculus, $f$ is differentiable almost everywhere and $f(x)=\int_0^x f'(u)du$.  Since $f(0)=0$ and $f(1)=1$, it follows that $\{x\in [0,1] \ : \ f'(x)\geq (1-\frac{s}{4})\}$ has positive measure.  In particular, there is $c>1$ such that $$E:=\{x\in [0,1] \ : \ c-\frac{s}{4}\leq f'(x)\leq c\}$$ has positive measure.  By Lemma \ref{LebesgueAP}, there is $b\in E$ and $0<u<1/j$ such that $b,b+u,b+2u,\ldots,b+(t-1)u\in E$.  Take $B,D\in [1,N]$ such that $b=\st(B/N)$ and $u=\st(D/N)$.  Note that $g(M)$ is infinite and $D/N$ is noninfinitesimal, so $1/g(M)<D/N<1/j$.  It remains to choose $W$.  Since $f$ is differentiable on $E$, there is $w>0$ sufficiently small so that for all $i=0,1,\ldots,t-1$, we have $|f'(b+id)-\frac{f(b+id+w)-f(b+id)}{w}|<\frac{s}{4}$.  For this $w$, $(\dagger\dagger)$ clearly holds; we now take $W$ such that $w=\st(W/N)$.  Since $h(D/N)$ is nonfinitesimal (as $D/N$ is noninfinitesimal), if $w$ is chosen sufficiently small, then $W/D<h(D/N)$.
\end{proof}

Theorem \ref{leth} implies a very weak form of Szemeredi's theorem.

\begin{corollary}
Suppose that $\BD(A)>0$.  Suppose that $g,h,s,t,j$ are as in the hypothesis of Theorem \ref{leth}.  Then for $n$ sufficiently large, there is an interval $I$ of length $n$ such that $A\cap I$ contains a $(t,s,d)$-almost progression in $I$ with $w/d<h(d/n)$ and $1/g(m)<d/n<1/j$.
\end{corollary}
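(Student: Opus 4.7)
The plan is to reduce the corollary directly to Theorem \ref{leth} by producing, for each sufficiently large $n$, an interval $I$ of length $n$ on which $A$ inherits enough density regularity to satisfy the hypothesis of the theorem. First, I would fix any convenient $r>1$ (say $r=2$) and let $m=m(g,h,s,r,t,j)\in\N$ be the integer provided by Theorem \ref{leth} for this choice. It then suffices to find, for all sufficiently large $n$, an interval $I$ of length $n$ such that $A\cap I$ is nonempty and has the $(m,r)$-density property on $I$: applying Theorem \ref{leth} to $A\cap I\subseteq I$ would then yield a $(t,d,w)$-progression with homogeneity $s$ and the stated control on $w/d$ and $d/n$.

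To produce such an $I$, I would exploit two facts about the Banach density function $\Delta_k(A)$: one has $\Delta_k(A)\geq\BD(A)$ for every $k$, and $\Delta_k(A)\to\BD(A)$ as $k\to\infty$ (this is how $\BD(A)$ was defined via Fekete's lemma). Choose $n_0$ so large that $\Delta_k(A)\leq\sqrt{r}\,\BD(A)$ for all $k\geq n_0$, and then restrict attention to $n\geq m\cdot n_0$. Among intervals of length $n$, select $I$ with $\delta(A,I)\geq\BD(A)/\sqrt{r}$, which is possible because $\Delta_n(A)\geq\BD(A)>\BD(A)/\sqrt{r}$. For any subinterval $J\subseteq I$ with $|J|/|I|\geq 1/m$, one has $|J|\geq n/m\geq n_0$, hence
\[
\delta(A,J)\leq\Delta_{|J|}(A)\leq\sqrt{r}\,\BD(A)\leq r\cdot\delta(A,I),
\]
so $A\cap I$ has the $(m,r)$-density property on $I$. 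Since $\delta(A,I)>0$, the intersection $A\cap I$ is nonempty, and Theorem \ref{leth} applies.

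There is no real obstacle here beyond getting the order of quantifiers right: $r$ and $m$ must be fixed first (with $m$ depending on $g,h,s,r,t,j$), and only afterwards is $n$ required to be large enough that $n/m$ exceeds the threshold past which $\Delta_k(A)$ is within a factor of $\sqrt{r}$ of $\BD(A)$. With this ordering in place, the corollary follows at once from Theorem \ref{leth}.
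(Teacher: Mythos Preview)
Your argument is correct, but the paper's proof is slicker. The key difference is in the choice of $r$: you fix $r=2$ and then work to verify the $(m,r)$-density property using the convergence $\Delta_k(A)\to\BD(A)$; the paper instead chooses $r$ large enough that $\BD(A)>1/r$, gets $m=m(g,h,s,r,t,j)$, and for any $n>m$ picks an interval $I$ of length $n$ with $\delta(A,I)>1/r$. Then $r\cdot\delta(A,I)>1\geq\delta(A,J)$ for every subinterval $J$, so the $(m,r)$-density property holds trivially, with no need for $n$ to clear any further threshold. Your route has the mild advantage that $r$ (and hence, in principle, $m$) does not depend on the particular set $A$; the paper's route buys brevity at the cost of letting $r$ depend on $\BD(A)$. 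Both lead to the same conclusion.
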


\begin{proof}
Let $r\in \mathbb{R}^{>1}$ be such that $\BD(A)>1/r$.  Let $m:=m(g,h,s,r,t,j)$ as in the conclusion of Theorem \ref{leth}.  Let $n>m$ and take an interval $I$ of length $n$ such that $\delta(A,I)>1/r$.  It remains to observe that $A\cap I$ has the $(m,r)$-density property on $I$.
\end{proof}

\section{Connection to the Erd\H{o}s-Tur\'{a}n conjecture}
Leth's original motivation was the following conjecture of Erd\H{o}s and Tur\'{a}n\index{Erd\H{o}s-Tur\'{a}n's Conjecture} from \cite{erdos_sequences_1936}:

\begin{conjecturenew}[Erd\H{o}s-Tur\'{a}n]
Suppose that $A=(a_n)$ is a subset of $\N$ such that $\sum 1/a_n$ diverges.  Then $A$ contains arbitrarily long arithmetic progressions.
\end{conjecturenew}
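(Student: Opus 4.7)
The plan is to leverage Theorem \ref{leth} in order to extract near-arithmetic progressions from $A$, and then attempt to upgrade them to genuine arithmetic progressions. First I would show that divergence of $\sum 1/a_n$ forces a density regularity condition: for any fixed $r > 1$ and $m \in \N$, there should exist arbitrarily long intervals $I$ on which $A$ satisfies the $(m, r)$-density property while still having non-negligible density $\delta(A, I)$. The intuition is that if no such intervals existed, then any sufficiently long interval $I$ could be replaced by a sub-interval of proportional length at least $1/m$ on which the density jumps by a factor of at least $r$. Iterating this concentration process and estimating $\sum_{n \in A \cap I} 1/n$ via a Cauchy-condensation style computation would force convergence of $\sum 1/a_n$, contradicting the hypothesis.

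Having obtained a sequence of such intervals $I_n$ of length tending to infinity, I would choose $g$ growing to infinity, $h$ decreasing slowly to $0$, small $s > 0$, and fixed $t$ and $j$, and then apply Theorem \ref{leth} on each $I_n$ of length greater than $m(g, h, s, r, t, j)$. This yields a $(t, d, w)$-progression $\B(b, t, d, w) \subseteq I_n$ nearly contained in $A$ with homogeneity $s$, satisfying $w/d < h(d/n)$ and $1/g(m) < d/n < 1/j$. In particular, we obtain points $a_0, a_1, \ldots, a_{t-1} \in A$ with $a_i \in [b + id, b + id + w]$, so the $a_i$ form a ``near'' arithmetic progression whose relative error $w/d$ can be made arbitrarily small by choosing $h$ to decay quickly.

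The hard part, and the reason the conjecture remains famously open, is the final step: upgrading a near-AP to a genuine AP contained in $A$. Theorem \ref{leth} is powerless to force $w = 0$, because the small tolerance $w$ on each block is essential to its proof (it exploits differentiability of the Lipschitz profile function $f$ from Lemma \ref{Lipshitz}, and a genuine derivative condition forces a positive-measure width). To close the gap, one would need to align the errors $a_i - (b + id)$ exactly, which requires either a much more delicate pigeonhole argument iterated over many choices of $(b, d, w)$, or a completely different technique such as the Fourier-analytic methods recently used to handle the length-$3$ case. Bridging this gap in full generality lies beyond the scope of the nonstandard density-regularity methods of this chapter, and in fact constitutes the entire remaining difficulty of the Erd\H{o}s-Tur\'{a}n conjecture.
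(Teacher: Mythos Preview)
The statement you are addressing is labeled \texttt{conjecturenew} in the paper, and indeed the Erd\H{o}s--Tur\'{a}n conjecture is a famous open problem; the paper does not prove it, and you correctly acknowledge in your final paragraph that the upgrade from near-progressions to genuine progressions is exactly the missing piece. So in that sense your proposal is honest and there is no proof in the paper to compare against.

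That said, your outline and the paper's surrounding discussion diverge in an instructive way. You propose to extract, directly from the divergence hypothesis, intervals on which $A$ has the $(m,r)$-density property for fixed $r>1$, and then feed these into Theorem~\ref{leth}. This first step is not substantiated: a set with $\sum 1/a_n=\infty$ can still concentrate on subintervals by arbitrarily large factors, so it is not clear your iteration-to-contradiction argument would go through for \emph{fixed} $r$. The paper takes a different route: it first proves the elementary Lemma following the conjecture, showing that divergence forces $\delta(A,n)>1/(\log n)^2$ infinitely often, and then formulates Leth's Conjecture~\ref{lethconjecture}, which posits near-progressions for sets meeting a much weaker density threshold $1/(c\log n)^{2\log\log n}$ with no regularity hypothesis at all. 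Theorem~\ref{AP} then shows that Leth's Conjecture plus the density lemma yields genuine arithmetic progressions via a clever rescaling argument that exploits the constraint $w/d<d/n$ and the requirement that $d$ be a power of $2$. So the paper isolates the missing ingredient as a concrete combinatorial conjecture rather than leaving it as the vague ``upgrade near-AP to AP'' step you describe; in particular, Theorem~\ref{AP} shows that the upgrade \emph{can} be carried out if the near-progressions satisfy the specific quantitative relationship $w/d<d/n$.
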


Leth first observed the following standard fact about the densities of sequences satisfying the hypotheses of the Erd\H{o}s-Tur\'{a}n conjecture.

\begin{lemma}
Suppose that $A=(a_n)$ is enumerated in increasing order and is such that $\sum 1/a_n$ diverges.  Then, for arbitrarily large $n$, one has $\delta(A,n)>1/(\log n)^2$.
\end{lemma}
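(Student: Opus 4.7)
The plan is to prove the contrapositive: I assume there exists $N_0 \in \N$ such that $\delta(A,n) \leq 1/(\log n)^2$ for every $n \geq N_0$, and deduce that $\sum 1/a_n$ converges. Since $A$ is enumerated in increasing order, for each $n$ the $n$-th element $a_n$ satisfies $|A \cap [1,a_n]| = n$, so the hypothesis applied at $n = a_n$ gives (for $a_n \geq N_0$) the key inequality $n/a_n = \delta(A, a_n) \leq 1/(\log a_n)^2$, i.e.\ $a_n \geq n(\log a_n)^2$.

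Since $a_n \geq n$ always, we have $\log a_n \geq \log n$, and hence $a_n \geq n(\log n)^2$ for all sufficiently large $n$. This gives $1/a_n \leq 1/(n(\log n)^2)$, and since $\sum_{n \geq 2} 1/(n(\log n)^2)$ converges (by the integral test), it follows that $\sum 1/a_n$ converges, contradicting the hypothesis of the lemma.

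The only small subtlety is making sure the comparison $a_n \geq n(\log n)^2$ really does hold for all $n$ beyond some threshold depending only on $N_0$; this is immediate once $a_n \geq N_0$, which occurs for all $n \geq N_0$ because $a_n \geq n$. Alternatively, one can avoid invoking the $n$-th element and argue directly by Abel summation: writing $f(n) := |A \cap [1,n]|$, one has
\[
\sum_{k=1}^{N} \frac{\mathbf{1}_A(k)}{k} = \frac{f(N)}{N} + \sum_{k=1}^{N-1} \frac{f(k)}{k(k+1)},
\]
and the assumed bound $f(k) \leq k/(\log k)^2$ for $k \geq N_0$ makes the tail of the right-hand side dominated by the convergent series $\sum 1/((k+1)(\log k)^2)$. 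Either route is essentially a one-line computation, so there is no real obstacle; the content of the lemma is simply the observation that the critical density threshold for divergence of $\sum 1/a_n$ is of order $1/(\log n)^2$.
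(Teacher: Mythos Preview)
Your proof is correct and follows essentially the same contrapositive strategy as the paper: bound $a_n$ below by a multiple of $n(\log n)^2$ and compare with the convergent series $\sum 1/(n(\log n)^2)$. Your execution is in fact slightly cleaner than the paper's—by evaluating the density bound directly at $a_n$ (using $|A\cap[1,a_n]|=n$ and $a_n\geq n$) you obtain $a_n \geq n(\log n)^2$ immediately, whereas the paper reaches the weaker bound $a_n \geq \tfrac{1}{2}n(\log n)^2$ via a short contradiction argument.
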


\begin{proof}
We argue by contrapositive.  Suppose that $\delta(A,n])\leq 1/(\log n)^2$ for all $n\geq n_0\geq 4$.  We first show that this implies that $a_n\geq \frac{1}{2}n(\log n)^2$ for all $n>n_0$.  Suppose otherwise and fix $n\geq n_0$.  Then $|A\cap [1,\frac{1}{2}n(\log n)^2]|\geq n$.  On the other hand, by our standing assumption, we have that 
$$|A\cap [1,\frac{1}{2}n(\log n)^2])\leq \frac{1/2n(\log n)^2}{(\log((1/2n(\log n))^2}\leq \frac{1}{2}n,$$ yielding the desired contradiction.

Since $a_n\geq \frac{1}{2}n(\log n)^2$ eventually, we have that
$$\sum \frac{1}{a_n}\leq \sum \frac{2}{n(\log n)^2},$$ whence $\sum \frac{1}{a_n}$, converges.
\end{proof}

The truth of the following conjecture, together with the theorem that follows it, would imply that, for sets satisfying the density condition in the previous lemma, the existence of almost arithmetic progressions implies the existence of arithmetic progressions.

\begin{conjecturenew}[Leth]\label{lethconjecture}
Fix $t\in \N$ and $c>0$.  Then there is $n_0:=n_0(t,c)$ such that, for all $n\geq n_0$, whenever $A\subseteq \N$ is such that $\delta(A,n)>1/(c\log n)^{2\log \log n}$, then $A$ nearly contains a $(t,d,w)$-progression on $[1,n]$ with $w/d<d/n$ where $d$ is a power of $2$.\index{Leth's Conjecture}
\end{conjecturenew}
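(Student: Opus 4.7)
The approach I would pursue is to try to reduce Leth's conjecture to the already-established Theorem \ref{leth} via a dyadic density-increment argument, with nonstandard overspill providing the required uniformity. First I would negate the conjecture: if it failed, then by overspill there would exist an infinite $N$ and an internal $A \subseteq [1,N]$ with $\delta(A,N) > 1/(c \log N)^{2 \log \log N}$ admitting no nearly-contained $(t,d,w)$-progression on $[1,N]$ with $d$ a power of $2$ and $w/d < d/N$. The rest of the plan is to derive a contradiction.

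Next I would perform a dyadic density-increment to locate a sub-interval $I$ of $[1,N]$ of dyadic length on which $A$ has the internal $(M,r)$-density property for $r := (c \log N)^2$ and some suitable infinite $M$. Starting from $[1,N]$, at each step I bisect the current interval and pass to the half of larger density; I halt as soon as every dyadic sub-interval of the current interval (down to a threshold length controlled by $M$) has density within a factor $r$ of the density on the current interval. The initial density is at least $(c \log N)^{-2 \log \log N}$ and is bounded above by $1$, so every step that fails to halt multiplies the density by at least $r$; hence the procedure terminates after at most $k \le \log \log N$ refinements, leaving an interval $I$ of length $|I| = N/2^k$. A covering argument (two adjacent dyadic intervals) then upgrades dyadic regularity to the internal $(M, 2r)$-density property on $I$ in the sense of Theorem \ref{leth}.

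On $I$, I would then invoke a strengthened version of Theorem \ref{leth} tailored to output $D$ equal to an internal power of $2$ and $W/D < D/N$ (equivalently, applied with $h(x) := x/2^k$ in the notation of that theorem). The only free choice in the proof of Theorem \ref{leth} is that of $(b,d)$ via Lemma \ref{LebesgueAP} applied to the positive-measure set $E \subseteq [0,1]$. Since the standard parts of internal powers of $2$ divided by $|I|$ are dense in $[0,1]$, one can require $d$ to come from an internal power of $2$ with no loss; and the choice of $w$ at the final step is made sufficiently small thanks to differentiability of $f$ at $b$. Pulling the resulting almost-progression back to $[1,N]$ contradicts the standing assumption, completing the proposed argument.

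The main obstacle lies in the density-increment step. The multiplicative increment $r = (c \log N)^2$ is essentially forced by the density lower bound, and on an interval shortened by a factor of $2^{\log \log N}$ the combination of ``$D$ must be a power of $2$'' with the lower bound $D/|I| > 1/g(M)$ from Theorem \ref{leth} leaves very little room for the enforced inequality $W/D < D/N$. More fundamentally, the density-increment \emph{only} succeeds when a substantially denser dyadic sub-interval is found; failure of this step is itself a strong structural statement about $A$, and converting that structure into an almost-progression in the $(\log n)^{-\log \log n}$ density regime would amount to a Szemer\'edi-type theorem well beyond what is currently known. This is precisely where Leth's conjecture meets the Erd\H{o}s--Tur\'an barrier, and I expect that any complete proof will require genuinely new input from additive combinatorics rather than a purely nonstandard repackaging of Theorem \ref{leth}.
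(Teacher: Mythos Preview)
The statement you are trying to prove is presented in the paper as an \emph{open conjecture} (Conjecture~\ref{lethconjecture}); the paper does not prove it, and indeed uses its hypothetical truth only as an assumption from which to derive consequences (Theorem~\ref{AP} and the corollary that Leth's Conjecture implies the Erd\H{o}s--Tur\'{a}n conjecture). There is therefore no proof in the paper to compare your proposal against.

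As a standalone proof attempt, your outline has a genuine gap even before the obstacle you yourself flag at the end. You intend to apply Theorem~\ref{leth} on the sub-interval $I$ with density-regularity parameter $r = (c\log N)^2$. But in both the statement and the proof of Theorem~\ref{leth}, $r$ is a fixed \emph{standard} real number greater than $1$: the underflow argument produces $m = m(g,h,s,r,t,j)$ only after $r$ is fixed, and the heart of the proof is Lemma~\ref{Lipshitz}, which asserts that the rescaled counting function $f$ is Lipschitz with constant $r$. If $N$ is infinite then $(c\log N)^2$ is infinite, so the Lipschitz bound gives no information about $f$, the measure-theoretic argument via Lemma~\ref{LebesgueAP} has nothing to work with, and the proof of Theorem~\ref{leth} simply does not go through. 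In short, your density-increment scheme can terminate in at most $\log\log N$ steps only because you allow $r$ to grow with $N$, but that is exactly what prevents you from invoking Theorem~\ref{leth} on the resulting interval. Your final paragraph correctly identifies that the conjecture sits at the Erd\H{o}s--Tur\'{a}n barrier; the difficulty is not merely in the ``bad'' branch of the increment but already in making the ``good'' branch interface with the existing machinery.
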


We should remark that requiring that $d$ be a power of $2$ is not much of an extra requirement.  Indeed, our proof of Theorem \ref{leth} shows that one can take  $d$ there to be a power of $2$.  For any $t$ and $c$, we let $L(t,c)$ be the statement that the conclusion of the previous conjecture holds for the given $t$ and $c$.  We let $L(t)$ be the statement that $L(t,c)$ holds for all $c>0$.  

\begin{theorem}\label{AP}
Suppose that $L(t)$ is true for a given $t\in \N$.  Further suppose that $A\subseteq \N$ is such that there is $c>0$ for which, for arbitrarily large $n$, one has $\delta(A,n)>c/(\log n)^2$.  Then $A$ contains an arithmetic progression of length $t$.
\end{theorem}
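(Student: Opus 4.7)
My plan is to apply $L(t)$ iteratively inside a saturated nonstandard model, using the fact that the conjecture's ratio $w/d<d/n$ forces the ``slack'' $w$ to shrink by at least a factor of $j^2$ at each step. After a hyperfinite number of iterations the slack must vanish, and a near $(t,d,0)$-progression is an honest $t$-term arithmetic progression.

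First I would move to a sufficiently saturated nonstandard extension and, using the density hypothesis together with overflow, pick an infinite $N\in{}^*\N$ with $\delta({}^*A,[1,N])>c/(\log N)^2$. For any fixed standard $c'>0$, the value $c/(\log N)^2$ eventually dominates the conjecture's density threshold $1/(c'\log N)^{2\log\log N}$, so by the transfer of $L(t,c')$ there exists a near $(t,D_1,W_1)$-progression $\B(B_1,t,D_1,W_1)\subseteq{}^*A\cap[1,N]$ with $W_1/D_1<D_1/N$ and $D_1$ a hyperfinite power of $2$. In particular $W_1<D_1^2/N$, and $W_1<D_1/j$.

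Then I would set up an internal recursion producing nested near-progressions $\B(B_\ell,t,D_\ell,W_\ell)$ with $W_\ell<W_{\ell-1}/j^2$. At stage $\ell$ I locate a block of the previous near-progression inside which ${}^*A$ still carries density of order $c/(\log N)^2$ (the key technical step, discussed below), shift this block to $[1,W_{\ell-1}]$, and apply the transfer of $L(t,c'')$ there. This is legitimate because $c/(\log N)^2$ overwhelms $1/(c''\log W_{\ell-1})^{2\log\log W_{\ell-1}}$ whenever $W_{\ell-1}$ is infinite. The new near $(t,D_\ell,W_\ell)$-progression satisfies $W_\ell<D_\ell^2/W_{\ell-1}<W_{\ell-1}/j^2$. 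Since the recursion is internal, I may extend it to a hyperfinite stage $K$ with $K>\log_{j^2}W_1$, forcing $W_K<1$. Because $W_K$ is a nonnegative hyperinteger, $W_K=0$, so $\B(B_K,t,D_K,0)$ is an exact $t$-term arithmetic progression in ${}^*A$; transfer then yields a $t$-term arithmetic progression in $A$.

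The hard part is the density-carryover step. A plain pigeonhole over the $t$ blocks of the near-progression will not work, because the conjecture only guarantees a single point of ${}^*A$ in each block, which is far too sparse to reapply $L(t)$ at the next scale. My intended remedy is to interpose the regularity machinery of this section: pass first to a subinterval of $[1,N]$ on which ${}^*A$ satisfies an internal $(M,r)$-density property---Lemma \ref{Lipshitz} then forces the normalized counting function to be Lipschitz, preventing concentration---and use Theorem \ref{leth} in place of the raw conjecture to produce a \emph{homogeneous} near-progression in which every block carries density comparable to the ambient $c/(\log N)^2$. Verifying that this regularity can be re-established inside the chosen block at each iteration, so that the scheme survives all the way down to $W_K=0$, is the main technical obstacle in the argument.
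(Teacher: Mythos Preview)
Your iteration scheme has a genuine obstruction at the density-carryover step that cannot be repaired along the lines you sketch.  The quantitative mismatch is this: the ambient density you start with on $[1,N]$ is of order $c/(\log N)^{2}$, and this is essentially the best density you can ever hope to carry into a block, even with perfect homogeneity from Theorem~\ref{leth}.  But to re-apply (the transfer of) $L(t,c')$ on a block of length $W_\ell$ you need density exceeding $1/(c'\log W_\ell)^{2\log\log W_\ell}$.  This threshold \emph{rises} as $W_\ell$ shrinks: once $W_\ell$ drops to, say, a power of $\log N$, the threshold is of order $(\log\log N)^{-O(\log\log\log N)}$, which is far larger than the infinitesimal $c/(\log N)^{2}$ you have available.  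Since your scheme needs roughly $\log N$ iterations to drive $W$ to zero, it must pass through this regime, and there the recursion halts.  The problem is not a matter of bookkeeping or of re-establishing an $(m,r)$-density property; it is that the hypothesis density $c/(\log n)^{2}$ is simply too weak to fuel repeated applications of $L(t)$ at a cascade of shrinking scales.  (There is a secondary issue as well: homogeneity loses a factor $(1-s)$ at each step, so after a hyperfinite number of iterations the density is multiplied by $(1-s)^{K}\approx 0$; and Theorem~\ref{leth} requires an $(m,r)$-density hypothesis that your set need not satisfy on any relevant subinterval.)

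The paper's argument is not iterative at all.  It uses the density hypothesis at a whole tower of dyadic scales $2^{2^k}$ below $N$ simultaneously to build, by a tree-like encoding, a single auxiliary internal set $B\subseteq[1,N]$ whose density is at least $1/(c''\log N)^{2\log\log N}$ --- exactly the threshold for $L(t)$.  One application of $L(t)$ to $B$ then yields a near $(t,d,w)$-progression in $B$ with $w/d<d/N$, and the combinatorics of the encoding (together with a parity trick, which is why one works with $2A$) converts this directly into a genuine $t$-term arithmetic progression in ${}^*(2A)$.  The point is that $L(t)$ is applied only once, to a set whose density was manufactured to match the conjecture's threshold, rather than many times to sets whose density is stuck at $c/(\log N)^{2}$.
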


Before we prove this theorem, we state the following standard combinatorial fact, whose proof we leave as an exercise to the reader (alternatively, this is proven in \cite[Proposition 1]{leth_near_2006}).

\begin{proposition}\label{subinterval}
Let $m,n\in \N$ be such that $m<n$, let $A\subseteq \N$, and let $I$ be an interval of length $n$.  Then there is an interval $J\subseteq I$ of length $m$ such that $\delta(A,J)>\delta(A,I)/2$.
\end{proposition}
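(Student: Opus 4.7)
The plan is to prove this by an elementary pigeonhole argument over sub-intervals of $I$ of length $m$, with a small technical care for the case when $m$ does not divide $n$. Setting $q := \lceil n/m \rceil$, I would partition $I$ into $q$ consecutive pieces $I_1, \ldots, I_q$, the first $q-1$ each of length $m$, and the last of length $r := n - (q-1)m \in \{1, \ldots, m\}$. Let $J \subseteq I$ be a length-$m$ sub-interval maximizing $|A \cap J|$, and set $M^{\ast} := |A \cap J|$.

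The central observation is that $|A \cap I_i| \leq M^{\ast}$ for every $i = 1, \ldots, q$. For $i < q$ this is immediate because $I_i$ is itself a length-$m$ sub-interval of $I$. For the tail $I_q$, which has length only $r$, one extends $I_q$ leftward to a length-$m$ sub-interval of $I$ (possible because $n \geq m$), and notes that this extension contains $I_q$ and has at most $M^{\ast}$ elements of $A$. Summing over the partition gives
\[
|A \cap I| \;=\; \sum_{i=1}^{q} |A \cap I_i| \;\leq\; q \, M^{\ast},
\]
and therefore, using the crude bound $qm \leq n + m - 1$ coming from $\lceil n/m \rceil \leq (n + m - 1)/m$, one obtains
\[
\delta(A, J) \;=\; \frac{M^{\ast}}{m} \;\geq\; \frac{|A \cap I|}{qm} \;\geq\; \frac{|A \cap I|}{n + m - 1}.
\]

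The desired strict inequality $\delta(A, J) > \delta(A, I)/2 = |A \cap I|/(2n)$ then reduces algebraically to $2n > n + m - 1$, i.e.\ $n \geq m$, which is guaranteed by the hypothesis $m < n$; strictness comes automatically whenever $|A \cap I| > 0$ (and the case $|A \cap I| = 0$ makes the stated inequality vacuous and can be set aside, as the proposition is applied in contexts where $\delta(A,I)>0$). I do not anticipate a serious obstacle: the only mildly delicate point is the control of the short tail piece $I_q$, and it is precisely there that the assumption $m < n$ (ensuring a length-$m$ sub-interval of $I$ containing $I_q$) enters essentially.
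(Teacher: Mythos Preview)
Your argument is correct. The paper does not supply its own proof of this proposition; it explicitly leaves it as an exercise to the reader (with a pointer to \cite[Proposition 1]{leth_near_2006}), and your pigeonhole/averaging argument over consecutive length-$m$ sub-intervals is exactly the standard way to carry this out. Your handling of the short tail piece $I_q$ and the observation that the degenerate case $A\cap I=\emptyset$ (where the strict inequality literally fails) is irrelevant to the applications are both appropriate.
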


\begin{proof}[of Theorem \ref{AP}]
For reasons that will become apparent later in the proof, we will need to work with the set $2A$ rather than $A$.  Note that $2A$ satisfies the hypothesis of the theorem for a different constant $c'>0$.

By overflow \index{overflow principle}, we may find $M>\N$ such that $\delta({}^{\ast}(2A),M)>\frac{c'}{(\log M)^2}$.  Take $L>\N$ such that $2^{2^L}\leq M<2^{2^{L+1}}$ and set $N:=2^{2^{L}}$.  If we apply Proposition \ref{subinterval} to any $n\leq N$ and $I=[1,N]$, we can find an interval $I_n\subseteq [1,M]$ of length $n$ such that $$|{}^{\ast}(2A)\cap I_n|>\frac{c'M}{2(\log M)^2}\geq \frac{c'M}{2(\log 2^{2^{L+1}})^2}=\frac{c'/8}{(\log N)^2}.$$  For $1\leq k\leq L$, write $I_{2^{2^k}}=[x_k,y_k]$.  

We will now construct an internal set $B\subseteq [1,N]$ such that $\delta(B,N)>\frac{1}{(c''\log N)^{2\log \log N}}$, where $c'':=\sqrt{8/c'}$.  Since we are assuming that $L(t)$ holds, by transfer we will be able to find an internal $(t,d,w)$-progression nearly inside of $B$ with $w/d<d/N$ and $w$ and $d$ both powers of $2$.  The construction of $B$ will allow us to conclude that ${}^{\ast}(2A)$ contains a $t$-termed arithmetic progression of difference $d$, whence so does $2A$ by transfer, and thus so does $A$.


Set $B_0:=[1,N]$ and, for the sake of describing the following recursive construction, view $B_0$ as the union of two subintervals of length $N/2=2^{2^L-1}=2^{2^L-2^0}$; we refer to these subintervals of $B_0$ as \emph{blocks}.  Now divide each block in $B_0$ into $2=2^{2^0}$ intervals of length $2^{2^L-{2^0}}/2^{{2^0}}=2^{2^L-2^1}$ and, for each $0\leq j< 2^{2^0}$, we place the $j^{\text{th}}$ subblock of each block in $B_0$ into $B_1$ if and only if $x_0+j\in {}^{\ast}2A$.  

Now divide each block in $B_1$ into $2^{2^1}$ intervals of length $2^{2^L-2^1}/2^{2^1}=2^{2^L-2^2}$ and, for each $0\leq j<2^{2^1}$, we place the $j^{\text{th}}$ subblock of each block in $B_1$ into $B_2$ if and only if $x_1+j\in {}^{\ast}2A$. 

We continue recursively in this manner.  Thus, having constructed the hyperfinite set $B_k$, which is a union of blocks of length $2^{2^L-2^k}$, we break each block of $B_k$ into $2^{2^k}$ many intervals of length $2^{2^L-2^k}/2^{2^k}=2^{2^L-2^{k+1}}$ and we place the $j^{\text{th}}$ subblock of each block in $B_k$ into $B_{k+1}$ if and only if $x_k+j\in {}^{\ast}2A$.

We set $B:=B_L$.  Since $|B_{k+1}|/|B_k|>\frac{c'/8}{(\log N)^2}$ for each $0\leq k<L$, it follows that $$|B|>\frac{(c'/8)^LN}{(\log N)^{2L}}=\frac{N}{(c''\log N)^{2\log\log N}}.$$

By applying transfer to $L(t)$, we have that $B$ nearly contains an internal $(t,d,w)$-progression $\B(b,t,d,w)$ contained in $[1,N]$ such that $w/d<d/N$ and $d$ is a power of $2$.  Take $k$ such that $2^{2^L-2^{k+1}}\leq d<2^{2^L-2^k}$.  Note that this implies that $2^{2^L-2^{k+1}}\mid d$.  Also, we have
$$w<(d/N)\cdot d<(2^{-2^k})2^{2^L-2^k}=2^{2^L-2^{k+1}}.$$

We now note that $\B(b,t,d,w)$ must be contained in a single block $C$ of $B_k$.  Indeed, since $d\mid 2^{2^L-2^k}$ and $w\mid 2^{2^L-2^{k+1}}$, we have $d+w<(\frac{1}{2}+\frac{1}{2^{2^k}})(2^{2^L-2^k})$, whence the fact that $[b,b+w]$ and $[b+d,b+d+w]$ both intersect $B_k$ would imply that $[x_{k-1},y_{k-1}]$ contains consecutive elements of ${}^{\ast}2A$, which is clearly a contradiction.

Now write $d=m\cdot 2^{2^L-2^{k+1}}$.  Take $0\leq j<2^{2^k}$ so that $[b,b+w]$ intersects $B_{k+1}$ in the $j^{\text{th}}$ subblock of $C$ so $x_k+j\in {}^{\ast}2A$.  Since $[b+d,b+d+w]\cap B_{k+1}\not=\emptyset$, we have that at least one of $x_k+j+(m-1)$, $x_k+j+m$, or $x_k+j+(m+1)$ belong to ${}^{\ast}(2A)$.  However, since $x_k+j$ and $m$ are both even, it follows that we must have $x_k+j+m\in {}^{\ast}(2A)$.  Continuing in this matter, we see that $x_k+j+im\in {}^{\ast}2A$ for all $i=0,1,\ldots,t-1$.  It follows by transfer that $2A$ contains a $t$-term arithmetic progression, whence so does $A$.    
\end{proof}

Putting everything together, we have:

\begin{corollary}
The Erd\H{o}s-Tur\'{a}n conjecture\index{Erd\H{o}s-Tur\'{a}n's Conjecture} follows from Leth's Conjecture\index{Leth's Conjecture}.
\end{corollary}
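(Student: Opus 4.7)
The plan is to deduce the Erd\H{o}s--Tur\'an conjecture from Leth's Conjecture by combining the two intermediate results already proven in the excerpt: the density lemma on divergent reciprocal sums, and Theorem~\ref{AP} which converts the existence of near--arithmetic progressions (for all $t$) into the existence of genuine arithmetic progressions under a $c/(\log n)^2$ density hypothesis. No new machinery should be needed; the task is purely to chain the implications together.

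First, I would assume Leth's Conjecture in full generality, which is precisely the statement that $L(t,c)$ holds for every $t\in\N$ and every $c>0$; equivalently, $L(t)$ holds for every $t\in\N$. Now let $A=(a_n)$ be an increasing enumeration of a subset of $\N$ with $\sum 1/a_n=\infty$. I would fix an arbitrary $t\in\N$ and aim to show that $A$ contains an arithmetic progression of length $t$; since $t$ is arbitrary this yields arbitrarily long progressions.

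Next, invoke the density lemma from the previous section: because $\sum 1/a_n$ diverges, there exist arbitrarily large $n$ with $\delta(A,n)>1/(\log n)^2$. In particular, the hypothesis of Theorem~\ref{AP} is satisfied with the constant $c=1$. Since $L(t)$ holds by our standing assumption, Theorem~\ref{AP} applies directly and produces an arithmetic progression of length $t$ inside $A$. As $t$ was arbitrary, $A$ contains arbitrarily long arithmetic progressions, which is the conclusion of the Erd\H{o}s--Tur\'an conjecture.

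There is essentially no obstacle here: the two quoted results have been set up precisely so that their density hypotheses match, and the only thing one must verify is that the constant $c=1$ in the density lower bound $1/(\log n)^2$ from the lemma is admissible in Theorem~\ref{AP}, which is immediate from its statement. The only conceivable wrinkle is to double-check that the hypothesis of Theorem~\ref{AP} requires the density bound only for \emph{arbitrarily large} $n$ (not all sufficiently large $n$), which is exactly what the divergence lemma supplies; this matching was clearly built in by design.
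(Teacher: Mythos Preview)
Your proof is correct and matches the paper's approach exactly: the corollary is stated immediately after Theorem~\ref{AP} with only the remark ``Putting everything together, we have,'' and your argument spells out precisely that chain---apply the density lemma to get $\delta(A,n)>1/(\log n)^2$ for arbitrarily large $n$, then invoke Theorem~\ref{AP} with $c=1$ for each $t$.
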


Leth used Theorem \ref{leth} to prove the following theorem, which is similar in spirit to Conjecture \ref{leth}, except that it allows sparser sequences but in turn obtains almost progressions with weaker smallness properties relating $d$ and $w$.

\begin{theorem}
Suppose that $s>0$ and $t\in \N^{>2}$ are gven.  Further suppose that $h$ is as in Theorem \ref{leth}.  Let $A\subseteq \N$ be such that, for all $\epsilon>0$, we have $\delta(A,n)>1/n^{\epsilon}$ for sufficiently large $n$.  Then for sufficiently large $n$, $A$ nearly contains an $(t,d,w)$-progression on $[1,n]$ of homogeneity $s$ with $w/d<h(\log d/\log n)$, where $d$ is a power of $2$.
\end{theorem}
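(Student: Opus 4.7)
The key observation is that the target bound $w/d < h(\log d/\log n)$ is \emph{weaker} than the bound $w/d < h(d/|I|)$ produced by Theorem \ref{leth}, provided $d/|I| \leq \log d/\log n$, where $I$ is the interval to which Theorem \ref{leth} is applied and we use that $h$ is increasing. The plan is therefore: find a subinterval $I \subseteq [1,n]$ of nearly full length on which $A$ has the $(m,r)$-density property required by Theorem \ref{leth}, apply Theorem \ref{leth} to $A$ on $I$, and verify the resulting bound implies $w/d < h(\log d/\log n)$.

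To find $I$, fix $r := 2$, $j := 2$, any increasing $g:\R^{>0}\to \R^{>0}$ with $g(x)\to \infty$, and let $m := m(g,h,s,r,t,j)$ be as in Theorem \ref{leth}. For $n$ to be taken sufficiently large, we iterate as follows: set $I_0 := [1,n]$, and while $A$ fails the $(m,r)$-density property on $I_k$, pick (by definition of failure) an interval $I_{k+1}\subseteq I_k$ with $|I_{k+1}|\geq |I_k|/m$ and $\delta(A,I_{k+1}) > r\,\delta(A, I_k)$. Since $\delta(A,I_k)\leq 1$ and the density hypothesis yields $\delta(A,I_0)>n^{-\epsilon}$ for any fixed $\epsilon>0$ once $n$ is large, the iteration terminates at some $k_0\leq \epsilon\log n/\log r$. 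Taking $\epsilon<(1-1/j)\log r/\log m$ yields $|I_{k_0}|\geq n/m^{k_0}\geq n^{1-\epsilon\log m/\log r}\geq g(m)\cdot n^{1/j}$ for all sufficiently large $n$.

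Apply Theorem \ref{leth} to $A$ on $I := I_{k_0}$: this yields a near-$(t,d,w)$-progression in $A$ contained in $I\subseteq [1,n]$, with homogeneity $s$, $d$ a power of $2$ (per the remark following Conjecture \ref{lethconjecture}), $1/g(m)<d/|I|<1/j$, and $w/d<h(d/|I|)$. From $d>|I|/g(m)\geq n^{1/j}$ we obtain $\log d>(\log n)/j$, and combining with $|I|/d>j$ gives $(|I|/d)\log d>\log n$, equivalently $d/|I|<\log d/\log n$. Since $h$ is increasing, $w/d<h(d/|I|)\leq h(\log d/\log n)$, as required. Homogeneity $s$ with respect to $I$ transfers to homogeneity $s$ with respect to $[1,n]$ because each iteration step strictly increased the density, so $\delta(A,I)\geq \delta(A,[1,n])$.

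The main technical step is the iteration argument, which converts the density hypothesis on $A$ into the density-regularity condition on $I$. Its termination relies only on the uniform upper bound on densities, and the slow decay of $\delta(A,n)$ permitted by the hypothesis (arbitrarily small $\epsilon$) is exactly what guarantees $|I|$ remains nearly as large as $n$, so that $d$ can be chosen polynomially large in $n$ and the passage from the bound $h(d/|I|)$ to the desired bound $h(\log d/\log n)$ is valid.
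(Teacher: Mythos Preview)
Your proof is correct and follows essentially the same route as the paper's: both hinge on a density-doubling iteration to locate a subinterval $I\subseteq[1,n]$ of length at least roughly $n^{1/2}$ on which $A$ enjoys the $(m,2)$-density property, then apply Theorem~\ref{leth} on $I$ and compare $d/|I|$ with $\log d/\log n$. The paper packages the iteration contrapositively inside a nonstandard argument, whereas you run it directly and standardly; your explicit handling of the homogeneity transfer from $I$ to $[1,n]$ (via $\delta(A,I)\geq\delta(A,[1,n])$) is a point the paper leaves implicit.
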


\begin{proof}
Suppose that the conclusion is false.  Then there is $N$ such that $\starA$ does not nearly contain any internal $(t,d,w)$-progression on $[1,N]$ of homogeneity $s$ with $w/d<h(\log d/\log N)$.  It suffices to show that there is $\epsilon>0$ such that $\delta(\starA,N)<1/N^{\epsilon}$.  Let $m$ be as in the conclusion of Theorem \ref{leth} with $r=2$ and $g(x)=x$ (and $h$ as given in the assumptions of the current theorem).

\

\noindent \textbf{Claim:}  If $I\subseteq [1,N]$ is a hyperfinite interval with $|I|>\sqrt{N}$, then $\starA$ does \emph{not} have the $(m,2)$-density property on $I$.

\

We will return to the proof of the claim in a moment.  We first see how the claim allows us to complete the proof of the theorem.  Let $K>\N$ be the maximal $k\in \starN$ such that $m^{2k}\leq N$, so $m^{2K}\leq N<m^{2K+2}$.  We construct, by internal induction, for $i=0,1,\ldots,K$, a descending chain of hyperfinite subintervals $(I_i)$ of $I$ of length $m^{2K-i}$ as follows.  By Proposition \ref{subinterval}, we may take $I_0$ to be any hyperfinite subinterval of $I$ of length $m^{2K}$ such that $\delta(\starA, I_0)\geq \delta(\starA, N)/2$.  Suppose that $i<K$ and $I_i$ has been constructed such that $|I_i|=m^{2k-i}$.  Since $\starA$ does not have the $(m,2)$ density property on $I_i$, there is a subinterval $I_{i+1}$ of length $|I_i|/m^{2k-i-1}$ with $\delta(\starA, I_{i+1})\geq 2\delta(\starA,I_i)$.  Notice now that $I_K$ is a hyperfinite interval of length $m^K\leq \sqrt{N}<m^{K+1}$ and $\delta(\starA,I_K)\geq 2^K\delta(\starA,I_0)$.  It follows that
$$\delta(\starA,N)\leq 2\delta(A,I_0)\leq 2^{-(K-1)}\delta(A,I_K)\leq 2^{-(K-1)}.$$  It follows that
$$|A\cap [1,N]|\leq 2^{-(K-1)}N\leq 2^{-(K-1)}m^{2K+2}=m^{2K+2-(K-1)\frac{\log 2}{\log m}}=(m^{2K})^{1-z}.$$  if we set $z:=\frac{(K-1)\log 2}{2K\log m}-\frac{1}{K}$.  If we set $\epsilon:=\st(z/2)=\frac{\log 2}{4\log m}$, then it follows that $|A\cap [1,N]|\leq N^{1-\epsilon}$, whence this $\epsilon$ is as desired.

We now prove the claim.  Suppose, towards a contradiction, that $I\subseteq [1,N]$ is a hyperfinite interval with $|I|>\sqrt{N}$ and is such that $\starA$ does have the $(m,2)$-density property on $I$.  By the choice of $m$, $\starA$ nearly contains an internal $(t,d,w)$-almost progression of homogeneity $s$ with $w/d<h(d/|I|)$ and $d>|I|/m>\sqrt{N}/m$.  Notice now that $\st\left(\frac{\log d}{\log N}\right)\geq \st\left(\frac{1/2\log N-\log m}{\log N}\right)=\frac{1}{2}$.  Note that we trivially have that $d/|I|<1/t$, whence $d/|I|<\log d/\log N$; since $h$ is increasing, we have that $w/d<h(\log d/\log N)$, contradicting the choice of $N$.  This proves the claim and the theorem.
\end{proof}

In \cite[Theorem 3]{leth_nonstandard_2007}, Leth shows that one cannot replace $(\log d)/(\log n)$ with $d/n$ in the previous theorem.

\section*{Notes and references}  There are other generalizations of arithmetic progressions appearing in the literature, e.g. the notion of \emph{quasi-progression} appearing in \cite{}.  It should be noted that they use the term $(t,d,w)$-progression in a related, but different, manner than it is used in this chapter.  The Erd\H os-Turan conjecture, first formulated in \cite{}, is one of the most important open problems in combinatorial number theory.  A positive solution would immediately generalize both Szemeredi's Theorem and the Green-Tao theorem on the existence of arbitrarily long arithmetic progressions in the primes \cite{}.
%
%
%

\chapter{The interval measure property}

\section{IM sets}
Let $I:=[y,z]$ be an infinite, hyperfinite interval.  Set $\st_I:=\st_{[y,z]}:I\to [0,1]$ to be the map $\st_I(a):=\st(\frac{a-y}{z-y})$.  For $A\subseteq \starN$ internal, we set $\st_I(A):=\st_I(A\cap I)$.  We recall that $\st_I(A)$ is a closed subset of $[0,1]$ and we may thus consider $\lambda_I(A):=\lambda(\st_I(A))$, where $\lambda$ is Lebesgue measure on $[0,1]$.  

We also consider the quantity $g_A(I):=\frac{d-c}{|I|}$, where $[c,d]\subseteq I$ is maximal so that $[c,d]\cap A=\emptyset$.

The main concern of this subsection is to compare the notions of making $g_A(I)$ small (an internal notion) and making $\lambda_I(A)$ large (an external notion).  There is always a connection in one direction:

\begin{lemma}\label{onedirection}
If $\lambda_I(A)> 1-\epsilon$, then $g_A(I)<\epsilon$.
\end{lemma}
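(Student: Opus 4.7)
The plan is to prove the contrapositive: if $g_A(I)\geq \epsilon$, then $\lambda_I(A)\leq 1-\epsilon$. Suppose $g_A(I)\geq \epsilon$, and let $[c,d]\subseteq I$ be a (maximal) subinterval with $[c,d]\cap A=\emptyset$ and $(d-c)/|I|\geq \epsilon$. Write $I=[y,z]$, set $\alpha:=\st_I(c)$ and $\beta:=\st_I(d)$, and note that since $\st$ is a ring homomorphism on the finite hyperreals,
$$\beta-\alpha \;=\; \st\!\left(\frac{d-c}{z-y}\right)\;\geq\;\epsilon.$$

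The key step is to check that the open interval $(\alpha,\beta)\subseteq[0,1]$ is disjoint from $\st_I(A)$. Suppose for contradiction that $t\in(\alpha,\beta)\cap \st_I(A)$, so $t=\st_I(a)$ for some $a\in A\cap I$. Then $\st((a-y)/(z-y))=t$, and the strict inequalities $\st((c-y)/(z-y))=\alpha<t<\beta=\st((d-y)/(z-y))$ force the corresponding strict inequalities at the level of finite hyperreals (if $\st(r)<\st(s)$ for finite $r,s\in\starR$, then $r<s$). Unwinding, this gives $c<a<d$, hence $a\in[c,d]\cap A$, contradicting the choice of $[c,d]$.

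Therefore $(\alpha,\beta)\subseteq[0,1]\setminus \st_I(A)$, so
$$\lambda_I(A)\;=\;\lambda(\st_I(A))\;\leq\; 1-(\beta-\alpha)\;\leq\; 1-\epsilon,$$
completing the proof of the contrapositive. There is no real obstacle here: the argument is essentially the transparent observation that a ``macroscopic'' gap $[c,d]$ of $A$ in $I$ pushes forward under $\st_I$ to a genuine open interval in $[0,1]$ missing $\st_I(A)$, and the only minor point to be careful about is that the \emph{open} interval $(\alpha,\beta)$, rather than the closed one, is what one shows to be disjoint from the closed set $\st_I(A)$ (the endpoints $\alpha$ and $\beta$ may well lie in $\st_I(A)$, but this does not affect Lebesgue measure).
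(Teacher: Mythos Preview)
Your proof is correct and follows essentially the same contrapositive approach as the paper: a gap $[c,d]$ of relative length at least $\epsilon$ produces an interval of length at least $\epsilon$ in $[0,1]$ missed by $\st_I(A)$. The only cosmetic difference is that the paper shrinks to $(\st_I(c)+\delta,\st_I(d)-\delta)$ and lets $\delta\to 0$, whereas you argue directly that the open interval $(\alpha,\beta)$ is disjoint from $\st_I(A)$; your version is slightly cleaner.
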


\begin{proof}
Suppose that $g_A(I)\geq \epsilon$, whence there is $[c,d]\subseteq I$ such that $[c,d]\cap A=\emptyset$ and $\frac{d-c}{|I|}\geq \epsilon$.  It follows that, for any $\delta>0$, we have $(\st_I(c)+\delta,\st_I(d)-\delta)\cap \st_I(A)=\emptyset$, whence $$\lambda_I(A)\leq 1-\left(\st\left(\frac{d-c}{|I|}\right)-2\delta\right)\leq 1-\epsilon+2\delta.$$  Letting $\delta\to 0$ yields the desired result.
\end{proof}

We now consider sets where there is also a relationship in the other direction.

\begin{definition}
We say that $A$ has the \emph{interval-measure property}\index{interval-measure property} (or \emph{IM property}) on $I$ if for every $\epsilon>0$, there is $\delta>0$ such that, for all infinite $J\subseteq I$ with $g_{A}(J)\leq \delta$, we have $\lambda_{J}(A)\geq 1-\epsilon$.
\end{definition}

If $A$ has the IM property on $I$, we let $\delta(A,I,\epsilon)$ denote the supremum of the $\delta$'s that witness the conclusion of the definition for the given $\epsilon$.

It is clear from the definition that if $A$ has the IM property on an interval, then it has the IM property on every infinite subinterval.  Also note that it is possible that $A$ has the IM property on $I$ for a trivial reason, namely that there is $\delta>0$ such that $g_A(J)>\delta$ for every infinite $J\subseteq I$.  Let us temporarily say that $A$ has the \emph{nontrivial IM property} on $I$ if this does \emph{not} happen, that is, for every $\delta>0$, there is an infinite interval $J\subseteq I$ such that $g_A(J)\leq \delta$.  It will be useful to reformulate this in different terms.  In order to do that, we recall an important standard tool that is often employed in the study of sets with the IM property, namely the \emph{Lebesgue density theorem}.  Recall that for a measurable set $E\subseteq [0,1]$, a point $r\in E$ is a \emph{(one-sided) point of density of $E$} if
$$\lim_{s\to r^+}\frac{\mu(E\cap[r,s])}{s-r}=1.$$  The Lebesgue density theorem asserts that almost every point of $E$ is a density point of $E$.

\begin{proposition}
Suppose that $A\subseteq \starN$ is internal and $I$ is an infinite, hyperfinite interval such that $A$ has the IM property on $I$.  Then the following are equivalent:
\begin{enumerate}
\item There is an infinite subinterval $J$ of $I$ such that $A$ has the nontrivial IM property on $J$.
\item There is an infinite subinterval $J$ of $I$ such that $\lambda_J(A)>0$.
\end{enumerate}
\end{proposition}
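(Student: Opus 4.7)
My plan is to prove the two implications separately. Direction $(1)\Rightarrow(2)$ is essentially immediate from the definitions: if $A$ has the nontrivial IM property on some infinite $J\subseteq I$, then applying the IM property of $A$ on $I$ with $\epsilon=1/2$ yields some $\delta>0$ such that every infinite subinterval $L\subseteq I$ with $g_{A}(L)\le\delta$ satisfies $\lambda_{L}(A)\ge 1/2$. Nontriviality then supplies an infinite $K\subseteq J\subseteq I$ with $g_{A}(K)\le\delta$, so $\lambda_{K}(A)\ge 1/2>0$, confirming (2) with witness $K$.

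For $(2)\Rightarrow(1)$, the key tool is the Lebesgue density theorem applied to the closed (hence measurable) set $\st_{J}(A)\subseteq[0,1]$, which by hypothesis has positive Lebesgue measure. I would fix a Lebesgue density point $r\in\st_{J}(A)$ with $r<1$; such an $r$ exists because almost every point of $\st_{J}(A)$ is a two-sided density point, and two-sided density forces one-sided density from both sides. I would then pick any $b\in J$ with $s_{0}:=\st_{J}(b)>r$ strictly, choose $a\in J$ with $\st_{J}(a)=r$, and set $J':=[a,b]\subseteq J$. The claim is that $A$ has the nontrivial IM property on $J'$.

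To verify the claim, fix standard $\delta>0$. The right-sided density property at $r$ supplies $s\in(r,s_{0}]$ with $\mu(\st_{J}(A)\cap[r,s])>(1-\delta/2)(s-r)$. Choose $b'\in J$ with $\st_{J}(b')=s$ and set $K:=[a,b']\subseteq J'$; since $s-r$ is a positive standard real and $|J|$ is infinite, $K$ is an infinite hyperfinite interval. The technical heart of the argument is a ``gap transfer'' observation: for any maximal gap $[c,d]\subseteq K$ witnessing $g_{A}(K)$ (so $[c,d]\cap A=\emptyset$), the open interval $(\st_{J}(c),\st_{J}(d))$ is disjoint from $\st_{J}(A)$; this uses only the affine monotonicity of $\st_{J}$ together with the fact that, for any standard $t\in(\st_{J}(c),\st_{J}(d))$, any preimage $\alpha\in A$ with $\st_{J}(\alpha)=t$ lies strictly between $c$ and $d$, because the noninfinitesimal differences $t-\st_{J}(c)$ and $\st_{J}(d)-t$ absorb any infinitesimal discrepancy inherent in taking standard parts. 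Granted this, since $(\st_{J}(c),\st_{J}(d))\subseteq[r,s]$ is disjoint from $\st_{J}(A)$, one obtains $\mu(\st_{J}(A)\cap[r,s])\le(s-r)-(\st_{J}(d)-\st_{J}(c))$, hence $\st_{J}(d)-\st_{J}(c)<(\delta/2)(s-r)$; since $\st_{J}$ is affine and $|K|/|J|\approx s-r$, this yields $\st(g_{A}(K))=(\st_{J}(d)-\st_{J}(c))/(s-r)<\delta/2$, so $g_{A}(K)<\delta$. As $\delta>0$ was arbitrary, $J'$ witnesses the nontrivial IM property of $A$. The main obstacle is the bookkeeping in the gap transfer step, where one must carefully distinguish strict real inequalities from nonstrict infinitesimal ones; no deeper difficulty is anticipated.
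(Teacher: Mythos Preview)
Your proof is correct and follows essentially the same route as the paper: both directions hinge on the Lebesgue density theorem and the implication ``$\lambda_{J'}(A)>1-\delta\Rightarrow g_A(J')<\delta$''. The only differences are cosmetic: the paper cites this implication as an earlier lemma (Lemma~\ref{onedirection}) rather than reproving it via your ``gap transfer'' step, and the paper shows nontriviality directly on the original $J$ rather than passing to a subinterval $J'$---both simplifications you could adopt to shorten the argument.
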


\begin{proof}
First suppose that $J$ is an infinite subinterval of $I$ such that $A$ has the nontrivial IM property on $J$.  Let $J'$ be an infinite subinterval of $J$ such that $g_A(J')\leq \delta(A,J,\frac{1}{2})$.  It follows that $\lambda_{J'}(A)\geq \frac{1}{2}$.  

Now suppose that $J$ is an infinite subinterval of $I$ such that $\lambda_J(A)>0$.  By the Lebesgue density theorem, there is an infinite subinterval $J'$ of $J$ such that $\lambda_{J'}(A)>1-\delta$.  By Lemma \ref{onedirection}, we have that $g_{A}(J')<\delta$, whence $g_A(J)<\delta$.  It follows that $A $ has the nontrivial IM property on $J$. 
\end{proof}

In practice, the latter property in the previous proposition is easier to work with.  Consequently, let us say that $A$ has the \emph{enhanced IM property on $I$} if it has the IM property on $I$ and $\lambda_I(A)>0$.\footnote{This terminology does not appear in the original article of Leth.}

We now seek to establish nice properties of sets with the IM property.  We first establish a kind of partition regularity theorem.  

\begin{theorem}\label{IMpartreg}
Suppose that $A$ has the enhanced IM property on $I$.  Further suppose that $A\cap I=B_1\cup\cdots \cup B_n$ with each $B_i$ internal.  Then there is $i$ and infinite $J\subseteq I$ such that $B_i$ has the enhanced IM property on $J$.
\end{theorem}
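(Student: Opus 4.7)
The plan is to prove the theorem by induction on $n$, using the Lebesgue density theorem together with countable saturation to either locate a subinterval where one of the $B_i$'s directly inherits the enhanced IM property, or reduce to a partition into fewer pieces. The base case $n=1$ is trivial since $B_1 = A \cap I$ already has enhanced IM on $I$. For the inductive step, by replacing each $B_j$ with $B_j \setminus (B_1 \cup \cdots \cup B_{j-1})$ we may assume the $B_j$'s are pairwise disjoint. Since $\lambda_I(A) > 0$ and $\st_I(A) = \bigcup_j \st_I(B_j)$, countable subadditivity of Lebesgue measure yields some $j$ with $\lambda_I(B_j) > 0$; relabel so that $\lambda_I(B_1) > 0$.

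The key strategy is to find an infinite subinterval $J \subseteq I$ on which $B_1$ dominates $A$ at all relevant scales. By the Lebesgue density theorem, almost every point of the closed set $\st_I(B_1) \subseteq [0,1]$ is a density point of $\st_I(B_1)$. Suppose first that we can find such a density point $r$ which is additionally a point of density zero for $\st_I(A\setminus B_1) = \bigcup_{j>1}\st_I(B_j)$. Then for each $k \in \N$ we may choose a standard closed interval $[a_k, b_k] \ni r$ on which $\st_I(B_1)$ has density at least $1 - 4^{-k}$ and $\st_I(A \setminus B_1)$ has density at most $4^{-k}$. Passing to corresponding internal subintervals $J_k \subseteq I$ whose image under $\st_I$ is $[a_k, b_k]$, and using the scaling relation between $\st_{J_k}$ and $\st_I$, we deduce that on every internal subinterval $J' \subseteq J_k$ with $|J'|/|J_k| \geq 2^{-k}$, the ratio $|(A \setminus B_1) \cap J'|/|J'|$ is less than $2^{-k}$. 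Each such condition is internal, so by countable saturation there exists a single infinite hyperfinite interval $J \subseteq I$ satisfying all of them simultaneously. One then verifies that $B_1$ has enhanced IM on $J$: given $\epsilon > 0$, set $\delta := \min(\delta(A, I, \epsilon/2),\epsilon/4)$; for any infinite $J' \subseteq J$ with $g_{B_1}(J') \leq \delta$, the inclusion $B_1 \subseteq A$ gives $g_A(J') \leq \delta$, whence IM of $A$ gives $\lambda_{J'}(A) \geq 1 - \epsilon/2$, and combined with the saturation control $\lambda_{J'}(A \setminus B_1) \leq \epsilon/2$ this yields $\lambda_{J'}(B_1) \geq 1 - \epsilon$ as required.

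If no density point of $\st_I(B_1)$ lying outside $\st_I(A \setminus B_1)$ exists, then $\st_I(B_1) \subseteq \st_I(A \setminus B_1)$ up to a null set, so in particular $\lambda_I(A \setminus B_1) > 0$. Pigeonholing over the $n-1$ sets $B_2, \ldots, B_n$ identifies some $B_{j_2}$ with $\lambda_I(B_{j_2}) > 0$, and one looks for a separating density point for $B_{j_2}$. If this process never yields a separating density point at any stage, the finiteness of $n$ combined with repeated pigeonholing on triple, quadruple, $\ldots$ intersections forces a set of positive $\lambda_I$-measure on which several $\st_I(B_i)$'s coincide up to null sets. In this degenerate configuration the argument can still be concluded by applying the saturation construction above to an index $i$ such that $\lambda_J(B_i)$ can be made arbitrarily close to $\lambda_J(A)$ at every macroscopic scale inside some $J$; such an $i$ exists by a further pigeonhole on which $B_j$ carries the bulk of $A$ at successively finer scales.

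The main technical obstacle is controlling the microscopic case, when $|J'|/|J|$ is infinitesimal and the saturation condition from the construction does not directly apply. In this regime, the hypothesis $g_{B_1}(J') \leq \delta$ already forces $B_1$ to be internally dense in $J'$, but to transfer this internal density control to the desired external bound $\lambda_{J'}(B_1) \geq 1 - \epsilon$ one must effectively re-apply the Lebesgue density argument inside $J'$, viewed as its own hyperfinite interval. I anticipate that making this scale-uniform density control rigorous will be the most delicate step, and the cleanest route is to strengthen the saturation in the second paragraph so that the density condition holds not merely for macroscopically large $J'$ but uniformly across a nested tower of scales inside $J$; alternatively, the bad microscopic behaviour can be argued to force the existence of an infinite subinterval on which a strict sub-collection of $\{B_1, \ldots, B_n\}$ covers $A$ up to a $\lambda$-null set at every relevant scale, thereby reducing to the inductive hypothesis for a partition into fewer than $n$ pieces.
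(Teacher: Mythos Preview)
Your approach has a real gap, and it is precisely the one you flag yourself: the saturation construction only controls $\lambda_{J'}(A\setminus B_1)$ for subintervals $J'\subseteq J$ with $|J'|/|J|\geq 2^{-k}$ for some finite $k$, and this says nothing when $|J'|/|J|$ is infinitesimal. But the IM property is a statement about \emph{all} infinite subintervals, including those of infinitesimal relative length. Your suggestion to ``strengthen the saturation \ldots\ uniformly across a nested tower of scales'' cannot work, because the conditions you are saturating over are external (they involve $\lambda$, i.e.\ standard parts of ratios), and there is no way to internalize ``$\lambda_{J'}(A\setminus B_1)$ is small'' uniformly at every infinitesimal scale. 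Your Case~2 fallback is also only a sketch; it does not in fact reduce to a partition into strictly fewer pieces in any clear way.

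The paper's argument sidesteps the multi-scale problem entirely via a much simpler dichotomy. Either there is some $i$ and some infinite $J\subseteq I$ with $B_i\cap J=\emptyset$ and $\lambda_J(A)>0$, in which case $A$ has enhanced IM on $J$ and is covered there by the remaining $n-1$ sets, so induction applies. Or else every infinite $J$ with $\lambda_J(A)>0$ meets every $B_i$. Under this second hypothesis one shows directly that \emph{each} $B_i$ has the IM property on $I$: if $g_{B_i}(J)\leq\delta(A,I,\epsilon)$ then $g_A(J)\leq g_{B_i}(J)$ gives $\lambda_J(A)\geq 1-\epsilon$, and any interval $[r,s]\subseteq[0,1]\setminus\st_J(B_i)$ corresponds to an infinite subinterval of $J$ missing $B_i$, hence (by the standing hypothesis) one on which $\lambda(A)=0$; so the gaps of $\st_J(B_i)$ carry no $\st_J(A)$-measure and $\lambda_J(B_i)=\lambda_J(A)\geq 1-\epsilon$. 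Since some $B_i$ has $\lambda_I(B_i)>0$, that one has enhanced IM on $I$. The point is that the second horn of the dichotomy converts ``$B_i$ misses an interval'' directly into ``$A$ has measure zero there,'' which is exactly the scale-free control you were trying to manufacture by density and saturation.
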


\begin{proof}
We prove the theorem by induction on $n$.  The result is clear for $n=1$.  Now suppose that the result is true for $n-1$ and suppose $A\cap I=B_1\cup\cdots \cup B_n$ with each $B_i$ internal.  If there is an $i$ and infinite $J\subseteq I$ such that $B_i\cap J=\emptyset$ and $\lambda_J(A)>0$, then we are done by induction.  We may thus assume that whenever $\lambda_J(A)>0$, then each $B_i\cap J\not=\emptyset$.  We claim that this implies that each of the $B_i$ have the IM property on $I$.  Since there must be an $i$ such that $\lambda_I(B_i)>0$, for such an $i$ it follows that $B_i$ has the enhanced IM property on $I$.  

Fix $i$ and set $B:=B_i$.  Suppose that $J\subseteq I$ is infinite, $\epsilon>0$, and $g_B(J)\leq \delta(A,I,\epsilon)$; we show that $\lambda_J(B)\geq 1-\epsilon$.  Since $g_A(J)\leq g_B(J)\leq \delta(A,I,\epsilon)$, we have that $\lambda_J(A)\geq 1-\epsilon$.  Suppose that $[r,s]\subseteq [0,1]\setminus \st_J(B)$.  Then $r=\st_J(x)$ and $s=\st_J(y)$ with $\frac{y-x}{|J|}\approx s-r$ and $B\cap[x,y]=\emptyset$.  By our standing assumption, this implies that $\lambda_{[x,y]}(A)=0$, whence it follows that $\lambda_J(A\cap [x,y])=0$.  It follows that $\lambda_J(B)=\lambda_J(A)\geq 1-\epsilon$, as desired.
\end{proof}

If $A$ has the IM property on an interval $I$ and we have a subinterval of $I$ on which $A$ has small gap ratio, then by applying the IM property, the Lebesgue density theorem, and Lemma \ref{onedirection}, we can find a smaller, but appreciably sized, subinterval on which $A$ once again has small gap ratio.  Roughly speaking, one can iterate this procedure until one finds a \emph{finite} subinterval of $I$ on which $A$ has small gap ratio; the finiteness of the subinterval will be crucial for applications.   We now give a precise formulation.

 Fix internal sets $A_1,\ldots,A_n$ and intervals $I_1,\ldots,I_n$.  Fix also $\delta>0$.  A \emph{$\delta$-configuration} (with respect to $A_1,\ldots,A_n,I_1,\ldots,I_n)$ is a sequence of subintervals $J_1,\ldots,J_n$ of $I_1,\ldots,I_n$ respectively such that each $|J_i|$ has the same length and such that $g_{A_i}(J_i)\leq \delta$ for all $i$.  We call the common length of the $J_i$'s the \emph{length} of the configuration.  There is an obvious notion of \emph{$\delta$-subconfiguration}, although, for our purposes, we will need a stronger notion of subconfiguration.  Indeed, we say that a $\delta$-subconfiguration $J_1',\ldots,J_n'$ of $J_1,\ldots,J_n$ is a \emph{strong $\delta$-subconfiguration} if there is some $c\in \starN$ such that, writing $a_i$ for the left endpoint of $J_i$, we have that $a_i+c$ is the left endpoint of $J_i'$.  Note that the strong $\delta$-subconfiguration relation is transitive.
 
\begin{theorem}\label{mainIMtheorem}
Suppose that $A_1,\ldots,A_n$ are internal sets that satisfy the IM property on $I_1,\ldots,I_n$ respectively.  Fix $\epsilon>0$ such that $\epsilon<\frac{1}{n}$.  Take $\delta>0$ with $\delta<\min_{i=1,\ldots,n}\delta(A_i,I_i,\epsilon)$.  Then there is $w\in \N$ such that any $\delta$-configuration has a strong $\delta$-subconfiguration of length at most $w$.

\end{theorem}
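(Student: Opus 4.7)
The plan is to argue by contradiction via overflow. If the conclusion fails, then for every $w\in\N$ the internal sentence
$$\psi(w):\ \exists\, \delta\text{-configuration of length }>w\text{ with no strong $\delta$-subconfiguration of length }\le w$$
is true, so by overflow applied to the internal set $\{w\in\starN:\psi(w)\}$ there is an infinite $W\in\starN\setminus\N$ together with a $\delta$-configuration $\mathcal{C}=(J_1,\ldots,J_n)$ of some infinite length $L>W$ admitting no strong $\delta$-subconfiguration of length $\le W$. The remainder of the argument extracts a contradiction from this $\mathcal{C}$.

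The central ingredient is a ``halving lemma'': every $\delta$-configuration $(J'_1,\ldots,J'_n)$ of infinite length $L'$ (living inside the $I_i$'s) admits a strong $\delta$-subconfiguration of length at most $L'/2$. To prove it, combine the IM hypothesis with the Lebesgue density theorem. Since $g_{A_i}(J'_i)\le\delta<\delta(A_i,I_i,\epsilon)$, the IM property yields $\lambda_{J'_i}(A_i)\ge 1-\epsilon$, and each $E_i:=\st_{J'_i}(A_i)\subseteq[0,1]$ has Lebesgue measure at least $1-\epsilon$; because $n\epsilon<1$, the intersection $\bigcap_{i=1}^n E_i$ has positive measure and therefore contains a common Lebesgue density point $r$. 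Pick a standard $s_0>0$ such that $\lambda(E_i\cap[r-s,r+s])\ge(1-\delta)(2s)$ for all $0<s\le s_0$ and all $i$, set $s:=\min(s_0,1/4)$ and $c:=\lfloor(r-s)L'\rfloor$, and let $a'_i$ denote the left endpoint of $J'_i$. A routine rescaling shows that the subintervals $J_i^{(s)}:=[a'_i+c,\,a'_i+c+\lfloor 2sL'\rfloor]$ satisfy $\lambda_{J_i^{(s)}}(A_i)\ge 1-\delta$, hence by Lemma 16.1.1 that $g_{A_i}(J_i^{(s)})<\delta$. Since $c$ and the common length $\lfloor 2sL'\rfloor\le L'/2$ do not depend on $i$, $(J_1^{(s)},\ldots,J_n^{(s)})$ is the desired strong $\delta$-subconfiguration.

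With the halving lemma in hand, run an internal recursion. Set $\mathcal{C}_0:=\mathcal{C}$ and, given $\mathcal{C}_k$ of length $L_k\ge 2$, let $\mathcal{C}_{k+1}$ be the lexicographically least strong $\delta$-subconfiguration of $\mathcal{C}_k$ of length at most $\lfloor L_k/2\rfloor$, leaving $\mathcal{C}_{k+1}$ undefined if either $L_k<2$ or no such subconfiguration exists. The predicate ``$\mathcal{C}_{k+1}$ is defined'' is internal in $k$, so the domain of the recursion is an internal downward-closed subset of $\starN$, hence of the form $[0,k^*]$; it cannot be all of $\starN$ because the bound $L_k\le L_0/2^k$ forces $L_k<1$ once $k\ge\lceil\log_2 L_0\rceil$. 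By transitivity of the strong subconfiguration relation, $\mathcal{C}_{k^*}$ is itself a strong $\delta$-subconfiguration of the original $\mathcal{C}$. If $L_{k^*}\in\N$ then $L_{k^*}\le W$ (because $W$ is infinite), contradicting the choice of $\mathcal{C}$; if $L_{k^*}$ is infinite, the halving lemma applied to $\mathcal{C}_{k^*}$ produces a strong $\delta$-subconfiguration of length $\le L_{k^*}/2$, contradicting the termination of the recursion at $k^*$. Either branch gives a contradiction.

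The main obstacle I expect is the halving lemma: the delicate point there is that one shift $c$ must work for all $n$ indices simultaneously. This is precisely where the hypothesis $\epsilon<1/n$ enters, ensuring that $\bigcap_i E_i$ has positive Lebesgue measure and therefore admits a common density point $r$. Once the halving lemma is established, the internal-recursion bookkeeping is routine, and the overflow setup wraps everything into the desired uniform bound $w$.
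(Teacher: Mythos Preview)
Your proof is correct. The heart of both arguments is the same: your halving lemma is exactly the paper's Claim (any $\delta$-configuration of infinite length has a proper strong $\delta$-subconfiguration), proved identically via the IM hypothesis, the inequality $\epsilon<1/n$ forcing $\bigcap_i\st_{J'_i}(A_i)$ to have positive Lebesgue measure, and a Lebesgue density-point argument. You sharpen ``proper'' to ``length $\le L'/2$'', but the paper's proof already gives this, since the density parameter $s$ there may be taken as small as one likes.

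Where you diverge is only in extracting the uniform $w$. The paper does this in one stroke: define the internal function $f$ on the internal set of $\delta$-configurations by $f(J_1,\ldots,J_n)=$ the minimal length of a minimal strong $\delta$-subconfiguration; by the Claim the range of $f$ lies in $\N$, and an internal subset of $\starN$ contained in $\N$ is bounded by some $w\in\N$. Your route --- overflow to a bad configuration of infinite length $>W$, then an internal halving recursion that must terminate at some $\mathcal{C}_{k^*}$ of finite length $\le W$ --- is sound but is heavier machinery for the same payoff. One small wrinkle in your halving lemma: setting $s=\min(s_0,1/4)$ does not by itself guarantee $[r-s,r+s]\subseteq[0,1]$ when the density point $r$ lies near an endpoint; you also need $s\le\min(r,1-r)$, or switch to one-sided density as the paper does.
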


\begin{proof}
Let $A_1,\ldots,A_n$, $I_1,\ldots,I_n$, $\epsilon$ and $\delta$ be as in the statement of the theorem.  The entire proof rests on the following:

\noindent \textbf{Claim:}  Any $\delta$-configuration of infinite length has a proper strong $\delta$-subconfiguration.

\

Given the claim, the proof of the theorem proceeds as follows:  let $\mathcal{C}$ denote the internal set of $\delta$-configurations.  Let $f:\mathcal{C}\to \starN$ be the internal function given by $f(J_1,\ldots,J_n)=$ the minimal length of a minimal strong $\delta$-subconfiguration of $J_1,\ldots,J_n$.  By the claim, the range of $f$ is contained in $\mathbb{N}$.  Thus, there is $w\in \mathbb{N}$ such that the range of $f$ is contained in $[1,w]$, as desired.
%
%
%
%

\

\noindent Thus, to finish the proof of the theorem, it suffices to prove the claim.

\

\noindent \textbf{Proof of Claim:}  
Write $J_i:=[a_i,a_i+b]$ for $i=1,\ldots,n$.  By assumption, $\lambda_{J_i}(A_i)\geq 1-\frac{1}{n}$, whence $ \lambda(\bigcap_{i=1}^n\st_{J_i}(A_i))>0$.  Let $r$ be a point of density for $\bigcap_{i=1}^n\st_{J_i}(A_i)$.  Thus, there is $s<1-r$ such that $$\lambda\left(\left(\bigcap_{i=1}^n \st_{J_i}(A_i)\right)\cap [r,r+s]\right)\geq (1-\delta)s.$$  Set $c:=\lfloor r\cdot b\rfloor$ and $b':=\lfloor s\cdot b\rfloor$.  Then $c+b'\leq b$ and, by Lemma \ref{onedirection}, we have $$g_{A_i}([a_i+c,a_i+c+b'])\leq \delta \text{ for all }i=1,\ldots,n.$$  Thus, the $[a_i+c,a_i+c+b']$ form the desired proper strong $\delta$-subconfiguration.
\end{proof}

A special case of Theorem \ref{mainIMtheorem} is worth singling out:

\begin{corollary}\label{corIMmain}
Let $A_1,\ldots,A_n$, $I_1,\ldots,I_n$, $\epsilon$, and $\delta$ be as in Theorem \ref{mainIMtheorem}.  Then there is $w\in \N$ such that, whenever $[a_i,a_i+b]$ is a $\delta$-configuration, then there is $c\in \starN$ such that
$$A_i\cap [a_i+c,a_i+c+w]\not=\emptyset \text{ for all }i=1,\ldots,n.$$
\end{corollary}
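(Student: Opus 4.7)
The plan is to extract the corollary directly from Theorem \ref{mainIMtheorem}, since the statements differ only cosmetically: the theorem already packages both a uniform bound on length \emph{and} a common shift into the notion of strong $\delta$-subconfiguration. First I would let $w \in \N$ be exactly the constant produced by Theorem \ref{mainIMtheorem} applied to the same data $A_1,\ldots,A_n,I_1,\ldots,I_n,\epsilon,\delta$. Without loss of generality I may assume $\delta < 1$: any $\delta' \in (0,\delta)$ still satisfies $\delta' < \min_i \delta(A_i,I_i,\epsilon)$, and a $\delta'$-configuration is automatically a $\delta$-configuration, so we can replace $\delta$ by $\min(\delta, 1/2)$ throughout if necessary.

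Next, given any $\delta$-configuration $[a_1, a_1+b],\ldots,[a_n, a_n+b]$, I would invoke Theorem \ref{mainIMtheorem} to obtain a strong $\delta$-subconfiguration $[a_1+c, a_1+c+w'],\ldots,[a_n+c, a_n+c+w']$ of some common length $w' \leq w$, where by definition of strong subconfiguration there is a single $c \in \starN$ that simultaneously shifts every left endpoint. Because this is a $\delta$-configuration, $g_{A_i}([a_i+c, a_i+c+w']) \leq \delta < 1$ for every $i$. Here is the only place where any genuine reasoning is required: by the definition of $g_{A_i}$, if $A_i \cap [a_i+c, a_i+c+w']$ were empty then the interval would be its own maximal $A_i$-free subinterval, giving $g_{A_i}=1$, which contradicts $\delta < 1$. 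Therefore $A_i \cap [a_i+c, a_i+c+w'] \neq \emptyset$ for each $i$, and since $w' \leq w$ we conclude
\[
A_i \cap [a_i+c, a_i+c+w] \supseteq A_i \cap [a_i+c, a_i+c+w'] \neq \emptyset
\]
for all $i = 1,\ldots,n$, which is exactly the conclusion sought.

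There is no real obstacle: all the substantive work is already encapsulated in Theorem \ref{mainIMtheorem}, whose proof combined the Lebesgue density theorem with the observation that the internal ``minimum length of a strong subconfiguration'' function must be finite-valued. The only step beyond simply quoting the theorem is the elementary remark that $g_{A_i}(J) < 1$ forces $A_i \cap J$ to be nonempty, which is what lets us upgrade the statement about gap ratios to a statement about actual hits by the sets $A_i$.
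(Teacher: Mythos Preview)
Your argument is exactly what the paper intends: it states the corollary as ``a special case of Theorem~\ref{mainIMtheorem}'' without further proof, and your derivation---apply the theorem to obtain a strong $\delta$-subconfiguration of standard length $w'\le w$ with a single shift $c$, then observe that $g_{A_i}\le\delta<1$ on a finite interval forces $A_i$ to meet it---is the intended reading.

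One small wrinkle: your ``without loss of generality $\delta<1$'' step is argued in the wrong direction. You correctly note that a $\delta'$-configuration is automatically a $\delta$-configuration when $\delta'<\delta$, but that means the corollary for $\delta'$ is \emph{weaker}, not stronger---it handles fewer configurations, not more. So replacing $\delta$ by $\min(\delta,1/2)$ does not by itself cover all $\delta$-configurations. This is a harmless technicality in practice (the hypotheses are only interesting for small $\delta$, and if $\delta\ge 1$ then $\min_i\delta(A_i,I_i,\epsilon)>1$ forces $\lambda_J(A_i)\ge 1-\epsilon$ on \emph{every} infinite subinterval, so the density-point argument inside the proof of Theorem~\ref{mainIMtheorem} already produces strong $\delta'$-subconfigurations for any $\delta'>0$ you like). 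But as written the WLOG is not justified; either note this refinement of the theorem's proof, or simply observe that the degenerate case $\delta\ge 1$ can be handled by the same density argument directly.
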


By refining the proof of Theorem \ref{mainIMtheorem}, we obtain the following:

\begin{corollary}\label{corofproof}
If $A$ has the IM property on $I$, then there is $w\in \N$ and a descending hyperfinite sequence $I=I_0,I_1,\ldots,I_K$ of hyperfinite subintervals of $I$ such that:
\begin{itemize}
\item $|I_K|\leq w$;
\item $\frac{|I_{k+1}|}{|I_k|}\geq \frac{1}{w}$;
\item whenever $I_k$ is infinite, we have $\lambda_{I_k}(A)>0$.
\end{itemize}
\end{corollary}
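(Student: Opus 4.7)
We may assume $I$ is infinite and $\lambda_I(A)>0$ (otherwise take $K=0$ and $w=\max(1,|I|)$, making the third condition vacuous). Fix $\epsilon\in(0,1)$ and $\delta\in(0,\delta(A,I,\epsilon))$, and let $Y$ denote the internal set of hyperfinite subintervals $J\subseteq I$ satisfying $g_A(J)\leq\delta$. The plan is to refine the proof of Theorem \ref{mainIMtheorem} by tracking the length ratio of nested subintervals, and then assemble the sequence by internal recursion.

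The key step is to define an internal function $\phi:Y\to\starN$ by letting $\phi(J)$ be the least $m\in\starN$ such that either $|J|\leq m$ or there exists a proper subinterval $J'\subsetneq J$ with $J'\in Y$ and $m|J'|\geq|J|$. I claim $\phi(J)\in\N$ for every $J\in Y$: if $J$ is finite then $\phi(J)\leq|J|\in\N$, while if $J$ is infinite then the IM property gives $\lambda_J(A)\geq 1-\epsilon>0$, so applying the Lebesgue density theorem to $\st_J(A)$ produces a density point $r$ and a real $s>0$ with $\lambda(\st_J(A)\cap[r,r+s])\geq(1-\delta)s$; writing $J=[a,a+b]$, the interval $J':=[a+\lfloor rb\rfloor,a+\lfloor(r+s)b\rfloor]$ is a proper subinterval of $J$ with $|J'|/|J|\approx s$ and, by Lemma \ref{onedirection}, $g_A(J')\leq\delta$, so $\phi(J)\leq\lceil 2/s\rceil\in\N$. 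Consequently $\phi(Y)$ is an internal subset of $\N$, hence finite (since every countably infinite set is external), so there is $w_1\in\N$ bounding $\phi(Y)$. Separately, the same Lebesgue density argument applied directly to $I$ (using $\lambda_I(A)>0$) yields a subinterval $J^{*}\subseteq I$ with $J^{*}\in Y$ and $|J^{*}|/|I|\geq 1/w_0$ for some $w_0\in\N$. Set $w:=\max(w_0,w_1)$.

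By internal recursion (justified by transfer), define $I_0:=I$, $I_1:=J^{*}$, and for $k\geq 1$, if $|I_k|\leq w$ terminate at $K=k$, otherwise let $I_{k+1}$ be a canonical (say lexicographically least) proper subinterval $J'\subsetneq I_k$ in $Y$ with $w|J'|\geq|I_k|$; such a $J'$ is guaranteed by $\phi(I_k)\leq w$. Since $|I_{k+1}|<|I_k|$ strictly, the recursion halts at some hyperfinite $K\leq|I|$, giving $|I_K|\leq w$. The ratio bound $|I_{k+1}|/|I_k|\geq 1/w$ holds by construction, and for $k\geq 1$ we have $I_k\in Y$, so whenever $I_k$ is infinite the IM property yields $\lambda_{I_k}(A)\geq 1-\epsilon>0$; the case $k=0$ is covered by our standing assumption. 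The main delicate point is the internal/external step that upgrades the pointwise fact $\phi(J)\in\N$ to a uniform standard bound $w$---this is precisely where the rigidity of the internal function $\phi$ interacts with the externality of $\N$ to produce the desired uniform constant, and it is the sole nontrivial ingredient beyond the proof of Theorem \ref{mainIMtheorem} itself.
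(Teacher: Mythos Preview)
Your proof is correct and follows essentially the same strategy as the paper's (terse) argument: both extract a uniform standard ratio bound---you via the internal function $\phi$ having range contained in $\N$ and hence bounded, the paper via an unnamed ``saturation argument''---and then assemble the descending chain by internal recursion. Your only slip is the parenthetical handling of the degenerate case: if $I$ is infinite with $\lambda_I(A)=0$ then $|I|\notin\N$ so $w=\max(1,|I|)$ is not allowed, but in fact this case makes the corollary as stated false (the third bullet fails at $k=0$), so the implicit hypothesis is $\lambda_I(A)>0$, which you correctly assume in the main body.
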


\begin{proof}
First note that the proof of the Claim in Theorem \ref{mainIMtheorem} actually yields that every $\delta$-configuration has a proper strong $\delta$-subconfiguration where the ratio of lengths is non-infinitesimal.  Thus, a saturation argument yields $\epsilon>0$ such that every $\delta$-configuration has a proper strong $\delta$-subconfiguration with ratio of lengths at least $\epsilon$.  The corollary follows easily from this, specializing to the case of a single internal set on a single interval.
\end{proof}

%
%
%

\begin{definition}
For any (not necessarily internal) $A\subseteq \starN$, we set $$D(A):=\{n\in \N \ : \ n=a-a' \text{ for infinitely many pairs }a,a'\in A\}.$$  
\end{definition}

The following corollary will be important for our standard application in the next section.

\begin{corollary}\label{DAsyndetic}
Suppose that $A$ has the enchanced IM property on $I$.  Then $D(A)$ is syndetic.
\end{corollary}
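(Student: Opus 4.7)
My plan is to use the characterization that $n \in D(A)$ if and only if the internal set $B_n := A \cap (A - n)$ has infinite standard cardinality, equivalently $|B_n|_h > \N$. The goal will be to show that for every $n \in \N$ some $m \in [n - w, n + w]$ lies in $D(A)$, where $w$ is a fixed constant produced from the IM machinery; this immediately yields syndeticity of $D(A)$ with gap at most $2w$.

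I will begin by applying Corollary \ref{corIMmain} with two copies $A_1 = A_2 = A$ and $I_1 = I_2 = I$, taking $\epsilon = 1/3$ and some $\delta < \delta(A, I, \epsilon)$; this produces the constant $w \in \N$. Next I will observe that the enhanced IM property implies the nontrivial IM property on $I$: since $\lambda_I(A) > 0$, the Lebesgue density theorem applied to $\st_I(A)$ together with Lemma \ref{onedirection} shows that for every $\eta > 0$ there is an infinite $J \subseteq I$ with $g_A(J) \leq \eta$, and by passing to the right half of $J$ if necessary I may further arrange that $\min J$ is infinite, so that every element of $J$ is an infinite hyperinteger.

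The heart of the argument will be, for each $K \in \N$ and each fixed $n \in \N$, the production of $K$ pairwise distinct pairs $(a_k, a_k') \in A \times A$ with $a_k$ infinite and $a_k' - a_k \in [n - w, n + w]$. Choose an infinite $J_0 = [y, z] \subseteq I$ with $y$ infinite and $g_A(J_0) \leq \delta/(2K)$, and then partition $J_0$ into $K$ essentially disjoint blocks $J_0^{(k)} = [y_k, y_k + b]$ of common infinite length $b \approx |J_0|/K$. Each block then satisfies $g_A(J_0^{(k)}) \leq \delta/2$, so the pair $J_1^{(k)} := [y_k, y_k + b - n]$ and $J_2^{(k)} := [y_k + n, y_k + b]$ (each of length $\geq b/2$) forms a $\delta$-configuration in $I \times I$. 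Corollary \ref{corIMmain} applied to each of these configurations yields a common shift $c_k \in \starN$ and elements $a_k \in A \cap [y_k + c_k, y_k + c_k + w]$ and $a_k' \in A \cap [y_k + n + c_k, y_k + n + c_k + w]$. Since $y_k$ is infinite, so is $a_k$; since the blocks are essentially disjoint, the $a_k$'s are distinct; and $a_k' - a_k$, being a hyperinteger in the finite window $[n - w, n + w]$, is in fact a standard integer of the form $n + d_k$ with $|d_k| \leq w$.

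Because $K$ was arbitrary, the set $\mathcal{P}_n := \{(a, a') \in A^2 : a \text{ is infinite and } a' - a \in [n - w, n + w]\}$ is standardly infinite. Since the finitely many possible values of $a' - a$ lie in $\{n - w, \ldots, n + w\}$, the pigeonhole principle yields some $m$ in this range for which infinitely many $a \in A$ satisfy $a + m \in A$; hence $m \in D(A)$, and we have found an element of $D(A)$ within distance $w$ of $n$. The main technical subtlety I anticipate is simultaneously arranging $\min J_0$ infinite and $g_A(J_0) \leq \delta/(2K)$, but both are handled by the nontrivial IM property together with the right-half trick.
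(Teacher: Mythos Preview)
Your proof is correct and follows the same overall strategy as the paper: apply Corollary~\ref{corIMmain} with $A_1=A_2=A$ and $I_1=I_2=I$ to obtain $w$, then show for each $n$ that infinitely many pairs $(a,a')\in A^2$ satisfy $a'-a\in[n-w,n+w]$, whence pigeonhole places some $m\in D(A)\cap[n-w,n+w]$.

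The only genuine difference is in how the ``infinitely many'' pairs are produced. The paper takes countably many distinct Lebesgue density points of $\st_I(A)$ and, via Lemma~\ref{onedirection} and overflow, obtains countably many pairwise disjoint infinite subintervals $J_n\subseteq I$ with $g_A(J_n)\approx 0$, applying the corollary once on each $J_n$ (and its shift by $m$). You instead, for each $K$, find a single interval $J_0$ with $g_A(J_0)\le\delta/(2K)$ and partition it into $K$ blocks, applying the corollary on each block. Both routes work; yours trades the density-point/overflow step for a more quantitative use of the nontrivial IM property.

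Two small remarks. First, your distinctness claim ``since the blocks are essentially disjoint, the $a_k$'s are distinct'' is justified provided you choose each $a_k$ inside the strong $\delta$-subconfiguration furnished by Theorem~\ref{mainIMtheorem} (which lies within $J_1^{(k)}\subseteq J_0^{(k)}$), not merely in the possibly longer window $[y_k+c_k,\,y_k+c_k+w]$ as Corollary~\ref{corIMmain} is literally stated. Second, the requirement that $\min J_0$ be infinite (so that each $a_k$ is infinite) is unnecessary: neither the definition of $D(A)$ nor the pigeonhole step uses it, though it does no harm.
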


\begin{proof}
Let $w\in \N$ be as in Corollary \ref{corIMmain} for $A_1=A_2=A$ and $I_1=I_2=I$.  It suffices to show that for all $m\in \N$, there are infinitely many pairs $(x,y)\in A^2$ such that $y-x\in [m-w,m+w]$ (as then $[m-w,m+w]\cap D(A)\not=\emptyset$).
  

By considering countably many distinct points of density of $\st_I(A)$ and using Lemma \ref{onedirection} and overflow, we may find pairwise disjoint infinite subintervals $J_n:=[a_n,b_n]\subseteq I$ such that $g_A(J_n)\approx 0$.  Note also that $g_A(J_n+m)\approx 0$.  Thus, by the choice of $w$, for each $n$, there is $c_n\in \starN$ such that $$A\cap [a_n+c_n,a_n+c_n+w],A\cap [a_n+m+c_n,a_n+m+c_n+w]\not=\emptyset.$$  If $x_n\in A\cap [a_n+c_n,a_n+c_n+w]$ and $y_n\in A\cap [a_n+m+c_n,a_n+m+c_n+w]$, then $y_n-x_n\in (A-A)\cap [m-w,m+w]$.  By construction, the pairs $(x_n,y_n)$ are all distinct. 
\end{proof}

\section{SIM sets}

We now seek to extract the standard content of the previous section.

\begin{definition}
$A\subseteq \N$ has the \emph{standard interval-measure property}\index{standard interval-measure property} (or \emph{SIM property}) if:
\begin{itemize}
\item $\starA$ has the IM property on every infinite hyperfinite interval; 
\item $\starA$ has the enhanced IM property on some infinite hyperfinite interval.
\end{itemize}
\end{definition}

\begin{example}
Let $A=\bigcup_n I_n$, where each $I_n$ is an interval, $|I_n|\to \infty$ as $n\to \infty$, and there is $k\in \N$ such that the distance between consecutive $I_n$'s is at most $k$.  Then $A$ has the SIM property.
\end{example}

We now reformulate the definition of SIM set using only standard notions.  (Although recasting the SIM property in completely standard terms is not terribly illuminating, it is the polite thing to do.)  First, note that one can define $g_A(I)$ for standard $A\subseteq \N$ and standard finite intervals $I\subseteq \N$ in the exact same manner.  Now, for $A\subseteq \N$ and $0<\delta<\epsilon<1$, define the function $F_{\delta,\epsilon,A}:\N\to \N$ as follows.  First, if $g_A(I)>\delta$ for every $I\subseteq \N$ of length $\geq n$, set $F_{\delta,\epsilon,A}(n)=0$.  Otherwise, set $F_{\delta,\epsilon,A}(n)=$ the minimum $k$ such that there is an interval $I\subseteq \N$ of length $\geq n$ such that $g_A(I)\leq \delta$ and there are subintervals $I_1,\ldots,I_k\subseteq I$ with $I_i\cap A=\emptyset$ for all $i=1,\ldots,k$ and $\sum_{i=1}^k |I_i|\geq \epsilon |I|$.

\begin{theorem}
$A$ has the SIM property if and only if:  for all $\epsilon>0$, there is $\delta>0$ such that $\lim_{n\to \infty} F_{\delta,\epsilon,A}(n)=\infty$.
\end{theorem}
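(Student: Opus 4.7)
The plan rests on a clean nonstandard reformulation of the standard hypothesis: for fixed standard $\epsilon,\delta>0$, the condition $\lim_{n\to\infty}F_{\delta,\epsilon,A}(n)=\infty$ holds if and only if every infinite hyperfinite interval $J$ with $g_{\starA}(J)\le\delta$ satisfies $\lambda_J(\starA)\ge 1-\epsilon$. The equivalence combines transfer with inner regularity of Lebesgue measure: $\lambda_J(\starA)<1-\epsilon$ forces $[0,1]\setminus\st_J(\starA)$ to have Lebesgue measure $>\epsilon$, and inner regularity produces a finite union of open intervals of total measure $>\epsilon$ in this open complement; pulling these back under $\st_J$ gives standardly many internal subintervals of $J$ disjoint from $\starA$ with total length $\ge\epsilon|J|$, contradicting the $F$-asymptotics at the infinite length $|J|$ by transfer.

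With this reformulation, the direction $(\Leftarrow)$ is immediate: given $\epsilon>0$, pick the hypothesized $\delta$. Restriction of the uniform statement to infinite $J\subseteq I$ yields IM on every infinite hyperfinite $I$, while the requirement $F_{\delta,\epsilon,A}(n)>0$ eventually (contained in $F\to\infty$) supplies, via overflow, an infinite hyperfinite $J$ with $g_{\starA}(J)\le\delta$, on which the uniform bound now gives $\lambda_J(\starA)\ge 1-\epsilon>0$, establishing enhanced IM on $J$.

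For the substantive direction $(\Rightarrow)$, assume SIM and fix $\epsilon>0$. The plan is to argue by contradiction: suppose no standard $\delta>0$ works, so for every such $\delta$ there is an infinite hyperfinite $J_\delta$ with $g_{\starA}(J_\delta)\le\delta$ and $\lambda_{J_\delta}(\starA)<1-\epsilon$. Feed these witnesses into countable saturation applied to the decreasing internal family
\[
Y_n=\bigl\{J:|J|\ge n,\ g_{\starA}(J)\le 1/n,\ \exists\text{ internal subintervals of }J\text{ disjoint from }\starA\text{ of total length }\ge(\epsilon-1/n)|J|\bigr\}
\]
to extract a single infinite hyperfinite $J^{\ast}$ with $g_{\starA}(J^{\ast})$ \emph{infinitesimal} yet still carrying an internal configuration of subintervals forcing $\lambda_{J^{\ast}}(\starA)\le 1-\epsilon$. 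Applying IM on $J^{\ast}$ at any standard $\epsilon'>0$ produces a standard $\delta^{\ast}>0$ for which the (infinitesimal) gap ratio is automatically $\le\delta^{\ast}$; hence $\lambda_{J^{\ast}}(\starA)\ge 1-\epsilon'$ for every standard $\epsilon'>0$, i.e.\ $\lambda_{J^{\ast}}(\starA)=1$, the desired contradiction.

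The main obstacle is ensuring that the saturated $J^{\ast}$ actually inherits control of $\lambda_{J^{\ast}}(\starA)$ from the covering configuration. Since every gap of $\starA$ on a $J$ with $g_{\starA}(J)\le 1/n$ has length $\le|J|/n$, an $\epsilon$-cover must use at least $n\epsilon$ subintervals, so the cardinality of covering subintervals grows with $n$ and the saturated limit could produce hyperfinitely many subintervals whose lengths' standard parts fail to sum to $\epsilon$. Overcoming this requires restricting the count at each stage $n$ to subintervals carrying a non-infinitesimal share of $|J|$ (obtained by inner-regularity approximations inside each $J_\delta$, keeping at most finitely many subintervals per standard resolution), and using the enhanced IM property on the initial interval $I_0$ together with Theorem~16.3 and Corollary~16.8 to transport the covering configuration across the saturated limit without losing measure. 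This delicate accounting is the step I expect to be the hardest in carrying out the proof.
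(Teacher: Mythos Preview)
Your nonstandard reformulation and the $(\Leftarrow)$ direction are fine and essentially match the paper. The problem is in the $(\Rightarrow)$ direction, and the obstacle you yourself flag is fatal to your strategy rather than merely ``delicate.'' Once your saturated interval $J^{\ast}$ has infinitesimal gap ratio, \emph{every} gap of $\starA$ on $J^{\ast}$ has infinitesimal relative length, so each gap collapses to a single point under $\st_{J^{\ast}}$ and $\st_{J^{\ast}}(\starA)=[0,1]$ automatically, giving $\lambda_{J^{\ast}}(\starA)=1$. The covering condition you carry through saturation only controls the \emph{internal total gap length}, i.e.\ the Loeb density $\mu_{J^{\ast}}(\starA)\le 1-\epsilon$; it says nothing about the Lebesgue measure of the standard-part image. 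Since a hyperfinite set can perfectly well have $\lambda_{J^{\ast}}(\starA)=1$ while $\mu_{J^{\ast}}(\starA)\le 1-\epsilon$, there is no contradiction to extract. Your proposed fix of ``restricting to subintervals carrying a non-infinitesimal share'' cannot work either: at level $n$ the gap ratio bound $\le 1/n$ forces every gap to have relative length $\le 1/n$, so for large $n$ no single gap carries a non-infinitesimal share. Theorem~16.3 and Corollary~16.8 concern partition regularity and syndeticity of difference sets and do not bear on this measure accounting.

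The paper's route avoids the obstacle by never pushing to infinitesimal gap ratio on a single interval. Arguing by contrapositive, one assumes $\liminf_n F_{1/k,\epsilon,A}(n)<\infty$ for all $k$, so at each fixed standard level $\delta=1/k$ there is a \emph{fixed standard finite} bound $m_k$ on the number of subintervals needed. One then packs, for each $n$, witnesses $I_{1,n},\ldots,I_{n,n}$ (with $g_A(I_{k,n})\le 1/k$ and $m_k$ gaps covering $\ge\epsilon|I_{k,n}|$) into a single interval $I_n$, and by overflow obtains one infinite hyperfinite $\tilde I$ containing, for every standard $k$, an infinite subinterval $I_k$ with $g_{\starA}(I_k)\le 1/k$ and only $m_k$ gap-intervals covering $\ge\epsilon|I_k|$. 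Because $m_k$ is standard finite, the standard part of a finite sum equals the sum of standard parts, so these $m_k$ gaps give Lebesgue measure $\ge\epsilon$ in $[0,1]\setminus\st_{I_k}(\starA)$, i.e.\ $\lambda_{I_k}(\starA)\le 1-\epsilon$. Hence for every standard $\delta>0$ there is an infinite subinterval of $\tilde I$ witnessing the failure of IM at $(\epsilon,\delta)$, so $\starA$ fails the IM property on $\tilde I$. The key idea you are missing is to work at each discrete scale $1/k$ with its own finite gap-count $m_k$, rather than saturating to an infinitesimal scale where the gap-count necessarily becomes hyperfinite.
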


\begin{proof}
First suppose that there is $\epsilon>0$ such that $\liminf_{n\to \infty}F_{\delta,\epsilon,A}(n)<\infty$ for all $\delta>0$; we show that $A$ does not have the SIM property.  Towards this end, we may suppose that $\lambda_I(\starA)>0$ for some infinite hyperfinite interval $I$ and show that $\starA$ does not have the IM property on some infinite interval.  Fix $0<\delta<\epsilon$.  By the Lebesgue density theorem and Lemma \ref{onedirection}, we have that $g_{\starA}(J)\leq \delta$ for some infinite subinterval $J\subseteq I$.  By transfer, there are intervals $J_n\subseteq \N$ of length $\geq n$ such that $g_A(J_n)\leq \delta$, whence $0<\liminf_{n\to \infty}F_{\delta,\epsilon,A}(n)$ for all $0<\delta<\epsilon$.  For every $k\geq 1$, set $m_k:=1+\liminf_{n\to \infty}F_{\frac{1}{k},\epsilon,A}(n)$.  Consequently, for every $n\in \N$, there are intervals $I_{1,n},\ldots,I_{n,n}$ of length $\geq n$ such that, for each $k=1,\ldots,n$, $g_{A}(I_{k,n})\leq\frac{1}{k}$ and the sum of the lengths of $m_{k}$ many gaps of $A$ in $I_{k,n}$ is at least $\epsilon\cdot |I_{k,n}|$.  Set $I_n:=I_{1,n}\cup\cdots\cup I_{n,n}$.  By overflow, there is an infinite, hyperfinite interval $\tilde{I}$ that contains infinite subintervals $I_k$ such that $g_{\starA}(I_k)\leq \frac{1}{k}$ and yet the sum the lengths of of $m_{k}$ many gaps of $A$ on $I_k$ have size at least $\epsilon |I_k|$.  It follows that $\starA$ does not have the IM property on $\tilde{I}$.

Now suppose that for all $\epsilon>0$, there is $\delta>0$ such that $\lim_{n\to \infty} F_{\delta,\epsilon,A}(n)=\infty$ and that $I$ is an infinite, hyperfinite interval such that $g_{\starA}(I)\leq \delta$.  By transfer, it follows that no finite number of gaps of $\starA$ on $I$ have size at least $\epsilon\cdot |I|$.  Since $\st_I(\starA)$ is closed, we have that $\lambda_I(\starA)\geq 1-\epsilon$.  Consequently, $A$ has the IM property on any infinite, hyperfinite interval.  Since, by transfer, there is an infinite, hyperfinite interval $I$ with $g_{\starA}(I)\leq \delta$, this also shows that $\starA$ has the enhanced IM property on this $I$.  Consequently, $A$ has the SIM property.

\end{proof}

\begin{exercise}\label{singleSIM}
Suppose that $A\subseteq \mathbb{N}$ has the SIM property.  Show that, for each $\epsilon>0$, there is a $\delta>0$ such that $\delta\leq \delta(\starA,I,\epsilon)$ \emph{for every} infinite hyperfinite intervals $I\subseteq \starN$.
\end{exercise}

The next lemma shows that the SIM property is not simply a measure of ``largeness'' as this property is not preserved by taking supersets.

\begin{lemma}
Suppose that $A\subseteq \N$ is not syndetic.  Then there is $B\supseteq A$ such that $B$ does not have the SIM property.
\end{lemma}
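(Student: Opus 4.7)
Since $A$ is not syndetic, $\N\setminus A$ is thick, so we can choose pairwise disjoint intervals $J_n=[a_n,a_n+\ell_n]\subseteq\N\setminus A$ with $\ell_n\ge n^2$. The strategy is to place a uniform ``comb'' pattern inside each $J_n$ so that, on the infinite hyperfinite interval $I:=J_N$ (for any fixed infinite $N$), the set $\starB$ exhibits arbitrarily small gap ratio on $I$ yet has subintervals of $I$ whose $\st_J$-image has $\lambda$-measure bounded away from $1$. This will refute the IM property on $I$, and hence cost $B$ the SIM property.

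Concretely, partition each $J_n$ into $2n+1$ consecutive blocks of equal length $c_n:=\lfloor\ell_n/(2n+1)\rfloor$, let $B_n'$ be the union of the odd-indexed blocks, and set $B:=A\cup\bigcup_{n}B_n'$. Then $A\subseteq B$. By transfer, $J_N\cap\starA=\emptyset$ and $\starB\cap J_N$ equals the union of the $N+1$ odd-indexed blocks in the $2N+1$-block partition of $J_N$; each block has infinite length $c_N\ge N/3$, and the blocks alternately lie inside $\starB$ (odd-indexed) or outside (even-indexed).

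To verify that $B$ fails IM on $I:=J_N$ I would fix $\varepsilon:=1/4$. Given any standard $\delta>0$, choose a standard $m\ge 2$ with $1/(2m-1)\le\delta$, and let $J\subseteq I$ be the subinterval spanning the first $m$ odd blocks together with the $m-1$ intermediate even blocks, so $|J|=(2m-1)c_N$ is infinite. The gaps of $\starB$ inside $J$ are precisely the $m-1$ even blocks, each of length $c_N$, hence $g_{\starB}(J)=1/(2m-1)\le\delta$. On the other hand, each odd block has relative length $1/(2m-1)$ inside $J$, a non-infinitesimal standard rational, so $\st_J$ sends it onto a closed interval of length $1/(2m-1)$ in $[0,1]$; these $m$ images are pairwise disjoint, giving $\lambda_J(\starB)=m/(2m-1)\le 2/3<3/4=1-\varepsilon$. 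This witnesses the failure of IM on $J_N$, so $B$ is not SIM.

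The main conceptual obstacle is an incompressibility between small $g$ and small $\lambda$ \emph{on one and the same interval}: if $g_{\starB}(K)$ is infinitesimal on $K$ then every gap in $\st_K(\starB)\subseteq[0,1]$ has infinitesimal length, forcing $\lambda_K(\starB)=1$. Failure of IM therefore cannot be witnessed by $I$ itself; as $\delta\to 0$ the witnessing subinterval $J\subseteq I$ must change, and $J$ must be chosen so that a standard positive fraction of $|J|$ consists of a bounded number of roughly equal-sized gaps. The comb pattern above is designed precisely so that, by varying the number $m$ of odd blocks kept, one simultaneously controls the gap ratio ($\sim 1/m$) and the measure ($\sim 1/2$) on the resulting $J$. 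Working inside $J_N\subseteq\N\setminus A$ is essential: there $\starA$ contributes nothing to $\starB$, so the behaviour of $\starB$ is entirely under our control and $A$ cannot accidentally rescue the IM property on $I$.
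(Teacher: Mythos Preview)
Your proof is correct and follows essentially the same strategy as the paper: exploit the thickness of $\N\setminus A$ to insert a periodic pattern into arbitrarily long gaps, then show that the IM property fails on the hyperfinite gap $J_N$ by exhibiting, for each $\delta>0$, a finite-period subinterval $J$ with $g_{\starB}(J)\le\delta$ but $\lambda_J(\starB)$ bounded away from $1$.

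The only difference is the pattern itself. The paper inserts the arithmetic progression $\{x_n+kn:0\le k\le n\}$ into a gap $[x_n,x_n+n^2]$, so that on $J:=[x_N,x_N+mN]$ one has $g_{\starB}(J)=1/m$ while $\st_J(\starB)=\{k/m:0\le k\le m\}$ is finite, hence $\lambda_J(\starB)=0$. You use alternating full and empty blocks of equal length, obtaining $\lambda_J(\starB)=m/(2m-1)\le 2/3$. Both work; the paper's pattern is slightly simpler (isolated points give measure zero outright), whereas your writeup is more careful about fixing the ambient interval $I=J_N$ once and letting the witnessing subinterval $J$ vary with $\delta$, and about why one must work inside a gap of $A$---points the paper leaves implicit.
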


\begin{proof}
For each $n$, let $x_n\in \N$ be such that $[x_n,x_n+n^2]\cap A=\emptyset$.  Let $$B:=A\cup\bigcup_n \{x_n+kn \ : \ k=0,1,\ldots n\}.$$
Fix $\epsilon>0$.  Take $m\in \N$ such that $m>\frac{1}{\epsilon}$ and take $N>\N$.  Set $I:=[x_N,x_N+mN]$.  Indeed, $g_{{}^{\ast}B}(I)=\frac{N}{mN}<\frac{1}{\epsilon}$ while $$\st_I({}^{\ast}B)=\left\{\st\left(\frac{kN}{mN}\right) \ : \ k=0,\ldots,m\right\}=\left\{\frac{k}{m} \ : \ k=0,\ldots,m\right\}$$ is finite and thus has measure $0$.  It follows that ${}^{\ast}B$ does not have the IM property on $I$, whence $B$ does not have the SIM property. 
\end{proof}

The previous lemma also demonstrates that one should seek structural properties of a set which ensure that it contains a set with the SIM property.  Here is an example:

\begin{lemma}
If $B$ is piecewise syndetic, then there is $A\subseteq B$ with the SIM property.
\end{lemma}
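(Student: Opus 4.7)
The strategy is to restrict $B$ to a carefully spaced sequence of intervals on which $B$ is uniformly syndetic. Since $B$ is piecewise syndetic, fix $k \in \N$ such that, for every $n \in \N$, some interval $J \subseteq \N$ of length $n$ has all gaps of $B$ bounded by $k$. Recursively choose disjoint intervals $J_n = [a_n, a_n + n] \subseteq \N$ with this property, taking $a_{n+1}$ large enough that the inter-block gap $G_n := (a_n + n, a_{n+1})$ satisfies $|G_n| \geq 2^n$. Set $A := B \cap \bigcup_n J_n \subseteq B$. For the enhanced IM property, transfer the sequence $(J_n)$ to $(J_\nu)_{\nu \in \starN}$ and pick any $N \in \starN \setminus \N$; then $J_N$ is an infinite hyperfinite interval on which $\starA$ coincides with $\starB$, and since the latter has all gaps $\leq k$ (infinitesimal relative to $|J_N| = N$), we have $\st_{J_N}(\starA) = [0,1]$, so $\lambda_{J_N}(\starA) = 1 > 0$.

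The substantive step is verifying the IM property on an arbitrary infinite hyperfinite interval $K \subseteq \starN$. Given $\epsilon > 0$, set $\delta := \epsilon/4$, assume $g_{\starA}(K) \leq \delta$, and (in the nontrivial case) let $N \leq M$ in $\starN$ be the smallest and largest indices $\nu$ with $J_\nu \cap K \neq \emptyset$. The gaps of $\starA$ on $K$ decompose into three types: (i) internal gaps within each $\starB \cap J_\nu \cap K$, of length $\leq k$, hence infinitesimal relative to $|K|$ and contributing nothing to $\lambda_K$; (ii) full inter-block gaps $G_\nu \subseteq K$ for $N \leq \nu \leq M-1$; and (iii) up to two partial gaps at the endpoints of $K$ (lying in $G_{N-1}$ and $G_M$). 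By hypothesis, each such gap has length $\leq \delta |K|$, so in particular $|G_{M-1}| \leq \delta |K|$. The geometric growth $|G_\nu| \geq 2^\nu$ then gives $\sum_{N \leq \nu \leq M-1} |G_\nu| \leq 2\,|G_{M-1}| \leq 2\delta |K|$, while the two partial gaps contribute at most $2\delta |K|$. Summing, the total non-infinitesimal gap measure on $K$ is at most $4\delta$, so $\lambda_K(\starA) \geq 1 - 4\delta = 1 - \epsilon$. The residual cases (when $K$ meets no $J_\nu$, or lies within a single $G_\nu$) yield $\starA \cap K = \emptyset$ and $g_\starA(K) = 1$, making the IM condition vacuous.

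The main obstacle is precisely this uniformity: one must control \emph{every} infinite hyperfinite interval $K$, including those straddling many blocks $J_\nu$ and inter-block gaps. The trick that makes the argument go through is the geometric growth of $|G_n|$: it ensures that the largest inter-block gap contained in $K$ already dominates, up to a constant factor, the sum of all inter-block gaps contained in $K$, so the single bound $g_{\starA}(K) \leq \delta$ automatically controls the full gap budget. A slower growth rate (say polynomial) would fail this estimate, which is why spacing the blocks at an exponential rate is essential.
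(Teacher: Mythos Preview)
Your approach is essentially the paper's: restrict $B$ to a union of well-spaced blocks on which it is uniformly syndetic, with inter-block gaps growing fast enough that only the last one contributes non-negligibly. The paper simplifies to the thick case and imposes the super-geometric condition $g_{n+1} \geq n g_n$ so that every earlier gap becomes infinitesimal relative to the last; you treat piecewise syndeticity directly and instead bound the \emph{sum} of the inter-block gaps by a constant times the last one, a minor variant of the same idea.

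There is, however, one slip. The condition $|G_n| \geq 2^n$ is only a lower bound on each gap separately and does not force the sequence $(|G_n|)$ itself to grow geometrically: nothing prevents $|G_N|$ from being enormous while $|G_{M-1}|$ barely exceeds $2^{M-1}$, in which case $\sum_{\nu=N}^{M-1}|G_\nu| \leq 2|G_{M-1}|$ fails. The fix is immediate --- impose instead $|G_{n+1}| \geq 2|G_n|$ at each step of the recursion (always possible, since valid blocks $J_{n+1}$ exist arbitrarily far out). With that correction your sum bound, and hence the rest of the argument, goes through.
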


\begin{proof}
For simplicity, assume that $B$ is thick; the argument in general is similar, just notationally more messy.  Let $A:=\bigcup_n I_n$, with $I_n$ intervals contained in $B$, $|I_n|\to \infty$ as $n\to \infty$, and such that, setting $g_n$ to be the length in between $I_n$ and $I_{n+1}$, we have $g_{n+1}\geq ng_n$ for all $n$.  We claim that $A$ has the SIM property.  It is clear that $\lambda_I(\starA)>0$ for some infinite hyperfinite interval $I$; indeed, $\lambda_{I_N}(\starA)=1$ for $N>\N$.    Now suppose that $I$ is an infinite hyperfinite interval; we claim that $\starA$ has the IM property on $I$ as witnessed by $\delta=\epsilon$.  Suppose that $J$ is an infinite subinterval of $I$ such that $g_{\starA}(J)\leq \epsilon$.  Suppose that $I_n,\ldots,I_{M+1}$ is a maximal collection of intervals from $\starA$ intersecting $J$.  Since $\frac{g_M}{|J|}\leq \epsilon$, for $k=N,\ldots,M-1$, we have $\frac{g_k}{|J|}=\frac{g_k}{g_M}\cdot \frac{g_M}{|J|}\approx 0$, whence the intervals $I_n,\ldots,I_M$ merge when one applies $\st_J$.  It follows that $\lambda_J(\starA)\geq 1-\epsilon$.
\end{proof}

In connection with the previous result, the following question seems to be the most lingering open question about sets that contain subsets with the SIM property:

\begin{question}
Does every set of positive Banach density contain a subset with the SIM property?
\end{question}

The next result shows that many sets do \emph{not} have the SIM property.

\begin{proposition}
Suppose that $A=(a_n)$ is a subset of $\N$ written in increasing order.  Suppose that $\lim_{n\to \infty} (a_{n+1}-a_n)=\infty$.  Then $A$ does not have the SIM property.
\end{proposition}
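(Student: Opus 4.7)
The plan is to invoke Corollary \ref{DAsyndetic}, which says that if $\starA$ has the enhanced IM property on some infinite hyperfinite interval then $D(\starA)$ is syndetic in $\N$. Since the enhanced IM clause is one of the two requirements in the definition of SIM, it suffices to refute enhanced IM on every interval, and in turn it suffices to show $D(\starA) = \emptyset$ under the hypothesis $a_{n+1} - a_n \to \infty$. The whole argument is a short computation of $D(\starA)$ followed by a contrapositive.

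The computation of $D(\starA)$ goes as follows. Fix $n \in \N$ and any pair $\alpha = a_\mu > \alpha' = a_\nu$ in $\starA$ with $\alpha - \alpha' = n$. I first claim that $\mu$ and $\nu$ must both be finite. If $\nu$ is finite but $\mu$ is infinite then $\alpha$ is infinite (since $a_k \to \infty$) while $\alpha'$ is finite, making $\alpha - \alpha'$ infinite. If both $\mu$ and $\nu$ are infinite, then writing $G_i := a_{i+1} - a_i$ gives
\begin{equation*}
\alpha - \alpha' \;=\; G_\nu + G_{\nu+1} + \cdots + G_{\mu-1},
\end{equation*}
and transferring the hypothesis ``$G_k \to \infty$'' yields that $G_k$ is infinite for every infinite $k$, so every summand above is infinite and the sum is infinite. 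Hence $\mu, \nu \in \N$. Now choose (by the hypothesis) $L \in \N$ with $G_k > n$ for all $k \geq L$; any standard representation $n = G_\nu + \cdots + G_{\mu-1}$ forces $\nu < L$, leaving only finitely many $\nu$, and for each such $\nu$ only finitely many $\mu$. Thus only finitely many pairs $(\alpha, \alpha') \in \starA^2$ yield difference $n$, so $n \notin D(\starA)$. As $n \in \N$ was arbitrary, $D(\starA) = \emptyset$.

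Since the empty set has unbounded gaps in $\N$, it is not syndetic. By the contrapositive of Corollary \ref{DAsyndetic}, $\starA$ fails the enhanced IM property on every infinite hyperfinite interval. Because the SIM property requires $\starA$ to have the enhanced IM property on at least one such interval, $A$ does not have SIM, as claimed.

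The only place the hypothesis is really used is the trichotomy in the second paragraph, which forces every finite-valued difference of elements of $\starA$ to come from a pair of \emph{standard} indices; that is the main obstacle to contend with, and once it is settled, Corollary \ref{DAsyndetic} does all the remaining work.
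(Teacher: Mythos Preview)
Your proof is correct. Both your argument and the paper's hinge on Corollary~\ref{DAsyndetic}, but the organization differs. The paper argues by contrapositive: assuming $A$ has the SIM property, it invokes \emph{the proof} of Corollary~\ref{DAsyndetic} to extract explicit $x,y \in \starA \setminus A$ with $0 < y - x \leq 2w$ for some fixed $w \in \N$, and then transfers back to obtain arbitrarily large pairs in $A$ with bounded gap, contradicting $a_{n+1}-a_n \to \infty$. You instead work forward: you compute $D(\starA) = \emptyset$ directly from the gap hypothesis and then apply the \emph{statement} of Corollary~\ref{DAsyndetic} as a black box. Your route is arguably cleaner in that it does not require reopening the corollary's proof; on the other hand, your trichotomy on the indices $\mu,\nu$ (forcing both to be finite) is precisely the nonstandard content underlying the paper's final transfer step, so the two arguments are the same idea read in opposite directions.
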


\begin{proof}
Suppose that $A$ has the SIM property.  Take $I$ such that $\lambda_I(\starA)>0$.  Then by the proof of Corollary \ref{DAsyndetic}, we can find $x,y\in \starA\setminus A$ such that $x<y$ and $y-x\leq 2w$.  Then, by transfer, there are arbitrarily large $m,n\in \N$ with $0<m-n\leq 2w$.  It follows that $\lim_{n\to \infty}(a_{n+1}-a_n)\not=\infty$.
\end{proof}

The following theorem provides a connection between the current chapter and the previous one.  The proof follows immediately from Theorem \ref{mainIMtheorem}.

\begin{theorem}
Suppose that $A\subseteq \mathbb{N}$ is a SIM set.  Fix $t\in \mathbb{N}$ and $0<\epsilon<\frac{1}{t}$.  Let $\delta>0$ be as in Exercise \ref{singleSIM} for $\epsilon$. Then there is $j\in \mathbb{N}$ such that whenever $A$ nearly contains a $(t,d,w)$-progression $\B(b,t,d,w)$, then $A$ nearly contains a \emph{subprogression}\footnote{Here, subprogression means that every block $[b'+id,b'+id+j]$ is contained in the corresponding block $[b+id,b+id+w]$.} $\B(b',t,d,j)$ of $\B(b,t,d,w)$.
\end{theorem}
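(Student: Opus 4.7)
The plan is to obtain the theorem as a direct transfer of Corollary \ref{corIMmain} applied to $n = t$ copies of $\starA$. First, I will fix an infinite hyperfinite interval $I \subseteq \starN$; since $A$ is SIM, $\starA$ has the IM property on $I$, and by Exercise \ref{singleSIM} the chosen $\delta$ satisfies $\delta \leq \delta(\starA, I, \epsilon)$. With $A_1 = \cdots = A_t = \starA$ and $I_1 = \cdots = I_t = I$, and noting that the hypothesis $\epsilon < 1/t$ is precisely the condition $\epsilon < 1/n$ required in Theorem \ref{mainIMtheorem}, I would apply Corollary \ref{corIMmain} to extract a standard $j \in \mathbb{N}$ such that whenever $[a_1, a_1+b], \ldots, [a_t, a_t+b]$ is a $\delta$-configuration of subintervals of $I$ (so the blocks share a common length $b$ and $g_{\starA} \leq \delta$ on each), there exists $c \in \starN$ with $\starA \cap [a_i+c, a_i+c+j] \neq \emptyset$ for every $i = 1, \ldots, t$.

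This $j$ will be the desired constant. To finish, I proceed by transfer: it suffices to establish the nonstandard form of the theorem, namely that for every internal $(t,D,W)$-progression $\B(B,t,D,W)$ nearly contained in $\starA$ (interpreted with the gap-ratio condition, so that the blocks form a $\delta$-configuration) and with $[B, B+(t-1)D+W] \subseteq I$, there exists $B' \in [B, B+W-j]$ such that $\starA$ nearly contains $\B(B',t,D,j)$. The $t$ blocks $[B+iD, B+iD+W]$ form precisely such a $\delta$-configuration, so the previous paragraph supplies $c \in \starN$ with $\starA \cap [B+iD+c, B+iD+c+j] \neq \emptyset$ for every $i$. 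Setting $B' := B + c$ then yields a sub-progression of the required form, and transferring this back converts it to the statement about $A$, $b$, $d$, $w$.

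The only substantive interpretive point is the meaning of ``$A$ nearly contains $\B(b,t,d,w)$'' in the hypothesis: Corollary \ref{corIMmain} requires the $t$ blocks to satisfy $g_{\starA} \leq \delta$, which is strictly stronger than merely meeting $\starA$. I would read the theorem as bundling this gap condition into the notion of near-containment (in the SIM context, this is the natural hypothesis that makes the IM machinery applicable); once that is pinned down, the argument is a mechanical application of Corollary \ref{corIMmain} followed by transfer, with no additional ingredient beyond the uniform choice of $\delta$ supplied by Exercise \ref{singleSIM}.
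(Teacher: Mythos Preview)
Your approach is exactly the paper's: the paper simply says the proof follows immediately from Theorem \ref{mainIMtheorem}, which is precisely what you carry out via Corollary \ref{corIMmain} with $A_1=\cdots=A_t=\starA$ on a single interval. You are also right about the interpretive point: the otherwise-idle $\delta$ in the statement signals that the intended hypothesis on $\B(b,t,d,w)$ is the gap condition $g_A\le\delta$ on each block (so that the blocks form a $\delta$-configuration), not merely that each block meets $A$.
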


We end this section with a result concerning a structural property of sets with the SIM property.  A direct consequence of Corollary \ref{DAsyndetic} is the following:

\begin{corollary}\label{standardDAsyndetic}
If $A$ has the SIM property, then $D(A)$ is syndetic.
\end{corollary}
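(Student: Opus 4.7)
The plan is to reduce Corollary \ref{standardDAsyndetic} directly to Corollary \ref{DAsyndetic} by a single application of transfer. By the definition of the SIM property, $\starA$ has the enhanced IM property on some infinite hyperfinite interval $I$. Thus Corollary \ref{DAsyndetic} applies to the internal set $\starA$ on $I$, and by inspecting its proof I extract a single standard $w\in\N$ with the following property: for every $m\in\N$, there are infinitely many distinct pairs $(x_n,y_n)\in\starA\times\starA$ such that $y_n-x_n\in[m-w,m+w]$. Since $[m-w,m+w]\cap\N$ is a finite standard set, by the pigeonhole principle there exists a single $k\in[m-w,m+w]\cap\N$ with $y_n-x_n=k$ for infinitely many $n$.

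The next step is to transfer this information about $\starA$ back down to $A$. Given the $k\in\N$ above, the set
\[
E_k:=\{x\in\N\ :\ x\in A\text{ and }x+k\in A\}
\]
is a standard subset of $\N$ whose hyperextension satisfies $\starE_k=\{\xi\in\starA:\xi+k\in\starA\}$. The previous paragraph shows that $\starE_k$ contains infinitely many (standardly indexed) distinct elements $x_n$, so $\starE_k$ cannot be finite; by Proposition \ref{finitedoesntenlarge}, $E_k$ is therefore infinite, which means $k\in D(A)$. Thus $D(A)\cap[m-w,m+w]\neq\emptyset$ for every $m\in\N$, which says exactly that $D(A)$ has gaps of size at most $2w$, i.e.\ it is syndetic.

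\textbf{Main obstacle.} There is essentially no hard step; the only thing to be careful about is the standard-vs.-internal bookkeeping when passing from ``$k\in D(\starA)$'' to ``$k\in D(A)$''. The point is that, although a priori infinitely many pairs in $\starA\times\starA$ could witness a given difference without any pair lying in $A\times A$, here one uses the fact that ``$E_k$ has at least $n$ elements'' is, for each fixed $n\in\N$, an elementary property to which transfer applies — equivalently, internal sets that are infinite (contain arbitrarily many elements in the standard sense) cannot be standard finite sets. This is the only subtle point; once it is observed, the corollary follows immediately from Corollary \ref{DAsyndetic}.
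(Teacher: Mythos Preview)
Your proof is correct and follows essentially the same route as the paper, which simply records the corollary as ``a direct consequence of Corollary \ref{DAsyndetic}'' with no further argument. The only difference is cosmetic: rather than re-opening the proof of Corollary \ref{DAsyndetic} to extract $w$ and the pairs $(x_n,y_n)$, one can use its conclusion as a black box---$D(\starA)$ is syndetic---and then observe once and for all that $D(\starA)\subseteq D(A)$ via exactly your $E_k$/transfer argument; your version just interleaves these two steps.
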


Leth's original main motivation for studying the IM property was a generalization of the previous corollary.  Stewart and Tijdeman \cite{stewart_infinite_1979} proved that, given $A_1,\ldots,A_n\subseteq \N$ with $\BD(A_i)>0$ for all $i=1,\ldots,n$, one has $D(A_1)\cap\cdots \cap D(A_n)$ is syndetic.  Leth proved the corresponding statement for sets with the SIM property:

\begin{theorem}
If $A_1,\ldots,A_n\subseteq \N$ all have the SIM property, then $D(A_1)\cap \cdots \cap D(A_n)$ is syndetic.
\end{theorem}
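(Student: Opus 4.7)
The plan is to imitate the proof of Corollary \ref{DAsyndetic}, but applying Corollary \ref{corIMmain} to all $n$ sets simultaneously so that a \emph{single} translation parameter $c$ witnesses the required intersections for every $A_i$. First I would unify the parameters: invoking Exercise \ref{singleSIM} with an $\epsilon\in(0,1/(2n))$ yields, for each $i$, a $\delta_i>0$ such that $\delta_i\leq\delta({}^{\ast}A_i,I,\epsilon)$ for every infinite hyperfinite interval $I$; set $\delta:=\min_i\delta_i$. For each $i$, the SIM property lets me fix an infinite hyperfinite interval $I_i$ on which ${}^{\ast}A_i$ has the enhanced IM property. Then Corollary \ref{corIMmain}, applied to the $2n$ internal sets ${}^{\ast}A_1,{}^{\ast}A_1,\ldots,{}^{\ast}A_n,{}^{\ast}A_n$ over the base intervals $I_1,I_1,\ldots,I_n,I_n$ with this common $\epsilon$ and $\delta$, yields a single threshold $w\in\N$ that will govern every subsequent configuration.

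Next, to verify that $\bigcap_i D(A_i)$ has gaps bounded by $2w$, I fix $m\in\N$ and for each $i$ produce, via the Lebesgue density theorem applied to $\st_{I_i}({}^{\ast}A_i)$ and Lemma \ref{onedirection}, countably many pairwise disjoint infinite subintervals $J_i^{(k)}\subseteq I_i$ of a common infinite length with $g_{{}^{\ast}A_i}(J_i^{(k)})\approx 0$; since shifting an infinite interval by a standard $m$ preserves near-zero gap ratios, $g_{{}^{\ast}A_i}(J_i^{(k)}+m)\approx 0$ as well. For every $k$ the list $\bigl(J_1^{(k)},J_1^{(k)}+m,\ldots,J_n^{(k)},J_n^{(k)}+m\bigr)$ is a $\delta$-configuration, so Corollary \ref{corIMmain} delivers $c_k\in{}^{\ast}\N$ making both ${}^{\ast}A_i\cap[a_i^{(k)}+c_k,a_i^{(k)}+c_k+w]$ and ${}^{\ast}A_i\cap[a_i^{(k)}+m+c_k,a_i^{(k)}+m+c_k+w]$ nonempty for every $i$, where $a_i^{(k)}$ denotes the left endpoint of $J_i^{(k)}$. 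Picking witnesses $x_i^{(k)}$ and $y_i^{(k)}$ in these intersections produces pairs with $d_{i,k}:=y_i^{(k)}-x_i^{(k)}\in[m-w,m+w]$, and pairwise disjointness of the $J_i^{(k)}$ across $k$ keeps the pairs distinct.

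The main obstacle is converting these pairs into a single difference $d$ lying in \emph{every} $D(A_i)$, as opposed to a per-coordinate difference $d_i\in D(A_i)$. Pigeonholing the infinitely many tuples $(d_{1,k},\ldots,d_{n,k})$ into the standard finite cube $[m-w,m+w]^n$ extracts one tuple $(d_1^{\ast},\ldots,d_n^{\ast})$ realized for infinitely many $k$, whence transfer yields $d_i^{\ast}\in D(A_i)$ for each $i$ — enough to reprove Corollary \ref{standardDAsyndetic} but not to force $d_1^{\ast}=\cdots=d_n^{\ast}$. My plan to bridge the gap is to sharpen the application of Corollary \ref{corIMmain} by shrinking $\delta$ uniformly (again using Exercise \ref{singleSIM}) until the strong $\delta$-subconfiguration produced by Theorem \ref{mainIMtheorem}, of length at most $w$, has longest gap $\delta w<1$ and therefore sits entirely inside ${}^{\ast}A_i$ for every $i$; once this happens, choosing $x_i^{(k)}:=a_i^{(k)}+c_k$ and $y_i^{(k)}:=a_i^{(k)}+c_k+m$ gives the uniform difference $d=m$ for all $i$ simultaneously.

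The core technical difficulty, and the step I expect to require the most care, is the parameter-dependence inside Theorem \ref{mainIMtheorem}: one must inspect the pigeonhole defining $w$ there to show that $w$ can be kept small enough relative to $1/\delta$. If this direct inspection is too weak, a backup route is to iterate the descending-chain refinement of Corollary \ref{corofproof}, at each stage performing a density-point selection that is valid for all $n$ sets at once and that shrinks the remaining gap of every ${}^{\ast}A_i$; after finitely many stages — bounded uniformly in $i$ by an internal-pigeonhole argument like the one inside Theorem \ref{mainIMtheorem} — the common subinterval must lie inside every ${}^{\ast}A_i$, after which the same uniform-difference argument closes the proof and in fact yields the stronger statement $\N\subseteq\bigcap_i D(A_i)$.
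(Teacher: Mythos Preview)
Your diagnosis of the obstacle is exactly right: the pigeonhole on the tuples $(d_{1,k},\ldots,d_{n,k})$ only yields $d_i^{\ast}\in D(A_i)$ coordinatewise, not a common difference. Unfortunately, both of your proposed repairs overshoot---they would prove $\N\subseteq\bigcap_i D(A_i)$, which is false. Take $A_i=2\N$: this set has the SIM property (for any infinite hyperfinite interval $I$ one has $\st_I({}^{\ast}(2\N))=[0,1]$, so $\lambda_I({}^{\ast}(2\N))=1$ and the IM condition holds vacuously), yet $D(2\N)$ consists only of even numbers. Concretely, your Fix~1 asks for $\delta w<1$, but $w=w(\delta)$ is produced by the saturation argument in Theorem~\ref{mainIMtheorem} and there is no mechanism forcing $\delta w$ to shrink; for $2\N$ the gap on any interval of length $\ge 2$ is exactly $1$, so \emph{every} $\delta$-subconfiguration of length $w\ge 2$ has $\delta w\ge 1$, and the interval can never sit entirely inside ${}^{\ast}(2\N)$. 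Fix~2 suffers the same fate for the same reason.

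The paper's proof avoids this trap by an entirely different mechanism. After aligning the base intervals $I_1,\ldots,I_n$ to a common length with $\lambda_{I_i}({}^{\ast}A_i)=1$ (using Corollary~\ref{corofproof} and density points), it sets $d_i:=x_i-x_1$ and defines the auxiliary internal set
\[
B=\bigl\{a\in{}^{\ast}A_1\cap I_1:\ {}^{\ast}A_i\cap[a+d_i-w,\,a+d_i+2w]\neq\emptyset\text{ for all }i\bigr\},
\]
shows that $B$ itself has the enhanced IM property on $I_1$, and then writes $B=\bigcup_{(k_1,\ldots,k_n)}B_{(k_1,\ldots,k_n)}$ where $B_{(k_1,\ldots,k_n)}=\{b\in B: b+d_i+k_i\in{}^{\ast}A_i\text{ for all }i\}$ and the $k_i$ range over the finite window $[-w,2w]$. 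The partition-regularity theorem for the enhanced IM property (Theorem~\ref{IMpartreg}) then supplies a single tuple $(k_1,\ldots,k_n)$ and an infinite $J$ on which $B':=B_{(k_1,\ldots,k_n)}$ has the enhanced IM property. Because the offsets are now \emph{frozen}, any $b,b'\in B'$ satisfy $b-b'=(b+d_i+k_i)-(b'+d_i+k_i)\in{}^{\ast}A_i-{}^{\ast}A_i$ for every $i$, so $D(B')\subseteq\bigcap_i D(A_i)$, and Corollary~\ref{DAsyndetic} applied to $B'$ finishes. The ingredient you are missing is precisely this use of Theorem~\ref{IMpartreg} to pass to a piece with constant offset tuple; no refinement of the configuration length can substitute for it.
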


\begin{proof}
We break the proof up into pieces.

\

\noindent \textbf{Claim 1:}  There are infinite hyperfinite intervals $I_1,\ldots,I_n$, all of which have the same length, such that $\lambda_{I_i}(\starA_i)=1$ for all $i=1,\ldots,n$. 

\noindent \textbf{Proof of Claim 1:}  By the definition of the SIM property and Corollary \ref{corofproof}, we may find infinite, hyperfinite intervals $J_1,\ldots,J_n$ whose length ratios are all finite and for which $\st_{J_i}(\starA_i)>0$ for $i=1,\ldots,n$.  By taking points of density in each of these intervals, for any $\epsilon>0$, we may find \emph{equally sized} subintervals $J_i'$ of $J_i$ such that $\lambda_{J_i'}(\starA_i)\geq 1-\epsilon$, whence $g_{\starA_i}(J_i')\leq \epsilon$.  Since this latter condition is internal, by saturation, we may find equally sized subintervals $I_i$ of $J_i$ such that each $g_{\starA_i}(I_i)\approx 0$, whence, by the fact that $\starA_i$ has the IM property on $J_i$, we have $\lambda_{I_i}(\starA_i)=1$.

\

\noindent We now apply Corollary \ref{corIMmain} to $A_1,\ldots,A_n$, $I_1,\ldots,I_n$ and $\epsilon:=\frac{1}{n+1}$.  Let $w\in \N$ be as in the conclusion of that corollary.  Write $I_i:=[x_i,y_i]$ and for $i=1,\ldots,n$, set $d_i:=x_i-x_1$.  We then set
$$B:=\{a\in \starA_1\cap I_1 \ : \ \starA_i \cap [a+d_i-w,a+d_i+2w]\not=\emptyset \text{ for all }i=1,\ldots,n\}.$$

\

\noindent \textbf{Claim 2:}  Suppose that $J\subseteq I_1$ is infinite and $r$ is a point of density of 
$$\bigcap_{i=1}^n \st_{J+d_i}(\starA_i).$$  Then $r\in \st_J(B)$.

\noindent \textbf{Proof of Claim 2:}  By a (hopefully) by now familiar Lebesgue density and overflow argument, there is an infinite hyperfinite interval $[u,v]\subseteq J$ such that $\st_J(u)=\st_J(v)=r$ and $$g_{\starA_i}([u+d_i,v+d_i])\approx 0 \text{ for all } i=1,\ldots,n.$$  This allows us to find $c\in \starN$ such that $u+d_i+c+w\leq v_i$ and $\starA_i\cap [u+d_i+c,u+d_i+c+w]\not=\emptyset$ for $i=1,\ldots,n$.  Take $a\in \starA_1\cap [u+c,u+c+w]$, say $a=u+c+j$ for $j\in [0,w]$.  It follows that $$\starA_i\cap [a+d_i-j,a+d_i+j+w]\not=\emptyset \text{ for all }i=1,\ldots,n$$ whence $a\in B$.  Since $u\leq u+c\leq a\leq u+c+w\leq v$, we have that $\st_J(a)=\st_J(v)=r$, whence $r\in \st_J(B)$, as desired.      

\

\noindent \textbf{Claim 3:}  $B$ has the enhanced IM property on $I_1$.

\noindent \textbf{Proof of Claim 3:}  Taking $J=I_1$ in Claim 2 shows that $\lambda_{I_1}(B)=1$.  We now show that $B$ has the IM property on $I_1$.  Fix $\epsilon>0$.  Let $\delta=\min_{i=1,\ldots,n}\delta(\starA_i,I_i,\frac{\epsilon}{n})$.  Suppose $J\subseteq I_1$ is such that $g_B(J)\leq \delta$.  Then $g_{\starA_i}(J+d_i)\leq \delta$, whence $$\lambda\left(\bigcap_{i=1}^n \st_{J+d_i}(\starA_i)\right)\geq 1-\epsilon.$$  By Claim 2, we have $\lambda_J(B)\geq 1-\epsilon$, as desired.  

\

\noindent For $-w\leq k_1,\ldots,k_n\leq 2w$, set
$$B_{(k_1,\ldots,k_n)}:=\{b\in B \ : \ b+d_i+k_i\in \starA_i \text{ for all }i=1,\ldots,n\}.$$  By the definition of $B$, we have that $B$ is the union of these sets.  Since $B$ has the enhanced IM property on $I_1$, by Theorem \ref{IMpartreg}, there is such a tuple $(k_1,\ldots,k_n)$ and an infinite $J\subseteq I_1$ such that $B':=B_{(k_1,\ldots,k_n)}$ has the enhanced IM property on $J$.  By Corollary \ref{DAsyndetic}, $D(B')$ is syndetic.  Since $B'-B'\subseteq \bigcap_{i=1}^n (\starA_i-\starA_i)$, by transfer we have that $D(A_1)\cap \cdots \cap D(A_n)$ is syndetic.
\end{proof}

\section*{Notes and references}  The material in this chapter comes from the paper \cite{leth_some_1988}, although many of the proofs appearing above were communicated to us by Leth and are simpler than those appearing in the aforementioned article.  In a recent preprint \cite{}, Goldbring and Leth study the notion of a \emph{supra-SIM} set, which is simply a set that contains a SIM set.  They show that these sets have very nice properties such as being partition regular and closed under finite-embeddability.  They also show that SIM sets satisfy the conclusions of the Sumset Theorem \ref{jinsumset} and Nathansons' Theorem \ref{nathansontheorem}.

\part{Other topics}

\chapter{Triangle removal and Szemeredi regularity}\label{trianglechapter}

\section{Triangle removal lemma}

The material in this section was not proven first by nonstandard methods.  However, the nonstandard perspective makes the proofs quite elegant.  We closely follow \cite{tao_hilberts_2014}.\index{Triangle Removal Lemma}

Suppose that $G=(V,E)$ is a finite graph.  We define the \emph{edge density} of $G$ to be the quantity
$$e(G):=\frac{|E|}{|V\times V|}$$ and the \emph{triangle density} of $G$ to be the quantity
$$t(G):=\frac{|\{(x,y,z)\in V\times V\times V \ : \ (x,y), (y,z), (x,z)\in E\}|}{|V\times V \times V|}.$$

 
\begin{theorem}[Triangle removal lemma]
For every $\epsilon>0$, there is a $\delta>0$ such that, whenever $G=(V,E)$ is a finite graph with $t(G)\leq \delta$, then there is a subgraph $G'=(V,E')$ of $G$ that is triangle-free (so $t(G')=0$) and such that $e(G\setminus G')\leq \epsilon$.
\end{theorem}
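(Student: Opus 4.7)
The plan is to argue by contradiction via nonstandard transfer, using Loeb measure on $V\times V$ to exhibit a triangle-free subgraph close to $E$. If the theorem fails for some $\epsilon>0$, there exist finite graphs $G_n=(V_n,E_n)$ with $t(G_n)<1/n$ but no subgraph obtained by deleting $\leq\epsilon|V_n|^2$ edges is triangle-free. Fix $N\in\starN\setminus\N$ and consider $G=(V,E):={}^*G_N$: a hyperfinite graph with $t(G)\approx 0$ for which every internal $E'\subseteq E$ with $|E\setminus E'|\leq\epsilon|V|^2$ still contains a triangle. I will construct such an internal $E'$ that is moreover triangle-free, yielding the contradiction.

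The internal edge set $\tilde E=E\subseteq V\times V$ is Loeb measurable, and iterating Theorem~\ref{fubiniloeb} and Theorem~\ref{integratebycounting} on $V\times V\times V$ translates the triangle density hypothesis into
$$\int \chi_{\tilde E}(x,y)\,\chi_{\tilde E}(y,z)\,\chi_{\tilde E}(x,z)\,d\mu_V^{\otimes 3}=\st(t(G))=0.$$
The key structural step is to approximate $\tilde E$ by a finite union of internal rectangles. Since $\tilde E$ is generally not measurable with respect to the proper sub-$\sigma$-algebra $\mathcal{L}_V\otimes\mathcal{L}_V$, I pass to its conditional expectation $f=\mathbb{E}[\chi_{\tilde E}\mid\mathcal{L}_V\otimes\mathcal{L}_V]$, which is; by Exercise~\ref{productapprox} together with inner regularity of Loeb measure, $f$ can be approximated in $L^1$, for any $\eta>0$, by a step function $g=\sum_{p,q}\alpha_{pq}\chi_{C_p\times C_q}$, where $\{C_1,\ldots,C_m\}$ is an internal partition of $V$ into atoms of the algebra generated by finitely many internal sets. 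Call a pair $(C_p,C_q)$ \emph{dense} if $\mu_V(C_p),\mu_V(C_q)\geq\eta^{1/4}$ and $\alpha_{pq}\geq 1-\eta^{1/2}$.

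Define $E':=E\cap\bigcup\{C_p\times C_q:(C_p,C_q)\text{ dense}\}$; this is internal. The edges not in $E'$ come from small cells, non-dense pairs, and the $L^1$-approximation error, whose total Loeb measure can be made $<\epsilon$ by choosing $\eta$ sufficiently small after the partition (of size $m$) is fixed. Suppose $(V,E')$ contained a triangle with $x\in C_a$, $y\in C_b$, $z\in C_c$: each of $(C_a,C_b),(C_b,C_c),(C_a,C_c)$ is dense, so by inclusion--exclusion
$$\int_{C_a\times C_b\times C_c}\chi_{\tilde E}(x,y)\,\chi_{\tilde E}(y,z)\,\chi_{\tilde E}(x,z)\,d\mu_V^{\otimes 3}\geq (1-3\eta^{1/2})\mu_V(C_a)\mu_V(C_b)\mu_V(C_c)\geq (1-3\eta^{1/2})\eta^{3/4}>0,$$
contradicting the global vanishing. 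The \emph{main obstacle} is precisely the interaction between $\tilde E$ (which need not be product-measurable), its conditional expectation (which is), and the bookkeeping tying $\eta$, $k$, $m$, and $\epsilon$ together so that the rectangle decomposition reflects $\tilde E$ itself accurately enough to force triangle-freeness of $E'$. A secondary technicality is that Theorem~\ref{fubiniloeb} is proven for two-factor products, so I would iterate it to $V\times V\times V$ using that $\mu_V\otimes\mu_V\otimes\mu_V$ sits inside $\mathcal{L}_{V\times V\times V}$. Once these pieces are in place, transferring the contradiction back to the $G_n$ yields the required $\delta(\epsilon)$.
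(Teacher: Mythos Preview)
Your overall framework matches the paper's: argue by contradiction, pass to a hyperfinite graph $G=(V,E)$ with $t(G)\approx 0$, use Loeb measure and the conditional expectation $f=\mathbb{E}[\chi_E\mid\mathcal{L}_V\otimes\mathcal{L}_V]$, and approximate by an internal partition to build a triangle-free internal $E'$. But there is a genuine gap in how you pass from information about $f$ back to control of $\mu(E\setminus E')$.

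Your definition of a ``dense'' pair requires $\alpha_{pq}\geq 1-\eta^{1/2}$, i.e., edge density close to~$1$. For a general graph this is far too stringent. Consider a hyperfinite (quasi-)random bipartite graph on parts $A,B$ with edge probability $1/2$: it is already triangle-free, yet $f\approx 1/2$ on $A\times B$ and $\approx 0$ elsewhere, so \emph{no} pair is dense and your construction gives $E'=\emptyset$. Then $\mu_{V\times V}(E\setminus E')\approx 1/4$, which cannot be made $<\epsilon$ by shrinking $\eta$. Your claim that ``the edges not in $E'$ \ldots\ total Loeb measure can be made $<\epsilon$'' is simply false: nothing prevents all of $E$ from living in pairs of intermediate density, and those are exactly the edges you throw away.

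What is missing is precisely the piece the paper isolates as a lemma: if $h\in L^2(\mathcal{L}_{V\times V})$ is orthogonal to $L^2(\mathcal{L}_V\otimes\mathcal{L}_V)$, then $\int h(x,y)\,g(y,z)\,k(x,z)\,d\mu_{V\times V\times V}=0$ for all $g,k$. Applied three times (once in each slot), this gives $\int \chi_E\chi_E\chi_E=\int fff=0$. Only now can one pass to the product-measurable set $G=\{f\geq\epsilon/2\}$, losing at most $\epsilon/2$ in edge measure via $\mu(E\setminus G)=\int_{f<\epsilon/2}\chi_E=\int_{f<\epsilon/2}f\leq\epsilon/2$, while still having $\int 1_G 1_G 1_G=0$ (since $1_G\leq(2/\epsilon)f$). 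Because $G$ is genuinely $\mathcal{L}_V\otimes\mathcal{L}_V$-measurable, it is close to a union of full rectangles, and the elementary-set argument with a modest density threshold (the paper uses $>2/3$) now works. Your high-threshold pigeonhole step is correct, but it only applies \emph{after} this reduction; applied to the raw edge set it forces you to discard too much. You correctly flagged the interaction between $\chi_E$ and $f$ as the main obstacle---the orthogonality lemma is exactly what resolves it.
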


In short, the triangle removal lemma\index{Triangle Removal Lemma} says that if the triangle density of a graph is small, then one can remove a few number of edges to get one that is actually triangle-free.  We first show how the Triangle Removal Lemma can be used to prove Roth's theorem\index{Roth's theorem}, which was a precursor to Szemeredi's theorem.

\begin{theorem}[Roth's theorem]\index{Roth's theorem}
For all $\epsilon>0$, there is $n_0\in \N$ such that, for all $n\geq n_0$ and all $A\subseteq [1,n]$, if $\delta(A,n)\geq \epsilon$, then $A$ contains a 3-term arithmetic progression.
\end{theorem}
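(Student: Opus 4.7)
The plan is the classical graph-theoretic reduction from 3-AP-freeness to the Triangle Removal Lemma, due to Ruzsa and Szemer\'edi. First I would fix $\epsilon > 0$, suppose for contradiction that arbitrarily large $n$ admit $A \subseteq [1,n]$ with $\delta(A,n) \geq \epsilon$ and containing no nontrivial 3-term arithmetic progression, and build from such an $A$ an auxiliary tripartite graph $G = (V,E)$ whose triangles are in essentially one-to-one correspondence with 3-APs in $A$.

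Concretely, take $N := 6n$ (or any sufficiently large $N$ so that all additions below stay in $[1,N]$ or are carried out in $\mathbb{Z}/N\mathbb{Z}$) and set $V := X \sqcup Y \sqcup Z$ with $X = Y = Z = \mathbb{Z}/N\mathbb{Z}$. Put an edge $\{x,y\}$ with $x \in X$, $y \in Y$ iff $y - x \in A$; an edge $\{y,z\}$ with $y \in Y$, $z \in Z$ iff $z - y \in A$; and an edge $\{x,z\}$ with $x \in X$, $z \in Z$ iff $(z - x)/2 \in A$ (only defined when $z - x$ is even; for odd $z-x$ put no edge). A triangle $(x,y,z)$ then corresponds to a triple $a = y-x$, $b = z-y$, $c = (z-x)/2$ in $A$ satisfying $a + b = 2c$, i.e., $(a,c,b)$ is a 3-term arithmetic progression in $A$. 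The assumption that $A$ contains no nontrivial 3-AP forces $a = b = c$, so every triangle in $G$ is ``trivial,'' parameterized by a choice of $a \in A$ and $x \in X$ (giving $y = x + a$, $z = x + 2a$).

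The key numerical observations are: (i) the number of triangles in $G$ is exactly $|A| \cdot N$, hence the triangle density is $t(G) = |A|/N^2 = O(\epsilon/N)$, which tends to $0$ as $n \to \infty$; and (ii) each edge of $G$ lies in exactly one triangle, because the trivial triangle through any given edge is uniquely determined (for an $XY$-edge $\{x,x+a\}$, the only completion is $z = x+2a$, and similarly for the other edge types). Applying the Triangle Removal Lemma with parameter $\epsilon/6$ yields some $\delta > 0$; for $n$ large enough we have $t(G) < \delta$, so there exists $G' \subseteq G$ which is triangle-free with $e(G \setminus G') \leq \epsilon/6$. But removing any edge of $G$ destroys at most one triangle (by (ii)), so making $G$ triangle-free requires removing at least $|A| \cdot N \geq \epsilon N^2$ edges, i.e., an edge-density of at least $\epsilon/9$ (using $|V|^2 = 9N^2$). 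For $\epsilon/6 < \epsilon/9$ this is already a contradiction; choosing constants properly (say replace $\epsilon/6$ by $\epsilon/100$) gives the contradiction cleanly.

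The only step requiring care is the combinatorial bookkeeping in (ii) --- verifying that in the tripartite graph with the above edge rules, distinct trivial triangles are genuinely edge-disjoint, which hinges on working in $\mathbb{Z}/N\mathbb{Z}$ (or choosing $N$ large enough that no ``wrap-around'' collisions occur) so that the map $(x,a) \mapsto \{x, x+a\}$ on $XY$-edges is injective. This is routine but is the one place where the choice of host group matters; everything else is just comparing the two densities $t(G) = O(\epsilon/N)$ and the required edge-removal density $\Omega(\epsilon)$ and invoking the Triangle Removal Lemma.
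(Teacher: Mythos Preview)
Your proposal is correct and follows essentially the same Ruzsa--Szemer\'edi construction as the paper: both build a tripartite graph on three copies of an interval (the paper uses $[1,3n]$, you use $\mathbb{Z}/N\mathbb{Z}$), with $XY$- and $YZ$-edges coding differences in $A$ and $XZ$-edges coding differences in $2A$, and both rely on the fact that trivial triangles are edge-disjoint so that $\Omega(\epsilon n^2)$ edges must be removed to destroy them. The only cosmetic difference is in the endgame: you argue by contradiction (no 3-AP $\Rightarrow$ $t(G)\to 0$ $\Rightarrow$ few edge removals suffice, contradicting the lower bound), while the paper runs the contrapositive directly (the edge-removal lower bound forces $t(G)\geq\delta$, hence $\Omega(n^3)$ triangles, but only $O(n^2)$ are trivial). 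A small caution: your edge rule $(z-x)/2\in A$ in $\mathbb{Z}/N\mathbb{Z}$ with $N$ even is slightly ambiguous since halving is two-valued there; the paper's formulation $w-v\in 2A$ on the interval $[1,3n]$ sidesteps this and may be cleaner to write up.
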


\begin{proof}
Fix $n$ and form a tripartite graph $G=G(A,n)$ with vertex set $V=V_1\cup V_2\cup V_3$, where each $V_i$ is a disjoint copy of $[1,3n]$.  If $(v,w)\in (V_1\times V_2)\cup (V_2\times V_3)$, we declare $(v,w)\in E \Leftrightarrow w-v\in A$.  If $(v,w)\in V_1\times V_3$, then we declare $(v,w)\in E\Leftrightarrow (w-v)\in 2A$.  Note then that if $(v_1,v_2,v_3)$ is a triangle in $G$, then setting $a:=v_2-v_1$, $b:=v_3-v_2$, and $c:=\frac{1}{2}(v_3-v_1)$, we have that $a,b,c\in A$ and $a-c=c-b$.  If this latter quantity is nonzero, then $\{a,b,c\}$ forms a 3-term arithmetic progression in $A$.  

Motivated by the discussion in the previous paragraph, let us call a triangle $\{v_1,v_2,v_3\}$ in $G$ \emph{trivial} if $v_2-v_1=v_3-v_2=\frac{1}{2}(v_3-v_1)$.  Thus, we aim to show that, for $n$ sufficiently large, if $\delta(A,n)\geq 1-\epsilon$, then $G(A,n)$ has a nontrivial triangle.  If $a\in A$ and $k\in [1,n]$, then $(k,k+a,k+2a)$ is a trivial triangle in $G$.  Since trivial triangles clearly do not share any edges, one would have to remove at least $3\cdot |A|\cdot n\geq 3\epsilon n^2$ many edges of $G$ in order to obtain a triangle-free subgraph of $G$.  Thus, if $\delta>0$ corresponds to $3\epsilon$ in the triangle removal lemma\index{Triangle Removal Lemma}, then we can conclude that $t(G)\geq \delta$, that is, there are at least $27\delta n^3$ many triangles in $G$.  Since the number of trivial triangles is at most $|A|\cdot (3n)\leq 3n^2$, we see that $G$ must have a nontrivial triangle fi $n$ is sufficiently large.  
\end{proof}

We now turn to the proof of the triangle removal lemma\index{Triangle Removal Lemma}.  The basic idea is that if the triangle removal lemma were false, then by a now familiar compactness/overflow argument, we will get a contradiction to some nonstandard triangle removal lemma\index{Triangle Removal Lemma}.  Here is the precise version of such a lemma:

\begin{theorem}[Nonstandard triangle removal lemma]\label{nstrl}\index{Triangle Removal Lemma!nonstandard}
Suppose that $V$ is a nonempty hyperfinite set and $E_{12}, E_{23}, E_{13}\in \l_{V\times V}$ are such that
$$\int_{V\times V\times V}1_{E_{12}}(u,v)1_{E_{23}}(v,w)1_{E_{13}}(u,v)d\mu(u,v,w)=0. \quad (\dagger)$$  Then for every $\epsilon>0$ and $(i,j)\in \{(1,2),(2,3),(1,3)\}$, there are hyperinite $F_{ij}\subseteq V\times V$ such that $\mu_{V\times V}(E_{ij}\setminus F_{ij})<\epsilon$ and
$$1_{F_{12}}(u,v)1_{F_{23}}(v,w)1_{F_{13}}(u,v)=0 \text{ for all }(u,v,w)\in V\times V\times V. \quad (\dagger\dagger)$$
\end{theorem}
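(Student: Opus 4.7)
The high-level plan is to project the problem onto the product sub-$\sigma$-algebra $\mathcal{L}_V \otimes \mathcal{L}_V \subseteq \mathcal{L}_{V \times V}$ from Section \ref{product} (exploiting that the integrand has tensor structure), and then to cut the $E_{ij}$'s by a common Loeb-measurable partition of $V$ into cells on which the corresponding conditional expectations are approximately constant. By Lemma \ref{approx}, one may first replace each $E_{ij}$ by an internal set with Loeb symmetric difference smaller than $\epsilon/100$, which changes the triple integral by an infinitesimal; the internal triangle set $T$ of the new $E_{ij}$'s then satisfies $|T|/|V|^3 \approx 0$.

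Next, let $f_{ij} \colon V \times V \to [0,1]$ denote the conditional expectation of $1_{E_{ij}}$ with respect to $\mathcal{L}_V \otimes \mathcal{L}_V$. Using Theorem \ref{fubiniloeb} and the defining identity of conditional expectation applied to one coordinate pair at a time (noting that, for fixed $w$, the function $(u,v) \mapsto 1_{E_{23}}(v,w) 1_{E_{13}}(u,w)$ is $\mathcal{L}_V \otimes \mathcal{L}_V$-measurable, and similarly for the other coordinate pairs), I would prove the key identity
\[
\int f_{12}(u,v)\, f_{23}(v,w)\, f_{13}(u,w)\, d\mu_{V^3} \;=\; \int 1_{E_{12}} 1_{E_{23}} 1_{E_{13}}\, d\mu_{V^3} \;=\; 0.
\]
By nonnegativity, $f_{12}(u,v) f_{23}(v,w) f_{13}(u,w) = 0$ for $\mu_{V^3}$-a.e.\ $(u,v,w)$. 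I would then use that $\mathcal{L}_V \otimes \mathcal{L}_V$ is generated by rectangles to find a single finite Loeb-measurable partition $V = V_1 \sqcup \cdots \sqcup V_N$ together with step functions $\sum_{a,b} c_{ij}^{ab} 1_{V_a \times V_b}$ approximating each $f_{ij}$ well enough that, for every cell-triple $(a,b,c)$ with $\mu(V_a)\mu(V_b)\mu(V_c)$ bounded below, the identity above forces $\min\{c_{12}^{ab}, c_{23}^{bc}, c_{13}^{ac}\} \leq \eta$ for a small threshold $\eta < \epsilon$.

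Finally, I would define
\[
F_{ij} \;:=\; \bigcup_{(a,b):\, c_{ij}^{ab} > \eta} E_{ij} \cap (V_a \times V_b).
\]
The conditional-expectation identity gives $\mu(E_{ij} \cap (V_a \times V_b)) \approx c_{ij}^{ab} \mu(V_a) \mu(V_b)$, whence $\mu(E_{ij} \setminus F_{ij}) \leq \eta < \epsilon$; and any triangle $(u,v,w)$ in the $F_{ij}$'s would force the associated cell-triple $(a,b,c)$ to have all three $c_{ij} > \eta$, contradicting the cell-wise bound above. A further application of Lemma \ref{approx} replaces each Loeb-measurable $V_a$ by an internal approximation, absorbing the small extra error into $\epsilon$, so that the resulting $F_{ij}$ are hyperfinite as required. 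The main obstacle is the partition step: upgrading the a.e.\ vanishing of $f_{12} f_{23} f_{13}$ to a genuine cell-wise bound requires the partition to be refined enough that the $L^1$ approximations of the $f_{ij}$ behave essentially as $L^\infty$ approximations on each significant cell, which is where a Szemer\'{e}di-regularity-flavored energy-increment input enters; turning the averaged integral identity into the pointwise triangle-free conclusion for the $F_{ij}$ is the delicate technical heart of the argument.
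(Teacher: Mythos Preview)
Your proposal shares the paper's opening move---project onto $\mathcal{L}_V\otimes\mathcal{L}_V$ via conditional expectations $f_{ij}$ and establish $\int f_{12}f_{23}f_{13}=0$---but diverges precisely at the point you flag as the ``main obstacle,'' and that obstacle is real. The a.e.\ vanishing of $f_{12}(u,v)f_{23}(v,w)f_{13}(u,w)$ does \emph{not} imply your cell-wise bound $\min\{c_{12}^{ab},c_{23}^{bc},c_{13}^{ac}\}\le\eta$ for $L^1$ step-function approximations, because the trilinear form is not continuous in $L^1$. Invoking energy increment would close this, but it is heavier than what is needed, and the paper avoids it entirely.

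The paper's route is as follows. From the $f_{ij}$, set $G_{ij}:=\{f_{ij}\ge\epsilon/2\}\in\mathcal{L}_V\otimes\mathcal{L}_V$. A one-line conditional-expectation computation gives $\mu(E_{ij}\setminus G_{ij})\le\epsilon/2$, and since $1_{G_{ij}}\le(2/\epsilon)f_{ij}$ pointwise, the identity $\int f_{12}f_{23}f_{13}=0$ forces $\int 1_{G_{12}}1_{G_{23}}1_{G_{13}}=0$ as well. This reduces the lemma to the case where each $E_{ij}$ already lies in $\mathcal{L}_V\otimes\mathcal{L}_V$. In that case one approximates each $E_{ij}$ by an elementary set $H_{ij}$ (a finite union of internal rectangles) to within $\epsilon/6$, refines to a common finite partition $V=V_1\sqcup\cdots\sqcup V_n$ into \emph{internal} pieces, and takes $F_{ij}$ to be the union of those boxes $V_k\times V_l\subseteq H_{ij}$ on which $E_{ij}$ has density exceeding $2/3$. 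These $F_{ij}$ are manifestly hyperfinite. The triangle-free condition $(\dagger\dagger)$ is then a direct counting argument: if some $(u,v,w)\in V_k\times V_l\times V_m$ lay in all three $F_{ij}$, then each of $E_{12}\times V_m$, $V_k\times E_{23}$, $E_{13}\times V_l$ would occupy more than $\tfrac{2}{3}\mu(V_k\times V_l\times V_m)$ inside that box, forcing their common intersection---and hence $\int 1_{E_{12}}1_{E_{23}}1_{E_{13}}$---to be positive, contradicting $(\dagger)$.

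So the key difference is this: rather than pushing a cell-wise bound on step approximations of the $f_{ij}$, the paper thresholds once to land in $\mathcal{L}_V\otimes\mathcal{L}_V$, and then uses the original hypothesis $(\dagger)$ directly (not the a.e.\ vanishing of the product of the $f_{ij}$) together with the $2/3$ density threshold and elementary inclusion--exclusion. No regularity or energy-increment input is required. Your definition of $F_{ij}$ as $E_{ij}$ intersected with good cells also forces you into the extra internal-approximation steps at both ends; the paper's choice of $F_{ij}$ as unions of boxes makes hyperfiniteness automatic.
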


\begin{proposition}
The nonstandard triangle removal lemma\index{Triangle Removal Lemma!nonstandard} implies the triangle removal lemma.
\end{proposition}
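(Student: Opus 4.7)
The plan is to argue by contradiction via transfer, reducing the standard statement to an instance of Theorem \ref{nstrl} applied with all three edge sets equal to the edge set of a single hyperfinite graph.

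If the triangle removal lemma fails, then there is $\epsilon_0>0$ such that, for every $n\in\N$, some finite simple graph $G_n=(V_n,E_n)$ satisfies $t(G_n)\le 1/n$ while every triangle-free subgraph $(V_n,E_n')$ has $e(G_n\setminus(V_n,E_n'))>\epsilon_0$. By transfer the same holds in $\starN$, and evaluating at an infinite $N\in\starN\setminus\N$ produces a hyperfinite simple graph $G=(V,E)$, with $E$ an internal symmetric loopless subset of $V\times V$, such that $t(G)\le 1/N\approx 0$ and every \emph{internal} symmetric triangle-free $E'\subseteq E$ satisfies $|E\setminus E'|/|V|^2>\epsilon_0$.

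Next I would verify hypothesis $(\dagger)$ of Theorem \ref{nstrl} by taking $E_{12}=E_{23}=E_{13}:=E$; each is internal and hence a member of $\l_{V\times V}$. The function $(u,v,w)\mapsto 1_E(u,v)1_E(v,w)1_E(u,w)$ is the characteristic function of the internal set $T\subseteq V^3$ of ordered triangles of $G$, so by Theorem \ref{integratebycounting} its Loeb integral equals $\st(|T|/|V|^3)=\st(t(G))=0$. Applying Theorem \ref{nstrl} with $\epsilon:=\epsilon_0/12$ then produces hyperfinite $F_{12},F_{23},F_{13}\subseteq V\times V$ with $\mu_{V\times V}(E\setminus F_{ij})<\epsilon_0/12$ and satisfying $(\dagger\dagger)$.

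The only genuine obstacle is that the $F_{ij}$ are, a priori, neither mutually related nor symmetric, whereas the failure established above supplies a contradiction only against internal \emph{symmetric} triangle-free subgraphs of $G$. I would therefore set $F:=F_{12}\cap F_{23}\cap F_{13}$ and $E':=F\cap F^{T}$, where $F^{T}:=\{(v,u):(u,v)\in F\}$. Then $E'$ is an internal symmetric subset of $E$, and any triangle $(u,v),(v,w),(u,w)\in E'$ would force the triple product in $(\dagger\dagger)$ to equal $1$ at $(u,v,w)$, since $E'\subseteq F_{ij}$ for every $(i,j)$; so $E'$ is triangle-free. Using symmetry of $E$ one has $\mu_{V\times V}(E\setminus F^{T})=\mu_{V\times V}(E\setminus F)<3\cdot\epsilon_0/12=\epsilon_0/4$, hence $\mu_{V\times V}(E\setminus E')<\epsilon_0/2$, and therefore $|E\setminus E'|/|V|^2<\epsilon_0/2<\epsilon_0$, contradicting the property of $G$ obtained by transfer.
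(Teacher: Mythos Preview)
Your proof is essentially the paper's: negate, pass by transfer to a hyperfinite bad graph, apply Theorem~\ref{nstrl} with $E_{12}=E_{23}=E_{13}=E$, and symmetrize the $F_{ij}$ to obtain an internal triangle-free subgraph of small complement. One small slip: your $E':=F\cap F^{T}$ need not lie inside $E$ (the $F_{ij}$ are only hyperfinite subsets of $V\times V$, not of $E$), so you should take $E\cap F\cap F^{T}$ instead; this does not affect the measure estimate and matches the paper's choice $E':=E\cap\bigcap_{ij}(F_{ij}\cap F_{ij}^{-1})$ with parameter $\epsilon/6$.
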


\begin{proof}
Suppose that the triangle removal lemma is false.  Then there is $\epsilon>0$ such that, for all $n\in \N$, there is a finite graph $G_n=(V_n,E_n)$ for which $t(G_n)\leq \frac{1}{n}$ and yet there does not exist a triangle-free subgraph $G'=(V_n,E_n')$ with $|E_n\setminus E_n'|\leq \epsilon |V_n|^2$.  Note that it follows that $|V_n|\to \infty$ as $n\to \infty$.  By \index{overflow principle}, there is an infinite hyperfinite graph $G=(V,E)$ such that $t(G)\approx 0$, whence $(\dagger)$ holds, and yet there does not exist a triangle-free hyperfinite subgraph $G'=(V,E')$ with $|E\setminus E'|\leq \epsilon |V|^2$.  We claim that this latter statement yields a counterexample to the nonstandard triangle removal lemma.  Indeed, if the nonstandard triangle removal held, then there would be hyperfinite $F_{ij}\subseteq V\times V$ such that $\mu_{V\times V}(E\setminus F_{ij})<\frac{\epsilon}{6}$ and for which $(\dagger \dagger)$ held.  If one then sets $E':=E\cap \bigcap_{ij}(F_{ij}\cap F_{ij}^{-1})$, then $G'=(V,E')$ is a hyperfinite subgraph of $G$ that is triangle-free and $\mu(E\setminus E')<\epsilon$, yielding the desired contradiction.\footnote{Given a binary relation $R$ on a set $X$, we write $R^{-1}$ for the binary relation on $X$ given by $(x,y)\in R^{-1}$ if and only if $(y,x)\in R$.}
\end{proof}

It might look like the nonstandard triangle removal lemma\index{Triangle Removal Lemma!nonstandard} is stated in a level of generality that is more than what is needed for we have $E_{12}=E_{23}=E_{13}=E$.  However, in the course of proving the lemma, we will come to appreciate this added level of generality of the statement.

\begin{lemma}
Suppose that $f\in L^2(\l_{V\times V})$ is orthogonal to $L^2(\l_V\otimes \l_V)$.  Then for any $g,h\in L^2(\l_{V\times V})$, we have
$$\int_{V\times V\times V} f(x,y)g(y,z)h(x,z)d\mu_{V\times V\times V}(x,y,z)=0.$$
\end{lemma}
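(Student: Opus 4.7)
The strategy is to apply Fubini (Theorem~\ref{fubiniloeb}) to rewrite the triple integral as
\begin{equation*}
\int_{V\times V} f(x,y)\,F(x,y)\,d\mu_{V\times V}(x,y),\qquad F(x,y):=\int_V g(y,z)h(x,z)\,d\mu_V(z),
\end{equation*}
and then show that $F\in L^2(\l_V\otimes\l_V)$. Since $\mu_{V\times V}$ restricts to $\mu_V\otimes\mu_V$ on $\l_V\otimes\l_V$ (Section~\ref{product}), the hypothesis that $f$ is orthogonal to $L^2(\l_V\otimes\l_V)$ inside $L^2(\l_{V\times V})$ then forces this pairing to vanish.

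The heart of the argument is the following observation: for every $k\in L^2(\l_{V\times V})$ orthogonal to $L^2(\l_V\otimes\l_V)$ and every $\phi\in L^2(\l_V)$, the function $y\mapsto\int_V k(y,z)\phi(z)\,d\mu_V(z)$ is zero in $L^2(\l_V)$. Indeed, pairing against an arbitrary $\alpha\in L^2(\l_V)$ and applying Fubini gives
\begin{equation*}
\int_V\alpha(y)\int_V k(y,z)\phi(z)\,d\mu_V(z)\,d\mu_V(y)=\int_{V\times V}k\cdot(\alpha\otimes\phi)\,d\mu_{V\times V}=0,
\end{equation*}
the last equality because $\alpha\otimes\phi\in L^2(\l_V\otimes\l_V)$. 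Writing $\tilde g:=E[g\mid\l_V\otimes\l_V]$ and $\tilde h:=E[h\mid\l_V\otimes\l_V]$, I would apply this observation twice---first with $k=g-\tilde g$ and $\phi=h(x,\cdot)$ (well defined in $L^2(\l_V)$ for $\mu_V$-a.e.\ $x$ by Fubini), and then symmetrically with $k=h-\tilde h$ and $\phi=\tilde g(y,\cdot)$---to conclude that
\begin{equation*}
F(x,y)=\int_V\tilde g(y,z)\tilde h(x,z)\,d\mu_V(z)\quad\text{for $\mu_{V\times V}$-a.e.\ }(x,y).
\end{equation*}
The right-hand side is $\l_V\otimes\l_V$-measurable by classical Fubini applied to the $\l_V\otimes\l_V\otimes\l_V$-measurable integrand $\tilde g(y,z)\tilde h(x,z)$, and it lies in $L^2$ by Cauchy--Schwarz, so $F\in L^2(\l_V\otimes\l_V)$, completing the proof.

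The main delicate point is the bookkeeping between the two distinct measures $\mu_{V\times V}$ and $\mu_V\otimes\mu_V$ on $V\times V$: one must verify that the almost-everywhere identities obtained via iterated integrals (hence a priori only $\mu_V\otimes\mu_V$-a.e.) persist as $\mu_{V\times V}$-a.e.\ identities. This is handled by the fact that the two measures coincide on $\l_V\otimes\l_V$, so every $\mu_V\otimes\mu_V$-null set in $\l_V\otimes\l_V$ is also $\mu_{V\times V}$-null, allowing the passage between the two measures at each step.
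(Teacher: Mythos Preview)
Your argument is correct, but it takes a substantially longer route than the paper's. The paper applies Fubini in the \emph{other} order: integrating over $(x,y)$ first with $z$ fixed. For each fixed $z$ (such that $g_z:=g(\cdot,z)$ and $h_z:=h(\cdot,z)$ lie in $L^2(\l_V)$, which holds for $\mu_V$-a.e.\ $z$), the function $(x,y)\mapsto h(x,z)g(y,z)=h_z(x)g_z(y)$ is already a simple tensor, hence lies in $L^2(\l_V\otimes\l_V)$. Orthogonality of $f$ kills the inner integral over $(x,y)$ immediately, and then one integrates zero over $z$.

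Your route---integrating over $z$ first and then showing that the resulting $F(x,y)$ belongs to $L^2(\l_V\otimes\l_V)$---works, but forces you through the conditional-expectation decomposition and the delicate bookkeeping between $\mu_{V\times V}$ and $\mu_V\otimes\mu_V$ that you flag at the end. All of that machinery becomes unnecessary once you notice that $f$ does not involve $z$: freezing $z$ makes the remaining factor a product of a function of $x$ alone and a function of $y$ alone, so membership in $L^2(\l_V\otimes\l_V)$ is automatic and no conditional expectations are needed. What your approach does buy is an explicit identification of $F$ with $\int\tilde g(y,z)\tilde h(x,z)\,d\mu_V(z)$, which is mildly more informative but not required for the lemma.
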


\begin{proof}
Fix $z\in V$.  Let $g_z:V\to \mathbb{R}$ be given by $g_z(y):=g(y,z)$.  Likewise, define $h_z(x):=h(x,z)$.  Note then that $g_z\cdot h_z\in L^2(\l_V\otimes \l_V)$.  It follows that
$$\int_{V\times V} f(x,y)g(y,z)h(x,z)d\mu_{V\times V}(x,y)=\int_{V\times V} f(x,y)g_z(y)h_z(x)d\mu_{V\times V}(x,y)=0.$$
By Theorem \ref{fubiniloeb}, we have that
$$\int_{V\times V\times V} f(x,y)g(y,z)h(x,z)d\mu_{V\times V\times V}(x,y,z)=\int_V \left[\int_{V\times V} f(x,y)g(y,z)h(x,z)d\mu_{V\times V}(x,y)\right]d\mu_V(z)=0.$$
\end{proof}

\begin{proof}[of Theorem \ref{nstrl}]
We first show that we can assume that each $E_{ij}$ belongs to $\l_V\otimes \l_V$.  Indeed, let $f_{ij}:=\mathbb{E}[1_{E_{ij}}|\l_V\otimes \l_V]$.\footnote{Here, for $f\in L^2(\l_{V\times V})$, $\mathbb{E}[f|\l_V\otimes \l_V]$ denotes the conditional expectation of $f$ onto the subspace $L^2(\l_V\otimes \l_V)$.}   Then by three applications of the previous lemma, we have
$$\int_{V\times V\times V} f_{12}f_{23}f_{13}d\mu_{V\times V\times V}=\int_{V\times V\times V} f_{12}f_{23}1_{13}d\mu_{V\times V\times V}=\int_{V\times V\times V} f_{12}1_{23}1_{13}d\mu_{V\times V\times V}=\int_{V\times V\times V} 1_{12}1_{23}1_{13}d\mu_{V\times V\times V}=0. \quad (*)$$  Let $G_{ij}:=\{(u,v)\in V\times V \ : \ f_{ij}(u,v)\geq \frac{\epsilon}{2}\}$.  Observe that each $G_{ij}$ belongs to $\l_V\otimes \l_V$ and
$$\mu(E_{ij}\setminus G_{ij})=\int_{V\times V} 1_{E_{ij}}(1-1_{G_{ij}})d\mu_{V\times V}=\int_{V\times V} f_{ij}(1-1_{G_{ij}})d\mu_{V\times V}\leq \frac{\epsilon}{2}.$$  By $(*)$ we have
$$\int_{V\times V} 1_{G_{12}}1_{G_{23}}1_{G_{13}}d\mu_{V\times V}=0.$$   Thus, if the nonstandard triangle removal lemma is true for sets belonging to $\l_V\otimes \l_V$, we can find hyperfinite $F_{ij}\subseteq V\times V$ such that $\mu(G_{ij}\setminus F_{ij})<\frac{\epsilon}{2}$ and such that $(\dagger \dagger)$ holds.  Since $\mu(E_{ij}\setminus F_{ij})<\epsilon$, the $F_{ij}$ are as desired.

Thus, we may now assume that each $E_{ij}$ belongs to $\l_V\otimes \l_V$.  Consequently, there are elementary sets $H_{ij}$ such that $\mu(E_{ij}\triangle H_{ij})<\frac{\epsilon}{6}$.  By considering the boolean algebra generated by the sides of the boxes appearing in the description of $H_{ij}$, we obtain a partition $V=V_1\sqcup \cdots \sqcup V_n$ of $V$ into finitely many hyperfinite subsets of $V$ such that each $H_{ij}$ is a union of boxes of the form $V_k\times V_l$ for $k,l\in \{1,\ldots,n\}$.  Let
$$F_{ij}:=\bigcup\{V_k\times V_l \ : \ V_k\times V_l\subseteq H_{ij}, \ \mu(V_k\times V_l)>0, \text{ and } \mu(E_{ij}\cap (V_k\times V_l))>\frac{2}{3}\mu(V_k\times V_l)\}.$$  Clearly each $F_{ij}$ is hyperfinite.  Note that $$\mu(H_{ij}\setminus F_{ij})=\mu((H_{ij}\setminus F_{ij})\cap E_{ij})+\mu((H_{ij}\setminus F_{ij})\setminus E_{ij})\leq \frac{2}{3}\mu(H_{ij}\setminus F_{ij})+\frac{\epsilon}{6},$$ whence $\mu(H_{ij}\setminus F_{ij})\leq \frac{\epsilon}{2}$ and thus $\mu(E_{ij}\setminus F_{ij})\leq \frac{\epsilon}{6}+\frac{\epsilon}{2}<\epsilon$.  It remains to show that $(\dagger \dagger)$ holds.  Towards a contradiction, suppose that $(u,v,w)$ witnesses that $(\dagger \dagger)$ is false.  Take $k,l,m\in \{1,\ldots,n\}$ such that $u\in V_k$, $v\in V_l$, and $w\in V_m$.  Since $(u,v)\in F_{12}$, we have that $\mu(E_{12}\cap (V_k\times V_l))>\frac{2}{3}\mu(V_k\times V_l)$.  Consequently, $\mu(E_{12}\times V_m)>\frac{2}{3}\mu(V_k\times V_l\times V_m)$.  Similarly, we have that $\mu(E_{23}\times V_k),\mu(E_{13}\times V_l)>\frac{2}{3}\mu(V_k\times V_l\times V_m)$.  Thus, by elementary probability considerations, it follows that $$\int_{V\times V} 1_{E_{12}}1_{E_{23}}1_{E_{13}}d\mu_{V\times V}>0,$$ contradicting $(\dagger)$. 
\end{proof}

\section{Szemer\'edi Regularity Lemma}

Suppose that $(V,E)$ is a finite graph.  For two nonempty subsets $X$, $Y$ of $V$, we define the \emph{density of arrows between $X$ and $Y$} to be the quantity $$d(X,Y):=\delta(E,X\times Y)=\frac{|E\cap (X\times Y)|}{|X||Y|}.$$  For example, if every element of $X$ is connected to every element of $Y$ by an edge, then $d(X,Y)=1$.  Fix $\epsilon\in \mathbb{R}^{>0}$.  We say that $X$ and $Y$ as above are \emph{$\epsilon$-pseudorandom} if whenever $A\subseteq X$ and $B\subseteq Y$ are such that $|A|\geq \epsilon |X|$ and $|B|\geq \epsilon |Y|$, then $|d(A,B)-d(X,Y)|<\epsilon$.  In other words, as long as $A$ and $B$ contain at least an $\epsilon$ proportion of the elements of $X$ and $Y$ respectively, then $d(A,B)$ is essentially the same as $d(X,Y)$, so the edges between $X$ and $Y$ are distributed in a sort of random fashion.  

If $X=\{x\}$ and $Y=\{y\}$ are singletons, then clearly $X$ and $Y$ are $\epsilon$-pseudorandom for any $\epsilon$.  Thus, any finite graph can trivially be partitioned into a finite number of $\epsilon$-pseudorandom pairs by partitioning the graph into singletons.  Szemer\'edi's Regularity Lemma essentially says that one can do much better in the sense that there is a constant $C(\epsilon)$ such that any finite graph has an ``$\epsilon$-pseudorandom partition'' into at most $C(\epsilon)$ pieces.  Unfortunately, the previous sentence is not entirely accurate as there is a bit of error that we need to account for.

Suppose that $V_1,\ldots,V_m$ is a partition of $V$ into $m$ pieces.  Set $$R:=\{(i,j) \ | \ 1\leq i,j\leq m, \ \ V_i \text{ and }V_j \text{ are }\epsilon\text{-pseudorandom}\}.$$  We say that the partition is \emph{$\epsilon$-regular} if $\sum_{(i,j)\in R}\frac{|V_i||V_j|}{|V|^2}>(1-\epsilon).$  This says that, in some sense, almost all of the pairs of points are in $\epsilon$-pseudorandom pairs.  We can now state:

\begin{theorem}[Szemer\'edi's Regularity Lemma]\index{Szemer\'edi's Regularity Lemma}
For any $\epsilon \in \mathbb{R}^{>0}$, there is a constant $C(\epsilon)$ such that any graph $(V,E)$ admits an $\epsilon$-regular partition into $m\leq C(\epsilon)$ pieces.
\end{theorem}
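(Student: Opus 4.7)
The plan is to prove the lemma by an overflow reduction together with a nonstandard form of the energy-increment argument. First I would argue by contradiction: if Szemer\'edi's lemma failed, there would exist $\epsilon > 0$ and, for each $n \in \mathbb{N}$, a finite graph $G_n = (V_n, E_n)$ admitting no $\epsilon$-regular partition of size at most $n$. Passing to a hyperfinite graph $G = (V, E)$ in the nonstandard universe (for example, any nonstandard $G_N$ with $N > \mathbb{N}$), one obtains a graph which admits \emph{no} internal $\epsilon$-regular partition $\{V_1, \ldots, V_m\}$ with $m \in \mathbb{N}$. The goal is to contradict this by producing such a partition directly.

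The main device is the \emph{energy} of a finite internal partition $\mathcal{P} = \{V_1, \ldots, V_m\}$ of $V$, which I would define by
\[
q(\mathcal{P}) := \operatorname{st}\!\left(\sum_{i,j=1}^{m} \frac{|V_i|\,|V_j|}{|V|^2}\, d(V_i, V_j)^2\right) \in [0,1].
\]
Equivalently, $q(\mathcal{P}) = \operatorname{st}(\|f_\mathcal{P}\|_2^2)$, where $f := 1_E \in L^2(\mathcal{L}_{V \times V}, \mu_{V \times V})$ and $f_\mathcal{P}$ is the conditional expectation of $f$ onto the finite sub-$\sigma$-algebra generated by the rectangles $V_i \times V_j$. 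Three properties are immediate: $q$ is bounded above by $1$, it is monotone under internal refinement of $\mathcal{P}$, and consequently the supremum $\alpha := \sup_\mathcal{P} q(\mathcal{P})$ over all finite internal partitions is a standard real number. I would then fix a finite internal partition $\mathcal{P}_0$ with $q(\mathcal{P}_0) > \alpha - \epsilon^5/2$ and claim that this $\mathcal{P}_0$ is $\epsilon$-regular.

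The claim follows from a nonstandard version of the classical energy-increment lemma: if $\mathcal{P}_0$ were not $\epsilon$-regular, then for each irregular pair $(V_i, V_j)$ one internally selects subsets $A_{ij} \subseteq V_i$, $B_{ij} \subseteq V_j$ of normalized counting measure at least $\epsilon \cdot |V_i|/|V|$, $\epsilon \cdot |V_j|/|V|$ respectively, witnessing $|d(A_{ij}, B_{ij}) - d(V_i, V_j)| \ge \epsilon$; refining each $V_i$ along the (finite, internal) Boolean algebra generated by all the $A_{ij}$ and $B_{ji}$ produces an internal refinement $\mathcal{P}_1$ with $q(\mathcal{P}_1) \ge q(\mathcal{P}_0) + \epsilon^5$, contradicting the near-maximality of $q(\mathcal{P}_0)$. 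Hence $\mathcal{P}_0$ is $\epsilon$-regular, yielding the sought contradiction. The principal obstacle is this energy-increment step, which is a Cauchy--Schwarz computation essentially identical to the standard one applied to internal sets through the Loeb normalization; the nonstandard framework's only real contribution is to replace the explicit iteration bound of the usual argument with the clean extremal principle of passing to a near-supremizer of $q$, from which the constant $C(\epsilon)$ emerges abstractly as the size of $\mathcal{P}_0$.
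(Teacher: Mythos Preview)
Your proof is correct, but it follows a genuinely different route from the paper's. You run the classical energy-increment argument in the nonstandard universe, using the supremum of the energy functional $q$ over all finite internal partitions to bypass the explicit iteration and extract a near-maximizer $\mathcal{P}_0$ that must be $\epsilon$-regular. The paper instead works measure-theoretically: it forms the conditional expectation $f = \mathbb{E}[1_E \mid \mathcal{L}_V \otimes \mathcal{L}_V]$ on the Loeb space, approximates $f$ in $L^1$ by a simple function $g$ whose level sets are elementary sets (finite unions of internal rectangles), and takes the common refinement of those rectangles as the partition. The exceptional pairs are those lying in the small-measure region where $f - g \geq \sqrt{r}$, and pseudorandomness on the remaining rectangles follows directly from the pointwise bound $d_{ij} \leq f < d_{ij} + \sqrt{r}$ via the calculation $\operatorname{st}(d(A,B)) = \big(\int_{A\times B} f\big)/\operatorname{st}(|A||B|/|V|^2)$. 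Your approach is more elementary (only Cauchy--Schwarz, no conditional expectation or Loeb-integration machinery) and makes the nonstandard contribution very transparent as an extremal shortcut; the paper's approach fits more naturally with the surrounding chapter, where the same $\mathcal{L}_V \otimes \mathcal{L}_V$ conditional expectation already drove the proof of the triangle removal lemma, and it yields the partition structurally from the level sets of a single approximating function rather than from an abstract supremum.
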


As in the previous section, the regularity lemma is equivalent to a nonstandard version of the lemma.  We leave the proof of the equivalence as an exercise to the reader.

\begin{proposition}\index{Szemer\'edi's Regularity Lemma!nonstandard}
Szemer\'edi's Regularity Lemma is equivalent to the following statement:  for any $\epsilon$ and any hyperfinite graph $(V,E)$, there is a \emph{finite} partition $V_1,\ldots,V_m$ of $V$ into internal sets and a subset $R\subseteq \{1,\ldots,m\}^2$ such that:
\begin{itemize}
\item for $(i,j)\in R$, $V_i$ and $V_j$ are \emph{internally} $\epsilon$-pseudorandom:  for all \emph{internal} $A\subseteq V_i$ and $B\subseteq V_j$ with $|A|\geq \epsilon |V_i|$ and $|B|\geq \epsilon |V_j|$, we have $|d(A,B)-d(V_i,V_j)|<\epsilon$; and
\item $\sum_{(i,j)\in R}\frac{|V_i||V_j|}{|V|^2}>(1-\epsilon).$
\end{itemize}
\end{proposition}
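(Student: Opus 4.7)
The plan is to prove both directions by a routine transfer/overflow argument, treating the statement as a standard compactness-style equivalence between a uniform finitary statement and its nonstandard counterpart.

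For the forward direction (standard implies nonstandard), I would fix a hyperfinite graph $(V,E)$ and a standard $\epsilon>0$. Applying the standard regularity lemma with parameter $\epsilon/2$, one obtains a constant $C := C(\epsilon/2)\in \N$ such that every finite graph admits an $(\epsilon/2)$-regular partition into at most $C$ pieces. This fact can be written as an elementary statement in which every quantifier is bounded: it asserts the existence of $m\leq C$, a partition of $V$ into $m$ subsets, and a set $R\subseteq\{1,\dots,m\}^2$ satisfying the two inequality conditions (the quantifiers over $A\subseteq V_i$ and $B\subseteq V_j$ are bounded by $\mathcal{P}(V)$). Transferring this statement to $(V,E)$ produces an internal partition into standardly many (hence finitely many) internal pieces, and the quantifiers over subsets now range over \emph{internal} subsets. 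An internal $(\epsilon/2)$-regular partition is, a fortiori, an internal $\epsilon$-regular partition in the sense of the nonstandard statement (the strict inequality $>1-\epsilon$ coming from the buffer $\epsilon/2$).

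For the backward direction (nonstandard implies standard), I would argue by contradiction. Suppose the standard lemma fails, so there exists a standard $\epsilon>0$ such that for every $n\in\N$ there is a finite graph $G_n$ admitting no $\epsilon$-regular partition with at most $n$ parts. Consider the internal property
\[
\psi(n)\ :\equiv\ \text{``there exists a finite graph having no $\epsilon$-regular partition with at most $n$ parts''}
\]
inside the nonstandard universe. By assumption $\psi(n)$ holds for every $n\in\N$, so by overflow it holds for some $N>\N$. Let $G=(V,E)$ be an internal (hence hyperfinite) witness. Apply the nonstandard regularity lemma to $G$ to obtain a finite partition $V_1,\dots,V_m$ into internal pieces, with $m\in\N$, satisfying the internal pseudorandomness condition for pairs in some $R\subseteq\{1,\dots,m\}^2$. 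Since $m\in\N\leq N$ and the internal pseudorandomness is exactly what "$\epsilon$-regular" unfolds to when interpreted inside the nonstandard universe for the finite graph $G$, this partition contradicts the defining property of $G$.

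The main point to verify, and the only place where care is needed, is that the two notions of "$\epsilon$-regular" match across transfer: in the standard statement quantifiers over $A\subseteq V_i$, $B\subseteq V_j$ range over all subsets, while in the hyperfinite setting they necessarily become quantifiers over internal subsets — which is exactly what the nonstandard statement asks for. There is no real obstacle: both the pseudorandomness clause and the weight inequality are expressible by elementary formulas with parameters bounded by $\mathcal{P}(V)$ and a standard integer $C(\epsilon)$, so transfer and overflow handle everything. The strict inequality $>1-\epsilon$ is the only mild nuisance and is absorbed by applying the standard lemma with $\epsilon/2$ in the forward direction and by replacing $\epsilon$ with $\epsilon/2$ in the failure hypothesis in the backward direction if needed.
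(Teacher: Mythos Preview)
The paper does not actually prove this proposition: it states ``As in the previous section, the regularity lemma is equivalent to a nonstandard version of the lemma. We leave the proof of the equivalence as an exercise to the reader.'' Your transfer/overflow argument is correct and is precisely the approach the paper uses for the analogous equivalence in the triangle removal section (Proposition ``The nonstandard triangle removal lemma implies the triangle removal lemma''): assume failure of the standard statement, overflow to a hyperfinite counterexample, and contradict the nonstandard statement; the forward direction is straight transfer with the standard bound $C(\epsilon)$ ensuring the partition has standardly many internal pieces.

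One small comment: the $\epsilon/2$ buffer is harmless but unnecessary. Both the pseudorandomness inequality $|d(A,B)-d(V_i,V_j)|<\epsilon$ and the weight condition $\sum_{(i,j)\in R}|V_i||V_j|/|V|^2>1-\epsilon$ are strict in the paper's formulation of the standard lemma, and strict inequalities transfer as strict inequalities, so you may apply the standard lemma with parameter $\epsilon$ itself in the forward direction.
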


We will now prove the above nonstandard equivalent of the Szemer\'edi Regularity Lemma.  Fix $\epsilon$ and a hyperfinite graph $(V,E)$.  
Set $f:=\mathbb E[1_E |\l_V\otimes \l _V]$.  The following calculation will prove useful:  Suppose that $A,B\subseteq V$ are internal and $\frac{|A|}{|V|}$ and $\frac{|B|}{|V|}$ are noninfinitesimal.  Then $(\clubsuit)$:
\begin{alignat}{2}
\int_{A\times B}fd(\mu_V\otimes \mu_V)&=\int_{A\times B} 1_Ed\mu_{V\times V} \quad \text{ by the definition of }f\notag \\ \notag
							     &=\st\left(\frac{|E\cap (A\times B)|}{|V|^2}\right)\notag \\
							     &=\st\left(\frac{|E\cap (A\times B)|}{|A||B|}\right)\st\left(\frac{|A||B|}{|V|^2}\right)\notag \\
							     &=\st(d(A,B))\st\left(\frac{|A||B|}{|V|^2}\right).\notag
\end{alignat}

Fix $r\in \mathbb{R}^{>0}$, to be determined later.  Now, since $f$ is $\mu_V\otimes \mu_V$-integrable, there is a $\mu_V\otimes \mu_V$-simple function $g\leq f$ such that $\int (f-g)d(\mu_V\otimes \mu_V)<r$.  Set $C:=\{\omega \in V\times V \ | \ f(\omega)-g(\omega)\geq \sqrt{r}\}\in s_V\otimes s_V$.  Then $(\mu_V\otimes \mu_V)(C)<\sqrt{r}$, for otherwise
$$\int(f-g)d(\mu_V\otimes \mu_V)\geq \int_C (f-g)d(\mu_V\otimes \mu_V)\geq \int_C \sqrt{r}d(\mu_V\otimes \mu_V)\geq \sqrt{r}\sqrt{r}=r.$$  By Fact \ref{approx}, there is an elementary set $D\in s_V\otimes s_V$ that is a finite, disjoint union of rectangles of the form $V'\times V''$, with $V',V''\subseteq V$ internal sets, such that $C\subseteq D$ and $(\mu_V\otimes \mu_V)(D)<\sqrt{r}$.  In a similar way, we may assume that the level sets of $g$ (that is, the sets on which $g$ takes constant values) are elementary sets (Exercise).  We now take a finite partition $V_1,\ldots,V_m$ of $V$ into internal sets such that $g$ and $1_D$ are constant on each rectangle $V_i\times V_j$.  For ease of notation, set $d_{ij}$ to be the constant value of $g$ on $V_i\times V_j$.

\

\noindent \textbf{Claim:}  If $\mu_V(V_i),\mu_V(V_j)\not=0$ and $(V_i\times V_j)\cap D=\emptyset$, then $V_i$ and $V_j$ are internally $2\sqrt{r}$-pseudorandom.

\

\noindent \textbf{Proof of Claim:}  Since $C\subseteq D$, we have that $(V_i\times V_j)\cap C=\emptyset$, whence 
$$d_{ij}\leq f(\omega)<d_{ij}+\sqrt{r}\text{ for }\omega \in V_i\times V_j. \quad (\clubsuit \clubsuit).$$  Now suppose that $A\subseteq V_i$ and $B\subseteq V_j$ are such that $|A|\geq 2\sqrt{r}|V_i|$ and $|B|\geq 2\sqrt{r}|V_j|$.  In particular, $\frac{|A|}{|V_i|}$ and $\frac{|B|}{|V_j|}$ are noninfinitesimal.  Since $\mu_V(V_i),\mu_V(V_j)>0$, it follows that $\frac{|A|}{|V|}$ and $\frac{|B|}{|V|}$ are noninfinitesimal and the calculation $(\clubsuit)$ applies.  Integrating the inequalities $(\clubsuit \clubsuit)$ on $A\times B$ yields:
$$d_{ij}\st\left(\frac{|A||B|}{|V|^2}\right)\leq \st(d(A,B))\st\left(\frac{|A||B|}{|V|^2}\right)< (d_{ij}+\sqrt{r})\st\left(\frac{|A||B|}{|V|^2}\right).$$  We thus get:
$$|d(A,B)-d(V_i,V_j)|\leq |d(A,B)-d_{ij}|+|d(V_i,V_j)-d_{ij}|< 2\sqrt{r}.$$

\

By the Claim, we see that we should choose $r<(\frac{\epsilon}{2})^2$, so $V_i$ and $V_j$ are internally $\epsilon$-pseudorandom when $V_i$ and $V_j$ are non-null and satisfy $(V_i\times V_j)\cap D=\emptyset$.  It remains to observe that the $\epsilon$-pseudorandom pairs almost cover all pairs of vertices.  Let $R:=\{(i,j) \ | \ V_i\text{ and }V_j \text{ are }\epsilon\text{-pseudorandom}\}$.  Then
\begin{alignat}{2}
\st\left(\sum_{(i,j)\in R} \frac{|V_i||V_j|}{|V|^2}\right)&=\mu_{V\times V}\left(\bigcup_{(i,j)\in R}(V_i\times V_j)\right)\notag \\ \notag
						         &\geq \mu_{V\times V}((V\times V)\setminus D)\notag \\
						         &> 1-\sqrt{r} \notag \\
						         &> 1-\epsilon.\notag
\end{alignat}

This finishes the proof of the Claim and the proof of the Szemer\'edi Regularity Lemma.

\section*{Notes and references}  The triangle removal lemma was originally proven by Rusza and Szemer\'edi in \cite{} and their proof used several applications of an early version of the Szemer\'edi regularity lemma.  Nowadays, the most common standard proof of the triangle removal lemma goes through a combination of the Szemer\'edi regularity lemma and the so-called \emph{Counting lemma}; see, for example, \cite{}.  Szemer\'edi's regularity lemma was a key ingredient in his proof in \cite{szemeredi_sets_1975} that sets of postiive density contain arbitrarily long arithmetic progressions (which we now of course call Szemer\'edi's theorem).  Analogous to the above proof of Roth's Theorem from the Triangle Removal Lemma, one can prove Szemeredi's theorem by first proving an appropriate removal lemma called the \emph{Hypergraph removal lemma} and then coding arithmetic progressions by an appropriate hypergraph generalization of the argument given above.  For more details, see \cite{gowers_hypergraph_2007} for the original standard proof and \cite{tao_correspondence_2007} and \cite{goldbring_approximate_2014} for simplified nonstandard proofs.

\chapter{Approximate groups}
\index{approximate group}
In this chapter, we describe a recent application of nonstandard methods to multiplicative combinatorics, namely to the structure theorem for finite approximate groups.  The general story is much more complicated than the rest of the material in this book and there are already several good sources for the complete story (see \cite{Breuillard_Structure_2012} or \cite{van_den_dries_approximate_2015}), so we content ourselves to a summary of some of the main ideas.  Our presentation will be similar to the presentation from \cite{van_den_dries_approximate_2015}.

\section{Statement of definitions and the main theorem}

In this chapter, $(G,\cdot)$ denotes an arbitrary group and $K\in \mathbb{R}^{\geq 1}$.  (Although using $K$ for a real number clashes with the notation used throughout the rest of this book, it is standard in the area.)  One important convention will be important to keep in mind:  for $X$ a subset of $G$ and $n\in \N$, we set $X^n:=\{x_1\cdots x_n \ : \ x_1,\ldots,x_n\in X\}$ (so $X^n$ does \emph{not} mean the $n$-fold Cartesian power of $X$).

By a \emph{symmetric} subset of $G$, we mean a set that contains the identity of $G$ and is closed under taking inverse. 

\begin{definition}
$X\subseteq G$ is a \emph{$K$-approximate group}\index{approximate group} if $X$ is symmetric and $X^2$ can be covered by at most $K$ left translates of $X$, that is, there are $g_1,\ldots,g_m\in G$ with $m\leq K$ such that $X^2\subseteq \bigcup_{i=1}^m g_iX$.
\end{definition}

\begin{example}

\

\begin{enumerate}
\item A $1$-approximate subgroup of $G$ is simply a subgroup of $G$.
\item If $X\subseteq G$ is finite, then $X$ is a $|X|$-approximate subgroup of $G$.
\end{enumerate}

\end{example}

The second example highlights that, in order to try to study the general structure of finite $K$-approximate groups, one should think of $K$ as fixed and ``small'' and then try to classify the finite $K$-approximate groups $X$, where $X$ has cardinality much larger than $K$.

\begin{exercise}
Suppose that $(G,+)$ is an abelian group.  For distinct $v_1\ldots,v_r\in G$ and (not necessarily distinct) $N_1,\ldots,N_r\in \N$, set 
$$P(\vec v, \vec N):=\{a_1v_1+\cdots+a_rv_r \ : \ a_i\in \mathbb{Z}, \ |a_i|\leq N_i\}.$$  Show that $P(\vec v,\vec N)$ is a $2^r$-approximate subgroup of $G$.
\end{exercise}

The approximate subgroups appearing in the previous exercise are called \emph{symmetric generalized arithmetic progressions} and the number $r$ of generators is called the \emph{rank} of the progression.  The \emph{Freiman Theorem for abelian groups}\index{Freiman's Theorem} (due to due to Freiman \cite{freiman_foundations_1973} for $\Z$ and to Green and Rusza \cite{green_freimans_2007} for a general abelian group) says that approximate subgroups of abelian groups are ``controlled'' by symmetric generalized arithmetic progressions:

\begin{theorem}\label{nonabelianfreiman}
There are constants $r_K,C_K$ such that the following hold:  Suppose that $G$ is an abelian group and $A\subseteq G$ is a finite $K$-approximate group.  Then there is a finite subgroup $H$ of $G$ and a symmetric generalized arithmetic progression $P\subseteq G/H$ such that $P$ has rank at most $r_K$, $\pi^{-1}(P)\subseteq \Sigma_4(A)$, and $|P|\geq C_K \cdot \frac{|A|}{|H|}$.
\end{theorem}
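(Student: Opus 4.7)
My plan is to follow the ultraproduct/nonstandard strategy pioneered by Hrushovski and refined by Breuillard--Green--Tao, specialized to the abelian setting where the model-theoretic input reduces to the classical structure theorem for locally compact abelian (LCA) groups rather than the full Gleason--Yamabe solution to Hilbert's fifth problem.

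First, I would set up the contradiction. Suppose the conclusion fails for some fixed $K$. Then for each $n \in \N$ there is an abelian group $G_n$ and a finite $K$-approximate subgroup $A_n \subseteq G_n$ admitting no finite subgroup $H_n$ and symmetric generalized arithmetic progression $P_n \subseteq G_n/H_n$ of rank $\leq n$ with $\pi^{-1}(P_n) \subseteq 4 A_n$ and $|P_n| \geq \tfrac{1}{n}\cdot |A_n|/|H_n|$. Passing to a sufficiently saturated nonstandard extension (equivalently, taking an ultraproduct), I would obtain an internal abelian group $G$ and an internal hyperfinite $K$-approximate subgroup $A \subseteq G$ such that, by transfer, no internal coset progression of standard finite rank and noninfinitesimal relative density fits inside $4A$.

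Second, I would construct the Hrushovski Lie model. Let $A^{\infty} := \bigcup_{n \in \N} A^n$, which is a genuine (external) subgroup of $G$ since $A$ is symmetric. The Pl\"unnecke--Ruzsa inequalities (valid internally by transfer) give $|A^n| \leq K^{O(n)} |A|$ for each $n \in \N$, so the Loeb measure on each hyperfinite $A^n$, normalized by $|A|$, assembles into a translation-invariant, finite-on-$A^n$ content $\lambda$ on the Boolean algebra of internal subsets of $A^{\infty}$. I would then define a left-invariant pseudometric by $d(x,y) := \lambda(xA \triangle yA)$, let $N := \{x \in A^{\infty} : d(x, e) = 0\}$, and set $L := A^{\infty}/N$. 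The Loeb-measure-theoretic properties of $\lambda$ ensure that $L$ is a Hausdorff topological group in the quotient metric, that the image $\widetilde{A}$ of $A$ is compact (being totally bounded and complete), and that $\widetilde{A}$ has nonempty interior in $L$ (so $L$ is locally compact and compactly generated). Since $G$ is abelian, $L$ is an LCA group.

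Third, I would apply the Pontryagin structure theorem: a compactly generated LCA group has the form $L \cong \R^a \times \Z^b \times K_0$ with $K_0$ compact and $a,b$ bounded in terms of the number of translates of $\widetilde{A}$ needed to cover $\widetilde{A}^{2}$ (hence by $K$, via a nonstandard Ruzsa covering argument). Inside $L$ I can then produce an explicit symmetric generalized arithmetic progression $Q$ of rank $\leq a+b+O(1)$, contained in the square $\widetilde{A}^{2}$, and with normalized Haar measure bounded below in terms of $K$ alone.

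Finally, I would pull $Q$ back through the quotient map. Writing $Q$ as the image of an internal box in $\langle v_1, \ldots, v_r \rangle \subseteq A^{\infty}$, the preimage naturally splits as $\pi^{-1}(P) = P \cdot H$ where $H \subseteq A^{\infty}$ is the internal kernel (modulo $N$) of the Lie model, which is an internal subgroup contained in some fixed power $A^k$; by taking a suitable internal finite subgroup approximation of $H$, I would obtain a genuine coset progression $\pi^{-1}(P) \subseteq A^4$ whose Loeb density, via the Haar lower bound on $Q$, gives $|\pi^{-1}(P)| \geq C_K |A|/|H|$. Transferring back to the sequence $(A_n)$ contradicts the original choice of counterexamples.

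The main obstacle will be the second step: verifying rigorously that $L$ is Hausdorff and locally compact, and in particular that $\widetilde{A}$ has nonempty interior. This requires a careful Steinhaus-type argument applied to the Loeb measure on internal sumsets, using crucially that $|A^n|/|A|$ is a (standardly) finite hypernumber for every $n \in \N$. A secondary subtlety is controlling how a box in the Lie model lifts to a genuine internal coset progression, which is where one must pass to a finite-index subgroup $H$ absorbing the infinitesimal kernel.
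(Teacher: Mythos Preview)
The paper does not actually prove this theorem: it is stated as the Green--Ruzsa theorem and cited to \cite{freiman_foundations_1973,green_freimans_2007}. The paper then outlines the Breuillard--Green--Tao strategy for the \emph{nonabelian} analogue and proves only the finite-exponent special case (Theorem~\ref{udi}). So there is no in-paper proof to compare against; your plan is precisely the BGT/Hrushovski route the paper sketches in its remarks, specialized to LCA groups.

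Your steps 1--3 are sound and essentially reproduce what the paper does prove in Theorem~\ref{lcmodel} (you build the locally compact model via a Loeb-measure pseudometric rather than via Sanders--Croot--Sisask, but both work). The gap is in step 4. The kernel $N$ of your model map is monadic, not internal, and there is in general \emph{no} internal subgroup $H$ with $o(X)\subseteq H\subseteq A^4$: if such $H$ existed, $\pi(H)$ would be a compact open subgroup of $L\cong\R^a\times\Z^b\times K_0$, forcing $a=0$. So your phrase ``internal finite subgroup approximation of $H$'' cannot be made literal when the Lie model has an $\R^a$ factor. Concretely, if $A=[-N,N]\subseteq\Z$ then $L\cong\R$, the only internal subgroup in $A^4$ is $\{0\}$, and any $v$ with $\pi(v)\neq 0$ generates a progression of only standard-finite length inside $A^4$.

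What is actually needed here is the discretization of the $\R^a$ factor: one must find generators $u_1,\ldots,u_r\in A$ with \emph{infinitesimal} image and hyperfinite lengths $N_i$ so that the internal box $P(\vec u,\vec N)$ lands in $A^4$ and has noninfinitesimal density. This is the content of the Gleason/escape-norm lemmas in BGT, or equivalently (as the paper notes in the Remark following Theorem~\ref{nsudi}) an induction on the dimension of the Lie model. Your plan correctly flags this as a ``secondary subtlety,'' but it is in fact the main obstacle, and your current sketch does not indicate how you would carry it out.
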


Here, $\pi:G\to G/H$ is the quotient map.  For a while it was an open question as to whether there was a version of the Freiman theorem that held for finite approximate subgroups of arbitrary groups.  Following a breakthrough by Hrushovski \cite{Hrushovski_Stable_2012}, Breuillard, Green, and Tao \cite{Breuillard_Structure_2012} were able to prove the following general structure theorem for approximate groups.

\begin{theorem}
There are constants $r_K,s_K,C_K$ such that the following hold:  Suppose that $G$ is a group and $A\subseteq G$ is a finite $K$-approximate group.  Then there is a finite subgroup $H\subseteq G$, a \emph{noncommutative progression} of rank at most $r_K$ whose generators generate a nilpotent group of step at most $s_K$ such that $\pi^{-1}(P)\subseteq A^4$ and $|P|\geq C_K \cdot \frac{|A|}{|H|}$.
\end{theorem}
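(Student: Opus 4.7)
The plan is to follow the Hrushovski/Breuillard--Green--Tao strategy, whose backbone is the reduction of finite $K$-approximate groups to \emph{ultra approximate groups} in a sufficiently saturated nonstandard universe, and then the construction of a \emph{Lie model} for the latter. I would first argue by contradiction and compactness: if the theorem failed, there would be, for each choice of constants, a counterexample, and a saturation argument as in the previous chapters produces a single hyperfinite internal $K$-approximate group $A \subseteq G$ (with $G$ an internal group) no coset-progression of bounded rank and nilpotent step lies in $A^4$ with the required size. The problem is thus translated into proving a purely internal statement about a fixed ultra approximate group $A$.

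Next I would assemble the algebraic scaffolding around $A$. The set $A^\infty := \bigcup_{n \in \mathbb{N}} A^n$ is an external subgroup of $G$, and the Loeb-type counting measure on hyperfinite subsets of $A^\infty$ gives a left-invariant finitely additive measure on the Boolean algebra of sets of the form $B \subseteq A^n$ (internal). The hard technical input at this stage is Sanders' theorem (or its Croot--Sisask refinement), which I would transfer to the nonstandard setting to produce, inside any $A^k$, an internal symmetric $S \subseteq A^4$ that is an approximate group, is contained in $A^4$, and is normalized by $A$. Iterating, one gets a neighborhood base of such $S$'s making $A^\infty$ into a (non-Hausdorff) topological group whose Hausdorffification is locally compact.

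The centerpiece is then the construction of a \emph{good model}: a continuous surjective homomorphism $\pi : A^\infty \to L$ onto a locally compact group $L$ in which the image of $A$ has compact closure with nonempty interior. This is where the heavy lifting lives, and it will be the main obstacle; carrying it out rigorously requires the Gleason--Yamabe structure theorem to replace $L$ by a connected Lie group modulo a compact normal subgroup, plus careful bookkeeping of the preimages of small neighborhoods to guarantee that they remain trapped inside $A^4$. Once $L$ is Lie, one applies Jordan-type theorems together with the escape-from-subgroups trick of Breuillard--Green--Tao to find an open neighborhood $U$ of the identity in $L$ that generates a nilpotent subgroup of step bounded by some $s_K$, and whose preimage sits inside $A^4$.

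Finally, I would translate back to a noncommutative progression. Inside the nilpotent Lie group generated by $U$, standard one-parameter subgroup arguments (Malcev coordinates) produce a progression $P$ of bounded rank $r_K$ with $\pi^{-1}(P) \subseteq A^4$ and with the ratio $|P| / (|A|/|H|)$ bounded below by an absolute constant $C_K$ (taking $H := \ker \pi \cap A$, which is a finite internal subgroup by the compactness of the image of $A$). Transferring this statement back via the contradiction setup yields the existence of $r_K, s_K, C_K$ working uniformly for every finite $K$-approximate group, completing the proof. The routine part is the final passage from Lie data to a noncommutative progression; the genuinely hard step, as indicated, is the construction of the Lie model, which depends on a nonstandard incarnation of Gleason--Yamabe and on the Sanders/Croot--Sisask regularization of $A$.
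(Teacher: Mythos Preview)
The paper does not prove this theorem; it explicitly states that ``the general story is much more complicated than the rest of the material in this book'' and contents itself with illustrating the main ideas via the finite-exponent special case (Theorem~\ref{udi}) and the construction of the locally compact model (Theorem~\ref{lcmodel}). Your outline is a reasonable summary of the full Breuillard--Green--Tao strategy and is broadly consistent with what the paper sketches: the Sanders--Croot--Sisask step and the locally compact model are exactly Theorem~\ref{SCS} and Theorem~\ref{lcmodel}, and the passage to a Lie model via Gleason--Yamabe is the content of the paper's Remark on the Hrushovski Lie Model Theorem.

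That said, there is a genuine error in your final step. You write $H := \ker\pi \cap A$ and claim it is a finite internal subgroup. But $\ker\pi$ is precisely the monadic set $o(X)$ of Theorem~\ref{lcmodel}, which is \emph{external} (a countable intersection of internal sets), so $\ker\pi \cap A$ is not internal and cannot be transferred back to a finite subgroup in the standard world. The actual extraction of $H$ is more delicate: one proceeds by induction on the dimension of the Lie model (as the paper notes), and at the base of the induction the kernel of the model map is trapped in $A^4$ and shown to contain an internal subgroup, which is what transfers. Your sketch collapses this induction and misidentifies where $H$ comes from. Also, the passage from the Lie group to a nilpotent open subgroup is not a Jordan-type theorem but rather the Gleason-lemma/escape-norm machinery specific to \cite{Breuillard_Structure_2012}, and the paper flags that one actually needs \emph{local} Lie groups and the local Yamabe theorem here.
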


Here, $\pi:G\to G/H$ is once again the quotient map.  To understand this theorem, we should explain the notion of noncommutative progression.

Suppose that $G$ is a group, $v_1,\ldots, v_r\in G$ are distinct, and $N_1,\ldots,N_r>0$ are (not necessarily distinct) natural numbers.  The noncommutative progression generated by $v_1,\ldots,v_r$ with dimensions $N_1,\ldots,N_r$ is the set of words on the alphabet $\{v_1,v_1^{-1},\ldots,v_r,v_r^{-1}\}$ such that the total number of occurrences of $v_i$ and $v_i^{-1}$ is at most $N_i$ for each $i=1,\ldots,r$; as before, $r$ is called the rank of the progression.  In general, noncommutative progressions need not be approximate groups (think free groups).  However, if $v_1,\ldots,v_r$ generate a nilpotent subgroup of $G$ of step $s$, then for $N_1,\ldots,N_r$ sufficiently large, the noncommutative progression is in fact a $K$-approximate group for $K$ depending only on $r$ and $s$.  (See, for example, \cite[Chapter 12]{tao_hilberts_2014}.)

\section{A special case:  approximate groups of finite exponent}
To illustrate some of the main ideas of the proof of the Breuillard-Green-Tao theorem, we prove a special case due to Hrushovski \cite{Hrushovski_Stable_2012}:

\begin{theorem}\label{udi}
Suppose that $X\subseteq G$ is a finite $K$-approximate group.  Assume that $X^2$ has exponent $e$, that is, for every $x\in X^2$, we have $x^e=1$.  Then $X^4$ contains a subgroup $H$ of $\langle X\rangle$ such that $X$ can be covered by $L$ left cosets of $H$, where $L$ is a constant depending only on $K$ and $e$.
\end{theorem}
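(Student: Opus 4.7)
The plan is to argue by contradiction using a compactness/saturation reduction, construct an external subgroup inside $X^4$, and then exploit the finite exponent hypothesis to promote it to an internal subgroup.

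First, I would show that it suffices to establish the following nonstandard statement: whenever $X$ is an internal hyperfinite $K$-approximate subgroup of some $\starG$ with $X^2$ of exponent $e$, there exists an \emph{internal} subgroup $H$ of $\langle X\rangle$ with $H\subseteq X^4$ that is covered by only finitely many (standard) cosets of $H$. Indeed, if the theorem failed, then for each $L\in\N$ one could find a finite witness $(G_L,X_L)$, and applying $\aleph_1$-saturation to the family of internal conditions ``$X_L$ is a $K$-approximate group, $X_L^2$ has exponent $e$, and no internal subgroup of $\langle X_L\rangle$ contained in $X_L^4$ covers $X_L$ with $\le L$ cosets'' would produce a hyperfinite counterexample, contradicting the nonstandard statement. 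Once such an $H$ is produced internally, a standard underspill argument gives the uniform bound $L=L(K,e)$.

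Next, I would build an external subgroup $\Gamma\subseteq X^4$ of large Loeb measure inside $X$. The tool here is a nonstandard version of a Croot--Sisask / Sanders almost-periodicity argument: for every standard $n\in\N$, one can find an internal symmetric set $S_n\subseteq X^4$ containing $1$, of Loeb measure $\mu_X(S_n\cap X)\ge c(n,K)>0$, such that $S_n^{\,n}\subseteq X^4$. Passing to a decreasing subsequence and setting
\[
\Gamma \;:=\;\bigcap_{n\in\N} S_n,
\]
one obtains an external symmetric subset of $X^4$ closed under multiplication (because $S_{n+1}\cdot S_{n+1}\subseteq S_n$ for all $n$ along the subsequence), hence an external subgroup of $\starG$ with $\mu_X(\Gamma\cap X)>0$.

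The crucial and hardest step is to convert $\Gamma$ into an internal subgroup. Here the finite exponent hypothesis does the decisive work: every element of $\Gamma\subseteq X^4$ that happens to lie in $X^2$ has order dividing $e$, and a more careful analysis shows that all of $\Gamma$ has exponent bounded by some $e'=e'(K,e)$. One can then apply (a nonstandard version of) Schlichting's theorem to the family of internal ``approximate stabilizers'' whose intersection is $\Gamma$, which, together with the uniform torsion bound, yields a single internal subgroup $H$ of $\langle X\rangle$ with $\Gamma\subseteq H\subseteq X^4$. Because $\mu_X(H\cap X)\ge\mu_X(\Gamma\cap X)>0$, only boundedly many left cosets of $H$ can meet $X$, providing the required cover and completing the contradiction.

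The main obstacle is the third step. Without the finite exponent assumption one is forced into the deep Hrushovski--Breuillard--Green--Tao theory of Lie models of ultraproducts of approximate groups in order to extract an internal (or ``definable'') subgroup from the type-definable $\Gamma$; the exponent $e$ hypothesis collapses the potential continuous part of the Lie model, leaving a purely finite/internal object to which Schlichting-type arguments apply directly. Verifying that the bound on torsion transfers from $X^2$ to all of $\Gamma$, and that Schlichting's theorem can indeed be implemented internally in $\starG$, are the two technical points I expect to require the most care.
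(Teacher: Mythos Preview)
Your compactness reduction in step 1 and the Sanders--Croot--Sisask construction in step 2 are essentially what the paper does: the descending chain $S_n$ is the paper's sequence $X_n$ (Proposition \ref{grouptop}), and your $\Gamma$ is the paper's $o(X)$. So far so good.

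The gap is in step 3. Neither of the two ``technical points'' you flag can be carried out as stated. First, the $S_n$ are symmetric internal \emph{sets}, not subgroups, and the sequence $(S_n)_{n\in\N}$ is external; Schlichting's theorem, in any of its forms, requires a uniformly commensurable family of genuine subgroups (and, for the definable version, an \emph{internally} definable family). There is no evident way to manufacture such a family from the $S_n$. Second, the torsion bound on $\Gamma$ is not available: you only know $\Gamma\subseteq X^4$, and while every $g\in\Gamma$ has all its powers in $X^4$, nothing forces the order of $g$ to be bounded by a \emph{standard} number depending only on $K$ and $e$. The exponent hypothesis controls $X^2$, not products of four elements of $X$.

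What the paper actually does at this point is pass to the quotient $\G=\langle X\rangle/o(X)$, which is locally compact (Theorem \ref{lcmodel}), and observe that $\pi(X^2)\subseteq\G$ is an identity neighbourhood of exponent $e$. One then applies Gleason--Yamabe to obtain a connected Lie subquotient $\mathcal H$; since a connected Lie group with an identity neighbourhood of finite exponent is trivial, $\mathcal H=\{1\}$, which forces the relevant open subgroup $\G'$ to equal its compact normal subgroup $N$. The preimage $\pi^{-1}(\G')$ is then simultaneously monadic and co-monadic, hence internal by saturation, and this is the internal subgroup $H\subseteq X^4$. So the finite exponent hypothesis is used not to bound torsion in $\Gamma$ but to kill the connected Lie piece of the locally compact model; some structural input of Gleason--Yamabe type (or at minimum van Dantzig plus Peter--Weyl) appears to be unavoidable here, and your proposed Schlichting shortcut does not supply it.
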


Here, $\langle X\rangle$ denotes the subgroup of $G$ generated by $X$.  Surprisingly, this theorem follows from the simple observation that the only connected Lie group which has an identity neighborhood of finite exponent is the trivial Lie group consisting of a single point.  But how do continuous objects such as Lie groups arise in proving a theorem about finite objects like finite approximaite groups?  The key insight of Hrushovski is that ultraproducts of finite $K$-approximate groups are naturally ``modeled'' in a precise sense by second countable, locally compact groups and that, using a classical theorem of Yamabe, this model can be perturbed to a Lie model.

More precisely, for each $i\in \N$, suppose that $X_i\subseteq G_i$ is a finite $K$-approximate group.  We set $X:=\prod_\u X_i$, which, by transfer, is a hyperfinite $K$-approximate subgroup of $G:=\prod_\u G_i$.  In the rest of this chapter, unless specified otherwise, $X$ and $G$ will denote these aforementioned ultraproducts.  By a \emph{monadic} subset of $G$ we mean a countable intersection of internal subsets of $G$.  Also, $\langle X\rangle$ denotes the subgroup of $G$ generated by $X$.

\begin{theorem}\label{lcmodel}
There is a monadic subset $o(X)$ of $X^4$ such that $o(X)$ is a normal subgroup of $\langle X\rangle$ such that the quotient $\G:=\langle X\rangle/o(X)$ has the structure of a second countable, locally compact group.  Moreover, letting $\pi:\langle X\rangle \to \G$ denote the quotient map, we have:
\begin{enumerate}
\item The quotient $\langle X\rangle/o(X)$ is \emph{bounded}, meaning that for all internal sets $A,B\subseteq \langle X\rangle$ with $o(X)\subseteq A$, finitely many left translates of $A$ cover $B$.
\item $Y\subseteq \G$ is compact if and only if $\pi^{-1}(Y)$ is monadic; in particular, $\pi(X)$ is compact.
\item If $Y\subseteq G$ is internal and contains $o(X)$, then $Y$ contains $\pi^{-1}(U)$ for some open neighborhood of the identity in $\G$.
\item $\pi(X^2)$ is a compact neighborhoods of the identity in $\G$.
\end{enumerate}
\end{theorem}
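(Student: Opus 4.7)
The plan is to construct $o(X)$ as a countable intersection $\bigcap_{n\in\N} X_n$ of nested internal symmetric sets lying inside $X^4$, where the chain is chosen carefully enough that $o(X)$ becomes a normal subgroup of $\langle X\rangle$ and the quotient carries a natural second countable locally compact topology whose basic identity neighborhoods are the cosets $\pi(X_n)$. The heuristic is that the $X_n$'s play the role of a neighborhood basis at the identity, the $K$-approximate group axiom gives local compactness by bounding how many translates of $X_n$ are needed to cover $X$, and countable saturation gives us enough room to arrange all the required algebraic closure properties simultaneously.

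The key combinatorial step I would carry out first is the construction of the sequence $(X_n)$. Starting from $X_0 := X^2$, I would recursively produce an internal symmetric $X_{n+1}\subseteq X^4$ with $1\in X_{n+1}$, with $X_{n+1}\cdot X_{n+1}\subseteq X_n$, and with $X_{n+1}$ normalized by $X$ in the sense that $x X_{n+1} x^{-1}\subseteq X_n$ for all $x\in X$. The existence of such an $X_{n+1}$ is the heart of the argument: one uses that $X$ is a $K$-approximate group together with a Sanders/Croot--Sisask-type pigeonhole to locate a large symmetric ``$X_n$-Bohr'' set inside $X^4$ on which the squaring and conjugation maps are tame. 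Transferred to the ultraproduct, this gives an internal $X_{n+1}$ with the required closure properties. Setting $o(X):=\bigcap_n X_n$, one checks directly from $X_{n+1}^2\subseteq X_n$ and $xX_{n+1}x^{-1}\subseteq X_n$ that $o(X)$ is a group and is normal in $\langle X\rangle$ (noting that $\langle X\rangle = \bigcup_k X^k$ and conjugating iteratively).

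Next I would set up the topology on $\G:=\langle X\rangle/o(X)$. Declare $\{\pi(X_n):n\in\N\}$ a neighborhood basis at the identity and translate it to other points; the nesting $X_{n+1}^2\subseteq X_n$ gives that this defines a Hausdorff group topology, and the countability of the basis gives second countability. For property (3), observe that if $Y\supseteq o(X)=\bigcap_n X_n$ is internal, then countable saturation forces $X_n\subseteq Y$ for some $n$, so $\pi(Y)$ contains $\pi(X_n)$, which is open. For (2) and (4), compactness of $\pi(X)$ follows because the $K$-approximate group axiom together with transfer gives that $X$ is covered by hyperfinitely many (in fact $\leq K^m$) translates of each $X_n$, so standardly $\pi(X)$ is covered by finitely many $\pi(X_n)$, hence totally bounded; completeness comes for free from the construction. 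Boundedness (property (1)) is the same covering argument applied to arbitrary internal $A,B$ with $o(X)\subseteq A$: by saturation some $X_n\subseteq A$, and $B\subseteq \langle X\rangle$ lies in some $X^m$, which is covered by boundedly many translates of $X_n$. The characterization of compact sets in (2) then follows by writing a compact $Y\subseteq\G$ as an intersection of finite unions of translates of $\pi(X_n)$ and pulling back.

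The main obstacle will be the construction of the sequence $(X_n)$ with all three properties simultaneously (symmetry, approximate multiplicative closure into $X_n$, and approximate conjugation-invariance by $X$), while keeping $X_{n+1}\subseteq X^4$. This is precisely where one needs a substantive additive-combinatorial input rather than a soft nonstandard argument; the Croot--Sisask almost-periodicity lemma, or equivalently a Sanders-type Bogolyubov argument, is what makes the recursive step go through. Once this chain exists, the verification of the group, topological, and covering properties is comparatively routine manipulation of internal sets and saturation.
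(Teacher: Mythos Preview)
Your proposal is correct and follows essentially the same route as the paper. The paper packages the recursive construction as a separate proposition yielding a descending chain $X^4=X_0\supseteq X_1\supseteq\cdots$ of internal symmetric sets with $X_{n+1}^2\subseteq X_n$, $X_{n+1}^X\subseteq X_n$, and (crucially) $X^4$ covered by finitely many left translates of each $X_n$; the construction uses the Sanders--Croot--Sisask theorem exactly as you anticipate, and the rest of the proof (topology via the $X_n$ as a neighborhood basis, saturation for (1) and (3), monadic characterization of compactness) matches your outline almost verbatim.

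One small point worth tightening: the finite-covering property of $X^4$ by translates of $X_n$ is not an immediate consequence of the $K$-approximate group axiom alone, but is built into the recursive step via the size lower bound on the Sanders--Croot--Sisask set (the paper isolates this as a separate easy lemma: if $S$ is symmetric with $S^4\subseteq X^4$ and $|S|\geq c|X|$, then $X^4$ is covered by $K^7/c$ translates of $S^2$). You should record this covering property explicitly as part of the inductive hypothesis on the $X_n$'s rather than appealing to it afterwards.
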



%



Let us momentarily assume that Theorem \ref{lcmodel} holds and see how it is used to prove Theorem \ref{udi}.  As usual, we first prove a nonstandard version of the desired result.

\begin{theorem}\label{nsudi}
Suppose that $X\subseteq G$ is a hyperfinite $K$-approximate group such that $X^2$ has exponent $e$.  Then $X^4$ contains an internal subgroup $H$ of $G$ such that $o(X)\subseteq H$.  
\end{theorem}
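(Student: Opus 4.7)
The plan is to pass from the hyperfinite setting to the locally compact group $\G = \langle X\rangle/o(X)$ provided by Theorem~\ref{lcmodel}, then exploit the rigidity imposed by finite exponent at the Lie-group level, and finally pull back a compact open subgroup of $\G$ to an internal subgroup of $G$. To begin, I would apply Theorem~\ref{lcmodel} to obtain $\G$ and the quotient map $\pi$; since $\pi$ is a group homomorphism and $X^2$ has exponent $e$, every element of the compact identity-neighborhood $\pi(X^2)$ has order dividing $e$. Moreover, property (3) of Theorem~\ref{lcmodel} applied to the internal set $X^4$ (which contains $o(X)$) yields an open neighborhood $W$ of the identity in $\G$ with $\pi^{-1}(W) \subseteq X^4$; this $W$ will serve as the target into which I wish to sink a compact open subgroup.

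Next I would invoke the Gleason--Yamabe theorem, after first replacing $\G$ by the open subgroup generated by $\pi(X^2)$ (which is $\sigma$-compact, being generated by a compact set). Gleason--Yamabe then yields an open subgroup $\G_0$ of $\G$ together with a compact normal subgroup $N \lhd \G_0$ satisfying $N \subseteq W \cap \G_0$ and with $\G_0/N$ a Lie group. The image of $\pi(X^2) \cap \G_0$ in the Lie group $\G_0/N$ is a neighborhood of the identity every element of which has order dividing $e$. However, a connected Lie group cannot admit such a neighborhood: via the exponential map one reduces to the statement that a neighborhood of $0$ in the Lie algebra is annihilated by multiplication by $e$, forcing the Lie algebra to vanish. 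Thus the identity component of $\G_0/N$ is trivial, $\G_0/N$ is discrete, and $N$ is itself open in $\G_0$, hence in $\G$. Consequently $N$ is a compact open subgroup of $\G$ contained in $W$, so $\pi^{-1}(N) \subseteq \pi^{-1}(W) \subseteq X^4$.

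It remains to promote the (abstract) subgroup $\pi^{-1}(N) \subseteq X^4$ to a genuinely internal subgroup $H$ of $G$, and this is the step I expect to be the principal obstacle. By property (2) of Theorem~\ref{lcmodel} (applied to the compact set $N$), $\pi^{-1}(N)$ is monadic and may thus be written as a countable decreasing intersection $\bigcap_n A_n$ of internal subsets of $X^4$, each containing $o(X)$; however, $\pi^{-1}(N)$ itself need not be internal. My plan is to exploit the fact that $N = N^{-1} = N \cdot N$ to refine the $A_n$ into internal symmetric approximate subgroups $A_n'$ satisfying $A_{n+1}' \cdot A_{n+1}' \subseteq A_n'$, and then to apply countable saturation to the partial type asserting that the desired $H$ sits inside every $A_n'$ while being closed under multiplication and inverses. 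The delicate point---and what makes this the heart of the argument---is ensuring that the resulting internal object can indeed be taken to be an honest subgroup without sacrificing either the containment $H \subseteq X^4$ or the containment $o(X) \subseteq H$.
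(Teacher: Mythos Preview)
Your argument is correct through the construction of the compact open subgroup $N\subseteq \G$ with $\pi^{-1}(N)\subseteq X^4$; this matches the paper's strategy. The difficulty you flag in the last paragraph, however, is not a real obstacle, and your proposed workaround via a partial type is both unnecessary and problematic (encoding the lower bound $o(X)\subseteq H$ as countably many internal conditions is not straightforward, so your saturation scheme as stated does not obviously close).

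The point you are missing is that $\pi^{-1}(N)$ is \emph{already} internal. You used compactness of $N$ to see that $\pi^{-1}(N)$ is monadic. Now use openness of $N$: the complement $\pi(X^4)\setminus N$ is closed in the compact set $\pi(X)^4$, hence compact, so $\pi^{-1}\bigl(\pi(X^4)\setminus N\bigr)$ is monadic as well, and therefore so is $X^4\setminus \pi^{-1}(N)=X^4\cap \pi^{-1}\bigl(\pi(X^4)\setminus N\bigr)$. Thus $\pi^{-1}(N)$ and its complement in the internal set $X^4$ are both countable intersections of internal sets; writing $\pi^{-1}(N)=\bigcap_n A_n$ (decreasing) and $\pi^{-1}(N)=\bigcup_n C_n$ (increasing, from the complement side) with $C_n\subseteq A_n$ internal, the family $\{A_n\setminus C_n\}$ has empty intersection, so by countable saturation some $A_k\setminus C_k=\emptyset$, i.e.\ $\pi^{-1}(N)=A_k$ is internal. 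This is exactly the paper's one-line remark that a clopen preimage is ``both monadic and co-monadic (galactic), whence internal by saturation.'' Taking $H=\pi^{-1}(N)$ then finishes the proof: it is a subgroup as the preimage of one, it contains $o(X)=\ker\pi$, and it sits inside $X^4$.
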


\begin{proof}
Let $U$ be an open neighborhood of the identity in $\G$ with $\pi^{-1}(U)\subseteq X^4$ such that $U$ is contained in $\pi(X^2)$, whence $U$ has exponent $e$.  By the Gleason-Yamabe theorem \cite{yamabe_generalization_1953}, there is an open subgroup  $\G'$ of $\G$ and normal $N\unlhd \G'$ with $N\subseteq U$ such that $\mathcal{H}:=\G'/N$ is a connected Lie group.  Let $Y:=X\cap \pi^{-1}(\G')$ and let $\rho:\langle Y\rangle \to \mathcal{H}$ be the composition of $\pi$ with the quotient map $\G'\to \mathcal{H}$.  Since $\G'$ is clopen in $\G$, $\pi^{-1}(\G')$ is both monadic and co-monadic (the complement of a monadic, also known as \emph{galactic}), whence internal by saturation; it follows that $Y$ is also internal.  Since the image of $U\cap \G'$ in $\mathcal{H}$ is also open, it follows that $\mathcal{H}$ is a connected Lie group with an identity  neighborhood of finite exponent.  We conclude that $\mathcal{H}$ is trivial, whence $\ker(\rho)=Y=\langle Y\rangle$ is the desired internal subgroup of $G$ contained in $X^4$.
\end{proof}

\begin{remark}
The passage from $\G$ to the Lie subquotient $\G'/N$ is called the \emph{Hrushovski Lie Model Theorem}.  More precisely, \cite{Breuillard_Structure_2012} abstracts the important properties of the quotient map $\pi:\langle X \rangle\to \G$ and calls any group morphism onto a second countable, locally compact group satisfying these properties a \emph{good model}.  In the proof of Theorem \ref{nsudi}, we actually showed that the good model $\pi:\langle X\rangle\to \G$ can be replaced by a good model $\rho:\langle Y\rangle \to \mathcal{H}$ onto a connected Lie group.  One can show that $Y$ is also an approximate group (in fact, it is a $K^6$-approximate group) that is closely related to the original approximate group $X$, whence the Hrushovski Lie model theorem allows one to study ultraproducts of $K$-approximate groups by working with the connected Lie groups that model them.  For example, the proof of Theorem \ref{nonabelianfreiman} actually proceeds by induction on the dimension of the corresponding Lie model.  To be fair, the proof of Theorem \ref{nonabelianfreiman} actually requires the use of \emph{local Lie groups} and, in particular, uses the local version of Yamabe's theorem, whose first proof used nonstandard analysis \cite{goldbring_hilberts_2010}.
\end{remark}

\begin{proof}[of Theorem \ref{udi}]
Suppose, towards a contradiction, that the theorem is false.  For each $L$, let $G_L$ be a group and $X_L\subseteq G_L$ a finite $K$-approximate group such that $X_L^2$ has exponent $e$ and yet, for any finite subgroup $H$ of $\langle X_L\rangle$ contained in $X_L^4$, we have that $X_L$ is not covered by $L$ cosets of $H$.  Let $X:=\prod_\u X_L$ and $G:=\prod_\u G_L$.  By transfer, $X$ is a $K$-approximate subgroup of $G$ such that $X^2$ has exponent $e$.  By Theorem \ref{nsudi}, $X^4$ contains an internal subgroup $H\supseteq o(X)$ of $\langle X\rangle$.  Without loss of generality, we may write $H:=\prod_\u H_L$ with $H_L$ a subgroup of $G_L$ contained in $X_L^4$.  Since the quotient is bounded by Theorem \ref{lcmodel}, there is $M\in \N$ such that $M$ left translates of $H$ cover $X^4$.  Thus, for $\u$-almost all $L$, $M$ left translates of $H_L$ cover $X_L$; taking $L>M$ yields the desired contradiction.  
\end{proof}

We now turn to the proof of Theorem \ref{lcmodel}.  Hrushovski's original proof used some fairly sophisticated model theory.  A key insight of Breuillard-Green-Tao was that a proof that relied only on fairly elementary combinatorics and nonstandard methods could be given.  The following result is the combinatorial core of their proof.  It, and the easy lemma after it, do not follow the convention that $X$ is a hyperfinite $K$-approximate group.

\begin{theorem}[Sanders-Croot-Sisask]\label{SCS}
Given $K$ and $\delta>0$, there is $\epsilon>0$ so that the following holds:  Suppose that $X$ is a finite $K$-approximate subgroup of $G$.  Suppose that $Y\subseteq X$ is symmetric and $|Y|\geq \delta |X|$.  Then there is a symmetric $E\subseteq G$ such that $|E|\geq \epsilon |X|$ and $(E^{16})^X\subseteq Y^4$.
\end{theorem}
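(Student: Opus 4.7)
\medskip

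My plan is to prove the Sanders-Croot-Sisask theorem by contradiction via an ultraproduct argument, using the Hrushovski-style locally compact model $\pi:\langle X\rangle\to\G$ of Theorem \ref{lcmodel} in the same spirit as the proof of Theorem \ref{udi}. If the conclusion fails, then for each $n\in\N$ there is a finite $K$-approximate group $X_n\subseteq G_n$ and a symmetric $Y_n\subseteq X_n$ with $|Y_n|\geq\delta|X_n|$ such that no symmetric $E\subseteq G_n$ with $|E|\geq\tfrac{1}{n}|X_n|$ satisfies $(E^{16})^{X_n}\subseteq Y_n^4$. Taking ultraproducts, I obtain a hyperfinite $K$-approximate group $X\subseteq G$, a symmetric internal $Y\subseteq X$ with $\mu_X(Y)\geq\delta$ (where $\mu_X$ is the normalized Loeb measure on $X$), and the ``transferred'' failure: for every internal symmetric $E\subseteq G$ with $\mu_X(E)>0$, one has $(E^{16})^X\not\subseteq Y^4$.

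Next I would apply Theorem \ref{lcmodel} to obtain the good model $\pi:\langle X\rangle\to\G$ onto a second-countable locally compact group, let $\nu$ denote a left Haar measure on $\G$, and push forward $\mu_X$ along $\pi$. Because the quotient is bounded and $\pi(X)$ is compact, the pushforward of $\mu_X$ is absolutely continuous with respect to $\nu|_{\pi(X)}$ (up to a normalizing constant), so $\nu(\pi(Y))>0$. A Steinhaus-type argument in $\G$ then produces an open symmetric identity neighborhood $V$ of $\G$ with compact closure such that $V\cdot V^{-1}\subseteq \pi(Y)\cdot\pi(Y)^{-1}\subseteq\pi(Y^2)$; iterating gives $V^{16}\subseteq\pi(Y^{32})$. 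To upgrade this to the conjugation-invariant statement $(V^{16})^{\pi(X)}\subseteq\pi(Y^4)$, I would exploit compactness of $\pi(X)$ together with continuity of conjugation to shrink $V$ so that $g^{-1}Vg$ remains inside a fixed small neighborhood for every $g\in\pi(X)$; one then feeds this into the Steinhaus/convolution estimate applied inside $\pi(Y)$ directly, aiming to land in $\pi(Y^4)$ rather than a larger power.

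Having produced such a $V$, I would take $E:=\pi^{-1}(V)\cap X$. Property (3) of Theorem \ref{lcmodel} combined with $\aleph_1$-saturation shows that after possibly replacing $V$ by a slightly smaller open neighborhood and $E$ by a suitable internal symmetric subset of $\pi^{-1}(V)$, one obtains an internal symmetric $E$ whose image under $\pi$ has non-empty interior. Because $V$ has positive Haar measure and $\pi$ is essentially measure-preserving on compact pieces, $\mu_X(E)$ is noninfinitesimal. Finally, since $(E^{16})^X\subseteq\pi^{-1}((V^{16})^{\pi(X)})\subseteq\pi^{-1}(\pi(Y^4))$, one more use of saturation (to absorb the monadic error $o(X)\subseteq Y^4$ using the fact that $Y^4$ is internal and contains an identity neighborhood modulo $o(X)$) gives $(E^{16})^X\subseteq Y^4$, contradicting the assumed failure.

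The main obstacle I anticipate is the conjugation-invariance in the second paragraph: ordinary Steinhaus gives $V^{16}\subseteq\pi(Y^{32})$ or the like, but the theorem demands the tighter $Y^4$ together with simultaneous invariance under conjugation by all of $\pi(X)$. Handling this requires careful juggling of the compactness of $\pi(X)$, equicontinuity of conjugation on compact sets, and a convolution argument that keeps the number of factors of $Y$ at four; this is exactly the content captured combinatorially by the Sanders-Croot-Sisask almost-periodicity method, recast here through the locally compact model.
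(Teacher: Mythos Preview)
Your proposal is circular. In the paper, Theorem~\ref{SCS} is not proved at all: it is quoted as a purely finitary combinatorial input (``the combinatorial core of their proof'') and then \emph{used} to establish Proposition~\ref{grouptop}, which in turn is the main ingredient in the proof of Theorem~\ref{lcmodel}. So you cannot invoke Theorem~\ref{lcmodel} to derive Theorem~\ref{SCS} without assuming what you are trying to prove. The whole point of the Sanders--Croot--Sisask lemma in this development is that it is the elementary combinatorial replacement for Hrushovski's model-theoretic construction of $o(X)$; the locally compact model $\G$ only exists once you already have the descending chain $X_0\supseteq X_1\supseteq\cdots$, and that chain is produced precisely by transferring Theorem~\ref{SCS}.

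Even if one ignored the circularity, several steps in your sketch do not go through. The assertion that $o(X)\subseteq Y^4$ is unjustified: all we know is $o(X)\subseteq X^4$, and $Y$ may be a tiny (density-$\delta$) subset of $X$, so there is no reason for $Y^4$ to contain the monadic normal subgroup. Likewise, $\pi^{-1}(\pi(Y^4))$ equals $Y^4\cdot o(X)$, not $Y^4$, so the final saturation step cannot recover the internal containment $(E^{16})^X\subseteq Y^4$. And you correctly flag that a naive Steinhaus argument only lands in $Y^{32}$ rather than $Y^4$; getting down to $Y^4$ with conjugation invariance is exactly the nontrivial content of the Croot--Sisask almost-periodicity technique, which is a direct finitary/probabilistic argument, not something recoverable from the locally compact model.
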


\begin{lemma}
Let $X\subseteq G$ be a finite $K$-approximate group and $S\subseteq G$ symmetric such that $S^4\subseteq X^4$ and $|S|\geq c|X|$ for some $c>0$.  Then $X^4$ can be covered by $K^7/c$ left cosets of $S^2$. 
\end{lemma}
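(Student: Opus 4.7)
The plan is to use a standard maximal-disjoint-translate covering argument, powered by two ingredients: the symmetry of $S$ (which lets disjointness upgrade to covering by $S^2$) and the defining property of a $K$-approximate group (which controls the size of $X^8$). No delicate estimates or nonstandard tools are needed; everything is elementary counting.

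First I would note that since $S$ is symmetric, $1 \in S$, and therefore $S \subseteq S^4 \subseteq X^4$. In particular any product of the form $g S$ with $g \in X^4$ lies in $X^4 \cdot X^4 = X^8$.

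Next, choose by a Zorn/greedy argument a maximal family $g_1,\dots,g_m \in X^4$ such that the left translates $g_1 S, \dots, g_m S$ are pairwise disjoint. For any $g \in X^4$, maximality forces $g S \cap g_i S \neq \varnothing$ for some $i$, so $g \in g_i S S^{-1} = g_i S^2$ (using symmetry of $S$ once more). Hence $X^4 \subseteq \bigcup_{i=1}^m g_i S^2$, and it only remains to bound $m$.

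For the bound, observe that the $g_i S$ are disjoint subsets of $X^8$, so $m \lvert S\rvert \leq \lvert X^8\rvert$. Now an easy induction using the definition of a $K$-approximate group (starting from $X^2 \subseteq FX$ with $\lvert F\rvert \leq K$) yields $X^n \subseteq F^{n-1} X$ with $\lvert F^{n-1}\rvert \leq K^{n-1}$, so in particular $\lvert X^8\rvert \leq K^7 \lvert X\rvert$. Combining with $\lvert S\rvert \geq c \lvert X\rvert$ gives $m \leq K^7 \lvert X\rvert / \lvert S\rvert \leq K^7/c$, completing the proof. The only step that requires any care is keeping track of the exponent $7$ (as opposed to $8$ or $6$), which comes from the fact that the translates $g_i S$ live in $X^8$ and that $X^8$ needs $K^{8-1}$ translates of $X$ to cover it.
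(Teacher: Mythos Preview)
Your proof is correct and is exactly the standard Ruzsa covering argument one would expect here. The paper does not actually supply a proof of this lemma---it labels it an ``easy lemma'' and states it without proof---so there is nothing to compare against beyond confirming that your argument is sound, which it is: the maximal-disjoint-translates step, the passage from disjointness to covering via $SS^{-1}=S^2$, and the bound $|X^8|\le K^7|X|$ from iterating $X^2\subseteq FX$ are all clean and correctly combined.
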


We now return to our assumption that $X$ is a hyperfinite $K$-approximate subgroup of $G$.

\begin{proposition}\label{grouptop}
There is a descending sequence
$$X^4=:X_0\supseteq X_1\supseteq X_2\supseteq \cdots \supseteq X_n\supseteq \cdots$$ of internal, symmetric subsets of $G$ such that:
\begin{enumerate}
\item[(i)] $X_{n+1}^2\subseteq X_n$;
\item[(ii)] $X_{n+1}^X\subseteq X_n$;
\item[(iii)] $X^4$ is covered by finitely many left cosets of $X_n$.
\end{enumerate}
\end{proposition}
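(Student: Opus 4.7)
My plan is to construct the sequence $(X_n)_{n \in \N}$ by external induction on $n$, using the nonstandard transfer of the Sanders--Croot--Sisask theorem (Theorem~\ref{SCS}) as the primary engine, together with the Rusza covering lemma to verify property (iii). I would start with $X_0 := X^4$, which is a standard $K^3$-approximate group and trivially satisfies (iii), while (i) and (ii) are vacuous at this stage.

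For the inductive step, I would first note that $X^4$ is a hyperfinite $K'$-approximate group for a standard $K' = K^{O(1)}$, by transfer of the Plünnecke--Rusza estimates $|X^m| \leq K^{m-1}|X|$. Given $X_n$ internal, symmetric, a standard approximate group, and with $X^4$ covered by $L_n$ translates of $X_n$ (so $X_n$ has standard density $\geq 1/L_n$ in $X^4$), I would apply the transfer of Theorem~\ref{SCS} to the $K'$-approximate group $X^4$ with symmetric subset $Y := X_n$. This produces an internal symmetric $E \subseteq G$ of density at least some standard $\epsilon_n > 0$ in $X^4$, satisfying the crucial normalization $(E^{16})^{X^4} \subseteq X_n^4$; since $X \subseteq X^4$, this gives both $E^{16} \subseteq X_n^4$ and $E^X \subseteq X_n^4$.

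The principal technical obstacle is closing the gap between containment in $X_n^4$, which SCS delivers, and containment in $X_n$, which (i) and (ii) demand. I plan to handle this by maintaining in parallel a finer auxiliary sequence $(Y_k)$ with $(Y_{k+1}^{16})^X \subseteq Y_k^4$, produced by iterated SCS starting from $Y_0 := X$, and then defining $X_n := Y_{k_n}^4$ for a sufficiently rapidly growing choice of indices $k_n$. Telescoping the iterated containments along the $Y$-chain yields $X_{n+1}^2 \subseteq Y_{k_{n+1}}^{16} \subseteq Y_{k_{n+1}-1}^4 \subseteq \cdots \subseteq Y_{k_n}^4 = X_n$, and since conjugation distributes as $(y_1 \cdots y_4)^x = y_1^x \cdots y_4^x$, the hypothesis $(Y_{k+1}^{16})^X \subseteq Y_k^4$ yields $(Y_{k+1}^4)^X \subseteq Y_k^4$, so the same telescope gives $X_{n+1}^X \subseteq X_n$. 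The careful bookkeeping of the indices $k_n$ and verifying that all approximate-group constants and densities remain standard throughout will be the combinatorial crux.

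Finally, to verify (iii) for $X_{n+1}$, I would observe that $X_{n+1}$ is itself a standard approximate group: its doubling is controlled by $|X_{n+1}^2| \leq |X_{n+1}^{16}| \leq |X_n^4| \leq C_n|X_{n+1}|$ for a standard $C_n$, whence $X_{n+1}^2 \subseteq F \cdot X_{n+1}$ for some finite standard $F$ by the noncommutative Plünnecke--Rusza inequalities. Then the Rusza covering lemma applied to $(X^4, X_{n+1})$, using the estimate $|X^4 \cdot X_{n+1}| \leq |X^8| \leq K^7|X| \leq (K^7/\epsilon_n)|X_{n+1}|$, yields that $X^4$ is covered by a standard-bounded number of translates of $X_{n+1}^2$, and hence by finitely many translates of $X_{n+1}$ itself using the inclusion $X_{n+1}^2 \subseteq F \cdot X_{n+1}$. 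This closes the induction and produces the required descending sequence.
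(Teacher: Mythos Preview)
Your approach is essentially the paper's: iterate Sanders--Croot--Sisask from $Y_0 = X$ to obtain $(Y_{k+1}^{16})^X \subseteq Y_k^4$, and then set $X_n$ to be a fourth power along this sequence. Two of your anticipated difficulties dissolve on closer inspection. First, no rapidly growing $k_n$ is needed: taking $k_n = n$ already gives $X_{n+1}^2 = Y_{n+1}^8 \subseteq Y_{n+1}^{16} \subseteq Y_n^4 = X_n$ and $(X_{n+1})^X = (Y_{n+1}^4)^X \subseteq (Y_{n+1}^{16})^X \subseteq X_n$, so the telescope collapses to a single step (the paper phrases the same move as $Y_{n+1} := S^2$ for the SCS output $S$, with $X_n := Y_n^4$). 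Second, for (iii) you need not first verify that $X_{n+1}$ is itself an approximate group and then run Rusza covering: the lemma stated immediately after Theorem~\ref{SCS} applies directly to the raw SCS output $S$ (since $S^4 \subseteq S^{16} \subseteq X^4$ and $|S| \geq \epsilon|X|$), already covering $X^4$ by $K^7/\epsilon$ left cosets of $S^2 \subseteq X_{n+1}$.
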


\begin{proof}
Suppose that $Y\subseteq G$ is internal, symmetric, $Y^4\subseteq X^4$, and $X^4$ can be covered by finitely many left cosets of $Y$.  We define a new set $\tilde{Y}$ with these same properties.  First, take $\delta>0$ such that $|Y|\geq \delta |X^4|$; such $\delta$ exists since $X^4$ can be covered by finitely many left cosets of $Y$.  By the transfer of Theorem \ref{SCS}, there is an internal, symmetric $S\subseteq Y^4$ such that $|S|\geq \epsilon |X^4|$ and $(S^{16})^X\subseteq Y^4$.  Let $\tilde{Y}:=S^2$.  Note that $\tilde{Y}$ has the desired properties, the last of which follows from the preceding lemma.

We now define a sequence $Y_0,Y_1,Y_2,\ldots,$ of internal subsets of $X^4$ satisfying the above properties by setting $Y_0:=X$ and $Y_{n+1}:=\tilde{Y_n}$.  Finally, setting $X_n:=Y_n^4$ yields the desired sequence.  
\end{proof}

\begin{proof}[of Theorem \ref{lcmodel}]  
Take $(X_n)$ as guaranteed by Proposition \ref{grouptop}.  We set $o(X):=\bigcap_n X_n$, a monadic subset of $X^4$.  It is clear from (i) and (ii) that $o(X)$ is a normal subgroup of $\langle X\rangle$.
We can topologize $\langle X\rangle$ by declaring, for $a\in \langle X\rangle$, $\{aX_n \ : \ n\in \N\}$ to be a neighborhood base for $a$.  The resulting space is not Hausdorff, but it is clear that the quotient space $\langle X \rangle/o(X)$ is precisely the separation of $\langle X\rangle$.  It is straightforward to check that the resulting space is separable and yields a group topology on $\G$.  Now one uses the boundedness property (proven in the next paragraph) to show that $\G$ is locally compact; see \cite{van_den_dries_approximate_2015} for details.

To show that it is bounded, suppose that $A,B\subseteq \langle X\rangle$ are such that $o(X)\subseteq A$.  We need finitely many left cosets of $A$ to cover $B$.    Take $n$ such that $X_n\subseteq A$ and take $m$ such that $B\subseteq (X^4)^m$.  Since $X^4$ is a $K^4$-approximate group, $(X^4)^m\subseteq E\cdot X^4$ for some finite $E$.  By (iii), we have that $X^4\subseteq F\cdot X_n$ for some finite $F$.  It follows that $B\subseteq EFA$, as desired.

The proof that $Y\subseteq \G$ is compact if and only if $\pi^{-1}(Y)$ is monadic is an exercise left to the reader (or, once again, one can consult \cite{van_den_dries_approximate_2015}).  To see the moreover part, note that $$\pi^{-1}(\pi(X))=\{x\in \langle X\rangle \ : \ \text {there is }y\in X \text{ such that }x^{-1}y\in \bigcap_n X_n\}.$$  In particular, $\pi^{-1}(\pi(X))\subseteq X^5$ and, by saturation, we actually have
$$\pi^{-1}(\pi(X))=\{x\in X^5 \ : \text{ for all $n$ there is }y\in X \text{ such that }x^{-1}y\in X_n\}.$$  From this description of $\pi^{-1}(\pi(X))$, we see that it is monadic, whence $\pi(X)$ is compact.

To prove (3), suppose that $Y$ is an internal subset of $G$ containing $o(X)$.  Take $n$ such that $X_n\subseteq Y$.  Thus, $\pi^{-1}(\pi(X_{n+1}))\subseteq X_n\subseteq Y$ and $\pi(X_{n+1})$ is open in $\G$.

Finally, to see that $\pi(X^2)$ is a neighborhood of the identity in $\G$, first observe that since $X^4$ is covered by finitely many left cosets of $X$, the neighborhood $\pi(X^4)$ of the identity is covered by finitely many left cosets of the compact set $\pi(X)$, whence $\pi(X)$ has nonempty interior and thus $\pi(X^2)=\pi(X)\cdot \pi(X)^{-1}$ is a neighborhood of the identity in $\G$.
\end{proof}

\appendix

\part{Appendix}

\chapter{Foundations of nonstandard analysis}
\section{Foundations}

In this appendix we will revise all the basic notions and principles that
we presented in Chapter 2 and put them on firm foundations.
As it is customary in the foundations of mathematics, 
we will work in a set-theoretic framework as formalized
by Zermelo-Fraenkel set theory with choice \zfc.
Since the purpose of this book is not a foundational one,
we will only outline the main arguments, and
then give precise bibliographic references where the interested reader 
can find all proofs worked out in detail.

\subsection{Mathematical universes and superstructures}

Let us start with the notion of a mathematical universe, which
formalizes the idea of a sufficiently large collection of
mathematical objects that contains all that one needs
when applying nonstandard methods.

\begin{definition}\label{def-universe}
A \emph{universe} $\UU$ is a nonempty collection of
``mathematical objects'' that satisfies the following properties:
\begin{enumerate}
\item
The numerical sets $\N, \Z, \Q, \R, \C\in\UU$;
\item
If $a_1,\ldots,a_k\in\UU$ then also the tuple
$\{a_1,\ldots,a_k\}$ and the ordered tuple $(a_1,\ldots,a_k)$
belong to $\UU$;
\item
If the family of sets $\F\in\UU$ then also its
union $\bigcup\F=\bigcup_{F\in\F}F\in\UU$;
\item
If the sets $A, B\in\UU$ then also
the \emph{Cartesian product} $A\times B$, 
the \emph{powerset} $\Pow(A)=\{A'\mid A'\subseteq A\}$,
and the \emph{function set} $\Fun(A,B)=\{f\mid f:A\to B\}$
belong to $\UU$;
\item
$\UU$ is \emph{transitive}, that is, $a\in A\in\UU\Rightarrow a\in\UU$.
\end{enumerate}
\end{definition}

Notice that a universe $\UU$ is necessarily closed under subsets; indeed 
if $A'\subseteq A\in\UU$, then $A'\in\Pow(A)\in\UU$, and hence $A'\in\UU$,
by transitivity. Thus, if the sets $A,B\in\UU$
then also the the \emph{intersection} $A\cap B$ and
the \emph{set-difference} $A\setminus B$ belong to $\UU$;
moreover, by combining properties 2 and 3, one
obtains that also the \emph{union} $A\cup B=\bigcup\{A,B\}\in\UU$.

\begin{remark}\label{reductionism}
It is a well-known fact that all ``mathematical objects'' used in 
the ordinary practice of mathematics, including
numbers, sets, functions, relations, ordered tuples, and
Cartesian products, can all be coded as sets.
Recall that, in \zfc, an ordered pair $(a,b)$ is defined as the
\emph{Kuratowski pair} $\{\{a\},\{a, b\}\}$;
in fact, it is easily shown that by adopting that definition
one has the characterizing property that
$(a,b)=(a',b')$ if and only if $a=a'$ and $b=b'$.
Ordered tuples are defined inductively by
letting $(a_1,\ldots,a_k,a_{k+1})=((a_1,\ldots,a_k),a_{k+1})$.
A binary relation $R$ is defined as a set of ordered pairs;
so, the notion of a relation is identified with the set of pairs that
satisfy it. A function $f$ is a relation such that every
element $a$ in the domain is in relation with a unique element $b$ 
of the range, denoted $b=f(a)$; so, the notion of a function is identified 
with its graph. 
As for numbers, the natural numbers $\N_0$ of \zfc\ are defined as the set 
of \emph{von Neumann naturals}: $0=\emptyset$ and, recursively,
$n+1=n\cup\{n\}$,
so that each natural number $n=\{0, 1, \ldots, n-1\}$
is identified with the set of its predecessors;
the integers $\Z$
are then defined as a suitable quotient of $\N\times\N$,
and the rationals $\Q$ as a suitable
quotient of $\Z\times\Z$; the real numbers $\R$
are defined as suitable sets of rational
numbers, namely the \emph{Dedekind cuts};
the complex numbers $\C=\R\times\R$
are defined as ordered pairs of
real numbers, where the pair $(a,b)$ is denoted $a+i b$.
(See, \emph{e.g.}, \cite{hrbacek_introduction_1999}.)

We remark that the above definitions are instrumental
if one works within axiomatic set theory, where all notions must be reduced to the
sole notion of a set; however, in the ordinary 
practice of mathematics, one can safely take the
ordered tuples, the relations, the functions, and the natural numbers
as primitive objects of a different nature with respect to sets.
\end{remark}

For convenience, in the following we will consider \emph{atoms},\index{atom}
that is, primitive objects that are not sets.\footnote
{~The existence of atoms is disproved by the axioms of \zfc, where
all existing objects are sets;
however, axiomatic theories are easily formalized that allow
a proper class of atoms. For instance, one can consider
a suitably modified versions of \zfc\ where a unary predicate
$A(x)$ for ``$x$ is an atom" is added to the language,
and where the axiom of extensionality is restricted to non-atoms.}
A notion of a universe that is convenient to our purposes is the following.

\begin{definition}
Let $X$ be a set of atoms.
The \emph{superstructure} over $X$ is the
union $\VV(X):=\bigcup_{n\in\N_0}V_n(X)$, where
$V_0(X)=X$, and, recursively, $V_{n+1}(X)=V_n(X)\cup\Pow(V_n(X))$.
\end{definition}

\begin{proposition}
Let $X$ be a set of atoms that includes (a copy of) $\N$. 
Then the superstructure $\VV(X)$ is a universe in the sense 
of Definition \ref{def-universe}.\footnote
{~Clearly, the transitivity property ``$a\in A\in\VV(X)\Rightarrow a\in\VV(X)$"
applies provided $A\notin X$.}
\end{proposition}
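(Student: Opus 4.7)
The plan is to verify the five axioms of a universe by exploiting the basic monotonicity properties of the hierarchy $V_0(X) \subseteq V_1(X) \subseteq \cdots$, together with the key principle that if $A \in V_n(X)$ is not an atom then $A \subseteq V_{n-1}(X)$, and conversely if $A \subseteq V_n(X)$ then $A \in V_{n+1}(X)$. These two facts, proved by a straightforward induction on $n$, are the workhorses of the whole argument, so I would establish them as a preliminary lemma before turning to the axioms themselves.

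For axiom (1), I would note that $\N \subseteq X = V_0(X)$, so $\N \in V_1(X)$. The other numerical sets are obtained via the standard set-theoretic codings outlined in Remark \ref{reductionism}: $\Z$ as a quotient of $\N \times \N$, $\Q$ as a quotient of $\Z \times \Z$, $\R$ as the set of Dedekind cuts (a subset of $\Pow(\Q)$), and $\C = \R \times \R$. Each of these constructions only climbs finitely many levels, so each lands in some $V_n(X) \subseteq \VV(X)$. For axiom (2), if $a_1, \ldots, a_k \in \VV(X)$, pick $n$ with $a_1, \ldots, a_k \in V_n(X)$ (possible since the hierarchy is increasing); then $\{a_1, \ldots, a_k\} \subseteq V_n(X)$ gives $\{a_1, \ldots, a_k\} \in V_{n+1}(X)$, and a finite iteration using the Kuratowski definition handles the ordered tuple.

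For axiom (4), given $A, B \in V_n(X)$ (not atoms) we have $A, B \subseteq V_{n-1}(X)$. Then $\Pow(A) \subseteq \Pow(V_{n-1}(X)) \subseteq V_n(X)$, so $\Pow(A) \in V_{n+1}(X)$. For the Cartesian product, each pair $(a,b)$ with $a \in A, b \in B$ is a Kuratowski pair whose members live a bounded number of levels up from $V_{n-1}(X)$, so $A \times B$ sits in some fixed $V_m(X)$. Since $\Fun(A,B) \subseteq \Pow(A \times B)$, the function set is handled by one more application of the powerset bound. For axiom (3), if $\F \in V_n(X)$ is a family of sets, then $\F \subseteq V_{n-1}(X)$, so each $F \in \F$ lies in $V_{n-1}(X)$ and hence $F \subseteq V_{n-2}(X)$; therefore $\bigcup \F \subseteq V_{n-2}(X) \subseteq V_{n-1}(X)$, giving $\bigcup \F \in V_{n-1}(X) \subseteq \VV(X)$ (with the trivial adjustment when $n \le 2$). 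Finally, axiom (5) is immediate from the key principle: if $a \in A \in \VV(X)$ with $A$ not an atom, fix $n$ with $A \in V_n(X)$, so $A \subseteq V_{n-1}(X)$ and thus $a \in V_{n-1}(X) \subseteq \VV(X)$.

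The proof is essentially a bookkeeping exercise, so there is no genuine obstacle; the only mildly delicate point is keeping careful track of exactly how many levels each construction consumes, especially for $\R$ and for $\Fun(A,B)$, where one has to chain together the powerset and product bounds. The footnote caveat about atoms in axiom (5) should be stated once at the outset so that it does not clutter the verification.
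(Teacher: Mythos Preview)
Your proposal is correct and follows the standard argument. The paper itself does not give a proof at all: it simply writes ``See \cite[\S 4.4]{chang_model_1977}.'' So your sketch is in fact considerably more detailed than what appears in the paper, and the level-counting approach you outline (the two key principles about $A \in V_n(X) \Rightarrow A \subseteq V_{n-1}(X)$ for non-atoms, and $A \subseteq V_n(X) \Rightarrow A \in V_{n+1}(X)$) is exactly the standard route taken in Chang--Keisler.
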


\begin{proof}
See \cite[\S 4.4]{chang_model_1977}.
\end{proof}


\begin{remark}
In set theory, one considers the universe
$\mathbf{V}=\bigcup_\gamma V_\gamma$
given by the union of all levels of the so-called \emph{von Neumann cumulative hierarchy}, 
which is defined by transfinite recursion on the class of all
ordinals by letting $V_0=\emptyset$, $V_{\gamma+1}=\Pow(V_\gamma)$, and 
$V_\lambda=\bigcup_{\gamma<\lambda}V_\gamma$ if $\lambda$ is a limit ordinal.
Basically, the Regularity axiom was introduced in set theory
to show that the above class $\mathbf{V}$ is the universal class of all sets.

Instead, the superstructures are defined by only taking the finite
levels $V_n(X)$ constructed over a given set of atoms $X$.
The main motivation for that restriction is that if one goes
beyond the finite levels and allows the first infinite ordinal $\omega$
to belong to the domain of the star map, then 
${}^*\omega$ would contain $\in$-descending chains
$\xi\ni\xi-1\ni\xi-2\ni\ldots$ for every $\xi\in{}^*\omega\setminus\omega$,
contradicting the Regularity axiom. 
Since $V_\omega=\bigcup_{n\in\omega} V_n$ would not be suitable, as it only
contains finite sets, one takes an infinite set of atoms $X$ as
the ground level $V_0(X)=X$, so as to
enclose (a copy of the) natural numbers in the universe. 

However, we remark that if one drops the Regularity Axiom from the
axiomatics \textsf{ZFC}, and replace it with a suitable Anti-Foundation
Axiom (such as Boffa's \emph{superuniversality axiom}),
then one can construct star maps $*:\mathbf{V}\to\mathbf{V}$ from
the universe all sets into itself that satisfies the \emph{transfer principle}
and $\kappa$-saturation for any given cardinal $\kappa$.
(This is to be contrasted with the well-known result by Kunen
about the impossibility in \textsf{ZFC} of non-trivial elementary extensions 
$j:\mathbf{V}\to\mathbf{V}$.)
This kind of foundational issues are the subject matter of the so-called 
\emph{nonstandard set theory} (see Remark \ref{remark-nstheories}).
\end{remark}

\subsection{Bounded quantifier formulas}

In this section we formalize the notion 
of ``elementary property"\index{elementary property}
by means of suitable formulas.
It is a well-known fact that virtually all properties
of mathematical objects can be described within first-order 
logic; in particular, one can reduce to the language
of set theory grounded on the usual logic symbols plus
the sole membership relation symbol.
Here is the ``alphabet" of our language.\footnote
{~To be precise, also parentheses ``\,$($\," and ``\,$)$\,"
should be included among the symbols of our alphabet.}
\begin{itemize}
\item
\emph{Variables}:\index{variable} $x, y, z, \ldots, x_1, x_2, \ldots$\index{variable};
\item
\emph{Logical Connectives}\index{logical connective}:
$\lnot$ (negation ``not");
$\land$ (conjunction ``and");
$\lor$ (disjunction ``or");
$\Rightarrow$ (implication ``if \ldots then");
$\Leftrightarrow$ (double implication ``if and only if");
\item
\emph{Quantifiers}\index{quantifier}\index{quantifier}:
$\exists$ (existential quantifier ``there exists");
$\forall$ (universal quantifier  ``for all");
\item
\emph{Equality symbol} $=$;
\item
\emph{Membership symbol}\index{membership} $\in$.
\end{itemize}

\begin{definition}\label{elementaryformula}
An \emph{elementary formula}\index{formula!elementary} $\sigma$
is a finite string of symbols in the above alphabet where
it is specified a set of \emph{free variables}\index{variable!free} $FV(\sigma)$
and a set of \emph{bound variables}\index{variable!bound} $BV(\sigma)$,
according to the following rules.
\begin{itemize}
\item
\emph{Atomic formulas}\index{formula!atomic}.
If $x$ and $y$ are variables then ``$(x=y)$" and ``$(x\in y)$"
are elementary formulas, named \emph{atomic formulas},
where $FV(x=y)=FV(x\in y)=\{x,y\}$ and $BV(x=y)=BV(x\in y)=\emptyset$;
\item
\emph{Restricted quantifiers}\index{quantifier!restricted}.
If $\sigma$ is an elementary formula, 
$x\in FV(\sigma)$ and $y\notin BV(\sigma)$,
then $``(\forall x\in y)\ \sigma"$ 
is an elementary formula where
$FV((\forall x\in y)\ \sigma)=(FV(\sigma)\setminus\{x\})\cup\{y\}$ and
$BV((\forall x\in y)\ \sigma)=BV(\sigma)\cup\{y\}$. Similarly 
with the elementary formula $``(\exists x\in y)\ \sigma"$ 
obtained by applying the existential quantifier;
\item
\emph{Negation}.
If $\sigma$ is an elementary formula then $(\lnot\,\sigma)$
is an elementary formula where $FV(\lnot\sigma)=FV(\sigma)$
and $BV(\lnot\sigma)=BV(\sigma)$;
\item
\emph{Binary connectives}.
If $\sigma$ and $\tau$
are elementary formulas where 
$FV(\sigma)\cap BV(\tau)=FV(\tau)\cap BV(\sigma)=\emptyset$,
then $``(\sigma\land\tau)"$ is an elementary formula
where $FV(\sigma\land\tau)=FV(\sigma)\cup FV(\tau)$
and $BV(\sigma\land\tau)=BV(\sigma)\cup BV(\tau)$;
and similarly with the elementary formulas
$(\sigma\lor\tau)$, $(\sigma\Rightarrow\tau)$,
and $(\sigma\Leftrightarrow\tau)$ obtained
by applying the connectives $\lor$, $\Rightarrow$, and 
$\Leftrightarrow$, respectively.
\end{itemize}
\end{definition}

According to the above,
every elementary formula is built from \emph{atomic formulas}
(and this justifies the name ``atomic").
in that an arbitrary elementary formula is obtained from
atomic formulas by finitely many iterations of 
restricted quantifiers, negations, and binary connectives, 
in whatever order.
Only quantifiers produces bound variables, and in fact
the bound variables are those that are quantified.
Notice that a variable can be quantified only if it is free
in the given formula, that is, it actually appears
and it has been not quantified already.

It is worth stressing that quantifications are only permitted
in the \emph{restricted forms} $(\forall x\in y)$ or $(\exists x\in y)$,
where the ``scope" of the quantified variable $x$ is ``restricted" by
another variable $y$. To avoid potential ambiguities,
we required that the ``bounding" variable $y$ does not appear
bound itself in the given formula.

As it is customary in the practice, to simplify notation
we will adopt natural short-hands. For instance,
we will write ``$x\ne y$" to mean ``$\lnot(x=y)$" and
``$x\notin y$" to mean ``$\lnot(x\in y)$";
we will write ``$\forall x_1,\ldots,x_k\in y\ \sigma$" to mean
``$(\forall x_1\in y)\ldots(\forall x_k\in y)\ \sigma$",
and similarly with existential quantifiers.
Moreover, we will use parentheses informally,
and omit some of them whenever confusion is unlikely.
So, we may write ``$\forall x\in y\ \sigma$" instead
of ``$(\forall x\in y)\ \sigma$; or ``$\sigma\land\tau$"
instead of ``$(\sigma\land\tau)$"; and so forth.

Another usual agreement is that negation $\lnot$ binds more
strongly than conjunctions $\land$ and
disjunctions $\lor$, which in turn bind more strongly
than implications $\Rightarrow$ and double implications $\Leftrightarrow$.
So, we may write ``$\lnot\sigma\land\tau$" to mean
``$((\lnot\sigma)\land\tau)$"; or
``$\lnot\sigma\lor\tau\Rightarrow\upsilon$"
to mean ``$(((\lnot\sigma)\lor\tau)\Rightarrow\upsilon)$"; or
``$\sigma\Rightarrow \tau\lor\upsilon$" to mean
``$(\sigma\Rightarrow(\tau\lor\upsilon))$".

When writing $\sigma(x_1,\ldots,x_k)$
we will mean that
$x_1,\ldots,x_k$ are all and only the free variables that
appear in the formula $\sigma$.
The intuition is that the truth or falsity of a formula
depends only on the values given to its \emph{free variables},
whereas \emph{bound variables} can be renamed
without changing the meaning of a formula.

\begin{definition}
A property of mathematical objects $A_1,\ldots,A_k$
is \emph{expressed in elementary form}
if it is written down by taking an elementary formula
$\sigma(x_1,\ldots,x_k)$, and by replacing all occurrences of
each free variable $x_i$ by $A_i$. In this case we denote
$$\sigma(A_1,\ldots,A_k),$$
and we will refer to objects $A_1,\ldots,A_k$ 
as \emph{constants} or \emph{parameters}.\footnote
{~In order to make sense, it is implicitly assumed
that in every quantification
$(\forall x\in A_i)$ and $(\exists x\in A_i)$,
the object $A_i$ is a \emph{set} (not an atom).}
By a slight abuse, sometimes we will simply say
\emph{elementary property}
to mean ``property expressed in elementary form".
\end{definition}

The motivation of our definition is the well-known fact
that virtually all properties considered in mathematics
can be formulated in elementary form.
Below is a list of examples that include the fundamental ones.
As an exercise, the reader can easily write down
by him- or herself any other mathematical property that comes
to his or her mind, in elementary form.

\medskip
\begin{example}\label{exampleelementary}
Each property is followed by one of its
possible expressions in elementary form.\footnote
{~For simplicity, in each item we use short-hands for properties
that have been already considered in previous items.}

\smallskip
\begin{enumerate}
\item
``$A\subseteq B$": $(\forall x\in A) (x\in B)$;
\item
$C=A\cup B$: $(A\subseteq C) \land (B\subseteq C) \land
(\forall x\in C) (x\in A \lor x\in B)$;
\item
$C=A\cap B$:
$(C\subseteq A)\land (\forall x\in A)(x\in B\Leftrightarrow x\in C)$;
\item
$C=A\setminus B$:
$(C\subseteq A)\land (\forall x\in A)(x\in C\Leftrightarrow x\notin B)$;
\item
$C=\{a_1,\ldots,a_k\}$:
$(a_1\in C)\land\ldots\land(a_k\in C)\land(\forall x\in C)(x=a_1\lor\ldots\lor x=a_k)$;
\item
$\{a_1,\ldots,a_k\}\in C$:
$(\exists x\in C)(x=\{a_1,\ldots,a_k\})$;
\item
$C=(a,b)$:
$C=\{\{a\},\{a,b\}\}$;\footnote
{~Recall that ordered pairs $(a,b)=\{\{a\},\{a,b\}\}$ were defined as
\emph{Kuratowski pairs}.}
\item
$C=(a_1,\ldots,a_k)$ with $k\ge 3$: Inductively, $C=\left((a_1,\ldots,a_{k-1}),a_k\right)$;
\item
$(a_1,\ldots,a_k)\in C$:
$(\exists x\in C)(x=(a_1,\ldots,a_k))$;
\item
$C=A_1\times\ldots\times A_k$:
$(\forall x_1\in A_1)\ldots(\forall x_k\in A_k)((a_1,\ldots,a_k)\in C)\land
(\forall z\in C)(\exists x_1\in A_1)\ldots(\exists x_k\in A_k)(z=(x_1,\ldots,x_k))$;
\item
$R$ is a $k$-place relation on $A$:
$(\forall z\in R)(\exists x_1,\ldots,x_k\in A)(z=(x_1,\ldots,x_k))$;
\item
$f:A\to B$: $(f\subseteq A\times B)\land
(\forall a\in A)(\exists b\in B)((a,b)\in f)\land
(\forall a,a'\in A)(\forall b\in B)((a,b),(a',b)\in f\Rightarrow a=a')$;
\item
$f(a_1,\ldots,a_k)=b$:
$((a_1,\ldots,a_k),b)=(a_1,\ldots,a_k,b)\in f$;
\item
$x<y$ in $\R$:
$(x,y)\in R$, where $R\subset\R\times\R$ is the order relation on $\R$.
\end{enumerate}
\end{example}

It is worth remarking that a same property may be
expressed both in an elementary form and in a non-elementary form.
The typical examples involve the powerset operation.

\begin{example}
``$\Pow(A)=B$" is trivially an elementary property of constants
$\Pow(A)$ and $B$, but \emph{cannot} be formulated as an
elementary property of constants $A$ and $B$.
In fact, while the inclusion ``$B\subseteq\Pow(A)$" is
formalized in elementary form by
``$(\forall x\in B)(\forall y\in x)(y\in A)$",
the other inclusion $\Pow(A)\subseteq B$ does not
admit any elementary formulation with $A$ and $B$ as constants.
The point here is that quantifications over subsets
``$(\forall x\subseteq A)(x\in B)$" are not
allowed by our rules.
\end{example}

\subsection{\L os' Theorem}

The ultrapower construction of the hyperreals is
naturally extended to the whole superstructure.

\begin{definition}
Let $\U$ be an ultrafilter on the set of indexes $I$.
The \emph{bounded ultrapower} of the superstructure $\VV(X)$
modulo $\U$ is the union
$$\VV(X)_b^I/\U\ :=\ \bigcup_n V_n(X)^I/\U$$
where $V_n(X)^I/\U=\{[f]\mid f:I\to V_n(X)\}$
contains the equivalence classes modulo $\U$ of 
the $I$-sequences $f$ that take values in the finite level $V_n(X)$.
The \emph{pseudo-membership relation} $\in_\U$ on $\VV(X)_b^I/\U$
is defined by setting:
$$[f]\in_\U[g]\ \Longleftrightarrow\ \{i\in I\mid f(i)\in g(i)\}\in\U.$$
\end{definition}

So, the \emph{bounded} ultrapower consists
of the equivalence classes modulo $\U$ of 
the ``bounded" $I$-sequences (that is,
of those sequences $f:I\to\VV(X)$
whose range is included in some finite level $V_n(X)$);
and the pseudo-membership holds when the
actual membership holds pointwise for $\U$-almost all indexes.

In bounded ultrapowers, properties expressed in elementary form
can be interpreted in a natural way.

\begin{definition}
Let $P$ be a property expressed in elementary form with constant
parameters in $\VV_b(X)^I/\U$.
The \emph{satisfaction relation} ``$\VV(X)_b^I/\U\models P$"
(read: ``the property $P$ holds in $\VV(X)_b^I/\U$")
is defined according to the following rules:\footnote
{~Following classic logic, we agree that the disjunction ``or" is \emph{inclusive}, that is,
``$A$ or $B$" is always true except when both A and B are false;
and the implication ``$A\Rightarrow B$" is true except when $A$ is true and $B$ is false.}
\begin{itemize}
\item
``$\VV(X)_b^I/\U\models [f]=[g]$" when $[f]=[g]$, that is, when
$\{i\in I\mid f(i)=g(i)\}\in\U$.
\item
``$\VV(X)_b^I/\U\models[f]\in[g]$" when $[f]\in_\U[g]$, that is, when
$\{i\in I\mid f(i)\in g(i)\}\in\U$.
\item
``$\VV(X)_b^I/\U\models\lnot P$" when ``$\VV(X)_b^I/\U\not\models P$."
\item
``$\VV(X)_b^I/\U\models(P_1\land P_2)$" when 
both ``$\VV(X)_b^I/\U\models P_1$" and ``$\VV(X)_b^I/\U\models P_2$."
\item
``$\VV(X)_b^I/\U\models(P_1\lor P_2)$" when 
``$\VV(X)_b^I/\U\models P_1$" or ``$\VV(X)_b^I/\U\models P_2$."
\item
``$\VV(X)_b^I/\U\models(P_1\Rightarrow P_2)$" when 
if ``$\VV(X)_b^I/\U\models P_1$" then also
``$\VV(X)_b^I/\U\models P_2$."
\item
``$\VV(X)_b^I/\U\models(P_1\Leftrightarrow P_2)$" when 
both ``$\VV(X)_b^I/\U\models P_1\Rightarrow P_2$" and
``$\VV(X)_b^I/\U\models P_2\Rightarrow P_1$."
\item
``$\VV(X)_b^I/\U\models(\exists x\in [g])\ \sigma(x,[f_1],\ldots,[f_n])$"
when ``$\VV(X)_b^I/\U\models\ \sigma([h],[f_1],\ldots,[f_n])$"
for some $[h]\in_\U[g]$.
\item
``$\VV(X)_b^I/\U\models(\forall x\in [g])\ \sigma(x,[f_1],\ldots,[f_n])$"
when ``$\VV(X)_b^I/\U\models\ \sigma([h],[f_1],\ldots,[f_n])$"
for every $[h]\in_\U[g]$.
\end{itemize}
\end{definition}

\L os' Theorem \index{\L os' Theorem}
is a fundamental result in model theory
stating that an ultrapower satisfies the
same elementary properties as the initial structure.
In the case of bounded ultrapowers of superstructures, 
one has the following formulation.

\begin{theorem}[\L os]
Let $\VV(X)_b^I/\U$ be a bounded ultrapower
of the superstructure $\VV(X)$ and let
$\sigma([f_1],\ldots,[f_n])$ be a property expressed in elementary form
with constant parameters from $\VV(X)_b^I/\U$. Then
$$\VV(X)_b^I/\U\models\sigma([f_1],\ldots,[f_n])\ \Longleftrightarrow\ 
\{i\in I\mid \sigma(f_1(i),\ldots,f_n(i))\ \text{holds}\}\in\U.$$
\end{theorem}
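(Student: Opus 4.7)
The plan is to prove the theorem by induction on the syntactic complexity of the elementary formula $\sigma$, adapting the classical proof of \L os' theorem to the setting of bounded ultrapowers over superstructures. The inductive scheme mirrors the recursive construction of elementary formulas given in Definition \ref{elementaryformula}: atomic formulas form the base case, negation and the binary connectives form one set of inductive steps, and restricted quantifiers form the remaining two inductive steps.

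First I would dispatch the base case. For atomic formulas ``$[f]=[g]$'' and ``$[f]\in[g]$'' the equivalence in the statement is literally the definition of the satisfaction relation on $\VV(X)_b^I/\U$. The inductive steps for $\lnot$, $\land$, $\lor$, $\Rightarrow$, $\Leftrightarrow$ are routine and exploit the three defining features of an ultrafilter: for any $A\subseteq I$ exactly one of $A$ and $I\setminus A$ belongs to $\U$ (handles negation), $\U$ is closed under finite intersections (handles conjunction), and $\U$ is closed under supersets (used in several places). The remaining binary connectives can either be reduced to $\lnot$ and $\land$ or handled directly with the same ultrafilter manipulations.

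The main obstacle is the bounded quantifier step, which I would handle for the existential case first and then deduce the universal case via negation. The forward direction is straightforward: if $[h]\in_\U[g]$ and $\VV(X)_b^I/\U\models\sigma([h],[f_1],\ldots,[f_n])$, then by the inductive hypothesis the set $\{i:\sigma(h(i),f_1(i),\ldots,f_n(i))\}\in\U$, and intersecting with $\{i:h(i)\in g(i)\}\in\U$ shows that $\{i:(\exists x\in g(i))\,\sigma(x,f_1(i),\ldots,f_n(i))\}\in\U$. The converse direction requires more care: given that $A:=\{i:(\exists x\in g(i))\,\sigma(x,f_1(i),\ldots,f_n(i))\}\in\U$, I would use the axiom of choice to pick, for each $i\in A$, an element $h(i)\in g(i)$ witnessing the existential, and define $h(i)$ arbitrarily (e.g.\ as a fixed element of $X$) off of $A$.

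The subtle point that distinguishes the \emph{bounded} ultrapower from the full ultrapower is verifying that the chosen witness sequence $h$ actually represents an element of $\VV(X)_b^I/\U$; that is, that $h$ lands inside some fixed finite level $V_n(X)$. This is where I expect the most delicate bookkeeping. Since $[g]\in\VV(X)_b^I/\U$, there is $n\in\N$ such that $g(i)\in V_n(X)$ for $\U$-almost all $i$; using the definition $V_{n}(X)=V_{n-1}(X)\cup\Pow(V_{n-1}(X))$ together with the fact that atoms in $X=V_0(X)$ have no members, for each such $i$ any set $g(i)$ satisfies $g(i)\subseteq V_{n-1}(X)\subseteq V_n(X)$, so $h(i)\in V_n(X)$. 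Adjusting $h$ on the null set of bad indices keeps $h$ bounded. Once the existential case is established, the universal case follows by observing that $(\forall x\in[g])\,\sigma$ is equivalent to $\lnot(\exists x\in[g])\,\lnot\sigma$ and applying the already-proved cases for negation and for the existential quantifier, completing the induction.
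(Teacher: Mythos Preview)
Your proof is correct and follows the standard induction-on-formula-complexity argument for \L os' theorem, with the appropriate extra care at the existential step to verify that the choice-function witness $h$ lands in a fixed level $V_n(X)$ and hence defines an element of the \emph{bounded} ultrapower. The paper itself does not supply a proof of this theorem: consistent with the announcement at the start of the appendix that ``we will only outline the main arguments, and then give precise bibliographic references,'' it simply states \L os' theorem and moves on to its corollary, so your write-up in fact fills in what the paper omits.
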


\begin{corollary}
Let $d:\VV(X)\to\VV(X)_b^I/\U$ be the diagonal
embedding $A\mapsto[\langle A\mid i\in I\rangle]$
of a superstructure into its bounded ultrapower. Then
for every property $\sigma(A_1,\ldots,A_n)$ expressed in elementary form
with constant parameters $A_j\in\VV(X)$ one has
$$\sigma(A_1,\ldots,A_n)\ \Longleftrightarrow\ 
\VV(X)_b^I/\U\models\sigma(d(A_1),\ldots,d(A_n)).$$
\end{corollary}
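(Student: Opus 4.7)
The plan is to derive this corollary as an essentially immediate consequence of \L os' theorem applied to the constant sequences. First I would apply \L os' theorem directly to the parameters $d(A_1), \ldots, d(A_n)$, where by definition $d(A_j) = [\langle A_j \mid i \in I\rangle]$ is the equivalence class of the constant sequence with value $A_j$. By \L os,
\[
\VV(X)_b^I/\U\models\sigma(d(A_1),\ldots,d(A_n))\ \Longleftrightarrow\ \{i\in I\mid \sigma(A_j|_{j=1,\ldots,n})\ \text{holds}\}\in\U,
\]
where on the right-hand side the values $A_j$ in the $i$-th coordinate are just the constants $A_j$ themselves.

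The key observation is that, because the sequences are constant, the set appearing on the right-hand side is independent of the index $i$: the statement $\sigma(A_1,\ldots,A_n)$ either holds in $\VV(X)$ or it does not, and this does not vary with $i \in I$. Hence the set $\{i \in I \mid \sigma(A_1,\ldots,A_n)\text{ holds}\}$ equals $I$ if $\sigma(A_1,\ldots,A_n)$ holds, and equals $\emptyset$ otherwise.

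Finally, I would invoke the basic ultrafilter properties: $I \in \U$ (since $\U$ is a filter containing the whole set) and $\emptyset \notin \U$ (since $\U$ is proper). Combining these observations, the set belongs to $\U$ if and only if $\sigma(A_1,\ldots,A_n)$ actually holds in $\VV(X)$, yielding the desired equivalence. There is no real obstacle here — the entire content is packaged inside \L os' theorem itself; the corollary is essentially the remark that applying \L os to diagonal (constant) parameters collapses the ``$\U$-almost all'' quantifier down to genuine truth in the base superstructure. The only minor verification is the implicit induction built into \L os that ensures $\sigma$ can indeed be interpreted with constants drawn from $\VV(X)$ via the diagonal embedding, which is immediate from the definition of satisfaction clause by clause.
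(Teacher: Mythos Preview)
Your proposal is correct and is exactly the intended argument: the paper states this corollary immediately after \L os' Theorem without proof, since it follows at once by applying \L os to the constant sequences $d(A_j)$ and noting that the resulting index set is either $I$ or $\emptyset$.
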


Usually, in nonstandard analysis one considers
a superstructure $\VV(X)$, named the \emph{standard universe},
takes a bounded ultrapower $\VV(X)_b^I/\U$ of it,
and then defines an injective map $\pi:\VV(X)_b^I/\U\to\VV(Y)$,
where $\VV(Y)$ is a suitable superstructure called the \emph{nonstandard universe}.
Such a map $\pi$, called the \emph{Mostowski collapse},
has the important property that it transforms the pseudo-membership $\in_\u$ into 
actual membership, that is, $[f]\in_\U[g]\Leftrightarrow \pi([f])\in\pi([g])$.
As a result, the star map $*=\pi\circ d:\VV(X)\to\VV(Y)$ obtained
by composing the diagonal embedding with the Mostowski collapse
satisfies the \emph{transfer} principle. All details of the construction can be found in
\S 4.4 of \cite{chang_model_1977}.

Triples $\langle *, \VV(X),\VV(Y)\rangle$ where the map
$*:\VV(X)\to\VV(Y)$ satisfies the \emph{transfer principle}\index{transfer principle}
and ${}^*X=Y$ are called \emph{superstructure models of nonstandard analysis}.\footnote
{~Typically, one takes (a copy of) the real numbers $\R$ as $X$.}

\subsection{Models that allow iterated hyper-extensions}\label{allowiterated}

In applications, we needed
iterated hyper-extensions, but in the usual superstructure
approach to nonstandard analysis (recalled in the previous section),
such extensions cannot be accommodated directly.
To this end, one would need to
construct a different standard universe each time,
which contains the previous nonstandard universe.
A neat way to overcome this problem is to consider
a superstructure model of nonstandard analysis 
$\langle *, \VV(X),\VV(X)\rangle$ where 
the standard and the nonstandard universe coincide.
Clearly, in this case a hyper-extension also belongs to
the standard universe, and so one can apply
the star map to it.\footnote
{~We remark that the notion
of ``iterated hyper-image" does not make
sense in Nelson's \emph{Internal Set Theory} \textsf{IST}, 
as well as in other axiomatic theories elaborated upon that
approach.}

In the following we will assume that
arbitrarily large sets of atoms are available.\footnote
{~We remark that this is just a simplifying assumption; indeed,
in \textsf{ZFC} one can easily construct 
sets $X$ of arbitrarily large cardinality that
behaves like sets of atoms
with respect to the corresponding superstructures $\VV(X)$,
that is, such that $\emptyset\notin X$ and 
$x\cap\VV(X)=\emptyset$ for every $x\in X$.
Such sets are named \emph{base sets} in 
\cite[\S 4.4]{chang_model_1977}.}

\begin{theorem}
Let $\kappa,\mu$ be infinite cardinals.
Then there exist sets of atoms $X_0\subset X$ of cardinality 
$|X_0|=|X|=\mu^\kappa$
and star maps $*:\VV(X)\to\VV(X)$ such that:
\begin{enumerate}
\item
(a copy of) the real numbers $\R\subset Y$;
\item
${}^*x=x$ for every $x\in X_0$, and hence ${}^*r=r$ for every $r\in\R$;
\item
$\starX=X$;
\item
\emph{transfer principle}\index{transfer principle}. 
For every bounded quantifier formula $\varphi(x_1,\ldots,x_n)$
and for every $a_1,\ldots,a_n\in\VV(X)$:
$$\varphi(a_1,\ldots,a_n)\Longleftrightarrow
\varphi({}^*a_1,\ldots,{}^*a_n);$$
\item
The $\kappa^+$-saturation\index{saturation!$\kappa $-} principle holds.
\end{enumerate}
\end{theorem}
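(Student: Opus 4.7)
The plan is to build, by a fixed-point construction, a set of atoms $X$ together with a star map defined via a suitably saturated bounded ultrapower whose Mostowski collapse lands back inside $\VV(X)$. The crucial feature to arrange is the identity $\starX=X$, which is what permits iteration; everything else (transfer, saturation, fixing of $X_0$) is standard once the underlying machinery is in place.

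First I would fix the combinatorial ingredients: take an index set $I$ of cardinality $\kappa$ and, using the existence theorem for $\kappa^+$-good ultrafilters alluded to earlier (see \cite[\S 6.1]{chang_model_1977}), pick a countably incomplete $\kappa^+$-good ultrafilter $\U$ on $I$. By Łoś' theorem the bounded ultrapower $\VV(X')_b^I/\U$ of any superstructure is an elementary extension in the sense of bounded-quantifier formulas, and $\kappa^+$-goodness guarantees that this ultrapower is $\kappa^+$-saturated; this takes care of properties (4) and (5) once the star map is in place. I would also fix a set of atoms $X_0$ of cardinality $\mu^\kappa$ containing (a copy of) $\R$.

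Next I would construct $X\supseteq X_0$ so that applying ``bounded ultrapower followed by Mostowski collapse'' sends $\VV(X)$ into itself, with $X$ mapped bijectively onto $X$. Starting from $X_0$, form $\VV(X_0)_b^I/\U$ and use the Mostowski collapse to get a canonical transitive realization; the equivalence classes of atom-valued sequences must be declared atoms of a larger set $X_1\supseteq X_0$, and we may identify the class of each constant sequence $[c_x]$ with $x$ itself, so that the collapse fixes $X_0$ pointwise. Iterating this $\omega$ times and taking the union yields $X=\bigcup_n X_n$; the cardinality bookkeeping $|X_0^I/\U|\le|X_0|^\kappa=\mu^\kappa$ shows inductively that $|X_n|=\mu^\kappa$, so $|X|=\mu^\kappa$. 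By construction, the bounded ultrapower of $\VV(X)$ collapses into $\VV(X)$ itself and the atoms of the collapsed image form exactly $X$. Defining ${}^*A:=\pi([c_A])$ for $A\in\VV(X)$, where $\pi$ is the collapse and $c_A$ the constant sequence, gives a star map $*:\VV(X)\to\VV(X)$ with ${}^*x=x$ for $x\in X_0$ and $\starX=X$, establishing (1)--(3). Transfer is immediate from Łoś, and saturation from the $\kappa^+$-goodness of $\U$.

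The main obstacle is the bookkeeping in the Mostowski collapse step: one has to arrange simultaneously that (a) constant-sequence classes of old atoms are sent back to those atoms (so ${}^*x=x$ on $X_0$), (b) nonconstant atom-valued sequence classes become genuine \emph{new} atoms that can be absorbed into $X$ at a later stage, and (c) the inductive limit $X$ is closed under the construction, giving $\starX=X$ on the nose rather than just $\starX\subseteq X$. The cardinality arithmetic $(\mu^\kappa)^\kappa=\mu^\kappa$ is what makes the fixed-point at stage $\omega$ possible; verifying that the collapsed image at each stage really sits inside $\VV(X_{n+1})$ (and not higher) is the delicate point. Once this recursion is carried out with care---using base-set arguments as in \cite[\S 4.4]{chang_model_1977} to handle atoms cleanly---properties (1)--(5) fall out directly.
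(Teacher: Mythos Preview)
Your overall strategy---bounded ultrapower plus Mostowski collapse back into $\VV(X)$---is the right one, and your use of a $\kappa^+$-good ultrafilter on a set of size $\kappa$ is a clean way to secure $\kappa^+$-saturation. But the $\omega$-iteration you propose for building $X$ has a real gap at the limit, and is in fact unnecessary.

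The gap is this. You set $X=\bigcup_n X_n$ and then assert that ``the bounded ultrapower of $\VV(X)$ collapses into $\VV(X)$ itself and the atoms of the collapsed image form exactly $X$.'' The only candidate for the collapse on atom-classes is the union of your stage-by-stage bijections $X_n^I/\U\to X_{n+1}$, but this is defined only on $\bigcup_n X_n^I/\U$, which is a \emph{proper} subset of $X^I/\U$ whenever $\U$ is countably incomplete. Indeed, pick a countable partition $I=\bigsqcup_n J_n$ with no $J_n\in\U$ (such exists since $\U$ is countably incomplete), and define $f:I\to X$ by sending $J_n$ into $X_n\setminus X_{n-1}$; then for every $m$ the set $\{i:f(i)\in X_m\}=J_0\cup\cdots\cup J_m$ is not in $\U$, so $[f]$ lies in no $X_m^I/\U$ and your collapse is undefined at $[f]$. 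You flagged the limit step as delicate, but you located the danger in getting $\starX=X$ rather than $\starX\subsetneq X$; the actual problem is that the map is not total.

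The paper bypasses the iteration altogether. The cardinal arithmetic $|X^I/\U|\le(\mu^\kappa)^\kappa=\mu^\kappa=|X|$ holds for \emph{any} set of atoms $X$ of size $\mu^\kappa$, so one simply fixes such an $X$ containing (a copy of) $\R$ at the outset, chooses once and for all a bijection $\Psi_0:X^I/\U\to X$ sending $[c_x]\mapsto x$ for every $x\in X_0$, and then defines $\Psi_n:V_n(X)^I/\U\to V_n(X)$ recursively by the usual Mostowski rule $\Psi_{n+1}([f])=\{\Psi_n([g]):g(i)\in f(i)\ \U\text{-a.e.}\}$. The star map is $*=\Psi\circ d$. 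Items (2) and (3) are then immediate from the choice of $\Psi_0$, and transfer follows as in \cite[\S4.4]{chang_model_1977}. Your iteration was aimed at producing this bijection constructively, but the bijection exists outright by cardinality; once you see that, the construction is a single step.
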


\begin{proof}
Since $\mu^\kappa\ge\mathfrak{c}$ 
has at least the size of the \emph{continuum}, we can
pick a set of atoms $X$ of cardinality $\mu^\kappa$
that contains (a copy of) the real numbers $\R$, and such that
the relative complement $X_0=X\setminus\R$ has cardinality
$\mu^\kappa$. For every $x\in\kappa$, let 
$\langle x\rangle=\{a\in\Fin(\kappa)\mid x\in a\}$
be the set of all finite parts of $\kappa$ that contains $x$.
It is readily seen that the family $\{\langle x\rangle\mid x\in\kappa\}$
has the \emph{finite intersection property}, and so it can be extended
to an ultrafilter $\U$ on $I=\Fin(\kappa)$\index{ultrafilter}. We now inductively define 
maps $\Psi_n:V_n(X)^I/\U\to V_n(X)$ as follows.

Since $\mu^\kappa=|X|\le |X^I/\U|\le|X|^{|I|}=(\mu^\kappa)^\kappa=\mu^\kappa$,
we have $|X|=|X^I/\U|$ and
we can pick a bijection $\Psi_0:X\to X^I/\U$ with the
property that $\Psi_0(x)=[c_x]_\U$ for every $x\in X_0$.
At the inductive step, let $f:I\to V_{n+1}(X)$ be given.
If $f(i)\in V_n(X)$ $\U$-a.e., let
$\Psi_{n+1}([f]_\U)=\Psi_n([f]_\U)$; and
if $f(i)\notin V_n(X)$ $\U$-a.e., that is, if $f(i)\in\Pow(V_n(X))$ $\U$-a.e., define
$$\Psi_{n+1}([f]_\U)\ =\ \left\{\Psi_n([g]_\U)\mid g(i)\in f(i)\ \U\text{-a.e.}\right\}.$$ 
By gluing together the above functions $\Psi_n$, we obtain a map 
$\Psi:\VV(X)^I_b/\U\to \VV(X)$ from the bounded
ultrapower\index{ultrapower} of our superstructure into the superstructure itself.
Finally, define the star map $*:\VV(X)\to\VV(X)$ as the
composition $\Psi\circ d$, where $d$ is the diagonal embedding:

\begin{center}
\begin{picture}(100,50)(0,10)
   \put(-10,0){\makebox(0,0){$\VV(X)$}}
   \put(100,0){\makebox(0,0){$\VV(X)$}}
   \put(50,50){\makebox(0,0){$\VV(X)^I_b/\U$}}
   \put(18,30){\makebox(0,0){$d$}}
   \put(82,30){\makebox(0,0){$\Psi$}}
   \put(50,4){\makebox{$*$}}
   \put(6,8){\vector(1,1){30}}
   \put(62,40){\vector(1,-1){30}}
   \put(10,0){\vector(1,0){70}}
\end{picture}
\end{center}

\bigskip
\medskip
By the definition of $\Psi_0$, for every $x\in X_0$ we have
that ${}^*x=\Psi(d(x))=\Psi_0([c_x]_\U)=x$.
Moreover, the map $*$ satisfies the \emph{transfer principle}\index{transfer principle}
for bounded quantifier formulas, as one can show by using
the same arguments as in \cite[Theorem 4.4.5]{chang_model_1977}.
In brief, the diagonal embedding $d$ preserves the bounded quantifier
formulas by \L os' Theorem; moreover it is easily verified
from the definition that also $\Psi$ preserves
the bounded quantifier formulas. Finally,
the range of $\Psi$ is a transitive subset of $\VV(X)$,
and bounded quantifier formulas are preserved under 
transitive submodels. 
\end{proof}

\begin{remark}\label{remark-nstheories}
The so-called \emph{nonstandard set theories} study
suitable adjustments of the usual axiomatic set theory where also
the methods of nonstandard analysis are incorporated
in their full generality. 
The most common approach in nonstandard set theories
is the so-called \emph{internal viewpoint} as initially proposed
independently by E. Nelson \cite{nelson_internal_1977} and K. Hrbacek \cite{hrbacek_axiomatic_1978} where one includes in the language 
a unary relation symbol $\texttt{st}$ for ``standard object''.
The underlying universe is then given by the internal sets, 
and the standard objects are those internal elements that are hyper-extensions. 
As a consequence, external sets do not belong to the universe, and 
can only be considered indirectly, similarly as
proper classes are treated in \zfc\ as extensions of formulas.

An alternative \emph{external viewpoint}, closer to
the superstructure approach, is to  
postulate a suitably modified version of Zermelo-Fraenkel theory \zfc,
plus the properties of an \emph{elementary embedding} 
for a star map $*:\mathbb{S}\to\mathbb{I}$ 
from the sub-universe $\mathbb{S}$
of ``standard" objects into the sub-universe $\mathbb{I}$ of 
``internal" objects. Of course, to this end one needs to
include in the language a new function symbol $*$ for the star map.
We remark that if one replaces the \emph{regularity axiom} by a suitable
\emph{anti-foundation principle}, then one can actually
construct \emph{bounded elementary embeddings} 
$*:\VV\to\VV$ defined on the whole
universe into itself, thus providing a foundational
framework for iterated hyper-extensions that
generalizes the superstructure models that we have
seen in this section; see \cite{ballard_standard_1992,di_nasso_axiomatic_2002}.

A simple axiomatic presentation to nonstandard analysis that
naturally accommodates iterated hyper-extensions is the Alpha-Theory
proposed by V.\ Benci and M.\ Di Nasso 
(see the book \cite{benci_measure_2018}).

\end{remark}

\backmatter

\bibliographystyle{plain}
\bibliography{bibliography}
\printindex


\end{document}